\documentclass[12pt]{smfart}
\usepackage{smfenum}
\usepackage[latin1]{inputenc}
\usepackage[french]{babel}

\usepackage{amsfonts}
\usepackage{amssymb}
\usepackage[all]{xypic}
\usepackage{bbm}
\usepackage{hyperref}
\usepackage[mathscr]{eucal}

\numberwithin{equation}{subsection}
\newtheorem{theorem}{Th\'eor\`eme}
\newtheorem{lemme}[equation]{Lemme}
\newtheorem{proposition}[equation]{Proposition}
\newtheorem{definition}[equation]{D\'efinition}
\newtheorem{theoreme}[equation]{Th\'eor\`eme}
\newtheorem{corollaire}[equation]{Corollaire}

\newtheorem{apptheoreme}{Th\'eor\`eme}

\newtheorem{appproposition}[apptheoreme]{Proposition}

\theoremstyle{remark}
\newtheorem{exemple}[equation]{Exemple}
\newtheorem{hypothese}[equation]{Hypoth\`ese}
\newtheorem{remarque}[equation]{Remarque}
\newtheorem{numero}[equation]{}

\newcommand\hfld[2]{\ \smash{\mathop{\hbox to 7mm{\rightarrowfill}}
     \limits^{\scriptstyle#1}_{\scriptstyle#2}}\ }
\newcommand\hflg[2]{\smash{\mathop{\hbox to 7mm{\leftarrowfill}}
     \limits^{\scriptstyle#1}_{\scriptstyle#2}}}

\newcommand\ogg{\leavevmode\raise.3ex\hbox{$\scriptscriptstyle\langle\!\langle$}\,}
\newcommand\fgg{\leavevmode\raise.3ex\hbox{$\scriptscriptstyle\,\rangle\!\rangle$}}
\newcommand\rta{\rightarrow}

\newcommand\calA{{\mathscr A}}
\newcommand\calP{{\mathscr P}}
\newcommand\calB{{\mathscr B}}
\newcommand\calR{{\mathscr R}}
\newcommand\calL{{\mathcal L}}
\newcommand\calM{{\mathscr M}}
\newcommand\calO{{\mathscr O}}

\newcommand\calH{{\mathcal H}}

\newcommand\calU{{\mathscr U}}
\newcommand\calW{{\mathscr W}}

\newcommand\calQ{{\mathscr G}}
\newcommand\calG{{\mathscr G}}

\newcommand\rmT{{\rm T}}
\newcommand\frakg{{\mathfrak g}}
\newcommand\frakt{{\mathfrak t}}

\newcommand\frakc{{\mathfrak c}}

\newcommand\build[3]{\mathrel{\mathop{\kern
0pt#1}\limits_{\textstyle #2}^{\textstyle #3}}}

\newcommand\bfO{{\mathbf O}}

\newcommand\bfx{{\mathbf x}}
\newcommand\bfB{{\mathbf B}}
\newcommand\CC{{\mathbbm C}}
\newcommand\GG{{\mathbbm G}}
\newcommand\FF{{\mathbbm F}}

\newcommand\ZZ{{\mathbbm Z}}
\newcommand\NN{{\mathbbm N}}
\newcommand\QQ{{\mathbbm Q}}
\newcommand\HH{{\mathbbm H}}
\newcommand\bbP{{\mathbbm P}}
\newcommand\bbA{{\mathbb A}}
\newcommand\bbH{{\mathbbm H}}
\newcommand\bbB{{\mathbbm B}}
\newcommand\bbU{{\mathbbm U}}

\newcommand\bbW{{\mathbbm W}}

\newcommand\bbT{{\mathbbm T}}
\newcommand\bbG{{\mathbbm G}}
\newcommand\bbx{{\mathbbm x}}
\newcommand\bbg{{\mathbbm g}}
\newcommand\bbt{{\mathbbm t}}
\newcommand\bbc{{\mathbbm c}}
\newcommand\rmH{{\rm H}}
\newcommand\rmR{{\rm R}}
\def\rmT{{\rm T}}
\newcommand\rmD{{\rm D}}
\newcommand\bbX{{\mathbbm X}}

\newcommand\frakh{\mathfrak h}

\newcommand\GL{{\rm GL}}

\newcommand\Gal{{\rm Gal}}

\newcommand\pr{{\rm pr}}

\newcommand\tr{{\rm tr}}

\newcommand\Spec{{\rm Spec}}

\newcommand\SL{{\rm SL}}
\newcommand\PGL{{\rm PGL}}
\newcommand\Sp{{\rm Sp}}

\newcommand\Hom{{\rm Hom}}
\newcommand\Aut{{\rm Aut}}
\newcommand\Pic{{\rm Pic}}

\newcommand\Lie{{\rm Lie}}

\newcommand\ad{{\rm ad}}

\newcommand\cl{{\rm cl}}

\newcommand\Out{{\rm Out}}
\newcommand\ovl[1]{{\overline #1}}

\newcommand\hookr\hookrightarrow
\newcommand\hookl\hookleftarrow

\newcommand\inv{{\rm inv}}
\newcommand\isom{\,\smash{\mathop{\hbox to 5mm{\rightarrowfill}}
    \limits^\sim}\,}
\newcommand\cf{{\it cf.\ }}

\newcommand\rs{{\rm rs}}

\newcommand\reg{{\rm reg}}
\newcommand\ani{{\rm ani}}

\newcommand\Ql{{\overline\QQ_\ell}}

\newcommand\RHom{{\rm RHom}}
\newcommand\resultant{\mathfrak R}
\newcommand\discrim{\mathfrak D}

\numberwithin{equation}{subsection}

\begin{document}
\title{Le lemme fondamental pour les alg\`ebres de Lie}
\author{Ng\^o Bao Ch\^au}
\address{\newline
Institute for Advanced Study, Einstein Drive, Princeton NJ 08540, USA.
D\'epartement de Math\'ematiques, Universit\'e Paris-Sud, 91405 Orsay,
France.} \email {{ngo@ias.edu} et {Bao-Chau.Ngo@math.u-psud.fr} }
\date{}
\maketitle
\let\languagename\relax


\section*[Intro]{Introduction}

Dans cet article, nous proposons une d\'emonstration pour des conjectures
de Langlands, Shelstad et Waldspurger plus connues sous le nom de lemme
fondamental pour les alg\`ebres de Lie et lemme fondamental
non standard. On se reporte \`a \ref{LS} et \`a \ref{Waldspurger} pour
plus de pr\'ecisions dans les \'enonc\'es suivants.

\begin{theorem}
Soient $k$ un corps fini \`a $q$ \'el\'ements, $\calO$ un anneau de
valuation discr\`ete complet de corps r\'esiduel $k$ et $F$ son
corps des fractions. Soit $G$ un sch\'ema en groupes r\'eductifs au-dessus de
$\calO$ dont l'ordre du groupe de Weyl n'est pas divisible par la
caract\'eristique de $k$. Soient $(\kappa,\rho_\kappa)$ une donn\'ee
endoscopique de $G$ au-dessus de $\calO$ et $H$ le sch\'ema en groupes
endoscopiques associ\'e.

On a l'\'egalit\'e entre la $\kappa$-int\'egrale orbitale et l'int\'egrale orbitale stable
$$\Delta_G(a){\bf O}_a^\kappa(1_{\frakg},{\rm d}t)=\Delta_H(a_H){\bf
SO}_{a_H}(1_{\frakh},{\rm d}t)
$$
associ\'ees aux classes de conjugaison stable semi-simples
r\'eguli\`eres $a$ et $a_h$  de  $\frakg(F)$ et $\frakh(F)$ qui se correspondent, aux fonctions caract\'eristiques $1_{\frakg}$ et $1_\frakh$ des compacts $\frakg(\calO)$ et $\frakh(\calO)$
dans $\frakg(F)$ et  $\frakh(F)$ et o\`u on a not\'e
$$\Delta_G(a)=q^{-{\rm val}(\discrim_G(a))/ 2} \ { et}\
\Delta_H(a_H)=q^{-{\rm val}(\discrim_H(a_H))/ 2}
$$
$\discrim_G$ et $\discrim_H$ \'etant les fonctions discriminant de $G$
et de $H$.
\end{theorem}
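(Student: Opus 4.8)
\medskip
\noindent\textbf{Esquisse de la strat\'egie de d\'emonstration.}
Le plan consiste \`a r\'ealiser les deux membres de l'\'egalit\'e comme comptages de points sur des fibres de Hitchin, puis \`a comparer g\'eom\'etriquement les fibrations de Hitchin de $G$ et de $H$. On se ram\`ene d'abord, suivant Waldspurger, au cas d'\'egale caract\'eristique $F=k(\!(\pi)\!)$, puis on globalise: on choisit une courbe projective lisse $X$ sur $k$ et un fibr\'e en droites $D$ de degr\'e assez grand, et on introduit le champ de Hitchin $\calM$ des $G$-fibr\'es de Higgs \`a p\^oles dans $D$, muni du morphisme caract\'eristique $f:\calM\to\calA$ vers l'espace des polyn\^omes caract\'eristiques. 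Sur l'ouvert anisotrope $\calA^\ani$, o\`u $f$ est propre, on dispose de l'action du champ de Picard $\calP$ des torseurs sous le sch\'ema en groupes des centralisateurs r\'eguliers, et l'observation de d\'epart est que, pour $a\in\calA^\ani(k)$ en position g\'en\'erale en un point ferm\'e de $X$, le cardinal pond\'er\'e de $\calM_a(k)$ est un produit d'int\'egrales orbitales locales, dont presque toutes valent $1$. Joint \`a un argument de densit\'e (on fait varier le point de ramification parmi suffisamment de places de $X$), ceci permettra de d\'eduire l'\'enonc\'e local de son analogue global.

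La deuxi\`eme \'etape est la traduction g\'eom\'etrique de l'endoscopie. La donn\'ee $(\kappa,\rho_\kappa)$ produit, via une dualit\'e de type Tate--Nakayama sur les rev\^etements cam\'eraux, un caract\`ere $\kappa$ du groupe des composantes $\pi_0(\calP_a)$, ainsi qu'un morphisme $\nu:\calA_H\to\calA$ comparant les bases de Hitchin de $H$ et de $G$. Je d\'ecomposerais alors le complexe $f_*\Ql$ sur $\calA^\ani$ suivant les caract\`eres de $\pi_0$, et j'affirmerais que sa partie $\kappa$-isotypique, restreinte \`a l'image de $\nu$, s'identifie --- \`a un d\'ecalage et une torsion de Tate pr\`es, pr\'ecis\'ement ceux que codent les facteurs $\discrim_G$ et $\discrim_H$ \`a travers la diff\'erence des dimensions des fibres de Hitchin --- \`a la partie stable du complexe analogue $f^H_*\Ql$ attach\'e \`a $H$. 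Au-dessus de l'ouvert o\`u les rev\^etements cam\'eraux sont \'etales, les fibres sont des vari\'et\'es ab\'eliennes (produits de jacobiennes de rev\^etements) et cette identification r\'esulte d'un calcul direct de cohomologie, \`a partir de la suite exacte reliant $\pi_0(\calP^G_a)$, $\pi_0(\calP^H_{a_H})$ et le groupe dual de $\kappa$.

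Le c\oe ur de la preuve, et l'obstacle principal, est l'extension de cette identification de l'ouvert lisse \`a $\calA^\ani$ tout entier. J'invoquerais pour cela le th\'eor\`eme de d\'ecomposition de Beilinson--Bernstein--Deligne--Gabber appliqu\'e \`a $f$ au-dessus de l'ouvert anisotrope, et je chercherais \`a d\'eterminer les supports des faisceaux pervers simples qui apparaissent dans la d\'ecomposition: c'est le \ogg th\'eor\`eme du support\fgg. L'outil est que $(\calM,\calA,\calP)$ est une fibration de Hitchin \emph{faiblement ab\'elienne} et \emph{$\delta$-r\'eguli\`ere}: polarisabilit\'e des fibres du champ de Picard, libert\'e de son action sur un gros ouvert, et surtout le fait que le lieu de $\calA^\ani$ o\`u l'invariant $\delta$ est $\ge n$ y est de codimension $\ge n$. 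Ces propri\'et\'es forcent tout support \`a \^etre $\calA^\ani$ tout entier --- ou, pour une partie $\kappa$ non triviale, exactement l'image du morphisme $\nu$ endoscopique associ\'e; une fois les supports ainsi contraints, l'isomorphisme g\'en\'erique entre les faisceaux pervers sous-jacents s'\'etend \`a tout $\calA^\ani$, d'o\`u l'isomorphisme global de complexes. On conclut en prenant les traces de $\Frob$, en revenant au comptage des $\calM_a(k)$, et en isolant l'int\'egrale orbitale locale par densit\'e. Le lemme fondamental non standard se traitera de fa\c con analogue, en comparant les fibrations de Hitchin attach\'ees aux deux syst\`emes de racines en pr\'esence.
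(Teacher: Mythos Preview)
Your sketch follows the same architecture as the paper: globalize via the Hitchin fibration, decompose $f^\ani_*\Ql$ under $\pi_0(\calP)$, prove a support theorem constraining the $\kappa$-part to the image of $\nu:\calA_H\to\calA$, verify the comparison on a dense open by point counting, and specialize back to the local statement via the product formula.

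There is however a real gap in the middle step. You assert $\delta$-r\'egularit\'e --- that the locus $\{\delta\ge n\}$ in $\calA^\ani$ has codimension $\ge n$ --- as an established fact, and use it to force every support in the $\kappa$-part to be all of $\tilde\calA_H$. In characteristic $p$ this is \emph{not} known unconditionally, and the paper says so explicitly. What is proved instead (\cf \ref{codimension}, via the local root-valuation strata of Goresky--Kottwitz--MacPherson) is only a conditional version: for each fixed $\delta$, the codimension inequality holds once $\deg(D)$ is sufficiently large relative to $\delta$. This leaves a closed ``bad'' locus $\tilde\calA_H^{\rm bad}$ where the support theorem \ref{support faible} says nothing. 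The paper's workaround is that $\delta_H^{\rm bad}(D)\to\infty$ as $\deg(D)\to\infty$, so any fixed local $a_H$ can be globalized with $D$ chosen large enough that the approximating global points lie in $\tilde\calA_H^{\rm good}=\tilde\calA_H^\ani-\tilde\calA_H^{\rm bad}$. One therefore proves geometric stabilization first only over $\tilde\calA_H^{\rm good}$, deduces the \emph{local} fundamental lemma from that partial result, and only afterwards --- reversing the local-global direction --- extends stabilization to all of $\tilde\calA_H^\ani$. The local statement is obtained before, not after, the full global one; your sketch has the logical order inverted.

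A smaller point: the generic comparison is not a pure abelian-variety calculation. For $a_H$ in the good open of $\calA_H$ (where the $H$-cameral cover is \'etale), the image $a\in\calA$ still meets the resultant divisor $\resultant_H^G$, generically transversally, and at those places the affine Springer fiber for $G$ is a one-dimensional infinite chain of $\bbP^1$'s, not a discrete set. The required local identity there is the rank-one endoscopic case --- essentially Labesse--Langlands for $\SL_2$ --- and the paper carries it out by hand in \ref{subsection : fibre simple} before it can complete the counting argument of \ref{subsection : A good}.
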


\begin{theorem}
Soient $G_1,G_2$ deux sch\'emas en groupes r\'eductifs sur $\calO$ ayant
des donn\'ees radicielles isog\`enes dont l'ordre du groupe de Weyl n'est pas
divisible par la caract\'eristique de $k$. Alors, on a l'\'egalit\'e suivante
entre les int\'egrales orbitales stables
$${\bf SO}_{a_1}(1_{\frakg_1},{\rm d}t)= {\bf SO}_{a_2}(1_{\frakg_2},{\rm d}t)$$
associ\'ees aux classes de conjugaison stable semi-simples
r\'eguli\`eres $a_1$ et $a_2$  de $\frakg_1(F)$ et $\frakg_2(F)$ qui se correspondent
et aux fonctions caract\'eristiques $1_{\frakg_1}$ et $1_{\frakg_2}$ des compacts
$\frakg_1(\calO)$ et $\frakg_2(\calO)$ dans $\frakg_1(F)$ et $\frakg_2(F)$.
\end{theorem}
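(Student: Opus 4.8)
Le plan est de globaliser le probl\`eme au moyen de la fibration de Hitchin, d'observer que les deux fibrations en pr\'esence vivent sur une m\^eme base et ont des champs de Picard isog\`enes, puis de conclure gr\^ace au th\'eor\`eme du support. On commencera par quelques r\'eductions : les arguments de Waldspurger sur le changement de caract\'eristique permettent de supposer $F$ d'\'egale caract\'eristique, disons $F=k(\!(\epsilon)\!)$ ; comme les deux membres se factorisent selon les facteurs quasi-simples et que le cas d'un facteur central est trivial, on peut de plus supposer $G_1$ et $G_2$ semi-simples \`a syst\`emes de racines irr\'eductibles, les exemples typiques \'etant $\SL_n$ et $\PGL_n$, ou encore $\mathfrak{sp}_{2n}$ et $\mathfrak{so}_{2n+1}$ ; on fixe enfin une courbe projective lisse g\'eom\'etriquement connexe $X$ sur $k$ et une place $v$ de degr\'e $1$ telle que $\calO\simeq\hat\calO_v$.

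Les donn\'ees radicielles de $G_1$ et $G_2$ \'etant isog\`enes, elles ont m\^eme groupe de Weyl $W$, et --- la caract\'eristique de $k$ ne divisant pas $|W|$ --- l'isog\'enie induit un isomorphisme $W$-\'equivariant entre $\frakt_1$ et $\frakt_2$, compatible \`a la stratification par les hyperplans de racines, proportionnelles de part et d'autre. En particulier le quotient adjoint $\frakc=\frakt/\!/W$ est commun aux deux groupes : choisissant un $\GG_m$-torseur $D$ sur $X$ de degr\'e assez grand, on dispose d'une unique base de Hitchin $\calA=H^0(X,\frakc_D)$, d'un unique rev\^etement cam\'erale universel de groupe $W$, donc d'un unique lieu discriminant et d'une unique stratification de $\calA$ par l'invariant $\delta$. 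On note $f_i\colon\calM_i\to\calA$ la partie anisotrope de la fibration de Hitchin pour $G_i$, $\calP_i\to\calA$ son champ de Picard, et $\calA^\heartsuit\subset\calA^\ani$ l'ouvert dense au-dessus duquel $\calM_{i,a}$ est un torseur sous $\calP_{i,a}$, dont la composante neutre est une vari\'et\'e ab\'elienne. Enfin l'isog\'enie de donn\'ees radicielles fournit une isog\'enie $J_1\to J_2$ entre centralisateurs r\'eguliers au-dessus de $\frakc$, d'o\`u un morphisme $\calP_1\to\calP_2$ au-dessus de $\calA$ qui est une isog\'enie sur les composantes neutres.

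Au-dessus de $\calA^\heartsuit$, et en restriction aux parties stables --- celles attach\'ees au caract\`ere trivial de $\pi_0(\calP_i)$ --- on en d\'eduit un isomorphisme $R^1f_{1,*}\Ql\simeq R^1f_{2,*}\Ql$ (une isog\'enie de vari\'et\'es ab\'eliennes induisant un isomorphisme sur le $H^1$ \`a coefficients dans $\Ql$), donc un isomorphisme entre les parties stables de $f_{1,*}\Ql$ et $f_{2,*}\Ql$ sur $\calA^\heartsuit$, compatible \`a l'action de Frobenius. Le th\'eor\`eme du support --- valable pour les deux fibrations, puisqu'il ne repose que sur les estim\'ees de codimension attach\'ees \`a l'invariant $\delta$, qui ne d\'ependent que de la base commune $\calA$ --- affirme que tout facteur pervers simple intervenant dans la partie stable de $f_{i,*}\Ql$ a pour support $\calA^\ani$ tout entier, et est donc l'extension interm\'ediaire de sa restriction \`a $\calA^\heartsuit$. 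L'isomorphisme pr\'ec\'edent s'\'etend par cons\'equent en un isomorphisme entre les parties stables de $f_{1,*}\Ql$ et $f_{2,*}\Ql$ sur $\calA^\ani$ tout entier.

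Il reste \`a redescendre au local. Pour $a\in\frakc(\calO)$ semi-simple r\'egulier fix\'e, on choisit, quitte \`a agrandir $D$ et $X$, un point $a_X\in\calA^\ani(k)$ co\"{\i}ncidant avec $a$ au voisinage de $v$ et de r\'eduction r\'eguli\`ere semi-simple en toute place distincte de $v$. Par la formule du produit, la trace de Frobenius sur la fibre en $a_X$ de la partie stable de $f_{i,*}\Ql$ s'\'ecrit, \`a une m\^eme constante explicite pr\`es --- ne d\'ependant que de la base commune $\calA$ et de l'invariant $\delta$ ---, comme le produit sur les places des int\'egrales orbitales stables locales de $1_{\frakg_i}$, toutes \'egales \`a $1$ en dehors de $v$ ; elle est donc proportionnelle \`a ${\bf SO}_a(1_{\frakg_i},{\rm d}t)$, le facteur de proportionnalit\'e \'etant ind\'ependant de $i$. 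L'isomorphisme obtenu \`a l'\'etape pr\'ec\'edente fournit alors l'\'egalit\'e ${\bf SO}_a(1_{\frakg_1},{\rm d}t)={\bf SO}_a(1_{\frakg_2},{\rm d}t)$ pour cet $a$ ; comme ces deux membres ne d\'ependent que d'un jet fini de $a$ en $v$, que l'on peut toujours r\'ealiser par un $a_X$ convenable, l'\'egalit\'e vaut pour tout $a$. La principale difficult\'e sera le th\'eor\`eme du support : c'est lui qui permet de propager la comparaison depuis l'ouvert $\calA^\heartsuit$, o\`u les fibres sont lisses et enti\`erement contr\^ol\'ees, jusqu'\`a tout le lieu anisotrope, l\`a m\^eme o\`u les fibres de Springer affines deviennent singuli\`eres et o\`u les deux c\^ot\'es diff\`erent r\'eellement comme sch\'emas ; il faudra par ailleurs s'assurer que l'isog\'enie $\calP_1\to\calP_2$ est suffisamment ma\^{\i}tris\'ee --- noyau et conoyau finis d'ordre premier \`a la caract\'eristique de $k$, compatibilit\'e des composantes connexes et de l'action de $\GG_m$ --- pour que le passage au $H^1$ \`a coefficients dans $\Ql$ soit bien un isomorphisme canonique, fonctoriel en $a$.
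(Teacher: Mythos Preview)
Your plan is the right one and matches the paper's strategy, but two of your steps are over-simplified in ways that actually fail.

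First, the support theorem as proved in this paper does \emph{not} say that every simple perverse constituent of the stable part of $f_{i,*}\Ql$ has support equal to $\calA^\ani$. What is proved (see \ref{support faible}) is conditional on the inequality ${\rm codim}(Z)\geq\delta_Z$, and that inequality is only established for $Z$ not contained in a closed ``bad'' locus $\calA^{\rm bad}$ (cf.\ \ref{delta bad}, \ref{codimension}); the full $\delta$-regularity you are implicitly invoking is known in characteristic~$0$ but not in characteristic~$p$. So from the comparison over $\calA^\diamondsuit$ (your $\calA^\heartsuit$---note the paper reserves $\calA^\heartsuit$ for the much larger open where the cameral curve is merely reduced) you only get the isomorphism over $\calA^{\rm good}=\calA^\ani-\calA^{\rm bad}$. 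The paper then bootstraps: from the result on $\calA^{\rm good}$ it deduces the \emph{local} identity (your last step), and only then, reversing the local-global argument, extends the sheaf-theoretic statement to all of $\calA^\ani$.

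Second, your globalisation is impossible as stated. You ask for $a_X\in\calA^\ani(k)$ congruent to $a$ near $v$ and with \emph{regular semisimple reduction at every place $\neq v$}, i.e.\ $a_X(\bar X-\{v\})$ avoids the discriminant entirely. But $a_X^*\discrim_{G,D}$ is a divisor of fixed degree $\sharp\,\Phi\cdot\deg(D)$, while its contribution at $v$ is the fixed integer $d_v(a)$; so for $\deg(D)$ large the section must meet $\discrim_{G,D}$ at many other places. The correct move (as in \ref{lemme fondamental}) is to require only that $a_X(\bar X-\{v\})$ meet the discriminant \emph{transversally}; at such places the local Springer fibers are $0$-dimensional and the local stable counts are equal for $G_1$ and $G_2$ (both quotients $[\calM_{v'}(a)/\calP_{v'}(J_a)]$ are trivial). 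Cancelling these common factors in the product formula isolates the place $v$ and yields the desired local identity over a large enough extension $k'$; one then descends to $k$ via \ref{k'->k}.
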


Nous d\'emontrons ces th\'eor\`emes dans le cas d'\'egale
caract\'eristique. D'apr\`es Waldspurger, le cas d'in\'egales
caract\'eristiques s'en d\'eduit \cf \cite{W}.

Les applications principales du lemme fondamental se trouvent dans la
r\'ealisation de certains cas particuliers du principe de fonctorialit\'e de
Lang\-lands via la comparaison de formules des traces et dans la
construction de repr\'esentations galoisiennes attach\'ees aux formes
automorphes par le biais du calcul de cohomologie des vari\'et\'es de
Shimura. On se r\'ef\`ere aux travaux d'Arthur \cite{A} pour les applications
\`a la comparaison de formules des traces et \`a l'article de Kottwitz
\cite{K-Shim} ainsi qu'au livre en pr\'eparation \'edit\'e par Harris pour les
applications aux vari\'et\'es de Shimura.

\subsubsection*{Cas connus et r\'eductions}
Le lemme fondamental a \'et\'e \'etabli dans un grand nombre de cas
particuliers. Son analogue archim\'edien a \'et\'e enti\`erement r\'esolu par
Shelstad dans \cite{Sh}. Ce cas a incit\'e Lang\-lands et Shelstad \`a
formuler leur conjecture pour un corps non-archim\'edien. Le cas du groupe
$\SL(2)$ a \'et\'e trait\'e par Labesse et Langlands dans \cite{LL}. Le cas du
groupe unitaire \`a trois variables a \'et\'e r\'esolu par Rogawski dans
\cite{Rog}. Les cas assimil\'es aux $\Sp(4)$ et $\GL(4)$ tordu ont \'et\'e
r\'esolus par Hales, Schr\"{o}der et Weissauer par des calculs explicites \cf
\cite{Hales-Sp(4)}, \cite{Sch} et \cite{We}. R\'ecemment, Whitehouse a
poursuivi ces calculs pour d\'emontrer le lemme fondamental pond\'er\'e
tordu dans ce cas \cf \cite{Wh}.

Le lemme fondamental pour le changement de base stable a \'et\'e \'etabli
par Clozel \cite{C} et Labesse \cite{Lab} \`a partir du cas de l'unit\'e de
l'alg\`ebre de Hecke d\'emontr\'e par Kottwitz \cite{K-unit}. Auparavant, le
cas $\GL(2)$ a \'et\'e \'etabli par Langlands \cite{L-GL(2)} et le cas $\GL(3)$
par Kottwitz \cite{K-GL(3)}.

Un autre cas important est le cas $\SL(n)$ r\'esolu par Waldspurger dans
\cite{W-SL}. Le cas $\SL(3)$ avec un tore elliptique a \'et\'e \'etabli
auparavant par Kottwitz \cf \cite{K-SL(3)} et le cas $\SL(n)$ avec un tore
elliptique par Kazhdan \cf \cite{Ka}.

R\'ecemment, avec Laumon, nous avons d\'emontr\'e le lemme fondamental
pour les alg\`ebres de Lie des groupes unitaires dans le cas d'\'egale
caract\'eristique. La m\'ethode que nous utilisons est g\'eom\'etrique et ne
s'applique qu'aux corps locaux d'\'egale caract\'eristique. Comme nous avons
d\'ej\`a mentionn\'e, le cas d'in\'egales caract\'eristiques s'en d\'eduit gr\^ace aux travaux
de Waldspurger  \cite{W}. Le changement de caract\'eristiques a aussi \'et\'e \'etabli par
Cluckers et Loeser \cite{CL2} dans un cadre plus g\'en\'eral mais moins pr\'ecis sur la borne
de la caract\'eristique r\'esiduelle en utilisant la logique.

Dans une s\'erie de travaux comprenant notamment \cite{W-Trans} et
\cite{W-tordu}, Waldspurger a d\'emontr\'e que le lemme fondamental pour
les groupes ainsi que le transfert se d\'eduit du lemme fondamental ordinaire
pour les alg\`ebres de Lie. De m\^eme, le lemme fondamental tordu se d\'eduit
de la conjonction du lemme ordinaire pour les alg\`ebres de Lie et de ce
qu'il appelle le lemme non standard. Dans la suite de cet article, on se restreint
au lemme fondamental ordinaire
pour les alg\`ebres de Lie et sa variante non standard sur un
corps local d'\'egale caract\'eristique.

\subsubsection*{Approche g\'eom\'etrique locale}
Kazhdan et Lusztig ont introduit dans \cite{KL} les fibres de Springer affines
qui sont des incarnations g\'eom\'etriques des int\'egrales orbitales. Ce
travail fournit des renseignements de base sur la g\'eom\'etrie
des fibres de Springer affines que nous rappellerons dans le chapitre
\ref{section : Fibres de Springer affines} du pr\'esent article.

Dans l'annexe \`a \cite{KL}, Bernstein et Kazhdan ont construit une fibre de
Springer affine pour le groupe $\Sp(6)$ dont le nombre de points n'est pas
un polyn\^ome en $q$. En fait, le motif associ\'e \`a cette fibre de Springer
affine contient le motif d'une courbe hyperelliptique. Cet exemple
sugg\`ere qu'il est peu probable qu'on puisse obtenir une formule explicite
pour les int\'egrales orbitales.

L'interpr\'etation des $\kappa$-int\'egrales orbitales en termes des
quotients des fibres de Springer affines a \'et\'e \'etablie dans l'article de
Goresky, Kottwitz et MacPherson \cite{GKM}. Ils ont aussi introduit dans
\cite{GKM} l'usage de la cohomologie \'equivariante dans l'\'etude des fibres
de Springer affines. Cette strat\'egie est tr\`es adapt\'ee au cas particulier
des \'el\'ements qui appartiennent \`a un tore non ramifi\'e car on dispose
dans ce cas d'une action d'un gros tore sur la fibre de Springer affine n'ayant
que des points fixes isol\'es. Ils ont aussi d\'ecouvert une relation
remarquable entre la cohomologie \'equivariante d'une fibre de Springer
affine pour $G$ et la fibre correspondante pour le groupe endoscopique
$H$ dans ce cadre non ramifi\'e. Cette relation d\'epend en plus d'une conjecture
de puret\'e de la cohomologie de ces fibres de Springer affines. Cette
conjecture a \'et\'e v\'erifi\'ee pour les \'el\'ements ayant des valuations
radicielles \'egales dans \cite{GKM-equivaluation}.

Dans \cite{L} et \cite{L-LFU}, Laumon a introduit une m\'ethode de
d\'eformation des fibres de Springer dans le cas des groupes unitaires
fond\'ee sur la th\'eorie des d\'eformation des courbes planes. Sa strat\'egie
consiste \`a introduire une courbe plane de genre g\'eom\'etrique nul
ayant une singularit\'e prescrite par la situation locale et ensuite \`a d\'eformer
cette courbe plane. Il a aussi remarqu\'e que dans le cas unitaire, il existe
un tore de dimension un agissant sur les fibres de Springer affine de $U(n)$
associ\'ees \`a une classe stable provenant de $U(n_1)\times U(n_2)$ dont la
vari\'et\'e des points fixes est la fibre de Springer affine correspondant de
$U(n_1)\times U(n_2)$. En calculant la cohomologie \'equivariante en
famille, il a pu d\'emontrer le lemme fondamental
dans le cas unitaire mais pour les \'el\'ements \'eventuellement ramifi\'es.
Sa d\'emonstration d\'epend de nouveau de la conjecture de puret\'e des
fibres de Springer affines formul\'ee par  Goresky, Kottwitz et MacPherson.

Cette conjecture de puret\'e m'avait sembl\'e l'obstacle principal de
l'approche g\'eom\'etrique locale. Elle joue un r\^ole essentiel pour faire
d\'eg\'en\'erer certaines suites spectrales et permet d'appliquer le
th\'eor\`eme de localisation d'Atiyah-Borel-Segal.

Il existe en fait un autre obstacle au moins aussi s\'erieux \`a l'usage de la
cohomologie \'equivariante pour les fibres de Springer affines g\'en\'erales. Pour des
\'el\'ements tr\`es ramifi\'es des groupes autres qu'unitaires, il n'y a pas
d'action torique sur la fibre de Springer affine correspondante. Dans ce cas,
m\^eme si on dispose de la conjecture de puret\'e, il n'est pas clair que les
strat\'egies de \cite{GKM} et \cite{L-LFU} peuvent s'appliquer.

\subsubsection*{Approche g\'eom\'etrique globale} Dans \cite{H}, Hitchin a
d\'emontr\'e que le fibr\'e cotangent de l'espace de module des fibr\'es
stables sur une surface de Riemann compacte est un syst\`eme hamiltonien
compl\`etement int\'egrable. Pour cela, il interpr\`ete ce fibr\'e
cotangent comme l'espace de module des fibr\'es principaux munis d'un champ de Higgs.
Les hamiltoniens sont alors donn\'es par les coefficients du polyn\^ome
caract\'eristique du champ de Higgs. Il d\'efinit ainsi la fameuse fibration de
Hitchin $f:\calM\rightarrow\calA$ o\`u $\calM$ est le fibr\'e cotangent
ci-dessus, o\`u $\calA$ est l'espace affine classifiant les polyn\^omes
caract\'eristiques \`a coefficients dans l'espace des sections globales de puissances convenables du fibr\'e canonique de $X$ et o\`u $f$ est un morphisme dont la fibre g\'en\'erique
est essentiellement une vari\'et\'e ab\'elienne.

De notre point de vue, les fibr\'es de Hitchin sont des analogues globaux des
fibres de Springer affines. Il est par ailleurs important dans notre approche
de prendre les fibr\'es de Higgs non \`a valeurs dans le fibr\'e canonique
mais \`a valeur dans un fibr\'e inversible arbitraire de degr\'e assez grand. Dans
cette g\'en\'eralit\'e, l'espace de module des fibr\'es de Higgs $\calM$ n'est
plus muni d'une forme symplectique mais dispose toujours d'une fibration de
Hitchin $f:\calM\rightarrow \calA$.

Nous avons observ\'e dans \cite{N} qu'un comptage formel de points de
$\calM$ \`a coefficients dans un corps fini donne une expression quasiment
identique au c\^ot\'e g\'eom\'etrique de la formule des traces pour
l'alg\`ebre de Lie. Nous nous sommes propos\'e dans \cite{N}
d'interpr\'eter le processus de stabilisation de la formule des traces de
Langlands et Kottwitz en terme de cohomologie $\ell$-adique de la
fibration de Hitchin. Cette interpr\'etation conduit \`a une formulation d'une variante globale
du lemme fondamental en termes de cohomologie $\ell$-adique de la
fibration de Hitchin \cf \cite{Dat} et \cite{N-ICM}. L'interpr\'etation
g\'eom\'etrique du processus de stabilisation avec la fibration de Hitchin
ainsi que la formulation de cette variante globale du lemme fondamental est
plus complexe que l'analogue local avec les fibres de Springer affines. En
contrepartie, on dispose d'une g\'eom\'etrie plus riche.

L'interpr\'etation g\'eom\'etrique du processus de stabilisation est fond\'ee
sur l'action d'un champ de Picard $\calP\rightarrow \calA$ sur $\calM$ qui
est en quelque sorte le champ des sym\'etries naturelles de la fibration de
Hitchin. Le comptage de points avec l'aide de $\calP$ fait dans le
paragraphe 9 de \cite{N} est repris de fa{\c c}on plus syst\'ematique dans
le dernier chapitre \ref{section : comptage} du pr\'esent article. La
construction de $\calP$ est fond\'ee sur celle du centralisateur r\'egulier
que nous rappelons dans le chapitre \ref{section : centralisateur regulier}.
L'observation qui joue un r\^ole cl\'e dans le comptage est que pour tout
$a\in\calM(k)$, le quotient $[\calM_a/\calP_a]$ s'exprime en un produit de
quotients locaux des fibres de Springer affines $\calM_v(a)$ par leurs
groupes de sym\'etries naturelles $\calP_v(J_a)$ \cf \ref{produit} et
\cite[4.6]{N}. Cette formule de produit appara\^it  en filigrane
tout le long de l'article.

Le champ alg\'ebrique $\calM$ n'est pas de type fini. Il existe n\'eanmoins
un ouvert anisotrope $\calA^\ani$ de $\calA$, construit dans le chapitre
\ref{section : anisotrope}, au-dessus duquel $f^\ani:\calM^\ani \rightarrow
\calA^\ani$ est un morphisme propre. On sait par ailleurs que $\calM^\ani$
est lisse sur le corps de base $k$. D'apr\`es Deligne \cite{Weil2}, l'image
directe d\'eriv\'ee $f^\ani_* \Ql$ est un complexe pur c'est-\`a-dire que les
faisceaux pervers de cohomologie
$$K^n=\,^p\rmH^n(f^\ani_* \Ql)$$
sont des faisceaux pervers purs. D'apr\`es le th\'eor\`eme de
d\'ecomposition, ceux-ci deviennent semi-simples apr\`es le changement de
base \`a $\calA^\ani\otimes_k \bar k$. En vue de d\'emontrer le lemme fondamental,
il est essentiel de comprendre les facteurs g\'eom\'etriquement simples dans cette
d\'ecomposition.

L'action de $\calP$ sur $\calM$ induit une action de $\calP^\ani$ sur les
faisceaux pervers $K^n$. Ceci permet de d\'ecomposer $K^{n}$ en une
somme directe
$$K^n=\bigoplus_{[\kappa]} K^n_{[\kappa]}$$
o\`u $[\kappa]$ parcourt l'ensemble des classes de conjugaison
semi-simples dans le groupe dual $\hat G$ et seul un nombre fini de
$[\kappa]$ contribue un facteur $K^n_{[\kappa]}$ non nul. On note
$K^n_{\rm st}$ le facteur direct correspondant \`a $\kappa=1$. L'apparition
du groupe dual ici r\'esulte du calcul du faisceau des composantes connexes
des fibres de $\calP$ \`a la Tate-Nakayama qui a
\'et\'e entam\'ee dans \cite[6]{N} et est compl\'et\'ee dans les paragraphes
\ref{subsection : pi_0(P_a)} et \ref{subsection : description pi_0(P)} du
pr\'esent article. Dans \cite{N} et \cite{N-ICM}, nous avons montr\'e que cette d\'ecomposition correspond exactement \`a la d\'ecomposition endoscopique du c\^ot\'e g\'eom\'etrique de la formule des traces qui a \'et\'e \'etablie par Langlands et Kottwitz \cf \cite{Langlands} et \cite{K-EST}.

Fixons un \'el\'ement $\kappa$ dans la classe de conjugaison $[\kappa]$. Il
n'y a qu'un nombre fini de groupes endoscopiques non ramifi\'es $H$
associ\'es \`a $\kappa$. Si $H$ est l'un de ceux-ci, on a un morphisme
$\calA_H \rightarrow \calA$ qui au-dessus de $\calA^\ani$ est un morphisme
fini non ramifi\'e \cf \cite[7.2]{N}. En gros, l'interpr\'etation g\'eom\'etrique
de la variante globale du lemme fondamental consiste en la comparaison
entre le facteur $K^n_{[\kappa]}$ et le facteur stable $K^n_{H,\rm st}$
dans la d\'ecomposition de la cohomologie de la fibration de Hitchin des
groupes endoscopiques $H$ associ\'es \`a $\kappa$. On se reporte \`a \ref{stabilisation sur tilde
A} pour un \'enonc\'e pr\'ecis de la stabilisation g\'eom\'etrique. Le lemme
fondamental de Langlands-Shelstad est une cons\'equence de
\ref{stabilisation sur tilde A}.

Dans \cite{N}, on a d\'emontr\'e que $K^n_{[\kappa]}$ est support\'e par la
r\'eunion des images des morphismes $\calA_H\rightarrow \calA$. Comme on a vu dans \cite{N} et \cite{LN}, cet \'enonc\'e peut remplacer la conjecture de puret\'e de Goresky, Kottwitz
et MacPherson dans le contexte de la fibration de Hitchin. Par ailleurs, il est
tr\`es tentant de conjecturer que tous les facteurs simples de
$K^n_{[\kappa]}$ ont comme support l'image de l'un des morphismes
$\calA_H\rightarrow \calA$. Remarquons que le th\'eor\`eme de
stabilisation g\'eom\'etrique se d\'eduit de cette conjecture du support car en effet
il n'est pas difficile de l'\'etablir sur un ouvert dense de $\calA_H$. Sur un ouvert assez petit, on peut utiliser un calcul de cohomologie \'equivariante comme dans \cite{LN} ou un comptage de points comme dans cet article pour \'etablir la stabilisation.

Dans le cas unitaire, Laumon et moi avons utilis\'e la suite exacte d'Atiyah-Borel-Segal en
cohomologie \'equivariante pour d\'emontrer une variante ad hoc de
l'\'enonc\'e de support ci-dessus. Mes tentatives ult\'erieures de g\'en\'eraliser cette
m\'ethode \'equivariante \`a d'autres groupes se sont heurt\'ees \`a
l'absence de l'action torique dans certaines fibres de Hitchin associ\'ees aux
\'el\'ements tr\`es ramifi\'es.

Dans ce travail, on d\'emontre pour tous les groupes une forme faible de la
conjecture du support \ref{support faible}. En fait, on d\'emontre un th\'eor\`eme de support g\'en\'eral \ref{support general kappa} qui implique dans la situation particuli\`ere de la fibration de Hitchin l'\'enonc\'e \ref{support faible}. Pour d\'emontrer la conjecture de support dans la forme forte, il reste \`a v\'erifier une condition de $\delta$-r\'egularit\'e \ref{delta regulier}. Pour la fibration de Hitchin, cette $\delta$-r\'egularit\'e a \'et\'e v\'erifi\'ee en caract\'eristique z\'ero mais pas en caract\'eristique $p$. En caract\'eristique $p$, on d\'emontre un \'enonc\'e plus faible \ref{codimension} qui suffit n\'eanmoins pour d\'eduire le lemme fondamental de \ref{support faible}. En renversant l'argument local-global, le lemme fondamental implique le th\'eor\`eme de stabilisation g\'eom\'etrique \ref{stabilisation sur tilde A}.

Passons maintenant en revue l'organisation de l'article. Le premier chapitre contient l'\'enonc\'e du lemme fondamental pour les alg\`ebres de Lie ainsi que sa variante non standard. Le coeur de l'article est l'avant-dernier chapitre \ref{support} o\`u on d\'emontre le th\'eor\`eme du support. Le dernier chapitre \ref{section : comptage} contient l'argument de comptage qui permet de d\'eduire le lemme fondamental \`a partir du th\'eor\`eme du support. Les autres chapitres sont de nature pr\'eparatoire. Le chapitre \ref{section : Fibres de Springer affines} contient une \'etude de la g\'eom\'etrie des fibres de Springer affines. Le chapitre suivant \ref{section : Fibration de Hitchin} contient une \'etude parall\`ele de la g\'eom\'etrie de la fibreation de Hitchin. Notre outil favori dans ces \'etudes est l'action des sym\'etries naturelles fond\'ee sur la construction du centralisateur r\'egulier rappel\'ee dans le chapitre \ref{section : centralisateur regulier}. Dans le chapitre \ref{section : stratification}, on d\'efinit diverses stratifications de la base de Hitchin. Dans le chapitre suivant \ref{section : anisotrope}, on d\'efinit l'ouvert anisotrope de la base de Hitchin au-dessus du quel la fibration a toutes les bonnes propri\'et\'es pour qu'on puisse formuler le th\'eor\`eme de stabilisation g\'eom\'etrique \ref{stabilisation sur tilde A} qui sera d\'emontr\'e dans les deux derniers chapitres \ref{support} et \ref{section : comptage}. Dans l'appendice \ref{appendice GM}, nous rappelons l'argument de
comptage de dimension et dualit\'e de Poincar\'e du \`a Goresky et MacPherson. Cet argument est utilis\'e dans la d\'emonstration du th\'eor\`eme du support. Une table des mati\`eres d\'etaill\'ee est ins\'er\'ee \`a la fin de l'article. Le lecteur pourra trouver au d\'ebut de chaque chapitre un r\'esum\'e de son contenu et des notations utilis\'ees.

\begin{small}
\subsubsection*{Remerciements}
Sans l'aide et l'encouragement des math\'ematiciens nomm\'es ci-dessous, ce
programme n'aurait probablement pas abouti et n'aurait probablement
m\^eme pas eu lieu. Je tiens \`a leur exprimer toute ma reconnaissance. R.
Kottwitz et G. Laumon qui m'ont appris la th\'eorie de l'endoscopie et la g\'eom\'etrie alg\'ebrique, n'ont jamais cess\'e de m'aider avec beaucoup de g\'en\'erosit\'e. P. Deligne et V. Drinfeld ont
relu attentivement certaines parties du manuscrit. Leurs
nombreux commentaires m'ont permis de corriger quelques erreurs et am\'eliorer certains arguments. L'argument de dualit\'e de Poincar\'e et de
comptage de dimension que m'a expliqu\'e Goresky a jou\'e un r\^ole
catalyseur de cet article. Il est \'evident que la lecture de l'article de
Hitchin \cite{H} a jou\'e un r\^ole dans la conception de ce programme. Il
en a \'et\'e de m\^eme des articles de Faltings \cite{Fa}, de Donagi et Gaitsgory
\cite{DG} et de Rapoport \cite{Ra}. Les conversations que j'ai eues avec M.
Harris sur le lemme non standard ont renforc\'e ma conviction sur la
conjecture du support. M. Raynaud a eu la gentillesse de r\'epondre \`a
certaines de mes question techniques. Je voudrais remercier J. Arthur,
J.-P. Labesse, L. Lafforgue , R. Langlands, C. Moeglin, H. Saito et J.-L.
Waldspurger de m'avoir encourag\'e dans ce long marche \`a la poursuite du lemme. Je dis
un merci chaleureux aux math\'ematiciens qui ont particip\'e activement
aux s\'eminaires sur l'endoscopie et le lemme fondamental que j'ai
contribu\'e \`a organiser \`a Paris-Nord et Bures au printemps 2003 et \`a
Princeton aux automnes 2006 et 2007 parmi lesquels P.-H. Chaudouard, J.-F.
Dat, L. Fargues, A. Genestier, A. Ichino, V. Lafforgue, S. Morel, Nguyen Chu
Gia Vuong, Ngo Dac Tuan, S.W. Shin, D. Whitehouse et Zhiwei Yun.

J'exprime ma gratitude \`a l'I.H.E.S. \`a Bures-sur-Yvettes pour un s\'ejour tr\`es agr\'eable en 2003 o\`u ce projet a \'et\'e concu. Il a \'et\'e men\'e \`a son terme durant mes s\'ejours en automne 2006 et pendant l'ann\'ee universitaire 2007-2008 \`a l'Institute for Advanced Study \`a Princeton qui m'a offert des conditions de travail id\'eales. Pendant mes s\'ejours \`a Princeton, j'ai b\'en\'efici\'e des soutiens financiers de l'AMIAS en 2006, de la fondation Charles Simonyi et ainsi de la NSF \`a travers le contrat DMS-0635607 en 2007-2008.
\end{small}

\section{Conjectures de Langlands-Shelstad et Waldspurger}

Dans ce chapitre, on rappelle l'\'enonc\'e du lemme fondamental pour les alg\`ebres de Lie conjectur\'e par Langlands et Shelstad \cf \ref{LS} ainsi que la variante non standard conjectur\'ee par Waldspurger \cf \ref{Waldspurger}. Dans le dernier paragraphe de ce chapitre, nous rappelons o\`u intervient le lemme fondamental de Langlands-Shelstad dans le processus de stabilisation du c\^ot\'e g\'eom\'etrique la formule des traces. Notre \'etude de la fibration de Hitchin qui sera faite ult\'erieurement dans cet article est directement inspir\'e de ce processus de stabilisation. Nous nous effor\c cons de maintenir l'exposition de ce chapitre dans un langage aussi \'el\'ementaire que possible.

\subsection{Morphisme de Chevalley}
\label{subsection : Chevalley}

Soient $k$ un corps et $\bbG$ un groupe r\'eductif lisse connexe et d\'eploy\'e sur $k$. On fixera un tore maximal d\'eploy\'e $\bbT$ et un sous-groupe de Borel $\bbB$ contenant $\bbT$. Soient $N_G(\bbT)$ le normalisateur de $\bbT$ et $\bbW=N_G(\bbT)/\bbT$ le groupe de Weyl. On supposera que la
caract\'eristique du corps $k$ ne divise pas l'ordre du
groupe de Weyl. On notera $\bbg$ l'alg\`ebre de Lie de $\bbG$ et $k[\bbg]$
l'alg\`ebre des fonctions r\'eguli\`eres sur $\bbg$. De m\^eme, on notera
$\bbt=\Spec(k[\bbt])$ l'alg\`ebre de Lie de $\bbT$. Soit $r$ le rang de
$\bbG$, qui par d\'efinition est la dimension du tore maximal $\bbT$.

Le groupe $\bbG$ agit sur son alg\`ebre de Lie par l'action adjointe. L'alg\`ebre des fonctions $\bbG$-invariantes sur $\bbg$ s'identifie par la restriction de $\bbg$ \`a $\bbt$ \`a l'alg\`ebre des fonctions
$\bbW$-invariantes sur $\bbt$. De plus, d'apr\`es Shephard, Todd \cite{S-T} et Chevalley \cite{Cheval} celle-ci est une alg\`ebre de polyn\^omes \cf \cite[5.5]{Bourbaki}.

\begin{theoreme}\label{Chevalley}
Par restriction de $\bbg$ \`a $\bbt$, on a un isomorphisme d'alg\`ebres
$k[\bbg]^\bbG =k[\bbt]^\bbW$. De plus, il existe des fonctions
homog\`enes $a_1,\ldots,a_r\in k[\bbg]$ de degr\'es $e_1,\ldots,e_r$ telles
que $k[\bbg]^\bbG$ soit l'alg\`ebre des polyn\^omes de variables
$a_1,\ldots,a_r$.
\end{theoreme}

Les entiers $e_1,\ldots,e_r$ rang\'es dans l'ordre croissants sont canoniquement d\'efinis. De plus, les entiers $e_{1}-1,\ldots,e_{r}-1$ sont des exposants du syst\`eme de racines d'apr\`es Kostant \cite{Kos}. Pour les groupes classiques, il est possible de construire explicitement des polyn\^omes invariants $a_i$ \`a l'aide de l'alg\`ebre lin\'eaire \cf \cite{H} et \cite{compagnon}.

On notera $\bbc=\Spec(k[\bbt]^\bbW)=\Spec(k[\bbg]^\bbG)$. D'apr\`es le
th\'eor\`eme ci-dessus, c'est un espace affine sur le corps $k$ de dimension
$r$ et de coordonn\'ees $a_1,\ldots, a_r$. Il est de plus muni d'une action
de $\GG_m$ donn\'ee par
$$
t(a_1,\ldots,a_r)=(t^{e_1}a_1,\ldots,t^{e_r}a_r)
$$
qu'on appellera l'action par les exposants.

L'inclusion $k[\bbt]^\bbW\subset k[\bbt]$ d\'efinit un morphisme
$\pi:\bbt\rightarrow\bbc$. C'est un morphisme fini et plat qui r\'ealise
$\bbc$ comme le quotient au sens des invariants de $\bbt$ par l'action de
$\bbW$. De plus, il existe un ouvert non vide $\bbc^\rs$ de $\bbc$
au-dessus duquel $\pi$ est un morphisme fini \'etale galoisien de groupe de
Galois $\bbW$. Ce morphisme est compatible avec l'action de $\GG_m$ par
homoth\'etie sur $\bbt$ et l'action de $\GG_m$ sur $\bbc$ par les exposants.

On notera $ \chi:\bbg\rta\bbc$ le morphisme caract\'eristique de Chevalley
qui se d\'eduit de l'inclusion d'alg\`ebres $k[\bbg]^\bbG\subset k[\bbg]$. Ce
morphisme est compatible avec l'action de $\GG_m$ par homoth\'etie sur
$\bbg$ et l'action de $\GG_m$ sur $\bbc$ par les exposants. Par
analogie avec le cas des matrices, on appellera $\chi(x)$ le polyn\^ome
caract\'eristique de $x$ et $\bbc$ l'espace des polyn\^omes
caract\'eristiques.

En plus de la propri\'et\'e de $\GG_m$-\'equivariance, $\chi$ est $\bbG$-invariant
par construction. Ceci donne naissance \`a des morphismes entre
champs alg\'e\-briques
$$[\chi]:[\bbg/\bbG]\rta \bbc\ \mbox{ et }\ [\chi/\GG_m]:
[\bbg/\bbG\times\GG_m]\rta [\bbc/\GG_m].$$

Pour tout $x\in\bbg$, notons $I_x$ son centralisateur. Quand $x$ varie, les
groupes $I_x$ s'organisent en un sch\'ema en groupes $I$ au-dessus de
$\bbg$. Pour le point g\'en\'erique $x$ de $\bbg$, $\dim(I_x)=r$ de sorte
que les points $x\in\bbg$ tels que $\dim(I_x)=r$ forment un ouvert de
$\bbg$ en vertu du th\'eor\`eme de semi-continuit\'e de la dimension des
fibres. On notera $\bbg^\reg$ cet ouvert. Rappelons le r\'esultat suivant du
\`a Kostant \cf \cite{Kos}.

\begin{lemme}
La restriction de $\chi$ \`a $\bbg^\reg$ est un morphisme lisse. Ses fibres
g\'eom\'etriques sont des espaces homog\`enes sous l'action de $G$.
\end{lemme}

Puisque $\bbg^\reg$ est lisse sur $k$, la lissit\'e de la restriction de $\chi$
\`a $\bbg^\reg$ est \'equivalente \`a l'existence des sections locales de
$\chi$ passant par n'importe quel point de $\bbg^\reg$. En fait, Kostant
a construit une section globale qu'on va maintenant rappeler.

\subsection{Section de Kostant}\label{Section de Kostant}
Fixons un \'epinglage de $\GG$ qui consiste en le choix d'un tore maximal
d\'eploy\'e $\bbT$, d'un sous-groupe de Borel $\bbB$ contenant $\bbT$ de
radical unipotent $\bbU$ et d'un vecteur $\bbx_{+}\in \Lie(\bbU)$ de la forme
$\bbx_+=\sum_{\alpha\in \Delta}\bbx_\alpha$ o\`u $\Delta$ est l'ensemble
des racines simples et o\`u $\bbx_\alpha$ est un vecteur non nul du
sous-espace propre $\Lie(\bbU)_\alpha$ de $\Lie(\bbU)$ correspondant \`a
la valeur propre $\alpha$ pour l'action de $\bbT$.

Il existe alors un unique $\mathfrak{sl}_2$-triplet $(h,\bbx_+,\bbx_-)$ dans
$\bbg$ avec l'\'el\'ement semi-simple $h\in \bbt\cap \Lie(\bbG^{\rm der})$.
Rappelons l'\'enonc\'e suivant du \`a Kostant \cite[th\'eor\`eme 0.10]{Kos}.

\begin{lemme}\label{Kostant}
Soit $\bbg^{{\bbx}_+}$ le centralisateur de ${\bbx}_+$ dans ${\bbg}$. La
restriction du morphisme caract\'eristique $\chi:\bbg\rta\bbc$ au
sous-espace affine ${\bbx}_-+\bbg^{{\bbx}_+}$ du vectoriel $\bbg$ est un
isomorphisme.
\end{lemme}

L'inverse de cet isomorphisme
\begin{equation}
\epsilon:\bbc\rta{\bbx}_-+\bbg^{{\bbx}_+}\hookrightarrow \bbg
\end{equation}
d\'efinit une section du morphisme de Chevalley $\chi:\bbg\rightarrow\bbc$
appel\'ee la section de Kostant. En fait, Kostant a construit toute une
famille de sections du morphisme de Chevalley. Pour les groupes classiques,
il est \'egalement possible de construire des sections explicites $\bbc\rightarrow \bbg$
\`a l'aide de l'alg\`ebre lin\'eaire sans utiliser l'\'epinglage \cf
\cite{compagnon}. Ces sections, qui g\'en\'eralisent la matrice compagnon
dans le cas lin\'eaire, sont probablement plus adapt\'ees aux calculs
explicites des fibres de Springer affines et des fibres de Hitchin.

\subsection{Torsion ext\'erieure}\label{Torsion exterieure}

Pour les applications arithm\'etiques, il est n\'ecessaire de consid\'erer les
formes quasi-d\'eploy\'ees du groupe $\GG$. Fixons un \'epinglage
$(\bbT,\bbB,\bbx_+)$ de $\GG$ comme dans \ref{Section de Kostant}.
Notons $\Out(\GG)$ le groupe des automorphismes de $\GG$ qui fixent cet
\'epinglage. C'est un groupe discret qui peut \^etre \'eventuellement infini.
Il agit sur l'ensemble des racines $\Phi$ en laissant stable le sous-ensemble
des racines simples. Il agit aussi sur le groupe de Weyl $\bbW$ de fa{\c c}on
compatible c'est-\`a-dire qu'on a une action du produit semi-direct
$\bbW\rtimes \Out(\GG)$ sur $\bbT$ et sur l'ensemble $\Phi$ des racines.

\begin{definition}
Une forme quasi-d\'eploy\'ee de $\GG$ sur un $k$-sch\'ema $X$ est la
donn\'ee d'un $\Out(\GG)$-torseur $\rho_G$ sur $X$ muni de la topologie
\'etale.
\end{definition}

\begin{numero} La donn\'ee de $\rho_G$ permet de tordre $\GG$ pour obtenir un
$X$-sch\'ema en groupes r\'eductif lisse $G=\rho_G\wedge^{\Out(\GG)} \GG$
qui est muni d'un \'epinglage d\'efini sur $X$, c'est-\`a-dire un triplet
$(T,B,\bfx_+)$, o\`u $B$ est un sous-sch\'ema en groupes ferm\'e de $G$
lisse au-dessus de $X$, $T$ est un sous-tore de $B$ et ${\bf x_+}$ est une
section globale de $\Lie(B)$, tel que fibre par fibre $(T,B,\bfx_+)$ est
isomorphe \`a l'\'epinglage $(\bbT,\bbB,\bbx_+)$. Inversement tout
$X$-sch\'ema en groupes lisse r\'eductif muni d'un \'epinglage $(T,B,\bfx_+)$
et localement isomorphe \`a $\GG$ pour la topologie \'etale d\'efinit un
torseur sous le groupe $\Out(\GG)$. Nous nous permettrons l'abus de
langage qui consiste \`a dire que $G$ est une forme quasi-d\'eploy\'ee en
oubliant l'\'epinglage attach\'e.
\end{numero}

\begin{numero}
Soient $\rho_G$ un $\Out(\GG)$-torseur sur $X$ et $G$ la forme
quasi-d\'eploy\'ee attach\'ee avec l'\'epinglage $(T,B,\bfx_+)$. Les
structures discut\'ees dans les deux paragraphes pr\'ec\'edents se
transportent sur la forme quasi-d\'eploy\'ee. Soient $\frakg=\Lie(G)$ et
$\frakt=\Lie(T)$. L'action de $\bbW\rtimes\Out(\GG)$ sur $\bbt$ induit une
action de $\Out(\GG)$ sur $\bbc=\bbt/\bbW$. On d\'efinit donc l'espace des
polyn\^omes caract\'eristiques de $\frakg=\Lie(G)$ comme le $X$-sch\'ema
$$\frakc=\rho_G\wedge^{\Out(\GG)} \bbc.$$
Il est muni d'un morphisme
$$\chi:\frakg\rightarrow \frakc$$
qui se d\'eduit du morphisme de Chevalley $\chi:\bbg\rightarrow\bbc$.
Comme $\Out(\GG)$ fixe le $\mathfrak {sl}_2$-triplet $(h,\bbx_+,\bbx_-)$,
la section de Kostant $\epsilon:\bbc\rightarrow \bbg$ est
$\Out(\GG)$-\'equivariant. Par torsion, on obtient un $X$-morphisme
\begin{equation}\label{section de Kostant tordue}
\epsilon:\frakc\rightarrow \frakg
\end{equation}
section du morphisme de Chevalley $\chi:\frakg\rightarrow \frakc$ qu'on
appellera aussi section de Kostant.  On a par ailleurs un morphisme fini plat
$$\pi:\frakt\rightarrow \frakc$$
qui se d\'eduit de $\pi:\bbt\rightarrow \bbc$. Le $X$-sch\'ema en groupes
fini \'etale
$$W=\rho_G \wedge^{\Out(\GG)}\bbW$$
agit sur $\frakt$. Comme $\bbc$ est le quotient de $\bbt$ par $\bbW$ au
sens des invariants, $\frakc$ est aussi le quotient de $\frakt$ par $W$ au
sens  des invariants. Au-dessus de l'ouvert
$\frakc^\rs=\rho_G\wedge^{\Out(\GG)} \bbc$, le morphisme
$\pi:\frakt^\rs\rightarrow \frakc^\rs$ est un torseur sous le sch\'ema en
groupes fini \'etale $W$.
\end{numero}

\begin{numero}
Pour \'etudier les groupes endoscopiques, il est n\'ecessaire de
consid\'erer les r\'eductions du $\Out(\GG)$-torseur $\rho_G$. Une
r\'eduction de $\rho_G$ est un torseur $\rho:X_\rho\rightarrow X$ sous un
groupe discret $\Theta_\rho$ muni d'un homomorphisme ${\bf
o}_\GG:\Theta_\rho\rightarrow \Out(\GG)$ tel que
\begin{equation}\label{reduction}
\rho\wedge^{\Theta_\rho}\Out(\GG)=\rho_G.
\end{equation}
On a alors $\frakt=X_\rho\wedge^{\Theta_\rho} \bbt$ et
$\frakc=X_\rho\wedge^{\Theta_\rho} \bbc$ et un diagramme cart\'esien
\begin{equation}\label{diagramme reduction}
\xymatrix{
 X_\rho \times\bbt \ar[d]_{} \ar[r]^{\pi}    & X_\rho \times \bbc\ar[d]^{}  \\
 \frakt \ar[r]_{\pi} & \frakc           }
\end{equation}
dans lequel les deux fl\`eches verticales sont des $\Theta_\rho$-torseurs.
Le produit semi-direct $\bbW\rtimes \Theta_\rho$ agit sur $X_\rho\times
\bbt$ et agit librement sur l'ouvert $X_\rho\times \bbt^\rs$. La fl\`eche
diagonale dans le diagramme ci-dessus
\begin{equation}\label{pi_rho}
\pi_\rho:X_\rho\times \bbt \rightarrow \frakc
\end{equation}
est alors un morphisme fini plat $\bbW\rtimes \Theta_\rho$-invariant qui
r\'ealise $\frakc$ comme le quotient de $X_\rho\times \bbt$ par
$\bbW\rtimes \Theta_\rho$ au sens des invariants.  Au-dessus de l'ouvert
$\frakc^\rs$, $\pi_\rho$ est un morphisme fini \'etale galoisien de groupe de
Galois $\bbW\rtimes \Theta_\rho$. Il est souvent commode dans les calculs
de remplacer le torseur $\frakt^\rs\rightarrow \frakc^\rs$ sous le sch\'ema
en groupes fini \'etale $W$ par le torseur $X_\rho\times \bbt^\rs\rightarrow
\frakc^\rs$ sous le groupe constant $\bbW\rtimes \Theta_\rho$.
\end{numero}

\begin{numero}
Il est parfois  commode de passer du langage des torseurs au langage plus
concret des repr\'esentations du groupe fondamental. Soient $x$ un point
g\'eom\'etrique de $X$ et $\pi_1(X,x)$ le groupe fondamental de $X$
point\'e par $x$. Soit $x_{\rm Out}$ un point g\'eom\'etrique de $\rho_G$
au-dessus de $x$. La donn\'ee de ce point d\'efinit un homomorphisme
$$\rho_G^\bullet:\pi_1(X,x)\rightarrow \Out(\GG).$$
Une r\'eduction $\rho\wedge^{\Theta_\rho}\Out(\GG)=\rho_G$ \cf
\ref{reduction} point\'ee est la donn\'ee d'un point g\'eom\'etrique $x_\rho$
de $X_\rho$ au-dessus de $x_{\rm Out}$. Cela revient \`a donner un
homomorphisme $\rho^\bullet:\pi_1(X,x)\rightarrow \Theta_\rho$ dans le
diagramme commutatif
\begin{equation}\label{deploiement}
\xymatrix{
 \pi_1(X,x)\ar[dr]_{\rho_G^\bullet} \ar[r]^{\rho^\bullet}   & \Theta_\rho \ar[d]^{{\bf o}_G}  \\
 &           \Out(\GG)}
\end{equation}
o\`u ${\bf o}_\GG:\Theta_\rho\rightarrow \Out(\GG)$ est l'homomorphisme de r\'eduction.
\end{numero}

\subsection{Centralisateur r\'egulier semi-simple}
\label{subsection : Centralisateur rs}

Notons $\frakg^\rs$ l'image r\'eciproque de l'ouvert $\frakc^\rs$ par le
morphisme $\chi:\frakg\rightarrow\frakc$. Pour tout $a\in\frakc^\rs(\bar
k)$, il est bien connu que la fibre $\chi^{-1}(a)$ est form\'ee d'une seule
orbite sous l'action adjointe de $G$. Pour tout $\gamma\in \frakg^\rs(\bar
k)$, le centralisateur $I_\gamma$ de $\gamma$ est un tore maximal de
$G\otimes_k \bar k$.

\begin{numero}
Soient $a\in \frakc^\rs(\bar k)$ et $\gamma,\gamma'\in \chi^{-1}(a)$. Il
existe alors $g\in G(\bar k)$ qui transporte $\gamma$ sur $\gamma'$
c'est-\`a-dire tel que $\ad(g) \gamma=\gamma'$. L'automorphisme int\'erieur
$\ad(g)$ d\'efinit donc un isomorphisme $\ad(g):I_\gamma \isom
I_{\gamma'}$. De plus, si $g$ et $g'$ sont deux \'el\'ements de $G(\bar k)$
qui transportent $\gamma$ sur $\gamma'$, alors $g$ et $g'$ diff\`erent par un
\'el\'ement de $I_\gamma$. Comme $I_\gamma$ est un tore, en particulier
commutatif, les deux isomorphismes
$$\ad(g)\ {\rm et}\ \ad(g'):I_\gamma \isom I_{\gamma'}$$
sont les m\^emes. Ceci d\'emontre que les tores $I_\gamma$ et
$I_{\gamma'}$ sont canoniquement isomorphes. Ceci d\'efinit donc un tore
qui ne d\'epend que de $a$ et qui est canoniquement isomorphe \`a
$I_\gamma$ pour tout $\gamma\in\chi^{-1}(a)$. Ce tore peut \^etre d\'ecrit
directement \`a partir de $a$ \`a coefficients arbitraires de la fa{\c c}on
suivante.

Soient $S$ un $X$-sch\'ema et $a\in\frakc^\rs(S)$ un $S$-point de
$\frakc^\rs$. On appellera {\em rev\^etement cam\'eral} associ\'e \`a $a$
le $W$-torseur $\pi_a:\tilde S_a \rightarrow S$ obtenu en formant le
diagramme cart\'esien
$$
\xymatrix{
 \tilde S_a \ar[d]_{\pi_a} \ar[r]^{}   & \frakt^\rs \ar[d]^{\pi}   \\
  S \ar[r]_{a} & \frakc^\rs           }
$$
Posons
\begin{equation}\label{pi a J a}
J_a=\pi_a\wedge^{W} T.
\end{equation}
Ce lemme est un cas particulier probablement bien connu d'un r\'esultat de
Donagi et Gaitsgory. On rappellera l'\'enonc\'e plus g\'en\'eral dans le
paragraphe \ref{desciption galoisienne de J}.
\end{numero}

\begin{lemme}\label{J a}
Soient $S$ un $X$-sch\'ema et $a\in\frakc^\rs(S)$ un $S$-point de
$\frakc^\rs$. Soient $x$ un $S$-point de $\frakg^\rs(S)$ tel que $\chi(x)=a$ et
$I_x$ l'image r\'eciproque du centralisateur sur $\frakg$. Alors on a un
isomorphisme canonique $J_a=I_x$.
\end{lemme}

\begin{numero}
Voici une variante de la construction ci-dessus. Consid\'erons une
r\'eduction $\rho\wedge^{\Theta_\rho}\Out(\GG)=\rho_G$ du torseur
$\rho_G$ \cf \ref{reduction}. Reprenons les notations de \cf
\ref{reduction}. Soit $\pi_{\rho,a}:\tilde S_{\rho,a}\rightarrow S$ l'image
r\'eciproque par $a:S\rightarrow\frakc^\rs$ du rev\^etement
$\pi_\rho:X_\rho\times \bbt\rightarrow \frakc$ \cf \ref{pi_rho}. Le
morphisme $\pi_{\rho,a}:\tilde S_{\rho,a}\rightarrow S$ est alors un torseur
sous le groupe $\bbW\rtimes \Theta_\rho$. On a alors une d\'efinition
\'equivalente de $J_a$
\begin{equation}\label{pi a J a 2}
J_a=\pi_{\rho,a}\wedge^{\bbW\rtimes \Theta_\rho}\bbT.
\end{equation}
\end{numero}

\subsection{Classes de conjugaison dans une classe stable}
Dans ce paragraphe, $G$ sera une forme quasi-d\'eploy\'ee de $\GG$ sur un
corps $F$ contenant $k$. Nous entendrons par classe de conjugaison stable
semi-simple r\'eguli\`ere de $\frakg$ sur $F$ un \'el\'ement
$a\in\frakc^\rs(F)$. La d\'efinition originale de Langlands des classes de
conjugaison stable est plus compliqu\'ee mais une fois restreinte aux
\'el\'ements semi-simples r\'eguliers de l'alg\`ebre de Lie, elle co\"incide avec
la n\^otre. Comme nous nous limitons aux classes de conjugaison stable
semi-simples r\'eguli\`eres, dans la suite de l'article, par classe de conjugaison stable, nous entendrons semi-simple r\'eguli\`ere sauf mention expresse du contraire.

\begin{numero}
Soit $a\in\frakc^\rs(F)$ une classe de conjugaison stable. Soit
$\gamma_0=\epsilon(a)\in\frakg(F)$ l'image de $a$ par la section de
Kostant. Le centralisateur $I_{\gamma_0}$ de $\gamma_0$ est un tore
d\'efini sur $F$ canoniquement isomorphe au tore $J_a$ d\'efini dans le
paragraphe pr\'ec\'edent \cf \ref{J a}. Soit $\gamma$ un autre $F$-point de
$\chi^{-1}(a)$. Comme \'el\'ements de $\frakg(\ovl F)$, $\gamma_0$ et
$\gamma$ sont conjugu\'es c'est-\`a-dire qu'il existe $g\in G(\ovl F)$ tel que
$\gamma=\ad(g) \gamma_0$. Cette identit\'e implique que pour tout
$\sigma\in\Gal(\ovl F/F)$, $g^{-1}\sigma(g)\in I_{\gamma_0}(\ovl F)$.
L'application $\sigma\mapsto g^{-1}\sigma(g)$ d\'efinit un \'el\'ement
$$\inv(\gamma_0,\gamma)\in \rmH^1(F,I_{\gamma_0})$$
qui ne d\'epend que de la classe de $G(F)$-conjugaison de $\gamma$ et non
du choix du transporteur $g$. L'image de cette classe dans $\rmH^1(F,G)$
est triviale par construction. D'apr\`es Langlands, l'application
$\gamma\mapsto \inv(\gamma_0,\gamma)$ d\'efinit une bijection de
l'ensemble des classes de $G(F)$-conjugaison dans l'ensemble des $F$-points
de $\chi^{-1}(a)$ sur la fibre de l'application $\rmH^1(F,I_{\gamma_0})\rta
\rmH^1(F,G)$ au-dessus de l'\'el\'ement neutre \cf \cite{Langlands}. Cette
fibre sera not\'ee
$${\rm ker}(\rmH^1(F,I_{\gamma_0})\rta \rmH^1(F,G)).$$
\end{numero}

\begin{numero}
Au lieu de $\frakg(F)$, il est souvent n\'ecessaire de consid\'erer le
groupo\"ide $[\frakg/G](F)$ des couples $(E,\phi)$ compos\'e d'un $G$-torseur
$E$ sur $F$ et d'un $F$-point $\phi$ de $\ad(E)=E\wedge^G \frakg$. Le
morphisme de Chevalley d\'efinit un foncteur $[\chi]$ de $[\frakg/G](F)$
dans l'ensemble $\frakc(F)$. Soit $a\in\frakc^\rs(F)$. Consid\'erons le
groupo\"ide des $F$-points de $[\chi]^{-1}(a)$. Les objets de
$[\chi]^{-1}(a)(F)$ sont localement isomorphes pour la topologie \'etale. Par
ailleurs, on a un point base $(E_0,\gamma_0)$ du groupo\"ide o\`u $E_0$ est
le $G$-torseur trivial et o\`u $\gamma_0\in \frakg(F)$ est l'\'el\'ement
$\gamma_0=\epsilon(a)$ d\'efini par la section de Kostant. Pour tout
$F$-point $(E,\phi)$ de $[\chi]^{-1}(a)$, on obtient un invariant
$$\inv((E_0,\gamma_0),(E,\phi))\in \rmH^1(F,I_{\gamma_0}).$$
L'application $(E,\phi)\mapsto \inv((E_0,\gamma_0),(E,\phi))$ d\'efinit une
bijection de l'en\-semble des classes d'isomorphisme de $[\chi]^{-1}(a)(F)$
sur $\rmH^1(F,I_{\gamma_0})$.

Soient $(E,\phi)$ un $F$-point de $[\chi]^{-1}(a)$ et
$\inv((E_0,\gamma_0),(E,\phi))$ son
invariant. La classe d'isomorphisme de $E$ correspond alors \`a
l'image de cet invariant dans $\rmH^1(F,G)$ par l'application
$$\rmH^1(F,I_{\gamma_0})\rightarrow \rmH^1(F,G).$$
On retrouve ainsi la bijection mentionn\'ee plus haut entre les classes de
$G(F)$-conjugaison dans $\chi^{-1}(a)(F)$ et le sous-ensemble de
$\rmH^1(F,I_{\gamma_0})$ des \'el\'ements d'image triviale dans
$\rmH^1(F,G)$.
\end{numero}

\subsection{Dualit\'e de Tate-Nakayama}\label{subsection : Tate Nakayama}

La discussion du paragraphe pr\'ec\'edent prend une forme tr\`es explicite
dans le cas d'un corps local non-archim\'edien gr\^ace \`a la dualit\'e de
Tate-Nakayama. Soient $F_v$ un corps local non-archim\'edien, $\calO_v$
son anneau des entiers et $v$ la valuation. Soit $F_v^{\rm sep}$ une
cl\^oture s\'eparable de $F_v$. Notons $\Gamma_v$ le groupe de Galois
$\Gal(F_v^{\rm sep}/F_v)$. Notons $X=\Spec(F_v)$ et $x$ le point
g\'eom\'etrique choisi.

\begin{numero}
Soit $G$ la forme quasi-d\'eploy\'ee sur $F_v$ de $\GG$ associ\'ee \`a un
homomorphisme $\rho_G^\bullet:\Gamma_v\rightarrow \Out(\GG)$. Soit
$\hat \bbG$ le dual complexe de $\bbG$. Par d\'efinition il est muni d'un
\'epinglage $(\hat\bbT,\hat\bbB,\hat\bbx)$ dont la donn\'ee radicielle
associ\'ee s'obtient \`a partir de celle de $\GG$ en \'echangeant les racines
et les coracines. En particulier $\Out(\GG)=\Out(\hat \GG)$. On dispose donc
d'une action $\rho_G^\bullet$ de $\Gamma_v$ sur $\hat \GG$ fixant l'\'epinglage.

D'apr\`es Kottwitz \cf \cite{K-CTT} et \cite{K-EST}, on a alors
$$\rmH^1(F,G)^*=\pi_0((Z_{\hat\GG})^{\rho_G^\bullet(\Gamma_v)})$$
o\`u $(Z_{\hat\GG})^{\rho_G^\bullet(\Gamma_v)}$ est le sous-groupe des
points fixes dans le centre ${Z_{\hat \GG}}$ de $\hat \GG$ sous l'action de
$\rho_G^\bullet(\Gamma_v)$. En particulier, $\rmH^1(F_v,G)^*$ est un
groupe ab\'elien de type fini.
\end{numero}

\begin{numero}\label{x_rho a}
Mettons-nous dans la situation de \ref{deploiement}. En particulier, on a un
rev\^etement fini \'etale galosien $\rho:X_\rho\rightarrow X$ de groupe de
Galois $\Theta_\rho$ avec un point $x_\rho\in X_\rho$ au-dessus de $x$.
Pour tout $a\in\frakc^\rs(F_v)$, on a un $\bbW\rtimes \Theta_\rho$-torseur
$\pi_{\rho,a}:\widetilde X_{\rho,a} \rightarrow  X$ qui provient de
\ref{pi_rho}. Choisissons un point $x_{\rho,a}$ de $\widetilde X_{\rho,a}$
au-dessus de $x_\rho$.

On a alors un homomorphisme $\pi_{\rho,a}^\bullet: \Gamma_v\rightarrow
\bbW\rtimes \Theta_\rho$ rendant commutatif le diagramme
\begin{equation}\label{triangle galoisien}
\xymatrix{
 \Gamma_v \ar[dr]_{\rho_G^\bullet} \ar[r]^{\pi_{\rho,a}^\bullet\,\,\,\,}   & \bbW\rtimes \Theta_\rho\ar[d]^{}  \\
 & \Theta_\rho          }
\end{equation}

Soit $\gamma_0=\epsilon(a)$ la section de Kostant appliqu\'ee \`a $a$.
D'apr\`es le lemme \ref{J a} on a
$$I_{\gamma_0}=J_a=\pi_{\rho,a}\wedge^{\bbW\rtimes
\Theta_\rho}\bbT.$$ D'apr\`es la dualit\'e de Tate et Nakayama locale
\cite[1.1]{K-EST}, on a alors
\begin{equation}\label{T-N}
\rmH^1(F_v,J_a)^*=\pi_0(\hat \bbT^{\pi_{\rho,a}^\bullet(\Gamma_v)}).
\end{equation}
Autrement dit le groupe des caract\`eres complexes de $\rmH^1(F_v,J_a)$
co\"incide avec le groupe des composantes connexes du groupe des points
fixes de $\pi_{\rho,a}^\bullet(\Gamma_v)$ dans $\hat\bbT$. Ici $\hat \bbT$ d\'esigne le tore
complexe dual de $\bbT$ d\'efini en \'echangeant le groupe des
caract\`eres et le groupe des cocaract\`eres.
\end{numero}

\begin{numero}
L'inclusion $\iota:\hat \bbT \hookrightarrow \hat \bbG$ est
$\Gamma$-\'equivariante modulo conjugaison c'est-\`a-dire que pour tout $t\in
\hat\bbT$ pour tout $\sigma\in \Gamma_v$, $\rho^\bullet(\sigma)(\iota(t))$
et $\iota(\pi_a^\bullet(\sigma)(t))$ sont conjugu\'es dans $\hat\GG$. On en
d\'eduit l'inclusion
$$
(Z_{\hat \GG})^{\rho_G^\bullet(\Gamma_v)}\subset \hat
\bbT^{\pi_a^\bullet(\Gamma_v)}
$$
qui induit un homomorphisme entre les groupes de composantes connexes
$$
\pi_0((Z_{\hat \GG})^{\rho_G^\bullet(\Gamma_v)})\rightarrow
\pi_0(\hat \bbT^{\pi_a^\bullet(\Gamma_v)}).
$$
Par dualit\'e on retrouve la fl\`eche $\rmH^1(F_v,I_{\gamma_0})\rta
\rmH^1(F_v,G)$ d\'efinie dans le paragraphe pr\'ec\'edent.
\end{numero}

\subsection{$\kappa$-int\'egrales orbitales} \label{subsection : integrales orbitales}

Gardons les notations du paragraphe pr\'ec\'edent. Supposons en plus que
$G$ soit donn\'e comme forme quasi-d\'eploy\'ee de $\GG$ sur $\calO_v$.
Le groupe localement compact $G(F_v)$ est alors muni d'un sous-groupe
ouvert compact maximal $G(\calO_v)$. Soit ${\rm d}g_v$ la mesure de Haar
de $G(F_v)$ normalis\'ee de sorte que $G(\calO_v)$ soit de volume un.

Pour $a\in \frakc^\rs(F_v)$, donnons-nous une mesure de Haar ${\rm d}t_v$
sur le tore $J_a(F_v)$. Pour tout $\gamma\in \frakg(F_v)$ avec
$\chi(\gamma)=a$, l'isomorphisme canonique  $J_a=I_\gamma$ permet de
transporter la mesure de Haar ${\rm d}t_v$ de $J_a(F_v)$ en une mesure
de Haar sur $I_\gamma(F_v)$.

Pour tout $\gamma$ comme ci-dessus, pour toute fonction  localement
constante $f$ \`a support compact sur $\frakg(F_v)$, on peut alors d\'efinir
l'int\'egrale orbitale
$${\bf O}_\gamma(f,{\rm d}t_v)=\int_{I_\gamma(F_v) \backslash G(F_v)}
f(\ad(g_{v})^{-1}\gamma) {\frac{{\rm d}g_v} { {\rm d}t_v}}.
$$

\begin{definition}\label{kappa integrale}
Soit $\kappa$ un \'el\'ement de $\hat \bbT^{\pi_a^\bullet(\Gamma_v)}$.
Pour toute fonction $f$ localement constante \`a support compact dans
$\frakg(F_v)$, on d\'efinit la $\kappa$-int\'egrale orbitale de $f$ sur la
classe de conjugaison stable $a\in\frakc(F_v)$ par la formule
$$
{\bf O}_a^\kappa(f,{\rm d}t_v)=\sum_{\gamma} \langle
\inv(\gamma_0,\gamma), \kappa \rangle {\bf O}_\gamma(f,{\rm d}t_v)
$$
o\`u $\gamma$ parcourt l'ensemble des classes de $G(F_v)$-conjugaison
dans la classe stable de caract\'eristique $a$, o\`u le point base
$\gamma_0=\epsilon(a)$ est d\'efini par la section de Kostant et o\`u ${\rm
d}t_v$ est une mesure de Haar du tore $J_a(F_v)$.
\end{definition}

Notons pour m\'emoire que la d\'efinition de la $\kappa$-int\'egrale orbitale
d\'epend du choix du point g\'eom\'etrique $x_{\rho,a}$ dans $\tilde
X_{\rho,a}$ sans lequel on ne peut pas relier le groupe de cohomologie
$\rmH^1(F_v,J_a)$ avec le tore dual $\hat\bbT$ et donc exprimer la
dualit\'e de Tate-Nakayama sous la forme \ref{T-N}.

\subsection{Groupes endoscopiques}\label{subsection : Groupes endoscopiques}

Par construction, le groupe dual $\hat\bbG$ est muni d'un \'epinglage
$(\hat\bbT,\hat\bbB,\hat\bbx_+)$. Soit $\kappa$ un \'el\'ement du tore
maximal $\hat\bbT$ dans cet \'epinglage. La composante neutre du
centralisateur de $\kappa$ dans $\hat\GG$ est un sous-groupe r\'eductif
qu'on notera $\hat\bbH$. L'\'epinglage de $\hat\bbG$ induit sur $\hat\bbH$
un \'epinglage avec le m\^eme tore maximal $\hat\bbT$. Soit $\bbH$ le
groupe r\'eductif d\'eploy\'e sur $k$ muni d'un \'epinglage dont le dual
complexe est $\hat\bbH$ muni de son \'epinglage. On a alors
$\Out(\bbH)=\Out(\hat\bbH)$.

Consid\'erons le centralisateur $(\hat\GG\rtimes \Out(\GG))_\kappa$ de
$\kappa$ dans le produit semi-direct $\hat\GG\rtimes \Out(\GG)$. On a la
suite exacte
$$1\rightarrow \hat\bbH \rightarrow
(\hat\GG\rtimes \Out(\GG))_\kappa \rightarrow
\pi_0(\kappa)\rightarrow 1
$$
o\`u $\pi_0(\kappa)$ est le groupe des composantes connexes de
$(\hat\GG\rtimes \Out(\GG))_\kappa$. On a alors des homomorphismes
canoniques
$$
\xymatrix{
 & \pi_0(\kappa)\ar[dl]_{{\bf o}_\bbH(\kappa)} \ar[dr]^{{\bf
o}_\bbG(\kappa)}   &   \\
 \Out(\bbH) & & \Out(\bbG)           }
$$

\begin{definition}\label{donnee endoscopique}
Soit $G$ une forme quasi-d\'eploy\'ee de $G$ sur $X$ donn\'ee par un
$\Out(\GG)$-torseur $\rho_G$. Une donn\'ee endoscopique de $G$ sur $X$
est un couple $(\kappa,\rho_\kappa)$ avec $\kappa$ comme ci-dessus et
o\`u $\rho_\kappa$ est un $\pi_0(\kappa)$-torseur qui induit $\rho_G$ par
le changement de groupes de structure ${\bf o}_\bbG(\kappa)$.

Le groupe endoscopique associ\'e \`a la donn\'ee endoscopique
$(\kappa,\rho_\kappa)$ est la forme quasi-d\'eploy\'ee $H$ sur $X$ de
$\bbH$ donn\'ee par le $\Out(\HH)$-torseur $\rho_H$ obtenu \`a partir de
$\rho_\kappa$ par le changement de groupes de structure ${\bf
o}_\bbH(\kappa)$.
\end{definition}

Il y a une variante point\'ee de la notion de donn\'ee endoscopique qui est
utile. Soit $X$ un sch\'ema avec un point g\'eom\'etrique $x$. Soit $G$ un
groupe r\'eductif connexe $X$ forme quasi-d\'eploy\'ee du groupe constant
$\GG$ d\'efinie par un homomorphisme
$\rho_G^\bullet:\pi_1(X,x)\rightarrow\Out(\GG)$.

\begin{definition}\label{donnee endoscopique pointee}
On appelle donn\'ee endoscopique point\'ee de $G$ sur $X$ un couple
$(\kappa,\rho_\kappa^\bullet)$ o\`u $\kappa\in \hat\bbT$ et
$\rho_\kappa^\bullet$ est un homomorphisme
$$\rho_\kappa^\bullet:\pi_1(X,x)\rightarrow\pi_0(\kappa)$$
au-dessus de $\rho_G^\bullet$.
\end{definition}

Avec une donn\'ee endoscopique point\'ee, on peut former un diagramme
commutatif
$$
\xymatrix{
& \pi_1(X,x) \ar[ddl]_{\rho_H^\bullet} \ar[ddr]^{\rho_G^\bullet} \ar[d]_{\rho_\kappa^\bullet} &\\
 & \pi_0(\kappa)\ar[dl]^{{\bf o}_\bbH(\kappa)} \ar[dr]_{{\bf
o}_\bbG(\kappa)}   &   \\
 \Out(\bbH) & & \Out(\bbG)       }
$$
Le groupe endoscopique $H$ associ\'e \`a la donn\'ee endoscopique
point\'ee $(\kappa,\rho_\kappa^{\bullet})$ peut \^etre alors form\'e \`a
l'aide de l'homomorphisme $\rho_H^\bullet$.

\subsection{Transfert des classes de conjugaison stable}
\label{subsection : Transfert des classes}

Soit $(\kappa,\rho_\kappa)$ une donn\'ee endoscopique comme
dans \ref{donnee endoscopique}. Soit $H$ le groupe endoscopique associ\'e.
On va maintenant d\'efinir le transfert des classes de conjugaison stable de
$H$ \`a $G$ en construisant un morphisme $\nu:\frakc_H\rightarrow
\frakc$.

Par construction, on dispose d'une r\'eduction simultan\'ee des torseurs
$\rho_G$ et $\rho_H$ au torseur
$$\rho_\kappa:X_{\rho_\kappa}\rightarrow X$$
sous le groupe $\pi_0(\kappa)$. On peut alors r\'ealiser $\frakc$  comme le
quotient au sens des invariants de $X_{\rho_\kappa}\times \bbt$ par l'action
de $\bbW\rtimes \pi_0(\kappa)$ \cf \ref{pi_rho}. De m\^eme, on peut
r\'ealiser $\frakc_H$ comme le quotient au sens des invariants de
$X_{\rho_\kappa}\times \bbt$ par l'action de $\bbW_\HH\rtimes
\pi_0(\kappa)$. Pour d\'efinir le morphisme $\frakc_H\rightarrow\frakc$ il
suffit de d\'efinir un homomorphisme
$$\bbW_\bbH\rtimes \pi_0(\kappa)\rightarrow \bbW\rtimes \pi_0(\kappa)$$
compatible avec l'action sur $X_{\rho_\kappa}\times \bbt$ ce qui nous
conduit au lemme suivant.

\begin{lemme}\label{homomorphisme produit semi-direct}
Soit $\bbW_\HH\rtimes \pi_0(\kappa)$ le produit semi-direct d\'efini par
${\bf o}_\HH(\kappa):\pi_0(\kappa)\rightarrow \Out(\HH)$. Soit
$\bbW\rtimes \pi_0(\kappa)$ le produit semi-direct d\'efini par ${\bf
o}_\GG(\kappa):\pi_0(\kappa)\rightarrow \Out(\GG)$. Il existe un
homomorphisme canonique
$$\bbW_\HH\rtimes \pi_0(\kappa) \rightarrow \bbW\rtimes
\pi_0(\kappa)
$$
qui induit l'homomorphisme \'evident sur les sous-groupes normaux
$\bbW_\HH \subset \bbW$, induit l'identit\'e sur le quotient $\pi_0(\kappa)$
et qui est compatible avec les actions de $\bbW_\HH\rtimes \pi_0(\kappa)$
et $\bbW\rtimes \pi_0(\kappa)$ sur $\bbt$.
\end{lemme}

\begin{proof}
Rappelons que le centralisateur $(\bbW\rtimes \Out(\GG))_\kappa$ de
$\kappa$ dans $\bbW\rtimes\Out(\GG)$ est canoniquement isomorphe au
produit semi-direct $\bbW_\HH \rtimes \pi_0(\kappa)$ \cf \cite[lemme
10.1]{N}. On en d\'eduit un homomorphisme $\theta:\pi_0(\kappa)
\rightarrow \bbW\rtimes\Out(\GG)$ de sorte qu'on a un homomorphisme de
groupes
$$\bbW_\HH \rtimes \pi_0(\kappa) \rightarrow \bbW\rtimes^{\theta} \pi_0(\kappa)$$
o\`u le second produit semi-direct est form\'e \`a l'aide de l'action de
$\pi_0(\kappa)$ sur $\bbW$ d\'efinie par l'homomorphisme
$\theta:\pi_0(\kappa)\rightarrow \bbW\rtimes \Out(\GG)$ et de l'action de
$\bbW\rtimes \Out(\GG)$ sur $\bbW$. Cet homomorphisme est visiblement
compatible avec les actions sur $\bbt$.

Il reste \`a construire un isomorphisme entre produits semi-directs
$$\bbW\rtimes^{\theta} \pi_0(\kappa) \rightarrow \bbW\rtimes
\pi_0(\kappa)
$$
dont le second est form\'e \`a l'aide de l'homomorphisme ${\bf
o}_\GG(\kappa):\pi_0(\kappa)\rightarrow \Out(\GG)$. Pour tout $\alpha\in
\pi_0(\kappa)$, l'\'el\'ement $\theta(\alpha)\in \bbW\rtimes\Out(\GG)$
s'\'ecrit de mani\`ere unique sous la forme $\theta(\alpha)=w(\alpha){\bf
o}_\GG(\alpha)$ o\`u $w(\alpha)\in\bbW$. Ceci  nous permet de d\'efinir un
homomorphisme
$$\pi_0(\kappa)\rightarrow \bbW\rtimes \pi_0(\kappa)$$
par la formule $\alpha\mapsto w(\alpha) \alpha$. Cet homomorphisme induit
un isomorphisme $\bbW\rtimes^{\theta} \pi_0(\kappa) \rightarrow
\bbW\rtimes \pi_0(\kappa)$ qui rend le diagramme
$$ \xymatrix{
\bbW\rtimes^{\theta} \pi_0(\kappa) \ar[dr]_{} \ar[rr]^{} &  &  \bbW\rtimes \pi_0(\kappa) \ar[dl]^{}  \\
 &\bbW\rtimes \Out(\GG) &           }
$$
commutatif. En particulier, cet isomorphisme est compatible avec les
actions sur $\bbt$.
\end{proof}

Au-dessus de l'ouvert semi-simple r\'egulier $\frakc^\rs$ de $\frakc$, on
a un morphisme fini et \'etale
$$\nu^\rs:\frakc_H^{G-\rs}\rightarrow \frakc^\rs$$
o\`u on a not\'e $\frakc_H^{G-\rs}$  l'image r\'eciproque de $\frakc^\rs$
dans $\frakc_H$. Ce morphisme r\'ealise le transfert des classes de
conjugaison stable de $H$ qui sont semi-simples et $G$-r\'eguli\`eres vers
les classes de conjugaison stable de $G$ qui sont semi-simples
r\'eguli\`eres.

\begin{lemme}\label{J a J a_H}
Soient $a_H\in \frakc_H^{G-\rs}(S)$ un point \`a valeur dans un sch\'ema $S$
et $a\in\frakc^\rs(S)$ son image. Alors, on a un isomorphisme canonique
entre le tore $J_a$ d\'efini par la formule \ref{pi a J a} et le tore
$J_{H,a_H}$ d\'efini par la m\^eme formule appliqu\'ee \`a $H$.
\end{lemme}

\begin{proof}
En prenant l'image inverse de \ref{pi_rho} par $a$, on a un $\bbW\rtimes
\pi_0(\kappa)$-torseur $\pi_{\rho_\kappa,a}$ sur $S$. De m\^eme, on a un
$\bbW_H\rtimes \pi_0(\kappa)$-torseur $\pi_{\rho_\kappa,a_H}$ sur $S$.
Par construction m\^eme du morphisme $\nu:\frakc_H\rightarrow \frakc$,
on a un morphisme
$$\pi_{\rho_\kappa,a_H} \rightarrow \pi_{\rho_\kappa,a}$$
compatible aux actions de $\bbW_H\rtimes \pi_0(\kappa)$ et de
$\bbW\rtimes \pi_0(\kappa)$. Par cons\'equent, le deuxi\`eme torseur se
d\'eduit du premier par le changement de groupes de structure
$\bbW_H\rtimes \pi_0(\kappa) \rightarrow \bbW\rtimes \pi_0(\kappa)$ \cf
\ref{homomorphisme produit semi-direct}. Il suffit maintenant d'appliquer la
formule \ref{pi a J a 2}.
\end{proof}

\begin{remarque}\label{x_a dans pi_{a_H}}
Soit maintenant $S=\Spec(F_v)$ o\`u $F_v$ est un corps local comme dans
\ref{subsection : Tate Nakayama}. En choisissant un $F_v^{\rm sep}$-point
$x_{\rho_\kappa,a}$ dans le torseur $\pi_{\rho_\kappa,a_H}$, on obtient un
homomorphisme $\pi_{\rho_\kappa,a_H}^\bullet:\Gamma_v \rightarrow
\bbW_H\rtimes \pi_0(\kappa)$. On a alors la dualit\'e de Tate-Nakayama
\ref{T-N}
$$\rmH^1(F_v,J_a)^*=\rmH^1(F_v,J_{H,a_H})^*=
\hat \bbT^{\pi_{\rho_\kappa,a_H}^\bullet(\Gamma_v)}.
$$
Par construction, $\kappa\in \hat\bbT^{\bbW_H\rtimes \pi_0(\kappa)}$ de
sorte qu'on peut d\'efinir la $\kappa$-int\'egrale orbitale ${\bf O}_a(f,{\rm
d}t_v)$ suivant \ref{kappa integrale}.
\end{remarque}

\subsection{Discriminant et r\'esultant}
Soit $\Phi$ le syst\`eme de racines associ\'e au groupe d\'eploy\'e $\GG$.
Pour toute racine $\alpha\in \Phi$, on note ${\rm d}\alpha\in k[\bbt]$ la
d\'eriv\'ee du caract\`ere $\alpha:\bbT\rightarrow\GG_m$. Formons le
discriminant
$$\underline\discrim_\GG=\prod_{\alpha\in\Phi} {\rm d}\alpha \in k[\bbt]$$
qui est clairement un \'el\'ement $\bbW$-invariant de cette alg\`ebre de
polyn\^omes. Il d\'efinit donc une fonction sur l'espace des polyn\^omes
caract\'eristiques $\bbc=\Spec(k[\bbt]^\bbW)$. Soit $\discrim_\GG$ le
diviseur de $\bbc$ d\'efini par cette fonction. Rappelons l'\'enonc\'e bien
connu.

\begin{lemme}\label{diviseur discriminant}
Le diviseur $\discrim_\GG$ est un diviseur r\'eduit de $\bbc$ stable sous
l'action de $\Out(\GG)$. L'ouvert compl\'ementaire de ce diviseur est
l'ouvert r\'egulier semi-simple $\bbc^\rs$.
\end{lemme}

\begin{proof}
L'image inverse de $\discrim_\GG$ est le diviseur des hyperplans de $\bbt$
d\'efinis par les racines, chaque hyperplan \'etant de multiplicit\'e $2$ car
$\alpha$ et $-\alpha$ d\'efinissent le m\^eme hyperplan. Le groupe $\bbW$
agit librement sur le compl\'ement de ce diviseur dans $\bbt$ si bien que le
morphisme $\pi_\bbt:\bbt\rightarrow\bbc$ est \'etale sur cet ouvert et
clairement ramifi\'e le long de ce diviseur. Par ailleurs, $\Out(\GG)$ agit sur
l'ensemble des racines si bien qu'il laisse stable ce diviseur.

Il reste \`a d\'emontrer que $\discrim_\GG$ est un diviseur r\'eduit.
Puisqu'il s'agit d'une intersection compl\`ete, il suffit de montrer qu'un
ouvert dense de $\discrim_\bbG$ est r\'eduit. On peut donc \^oter de $\bbt$ les
points appartenant \`a plus de deux hyperplans de racines.  On se ram\`ene
alors \`a un groupe de rang semi-simple un o\`u l'assertion peut \^etre
v\'erifi\'ee \`a la main.
\end{proof}

Soient $X$ un $k$-sch\'ema et $G$ une forme quasi-d\'eploy\'ee de $\GG$ sur
$X$ donn\'ee par un $\Out(\GG)$-torseur $\rho_G$. En tordant $\discrim_\GG$
par $\rho_G$, on obtient un diviseur r\'eduit $\discrim_G$ de $\frakc$ dont
l'ouvert compl\'ementaire est $\frakc^\rs$. Soit $(\kappa,\rho_\kappa)$ une
donn\'ee endoscopique \cf \ref{donnee endoscopique} de $G$. On a un
diviseur $\discrim_\bbH$ de $\bbc_\HH$ et par torsion un diviseur
$\discrim_H$ de $\frakc_H$.

\begin{lemme}
Il existe un unique diviseur effectif $\resultant_\bbH^\bbG$ de $\bbc_\HH$
tel que
$$\nu^* \discrim_\bbG=\discrim_\bbH+2 \resultant_\bbH^\bbG.$$
\end{lemme}

\begin{proof} Choisissons un sous-ensemble $\Psi\subset \Phi-\Phi_\bbH$ tel que
pour toute paire de racines oppos\'ees $\pm\alpha\in \Phi-\Phi_\bbH$, le
cardinal de $\{\pm \alpha\}\cap \Psi$ vaut un. Consid\'erons l'id\'eal de
$k[\bbt]$ engendr\'e par la fonction $\prod_{\alpha\in \Psi}{\rm d}\alpha$.
Cet id\'eal est invariant sous l'action de $\bbW_\bbH$ car la fonction
$\prod_{\alpha\in \Psi}{\rm d}\alpha$ se transforme sous l'action de
$\bbW_\HH$ par un signe. Le diviseur effectif $\bbW_\HH$-invariant d\'efini
par cet id\'eal n'\'etant pas contenu dans $\discrim_\bbH$, il existe d'un
unique diviseur effectif $\resultant_\bbH^\bbG$ de $\bbc_\HH$ qui v\'erifie
alors la relation dans l'\'enonc\'e du lemme.
\end{proof}

\begin{numero}\label{discrimnant resultant}
En tordant $\resultant_\bbH^\bbG$ par $\rho_{\kappa}$, on obtient un
diviseur effectif $\resultant_H^G$ sur $\frakc_H$ qui v\'erifie la relation
$$
\nu^* \discrim_G=\discrim_H+2 \resultant_H^G.
$$
\end{numero}

\subsection{Le lemme fondamental pour les alg\`ebres de Lie}
On est maintenant en position d'\'enoncer le lemme fondamental pour les
alg\`ebres de Lie, conjectur\'e par Langlands et Shelstad et Waldspurger.

Soient $F_v$ un corps local non-archim\'edien, $\calO_v$ son anneau des
entiers et $v:F_v^\times\rightarrow \ZZ$ la valuation discr\`ete. Soit
$\FF_q$ le corps r\'esiduel de $\calO_v$. Soit $X_v=\Spec(\calO_v)$. Soit
$F_v^{\rm sep}$ une cl\^oture s\'eparable de $F_v$. Celle-ci d\'efinit un
point g\'eom\'etrique $x$ de $X_v$.

Soit $G$ une forme quasi-d\'eploy\'ee de $\GG$ sur $\calO_v$ d\'efinie par
un homomorphisme $\rho_G^\bullet:\pi_1(X_v,x)\rightarrow \Out(\GG)$.
Consid\'erons une donn\'ee endoscopique point\'ee
$(\kappa,\rho_\kappa^\bullet)$ form\'ee d'un \'el\'ement $\kappa\in\hat\bbT$
et d'un homomorphisme $\rho_\kappa^\bullet:\pi_1(X_v,x)\rightarrow
\pi_0(\kappa)$ au-dessus de $\rho_G^\bullet$ \cf \ref{donnee endoscopique
pointee}. On a alors un sch\'ema en groupes r\'eductifs $H$ au-dessus de
$X_v$ et un morphisme de $X_v$-sch\'emas $\frakc_H\rightarrow \frakc$.

Soit $a_H\in\frakc_H(\calO_v)$ d'image $a\in\frakc(\calO_v)\cap
\frakc^\rs(F_v)$. Choisissons une mesure de Haar ${\rm d}t_v$ sur le tore
$J_a(F_v)$. D'apr\`es le lemme \ref{J a J a_H}, on a un isomorphisme entre
les tores $J_a$ et $J_{H,a_H}$ de sorte qu'on peut transporter la mesure
de Haar ${\rm d}t_v$ sur $J_{H,a_H}(F_v)$.

En choisissant un $F_v^{\rm sep}$-point $x_a$ comme dans \ref{x_a dans
pi_{a_H}}, on peut d\'efinir la $\kappa$-int\'egrale orbitale ${\bf
O}_a^\kappa$ de n'importe quelle fonction localement constante \`a
support compact dans $\frakg(F_v)$. Notons $1_{\frakg_v}$ la fonction
caract\'eristique de $\frakg(\calO_v)$ dans $\frakg(F_v)$ et $1_{\frakh_v}$
la fonction caract\'eristique de $\frakh(\calO_v)$ dans $\frakh(F_v)$.

\begin{theoreme}\label{LS}
Avec les notations ci-dessus, on a l'\'egalit\'e
$${\bf O}_a^\kappa(1_{\frakg_v},{\rm d}t_v)=q^{r_{H,v}^G(a_H)}{\bf
SO}_{a_H}(1_{\frakh_v},{\rm d}t_v)
$$
o\`u $r_{H,v}^G(a_H)=\deg_v(a_H^* \resultant_H^G)$.
\end{theoreme}

Ce th\'eor\`eme est la variante pour les alg\`ebres de Lie de la
conjecture originale de Langlands et Shelstad qui porte sur les groupes de Lie
sur un corps local non-archim\'edien. Cette variante a \'et\'e formul\'ee par
Waldspurger qui a d\'emontr\'e qu'elle implique la conjecture originale pour
les groupes de Lie. Il a aussi d\'emontr\'e que le cas o\`u $F$ est un corps
local de caract\'eristique positifve ne divisant pas l'ordre de $\bbW$ implique
le cas o\`u $F$ est un corps local de caract\'eristique nulle et dont la
caract\'eristique r\'esiduelle ne divise pas l'ordre de $\bbW$ \cf \cite{W}.
Nous nous limitons au premier cas.

L'\'enonc\'e original de Langlands-Shelstad est sensiblement plus compliqu\'e
notamment \`a cause de la pr\'esence d'un signe dans le facteur de
transfert. Dans \cite{K-FdT}, Kottwitz  a \'etabli le lien entre le facteur de
transfert de Langlands-Shelstad et la section de Kostant qui nous permet
d'\'enoncer la conjecture de Langlands et Shelstad sous cette forme plus
simple. Dans le cas des groupes classiques, Waldspurger a donn\'e une forme
plus explicite de cette conjecture dans \cite{W-pav}. Hales a \'egalement
\'ecrit un article d'exposition fort agr\'eable \cite{Hales} sur l'\'enonc\'e de
la conjecture de Langlands-Shelstad.

\begin{numero} Notons que l'\'enonc\'e ci-dessus s'\'etend trivialement au cas o\`u on part
d'un \'el\'ement $a_H\in \frakc^{G-\rs}(F_v)$ qui n'est pas dans
$\frakc_H(\calO_v)$. Dans ce cas, on a \'egalement
$a\notin\frakc(\calO_v)$. Ceci se d\'eduit en effet du crit\`ere valuatif de
propret\'e appliqu\'e au morphisme fini $\nu_H:\frakc_H\rightarrow\frakc$.
Il est alors \'evident que les int\'egrales orbitales ${\bf
O}_a^\kappa(1_{\frakg_v},{\rm d}t_v)$ et ${\bf SO}_{a_H}(1_{\frakh_v},{\rm
d}t_v)$ sont nulles.
\end{numero}

\begin{numero} Notons enfin que l'\'egalit\'e de diviseurs
$$a_*\discrim_G=a_H^* \discrim_H+ 2 a_H^* \resultant_G^H$$
qui se d\'eduit de \ref{discrimnant resultant}, implique
$$\Delta_H(a_H) \Delta_G(a)^{-1}= q^{\deg_v(a_H^* \resultant_G^H)}$$
avec $\Delta_H(a_H)=q^{-{\rm deg}(a_H^* \discrim_H)/ 2}$ et
$\Delta_G(a)=q^{-{\rm deg}(a^* \discrim_G)/ 2}$. Ceci permet de r\'ecrire la
formule \ref{LS} sous la forme plus habituelle
$$\Delta_G(a){\bf O}_a^\kappa(1_{\frakg_v},{\rm d}t_v)=\Delta_H(a_H){\bf
SO}_{a_H}(1_{\frakh_v},{\rm d}t_v).
$$
\end{numero}

\subsection{Le lemme fondamental non standard}

Dans \cite{W-tordu}, Waldspurger a formul\'e une variante de la conjecture
de Langlands-Shelstad qu'il appelle le lemme fondamental non standard.
Dans ce paragraphe, nous allons rappeler cette conjecture.

Soient $\bbG_1$ et $\bbG_2$ deux groupes r\'eductifs d\'eploy\'es sur $k$
avec \'epinglages. Pour tout $i\in\{1,2\}$, on a en particulier un tore
maximal $\bbT_i$ de $\bbG_i$,  l'ensemble des racines $\Phi_i\subset
\bbX^*(\bbT_i)$, l'ensemble des racines simples $\Delta_i\subset \Phi_i$
ainsi que l'ensemble des coracines $\Phi_i^\vee\subset \bbX_*(\bbT_i)$. Le
quintuplet
$$(\bbX^*(\bbT_i),\bbX_*(\bbT_i),\Phi_i,\Phi_i^\vee,\Delta_i)$$
est la donn\'ee radicielle associ\'ee au groupe \'epingl\'e $\bbG_i$ et qui le
d\'etermine \`a  isomorphisme unique pr\`es.

\begin{definition}\label{isogenie donnees radicielles}
Une isog\'egie des donn\'ees radicielles entre $\bbG_1$ et $\bbG_2$
consiste en un couple d'isomorphismes de $\QQ$-espaces vectoriels
$$\psi^*:\bbX^*(\bbT_2)\otimes\QQ \longrightarrow \bbX^*(\bbT_1)\otimes\QQ
$$
et
$$
\psi_*:\bbX_*(\bbT_1)\otimes\QQ \longrightarrow
\bbX_*(\bbT_2)\otimes\QQ
$$
duaux l'un de l'autre tels que $\psi^*$ envoie bijectivement l'ensemble des
$\QQ$-droites de la forme $\QQ \alpha_2$ avec $\alpha_2\in \Phi_2$ sur
l'ensemble des $\QQ$-droites de la forme $\QQ \alpha_1$ avec $\alpha_1\in
\Phi_1$ en envoyant les droites des racines simples sur les droites des
racines simples et de m\^eme pour $\psi_*$ avec les $\QQ$-droites
engendr\'ees par les coracines.
\end{definition}

\begin{exemple}
Deux groupes semi-simples ayant le m\^eme groupe
adjoint ont des donn\'ees radicielles isog\`enes. En effet dans ce cas, on a
un isomorphisme canonique $\bbX^*(\bbT_1)\otimes \QQ\rightarrow
\bbX^*(\bbT_2)\otimes\QQ$ qui respecte l'ensemble des racines et celui des
racines simples et dont l'isomorphisme dual respecte les coracines.
\end{exemple}

\begin{exemple}
L'exemple le plus int\'eressant est celui de deux groupes r\'eductifs duaux au
sens de Langlands. Par d\'efinition, la donn\'ee radicielle du groupe dual
$\hat\bbG$ s'obtient \`a partir de celle de $\bbG$ en \'echangeant le
groupe des caract\`eres avec le groupe des cocaract\`eres, l'ensemble des
racines avec l'ensemble des coracines. Consid\'erons la d\'ecomposition en
somme directe de $\QQ$-espaces vectoriels
\begin{eqnarray*}
\bbX^*(\bbT)\otimes\QQ&=&\QQ\Phi \oplus \bbX^*(Z_\GG)\otimes\QQ\\
\bbX_*(\bbT)\otimes\QQ&=&\QQ\Phi^\vee \oplus \bbX_*(Z_\GG)\otimes\QQ.
\end{eqnarray*}
o\`u $\QQ\Phi$ est le sous-espace vectoriel de $\bbX^*(\bbT)\otimes\QQ$
engendr\'e par les racines et $\bbX^*(Z_\GG)$ est le groupe des
caract\`eres du centre de $\GG$. L'application $\alpha\mapsto \alpha^\vee$
induit un isomorphisme d'espaces vectoriels $\QQ\Phi\rightarrow
\QQ\Phi^\vee$ qui envoie les droites de la forme $\QQ \beta$ pour une
certaine racine $\beta$ sur les droites de la forme $\QQ {\beta'}^\vee$ pour
une certaine coracine $\beta'$. L'application lin\'eaire duale a la m\^eme
propri\'et\'e. Les cas int\'eressants sont les cas $B_n\leftrightarrow C_n$,
$F_4$ et $G_2$ o\`u il existe des racines de longueur diff\'erente. On se
r\'ef\`ere \`a \cite[page 14]{W-tordu} pour une discussion plus d\'etaill\'ee.
\end{exemple}

\begin{numero}
Puisque la r\'eflexion associ\'ee \`a une racine $\alpha$ ne d\'epend que de la
$\QQ$-droite passant par $\alpha$, les isomorphismes $\psi^*$ et $\psi_*$
induisent un isomorphisme entre les groupes de Weyl $\bbW_1 \isom
\bbW_2$ de deux groupes r\'eductifs $\bbG_1$ et $\bbG_2$ appari\'es.
\end{numero}

\begin{numero}\label{apparies}
Soient $\bbG_1$ et $\bbG_2$ deux groupes d\'eploy\'es dont les donn\'ees
radicielles sont isog\`enes. Soit $\Out_{12}$ le groupe des automorphismes
de $\bbX_*(\bbT_1)\otimes\QQ$ qui laissent stables $\Phi_1$, $\Delta_1$
mais \'egalement $\bbX^*(\bbT_2), \Phi_2, \Delta_2$ vu comme
sous-ensembles de $\bbX_*(\bbT_1)\otimes\QQ$ et de m\^eme pour les
automorphismes duaux de $\bbX_*(T_1)\otimes\QQ$. Pour tout $k$-sch\'ema
$X$, pour tout $\Out_{12}$ torseur $\rho_{12}$, on peut tordre $\bbG_1$
et $\bbG_2$ munis de leurs \'epinglages par $\rho_{12}$ pour obtenir des
formes quasi-d\'eploy\'ees $G_1$ et $G_2$. Les couples des formes
quasi-d\'eploy\'ees obtenus par ce proc\'ed\'e sont appel\'es {\em appari\'es}.
\end{numero}

\'Etant donn\'e l'isomorphisme $\psi^*$ entre les $\QQ$-espaces vectoriels
$$\bbX^{}*(\bbT_1)\otimes\QQ\simeq\bbX^{}*(\bbT_2)\otimes\QQ,$$
on peut comparer la position de deux r\'eseaux $\bbX^*(\bbT_1)$ et
$\bbX^*(\bbT_2)$ vus comme r\'eseaux dans un m\^eme $\QQ$-espace
vectoriel. Un nombre premier $p$ est dit {\em bon} par rapport \`a $\psi^*$
si $p$ ne divise pas les entiers
$$|\bbX^*(\bbT_1)/(\bbX^*(\bbT_1) \cap \bbX^*(\bbT_2))|
\ \ {\rm et}\ \ |\bbX^*(\bbT_2)/(\bbX^*(\bbT_2) \cap \bbX^*(\bbT_1))|.
$$
Si $k$ est un corps de caract\'eristique bonne par rapport \`a $\psi^*$,
celui-ci induira un isomorphisme $\bbX^*(\bbT_1)\otimes k \isom
\bbX^*(\bbT_2)\otimes k$.

\begin{lemme}\label{c_G_1=c_G_2}
Soient $G_1$ et $G_2$ deux groupes appari\'es au-dessus d'une base $X$ de bonnes
carat\'eristiques r\'esiduelles. Soient $T_1$ et $T_2$ les
tores maximaux des \'epinglages de $G_1$ et $G_2$ et $\frakt_1, \frakt_2$
leurs alg\`ebres de Lie. Alors il existe un isomorphisme canonique
$\frakt_1\rightarrow \frakt_2$ et un isomorphisme compatible
$\nu:\frakc_{G_1}\isom \frakc_{G_2}$.
\end{lemme}

\begin{proof}
On a
$$\bbt_i=\Spec({\rm Sym}_{\calO_X}(\bbX^*(\bbT_i)\otimes \calO_X))$$
o\`u ${\rm Sym}_{\calO_X}(\bbX^*(\bbT_i)\otimes \calO_X)$ est la
$\calO_X$-alg\`ebre sym\'etrique associ\'e au $\calO_X$-module libre
$\bbX^*(\bbT_i)\otimes \calO_X$. Si les caract\'eristiques r\'esiduelles de
$X$ sont bonnes, $\psi^*$ induit un isomorphisme de $\calO_X$-modules
libres
$$\bbX^*(\bbT_2)\otimes \calO_X \longrightarrow \bbX^*(\bbT_1)\otimes \calO_X$$
et donc un isomorphisme $\bbt_1 \isom \bbt_2$. On a d\'ej\`a vu que
$(\psi^*,\psi_*)$ induit un isomorphisme $\bbW_1\isom \bbW_2$ qui est
visiblement compatible avec leurs actions sur $\bbt_1\isom \bbt_2$. Il en
r\'esulte un isomorphisme entre
$$\bbc_1=\Spec({\rm Sym}_{\calO_X}(\bbX^*(\bbT_i)\otimes \calO_X))^{\bbW_{1}}$$
et
 $$\bbc_2=\Spec({\rm Sym}_{\calO_X}(\bbX^*(\bbT_i)\otimes
\calO_X))^{\bbW_{2}}.
$$
En appliquant la torsion ext\'erieure par $\rho_{12}$, on obtient
l'isomorphisme $\frakc_1 =\frakc_2$ qu'on voulait.
\end{proof}

Mettons-nous sous l'hypoth\`ese du lemme pr\'ec\'edent. Soit
$X$ le disque $\Spec(\calO_v)$ o\`u $\calO_v=k[[\epsilon_v]]$ avec $k$ un corps fini
de caract\'eristique bonne par rapport \`a $\psi^{}*$. Soient $a_1\in
\frakc_{G_1}(\calO_v)$ et $a_2\in\frakc_{G_2}(\calO_v)$ tels que
$\nu(a_1)=a_2$. L'isog\'enie $\bbT_1\rightarrow \bbT_2$ donn\'ee par
$\psi^*$ induit par torsion une isog\'enie des tores $J_{a_1}\rta J_{a_2}$
d'apr\`es \ref{J a}. De plus, l'isog\'enie induit un isomorphisme entre les
alg\`ebres de Lie de ces tores sous l'hypoth\`ese que la caract\'eristique est
bonne par rapport \`a $\psi$. On peut donc transporter des mesures de Haar
de $J_{a_1}(F_v)$ \`a $J_{a_2}(F_v)$ et inversement. On va utiliser la m\^eme notation ${\rm d}t_v$ pour ces mesures de Haar compatibles.

\begin{theoreme}\label{Waldspurger}
On a l'\'egalit\'e suivante entre les int\'egrales orbitales stables
$${\bf SO}_{a_1}(1_{G_1},{\rm d}t_v)= {\bf SO}_{a_2}(1_{G_2},{\rm d}t_v)$$
o\`u $1_{G_i}$ est la fonction caract\'eristique du compact
$\frakg_i(\calO_v)$ dans $\frakg_i(F_v)$.
\end{theoreme}

Cet \'egalit\'e a \'et\'e conjectur\'ee par Waldspurger qui l'appelle le lemme
fondamental non standard. Dans \cite{W-tordu}, il a d\'emontr\'e que la
conjonction du lemme fondamental ordinaire \ref{LS} et du lemme
non standard ci-dessus implique le lemme fondamental tordu.

\subsection{Formule globale de stabilisation}
\label{Formule globale de stabilisation}

Revenons \`a la conjecture de Langlands-Shelstad. Le lemme fondamental
consiste en une identit\'e d'int\'egrales orbitales locales. Il sera n\'eanmoins
n\'ecessaire de le replacer dans son contexte global d'origine qui est la
stabilisation de la formule des traces. Nous allons donc passer en revue la structure
de la partie anisotrope sur un corps global de caract\'eristique
positive. Cette revue nous servira de guide plus tard pour \'etudier la structure de
la cohomologie de la fibration de Hitchin.

Soient $k=\FF_q$ et $F$ le corps des fonctions rationnelles sur une courbe
projective lisse g\'eom\'etriquement connexe $X$ sur $k$. Pour tout point
ferm\'e $v\in |X|$, notons $F_v$ la compl\'etion de $F$ selon la valuation
d\'efinie par $v$ et $\calO_v$ son anneau des entiers. Pour simplifier, nous
allons supposer dans cette discussion que $G$ est un groupe adjoint
d\'eploy\'e.

Pour toute classe $\xi\in\rmH^1(F,G)$, on note $G^\xi$ la forme int\'erieure
de $G$ d\'efinie par l'image de $\xi$ dans $\rmH^1(F,G^{\rm ad})$. On
appellera la forme $G^\xi$ ainsi d\'efinie une forme int\'erieure forte. Au
lieu de consid\'erer la formule des traces de $G$, on va consid\'erer la
somme des formules des traces sur les formes int\'erieures fortes qui sont
localement triviales. La stabilisation de la somme devient plus simple et en
plus, admet une interpr\'etation g\'eom\'etrique directe. Le processus de
stabilisation qu'on va pr\'esenter est du \`a Langlands et Kottwitz, voir
\cite{Langlands} et \cite{K-EST}. Nous reprenons ici l'exposition de
\cite{N-ICM}.

Consid\'erons donc la somme
\begin{equation}\label{1}
\sum_{\xi\in {\rm ker}^1(F,G)} \sum_{\gamma\in {\mathfrak
g}^{\xi,\ani}(F)/{\sim}} \bfO_\gamma(1_D)
\end{equation}
o\`u
\begin{itemize}
    \item ${\rm ker}^1(F,G)$
    est l'ensemble des classes d'isomorphisme des $G$-tor\-seurs sur
    $F$ d'image triviale dans les $\rmH^1(F_v,G)$ pour tous
    $v\in|X|$.
    \item ${\mathfrak g}^\xi$ est la forme de ${\mathfrak g}$ sur $F$
        d\'efinie par $\xi$.
    \item $\gamma$ parcourt l'ensemble des classes de conjugaison
        r\'eguli\`eres semi-simples de ${\mathfrak g}^\xi(F)$dont le centralisateur dans
        $\frakg^\xi(F\otimes_k \bar k)$ est un tore anisotrope.
    \item $\bfO_\gamma(1_D)$ est l'int\'egrale orbitale globale
    $$
    \bfO_\gamma(1_D)=\int_{G_\gamma^\xi(F)\backslash G({\mathbb A})}
    1_{D}({\rm ad}(g)^{-1}\gamma) dg
    $$
    de la fonction
    $$1_D=\bigotimes_{v\in |X|} 1_{D_v}:\frakg(\bbA)\longrightarrow \CC$$
    $1_{D_v}$ \'etant la fonction caract\'eristique du compact ouvert
    $\varepsilon^{-d_v}{\mathfrak g}({\mathcal O}_v)$ de ${\mathfrak
    g}(F_v)$, les entiers $d_v$ \'etant des entiers pairs, nuls sauf pour un
    nombre fini de places $v$. L'int\'egrale est convergente pour les
    classes anisotropes $\gamma$.

    \item $dg$ est la mesure de Haar normalis\'ee de $G({\mathbb A})$ de
        telle fa{\c c}on que $G({\mathcal O}_{{\mathbb A}})$ soit de volume
        un.
\end{itemize}

Consid\'erons le morphisme caract\'eristique de Chevalley
$\chi:\frakg\rightarrow \frakc$ ainsi que ses formes tordues
$$\chi^\xi :\frakg^\xi \longrightarrow \frakc$$
par les classes $\xi\in\rmH^1(F,G)$. Notons que le groupe des automorphismes
de $G$ agit sur $\frakc$ \`a travers le groupe des automorphismes
ext\'erieurs de sorte que la torsion par les $\xi$ n'affecte pas $\frakc$.
Toute classe de conjugaison $\gamma$ de $\frakg^\xi(F)$ d\'efinit donc un
\'el\'ement $a\in\frakc(F)$. Comme le centralisateur de $\gamma$
semi-simple r\'egulier ne d\'epend que de $a$, il existe un sous-ensemble
$\frakc^\ani(F)$ de $\frakc(F)$ des \'el\'ements $a$ provenant des classes
$\gamma$ semi-simples r\'eguli\`eres et anisotropes dans $\frakg^{\xi}(F\otimes_{k} \bar k)$. La somme (\ref{1})
peut se r\'e\'ecrire comme une somme sur les $a\in\frakc^\ani(F)$ :
\begin{equation}\label{2}
\sum_{a\in\frakc^\ani(F)}
\sum_{\xi\in {\rm ker}^1(F,G)}
\sum_{ {\gamma\in {\mathfrak g}^\xi(F)/{\scriptscriptstyle \sim} },\, {\chi(\gamma)=a}   }
\bfO_\gamma(1_D).
\end{equation}

Pour chaque \'el\'ement $a\in\frakc^\ani(F)$, la section de
Kostant \ref{Kostant} produit un \'el\'ement $\gamma_0=\epsilon(a)\in
{\mathfrak g}(F)$ d'image $\chi(\gamma_0)=a$. On a not\'e $J_a$ le
centralisateur $I_{\gamma_0}$ de $\gamma_0$. Puisqu'on s'est restreint \`a
la partie semi-simple r\'eguli\`ere anisotrope, $J_a$ est un tore anisotrope.
Le tore dual $\hat J_a$ d\'efini sur $\Ql$ est muni d'une action finie de
$\Gamma={\rm Gal}(\overline F/F)$ telle que le groupe $\hat J_a^\Gamma$
des points fixes est un groupe fini.

Pour tout $\xi\in {\rm ker}^1(F,G)$, les classes de conjugaison
$\gamma\in {\mathfrak g}^\xi(F)$ telles que $\chi(\gamma)=a$ sont en
bijection avec les classes de cohomologie
$$\alpha={\rm inv}(\gamma_0,\gamma)\in {\rm H}^1(F,J_a)$$
dont l'image dans ${\rm H}^1(F,G)$ est l'\'el\'ement $\xi$. Ainsi l'ensemble
des paires $(\xi,\gamma)$ de la somme (\ref{2}) o\`u $\xi\in {\rm
ker}^1(F,G)$ et $\gamma$ est une classe de conjugaison de ${\mathfrak
g}^\xi(F)$ d'image $a\in \frakc^{\rm ani}(F)$ est en bijection avec
$${\rm ker}[{\rm H}^1(F,J_a)\rightarrow \bigoplus_{v\in |X|}
{\rm H}^1(F_v,G)].$$

Pour qu'une collection de classes de conjugaison
$(\gamma_v)_{v\in|X|}$ de ${\mathfrak g}(F_v)$ avec
$\chi(\gamma_v)=a$ provienne d'une paire $(\xi,\gamma)$ de la somme
(\ref{2}), il faut et il suffit que $\gamma_v=\gamma_0$ pour presque
tout $v$ et que
\begin{equation}\label{condition}
\sum_{v\in |X|}\alpha_v|_{\hat I_a^\Gamma}=0
\end{equation}
o\`u $\alpha_v={\rm inv}_v(\gamma_0,\gamma_v)$ d'apr\`es \cite{K-CTT}.
Si c'est le cas, le nombre de paires $(\xi,\gamma)$ qui s'envoient sur cette
collection $(\gamma_v)_{v\in|X|}$ est \'egal au cardinal du groupe
$${\rm ker}^1(F,J_a)={\rm ker}[{\rm H}^1(F,J_a)\rightarrow
\bigoplus_{v\in |X|} {\rm H}^1(F_v,J_a)].
$$

On va maintenant faire entrer en jeu les int\'egrales orbitales locales.
Pour d\'efinir celles-ci, on a besoin d'une mesure de Haar du centralisateur.
Pour tout $a$, on va choisir une forme volume invariante de la fibre
g\'en\'erique de $J_a$. Une telle forme existe et unique \`a un scalaire pr\`es.
Elle induit en chaque place $v$ une mesure ${\rm d}t_v$ de $J_a(F_v)$
dont le produit tensoriel $\bigotimes_{v\in|X|} {\rm d} t_v$ ne d\'epend
plus du choix de la forme volume. C'est la mesure de Tamagawa.

La somme (\ref{2}) se r\'e\'ecrit comme suit
\begin{equation}
\sum_{a\in\frakc^\ani(F)} \left|{\rm ker}^1(F,J_a)\right|\,
\tau(J_a) \sum_{(\gamma_v)_{v\in|X|}} \prod_v
\bfO_{\gamma_v}(1_{D_v},{\rm d}t_v)
\end{equation}
o\`u les $\gamma_v$ sont des classes de conjugaison de ${\mathfrak
g}(F_v)$ v\'erifiant l'\'equation (\ref{condition}) et o\`u
$$\tau(J_a)={\rm vol}(J_a(F)\backslash J_a(\bbA), \bigotimes_{v\in|X|} {\rm
d}t_v)
$$
est un nombre de Tamagawa. En mettant en facteur le nombre de
Tamagawa $\tau(J_a)$, on trouve une somme de produits d'int\'egrales
orbitales locales $\prod_v \bfO_{\gamma_v}(1_{D_v},{\rm d}t_v)$ au lieu
des int\'egrales orbitales globales $\bfO_\gamma(1_D)$.

En appliquant la formule d'Ono \cite{O}
$$\left|{\rm ker}^1(F,J_a)\right|\, \tau(J_a)
=\left|\pi_0(\hat J_a^\Gamma)\right|,
$$
la somme (\ref{2}) devient
\begin{equation}
\sum_{a\in\frakc^\ani(F)}\left|\pi_0(\hat J_a^\Gamma)\right|
\sum_{(\gamma_v)_{v\in|X|}} \prod_v \bfO_{\gamma_v}(1_{D_v},{\rm
d}t_v)
\end{equation}
o\`u les $(\gamma_v)$ v\'erifient la condition (\ref{condition}). Notons
qu'avec l'hypoth\`ese $J_a$ anisotrope, le groupe $\hat J_a^\Gamma$ est
un groupe fini de sorte que $\pi_0(\hat J_a^\Gamma)=\hat J_a^\Gamma$.

En utilisant la transformation de Fourier sur le groupe fini $\hat
J_a^\Gamma$, la somme (\ref{2}) devient
\begin{equation}\label{kappa}
\sum_{a\in\frakc^\ani(F)} \sum_{\kappa \in \hat J_a^\Gamma}
\bfO^\kappa_a(1_D,\bigotimes_{v\in|X|}{\rm d}t_v)
\end{equation}
avec
\begin{equation}
{\bf O}^\kappa_a(1_D,\bigotimes_{v\in|X|}{\rm d}t_v)=\prod_{v\in|X|}
\sum_{
\genfrac{}{}{0pt}{}{\gamma_v\in {\mathfrak g}(F_v)/{\sim}}{ \chi(\gamma_v)=a}
}
\langle {\rm inv}_v(\gamma_0,\gamma_v),\kappa\rangle {\bf
O}_{\gamma_v}(1_{D_v},{\rm d}t_v).
\end{equation}

L'op\'eration suivante consiste \`a permuter la sommation sur les $a$ et la
sommation sur les $\kappa$. En choisissant un plongement de $\hat J_a$
dans $\hat G$, $\kappa$ d\'efinit une classe de conjugaison semi-simple
$[\kappa]$. L'intersection $\hat J_a^{\Gamma}\cap [\kappa]$ de $\hat J_a^{\Gamma}$ avec la
classe de conjugaison $[\kappa]$ ne d\'epend pas du choix de plongement
de $\hat J_a$ dans $\hat G$. La somme (\ref{2}) devient maintenant
\begin{equation}\label{[kappa]}
\sum_{[\kappa]\in \hat G/\sim} \sum_{a\in \frakc^{\rm ani}(F)}
\sum_{\kappa\in \hat J_a^\Gamma \cap [\kappa]} {\bf
O}^\kappa_a(1_D,\bigotimes_{v\in|X|}{\rm d}t_v).
\end{equation}

Rappelons qu'on a suppos\'e que $G$ est un groupe semi-simple
adjoint. Pour chaque classe de conjugaison $[\kappa]$ on choisit un
repr\'esentant $\kappa\in\hat G$. Comme $\hat G$ est un groupe
semi-simple simplement connexe, $\hat G_\kappa$ est un groupe r\'eductif
connexe. Soit $\hat H=\hat G_\kappa$ et $H$ le groupe r\'eductif dual de
$\hat H$. Comme on ne s'int\'eresse qu'\`a la partie anisotrope on \'ecarte
tous les $H$ qui ne sont pas semi-simples. Supposons donc $H$ semi-simple
et regardons le morphisme
$$\nu_H:\frakc_H^\ani(F)\longrightarrow \frakc^\ani(F).$$
Un \'el\'ement $a$ est dans l'image de $\nu_{H}$ si et seulement
si $\hat J_a^\Gamma \cap [\kappa]$ est non-vide. En g\'en\'eral, on a une
bijection canonique entre l'ensemble $\hat J_a^\Gamma \cap [\kappa]$ et
l'ensemble des $a_H\in\frakc_H^\ani(F)$ dans la pr\'eimage de $a$.

En supposant le lemme fondamental, la somme (\ref{2}) devient
\begin{equation}\label{5}
\sum_{H} \sum_{a_H\in \frakc_H^{\rm ani}(F)} {\bf
SO}_a(1_D,\bigotimes_{v\in|X|}{\rm d}t_v).
\end{equation}
o\`u la premi\`ere somme porte sur l'ensemble des classes d'\'equivalence
des groupes endoscopiques elliptiques de $G$.

Comme nous avons remarqu\'e dans \cite[1]{N}, le comptage des points
\`a valeurs dans un corps fini de l'espace de module des
fibr\'es de Higgs donne essentiellement l'expression (\ref{2}). On y a
d'ailleurs propos\'e une interpr\'etation g\'eom\'etrique du processus de stabilisation (\ref{2})=(\ref{kappa}) comme une d\'ecomposition de la cohomologie de la fibration de Hitchin par rapport \`a l'action de ses sym\'etries naturelles.  Il s'agit donc d'une d\'ecomposition en somme directe d'un complexe pur sur la base de la fibration de Hitchin.

L'\'egalit\'e (\ref{2})=(\ref{5}) peut alors \^etre interpr\'et\'e comme une \'egalit\'e dans un groupe de Grothendieck entre deux complexes purs. Le th\'eor\`eme \ref{stabilisation sur tilde A} est une variante pr\'ecise de cette interpr\'etation dont on en d\'eduira le lemme fondamental de Langlands-Shelstad \ref{LS}. 


\section{Centralisateur r\'egulier et section de Kostant}
\label{section : centralisateur regulier}

Nous rappelons dans ce chapitre la construction du centralisateur r\'egulier et du morphisme du centralisateur r\'gulier vers le centralisateur de \cite{N}. Nous rappelons aussi la description galoisienne du centralisateur r\'gulier de Donagi et Gaitsgory \cite{DG}.

On garde les notations de \ref{Torsion exterieure}. En particulier, $\GG$
est un groupe r\'eductif d\'eploy\'e sur un corps $k$ et $G$ est une forme
quasi-d\'eploy\'ee de $\GG$ sur un $k$-sch\'ema $X$. On suppose que la
caract\'eristique de $k$ ne divise pas l'ordre de $\bbW$.

\subsection{Centralisateur r\'egulier}\label{centralisateur
regulier}

Soit $I$ le sch\'ema en groupes des centralisateurs au-dessus de $\frakg$. La
fibre de $I$ au-dessus d'un point $x$ de $\frakg$ est le sous-groupe de $G$
qui centralise $x$
$$ I_x=\{g\in G|\ad(g)x=x\}.$$
La dimension de $I_x$ d\'epend en g\'en\'eral de $x$ de sorte que $I$ n'est
pas plat sur $\frakg$ mais la restriction $I^\reg$ de $I$ \`a l'ouvert
$\frakg^\reg$ est un sch\'ema en groupes lisse de dimension relative $r$.
Puisque sa fibre g\'en\'erique est un tore, c'est un sch\'ema en groupes
commutatif lisse.

Le lemme \cite[3.2]{N} suivant est le point de d\'epart de notre \'etude de
la fibration de Hitchin. Pour la commodit\'e du lecteur, nous allons rappeler
bri\`evement sa d\'emonstration.

\begin{lemme} \label{J}
Il existe un unique sch\'ema en groupes lisse commutatif $J$ sur $\frakc$
muni d'un isomorphisme $G$-\'equivariant
$$(\chi^* J)|_{\frakg^\reg} \isom I|_{\frakg^\reg}.$$
De plus, cet isomorphisme se prolonge en un homomorphisme $\chi^* J\rta
I$.
\end{lemme}

\begin{proof}
Soient $x_1,x_2$ deux points de $\frakg^\reg(\bar k)$ tels que
$\chi(x_1)=\chi(x_2)=a$. Soient $I_{x_1}$ et $I_{x_2}$ les fibres de $I$ en
$x_1$ et $x_2$. Il existe $g\in G(\bar k)$ tel que $\ad(g) x_1=x_2$. La
conjugaison par $g$ induit un isomorphisme $I_{x_1} \rightarrow I_{x_2}$
qui de surcro{\^\i}t ne d\'epend pas du choix de $g$ puisque $I_{x_1}$ est
commutatif. Ceci permet de d\'efinir la fibre $J_a$ de $J$ en $a$.

La d\'efinition de $J$ comme un sch\'ema en groupes affine au-dessus de
$\frakc$ se proc\`ede par descente fid\`element plate. Notons $I_1^\reg$
et $I_2^\reg$ les sch\'emas en groupes sur $\frakg^\reg\times_\frakc
\frakg^\reg$ images r\'eciproques de $I^\reg=I|_{\frakg^\reg}$ par la
premi\`ere et la deuxi\`eme projection. La donn\'ee de descente de
$I^\reg$ le long du morphisme $\chi^\reg:\frakg^\reg\rightarrow \frakc$
consiste un en isomorphisme $\sigma_{12}:I_2^\reg\rightarrow I_1^\reg$
qui v\'erifie une condition de cocycle. Nous allons construire $\sigma_{12}$
en laissant le soin de v\'erifier la condition de cocycle au lecteur.

La construction de l'isomorphisme $\sigma_{12}$ se fait aussi par descente.
Consi\-d\'erons le morphisme
$$G\times \frakg^\reg \rightarrow \frakg^\reg\times_\frakc \frakg^\reg$$
d\'efini par $(g,x)\rightarrow (x,\ad(g)x)$. C'est un morphisme lisse surjectif
donc a fortiori fid\`element plat. Au-dessus de $G\times \frakg^\reg$, on a
un isomorphisme canonique entre $I_1^\reg$ et $I_2^\reg$ qui consiste en
la structure $G$-\'equivariante de $I$. Pour que cet isomorphisme descende
\`a $\frakg^\reg\times_\frakc \frakg^\reg$, il faut v\'erifier une identit\'e
au-dessus du carr\'e de $G\times \frakg^\reg$ au-dessus de
$\frakg^\reg\times_\frakc \frakg^\reg$. Apr\`es avoir identifi\'e ce carr\'e
\`a $G\times I_1^\reg$, l'identit\'e \`a v\'erifier se d\'eduit de la
commutativit\'e du $\frakg^\reg$-sch\'ema en groupes $I_1^\reg$.

On a donc construit un sch\'ema en groupes lisse commutatif $J$ au-dessus
de $\frakc$ muni d'un isomorphism $G$-\'equivariant $\chi^*
J|_{\frakg^\reg}\rightarrow I|_{\frakg^\reg}$. Cet isomorphisme s'\'etend
en un homomorphisme de sch\'ema en groupes $\chi^* J\rightarrow I$
puisque $\chi^* J$ est un $k$-sch\'ema lisse, $I$ est un $k$-sch\'ema affine
et de plus $\chi^* J-\chi^* J|_{\frakg^\reg}$ est un ferm\'e de codimension
trois de $\chi^* J$.
\end{proof}

Nous appelons $J$ le {\em centralisateur r\'egulier}. On peut en fait
prendre comme d\'efinition $J:=\epsilon^*I$ o\`u $\epsilon$ est la section
de Kostant de \ref{Section de Kostant}. Notons que $J$ est muni d'une
structure $\GG_m$-\'equivariante pour l'action de $\GG_m$ sur $\frakc$
d\'efinie par les exposants. Par descente, on a un sch\'ema en
groupes sur $[\frakc/\GG_m]$ qu'on notera \'egalement $J$, voir
\cite[3.3]{N}.

\subsection{Sur le quotient $[\frakg/ G]$}

Le morphisme de Chevalley $\chi:\frakg\rightarrow \frakc$ \'etant
$G$-invariant, il se factorise par le champ quotient $[\frakg/ G]$ et le
morphisme
$$[\chi]:[\frakg/ G]\rightarrow \frakc.$$
Rappelons que $[\frakg/ G]$ associe \`a tout $k$-sch\'ema
$S$ le groupo\"ide des couples $(E,\phi)$ o\`u $E$ est un $G$-torseur sur $S$
et o\`u $\phi$ est une section du fibr\'e adjoint $\ad(E)$ associ\'ee \`a la
repr\'esentation adjointe de $G$.

Au-dessus de $\frakc$, on a d\'efini un sch\'ema en groupes commutatif lisse
$J$. Soit $\bfB J$ le classifiant de $J$ qui associe \`a
tout $\frakc$-sch\'ema $S$ le groupo\"ide de Picard des $J$-torseurs sur $S$. Le
lemme \ref{J} montre qu'on a une action de $\bfB J$ sur $[\frakg/ G]$
au-dessus de $\frakc$. En effet, on peut tordre un couple $(E,\phi)\in
[\frakg/ G](S)$ par un $J$-torseur \`a l'aide de l'homomorphisme $\chi^*
J\rightarrow I$ de \ref{J}.

\begin{proposition}\label{gerbe}
Le morphisme $[\chi^\reg]:[\frakg^\reg/G]\rta\frakc$ est une gerbe li\'ee
par le centralisateur r\'egulier $J$. De plus, cette gerbe est neutre.
\end{proposition}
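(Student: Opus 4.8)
The plan is to verify the two assertions separately. First I would check that $[\chi^\reg]\colon[\frakg^\reg/G]\to\frakc$ is a gerbe banded by $J$, then I would exhibit a global section to conclude that this gerbe is neutral. For the gerbe property, recall from the lemme attributed to Kostant in \ref{subsection : Chevalley} that $\chi^\reg\colon\frakg^\reg\to\frakc$ is smooth and surjective with geometrically connected (homogeneous under $G$) fibers; in particular $[\frakg^\reg/G]\to\frakc$ is a smooth surjective morphism of stacks, so it suffices to work fppf-locally on $\frakc$. Over a point $a\in\frakc$ the fiber $[\chi^\reg]^{-1}(a)$ is the quotient stack $[\chi^{-1}(a)^\reg/G]$, and since $\chi^{-1}(a)$ is a single $G$-orbit (over $\bar k$, and after étale base change one may assume it has a rational point $x$), this quotient is isomorphic to $\bfB I_x$. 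By Lemme \ref{J} the band of $I_x$ is canonically identified with $J_a$, and this identification is independent of the chosen point $x$ precisely because $I_x$ is commutative — which is the content of the discussion preceding Lemme \ref{J a}. Thus $[\chi^\reg]$ is fppf-locally isomorphic to $\bfB J$, i.e.\ a gerbe banded by $J$.

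For neutrality, the key observation is that the Kostant section provides a global section. Indeed, by Lemme \ref{Kostant} the Kostant section $\epsilon\colon\frakc\to\frakg$ lands inside $\frakg^\reg$ (its image is an affine subspace contained in the regular locus, being a section of $\chi$ over all of $\frakc$, and every fiber of $\chi$ meets $\frakg^\reg$ in exactly one $G$-orbit so the section is forced into $\frakg^\reg$). Composing with the quotient map $\frakg^\reg\to[\frakg^\reg/G]$ gives a section $\frakc\to[\frakg^\reg/G]$ of $[\chi^\reg]$. A gerbe that admits a global section is neutral; concretely this section identifies $[\frakg^\reg/G]$ with $\bfB J$, the classifying stack of the regular centralizer, via $(E,\phi)\mapsto$ the $J$-torsor of isomorphisms $\ad(E)\text{-translates of }\phi\simeq\epsilon(\chi(\phi))$. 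One should also note $\GG_m$-equivariance if one wants the statement over $[\frakc/\GG_m]$, but this follows since $\epsilon$ is $\GG_m$-equivariant for the action by exponents, as recalled in \ref{Torsion exterieure}.

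The main subtlety — though not really an obstacle — is the descent bookkeeping needed to promote the fiberwise identification $[\chi^\reg]^{-1}(a)\simeq\bfB J_a$ to an honest identification of stacks over $\frakc$, compatibly with the band. This is handled exactly as in the proof of Lemme \ref{J}: one uses the smooth atlas $G\times\frakg^\reg\to\frakg^\reg\times_\frakc\frakg^\reg$ and the $G$-equivariant structure of $I$, and the cocycle condition reduces to the commutativity of $I^\reg$. In fact, once the Kostant section is in hand the cleanest route is to bypass the abstract gerbe/band verification entirely: one directly constructs the isomorphism $\bfB J\isom[\frakg^\reg/G]$ over $\frakc$ sending a $J$-torsor $P$ to $(\epsilon^*E)\wedge^{\chi^*J}\!\text{-twist}$, i.e.\ one twists the pair $(\text{trivial }G\text{-torsor},\epsilon(a))$ by $P$ using the homomorphism $\chi^*J\to I$; the fact that this is an isomorphism is checked fppf-locally, where it reduces to the fiberwise statement above. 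This simultaneously proves that the gerbe is banded by $J$ and that it is neutral. This is, of course, exactly \cite[3.3 and thereabouts]{N}, and I would simply reproduce that argument.
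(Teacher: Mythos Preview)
Your proposal is correct and follows essentially the same approach as the paper: the gerbe property comes from the $G$-equivariant isomorphism $(\chi^*J)|_{\frakg^\reg}\isom I|_{\frakg^\reg}$ of Lemme~\ref{J}, and neutrality comes from the Kostant section. The paper's proof is two sentences long and says exactly this.

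One minor slip: your justification for why $\epsilon$ lands in $\frakg^\reg$ (``every fiber of $\chi$ meets $\frakg^\reg$ in exactly one $G$-orbit so the section is forced into $\frakg^\reg$'') is not a valid argument --- nothing prevents a section of $\chi$ from passing through non-regular points. The regularity of the Kostant slice $\bbx_-+\bbg^{\bbx_+}$ is a separate theorem of Kostant (part of the content of \cite{Kos}), and the paper simply invokes it by writing $\epsilon\colon\frakc\to\frakg^\reg$.
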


\begin{proof}
L'assertion que $[\chi^\reg]$ est une gerbe se d\'eduit du fait que la
restriction de l'homomorphisme $\chi^* J\rightarrow I$ \`a $\frakg^\reg$
est un isomorphisme $G$-\'equivariant par construction de $J$. La section
de Kostant $\epsilon:\frakc\rta\frakg^\reg$ compos\'ee avec le morphisme
quotient $\frakg^\reg\rta[\frakg^\reg/G]$ neutralise la gerbe. Nous noterons
ce point $[\epsilon]:\frakc\rta [\frakg^\reg/G]$.
\end{proof}

\begin{numero}
Il n'est donc pas d\'eraisonnable de penser $[\frakg/ G]$ comme une sorte
de compactification du champ de Picard ${\bf B} J$. C'est une moule
avec laquelle on fabrique des situations g\'eom\'etriques plus concr\`etes comme la fibre
de Springer affine et la fibration de Hitchin en \'evaluant sur de diff\'erents
sch\'emas $S$. Pour les fibres de Springer affines, on l'\'evalue sur l'anneau
des s\'eries formelles \`a une variable \cf \ref{section : Fibres de Springer
affines}. Pour la fibration de Hitchin, on l'\'evalue sur une courbe projective
lisse \cf \ref{section : Fibration de Hitchin}. Pour cette derni\`ere, on a
besoin de tenir compte aussi de l'action de $\GG_m$ qui agit par
homoth\'etie sur $\frakg$.
\end{numero}

\begin{numero}
Le centralisateur r\'egulier $J$ est muni d'une action de $\GG_m$ qui
rel\`eve l'action de $\GG_m$ sur $\frakc$. Le
classifiant ${\bf B} J$ au-dessus de $[\frakc/ \GG_m]$ agit sur $[\frakg/
G\times\GG_m]$. Ce dernier contient comme ouvert
$[\frakg^\reg/G\times\GG_m]$. Le morphisme
\begin{equation}\label{mu}
[\chi^\reg/\GG_m]:[\frakg^\reg/G\times\GG_m]\rta[\frakc/\GG_m]
\end{equation}
est encore une gerbe li\'ee par $J$. Cette gerbe n'est pas neutre en
g\'en\'eral. Elle le devient n\'eanmoins apr\`es l'extraction d'une racine
carr\'e du fibr\'e inversible universel sur ${\bf B}\GG_m$. Consid\'erons
l'homomorphisme $[2]:\GG_m\rta\GG_m$ d\'efini par $t\mapsto t^2$. Il
induit un morphisme $\bfB[2]:{\bf B}\GG_m\rta {\bf B}\GG_m$ qui consiste
en l'extraction d'une racine carr\'e du fibr\'e inversible universel sur ${\bf
B}\GG_m$. Nous indiquons par un exposant $[2]$ le changement de base
par ce morphisme. En particulier, on a un morphisme
$$
[\chi/\GG_m]^{[2]}:[\frakg/G\times \GG_m]^{[2]}\rta
[\frakc/\GG_m]^{[2]}.
$$
En effet, $[\frakc/\GG_m]^{[2]}$ est le quotient de $\frakc$ par l'action de
$\GG_m$ d\'efinie comme le carr\'e de l'action par les exposants et $[\frakg/G\times
\GG_m]^{[2]}$ est le quotient de $\frakg$ par l'action adjointe de $G$ et le
carr\'e de l'homoth\'etie. Consid\'erons le compos\'e de deux
homomorphismes
$$\GG_m\rightarrow T\times \GG_m\rightarrow G\times\GG_m$$
dont le premier est d\'efini par $t\mapsto (2\rho(t),t)$ o\`u $2\rho$ est la
somme des coracines positives. La section de Kostant \ref{Section de
Kostant} $\epsilon:\frakc\rightarrow \frakg$ est \'equivariante par rapport \`a cet
homomorphisme de sorte qu'il induit une section de $[\chi/\GG_m]^{[2]}$.
Nous pouvons reformuler ce qui pr\'ec\`ede d'une fa{\c c}on plus commode
\`a l'usage.
\end{numero}

\begin{lemme}\label{epsilon D'}
Soient $S$ un $k$-sch\'ema muni d'un fibr\'e inversible $D$ et
$h_D:S\rightarrow {\bf B}\GG_m$ le morphisme associ\'e vers le classifiant
de $\GG_m$. Soit $a:S\rightarrow [\frakc/\GG_m]$ un morphisme au-dessus
de $h_D$. La section de Kostant et le choix d'une racine carr\'e $D'$ de $D$
d\'efinit une section
$$[\epsilon]^{D'}(a):S\rightarrow [\frakg^\reg/G\times \GG_m].$$
\end{lemme}

\subsection{Le centre de $G$ et les composantes connexes de $J$}
Le contenu de ce paragraphe est bien connu.

\begin{proposition} \label{centre connexe}
Si $G$ est un groupe de centre connexe, le centralisateur r\'egulier $J$ a
des fibres connexes.
\end{proposition}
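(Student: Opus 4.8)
Si $G$ est un groupe de centre connexe, le centralisateur régulier $J$ a des fibres connexes.

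The plan is to reduce the statement to the classical fact that the centralizer of a regular element of a reductive Lie algebra is connected as soon as the centre of the group is connected, the delicate part of that fact being handled by the Jordan decomposition.

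Connectedness of the fibres of the $\frakc$-group scheme $J$ may be tested on geometric fibres, and over an algebraic closure $\bar k$ of $k$ every $\Out(\GG)$-torsor is trivial; so I would base change to $\bar k$ and assume from the start that $G=\GG$ is split over an algebraically closed field. A geometric fibre of $J$ is then $J_a$ for some $a\in\bbc(\bar k)$, and, taking $J=\epsilon^{*}I$ for the regular centralizer of Lemma~\ref{J} (or using Lemma~\ref{J a}), there is a canonical isomorphism $J_a\isom I_{\gamma_0}$, where $\gamma_0=\epsilon(a)\in\bbg^\reg(\bar k)$ is the image of $a$ by the Kostant section of \ref{Section de Kostant}. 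Hence it suffices to prove that $I_{\gamma_0}=\{g\in\GG\mid\ad(g)\gamma_0=\gamma_0\}$ is connected for every regular $\gamma_0\in\bbg(\bar k)$, under the assumption that $Z_\GG$ is connected.

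I would then write the Jordan decomposition $\gamma_0=s+n$ in $\bbg(\bar k)$ ($s$ semisimple, $n$ nilpotent, $[s,n]=0$). Since $\ad(g)$ preserves the Jordan decomposition, $I_{\gamma_0}=\{g\mid\ad(g)s=s,\ \ad(g)n=n\}=Z_M(n)$ with $M=Z_\GG(s)$. Because $\mathrm{char}(k)\nmid|\bbW|$, $M$ is a smooth connected reductive subgroup of maximal rank $r$. Its centre $Z_M$ is again connected: conjugating $s$ into $\bbt$, the roots of $M$ are exactly the roots $\alpha$ with $\mathrm{d}\alpha(s)=0$, a closed subsystem which is $\bbW$-conjugate to a standard Levi subsystem $\Phi_J$, so $\ZZ\Phi_M$ is a saturated sublattice of $\ZZ\Phi$; as $\bbX^{*}(\bbT)/\ZZ\Phi$ is torsion-free — which is precisely what connectedness of $Z_\GG$ means — the same holds for $\bbX^{*}(\bbT)/\ZZ\Phi_M$, i.e.\ $Z_M$ is connected. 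Finally $\dim Z_M(n)=\dim I_{\gamma_0}=r=\rk M$, so $n$ is a \emph{regular} nilpotent element of $\Lie M$, and the proof is reduced to the case of a regular nilpotent element of a connected reductive group $M$ with connected centre.

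In that case $Z_M(n)$ is connected, which is part of Kostant's description of the regular centralizer: concretely, $Z_M(n)$ lies in the unique Borel subgroup $B_M=T_MU_M$ attached to $n$; the projection $Z_M(n)\rightarrow T_M$ has image inside the intersection of the kernels of the simple roots, i.e.\ inside the connected group $Z_M$, while its kernel $Z_{U_M}(n)$ is connected, being stable under and contracted to $0$ by the $\GG_m$ coming from an $\mathfrak{sl}_2$-triple through $n$, hence isomorphic to an affine space; as an extension of the connected group $Z_M$ by $Z_{U_M}(n)$, the group $Z_M(n)$ is connected. The point that needs care, and which I expect to be the main obstacle, is to check that this structure theory of regular centralizers (smoothness, the description of the unipotent part) remains valid under the sole hypothesis $\mathrm{char}(k)\nmid|\bbW|$ and not just in characteristic zero; alternatively one could argue uniformly, avoiding the Jordan decomposition altogether, from the Galois description of $J$ due to Donagi and Gaitsgory \cite{DG}, computing $\pi_0(J_a)$ directly from the lattice $\bbX^{*}(\bbT)$ and the cameral cover.
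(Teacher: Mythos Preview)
Your approach is essentially the same as the paper's: Jordan decomposition $\gamma_0=s+n$, reduction to the regular nilpotent $n$ inside the connected reductive group $M=Z_G(s)$ with connected centre, and then the theorem of Springer for centralizers of regular nilpotents. The paper is more laconic, citing Kostant \cite[3, lemmes 5 et 8]{Kos} for the fact that $M$ is connected reductive with connected centre (where you give a direct lattice argument via saturation of $\ZZ\Phi_M$ in $\bbX^*(\bbT)$), and citing Springer \cite[III, 3.7 et 1.14]{S-S}, \cite[th\'eor\`eme 4.11]{S} for the nilpotent case (where you sketch the argument via the Borel containing $Z_M(n)$ and the contracting $\GG_m$-action). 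Your concern about whether the $\mathfrak{sl}_2$-triple argument survives the hypothesis $\mathrm{char}(k)\nmid|\bbW|$ is exactly why the paper defers to those references rather than reproving this step.
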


\begin{proof} Dans le cas d'un \'el\'ement
nilpotent r\'egulier, il s'agit d'un th\'eor\`eme de Springer \cf\cite[III, 3.7 et
1.14]{S-S} et \cite[th\'eor\`eme 4.11]{S}. Le cas g\'en\'eral s'y ram\`ene par
la d\'ecomposition de Jordan. Soit $x\in \frakg(\bar k)$ un point
g\'eom\'etrique de $\frakg$ qui est r\'egulier. Soit $x=s+n$ sa
d\'ecomposition de Jordan o\`u $s\in\frakg(\bar k)$ est un \'el\'ement
semi-simple et $n\in\frakg(\bar k)$ est un \'el\'ement nilpotent tel que
$[x,n]=0$. D'apr\`es \cite[3, lemmes 5 et 8]{Kos}, le centralisateur $G_x$
de $x$ est un sous-groupe r\'eductif connexe de $G$ et de plus son centre
est connexe. On applique donc le th\'eor\`eme de Springer \`a l'\'el\'ement
nilpotent r\'egulier $n$ de $\Lie(G_s)$.
\end{proof}

\begin{corollaire}\label{pi 0 ZG}
Pour tout $x\in \frakg^\reg(\bar k)$, l'homomorphisme canonique
$Z_G\rightarrow I_x$ induit un homomorphisme surjectif
$\pi_0(Z_G)\rightarrow \pi_0(I_x)$.
\end{corollaire}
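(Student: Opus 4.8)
Le plan consiste \`a se ramener au cas d'un groupe de centre connexe, d\'ej\`a trait\'e dans la proposition \ref{centre connexe}. On choisira un tore d\'eploy\'e $T$ et une immersion ferm\'ee $Z_G\hookrightarrow T$, et on posera
$$G_1=(G\times T)/Z_G,$$
le sous-groupe $Z_G$ \'etant plong\'e dans $G\times T$ par $z\mapsto(z,z^{-1})$. On v\'erifiera alors sans peine que $G_1$ est un groupe r\'eductif, que $g\mapsto[(g,1)]$ identifie $G$ \`a un sous-groupe de $G_1$, que $Z_{G_1}=(Z_G\times T)/Z_G\isom T$ est connexe, que $G_1=G\cdot Z_{G_1}$ et $Z_{G_1}\cap G=Z_G$, et enfin que $G_1^{\rm der}=G^{\rm der}$, de sorte que le groupe de Weyl de $G_1$ est $\bbW$ et que $G_1$ v\'erifie encore l'hypoth\`ese sur la caract\'eristique.

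On notera $x_1\in\frakg_1=\Lie(G_1)$ l'image de $x$ par le morphisme injectif $\frakg\hookrightarrow\frakg_1$. Le facteur $T$ de $G_1$ \'etant central, il agit trivialement dans la repr\'esentation adjointe ; un calcul direct montrera que, pour tout $y\in\frakg_1$ provenant d'un \'el\'ement $u\in\frakg$, le centralisateur de $y$ dans $G_1$ s'identifie \`a $(I_u\times T)/Z_G$, o\`u $I_u$ d\'esigne le centralisateur de $u$ dans $G$ ; en particulier $\dim I^{G_1}_y=\dim I_u+\bigl(\rk(G_1)-\rk(G)\bigr)$. Pour $x$ r\'egulier on a $\dim I_x=\rk(G)$, d'o\`u $\dim I^{G_1}_{x_1}=\rk(G_1)$, et $x_1$ est donc un \'el\'ement r\'egulier de $\frakg_1$. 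La proposition \ref{centre connexe} appliqu\'ee \`a $G_1$ donnera alors la connexit\'e de $I^{G_1}_{x_1}$.

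La description explicite du centralisateur dans $G_1$ fournira enfin une suite exacte de groupes alg\'ebriques sur $\bar k$
$$1\rightarrow Z_G\rightarrow I_x\times Z_{G_1}\rightarrow I^{G_1}_{x_1}\rightarrow 1,$$
o\`u $Z_G$ est plong\'e par $z\mapsto(z,z^{-1})$ et o\`u la derni\`ere fl\`eche est $(g,z)\mapsto gz$. En passant aux groupes de composantes connexes, et compte tenu de la connexit\'e de $Z_{G_1}$, on obtiendra la suite exacte $\pi_0(Z_G)\rightarrow\pi_0(I_x)\rightarrow\pi_0(I^{G_1}_{x_1})\rightarrow 1$. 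Comme $I^{G_1}_{x_1}$ est connexe, le dernier terme est trivial, ce qui montre que le morphisme $\pi_0(Z_G)\rightarrow\pi_0(I_x)$, qui n'est autre que l'homomorphisme canonique d\'eduit de l'inclusion $Z_G\hookrightarrow I_x$, est surjectif.

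L'essentiel du travail, et le seul endroit qui demandera un peu de soin, sera le calcul du centralisateur $I^{G_1}_{x_1}$ dans $G_1$ : il s'agira de voir que l'adjonction du tore central $T$ augmente la dimension de $\rk(G_1)-\rk(G)$ exactement, et que la suite exacte ci-dessus admet bien $Z_G$ pour noyau. Tout le reste est formel d\`es qu'on dispose de la proposition \ref{centre connexe}.
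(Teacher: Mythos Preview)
Your proof is correct. Both your approach and the paper's reduce to Proposition~\ref{centre connexe}, but they go in opposite directions: the paper passes \emph{downward} to the adjoint group $G^\ad=G/Z_G$, which has trivial (hence connected) centre, and uses the short exact sequence
\[
1\rightarrow Z_G\rightarrow I_x\rightarrow I^\ad_x\rightarrow 1,
\]
where $I^\ad_x$ is the centraliser of $x$ for the $G^\ad$-action on $\frakg$; connectedness of $I^\ad_x$ then gives the surjectivity immediately. You instead pass \emph{upward} to a $z$-extension $G_1=(G\times T)/Z_G$ with connected centre. Your route is slightly longer --- one must construct $T$, verify that $x_1$ is regular in $\frakg_1$, and identify $I^{G_1}_{x_1}$ explicitly --- whereas the quotient $I_x/Z_G$ is immediate. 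On the other hand, the $z$-extension device reappears later in the paper (notably in the computation of $\pi_0(\calP_{\bar v}(J_a))$), so your argument has the virtue of rehearsing a construction that recurs.
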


\begin{proof}
Soient $G^\ad$ le groupe adjoint de $G$ et $I^\ad_x$ le centralisateur de
$x$dans $G^\ad$. On a la suite exacte
$$1\rightarrow Z_G \rightarrow I_x \rightarrow I^\ad_x \rightarrow 1.$$
Puisque $I^\ad_x$ est un groupe connexe par la proposition qui pr\'ec\`ede,
la fl\`eche canonique $\pi_0(Z_G)\rightarrow \pi_0(I_x)$ est surjective.
\end{proof}

\subsection{Description galoisienne de $J$}
\label{desciption galoisienne de J}

A la suite de Donagi et Gaitsgory \cite{DG}, on peut d\'ecrire le sch\'ema en
groupes $J$ au-dessus de $\frakc$ \`a l'aide du rev\^etement fini et plat
$\pi:\frakt \rta\frakc$. Notre pr\'esentation sera un peu diff\'erente de la
leur.

Consid\'erons la restriction \`a la Weil du tore $T\times \frakt$ sur $\frakt$
$$\Pi:=\prod_{\frakt/\frakc} (T\times \frakt)=\pi_* (T\times\frakt).$$
Comme groupe ab\'elien fibr\'e au-dessus de $\frakc$, on a
$$
\Pi(S)=
\Hom_\frakt(S\times_\frakc \frakt,T\times\frakt)
$$
pour tout $\frakc$-sch\'ema $S$. Ce foncteur est repr\'esentable par un
sch\'ema en groupes commutatif au-dessus de $\frakc$ puisque le morphisme
$\frakt\rightarrow\frakc$ est fini et plat. Comme la restriction \`a la Weil
pr\'eserve la lissit\'e, $\Pi$ est un sch\'ema en groupes lisse et commutatif
au-dessus de $\frakc$ de dimension relative $r\sharp\,\bbW$. Au-dessus de
l'ouvert $\frakc^\rs$, le rev\^etement $\frakt^\rs \rta\frakc^\rs$ est fini
\'etale de sorte que la restriction de $\Pi$ \`a cet ouvert est un tore.

Le $S$-sch\'ema en groupes fini \'etale $W$ agit simultan\'ement sur $T$ et
$\frakt$. L'action diagonale de $W$ sur $T\times\frakt$ induit une action de
$W$ sur $\Pi$. Les points fixes par $W$ d\'efinit un
sous-foncteur ferm\'e $J^1$ de $\Pi$.

\begin{lemme}\label{J1}
Le sous-sch\'ema ferm\'e $J^1$ de $\Pi$ est un sch\'ema en groupes
commutatif et lisse au-dessus de $\frakc$.
\end{lemme}

\begin{proof}
Puisque l'ordre de $W$ est premier \`a la caract\'eristique, les points fixes
de $W$ dans le sch\'ema lisse $\Pi$ forme un sous-sch\'ema ferm\'e $J^1$ lisse sur $k$. Soient
$x$ un point g\'eom\'etrique de $\Pi$ fixe sous l'action de $W$ et $a$ son
image dans $\frakc$. Soient ${\bf T}_x$ l'espace tangent de $\Pi$ en $x$ et
${\bf T}_a$ l'espace tangent de $\frakc$ en $a$. Puisque $\Pi$ est lisse
au-dessus de $\frakc$, l'application ${\bf T}_x\rta {\bf T}_a$ est surjective.
L'espace tangent ${\bf T}_x J^1$ de $J^1$ en $x$ est le sous-espace
vectoriel de ${\bf T}_x$ des vecteurs fixes sous $W$. Puisque la
caract\'eristique $p$ du corps de base $k$ ne divise pas $\sharp\,\bbW$,  la
restriction de ${\bf T}_x\rta {\bf T}_a$ \`a la partie $W$-fixe est encore surjective.
Il s'ensuit que $J^1$ est un sch\'ema lisse au-dessus de $\frakc$.
\end{proof}

La proposition qui suit est un renforcement de \ref{pi a J a}.

\begin{proposition} \label{J J1}
Il existe un homomorphisme canonique $J\rightarrow
J^1$ qui de plus est un isomorphisme au-dessus de l'ouvert
$\frakc^\rs$ de $\frakc$.
\end{proposition}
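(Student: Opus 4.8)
Le plan est de construire l'homomorphisme $J\rightarrow J^1$ directement au-dessus de $\frakc$ tout entier, puis de constater qu'il induit un isomorphisme au-dessus de $\frakc^\rs$ gr\^ace \`a la description cam\'erale de $J$ rappel\'ee en \ref{J a} et \ref{pi a J a}. La strat\'egie na\"{\i}ve, qui consisterait \`a d\'efinir d'abord le morphisme au-dessus de $\frakc^\rs$ et \`a le prolonger, est vou\'ee \`a l'\'echec : le diviseur discriminant \'etant de codimension un dans $\frakc$, un morphisme d\'efini sur $\frakc^\rs$ \`a valeurs dans le sch\'ema affine $J^1$ n'a aucune raison de se prolonger ; il faut une construction valable partout. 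Or, par l'adjonction entre la restriction \`a la Weil $\pi_*$ le long du morphisme fini plat $\pi:\frakt\rightarrow\frakc$ et le changement de base $\pi^*$, combin\'ee \`a la d\'efinition $J^1=\Pi^W$, se donner un homomorphisme de $\frakc$-sch\'emas en groupes $J\rightarrow J^1$ \'equivaut \`a se donner un homomorphisme $W$-\'equivariant de $\frakt$-sch\'emas en groupes $\pi^* J\rightarrow T$, la composante vers $\frakt$ \'etant impos\'ee comme morphisme structural.

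Pour produire un tel homomorphisme, je proposerais d'utiliser la r\'esolution simultan\'ee de Grothendieck $\tilde\frakg=\{(x,\frakb')\mid x\in\frakb'\}$, o\`u $\frakb'$ parcourt les sous-alg\`ebres de Borel de $\frakg$, et sa restriction $\tilde\frakg^\reg$ au-dessus de $\frakg^\reg$. On dispose du morphisme fini $\tilde\frakg^\reg\rightarrow\frakg^\reg$ et du morphisme $\tilde\frakg^\reg\rightarrow\frakt$ donn\'e par $(x,\frakb')\mapsto x\bmod[\frakb',\frakb']$, qui s'ins\`erent dans un carr\'e commutatif au-dessus de $\pi$ et de $\chi^\reg$ ; il est classique que ce carr\'e est cart\'esien, de sorte que $\tilde\frakg^\reg\rightarrow\frakt$ est fid\`element plat. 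Au-dessus de $\tilde\frakg^\reg$, l'\'el\'ement r\'egulier universel $x$ appartient au Borel universel $\frakb'$, et comme il est r\'egulier son centralisateur $I_x$ est contenu dans $B'$ ; la projection $B'\rightarrow B'/[B',B']$ sur le tore abstrait, canoniquement identifi\'e \`a $T$, fournit donc un homomorphisme de sch\'emas en groupes sur $\tilde\frakg^\reg$, $G$-\'equivariant et $W$-\'equivariant, de l'image r\'eciproque de $I^\reg$ vers $T$. D'apr\`es le lemme \ref{J} on a $I^\reg=(\chi^* J)|_{\frakg^\reg}$, et la commutativit\'e du carr\'e montre que cette image r\'eciproque co\"{\i}ncide avec celle de $\pi^* J$ le long de $\tilde\frakg^\reg\rightarrow\frakt$ ; la $G$-\'equivariance fournit alors la donn\'ee de descente, et en descendant le long du morphisme fid\`element plat $\tilde\frakg^\reg\rightarrow\frakt$ on obtient l'homomorphisme $W$-\'equivariant $\pi^* J\rightarrow T$ voulu, d'o\`u $J\rightarrow J^1$. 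Sur le groupe d\'eploy\'e cette construction est \'equivariante sous $\Out(\GG)$, qui pr\'eserve l'\'epinglage, le Borel, le tore abstrait et la r\'esolution de Grothendieck ; elle se transporte donc \`a toute forme quasi-d\'eploy\'ee.

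Le point d\'elicat, qui me para\^{\i}t \^etre l'obstacle principal, est de v\'erifier que l'homomorphisme $I_x\rightarrow T$ ci-dessus est bien canonique, c'est-\`a-dire que ses deux images r\'eciproques co\"{\i}ncident sur $\tilde\frakg^\reg\times_\frakt\tilde\frakg^\reg$. Si $(x_1,\frakb_1)$ et $(x_2,\frakb_2)$ ont m\^eme image $t\in\frakt$, on a $x_2=\ad(g)x_1$ pour un certain $g\in G$ ; comme les deux Borels contiennent l'\'el\'ement r\'egulier correspondant et induisent tous deux le point $t$ de $\frakt$, on en d\'eduit $\frakb_2=\ad(g)\frakb_1$, l'isomorphisme $\ad(g):B_1/[B_1,B_1]\isom B_2/[B_2,B_2]$ co\"{\i}ncide avec l'identification canonique des tores abstraits ind\'ependamment du choix de $g$, et l'ambigu\"{\i}t\'e r\'esiduelle sur $g$ est absorb\'ee par le groupe commutatif $I_{x_1}\subset B_1$, qui agit trivialement sur $B_1/[B_1,B_1]$ ; les deux morphismes co\"{\i}ncident donc. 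On utilise ici les faits classiques que le centralisateur d'un \'el\'ement r\'egulier contenu dans une sous-alg\`ebre de Borel est contenu dans le sous-groupe de Borel correspondant, et que la conjugaison entre sous-groupes de Borel induit l'identification canonique de leurs tores abstraits. Il restera \`a voir que le morphisme ainsi obtenu est un isomorphisme au-dessus de $\frakc^\rs$ : l\`a $\pi$ est fini \'etale, le $W$-torseur cam\'eral de $a\in\frakc^\rs$ est la restriction de $\frakt^\rs\rightarrow\frakc^\rs$, de sorte que $J|_{\frakc^\rs}$ s'identifie d'apr\`es \ref{pi a J a} et \ref{J a} au tore $\Pi^W|_{\frakc^\rs}=J^1|_{\frakc^\rs}$ ; en d\'eroulant les d\'efinitions on voit que l'homomorphisme construit est l'identit\'e sur $\frakc^\rs$, donc en particulier un isomorphisme.
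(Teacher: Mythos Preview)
Your proposal is correct and follows essentially the same route as the paper's proof: both use the Grothendieck--Springer resolution $\tilde\frakg^\reg \simeq \frakg^\reg \times_\frakc \frakt$, the inclusion $I_x \subset B'$ for $x$ regular in $\frakb'$, and the projection $B' \to T$ to produce a $G$-equivariant map that descends along $\tilde\frakg^\reg \to \frakt$ to give $\pi^* J \to T$, whence $J \to J^1$ by adjunction. The only notable differences are that the paper proves the inclusion $I_x \subset \ad(g)B$ as an explicit lemma (via a flatness argument from the regular semisimple locus) rather than citing it as classical, and justifies the factorisation through $J^1 = \Pi^W$ by checking $W$-equivariance over the dense open $\frakc^\rs$ and invoking closedness, whereas you assert the $W$-equivariance of $\pi^* J \to T$ directly.
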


\begin{proof} Commen{\c c}ons par construire un homomorphisme de $J$
dans la restriction \`a la Weil $\pi_* (T\times\frakt)$. Par adjonction, il
revient au m\^eme de construire un homomorphisme
$$\pi^* J\rightarrow T\times\frakt$$
de sch\'emas en groupes au-dessus de $\frakt$.

Pour ce faire, nous avons besoin  de la r\'esolution simultan\'ee de
Gro\-thendieck-Springer. Soit $\tilde\frakg$ le sch\'ema des couples
$(x,gB)$ o\`u $x\in \frakg$ et $gB\in G/B$ qui v\'erifie
$\ad(g)^{-1}(x)\in\Lie(B)$. Ici, $B$ d\'esigne le sous-groupe de Borel de
l'\'epinglage de $G$. Notons $\pi_\frakg:\tilde\frakg\rightarrow\frakg$ la
projection sur la variable $x$. La projection $\Lie(B)\rightarrow\frakt$
d\'efinit un morphisme $\tilde\chi:\tilde\frakg\rightarrow \frakt$ qui
compl\`ete le carr\'e commutatif
$$
\xymatrix{
  \tilde\frakg \ar[d]_{\pi_\frakg} \ar[r]^{\tilde\chi}
                & \frakt  \ar[d]^{\pi}\\
     \frakg \ar[r]_{\chi}          & \frakc             }
$$
De plus, si on se restreint \`a l'ouvert $\frakg^\reg$ de $\frakg$, on obtient
un diagramme cart\'esien
$$
\xymatrix{
  \tilde\frakg^\reg \ar[d]_{\pi_\frakg^\reg} \ar[r]^{\tilde\chi^\reg}
                & \frakt  \ar[d]^{\pi}\\
     \frakg^\reg \ar[r]_{\chi^\reg}          & \frakc             }
$$
D'apr\`es \ref{J}, $(\chi^\reg)^*J= I|_{\frakg^\reg}$ si bien que pour
construire un homomorphisme de $\frakt$-sch\'emas en groupes $\pi^*
J\rightarrow (T\times \frakt)$, il suffit de construire un homomorphisme de
$\tilde\frakg^\reg$-sch\'emas en groupes
$$
(\pi_\frakg^\reg)^*(I|_{\frakg^\reg}) \rightarrow T\times
\tilde\frakg^\reg
$$
qui soit $G$-\'equivariant. Nous avons besoin d'un lemme.

\begin{lemme}
Pour tout $(x,gB)\in \tilde\frakg^\reg(\bar k)$, on a $I_x\subset \ad(g)B$.
\end{lemme}

\begin{proof}
L'assertion est claire pour les \'el\'ements $x$ qui sont r\'eguliers
semi-simples. En effet, pour ceux-ci, le centralisateur $I_x$ est un tore qui
agit la fibre $\pi_\frakg^{-1}(x)$. Puisque celle-ci est un ensemble discret,
l'action du tore est n\'ecessairement triviale.

Consid\'erons le sch\'ema en groupes $H$ au-dessus de $\tilde\frakg^\reg$
dont la fibre au-dessus de $(x,gB)\in \tilde\frakg^\reg$ est le sous-groupe de
$I_x$ des \'el\'ements $h$ tels que $h\in gBg^{-1}$. Par construction, c'est
un sous-sch\'ema en groupes ferm\'es de $(\pi_\frakg^\reg)^*I$ qui co\"incide
avec $(\pi_\frakg^\reg)^*I|_{\frakg^\reg}$ au-dessus de l'ouvert dense des
couples $(x,gB)$ avec $x$ r\'eguliers semi-simples. Or
$(\pi_\frakg^\reg)^*I|_{\frakg^\reg}$ est plat sur $\tilde\frakg^\reg$, ce
sous-sch\'ema en groupes est n\'ecessairement \'egal \`a
$(\pi_\frakg^\reg)^*I|_{\frakg^\reg}$.
\end{proof}

Consid\'erons le sch\'ema en groupes $\underline B$ au-dessus de $G/B$
dont la fibre au-dessus de $gB$ est le sous-groupe $\ad(g)B$ de $G$. Notons
$\underline B|_{\tilde\frakg^\reg}$ le changement de base de $\underline
B$ \`a $\tilde\frakg^\reg$. Le lemme ci-dessus montre qu'au-dessus de
$\tilde\frakg^\reg$, on a un homomorphisme
$$I|_{\tilde\frakg^\reg}\rightarrow  \underline B|_{\tilde\frakg^\reg}.$$

Par ailleurs, on dispose d'un homomorphisme de $\underline B$ dans le $G/B$-tore
$T\times G/B$.  En composant, on obtient un homomorphisme
$G$-\'equivariant
$$I|_{\tilde \frakg^\reg} \rightarrow T\times \tilde\frakg^\reg$$
qui est un isomorphisme au-dessus du lieu r\'egulier semi-simple. Au-dessus
du lieu r\'egulier semi-simple, l'action de $W$ sur $I|_{\tilde \frakg^\reg}$
se transporte sur l'action diagonale de $T\times \frakg^\rs$. Par adjonction,
on obtient un homomorphisme
$$I|_{\frakg^\reg}\rightarrow (\pi_\frakg)_*(T\times \tilde\frakg^\reg).$$
qui se factorise par le sous-foncteur des points fixes sous l'action diagonale
de $W$ sur $(\pi_\frakg)_*(T\times \tilde\frakg^\reg)$.

Par descente, on obtient un homomorphisme $J\rightarrow J^1$ qui est un
isomorphisme au-dessus de $\frakc^\rs$.
\end{proof}

La proposition pr\'ec\'edente admet la variante suivante. Consid\'erons une
r\'eduction $\rho\wedge^{\Theta_\rho} \Out(\GG)=\rho_G$ du torseur
$\rho_G$ \cf \ref{reduction}. On dispose alors un rev\^etement fini plat
$\pi_{\rho}:X_\rho\times \bbt \rightarrow \frakc$ qui est g\'en\'eriquement
\'etale galoisien de groupe de Galois $\bbW\rtimes \Theta_\rho$ \cf
\ref{pi_rho}. Ceci fournit une description l\'eg\`erement diff\'erente de
$J^1$.

\begin{lemme}\label{J pi_rho_G}
$J^1$ est canoniquement isomor\-phe au
sous-sch\'ema des points fixes dans la restriction des scalaires \`a la Weil
$$\prod_{(X_\rho\times \bbt)/ \frakc}(\bbT\times X_\rho\times\bbt)$$
pour l'action diagonale de $\bbW\rtimes \Theta_\rho$ sur $\bbT\times
X_\rho\times\bbt$.
\end{lemme}

\begin{proof}
L'assertion \`a d\'emontrer \'etant locale pour la topologie \'etale de $X$,
on peut supposer que $\rho$ est trivial. Dans quel cas, elle est imm\'ediate.
\end{proof}

En suivant \cite{DG}, nous allons d\'efinir un sous-faisceau $J'$
de $J^1$ que nous d\'emontrerons qu'il co\"incide avec l'image de
$J\rightarrow J^1$. Cette d\'efi\-nition n\'ecessitera un peu de
pr\'epa\-ra\-tions. La construction qui suit est locale pour la topologie
\'etale de la base $X$ de sorte qu'on peut supposer $G$ d\'eploy\'e. Pout
toute racine $\alpha\in\Phi$, soit $h_\alpha$ l'hyperplan de $\frakt$ noyau
de l'application lin\'eaire ${\mathrm d}\alpha:\frakt\rta\GG_a$. Soit
$s_\alpha\in W$ la r\'eflexion par rapport \`a l'hyperplan $h_\alpha$. Soit
$T^{s_\alpha}$ le sous-groupe de $T$ des \'el\'ements fixes par $s_\alpha$.
On a alors l'inclusion
$$\alpha(T^{s_\alpha})\subset \{\pm 1\}.$$
Soient $x$ un point g\'eom\'etrique de $\frakt$ tel que $s_\alpha(x)=x$ et
$a$ son image dans $\frakc$. Puisque $J^1$ est la partie $W$-invariante du
faisceau $\prod_{\frakt/\frakc}(T\times\frakt)$, on a un homomorphisme
canonique de $J^1_a$ dans la fibre $T\times\{x\}$ dont l'image est
contenue dans $T^{s_\alpha}\times\{x\}$. En composant avec la racine
$\alpha:T\rta \GG_m$, on obtient un homomorphisme
$\alpha_x:J^1_a\rta\GG_m$ d'image contenue dans $\{\pm 1\}$.

Soit $J^0$ le sous-sch\'ema en groupes ouvert des composantes neutres de
$J^1$. Par construction $J^0_a$ est la composante neutre de $J^1_a$ de
sorte que $J^0_a$ est contenu dans le noyau de $\alpha_x$.

\begin{definition}\label{J'}
Soit $J'$ le sous-foncteur de $J^1$ d\'efini comme suit. Pour tout
$\frakc$-sch\'ema $S$, le groupe $J'(S)$ est le sous-groupe de $J^1(S)$ des
morphismes
$$f:S\times_\frakc \frakt \rightarrow T$$
tel que pour tout point g\'eom\'etrique $x$ de $S\times_\frakc\frakt$ tel
que $s_\alpha(x)=x$ pour une certaine racine $\alpha$, on a
$\alpha(f(x))=1$. On a des inclusions de foncteurs $J^0\subset J'\subset
J^1$.
\end{definition}

\begin{lemme}
Le foncteur $J'$ de la cat\'egorie des $\frakc$-sch\'emas dans la cat\'egorie
des groupes ab\'eliens est repr\'esentable par un sous-sch\'ema en groupes
ouvert de $J^1$.
\end{lemme}

\begin{proof}
Il suffit de d\'emontrer qu'il est repr\'esentable par un sous-sch\'ema ouvert
de $J^1$ car \'etant un foncteur en sous-groupes, le sch\'ema qui le
repr\'esente sera automatiquement sch\'ema en groupes. Consid\'erons
l'ouvert $U$ de $T\times\frakt$ compl\'ement de la r\'eunion sur toutes les
racines $\alpha\in\Phi$ des ferm\'es $\alpha^{-1}(-1)\times h_\alpha$. La
restriction \`a la Weil ${\rm Res}_{\frakt/\frakc}(U)$ est alors un
sous-sch\'ema ouvert de ${\rm Res}_{\frakt/\frakc}(T\times \frakt)$. On a
alors
$$J'={\rm Res}_{\frakt/\frakc}(U)\cap J^1$$
de sorte que $J'$ est un sous-sch\'ema ouvert de $J^1$.
\end{proof}

L'\'enonc\'e suivant est une variante d'un th\'eor\`eme de Donagi et
Gaitsgory \cite[th\'eor\`eme 11.6]{DG}. On propose ici une d\'emonstration
un peu diff\'erente.

\begin{proposition}\label{J=J'}
L'homomorphisme $J\rightarrow J^1$ de \ref{J J1} se factorise par le
sous-sch\'ema en groupes ouvert $J'$ de \ref{J'} et induit un isomorphisme
$J\rightarrow J'$.
\end{proposition}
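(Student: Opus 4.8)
The plan is to establish the two parts of the statement separately — that $J\to J^{1}$ factors through $J'$, and that the resulting map $J\to J'$ is an isomorphism — after reducing, as the assertion allows, to the case where $G$ is deployed (\'etale-local on $X$) and working locally on $\frakc$.

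For the factorization through $J'$, I would use the explicit form of the homomorphism $J\to J^{1}$ produced in the proof of \ref{J J1}: by adjunction it is a homomorphism $\pi^{*}J\to T\times\frakt$ over $\frakt$ which, over $\frakg^{\reg}$ and through the cartesian square $\tilde\frakg^{\reg}=\frakg^{\reg}\times_{\frakc}\frakt$, is the composite $I|_{\tilde\frakg^{\reg}}\to\underline B|_{\tilde\frakg^{\reg}}\to T\times\tilde\frakg^{\reg}$. Fixing a root $\alpha$, the condition cutting out $J'$ inside $J^{1}$ (see \ref{J'}) amounts to the triviality, along the hyperplane $h_{\alpha}\subset\frakt$, of the composite of $\pi^{*}J\to T$ with $\alpha\colon T\to\GG_{m}$; since $h_{\alpha}$ is irreducible and $\pi^{*}J$ is smooth over $\frakt$, it is enough to check this at the generic point $\eta$ of $h_{\alpha}$. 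There the relevant regular element $x$ of $\frakg^{\reg}$ over $\eta$ has semisimple part $s$ whose centralizer $G_{s}$ is reductive of semisimple rank one with root system $\{\pm\alpha\}$, one has $I_{x}\subset G_{s}$, and the map $I_{x}\to T$ is the Grothendieck--Springer map of $G_{s}$ relative to its Borel $B\cap G_{s}$. This reduces the point to a direct computation in $\SL(2)$ and $\PGL(2)$ (the central torus of $G_{s}$ being killed by $\alpha$): for $\SL(2)$ the image of $I_{x}$ in $T$ is $\mu_{2}$, on which $\alpha$ is the squaring map, hence trivial; for $\PGL(2)$ the centralizer of a regular nilpotent is unipotent and maps trivially to $T$.

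For the isomorphism $J\to J'$, I would first prove that the tangent map $\Lie(J)\to\Lie(J')=\Lie(J^{1})$ is an isomorphism over all of $\frakc$: these are rank-$r$ vector bundles on $\frakc$, the map is an isomorphism over $\frakc^{\rs}$, so its determinant is a section of a line bundle on $\frakc$ vanishing at most along the divisor $\underline\discrim_{\GG}$, and the rank-one computation above — carried out now at the generic points of that divisor — shows it does not vanish there either, whence the determinant is invertible everywhere. Consequently $J\to J'$, a homomorphism of smooth $\frakc$-group schemes inducing an isomorphism on Lie algebras, is unramified, in particular quasi-finite; by miracle flatness ($J$ and $J'$ being smooth of relative dimension $r$ over the regular scheme $\frakc$, with zero-dimensional fibres) it is flat, hence \'etale; and since it is an isomorphism over the dense open $\frakc^{\rs}$, its kernel is an \'etale $\frakc$-group scheme trivial there, hence trivial, so $J\to J'$ is an open immersion. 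Its image $\bar J$ is thus an open subgroup scheme of $J^{1}$ containing the relative identity component $J^{0}$ and contained in $J'$, and it remains to see $\bar J=J'$. Equality holds over $\frakc^{\rs}$, and the rank-one computation at the generic points of the components of $\underline\discrim_{\GG}$ gives it there as well, so $\bar J=J'$ outside a closed subset of codimension $\ge 2$; passing to the finite \'etale $\frakc$-group scheme $\pi_{0}(J^{1}/\frakc)$, the open-and-closed subgroup schemes $\bar J/J^{0}$ and $J'/J^{0}$ then coincide, because otherwise their difference would be an open subscheme of $\pi_{0}(J^{1}/\frakc)$, \'etale over $\frakc$, yet supported in codimension $\ge 2$.

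The main obstacle — and the only genuinely computational ingredient — is the reduction to the semisimple rank-one groups together with the verification in $\SL(2)$ and $\PGL(2)$: this is exactly where the difference between the simply connected and the adjoint cases surfaces, and where one sees that the extra condition $\alpha(f(x))=1$ defining $J'$ is vacuous for $\SL(2)$ but not for $\PGL(2)$. The remaining steps — the determinant argument for the Lie algebras, miracle flatness and the open-immersion argument, and the spreading out from codimension one — are formal, granted the (known) fact that $\pi_{0}(J^{1}/\frakc)$ is finite \'etale over $\frakc$.
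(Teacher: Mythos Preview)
Your factorization argument works, though the reduction to the generic point of $h_\alpha$ needs the remark that every irreducible component of the smooth $h_\alpha$-group scheme $(\pi^*J)|_{h_\alpha}$ dominates $h_\alpha$ (equidimensionality: the total space is $k$-smooth of dimension $\dim h_\alpha + r$, while fibres have dimension $r$). The paper argues differently and more directly: since $\pi_0(Z_G)\twoheadrightarrow\pi_0(J_a)$ by Corollaire~\ref{pi 0 ZG} and every root is trivial on $Z_G$, the factorization holds fibre by fibre with no rank-one computation at all.

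There is, however, a genuine gap at the end of your isomorphism argument. The claim that $\pi_0(J^1/\frakc)$ is finite \'etale over $\frakc$ is false: the fibres of $J^1$ are connected over $\frakc^{\rs}$ (they are tori) but typically disconnected over the discriminant --- already for $\PGL_2$ one finds $J^1\cong\{(s,a,b):a^2-sb^2=1\}$ over $\frakc=\bbA^1_s$, with fibre $\{\pm 1\}\times\GG_a$ at $s=0$ --- so the component-group sheaf is constructible with jumps, not locally constant, and your ``open in an \'etale $\frakc$-scheme yet supported in codimension $\ge 2$, hence empty'' does not apply. The paper closes this step by a different mechanism: it invokes that $J$ (and $J'$) are smooth \emph{affine} over $\frakc$, so that a Hartogs-type extension of regular functions across the codimension-two locus $\discrim_G^{\rm sing}$ promotes the isomorphism over $\frakc-\discrim_G^{\rm sing}$ to all of $\frakc$. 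Your open-immersion set-up can be repaired along the same lines: once $\bar J=\mathrm{im}(J)$ is open in $J'$ and affine (being isomorphic to $J$), normality and Hartogs give $\calO(J')=\calO(J'|_V)=\calO(\bar J|_V)=\calO(\bar J)$, hence a morphism $J'\to\bar J$ which, by density and separatedness, is the sought inverse.
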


\begin{proof} Pour d\'emontrer que que $J\rightarrow J^1$ se
factorise par $J'$, il suffit de d\'emontrer que pour tout $a\in\frakc(\bar
k)$, l'homomorphisme $\pi_0(J_a)\rightarrow \pi_0(J_a^1)$ se factorise par
$\pi_0(J'_a)$. Il suffit de v\'erifier que fibre par fibre l'homomorphisme
$\pi_0(J_a)\rightarrow \pi_0(J^1_a)$ se factorise par $\pi_0(J')$. Rappelons
que l'homomorphisme $Z_G\rightarrow J_a$ induit un homomorphisme
surjectif $\pi_0(Z_G)\rightarrow \pi_0(J_a)$ d'apr\`es \ref{pi 0 ZG}. Il suffit
donc de v\'erifier que l'homomorphisme $Z_G\rightarrow J^1_a$ se
factorise par $J'_a$. Mais ceci est \'evident car la restriction de n'importe
quelle racine \`a $Z_G$ est triviale.

Puisque $J$ et $J'$ sont des sch\'ema en groupes lisses et affines au-dessus
de $\frakc$, pour d\'emontrer que l'homomorphisme $J\rightarrow J'$ est
un isomorphisme, il suffit de le faire au-dessus d'un ouvert de $\frakc$ dont
le compl\'ement est un ferm\'e de codimension deux. L'image inverse de
$\discrim_G$ dans $\frakt$ est la r\'eunion des hyperplans $h_\alpha$ des
racines. Soit $\discrim_G^{\rm sing}$ le ferm\'e de $\discrim_G$ dont
l'image inverse est lieu des points appartenant \`a au moins deux
hyperplans $h_\alpha$. Il est clair que $\discrim_G^{\rm sing}$ est un
ferm\'e de codimension deux de $\frakc$. Il suffit de d\'emontrer que
l'isomorphisme entre $J$ et $J'$ sur $\frakc-{\discrim_G}$ se prolonge en
un isomorphisme sur $\frakc-\discrim_G^{\rm sing}$.

Soit $a\in (\frakc-\discrim_G^{\rm sing})(\bar k)$. Il suffit de d\'emontrer
que l'homomorphisme $J\rightarrow J'$ est un isomorphisme au-dessus d'un
voisinage \'etale de $a$. Si $a\notin {\discrim_G}$, il n'y a rien \`a
d\'emontrer car on a vu que $J=J'=J^1$ sur l'ouvert $\frakc-{\discrim_G}$.
Si maintenant $a\in {\discrim_G}-\discrim_G^{\rm sing}$, on peut trouver
$s\in\frakt(\bar k)$ d'image $a$ tel que $s$ annul\'e par une unique racine
$\alpha$. Soient $T_\alpha$ le noyau de $\alpha:T\rightarrow\GG_m$ et
$H_\alpha$ le centralisateur de $T_\alpha$. Soit $n$ un \'el\'ement
nilpotent r\'egulier de l'alg\`ebre de Lie $\frakh_\alpha$ de $H_\alpha$ qui
s'identifie \`a une sous-alg\`ebre de Lie de $\frakg$. L'\'el\'ement $x=s+n$
est un \'el\'ement r\'egulier de $\frakg$ d'image $a$ dans $\frakc$. Il est
aussi un \'el\'ement r\'egulier de $\frakh_\alpha$ d'image $a_{H_\alpha}$
dans l'espace des polyn\^omes caract\'eristiques $\frakc_{H_\alpha}$ de
$H_\alpha$. On v\'erifie que le morphisme $\frakc_{H_\alpha}\rightarrow
\frakc$ envoie $a_{H_\alpha}$ en $a$ et est \'etale en ce point. On peut
aussi v\'erifier que dans un voisinage de $a_{H_\alpha}$, les images
r\'eciproques de $J, J'$ et $J^1$ \`a $\frakc_{H_\alpha}$ co\"incide avec les
m\^emes groupes mais associ\'es \`a $H_\alpha$. On peut ainsi ramener le
probl\`eme au cas particulier des groupes de rang semi-simple un.

Un groupe semi-simple de rang un est isomorphe \`a un produit de $\SL_2$,
$\PGL_2$ ou $\GL_2$ avec un tore. Il suffit donc de se restreindre \`a ces
trois groupes. Par un calcul direct, on v\'erifie que le centralisateur $J$ a
une fibre non-connexe dans le cas $\SL_2$ alors que dans les deux autres
cas, ses fibres sont toutes connexes. Il en est de m\^eme pour $J'$.
\end{proof}

\begin{corollaire}\label{isom sur composantes neutres}
L'homomorphisme $J\rightarrow J^1$ de \cf \ref{J=J'}, induit un isomorphisme sur leurs
sous-sch\'emas ouverts des composantes neutres des fibres.
\end{corollaire}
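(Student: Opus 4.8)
Le plan est de d\'eduire l'\'enonc\'e formellement de la proposition \ref{J=J'}. Celle-ci affirme que l'homomorphisme $J\rightarrow J^1$ de \ref{J J1} se factorise en $J\isom J'\hookrightarrow J^1$, o\`u la premi\`ere fl\`eche est un isomorphisme de sch\'emas en groupes au-dessus de $\frakc$ et la seconde une immersion ouverte. Les trois sch\'emas en groupes $J$, $J'$ et $J^1$ \'etant lisses sur $\frakc$ (\cf \ref{J}, \ref{J1}, et le fait que $J'$ est ouvert dans $J^1$), chacun poss\`ede un sous-sch\'ema en groupes ouvert \'egal \`a la r\'eunion des composantes neutres de ses fibres --- pour $J^1$ c'est le $J^0$ introduit ci-dessus. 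Comme $J\isom J'$ est un isomorphisme de $\frakc$-sch\'emas en groupes, il enverra le sous-sch\'ema ouvert des composantes neutres des fibres de $J$ isomorphiquement sur celui de $J'$; il suffira donc de d\'emontrer que l'immersion ouverte $J'\hookrightarrow J^1$ identifie ce dernier \`a $J^0$.

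On v\'erifierait cela fibre \`a fibre. Soit $a$ un point g\'eom\'etrique de $\frakc$. D'apr\`es la d\'efinition \ref{J'}, on a des inclusions $J^0_a\subset J'_a\subset J^1_a$, o\`u $J^0_a=(J^1_a)^0$ est la composante neutre de $J^1_a$. Comme $J^0$ est ouvert dans $J^1$, $J^0_a$ est un sous-groupe ouvert, donc aussi ferm\'e, de $J'_a$; \'etant connexe et contenant l'\'el\'ement neutre, il est contenu dans $(J'_a)^0$. Inversement, $(J'_a)^0$ est connexe et contenu dans $J^1_a$, donc dans $(J^1_a)^0=J^0_a$. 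On obtiendrait ainsi $(J'_a)^0=J^0_a$ pour tout $a$, de sorte que les deux sous-sch\'emas en groupes ouverts de $J^1$ que sont $J^0$ et la r\'eunion des composantes neutres des fibres de $J'$ ont m\^eme espace sous-jacent, donc sont \'egaux. En transportant cette \'egalit\'e par l'isomorphisme $J\isom J'$, on conclurait.

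Le point le moins formel sera l'existence m\^eme de ces sous-sch\'emas en groupes ouverts des composantes neutres; elle r\'esulte de la lissit\'e de $J$, $J'$ et $J^1$ sur $\frakc$ (\cf SGA~3, Exp.~VI$_B$), ou bien, si l'on pr\'ef\`ere l'\'eviter, on peut d\'efinir directement celui de $J$ comme l'image r\'eciproque de $J^0$ par l'isomorphisme $J\isom J'$. Tout le reste n'est qu'un jeu \'el\'ementaire sur les sous-groupes ouverts d'un groupe alg\'ebrique sur un corps, et l'\'enonc\'e ne pr\'esente donc pas de difficult\'e s\'erieuse une fois la proposition \ref{J=J'} acquise.
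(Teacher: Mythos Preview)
Votre d\'emonstration est correcte et co\"incide avec l'approche du texte: le corollaire y est \'enonc\'e sans preuve, donc consid\'er\'e comme cons\'equence imm\'ediate de la proposition \ref{J=J'}, et vous ne faites qu'en expliciter les d\'etails formels via les inclusions $J^0\subset J'\subset J^1$ de la d\'efinition \ref{J'}.
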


\subsection{Le cas des groupes endoscopiques}
Consid\'erons  une donn\'ee endoscopique $(\kappa,\rho_\kappa)$ de $G$ et
le groupe endoscopique associ\'e $H$ \cf \ref{donnee endoscopique}. On a
d\'efini un morphisme fini plat $\nu:\frakc_H\rightarrow \frakc$ entre les
$X$-sch\'emas des polyn\^omes caract\'eristiques de $H$ et de $G$ \cf
\ref{subsection : Transfert des classes}. Le centralisateur r\'egulier $J$ sur
$\frakc$ et  le centralisateur $J_H$ r\'egulier de $H$ sur $\frakc_H$ sont
reli\'es de la fa{\c c}on suivante.

\begin{proposition}\label{J JH}
Il existe un homomorphisme canonique
$$\mu:\nu^* J \longrightarrow J_H$$
qui est un isomorphisme au-dessus de l'ouvert
$\frakc_H^{G-\rs}=\nu^{-1}(\frakc^\rs)$.
\end{proposition}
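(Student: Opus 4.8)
The plan is to construct $\mu$ from the Galois / Weil-restriction description of the regular centralizers, to check that it lands in $J_H$ thanks to the inclusion $\Phi_\bbH\subseteq\Phi_\bbG$ of root systems, and to recover from \ref{J a J a_H} that it is an isomorphism over $\frakc_H^{G-\rs}$. First I would set up the cameral covers: by the construction of $\nu$ in \ref{subsection : Transfert des classes}, writing $\tilde\frakc=X_{\rho_\kappa}\times\bbt$ one has two finite flat morphisms $p:\tilde\frakc\to\frakc$ and $p_H:\tilde\frakc\to\frakc_H$ (the covers \ref{pi_rho} attached to $G$ and to $H$, the latter using that $\rho_\kappa$ is a reduction of $\rho_H$ and that the maximal torus of $H$ is $\bbT$) satisfying $\nu\circ p_H=p$, realizing $\frakc$, resp. $\frakc_H$, as the invariant-theoretic quotient of $\tilde\frakc$ by $\bbW\rtimes\pi_0(\kappa)$, resp. by $\bbW_\bbH\rtimes\pi_0(\kappa)$, and étale over $\frakc^\rs$, resp. $\frakc_H^\rs$. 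Applying \ref{J pi_rho_G} to $G$ and to $H$ gives $J^1=(\prod_{\tilde\frakc/\frakc}(\bbT\times\tilde\frakc))^{\bbW\rtimes\pi_0(\kappa)}$ over $\frakc$ and $J_H^1=(\prod_{\tilde\frakc/\frakc_H}(\bbT\times\tilde\frakc))^{\bbW_\bbH\rtimes\pi_0(\kappa)}$ over $\frakc_H$, while \ref{J=J'} identifies $J$ with the open subgroup scheme $J'\subseteq J^1$ and $J_H$ with $J_H'\subseteq J_H^1$.

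Next I would build the underlying homomorphism $\nu^*J^1\to J_H^1$ on the level of functors of points. For a $\frakc_H$-scheme $S$, a section of $\nu^*J^1$ over $S$ is a $\bbW\rtimes\pi_0(\kappa)$-equivariant morphism $S\times_\frakc\tilde\frakc\to\bbT$, and a section of $J_H^1$ over $S$ is a $\bbW_\bbH\rtimes\pi_0(\kappa)$-equivariant morphism $S\times_{\frakc_H}\tilde\frakc\to\bbT$. The identity morphisms of $S$ and $\tilde\frakc$ induce a canonical $\bbW_\bbH\rtimes\pi_0(\kappa)$-equivariant morphism $\iota:S\times_{\frakc_H}\tilde\frakc\to S\times_\frakc\tilde\frakc$; pre-composition with $\iota$, combined with restriction of the equivariance along the injection $\bbW_\bbH\rtimes\pi_0(\kappa)\hookrightarrow\bbW\rtimes\pi_0(\kappa)$ of \ref{homomorphisme produit semi-direct}, defines a homomorphism $\mu_1:\nu^*J^1\to J_H^1$ of group schemes over $\frakc_H$. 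This sidesteps the failure of Weil restriction to commute with base change, since $\mu_1$ is produced directly rather than by pulling back the Weil restriction.

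The delicate point is that $\mu_1$ carries $\nu^*J=\nu^*J'$ into $J_H'=J_H$. Here I would use that for an ordinary endoscopic datum the roots of $H$ are genuinely roots of $G$: from $\hat\bbH=(\hat\bbG_\kappa)^\circ$ one gets $\Phi(\hat\bbH)=\{\alpha^\vee:\alpha\in\Phi_\bbG,\ \alpha^\vee(\kappa)=1\}$, hence dually $\Phi_\bbH=\{\alpha\in\Phi_\bbG:\alpha^\vee(\kappa)=1\}\subseteq\Phi_\bbG$, and for $\beta\in\Phi_\bbH$ the reflection $s_\beta\in\bbW_\bbH\subseteq\bbW$ is the reflection $s_\alpha$ of the same root $\alpha=\beta\in\Phi_\bbG$. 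Given a section $f$ of $\nu^*J'$ over $S$, that is an equivariant $f:S\times_\frakc\tilde\frakc\to\bbT$ with $\alpha(f(x))=1$ for every geometric point $x$ fixed by $s_\alpha$ with $\alpha\in\Phi_\bbG$, any geometric point $y$ of $S\times_{\frakc_H}\tilde\frakc$ fixed by $s_\beta$ with $\beta\in\Phi_\bbH$ has image $\iota(y)$ fixed by $s_\beta=s_\alpha$, so $\beta(\mu_1(f)(y))=\alpha(f(\iota(y)))=1$; thus $\mu_1(f)$ lies in $J_H'$. One checks the compatibility of the condition defining $J'$ in \ref{J'} with the cover $\tilde\frakc$ in place of $\frakt$ exactly as in the proof of \ref{J pi_rho_G}. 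This yields the canonical homomorphism $\mu:\nu^*J\to J_H$.

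It remains to see that $\mu$ is an isomorphism over $\frakc_H^{G-\rs}=\nu^{-1}(\frakc^\rs)$. By the relation $\nu^*\discrim_G=\discrim_H+2\resultant_H^G$ of \ref{discrimnant resultant} the divisor $\discrim_H$ is trivial there, so $\frakc_H^{G-\rs}\subseteq\frakc_H^\rs$; hence $J=J^1$ over $\frakc^\rs$, $J_H=J_H^1$ over $\frakc_H^{G-\rs}$, and the cameral cover is étale there, so $\iota$ exhibits $S\times_\frakc\tilde\frakc$ as the $\bbW\rtimes\pi_0(\kappa)$-torsor induced from the $\bbW_\bbH\rtimes\pi_0(\kappa)$-torsor $S\times_{\frakc_H}\tilde\frakc$. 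Since equivariant morphisms to $\bbT$ out of an induced torsor coincide with those out of the original one, $\mu_1$, and therefore $\mu$, is an isomorphism over $\frakc_H^{G-\rs}$; equivalently this restriction of $\mu$ is precisely the isomorphism $J_a\isom J_{H,a_H}$ of \ref{J a J a_H}, obtained from formula \ref{pi a J a 2} and the same change of structure group $\bbW_\bbH\rtimes\pi_0(\kappa)\to\bbW\rtimes\pi_0(\kappa)$. The main obstacle is precisely the well-definedness of $\mu$ on $\nu^*J$ — the bookkeeping with Weil restrictions and with the root and reflection conditions cutting out $J'$ and $J_H'$ — the rest being formal given \ref{J pi_rho_G}, \ref{J=J'} and \ref{J a J a_H}.
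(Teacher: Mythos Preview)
Your proposal is correct and follows essentially the same approach as the paper: construct $\mu_1:\nu^*J^1\to J_H^1$ from the cameral cover $\tilde\frakc=X_{\rho_\kappa}\times\bbt$, check that it respects the open subgroups $J=J'\subset J^1$ and $J_H=J_H'\subset J_H^1$ using $\Phi_\bbH\subset\Phi_\bbG$, and invoke \ref{J a J a_H} for the isomorphism over $\frakc_H^{G-\rs}$. The only cosmetic difference is in the middle step: the paper reduces the verification that $\mu_1(\nu^*J)\subset J_H$ to a statement on $\pi_0$ of geometric fibres (using \ref{isom sur composantes neutres} to dispose of the neutral components), whereas you check the defining condition of $J'$ from \ref{J'} directly on points; these are two phrasings of the same observation.
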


\begin{proof}
Reprenons les notations de \ref{subsection : Transfert des classes}. En
particulier, on a un rev\^etement fini plat $X_{\rho_\kappa}\times
\bbt\rightarrow \frakc$ qui permet de r\'ealiser $\frakc$ comme le quotient
invariant de $X_{\rho_\kappa}\times\bbt$ par $\bbW\rtimes \pi_0(\kappa)$.
De m\^eme, $\frakc_H$ est le quotient invariant de
$X_{\rho_\kappa}\times\bbt$ par $\bbW_\HH\rtimes \pi_0(\kappa)$.
Rappelons aussi que le morphisme $\nu:\frakc_H\rightarrow \frakc$ a \'et\'e
construit dans \ref{subsection : Transfert des classes} en produisant un
homomorphisme
$$\bbW_\HH\rtimes \pi_0(\kappa) \rightarrow \bbW\rtimes \pi_0(\kappa)$$
compatible avec les actions de ces deux groupes sur $X_{\rho_\kappa}\times
\bbt$.

Consid\'erons le sch\'ema en groupes
$$J^1=\prod_{X_\rho\times\bbt/\frakc}
(X_\rho\times\bbt\times \bbT)^{\bbW\rtimes \pi_0(\kappa)}
$$
et son analogue pour $H$
$$J_H^1=\prod_{X_\rho\times\bbt/\frakc_H}
(X_\rho\times\bbt\times \bbT)^{\bbW_\HH\rtimes \pi_0(\kappa)}
$$
Le morphisme \'evident
$$X_{\rho_\kappa}\times\bbt \rightarrow (X_{\rho_\kappa}\times\bbt)\times_{\frakc} \frakc_H$$
qui est $\bbW_\HH\rtimes \pi_0(\kappa)$-\'equivariant, induit un
homomorphisme $\nu^* J^1\rightarrow J_H^1$. On a v\'erifi\'e que cet
homomorphisme est un isomorphisme au-dessus de l'ouvert
$\frakc_H^{G-\rs}$ dans \ref{J a J a_H}.

D'apr\`es \ref{isom sur composantes neutres}, on a un homomorphisme
$J\rightarrow J^1$ qui induit un isomorphisme sur leur sous-sch\'ema
ouvert des composantes neutres des fibres. Pour v\'erifier que
l'homomorphisme $\nu^* J^1\rightarrow J_H^1$ se restreint en un
homomorphisme $\nu^* J \rightarrow J_H$, il suffit de d\'emontrer que
pour tout $a_H\in \frakc_H(\bar k)$ d'image $a\in \frakc(\bar k)$,
l'homomorphisme
$$\pi_0(J^1_a)\rightarrow \pi_0(J^1_{H,a_H})$$
qui se d\'eduit de $J^1_a \rightarrow J^1_{H,a_H}$, envoie le sous-groupe
$\pi_0(J_a)\subset \pi_0(J^1_a)$ dans le sous-groupe
$\pi_0(J_{H,a_H})\subset \pi_0(J^1_{H,a_H})$. Puisque l'ensemble des
racines de $H$ est un sous-ensemble de l'ensemble des racines pour $G$, les conditions
qui d\'elimitent le sous-groupe $\pi_{0}(J_{H,a_H})$ dans le groupe
$\pi_0(J^1_{H,a_H})$ sont satisfaites par les \'el\'ements de
$\pi_0(J_a)$ \cf \ref{J'}. La proposition suit.
\end{proof}


\section{Fibres de Springer affines}
\label{section : Fibres de Springer affines}

Par analogie avec les fibres de Springer dans la r\'esolution simultan\'ee de
Grothendieck-Springer, Kazhdan et Lusztig ont introduit les fibres de
Springer affines et ont \'etudi\'e leur propri\'et\'e g\'eom\'etrique. Goresky,
Kottwitz et MacPherson ont r\'ealis\'e le lien entre les fibres de Springer
affine et les int\'egrales orbitales stables via le comptage de points de certain quotient des fibres de Springer affines. Dans ce chapitre, nous allons passer en revue les propri\'et\'e g\'eom\'etriques des
fibres de Springer affine suivant Kazhdan et Lusztig en donnant quelques
compl\'ements. Le comptage de points sera revu dans le chapitre
\ref{section : comptage}.

Voici les notations qui seront utilis\'ees dans ce chapitre. Soient $k$ un
corps fini \`a $q$ \'el\'ements et $\bar k$ une cl\^oture s\'eparable de $k$.
Soient $F_v$ un corps local d'\'egales caract\'eristiques, $\calO_v$ son
anneau des entiers dont le corps r\'esiduel $k_v$ est une extension finie de
$k$.  On notera $X_v=\Spec(\calO_v)$ le disque formel associ\'e et
$X_v^\bullet$ le disque formel point\'e. Soient $v$ le point ferm\'e de $X_v$
et $\eta_v$ son point g\'en\'erique.

Soient $\bar\calO_v=\calO_v\hat\otimes_k \bar k$ et $\bar
X_v=\Spec(\bar\calO_v)$. L'ensemble des composantes connexes de $\bar
X_v$ est en bijection l'ensemble des plongements de l'extension $k_v$ de
$k$ dans $\bar k$
$$\bar X_v=\bigsqcup_{\bar v:k_v\rightarrow \bar k} \bar X_{\bar v}.$$

En choisissant un point g\'eom\'etrique $\bar\eta_v$ dans la fibre
g\'eom\'etrique de $\bar X_{\bar v}$, on obtient la suite exacte habituelle
$$1\rightarrow I_v \rightarrow \Gamma_v \rightarrow \Gal(\bar k/\bar k_v)
\rightarrow 1
$$
o\`u $\Gamma_v=\pi_1(\eta_v,\bar\eta_v)$ est le groupe de Galois de
$F_v$ et $I_v=\pi_1(\bar X_{\bar v},\bar \eta_v)$ son sous-groupe d'inertie.

Soit $G$ une forme quasi-d\'eploy\'ee de $\GG$ sur $X_v$ associ\'ee \`a un
$\Out(\GG)$-torseur $\rho_G$. En choisissant un point g\'eom\'etrique de
$\bar\eta_{\Out,v}$ de $\rho_G$ au-dessus de $\bar\eta_v$, on obtient un
homomorphisme $\rho_G^\bullet:\Gamma_v\rightarrow \Out(\GG)$ qui se
factorise \`a travers $\Gal(\bar k/ k_v)$. Au-dessus de $\bar X_v$, $\rho_G$
est le torseur trivial. Le point de $\bar\eta_{\Out,v}$ fournit une
trivialisation de $\rho_G$ au-dessus de $\bar X_{\bar v}$.

\subsection{Rappels sur la grassmannienne affine}
\label{subsection : grassmannienne affine}

Pour tout $k$-sch\'ema $S$, notons $X_v\hat\times S$ la compl\'etion
$v$-adique de $X_v\times S$ et $X^\bullet_v\hat\times S$ l'ouvert compl\'ementaire
de $\{v\}\times S$ dans $X_v\hat\times S$. La grassmannienne affine
est le foncteur $\calQ_v$ qui associe \`a tout $k$-sch\'ema le groupo\"ide des
$G$-torseurs $E_v$ sur $X_v\hat\times S$ munis d'une trivialisations sur
$X_v^\bullet\hat\times  S$. Notons qu'un automorphisme de $E_v$, trivial
sur $X_v^\bullet\hat\times S$ est n\'ecessairement trivial de sorte que ce
groupo\"ide est une cat\'egorie discr\`ete. On pourra aussi bien le remplacer
par l'ensemble des classes d'isomorphisme.

D'apr\`es \cite{BL} et \cite{Fa-loop}, on sait que $\calQ_v$ est strictement
repr\'esentable par un ind-sch\'ema sur $k$. Plus pr\'ecis\'ement, il existe
syst\`eme injectif de $k$-sch\'emas projectifs dont les fl\`eches sont
immersions ferm\'ees et dont la limite inductive repr\'esente le foncteur
$\calQ_v$. Comme $G$ est un sch\'ema en groupes de fibres connexes,
l'ensemble des $k$-points de $\calQ_v$ s'exprime comme un
quotient
$$\calQ_v(k)=G(F_v)/G(\calO_v).$$
Quand $G=\GL_r$, cet ensemble s'identifie naturellement \`a l'ensemble
des $\calO_v$-r\'eseaux dans le $F_v$-espace vectoriel $F_v^{\oplus r}$.

La m\^eme d\'efinition vaut quand on remplace $k$ par $\bar k$, $X_v$ par
$\bar X_{\bar v}$ pour tout plongement $\bar v:k_v\rightarrow \bar k$. On
a alors la grassmannienne affine $\calG_{\bar v}$ d\'efinie sur $\bar k$. On a
la formule
$$\calG\otimes_k \bar k=\prod_{\bar v:k_v\rightarrow \bar k} \calG_{\bar v}.$$

\subsection{Fibres de Springer affines}\label{Springer}
Nous gardons les notations de \ref{Torsion exterieure}. En particulier, on a
un sch\'ema $\frakc$ au-dessus de $X_{v}$ obtenu par torsion ext\'erieure de
l'espace des polyn\^omes caract\'eristiques $\bbc$ de $\bbg$
comme dans le th\'eor\`eme de Chevalley \ref{Chevalley}.

Notons
$$\frakc^\heartsuit(\calO_v)=\frakc(\calO_v)\cap
\frakc^{\rs}(F_v)
$$
l'ensemble des $\calO_v$-points de $\frakc$ dont la fibre g\'en\'erique est
r\'eguli\`ere semi-simple. Rappelons qu'on a la section de Kostant
$\epsilon:\frakc\rightarrow \frakg^\reg$  \cf \ref{section de Kostant
tordue}. Pour tout $a\in \frakc^\heartsuit(\calO_v)$, on a un point
$$[\epsilon](a)\in [\frakg^\reg/G]$$
qui consiste en le $G$-torseur triviale $E_0$ sur $X_v$ et une section
$\gamma_0\in \Gamma(X_v,\ad(E_0))$ ayant $a$ comme polyn\^ome
caract\'eristique.

Pour chaque $a\in\frakc^\heartsuit(\calO_v)$, on d\'efinit la fibre de
Springer affine $\calM_v(a)$ comme suit. Le foncteur $\calM_v(a)$ associe
\`a tout $k$-sch\'ema $S$ le groupo\"ide des couples $(E,\phi)$ form\'e d'un
$G$-torseur $E$ sur $X_v\hat\times S$ et d'une section $\phi$ de $\ad(E)$,
munis d'un isomorphisme avec $(E_0,\gamma_0)$ sur
$X_v^\bullet\hat\times S$. En particulier
$$[\chi](E,\phi)=[\chi](E_0,\gamma_0)=a.$$

Notons que par construction, le $G$-torseur $E_0$ est le torseur trivial si
bien que $E$ d\'etermine un point de la Grassmannienne affine.

\begin{proposition}\label{localement de type fini}
Le foncteur d'oubli $(E,\phi)\mapsto E$ d\'efinit un morphisme de la fibre
de Springer affine $\calM_v(a)$ dans la grassmannienne affine $\calQ_v$ qui
est une immersion ferm\'ee. En particulier, $\calM_v(a)$ est strictement
repr\'esentable par un ind-sch\'ema. De plus, le r\'eduit $\calM_v^{\rm
red}(a)$de $\calM_v(a)$ est repr\'esentable par un sch\'ema localement de
type fini.
\end{proposition}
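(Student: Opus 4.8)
The plan is to realise $\calM_v(a)$ as a closed subfunctor of the affine Grassmannian $\calQ_v$, the closed condition being the regularity of a prescribed meromorphic section of the adjoint bundle, and then to import from Kazhdan--Lusztig \cite{KL} the statement that the reduced scheme underlying this closed subfunctor is locally of finite type.

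First I would unwind the forgetful morphism. An $S$-point of $\calM_v(a)$ is a triple $(E,\phi,\iota)$ with $\iota\colon(E,\phi)|_{X_v^\bullet\hat\times S}\isom(E_0,\gamma_0)|_{X_v^\bullet\hat\times S}$; dropping $\phi$ leaves $(E,\iota)$, and since $E_0$ is the trivial $G$-torseur, $\iota$ is simply a trivialisation of $E$ over $X_v^\bullet\hat\times S$, i.e.\ an $S$-point of $\calQ_v$. This morphism is a monomorphism: given $(E,\iota)$, the section $\phi$ of $\ad(E)$ over $X_v\hat\times S$ is forced to restrict to the $\iota$-transport of $\gamma_0$ over $X_v^\bullet\hat\times S$, and since $\ad(E)$ is locally free while $X_v^\bullet\hat\times S$ is schematically dense in the $\calO_v$-flat scheme $X_v\hat\times S$, there is at most one such $\phi$. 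The same density argument, run with the separated $X_v$-scheme $\frakc$ in place of $\ad(E)$, shows moreover that the two sections $[\chi](E,\phi)$ and $a$ of $\frakc$ over $X_v\hat\times S$, which agree on $X_v^\bullet\hat\times S$, agree everywhere; this is the parenthetical equality $[\chi](E,\phi)=a$.

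The core point is that this monomorphism is a closed immersion. As $\calQ_v$ is strictly ind-representable by an increasing union of projective $k$-schemes along closed immersions (\cite{BL}, \cite{Fa-loop}), it suffices to show that for every finite-type closed subscheme $Z\hookrightarrow\calQ_v$ the functor $\calM_v(a)\times_{\calQ_v}Z$ is represented by a closed subscheme of $Z$. Pulling the universal $G$-torseur $\calE_Z$ back to $X_v\hat\times Z$ and transporting $\gamma_0$ by the universal trivialisation yields a section $s$ of $\ad(\calE_Z)$ over the affine open $X_v^\bullet\hat\times Z$; quasi-compactness of $Z$ bounds the pole order of $s$ along $\{v\}\times Z$, so $s\in\Gamma(X_v\hat\times Z,\calF)$ with $\calF:=\ad(\calE_Z)(N\cdot\{v\})$ for suitable $N$. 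The coherent sheaf $\calT:=\calF/\ad(\calE_Z)$ is supported on $\{v\}\times Z=\Spec(k_v)\times_k Z$, which is finite over $Z$, so its direct image $p_*\calT$ is a coherent $\calO_Z$-module; a direct check (using affineness of $X_v\hat\times T$ for affine $T$, together with the $Z$-finiteness of $\calT$, which lets pullback commute with taking sections) identifies $\calM_v(a)\times_{\calQ_v}Z$ with the vanishing scheme of the image $\bar s\in\Gamma(Z,p_*\calT)$ of $s$, which is a closed subscheme of $Z$. Hence $\calM_v(a)\to\calQ_v$ is a closed immersion, and intersecting with the projective subschemes exhausting $\calQ_v$ exhibits $\calM_v(a)$ as strictly ind-representable, in fact ind-projective.

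There remains the assertion that $\calM_v^{\rm red}(a)$ is not merely an ind-scheme but a genuine $k$-scheme locally of finite type; this is the theorem of Kazhdan and Lusztig \cite{KL}, which I would cite. In outline, $\calM_v(a)$ carries a left-translation action of $J_a(F_v)\simeq I_{\gamma_0}(F_v)$ (the translating element centralises $\gamma_0$, so the regularity condition is preserved); the subgroup $\Lambda_v$ attached to the maximal $F_v$-split subtorus of $J_a$ is free of finite rank and acts discretely; and Kazhdan--Lusztig show, together with their bound on $\dim\calM_v(a)$ in terms of the discriminant valuation, that $\calM_v^{\rm red}(a)$ is covered in a locally finite manner by the $\Lambda_v$-translates of a single projective closed subscheme, so that the ind-structure collapses to an honest scheme structure, locally of finite type. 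This last step is the genuine obstacle: the closed-immersion statement is essentially formal, whereas controlling the reduced structure --- its finite-dimensionality and the local finiteness of the translation-covering --- requires the substantive geometric input of \cite{KL}.
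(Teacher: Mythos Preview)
Your proof is correct and follows exactly the same approach as the paper, which simply says ``La premi\`ere assertion est imm\'ediate. La seconde assertion a \'et\'e d\'emontr\'ee par Kazhdan et Lusztig \cite{KL}.'' You have usefully spelled out the ``immediate'' closed-immersion argument (monomorphism plus the pole-bounding/coherent-vanishing-locus trick) and given a faithful outline of the Kazhdan--Lusztig input, but there is no divergence from the paper's route.
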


\begin{proof}
La premi\`ere assertion est imm\'ediate. La seconde assertion a \'et\'e
d\'emontr\'ee par Kazhdan et Lusztig \cite{KL}.
\end{proof}

Consid\'erons l'ensemble des $k$-points de $\calM_v(a)$. Soit $(E,\phi)$ un
objet de $\calM_v(a,k)$. Les fibres g\'en\'eriques de $E$ et de $E_0$ \'etant
identifi\'ees, la donn\'ee de $E$ consiste en une classe $g\in
G(F_v)/G(\calO_v)$. Puisque $\phi$ est identifi\'ee \`a $\gamma_0$ sur la
fibre g\'en\'erique, pour que $\phi$ d\'efinisse une section sur $X_v$ de
$\ad(E)\otimes D$, il faut et il suffit que $\ad(g)^{-1}\gamma_0 \in
\frakg(\calO_v)$. On a donc
$$
\calM_v(a,k)=\{g\in G(F_v)/G(\calO_v) \mid \ad(g)^{-1}\gamma_0 \in
\frakg(\calO_v)\}.
$$

Nous allons maintenant consid\'erer une variation triviale de la construction
pr\'ec\'edente en pr\'esence d'un faisceau inversible $D'$ sur $X_v$. Cette
variation sera n\'ecessaire pour r\'ealiser le passage entre les fibres de
Springer affines et les fibres de Hitchin.

Soient $D={D'}^{\otimes 2}$ et $h_D:X_v\rta \bfB\GG_m$ le morphisme dans
le classifiant de $\GG_m$ correspondant au fibr\'e en droites $D$. Fixons un
morphisme $h_a:X_v\rta [\frakc/\GG_m]$ qui s'ins\`ere dans le diagramme
commutatif :
$$\xymatrix{
  X_v \ar[r]^{\!\!\!\!\!h_a} \ar[dr]_{h_D}
                & [\frakc/\GG_m] \ar[d]^{}  \\
                & \bfB \GG_m             }$$
Notons $[\epsilon]^{D'}(a)$ le point de Kostant de $a$ construit comme
dans le lemme \ref{epsilon D'}. On notera $a^\bullet$ et $h_a^\bullet$ les
restrictions de $a$ et de $h_a$ \`a $X_v^\bullet$ et on notera le point de
Kostant associ\'e $[\epsilon]^{D'}(a^\bullet)$.

\begin{definition}\label{fibre de Springer}
On d\'efinit la fibre de Springer $\calM_v(a)$ comme le foncteur qui associe
\`a tout $k$-sch\'ema $S$ l'ensemble $\calM_v(a,S)$ des classes
d'isomorphisme des morphismes $h_{E,\phi}:X_v\hat\times S\rta
[\frakg/G\times \GG_m]$ s'ins\'erant dans le diagramme commutatif
$$
\xymatrix{
  X_v\hat \times S \ar[r]^{h_{E,\phi}\,\,\,\,\,\,\,\,\,\,} \ar[dr]_{h_a}
                & [\frakg/G\times\GG_m] \ar[d]^{[\chi]}  \\
                & [\frakc/\GG_m]             }
$$
munis d'un isomorphisme entre la restriction de $h_{E,\phi}$ \`a
$X_v^\bullet\hat\times S$, et le point de Kostant $[\epsilon]^{D'}
(a^\bullet)$.
\end{definition}

La m\^eme d\'efinition vaut quand on remplace $k$ par $\bar k$, $X_v$ par
$\bar X_{\bar v}$ pour tout plongement $\bar v:k_v\rightarrow \bar k$.
Pour tout $a\in \frakc(\bar\calO_{\bar v})\cap \frakc^\rs(\bar F_{\bar v})$,
on a alors la fibre de Springer affine $\calM_{\bar v}(a)$ d\'efinie sur $\bar
k$. Pour tout $a\in \frakc(\calO_{v})\cap \frakc^\rs(F_{v})$, on a la formule
$$\calM_v(a)\otimes_k \bar k=\prod_{\bar v:k_v\rightarrow \bar k} \calM_{\bar v}(a).$$

\subsection{Sym\'etries d'une fibre de Springer affine}
\label{symetries d'une fibre de Springer}

Le centralisateur r\'egulier permet de  d\'efinir le groupe des sym\'etries
d'une fibre de Springer affine. Soient $a\in\frakc^\heartsuit(\calO_v)$ et
$h_a:X_v\rightarrow \frakc$ le morphisme correspondant. Soit $J_a=h_a^*
J$ l'image r\'eciproque du centralisateur r\'egulier.

Consid\'erons le groupo\"ide de Picard $\calP_v(J_a)$ fibr\'e au-dessus de
$\Spec(k)$ qui associe \`a tout $k$-sch\'ema $S$ le groupo\"ide de Picard
$\calP_v(J_a,S)$ des $J_a$-torseurs sur $X_v\hat\times S$ munis d'une
trivialisation sur $X_v^\bullet\hat\times S$. On peut v\'erifier que pour tout
$k$-sch\'ema $S$, $\calP_v(J_a,S)$ est une cat\'e\-go\-rie de Picard
discr\`ete. De plus, le foncteur qui associe \`a $S$ le groupe des classes
d'isomorphisme de $\calP_v(J_a,S)$ est repr\'esentable par un ind-sch\'ema
en groupes sur $k$ qu'on notera $\calP_v(J_a)$. Le groupe des
$\bar k$-points $\calP_v(J_a,k)$ s'identifie canoniquement au quotient
$J_a(\bar F_v)/J_a(\bar \calO_v)$. Si les fibres de $J_{a}$ sont connexes,
l'ensemble des $k$-points de $\calP_{v}(J_{a})$ s'identifie au quotient
$J_{a}(F_{v})/J_{a}(\calO_{v})$.

Le lemme \ref{J} permet de d\'efinir une action de $\calP_v$ sur $\calM_v$.
En effet, pour tout $(E,\phi)\in \calM_a(S)$, on a un homomorphisme de
faisceaux en groupes au-dessus de $X_v\hat\times S$
$$J_a \longrightarrow \underline{\rm Aut}(E,\phi)$$
qui se d\'eduit de \ref{J}. Ceci permet de tordre $(E,\phi)$ par un
$J_a$-torseur sur $X_v\hat\times S$ qui est trivialis\'e sur
$X_v^\bullet\hat\times S$.

Sur les $k$-points, on peut d\'ecrire concr\`etement cette action. Pour simplifier
l'exposition, supposons que $J_{a}$ a des fibres connexes. L'action du
groupe $\calP_v(J_a,k)=J_a(F_v)/J_a(\calO_v)$ sur l'ensemble
$$\calM_v(a,k)=\{g\in G(F_v)/G(\calO_v) \mid \ad(g)^{-1}\gamma_0 \in
\frakg(\calO_v)\}
$$
se d\'ecrit concr\`etement comme suit. D'apr\`es \ref{J}, il existe un isomorphisme
canonique de $J_a(F_v)$ sur le centralisateur $G_{\gamma_0}(F_v)$ de
$\gamma_0$ qu'on va noter $j\mapsto \theta(j)$. On fait agir $J_a(F_v)$
sur l'ensemble des $g\in G(F_v)$ tels que $\ad(g)^{-1}(\gamma_0)\in
\frakg(\calO_v)$ par $j.g=\theta(j)g$. Pour que ceci induise une action de
$J_a(F_v)/J_a(\calO_v)$ sur $\calM_v(a,k)$, il faut et il suffit que pour tout
$g\in\calM_v(a,k)$, on a l'inclusion
$$\theta(J_a(\calO_v)) \subset \ad(g) G(\calO_v).$$
Soit $\gamma=\ad(g)^{-1} \gamma_0 \in \frakg(\calO_v)$. D'apr\`es \ref{J},
l'isomorphisme $\ad(g)^{-1}\circ\theta : I_a \rta G_{\gamma_0}$ se prolonge
en un homomorphisme de $X_v$-sch\'emas en groupes sur
$$\ad(g)^{-1}\circ\theta: J_a \longrightarrow I_\gamma$$
ce qui implique en particulier que
$$\theta(J_a(\calO_v)) \subset \ad(g)(I_{\gamma_0}(\calO_v))
\subset \ad(g) G(\calO_v).$$

Nous consid\'erons le sous-foncteur $\calM_v^\reg(a)$ de la fibre de
Springer affine $\calM_v(a)$ dont les points sont les morphismes $h_{E,\phi}:
X_v \rightarrow [\frakg/G\times \GG_m]$ qui se factorisent par
l'ouvert $[\frakg^\reg/G\times\GG_m]$. Il est
clair que $\calM_v^\reg(a)$ est un ouvert de $\calM_v(a)$.

\begin{lemme}
L'ouvert $\calM_v^\reg(a)$ est un espace principal homog\`ene sous l'action
de $\calP_v(J_a)$.
\end{lemme}

\begin{proof}
C'est une cons\'equence du lemme \ref{gerbe}.
\end{proof}

Soit $\bar v:k_v \rightarrow \bar k$. Pour tout $a\in \frakc(\bar\calO_{\bar
v})\cap \frakc^\rs(\bar F_{\bar v})$, on a le groupe des symm\'etries
$\calP_{\bar v}(J_a)$ d\'efinie sur $\bar k$ de la fibre affine $\calM_{\bar
v}(a)$. Si $a\in \frakc(\calO_{v})\cap \frakc^\rs(F_{v})$, on a la formule
$$\calP_v(J_a)\otimes_k \bar k=\prod_{\bar v:k_v
\rightarrow \bar k} \calP_{\bar v}(J_a).
$$
Dans la suite du chapitre, on va passer \`a $\bar k$ et discuter des
propri\'et\'es g\'eom\'etriques de $\calP_{\bar v}(J_a)$. Rappelons
qu'au-dessus de $\bar X_{\bar v}$, $G$ est un groupe d\'eploy\'e.

\subsection{Quotient projectif d'une fibre de Springer affine}

Soit $a\in\frakc(\bar\calO_{\bar v})$ dont la fibre g\'en\'erique est dans
$\frakc^\rs(\bar F_{\bar v})$. D'apr\`es Kazhdan et Lusztig, le foncteur
$\calM_{\bar v}(a)$ a un sch\'ema r\'eduit sous-jacent $\calM_{\bar v}^{\rm
red}(a)$ qui est localement de type fini \cf \ref{localement de type fini} et
\cite{KL}. Ils ont aussi d\'emontr\'e que $\calM_{\bar v}^{\rm red}(a)$,
quotient\'e par un groupe discret convenable, est un sch\'ema projectif.
Rappelons leur \'enonc\'e de fa{\c c}on plus pr\'ecise.

Soit $\Lambda$ le quotient libre maximal de $\pi_0(\calP_{\bar v}(J_a))$.
Choisissons un rel\`eve\-ment arbitraire
$$\Lambda\rightarrow \calP_{\bar v}(J_a)$$
qui induit une action de $\Lambda$ sur $\calM_{\bar v}(a)$ et $\calM_{\bar
v}^{\rm red}(a)$.

\begin{proposition}\label{quotient projectif}
Le groupe discret $\Lambda$ agit librement sur $\calM_{\bar v}^{\rm
red}(a)$ et le quotient $\calM_{\bar v}^{\rm red}(a)/ \Lambda$ est un $\bar
k$-sch\'ema projectif.
\end{proposition}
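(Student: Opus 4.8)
The statement is the theorem of Kazhdan and Lusztig \cite{KL}, and I would prove it by recalling their argument in the present language. The plan has three steps: show that $\Lambda$ acts freely; show that finitely many $\Lambda$-translates of a bounded piece exhaust $\calM_{\bar v}^{\rm red}(a)$; and then conclude formally. The starting point is \ref{localement de type fini}: the forgetful morphism realises $\calM_{\bar v}^{\rm red}(a)$ as a closed subscheme of the affine Grassmannian $\calG_{\bar v}$, which by \ref{subsection : grassmannienne affine} is the increasing union of projective $\bar k$-subschemes $\calG_{\bar v}^{\leq N}$ with closed transition maps; hence each $\calM_{\bar v}^{\leq N}(a):=\calM_{\bar v}^{\rm red}(a)\cap\calG_{\bar v}^{\leq N}$ is projective and $\calM_{\bar v}^{\rm red}(a)=\bigcup_N\calM_{\bar v}^{\leq N}(a)$. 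I expect the second step to be the only substantial one.

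For the freeness I would work on $\bar k$-points. Following \ref{symetries d'une fibre de Springer}, a point of $\calM_{\bar v}^{\rm red}(a)$ is represented by a class $g\in G(\bar F_{\bar v})/G(\bar\calO_{\bar v})$ with $\ad(g)^{-1}\gamma_0\in\frakg(\bar\calO_{\bar v})$, and $p\in\calP_{\bar v}(J_a)(\bar k)=J_a(\bar F_{\bar v})/J_a(\bar\calO_{\bar v})$ acts by $p\cdot g=\theta(p)g$, where $\theta\colon J_a(\bar F_{\bar v})\isom G_{\gamma_0}(\bar F_{\bar v})$ is the canonical isomorphism of \ref{J} onto the centraliser of $\gamma_0$; the latter is a maximal torus because $a$ is regular semisimple. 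The stabiliser of $g$ is then $\theta^{-1}\!\bigl(G_{\gamma_0}(\bar F_{\bar v})\cap gG(\bar\calO_{\bar v})g^{-1}\bigr)$ modulo $J_a(\bar\calO_{\bar v})$; being a subgroup of the torus $G_{\gamma_0}(\bar F_{\bar v})$ contained in the bounded subgroup $gG(\bar\calO_{\bar v})g^{-1}$, it is bounded, hence its image in $\calP_{\bar v}(J_a)$ is contained in a quasi-compact, in particular finite type, subscheme. Since $\Lambda$ is the maximal free quotient of $\pi_0(\calP_{\bar v}(J_a))$, such a subgroup maps to a finite, hence trivial, subgroup of $\Lambda$. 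Therefore, for the chosen (group) homomorphism $\Lambda\hookrightarrow\calP_{\bar v}(J_a)$, any $\lambda$ fixing a point lies in the stabiliser of that point, so $\lambda=0$, and the $\Lambda$-action is free.

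The heart of the matter is the second step, where the regular semisimplicity of $a$ is genuinely used; this is the finiteness theorem of \cite{KL}. Concretely one must show that the double quotient $G_{\gamma_0}(\bar F_{\bar v})\backslash\{g\in G(\bar F_{\bar v}):\ad(g)^{-1}\gamma_0\in\frakg(\bar\calO_{\bar v})\}/G(\bar\calO_{\bar v})$ is finite, equivalently that $\calM_{\bar v}^{\rm red}(a)$ is covered by finitely many $\calP_{\bar v}(J_a)(\bar k)$-translates of a fixed $\calM_{\bar v}^{\leq N_0}(a)$; Kazhdan and Lusztig deduce this from a bound on the relative position of such $g$ in terms of $\mathrm{val}(\discrim_G(a))$, using that $\ad(g)^{-1}\gamma_0$ is integral with regular semisimple generic characteristic polynomial. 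Granting this, pick finitely many representatives $p_1,\dots,p_m\in\calP_{\bar v}(J_a)(\bar k)$; each $p_i\cdot\calM_{\bar v}^{\leq N_0}(a)$ is again projective, hence lies in some $\calG_{\bar v}^{\leq N_i}$, so for $N\geq\max_iN_i$ we obtain $\Lambda\cdot\calM_{\bar v}^{\leq N}(a)=\calM_{\bar v}^{\rm red}(a)$.

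Finally, by the freeness the quotient $\calM_{\bar v}^{\rm red}(a)/\Lambda$ exists as a separated $\bar k$-scheme (following \cite{KL}), and the composite $\calM_{\bar v}^{\leq N}(a)\hookrightarrow\calM_{\bar v}^{\rm red}(a)\to\calM_{\bar v}^{\rm red}(a)/\Lambda$ is surjective by the second step. It is moreover finite: proper because the source is proper and the target separated, and quasi-finite because only finitely many $\lambda\in\Lambda$ can satisfy $\lambda\cdot\calM_{\bar v}^{\leq N}(a)\cap\calM_{\bar v}^{\leq N}(a)\neq\emptyset$, as one checks by writing the $\Lambda$-action through $\theta$ and the relative-position filtration of $\calG_{\bar v}$. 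A $\bar k$-scheme that is proper and receives a finite surjective morphism from a projective scheme is itself projective; hence $\calM_{\bar v}^{\rm red}(a)/\Lambda$ is projective, which is the assertion.
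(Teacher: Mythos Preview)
Your proposal is correct and follows essentially the same route as the paper, which simply cites \cite[Proposition~1, p.~138]{KL} for freeness and for the fact that a truncated piece of the affine Springer fiber surjects onto the quotient, and then concludes projectivity from that surjection. You have unpacked the Kazhdan--Lusztig argument in more detail---boundedness of stabilisers for freeness, finiteness of orbits for the covering by translates of a bounded piece, and a finite surjection from a projective scheme for the conclusion---but the skeleton is identical.
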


\begin{proof}
La proposition 1 de \cite[page 138]{KL} montre que $\Lambda$ agit
librement sur $\calM_{\bar v}^{\rm red}(a)$ et qu'en tronquant la fibre de
Springer affine r\'eduite $\calM_{\bar v}^{\rm red}(a)$, on obtient des
sch\'emas projectifs qui se surjectent sur le quotient $\calM_{\bar v}^{\rm
red}(a)/ \Lambda$. Il s'ensuit que le quotient est \'egalement un sch\'ema
projectif.
\end{proof}

\subsection{Approximation}
\label{subsection : approximation}

D'apr\`es un th\'eor\`eme bien connu de Harish-Chandra, les int\'egrales
orbitales semi-simples r\'eguli\`eres sont localement constantes. Dans ce
paragraphe, nous allons montrer une variante g\'eom\'e\-trique, plus forte,
de ce th\'eor\`eme.

\begin{proposition}\label{HC}
Soient $a\in\frakc(\calO_v)\cap \frakc^\rs(F_v)$, $\calM_a$  la fibre de Springer
affine et $\calP_v(J_a)$ le groupe des sym\'etries de $\calM_v(a)$. Il
existe alors un entier $N$ tel que pour toute extension finie $k'$ de $k$,
pour tout $a'\in\frakc(\calO_v\otimes_k k')$ tel que
$$a\equiv a' \mod \epsilon_v^N,$$
la fibre de Springer affine $\calM_{v}(a')$ munie de l'action de
$\calP_v(J_{a'})$ est isomorphe \`a $\calM_v(a)\otimes_k k'$ munie de
l'action de $\calP_v(J_a)\otimes_k k'$.
\end{proposition}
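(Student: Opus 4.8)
\emph{Strategy.} The plan is to reduce everything to the boundedness theorem of Kazhdan--Lusztig and to the observation that all the pieces of data entering the construction of $(\calM_v(a),\calP_v(J_a))$ — the Kostant point $\epsilon(a)$, the regular centralizer $J_a$, and the finitely many discrete invariants ($\dim\calM_v(a)$, $\pi_0(\calP_v(J_a))$, the lattice $\Lambda$ of \ref{quotient projectif}) — are $\epsilon_v$-adically continuous, indeed locally constant, in $a$ on $\frakc^\heartsuit(\calO_v)$. One may first reduce to the case $G$ split over $\calO_v$: by the product formulas $\calM_v(a)\otimes_k\bar k=\prod_{\bar v}\calM_{\bar v}(a)$ and the analogous one for $\calP_v(J_a)$, it suffices to treat the $\bar k$-statement together with its compatibilities, and there $\rho_G$ is trivial; the general case then follows by Galois descent.

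\emph{The scheme $\calM_v(a)$.} Since $a\in\frakc^\heartsuit(\calO_v)$, the ind-scheme $\calM_v(a)$ is in fact a scheme of finite type (Kazhdan--Lusztig), and there is an integer $M$ such that $\calM_v(a)$ is contained in the bounded closed subscheme $\calQ_v^{\le M}$ of the affine Grassmannian; moreover $M$ may be taken to depend on $a$ only through $d:=\deg_v(a^*\discrim_G)=\mathrm{val}(\discrim_G(a))$. Because $a^*\discrim_G$ is cut out by a fixed polynomial in the coefficients of $a$, the integer $d$ is locally constant on $\frakc^\heartsuit(\calO_v)$: if $a'\equiv a\mod \epsilon_v^{d+1}$ then $a'\in\frakc^\heartsuit(\calO_v\otimes_k k')$ with $\deg_v(a'^*\discrim_G)=d$, so $\calM_v(a')\subset\calQ_v^{\le M}$ as well. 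Now inside the finite-type scheme $\calQ_v^{\le M}$ the condition defining $\calM_v(a)$ is, relatively in any family, the closed condition $\ad(g)^{-1}\epsilon(a)\in\frakg(\calO_v)$ on the point $g$ of $\calQ_v$. Since $\calQ_v^{\le M}$ is bounded, the Cartan decomposition gives an integer $c=c(M)$ with $\ad(g)^{-1}\bigl(\epsilon_v^{\,c}\frakg(\calO_v)\bigr)\subset\frakg(\calO_v)$ for every $g\in\calQ_v^{\le M}$ (in any base). As $\epsilon$ is an $\calO_v$-morphism, $a\equiv a'\mod\epsilon_v^{c+1}$ forces $\epsilon(a)\equiv\epsilon(a')\mod\epsilon_v^{c+1}$, hence $\ad(g)^{-1}\epsilon(a)-\ad(g)^{-1}\epsilon(a')\in\epsilon_v\frakg(\calO_v)$ throughout $\calQ_v^{\le M}$, so the two closed conditions define the same closed subscheme. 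Thus for $N\ge\max(d+1,c+1)$ one obtains a canonical isomorphism $\calM_v(a')\simeq\calM_v(a)\otimes_k k'$ of closed subschemes of $\calQ_v$. The auxiliary square root $D'$ of \ref{fibre de Springer} plays no role here, being a trivial variation as noted in \loc.

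\emph{The symmetries and the action.} It remains to match the $\calP_v$-actions compatibly. By the lemma preceding \ref{quotient projectif}, the Kostant point $[\epsilon]^{D'}(a)$ lies in the open orbit $\calM_v^\reg(a)$, a $\calP_v(J_a)$-torsor; the same holds for $a'$. The isomorphism above carries $\calM_v^\reg(a)$ onto $\calM_v^\reg(a')$ (the regular locus is where the tautological section of the adjoint bundle stays regular, a condition independent of $a$ once the two subschemes of $\calQ_v$ coincide) and carries the Kostant point of $a$ to that of $a'$, both being the base point $1\cdot G(\calO_v)$ of $\calQ_v$. So one is reduced to showing that the group structures transported from $\calP_v(J_a)$ and $\calP_v(J_{a'})$ to $\calM_v^\reg(a)=\calM_v^\reg(a')$ through these base points coincide for $N$ large. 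That group structure is the one on $J_a$-torsors trivialised over $X_v^\bullet$; by the Donagi--Gaitsgory description \ref{J=J'}, $J_a$ is built functorially out of the cameral cover $\tilde X_{v,a}=X_v\times_{\frakc,a}\frakt$, a finite flat cover of $X_v$, étale over $\eta_v$, whose different has valuation bounded in terms of $d$. For $N$ large (larger than a bound depending only on $d$) the cover $\tilde X_{v,a'}$ is isomorphic, as a cover of $X_v$, to $\tilde X_{v,a}$ — equivalently the associated continuous homomorphism $\Gamma_v\to\bbW$ is unchanged, a Krasner-type local constancy statement — and this isomorphism is $\bbW$-equivariant, hence induces $J_a\simeq J_{a'}$ as group schemes over $X_v$ and therefore $\calP_v(J_a)\simeq\calP_v(J_{a'})$ compatibly with the actions on $\calM_v(a)=\calM_v(a')$. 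Taking $N$ larger than all the constants above ($d$, $c$, and the cameral-cover bound, each depending on $a$ only through $\mathrm{val}(\discrim_G(a))$) finishes the proof.

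\emph{Main obstacle.} The only non-formal inputs are the boundedness results of the second paragraph — that $\calM_v(a)$ is of finite type with support bounded by a quantity depending on $a$ only through $\mathrm{val}(\discrim_G(a))$ — together with the corresponding bound on the ramification of the cameral cover used in the third paragraph; these are Kazhdan--Lusztig and an elementary discriminant estimate respectively. Everything else is $\epsilon_v$-adic continuity on a fixed finite-type target, and the real content of the write-up is the bookkeeping that makes all the relevant discrete and bounded invariants stabilise simultaneously for a single $N$.
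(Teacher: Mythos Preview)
Your argument that $\calM_v(a)$ and $\calM_v(a')$ coincide as closed subschemes of $\calQ_v$ is correct and pleasant: boundedness plus the obvious $\epsilon_v$-adic continuity of the defining inequality does the job. The gap is in the last paragraph, where you assert that the isomorphism $J_a\simeq J_{a'}$ coming from the isomorphism of cameral covers is ``compatible with the actions on $\calM_v(a)=\calM_v(a')$''. This compatibility is exactly the nontrivial point, and it fails for the identity map you have chosen. Concretely, the action of $\calP_v(J_a)$ on $\calM_v(a)$ is by left multiplication through the embedding $J_a\simeq I_{\gamma_0}\hookrightarrow G$ with $\gamma_0=\epsilon(a)$, while $\calP_v(J_{a'})$ acts through $I_{\gamma_0'}\hookrightarrow G$ with $\gamma_0'=\epsilon(a')$. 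These are two \emph{different} maximal tori of $G$ over $F_v$ (already for $\mathfrak{sl}_2$, the centralizer of $\bigl(\begin{smallmatrix}0&a\\1&0\end{smallmatrix}\bigr)$ moves with $a$), so left multiplication by one is not left multiplication by the other. An abstract isomorphism $J_a\simeq J_{a'}$ of group schemes over $X_v$ says nothing about their embeddings in $G$, hence nothing about the actions.

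What is missing is precisely what the paper supplies: one must show that the abstract isomorphism $I_{\gamma_0}\simeq I_{\gamma_0'}$ induced by the cameral covers is realised by conjugation by some $g\in G(\calO_v)$, i.e.\ $\ad(g)^{-1}I_{\gamma_0}=I_{\gamma_0'}$. The paper deduces this from Donagi--Gaitsgory by transporting $\gamma_0$ through the isomorphism to get a second regular element $\iota(\gamma_0)\in\Lie(I_{\gamma_0'})$ with characteristic $a$ and the same special fibre as $\gamma_0$, and then using smoothness of $G\times\frakg^{\reg}\to\frakg^{\reg}\times_{\frakc}\frakg^{\reg}$ to produce $g$. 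Once $g$ is in hand, the equivariant isomorphism is $m\mapsto g^{-1}m$, not the identity; to see that this really carries $\calM_v(a)$ to $\calM_v(a')$ one also needs the paper's intermediate lemma that membership in $\calM_v(a)$ depends only on the subalgebra $\Lie(I_{\gamma_0})\subset\frakg$ and not on $\gamma_0$ itself. Your direct identification of the two Springer fibres as closed subschemes is thus a shortcut for half of the statement, but it cannot by itself deliver the equivariance; for that one is forced back to the conjugacy of the centralisers inside $G(\calO_v)$.
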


\begin{proof}
Consid\'erons le rev\^etement cam\'eral $\widetilde X_{a,v}$ de $X_v$
associ\'e \`a $a$ construit en formant le diagramme cart\'esien
$$
\xymatrix{
 \widetilde X_{a,v} \ar[d]_{\pi_a} \ar[r]^{}   & \ar[d]^{} \frakt  \\
 X_v \ar[r]_{a} & \frakc          }
$$
Le morphisme $\pi_a$ est fini plat et g\'en\'eriquement \'etale. Le
rev\^etement $\widetilde X_{a,v}$ est muni d'une action de $W$ qui se
d\'eduit de l'action de $W$ sur $\frakt$. On associe aussi \`a $a'\in
\frakc(\calO_v\otimes_k k')$ son rev\^etement cam\'eral $\widetilde
X_{a',v}\rightarrow X_v\otimes_k k'$.

\begin{lemme}
Soit $a\in\frakc(\calO_v)\cap \frakc^\rs(F_v)$. Il existe un entier $N$ tel
que pour toute extension finie $k'$ de $k$, pour tout
$a'\in\frakc(\calO_v\otimes_k k')$ tel que
$$a\equiv a' \mod \epsilon_v^N,$$
le rev\^etement $\widetilde X_{a',v}$ de $X_v\otimes_k k'$ muni de
l'action de $W$ est isomorphe au rev\^etement $\widetilde
X_{a,v}\otimes_k k'$ muni de l'action de $W$. On peut demander en plus que
l'isomorphisme rel\`eve l'isomorphisme \'evident module $\epsilon_{v}^{N}$.
\end{lemme}

\begin{proof}
C'est un cas particulier d'un lemme d'Artin-Hironaka \cite[lemme 3.12]{Artin}.
La seconde assertion est implicite dans la d\'emonstration d'Artin.
\end{proof}

Supposons que les rev\^etement cam\'eraux $\widetilde X_{a,v}$ et
$\widetilde X_{a',v}$ de $X_v\otimes_k k'$ sont isomorphes, d\'emontrons
que  $\calM_v(a)\otimes_k k'$ munie de l'action
de $\calP_v(J_a)\otimes_k k'$ et $\calM_v(a')$ munie de l'action de $\calP_v(J_{a'})$
sont isomorphes. En rempla{\c c}ant $k$ par $k'$, on peut d\'esormais supposer
$k=k'$ .

Pour cela, il nous faut une description de la fibre de Springer affine en
termes du centralisateur r\'egulier. Soit
$\gamma_0=\epsilon(a):X_v\rightarrow \frakg$ la section de Kostant de $a$.
On a alors un isomorphisme canonique entre $J_a$ et le $X_v$-sch\'ema en
groupes $I_{\gamma_0}=\gamma_0^* I$ o\`u $I$ est le sch\'ema en groupes
des centralisateurs au-dessus de $\frakg$. L'homomorphisme \'evident
$I_{\gamma_0}\rightarrow G$ induit un homomorphisme injectif de fibr\'es
vectoriels sur $X_v$
$${\rm Lie}(I_{\gamma_0})\rightarrow \frakg.$$
L'\'enonc\'e suivant montre que la fibre de Springer affine $\calM_v(a)$ ne d\'epend
pas de $\gamma_0$ mais seulement de la sous-alg\`ebre de Lie
commutative $\Lie(I_{\gamma_0})$ de $\frakg$.

\begin{lemme}
Soit $g\in G(F_v)$. Alors on a $\ad(g)^{-1}(\gamma_0)\in \frakg(\calO_v)$ si
et seulement si $\ad(g)^{-1} {\rm Lie}(I_{\gamma_0}) \subset
\frakg(\calO_v)$.
\end{lemme}

\begin{proof} Comme $\gamma_0\in \Lie(I_{\gamma_0})$,
$\ad(g)^{-1} {\rm Lie}(I_{\gamma_0}) \subset \frakg(\calO_v)$ implique
imm\'ediatement $\ad(g)^{-1}(\gamma_0)\in \frakg(\calO_v)$. Inversement,
soit $g\in G(F_v)$ tel que $\gamma=\ad(g)^{-1}(\gamma_0) \in
\frakg(\calO_v)$. Soit $I_\gamma=\gamma^* I$. Le fait \cf \ref{J} que l'isomorphisme
en fibre g\'en\'erique
$$\ad(g)^{-1}:J_{a,F_v}=I_{\gamma_0,F_v} \rightarrow I_{\gamma,F_v}$$
se prolonge en un homomorphisme $J_a \rightarrow I_\gamma$ implique en
particulier que $\ad(g)^{-1} {\rm Lie}(I_{\gamma_0}) \subset
\frakg(\calO_v)$.
\end{proof}

Pour terminer la d\'emonstration de la proposition \ref{HC}, il suffit de
d\'emontrer le lemme suivant.
\end{proof}

\begin{lemme}
Soient $a,a'\in \frakc(\calO_v)\cap \frakc_{\rs}(F_v)$ tels que $a\equiv a'
\mod \epsilon_v$. Supposons qu'il existe un isomorphisme entre les
rev\^etements cam\'eraux $\widetilde X_{a,v}$ et $\widetilde X_{a',v}$
munis de l'action de $W$ qui rel\`eve l'isomorphisme \'evident dans la fibre
sp\'eciale. Soient $\gamma_0=\epsilon(a)$ la section de Kostant de $a$ et
$\gamma'_0=\epsilon(a')$ la section de Kostant de $a'$. Soient
$I_{\gamma_0}$ et $I_{\gamma_0'}$ les sous-sch\'emas en groupes de $G$
centralisateurs des sections $\gamma_0$ et $\gamma_0'$. Alors il existe
$g\in G(\calO_v)$ tel que
$$\ad(g)^{-1}I_{\gamma_0}= I_{\gamma_0'}.$$
\end{lemme}

\begin{proof}
Ce lemme est une cons\'equence d'un r\'esultat de Donagi et Gaitsgory
\cite[th\'eor\`eme 11.8]{DG}. Pour la commodit\'e du lecteur, nous allons en
extraire la portion utile \`a notre propos. Puisque les
sch\'emas en groupes $I_{\gamma_0}$ et $I_{\gamma'_0}$ sont
compl\`etement d\'etermin\'es par les rev\^etements cam\'eraux
$\widetilde X_{a,v}$ respectivement $\widetilde X_{a',v}$ \cf \ref{J=J'},
l'isomorphisme entre $\widetilde X_{a,v}$ et $\widetilde X_{a',v}$ induit
un isomorphisme $\iota: I_{\gamma_0} \rightarrow I_{\gamma'_0}$. Cet
isomorphisme transporte $\gamma_0\in \Lie(I_{\gamma_0})$ en un
\'el\'ement $\iota(\gamma_0)\in \Lie(I_{\gamma'_0})$. Puisque les fibres
sp\'eciales de $\iota(\gamma_0)$ et $\gamma_0'$ co\"incident,
$\iota(\gamma_0):X_v \rightarrow \frakg$ se factorise par l'ouvert
$\frakg^\reg$. On en d\'eduit l'\'egalit\'e
$I_{\iota(\gamma_0)}=I_{\gamma'_0}$ de sous-sch\'emas en groupes de
$G$.

On dispose de deux sections
$$\gamma_0, \iota(\gamma_0): X_v \rightarrow \frakg^\reg$$
qui ont le m\^eme polyn\^ome caract\'eristique $a$ et qui sont \'egales
module $\epsilon_v$. Puisque le morphisme
$$G\times_X \frakg^\reg \rightarrow \frakg^\reg\times_{\frakc} \frakg^\reg$$
est un morphisme lisse, il existe $g\in G(\calO_v)$ tel que $g\equiv 1 \mod
\epsilon_v$ et tel que $\ad(g^{-1}(\gamma_0))=\iota(\gamma_0)$. On en
d\'eduit que
$$\ad(g)^{-1}(I_{\gamma_0})= I_{\iota(\gamma_0)}=I_{\gamma_0'}.$$
C'est ce qu'on voulait.
\end{proof}

\subsection{Cas lin\'eaire}\label{Cas lineaire local}
Examinons maintenant les fibres de Springer affines du groupe lin\'eaire en
suivant la pr\'esentation de Laumon \cite{L}. Nous r\'ef\'erons \`a
\cite{compagnon} pour une discussion similaire dans le cas des groupes
classiques. Soit $G=\GL(r)$.

On garde les notations fix\'ees au d\'ebut de ce chapitre. Un point
$a\in\frakc(\bar\calO_{\bar v})$ est repr\'esent\'e par un polyn\^ome
unitaire de degr\'e $r$ de variable $t$
$$P(a,t)=t^r-a_1 t^{r-1}+\cdots+(-1)^r a_r\in \bar\calO_{\bar v}[t]$$
Formons la $\bar\calO_{\bar v}$-alg\`ebre finie et plate de rang $r$
$$B=\bar\calO_{\bar v}[t]/P(a,t)$$
et notons $E=B\otimes_{\bar\calO_{\bar v}} \bar F_{\bar v}$. L'hypoth\`ese
$a\in\frakc^\heartsuit(\bar\calO_{\bar v})$ implique que $E$ est une $\bar
F_{\bar v}$-alg\`ebre finie \'etale de dimension $r$. On peut \'ecrire $E$ comme
un produit $E_1\times\cdots\times E_s$ de $s$ extensions s\'eparables de
$\bar F_{\bar v}$ avec $s\leq r$.

\begin{numero}
Dans cette situation, on a une description de la fibre de Springer en termes
des r\'eseaux. Les $\bar k$-points de la fibre de Springer $\calM_{\bar v}(a)$
est l'ensemble des $B$-r\'eseaux dans $E$ c'est-\`a-dire des $B$-modules
$\calL$ qui sont contenus dans le $\bar F_{\bar v}$-espace vectoriel $E$
comme un $\bar \calO_{\bar v}$-r\'eseau. Les $\bar k$-points de la partie
r\'eguli\`ere $\calM_{\bar v}^\reg(a)$ consistent des $B$-r\'eseaux de $E$
qui sont des $B$-modules libres. Les $\bar k$-points du groupe des
sym\'etries $\calP_{\bar v}(J_a)$ est le groupe $E^\times /B^\times$.
\end{numero}

\begin{numero}
La normalisation $B^\flat$ de $B$ est l'anneau des entiers de $E_{\bar v}$.
On a alors un d\'evissage de $E^\times/B^\times$
$$1\rightarrow (B^\flat)^\times/B^\times \rightarrow
E^\times/B^\times \rightarrow E^\times/(B^\flat)^\times\rta 1
$$
o\`u $(B^\flat)^\times/B^\times$ est le groupe des $\bar k$-points de la
composante neutre du groupe $\calP_{\bar v}(J_a)$. Par cons\'equent, le
groupe des composantes connexes $\pi_0(\calP_{\bar v}(J_a))$ est
canoniquement isomorphe \`a $E^\times/(B^\flat)^\times$ qui est un groupe
ab\'elien libre muni d'une base index\'ee par l'ensemble des composantes
connexes de $\Spec(B^\flat)$.
\end{numero}

\begin{numero}
Dans ce cas, la dimension de $\calP_{\bar v}(J_a)$  est \'egale \`a l'invariant
$\delta$ de Serre
$$\delta_{\bar v}(a)=\dim(\calP_{\bar v}(J_a))=\dim_k(B^\flat/B).$$
Par ailleurs, cet entier peut \^etre exprim\'e en fonction du discriminant.
Soit $d_{\bar v}(a)={\rm val}_{\bar v}(\discrim(a))$ la valuation $\bar
v$-adique du discriminant de $a$. D'apr\`es \cite[III.3 proposition 5 et III.6
corollaire 1]{Se}, on a la formule
$$\delta_{\bar v}(a)=(d_{\bar v}(a)-c_{\bar v}(a))/ 2$$
o\`u $c_{\bar v}(a)=r-s$.
\end{numero}

\subsection{Dimension}
\label{dimension}

Dans \cite{KL}, Kazhdan et Lusztig ont montr\'e que
$$\dim(\calM_{\bar v}(a))=\dim(\calM_{\bar v}^\reg(a)).$$
On peut en fait d\'eduire un \'enonc\'e plus pr\'ecis \`a partir de leurs
r\'esultats.

\begin{proposition}\label{dimension du complement}
Le compl\'ementaire de l'ouvert r\'egulier $\calM_{\bar v}^\reg(a)$ de la
fibre de Springer $\calM_{\bar v}(a)$ est de dimension strictement plus
petite que celle de $\calM_{\bar v}(a)$.
\end{proposition}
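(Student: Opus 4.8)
The plan is to reduce the statement, by a finite stratification of the affine Springer fibre according to the behaviour of the Higgs field at the closed point, to a dimension estimate for each stratum, and then to extract that estimate from the analysis of Kazhdan and Lusztig.

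The first step is to identify $\calM_{\bar v}^\reg(a)$ with a stratum of such a stratification. A pair $(E,\phi)\in\calM_{\bar v}(a)$ lies in $\calM_{\bar v}^\reg(a)$ precisely when the value of $\phi$ at the closed point of $\bar X_{\bar v}$ is a regular element of $\frakg$: indeed the generic value of $\phi$ is regular semisimple, so $\phi$ factors through $[\frakg^\reg/G\times\GG_m]$ as soon as its value at the closed point does, $\frakg^\reg$ being open. Since $a$ is fixed, that value, which is well defined up to $G(\bar k)$-conjugacy, has characteristic polynomial $a(\bar v)\in\frakc(\bar k)$ and hence lies in the fibre $\chi^{-1}(a(\bar v))\subset\frakg$; this fibre is a union of finitely many $G$-orbits, the regular one being open and dense. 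Varying the conjugacy class of $\phi$ at the closed point thus yields a finite stratification of $\calM_{\bar v}(a)$ by locally closed subsets $\calM_{\bar v}^{\mathcal O}(a)$ indexed by the orbits $\mathcal O\subset\chi^{-1}(a(\bar v))$, the stratum over the regular orbit being precisely $\calM_{\bar v}^\reg(a)$, and $\calM_{\bar v}(a)\setminus\calM_{\bar v}^\reg(a)$ being the union of the finitely many strata $\calM_{\bar v}^{\mathcal O}(a)$ with $\mathcal O$ non-regular. As this is a finite union, it suffices to prove $\dim\calM_{\bar v}^{\mathcal O}(a)\le\delta-1$ for every non-regular $\mathcal O$, where $\delta=\dim\calM_{\bar v}(a)$. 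To be able to work with schemes of finite type one may, using Proposition \ref{quotient projectif}, first replace $\calM_{\bar v}(a)$ by the projective scheme $\calM_{\bar v}^{\rm red}(a)/\Lambda$ and the strata by their images, which changes no dimensions.

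Next I would bring in the action of $\calP_{\bar v}(J_a)$, which preserves each stratum since the conjugacy class of $\phi$ at the closed point is invariant under it. On the regular stratum the action is simply transitive, so $\dim\calM_{\bar v}^\reg(a)=\dim\calM_{\bar v}(a)=\delta$ (this is the equality of \cite{KL}). For a point $m=(E,\phi)$ in a non-regular stratum, consider the homomorphism $J_a\rightarrow\underline{\rm Aut}(E,\phi)$ deduced from \ref{J}: it is an isomorphism over the generic point of $\bar X_{\bar v}$, but its cokernel is a non-trivial sheaf of groups concentrated at the closed point, because the fibre of $J_a$ there has dimension $r$ while that of $\underline{\rm Aut}(E,\phi)$ has dimension $\dim G_{\phi(\bar v)}>r$. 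Hence the stabiliser of $m$ in $\calP_{\bar v}(J_a)$ is of strictly positive dimension, and $\dim\bigl(\calP_{\bar v}(J_a)\cdot m\bigr)\le\delta-1$ for every $m\in\calM_{\bar v}^{\mathcal O}(a)$.

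The remaining and only non-formal step is to pass from this bound on the dimension of the individual orbits to the bound $\dim\calM_{\bar v}^{\mathcal O}(a)\le\delta-1$: a priori the stratum could be a $\delta$-dimensional family of $(\delta-1)$-dimensional orbits, so one must also control the moduli of orbits inside it. Here I would re-examine the proof, in \cite{KL}, of the equality $\dim\calM_{\bar v}(a)=\dim\calM_{\bar v}^\reg(a)=\delta$, which proceeds by estimating the dimension of the pieces cut out on $\calM_{\bar v}(a)$ by the Schubert cells of the affine Grassmannian: one checks that the intersection of each such piece with a non-regular stratum has dimension strictly smaller than the piece itself. Alternatively one may use a topological Jordan decomposition to reduce $a(\bar v)$ to a nilpotent characteristic polynomial for the centraliser $G_s$ of its semisimple part, thereby reducing to the classical statement that the regular nilpotent stratum is dense in the corresponding fibre. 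Either route gives $\dim\calM_{\bar v}^{\mathcal O}(a)\le\delta-1$ for each non-regular $\mathcal O$, whence $\dim\bigl(\calM_{\bar v}(a)\setminus\calM_{\bar v}^\reg(a)\bigr)\le\delta-1<\delta$. I expect this last step — the simultaneous control of orbit dimensions and of the parameter space of orbits in a non-regular stratum — to be the main difficulty, and the point at which the geometric input of \cite{KL} is genuinely needed rather than formal.
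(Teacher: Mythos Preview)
Your approach is quite different from the paper's and carries a real gap at exactly the spot you identify.

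The paper's proof is short and avoids both your stabiliser computation and the orbit-space problem entirely. It introduces the Iwahori affine Springer fibre
\[
\calB_{\bar v}(a)=\{\,g\in G(\bar F_{\bar v})/{\rm Iw}_{\bar v}\ :\ \ad(g)^{-1}\gamma_0\in\Lie({\rm Iw}_{\bar v})\,\},
\]
for which Kazhdan--Lusztig proved equidimensionality. The natural projection $\calB_{\bar v}(a)\to\calM_{\bar v}(a)$ has, over a point $g\,G(\bar\calO_{\bar v})$ with $\gamma=\ad(g)^{-1}\gamma_0$, fibre equal to the classical Springer fibre of $\gamma(\bar v)$ in $G/B$: this is finite precisely when $\gamma(\bar v)$ is regular, and of dimension $\ge 1$ otherwise. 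Since the preimage of $\calM_{\bar v}^\reg(a)$ already has dimension $\delta$, equidimensionality of $\calB_{\bar v}(a)$ forces every irreducible component of the complement to have dimension $\le\delta-1$. The positive-dimensional classical Springer fibres over non-regular points absorb exactly the excess dimension, with no need to separate ``orbit dimension'' from ``moduli of orbits''.

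In your argument, the stabiliser step is not correctly justified: the stabiliser of $m$ in $\calP_{\bar v}(J_a)$ identifies with $I_\gamma(\bar\calO_{\bar v})/J_a(\bar\calO_{\bar v})$ (via $\ad(g)^{-1}$), and the fact that the special fibre of $I_\gamma$ has dimension $>r$ does not by itself show this quotient is positive-dimensional --- $I_\gamma$ is not flat, and a dimension jump in its special fibre need not be seen on $\bar\calO_{\bar v}$-points. The conclusion is in fact true, but one has to compare the lattices $\frakg(\bar\calO_{\bar v})\cap\frakg_\gamma(\bar F_{\bar v})$ and $\ad(g)^{-1}\frakg(\bar\calO_{\bar v})\cap\frakg_\gamma(\bar F_{\bar v})$ and show they differ when $\gamma(\bar v)$ is non-regular. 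More seriously, even granting $\dim(\calP_{\bar v}(J_a)\cdot m)\le\delta-1$, you give no argument bounding the dimension of the space of orbits in $\calM_{\bar v}^{\mathcal O}(a)$. Your two suggestions for this step are both placeholders: ``re-examine the Schubert-cell estimates of \cite{KL}'' is not an argument, and the topological Jordan reduction to a nilpotent statement for $G_s$ addresses the fibre $\chi^{-1}(a(\bar v))$ but does not obviously control the affine moduli of orbits. The paper's Iwahori route sidesteps the whole issue.
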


\begin{proof}
Pour discuter de la dimension, on peut n\'egliger les nilpotents dans les
anneaux structuraux de $\calM_{\bar v}(a)$. Comme rappel\'e dans le
paragraphe \ref{Springer}, $\calM_{\bar v}(a)$ est la sous-vari\'et\'e de la
Grassmannienne affine des $g\in G(F_{\bar v})/G(\calO_{\bar v})$ tel que
$\ad(g)^{-1}(\gamma_0)\in\frakg(\calO_{\bar v})$.

A la suite de Kazhdan et Lusztig, consid\'erons aussi la sous-vari\'et\'e des
points fixes $\calB_{\bar v}(a)$ des drapeaux affines fixes sous $\gamma_0$
c'est-\`a-dire $g\in G(F_{\bar v})/{\rm Iw}_{\bar v}$ tel que
$\ad(g)^{-1}\gamma_0\in \Lie({\rm Iw}_{\bar v})$. Ici ${\rm Iw}_{\bar v}$
est un sous-groupe d'Iwahori de $G(\calO_{\bar v})$. Dans \cite{KL},
Kazhdan et Lusztig ont d\'emontr\'e que les fibres de Springer affines pour
les sous-groupes d'Iwahori sont \'equidimen\-sionnelles.
L'\'equidimen\-sionalit\'e des fibres de Springer classiques a \'et\'e
d\'emontr\'ee auparavant par Spaltenstein \cite{Sp}.

Le morphisme $\calB_{\bar v}(a)\rightarrow \calM_{\bar v}(a)$ est un
morphisme fini au-dessus de $\calM_{\bar v}^\reg(a)$. Les fibres au-dessus
des point $x\in\calM_{\bar v}-\calM_{\bar v}^\reg$ sont non vides et de dimension sup\'erieure ou \'egale \`a un. L'\'equidimensionalit\'e de $\calB_{\bar v}(a)$
implique donc que les composantes irr\'eductibles de $\calM_a-\calM_a^\reg$
sont de dimension strictement plus petite que $\dim(\calM_a^\reg)$.
\end{proof}

\begin{corollaire}\label{dim zero}
Si $\dim(\calM_v(a))=0$, alors l'ouvert dense $\calM_v^\reg(a)$ est
$\calM_v(a)$ tout entier.
\end{corollaire}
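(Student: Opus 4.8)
The plan is to deduce the statement directly from Proposition \ref{dimension du complement} together with the base-change formula to $\bar k$, reading the hypothesis $\dim(\calM_v(a))=0$ as forcing the non-regular locus to be empty.

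First I would pass to $\bar k$. Recall the product decomposition $\calM_v(a)\otimes_k\bar k=\prod_{\bar v:k_v\rightarrow\bar k}\calM_{\bar v}(a)$, and likewise for the regular open $\calM_v^\reg(a)$. Since $\dim(\calM_v(a))=0$, each geometric factor $\calM_{\bar v}(a)$ has dimension $0$. By Proposition \ref{dimension du complement}, the complement of $\calM_{\bar v}^\reg(a)$ in $\calM_{\bar v}(a)$ has dimension strictly smaller than $\dim(\calM_{\bar v}(a))=0$; a scheme of dimension $<0$ has no points, hence is empty. Consequently $\calM_{\bar v}^\reg(a)=\calM_{\bar v}(a)$ for every embedding $\bar v$.

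Finally I would descend back to $k$. Writing $Z$ for the closed complement of the open immersion $\calM_v^\reg(a)\hookrightarrow\calM_v(a)$, the previous step shows that $Z\otimes_k\bar k=\emptyset$; since $\Spec(\bar k)\rightarrow\Spec(k)$ is faithfully flat (equivalently, $Z$ then has no geometric points), $Z$ itself is empty, so $\calM_v^\reg(a)=\calM_v(a)$. There is no genuine obstacle here: the one point worth spelling out is that the dimension bound of Proposition \ref{dimension du complement}, when the ambient space already has dimension $0$, leaves no room for a nonempty complement, and that an open immersion which is surjective on underlying spaces (hence a homeomorphism onto its target) is an isomorphism, so the conclusion holds for the ind-scheme $\calM_v(a)$ itself and not merely for its reduced part $\calM_v^{\rm red}(a)$.
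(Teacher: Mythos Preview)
Your proposal is correct and follows exactly the argument the paper has in mind: the corollary is stated in the paper without proof, as an immediate consequence of Proposition~\ref{dimension du complement}, and your write-up simply spells out the obvious step (dimension $<0$ forces the complement to be empty) together with the harmless base-change to and from $\bar k$.
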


Kazhdan et Lusztig ont aussi conjectur\'e une formule qui exprime la
dimension ci-dessus en fonction du discriminant et d'un terme d\'efectif
reli\'e \`a la monodromie qui a \'et\'e d\'emontr\'ee plus tard par
Bezrukavnikov dans \cite{Bz}. Rappelons cette formule.

Gardons les notations fix\'ees au d\'ebut de ce chapitre. Soit $a:\bar X_{\bar
v} \rightarrow \frakc$ un morphisme dont l'image n'est pas contenue dans le
diviseur du discriminant ${\discrim_G}$. En prenant l'image r\'eciproque de
${\discrim_G}$, on obtient un diviseur de Cartier effectif de $\bar X_{\bar
v}$ support\'e par son point ferm\'e. Son degr\'e est un entier naturel que
nous allons noter
$$d_{\bar v}(a):=\deg_{\bar v}(a^*{\discrim_G}).$$

En prenant l'image r\'eciproque du rev\^etement $\pi:\frakt^\rs \rightarrow
\frakc^\rs$ par le morphisme $a:\bar X_{\bar v}^\bullet \rightarrow
\frakc^\rs$, on obtient un $W$-torseur $\pi_a$ sur $\bar X_{\bar
v}^\bullet$. Rappelons qu'au-dessus de $\bar X_{\bar v}$ la forme
quasi-d\'eploy\'ee se d\'eploie canoniquement de sorte qu'apr\`es le
changement de base \`a $\bar X_{\bar v}$, on a $W=\bbW$. En choisissant
un point g\'eom\'etrique de ce torseur au-dessus du point g\'eom\'etrique
$\bar\eta_{\bar v}$ de $\bar X_{\bar v}$, on obtient un homomorphisme
\begin{equation}\label{pi_a^bullet}
\pi_a^\bullet: I_v\rightarrow \bbW
\end{equation}
Puisque $p$ ne divise pas l'ordre de $\bbW$, $\pi_a^\bullet$ se factorise
par le quotient mod\'er\'e $I_v^{\rm tame}$ de $I_v$ de sorte que son
image est un sous-groupe cyclique. Notons
\begin{equation}\label{c_v(a)}
c_{\bar v}(a):=\dim(\bbt) - \dim(\bbt^{\pi_a^\bullet(I_v)}).
\end{equation}
L'\'enonc\'e suivant a \'et\'e d\'emontr\'e dans \cite{Bz} par Bezrukavnikov.

\begin{proposition}\label{dimension Bezru} On a les \'egalit\'es
$$\dim(\calM^\reg_{\bar v}(a))=\dim(\calP_{\bar v}(J_a))=\frac{d_{\bar v}(a)-c_{\bar v}(a)}{2}.$$
\end{proposition}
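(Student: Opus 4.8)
The plan is to establish the three equalities in turn, the first two being essentially formal and the third being the substantive point. For the equality $\dim(\calM^\reg_{\bar v}(a))=\dim(\calP_{\bar v}(J_a))$ I would invoke the lemma proved just after \ref{symetries d'une fibre de Springer}: the open subscheme $\calM^\reg_{\bar v}(a)$ is a nonempty principal homogeneous space under the group ind-scheme $\calP_{\bar v}(J_a)$, a formal consequence of the gerbe statement \ref{gerbe}; a nonempty torsor and its structure group have the same dimension. For the passage from $\calM^\reg_{\bar v}(a)$ to the full affine Springer fibre I would use Proposition \ref{dimension du complement}: the complement of $\calM^\reg_{\bar v}(a)$ in $\calM_{\bar v}(a)$ has strictly smaller dimension, hence $\dim(\calM^\reg_{\bar v}(a))=\dim(\calM_{\bar v}(a))$.

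It remains to prove $\dim(\calM_{\bar v}(a))=\frac{d_{\bar v}(a)-c_{\bar v}(a)}{2}$, which is the dimension formula conjectured by Kazhdan and Lusztig in \cite{KL} and proved by Bezrukavnikov in \cite{Bz}. The one point requiring attention is the identification of the invariant $c_{\bar v}(a)$ of \ref{c_v(a)}, namely $\dim(\bbt)-\dim(\bbt^{\pi_a^\bullet(I_v)})$ with $\pi_a^\bullet$ as in \ref{pi_a^bullet}, with the ``defect'' term appearing in the Kazhdan--Lusztig/Bezrukavnikov formula. This is routine: since $p$ does not divide $|\bbW|$ the monodromy $\pi_a^\bullet$ factors through the tame quotient of $I_v$, so its image is cyclic, and the defect in \loc\ is by definition the corank of the centralizer torus $J_a$, i.e. $\dim(\bbt)$ minus the dimension of its inertia invariants (over $\bar k$ there is no residual Galois, so inertia and the full Galois group agree). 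Likewise one checks that $d_{\bar v}(a)=\deg_{\bar v}(a^*\discrim_G)$, with $\discrim_G$ normalized as the image of $\prod_{\alpha\in\Phi}{\rm d}\alpha$ (each root hyperplane entering $\pi^*\discrim_G$ with multiplicity two, \cf \ref{diviseur discriminant}), is the valuation of the usual discriminant, so the formulas match.

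If one prefers a proof independent of \cite{Bz}, I would instead compute $\dim(\calP_{\bar v}(J_a))$ directly by reducing to the linear case of \S\ref{Cas lineaire local}. By the Donagi--Gaitsgory description (\ref{J J1}, \ref{J=J'}), $J_a$ is an open subgroup of the Weil restriction $J^1_a=(\pi_{a*}(\bbT\times\frakt))^{\bbW}$ with the same identity component, so $\Lie(J_a)=(\bbt\otimes_{\bar k}\widetilde\calO_a)^{\bbW}$, where $\widetilde\calO_a$ is the ring of the cameral cover and $\bbW$ acts diagonally; and $\dim(\calP_{\bar v}(J_a))$ equals the length of $\Lie(J_a^\flat)/\Lie(J_a)$, where $J_a^\flat$ is the N\'eron model of the generic fibre, whose Lie algebra is $(\bbt\otimes_{\bar k}\widetilde\calO_a^\flat)^{\bbW}$ for $\widetilde\calO_a^\flat$ the normalization of $\widetilde\calO_a$. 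Since $p\nmid|\bbW|$, taking $\bbW$-invariants is exact, so this length equals $\dim_{\bar k}\big((\bbt\otimes_{\bar k}(\widetilde\calO_a^\flat/\widetilde\calO_a))^{\bbW}\big)$. Decomposing the cameral cover into $\bbW$-orbits of sheets, each carrying the cyclic tame monodromy, one sees this quantity is additive over the orbits and each orbit contributes a $\GL$-type term, reducing to Serre's identity $\delta=(d-c)/2$ established via \cite{Se} in \S\ref{Cas lineaire local}. The hard part of this second route — and the reason it is cleaner to cite Bezrukavnikov — is precisely this $\bbW$-equivariant bookkeeping: one must keep simultaneous track of the $\delta$-invariant of the cameral cover, of the cyclic monodromy datum $\pi_a^\bullet$, and of the valuation of $\discrim_G$, which is the technical core of Bezrukavnikov's computation.
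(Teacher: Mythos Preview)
Your proposal is correct and matches the paper's treatment: the paper does not give a proof of this proposition at all, but simply records it as ``d\'emontr\'e dans \cite{Bz} par Bezrukavnikov'', exactly as you do for the substantive equality. Your explicit justification of the first equality via the torsor lemma, and your sketch of an alternative computation of $\dim(\calP_{\bar v}(J_a))$ via the N\'eron model and $W$-invariants, go beyond what the paper provides here; the latter is in fact the content of Corollaire~\ref{delta local} in the following section, which the paper presents as ``une autre formule'' for $\delta_{\bar v}(a)$ rather than as an independent proof of the Bezrukavnikov formula.
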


Nous allons noter $\delta_{\bar v}(a)=\dim(\calP_{\bar v}(J_a))$ et l'appeler
l'{\em invariant $\delta$ local}.

\subsection{Mod\`ele de N\'eron}
\label{subsection : Modele de Neron}

La dimension de $\calP_{\bar v}(J_a)$ peut \^etre calcul\'ee autrement \`a
l'aide du mod\`ele de N\'eron de $J_a$. En fait, le mod\`ele de N\'eron
nous permet d'analyser compl\`etement la structure de $\calP_{\bar
v}(J_a)$. D'apr\`es Bosch, Lutkebohmer et Raynaud \cite{BLR}, il existe un unique
sch\'ema en groupes lisse de type fini $J^\flat_a$ sur $\bar X_{\bar v}$ de
m\^eme fibre g\'en\'erique que $J_a$ et maximal pour cette propri\'et\'e
c'est-\`a-dire pour tout autre sch\'ema en groupes lisse de type fini $J'$ sur
$\bar X_{\bar v}$ de m\^eme fibre g\'en\'erique, il existe un
homomorphisme canonique $J'\rightarrow J^\flat_a$ qui induit l'identit\'e
sur les fibres g\'en\'eriques. En particulier, on a un homomorphisme
canonique $J_a\rightarrow J^\flat_a$. Au niveau des points entiers, cet
homomorphisme d\'efinit les inclusions
$$J_a(\bar\calO_{\bar v})\subset J^\flat_a(\bar\calO_{\bar v})\subset J_a(\bar F_{\bar v}).$$
o\`u $J^\flat_a(\bar \calO_{\bar v})$ est le sous-groupe born\'e maximal de
$J_a(F_{\bar v})$. On appellera $J^\flat_a$ le mod\`ele de N\'eron de
$J_a$. Remarquons que dans la terminologie de \cite{BLR}, $J^\flat_a$
sera appel\'e le mod\`ele de N\'eron de type fini \`a distinguer avec leur mod\`ele
de N\'eron qui est seulement localement de type fini.

En rempla{\c c}ant dans la d\'efinition \ref{symetries d'une fibre de
Springer} de $\calP_{\bar v}(J_a)$ le sch\'ema en groupes $J_a$ par son
mod\`ele de N\'eron $J^\flat_a$, on obtient un ind-sch\'ema en groupes
$\calP_{\bar v}(J^\flat_a)$ sur $\bar k$. L'homomorphisme de sch\'emas en
groupes $J_a\rta J^\flat_a$ induit un homomorphisme de ind-groupes
$$\calP_{\bar v}(J_a)\rta \calP_{\bar v}(J^\flat_a)$$
qui induit un d\'evissage de $\calP_{\bar v}(J_a)$.

\begin{lemme}\label{R_v(a)}
Le groupe $\calP_{\bar v}(J_a^\flat)$ est hom\'eomorphe \`a un groupe
ab\'e\-lien libre de type fini. L'homomorphisme $p_{\bar v}:\calP_{\bar
v}(J_a)\rta \calP_{\bar v}(J^\flat_a)$ est surjectif. Le noyau $\calR_{\bar
v}(a)$ de $p_{\bar v}$  est un sch\'ema en groupes affine de type fini sur
$\bar k$.
\end{lemme}

\begin{proof}
Puisque $J_a^\flat(\bar\calO_{\bar v})$ est le sous-groupe born\'e maximal
dans le tore $J_a(\bar F_{\bar v})$, le quotient $J_a(\bar F_{\bar
v})/J_a^\flat(\bar\calO_{\bar v})$ est un groupe ab\'elien libre de type fini.
L'homomorphisme
$$\calP_{\bar v}(J_a)(\bar k)=J_a(\bar F_{\bar v})/J_a(\bar \calO_{\bar v}) \rightarrow J_a(\bar F_{\bar v})/
J_a^\flat(\bar \calO_{\bar v})=\calP_{\bar v}(J_a^\flat)(\bar k)
$$
est manifestement surjectif.

Pour un entier $N$ assez grand, $J_a(\calO_{\bar v})$ contient le noyau de
l'homomorphisme $J_a^\flat(\bar\calO_{\bar v}) \longrightarrow
J_a^\flat(\bar\calO_{\bar v}/\varepsilon_{\bar v}^N \bar\calO_{\bar v})$. Il
s'ensuit que $\calR_{\bar v}(a)$ est un quotient de la restriction \`a la Weil
$$\prod_{\Spec(\bar\calO_{\bar v}/\varepsilon_{\bar v}^N \bar\calO_{\bar v})/\Spec(\bar k)}
J_a^\flat\otimes_{\bar\calO_{\bar v}}(\bar\calO_{\bar v}/\varepsilon_{\bar v}^N \bar\calO_{\bar v})
$$
qui est un $\bar k$-groupe alg\'ebrique affine lisse de type fini. Le lemme
s'en d\'eduit.
\end{proof}

Le mod\`ele de N\'eron peut \^etre explicitement construit \`a l'aide de la
normalisation du rev\^etement cam\'eral. Notons $\widetilde X_{a,\bar v}$
l'image r\'eciproque de $\frakt \rightarrow \frakc$ par le morphisme $a:\bar
X_{\bar v}\rightarrow \frakc$. Consid\'erons la normalisation $\widetilde
X_{a,\bar v}^\flat$ de $\widetilde X_{a,v}$ qui est un sch\'ema fini et plat
au-dessus de $\bar X_{\bar v}$ muni d'une action de $W$. Puisque le corps
r\'esiduel de $\bar X_{\bar v}$ est alg\'ebriquement clos, $\widetilde
X_{a,\bar v}^\flat$ est un sch\'ema semi-local complet r\'egulier de
dimension un.

\begin{proposition}\label{J a flat}
Soit $\widetilde X_{a,\bar v}^\flat=\Spec(\widetilde \calO_{\bar v}^\flat)$ la
normalisation de $\widetilde X_{a,\bar v}$. Alors le mod\`ele de N\'eron
$J_a^\flat$ de $J_a$ est le groupe des points fixes sous l'action diagonale
de $W$ dans la restriction des scalaires de $\widetilde X_{a,\bar v}^\flat$
\`a $\bar X_{\bar v}$ du tore $T\times_{\bar X_{\bar v}} \widetilde
X_{a,\bar v}^\flat$
$$J_a^\flat=\prod_{\widetilde
X_{a,\bar v}^\flat/\bar X_{\bar v}}(T\times_{\bar X_{\bar v}} \widetilde X_{a,\bar v}^\flat)^W.$$
\end{proposition}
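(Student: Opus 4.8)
On se propose d'identifier le sch\'ema en groupes explicite
$$\widetilde J_a:=\prod_{\widetilde X_{a,\bar v}^\flat/\bar X_{\bar v}}(T\times_{\bar X_{\bar v}}\widetilde X_{a,\bar v}^\flat)^W$$
au mod\`ele de N\'eron de type fini $J_a^\flat$ en v\'erifiant les deux propri\'et\'es qui caract\'erisent ce dernier : c'est un sch\'ema en groupes lisse de type fini sur $\bar X_{\bar v}$, de m\^eme fibre g\'en\'erique que $J_a$, et son groupe de points $\widetilde J_a(\bar\calO_{\bar v})$ est le sous-groupe born\'e maximal de $J_a(\bar F_{\bar v})$. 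Pour la lissit\'e : comme $\widetilde X_{a,\bar v}^\flat$ est fini et plat sur $\bar X_{\bar v}$, la restriction \`a la Weil $\prod_{\widetilde X_{a,\bar v}^\flat/\bar X_{\bar v}}(T\times \widetilde X_{a,\bar v}^\flat)$ est un sch\'ema en groupes affine lisse de type fini sur $\bar X_{\bar v}$ \cf \cite{BLR}, et puisque $\sharp\,\bbW$ est premier \`a la caract\'eristique, son sous-sch\'ema des points fixes sous l'action diagonale de $W$ est encore lisse, exactement comme dans \ref{J1} ; donc $\widetilde J_a$ est lisse affine de type fini, en particulier s\'epar\'e. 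Pour la fibre g\'en\'erique : le rev\^etement cam\'eral $\widetilde X_{a,\bar v}$ a une fibre g\'en\'erique d\'ej\`a normale (finie \'etale sur le corps $\bar F_{\bar v}$, donc produit d'extensions s\'eparables), de sorte que $\widetilde X_{a,\bar v}^\flat\to \widetilde X_{a,\bar v}$ est un isomorphisme en fibre g\'en\'erique ; par suite $\widetilde J_a$ et $J^1_a$ ont m\^eme fibre g\'en\'erique, qui est celle de $J_a=J'_a$ d'apr\`es \ref{J J1} et \ref{J=J'}.

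Vient ensuite le calcul du sous-groupe born\'e maximal. Notons $\widetilde\calO_{\bar v}^\flat$ l'anneau (semi-local r\'egulier) de $\widetilde X_{a,\bar v}^\flat$ et $\widetilde F$ son anneau total de fractions. L'adjonction de la restriction \`a la Weil donne $\widetilde J_a(\bar\calO_{\bar v})=T(\widetilde\calO_{\bar v}^\flat)^W$ et $\bigl(\prod_{\widetilde X_{a,\bar v}^\flat/\bar X_{\bar v}}(T\times\widetilde X_{a,\bar v}^\flat)\bigr)(\bar F_{\bar v})=T(\widetilde F)$, tandis que $J_a(\bar F_{\bar v})=T(\widetilde F)^W$, la fibre g\'en\'erique $J_a\otimes\bar F_{\bar v}$ \'etant un sous-groupe ferm\'e de cette restriction \`a la Weil. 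Comme $T$ est d\'eploy\'e, $T(\widetilde\calO_{\bar v}^\flat)$ est le sous-groupe born\'e maximal de $T(\widetilde F)$ ; une immersion ferm\'ee \'etant une application ferm\'ee, tout sous-groupe born\'e de $J_a(\bar F_{\bar v})$ est born\'e dans $T(\widetilde F)$, donc contenu dans $T(\widetilde\calO_{\bar v}^\flat)\cap T(\widetilde F)^W=\widetilde J_a(\bar\calO_{\bar v})$, lequel est lui-m\^eme born\'e ; d'o\`u l'assertion.

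Il reste \`a identifier $\widetilde J_a$ \`a $J_a^\flat$. Le morphisme de normalisation \'etant $W$-\'equivariant, la fonctorialit\'e de la restriction \`a la Weil fournit un homomorphisme $J^1_a\to\widetilde J_a$ ; en le restreignant \`a $J_a=J'_a\subset J^1_a$ on obtient un homomorphisme canonique $J_a\to\widetilde J_a$, isomorphisme en fibre g\'en\'erique. Par la propri\'et\'e de maximalit\'e d\'efinissant $J_a^\flat$, on dispose alors d'un homomorphisme canonique $u\colon\widetilde J_a\to J_a^\flat$, isomorphisme en fibre g\'en\'erique. Les deux points pr\'ec\'edents montrent que $\widetilde J_a(\bar\calO_{\bar v})$ et $J_a^\flat(\bar\calO_{\bar v})$ valent tous deux l'unique sous-groupe born\'e maximal de $J_a(\bar F_{\bar v})$, donc $u$ est bijectif sur les points \`a valeurs dans $\bar\calO_{\bar v}$ ; en plongeant $\widetilde J_a$ et $J_a^\flat$ dans le mod\`ele de N\'eron localement de type fini $\calN$ de $J_a$ (o\`u tout mod\`ele lisse s\'epar\'e de type fini appara\^it comme l'image r\'eciproque d'un sous-groupe de $\pi_0(\calN_s)$, lequel sous-groupe est d\'etect\'e par l'image des $\bar\calO_{\bar v}$-points via la surjection $\calN(\bar\calO_{\bar v})\twoheadrightarrow\pi_0(\calN_s)$), on conclut que $\widetilde J_a$ et $J_a^\flat$ d\'efinissent le m\^eme ouvert de $\calN$, de sorte que $u$ est un isomorphisme.

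Le point d\'elicat est cette derni\`ere \'etape : passer de l'\'egalit\'e des fibres g\'en\'eriques et des groupes de points entiers \`a un isomorphisme effectif de sch\'emas en groupes. Tout le reste (lissit\'e des restrictions \`a la Weil et des lieux de points fixes, comparaison g\'en\'erique, calcul du sous-groupe born\'e maximal) est formel une fois la structure rappel\'ee ci-dessus admise ; c'est l'unicit\'e, parmi les mod\`eles lisses de type fini d'un tore sur le trait strictement hens\'elien $\bar X_{\bar v}$, de celui dont le groupe des points entiers est prescrit, qui constitue le c\oe ur de l'argument, et c'est l\`a qu'intervient la th\'eorie des mod\`eles de N\'eron des tores (ainsi que, en filigrane, la description galoisienne de Donagi et Gaitsgory utilis\'ee pour \ref{J=J'}).
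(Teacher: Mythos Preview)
Votre calcul des points entiers $\widetilde J_a(\bar\calO_{\bar v})=T(\widetilde\calO_{\bar v}^\flat)^W$ et son identification au sous-groupe born\'e maximal de $J_a(\bar F_{\bar v})$ sont corrects et instructifs. L'argument de lissit\'e et l'identification de la fibre g\'en\'erique sont \'egalement justes. Le probl\`eme se situe exactement l\`a o\`u vous le signalez vous-m\^eme, dans la derni\`ere \'etape : l'affirmation parenth\'etique selon laquelle \og tout mod\`ele lisse s\'epar\'e de type fini appara\^{\i}t comme l'image r\'eciproque d'un sous-groupe de $\pi_0(\calN_s)$\fg{} est fausse en g\'en\'eral. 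Prenez par exemple la dilatation de $\GG_m$ sur $\bar\calO_{\bar v}$ le long de la section unit\'e de la fibre sp\'eciale : c'est un mod\`ele lisse s\'epar\'e de type fini de $\GG_{m,\bar F_{\bar v}}$, de fibre sp\'eciale $\GG_a$, et le morphisme canonique vers $\calN=\GG_m$ envoie toute la fibre sp\'eciale sur l'\'el\'ement neutre ; ce n'est donc pas une immersion ouverte. Ainsi, l'\'egalit\'e des $\bar\calO_{\bar v}$-points de $\widetilde J_a$ et de $J_a^\flat$ ne suffit pas \`a elle seule pour conclure que $u$ est un isomorphisme de sch\'emas.

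L'article proc\`ede diff\'eremment et plus directement : il v\'erifie la propri\'et\'e universelle du mod\`ele de N\'eron pour $\widetilde J_a$. Pour tout sch\'ema en groupes lisse de type fini $J'$ de m\^eme fibre g\'en\'erique que $J_a$, on tire en arri\`ere sur $\widetilde X_{a,\bar v}^\flat$ ; la fibre g\'en\'erique de $\pi_a^{\flat*}J'$ s'identifie alors au tore d\'eploy\'e $T$ (via la description galoisienne de \ref{desciption galoisienne de J}), dont le mod\`ele de N\'eron sur le sch\'ema r\'egulier $\widetilde X_{a,\bar v}^\flat$ est trivialement $T\times\widetilde X_{a,\bar v}^\flat$. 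On obtient donc $\pi_a^{\flat*}J'\to T\times\widetilde X_{a,\bar v}^\flat$, puis par adjonction $J'\to\pi_{a*}^\flat T$ ; ce morphisme se factorise par les $W$-invariants car il le fait g\'en\'eriquement et $J'$ est r\'eduit. Cela fournit le morphisme $J'\to\widetilde J_a$ requis pour tout $J'$, donc en particulier une fl\`eche $J_a^\flat\to\widetilde J_a$ inverse de votre $u$.

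Votre approche peut \^etre r\'epar\'ee : il suffirait de montrer que $u$ est \'etale, par exemple en v\'erifiant que ${\rm Lie}(\widetilde J_a)\to{\rm Lie}(J_a^\flat)$ est un isomorphisme de $\bar\calO_{\bar v}$-modules (les deux valent $(\frakt\otimes\widetilde\calO_{\bar v}^\flat)^W$, calcul qui appara\^{\i}t d'ailleurs implicitement dans \ref{delta local}). Une fois $u$ \'etale, la bijectivit\'e sur les $\bar\calO_{\bar v}$-points combin\'ee \`a la surjectivit\'e des applications de r\'eduction (lissit\'e, base strictement hens\'elienne) donne la bijectivit\'e sur les $\bar k$-points de la fibre sp\'eciale, d'o\`u l'isomorphisme. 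Mais cette r\'eparation revient finalement \`a refaire, sous une autre forme, la v\'erification de la propri\'et\'e universelle que l'article fait d'embl\'ee.
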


\begin{proof}
Notons $\pi_a^\flat$ le morphisme $\widetilde X_{a,\bar v}^\flat\rightarrow
\bar X_{\bar v}$. Comme dans le lemme \ref{J1}, le sch\'ema des points
fixes sous l'action diagonale de $W$ sur la restriction des scalaires \`a la
Weil $\prod_{\widetilde X_{\bar v}^\flat/\bar X_{\bar v}}(T\times_{\bar
X_{\bar v}} \widetilde X_{\bar v}^\flat)$ est un sch\'ema en groupes lisse de
type fini sur $\bar X_{\bar v}$. Il sera plus commode de raisonner avec le
faisceau $(\pi_{a *}^{\flat}T)^W$ que repr\'esente la restriction \`a la Weil
ci-dessus.

D'apr\`es la description galoisienne du centralisateur r\'egulier
\ref{desciption galoisienne de J}, la restriction de $\pi_a^{\flat*} J_a$ \`a
$\widetilde X_{a,\bar v}^{\flat\bullet}=\widetilde X_{a,\bar
v}^\flat\times_{\bar X_{\bar v}} \bar X_{\bar v}^\bullet$ est canoniquement
isomorphe au tore $T\times_{\bar X_{\bar v}^\bullet} \widetilde X_{a,\bar
v}^{\flat\bullet}$. Le mod\`ele de N\'eron de $T\times_{\bar X_{\bar
v}^\bullet} \widetilde X_{a,\bar v}^{\flat\bullet}$ \'etant le tore
$T\times_{\bar X_{\bar v}} \widetilde X_{a,\bar v}^{\flat}$, on a un
homomorphisme canonique
$$
\pi_a^{\flat *} J_a \longrightarrow T\times_{\bar X_{\bar v}}
\widetilde X_{a,\bar v}^{\flat}.
$$
Par adjonction, on a un homomorphisme $J_a \longrightarrow \pi_{a
*}^{\flat}T$. En fibre g\'en\'erique, cet homomorphisme se factorise par le
sous-tore des points fixes sous l'action diagonale $W$ dans $T\times_{\bar
X_{\bar v}^\bullet} \widetilde X_{a,\bar v}^{\flat\bullet}$. On en d\'eduit un
homomorphisme $J_a \longrightarrow (\pi_{a *}^{\flat} T)^W$. Le m\^eme
raisonnement s'applique en fait \`a n'importe quel sch\'ema en groupes lisse
de type fini ayant la m\^eme fibre g\'en\'erique que $J_a$. Le lemme en
r\'esulte donc.
\end{proof}

\begin{corollaire}\label{delta local}
On a la formule
$$\dim(\calP_{\bar v}(J_a))=\dim_{\bar k}(\frakt\otimes_{\bar\calO_{\bar v}} \widetilde
\calO_{\bar v}^\flat/\widetilde\calO_{\bar v})^{W}.
$$
\end{corollaire}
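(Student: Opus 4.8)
On d\'eduira le corollaire de la combinaison du d\'evissage du lemme~\ref{R_v(a)}, de la description galoisienne du centralisateur r\'egulier (lemme~\ref{J1} et proposition~\ref{J=J'}) et de la description du mod\`ele de N\'eron de la proposition~\ref{J a flat}. La premi\`ere \'etape ram\`ene le membre de gauche \`a une comparaison d'alg\`ebres de Lie. D'apr\`es le lemme~\ref{R_v(a)}, le groupe $\calP_{\bar v}(J_a^\flat)$ est hom\'eomorphe \`a un groupe ab\'elien libre de type fini, donc de dimension nulle, tandis que $\calR_{\bar v}(a)=\ker(p_{\bar v})$ est un sch\'ema en groupes affine de type fini sur $\bar k$ de points $\calR_{\bar v}(a)(\bar k)=J_a^\flat(\bar\calO_{\bar v})/J_a(\bar\calO_{\bar v})$. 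La suite exacte $1\to\calR_{\bar v}(a)\to\calP_{\bar v}(J_a)\to\calP_{\bar v}(J_a^\flat)\to 1$ donne donc $\dim\calP_{\bar v}(J_a)=\dim\calR_{\bar v}(a)$.

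On identifie ensuite le membre de droite \`a $\dim_{\bar k}\bigl(\Lie(J_a^\flat)/\Lie(J_a)\bigr)$, les alg\`ebres de Lie \'etant vues comme $\bar\calO_{\bar v}$-modules. Par la proposition~\ref{J=J'}, l'homomorphisme $J_a\to J_a^1$ est une immersion ouverte, donc induit un isomorphisme $\Lie(J_a)\isom\Lie(J_a^1)$~; et la description galoisienne du lemme~\ref{J1} (valable au-dessus de $\bar X_{\bar v}$, o\`u $G$ est d\'eploy\'e) identifie $J_a^1$ au sous-sch\'ema des points fixes de $W$ dans $\prod_{\widetilde X_{a,\bar v}/\bar X_{\bar v}}(T\times_{\bar X_{\bar v}}\widetilde X_{a,\bar v})$, d'o\`u $\Lie(J_a)=(\frakt\otimes_{\bar\calO_{\bar v}}\widetilde\calO_{\bar v})^W$. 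La proposition~\ref{J a flat} donne de m\^eme $\Lie(J_a^\flat)=(\frakt\otimes_{\bar\calO_{\bar v}}\widetilde\calO_{\bar v}^\flat)^W$. Comme $\widetilde X_{a,\bar v}$ est une courbe r\'eduite, finie et plate sur $\bar X_{\bar v}$, le $\bar\calO_{\bar v}$-module $\widetilde\calO_{\bar v}^\flat/\widetilde\calO_{\bar v}$ est de longueur finie~; et puisque $\sharp\,\bbW$ est inversible, le foncteur des $W$-invariants est exact, de sorte qu'en tensorisant la suite du conducteur $0\to\widetilde\calO_{\bar v}\to\widetilde\calO_{\bar v}^\flat\to\widetilde\calO_{\bar v}^\flat/\widetilde\calO_{\bar v}\to 0$ par le $\bar\calO_{\bar v}$-module libre $\frakt$ puis en prenant les $W$-invariants, on trouve
$$\bigl(\frakt\otimes_{\bar\calO_{\bar v}}(\widetilde\calO_{\bar v}^\flat/\widetilde\calO_{\bar v})\bigr)^W=\Lie(J_a^\flat)/\Lie(J_a),$$
ce qui est l'identification voulue.

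Le point principal sera alors l'\'egalit\'e $\dim\calR_{\bar v}(a)=\dim_{\bar k}\bigl(\Lie(J_a^\flat)/\Lie(J_a)\bigr)$. Comme dans la preuve du lemme~\ref{R_v(a)}, on choisit $N$ assez grand pour que le $N$-i\`eme sous-groupe de congruence de $J_a^\flat(\bar\calO_{\bar v})$ soit contenu dans $J_a(\bar\calO_{\bar v})$~; alors $\calR_{\bar v}(a)$ a les m\^emes points sur $\bar k$, donc la m\^eme dimension, que le conoyau du morphisme de sch\'emas de jets $\phi_N:\mathrm{Gr}_N(J_a)\to\mathrm{Gr}_N(J_a^\flat)$, lesquels sont lisses sur $\bar k$ de dimension $Nr$ (avec $r=\dim\frakt$) puisque $J_a$ et $J_a^\flat$ sont lisses sur $\bar\calO_{\bar v}$ de dimension relative $r$. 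On en tire $\dim\calR_{\bar v}(a)=Nr-\dim\mathrm{Im}(\phi_N)=\dim\ker(\phi_N)$~; or $\ker(\phi_N)$ est le sch\'ema de jets d'ordre $N$ du noyau sch\'ematique $\calK=\ker(J_a\to J_a^\flat)$, qui est fini sur $\bar\calO_{\bar v}$ (sa fibre g\'en\'erique est r\'eduite \`a un point, donc l'id\'eal de la section unit\'e dans $\calO_{\calK}$ est de torsion comme $\bar\calO_{\bar v}$-module), de sorte que la dimension de $\mathrm{Gr}_N(\calK)$ se stabilise pour $N$ grand, de valeur limite $\dim_{\bar k}\bigl(\Lie(J_a^\flat)/\Lie(J_a)\bigr)$ par le calcul explicite \`a l'identit\'e (forme normale de Smith de la matrice jacobienne $\Lie(J_a)\to\Lie(J_a^\flat)$ sur l'anneau de valuation discr\`ete $\bar\calO_{\bar v}$). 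La difficult\'e r\'eside dans cette derni\`ere stabilisation, de m\^eme nature que le calcul de l'invariant $\delta$ de Serre rappel\'e en~\ref{Cas lineaire local}~; elle est rendue possible par l'hypoth\`ese que $\sharp\,\bbW$ n'est pas divisible par la caract\'eristique du corps de base.
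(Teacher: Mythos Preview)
Les deux premi\`eres \'etapes sont correctes : la r\'eduction \`a $\dim\calR_{\bar v}(a)$ via le lemme~\ref{R_v(a)}, puis l'identification du membre de droite \`a $\dim_{\bar k}\bigl(\Lie(J_a^\flat)/\Lie(J_a)\bigr)$ via \ref{J=J'} et \ref{J a flat}. L'erreur est dans la troisi\`eme \'etape. Le noyau sch\'ematique $\calK=\ker(J_a\to J_a^\flat)$ n'est \emph{pas} fini sur $\bar\calO_{\bar v}$ d\`es que $\delta_{\bar v}(a)>0$ : la diff\'erentielle $\Lie(J_a)\otimes\bar k\to\Lie(J_a^\flat)\otimes\bar k$ n'\'etant pas surjective, la fibre sp\'eciale de $\calK$ est de dimension strictement positive. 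Par exemple, pour $G=\GL_2$ et $a$ de polyn\^ome caract\'eristique $t^2-\epsilon_{\bar v}^2$, on trouve $\calK=\Spec\bar\calO_{\bar v}[\beta]/(\epsilon_{\bar v}\beta)$, dont la fibre sp\'eciale est une droite affine. L'argument \og l'id\'eal de la section unit\'e est de torsion donc $\calK$ est fini\fg\ confond torsion et type fini. Remarquez d'ailleurs que si $\calK$ \'etait fini, on aurait $\dim\mathrm{Gr}_N(\calK)=0$ pour tout $N$, ce qui contredirait la valeur $\delta_{\bar v}(a)>0$ annonc\'ee.

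L'\'egalit\'e $\dim\calR_{\bar v}(a)=\dim_{\bar k}\bigl(\Lie(J_a^\flat)/\Lie(J_a)\bigr)$ est n\'eanmoins vraie ; l'argument naturel, implicite dans le fait que l'\'enonc\'e soit un corollaire de \ref{J a flat}, passe par le conducteur. On a $J_a^1(\bar\calO_{\bar v})=T(\widetilde\calO_{\bar v})^W$ et $J_a^\flat(\bar\calO_{\bar v})=T(\widetilde\calO_{\bar v}^\flat)^W$ (et $J_a(\bar\calO_{\bar v})$ est d'indice fini dans $J_a^1(\bar\calO_{\bar v})$). Si $\mathfrak c\subset\widetilde\calO_{\bar v}$ d\'esigne le conducteur, le sous-groupe $1+\mathfrak c$ de $T(\widetilde\calO_{\bar v}^\flat)$ est contenu dans $T(\widetilde\calO_{\bar v})$, d'o\`u
$$T(\widetilde\calO_{\bar v}^\flat)/T(\widetilde\calO_{\bar v})\;\simeq\;T(\widetilde\calO_{\bar v}^\flat/\mathfrak c)\big/T(\widetilde\calO_{\bar v}/\mathfrak c),$$
quotient de groupes des unit\'es de $\bar k$-alg\`ebres finies, donc lisses de dimensions $\dim_{\bar k}(\widetilde\calO_{\bar v}^\flat/\mathfrak c)\cdot r$ et $\dim_{\bar k}(\widetilde\calO_{\bar v}/\mathfrak c)\cdot r$. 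On en tire $\dim\bigl(T(\widetilde\calO_{\bar v}^\flat)/T(\widetilde\calO_{\bar v})\bigr)=\dim_{\bar k}\bigl(\frakt\otimes(\widetilde\calO_{\bar v}^\flat/\widetilde\calO_{\bar v})\bigr)$. Le passage aux $W$-invariants ne change pas les dimensions car $\sharp\,\bbW$ est inversible : le groupe $H^1(W,T(\widetilde\calO_{\bar v}/\mathfrak c))$ est de dimension nulle (sa $\Lie$ est $H^1(W,\frakt\otimes\widetilde\calO_{\bar v}/\mathfrak c)=0$), de sorte que $\dim\bigl(T(\widetilde\calO_{\bar v}^\flat)^W/T(\widetilde\calO_{\bar v})^W\bigr)=\dim\bigl(T(\widetilde\calO_{\bar v}^\flat)/T(\widetilde\calO_{\bar v})\bigr)^W$, et ce dernier vaut $\dim_{\bar k}\bigl(\frakt\otimes(\widetilde\calO_{\bar v}^\flat/\widetilde\calO_{\bar v})\bigr)^W$ puisque les $W$-invariants d'un groupe lisse (ordre de $W$ premier \`a $p$) sont lisses. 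C'est l'argument de~\ref{Cas lineaire local} avec l'action de $W$ en sus.
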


On obtient ainsi une autre formule pour l'invariant $\delta_{\bar
v}(a)$. Notons au passage que l'entier $c_{\bar
v}(a)$ de \ref{c_v(a)} est \'egale \`a la chute du rang torique du mod\`ele
de N\'eron $J^\flat_a$ c'est-\`a-dire la diff\'erence entre $r$ et le rang
torique de la fibre sp\'eciale de $J_a^\flat$.

\subsection{Composantes connexes}
Dans ce paragraphe, nous allons d\'ecrire le groupe des composantes
connexes $\pi_0(\calP_{\bar v}(a))$.

Soit $a:\bar X_{\bar v}\rightarrow \frakc$ un morphisme dont l'image n'est
pas contenue dans le diviseur discriminant. On a un alors un sch\'ema en
groupes lisse $J_a$ sur $\bar X_{\bar v}$ dont la fibre g\'en\'erique est un
tore. Soit $J_a^0$ le sous-sch\'ema en groupes ouvert des composantes
neutres de $J_a$. Au-dessus de $\bar F_{\bar v}$, $J_a$ est un tore de
sorte que l'homomorphisme $J_a^0\rightarrow J_a$ induit un isomorphisme
au-dessus de $\bar F_{\bar v}$. En tant qu'homomorphisme de faisceaux en
groupes ab\'eliens, c'est un homomorphisme injectif dont le conoyau est
support\'e par la fibre sp\'eciale de $\bar X_{\bar v}$ avec comme fibre
$$\pi_0(J_{a,\bar v})=J_a(\bar \calO_{\bar v})/J_a^0(\bar \calO_{\bar v}).
$$

Les inclusions $J_a^0(\bar\calO_{\bar v})\subset J_a(\bar\calO_{\bar
v})\subset J_a(\bar F_{\bar v})$ induisent une suite exacte
$$
1\rightarrow \pi_0(J_{a,v})\rightarrow J_a(\bar F_{\bar v})/J_a^0(\bar \calO_{\bar v})\rightarrow
J_a(\bar F_{\bar v})/J_a(\bar \calO_{\bar v}) \rightarrow 1.
$$
On a donc un homomorphisme surjectif
\begin{equation}\label{P_v(J_a^0)->P_v(J_a)}
\calP_{\bar v}(J_a^0)\rightarrow \calP_{\bar v}(J_a)
\end{equation}
dont le noyau est le groupe fini $\pi_0(J_{a,v})$. On en d\'eduit une suite
exacte
$$\pi_0(J_{a,v})\rightarrow \pi_0(\calP_{\bar v}(J_a^0))\rightarrow \pi_0(\calP_{\bar v}(J_a)) \rightarrow 1.$$
Pour d\'eterminer $\pi_0(\calP_{\bar v}(J_a))$, il suffit donc de d\'ecrire le
groupe $\pi_0(\calP_{\bar v}(J_a^0))$ et l'image de la fl\`eche
$\pi_0(J_{a,v})\rightarrow \pi_0(\calP_{\bar v}(J_a^0))$.

Comme dans le cas de la dualit\'e de Tate-Nakayama, le groupe des
composantes connexes $\pi_0(P_a)$ s'exprime plus ais\'ement \`a l'aide de
la dualit\'e. Pour tout groupe ab\'elien de type fini $\Lambda$, nous notons
$$\Lambda^*=\Spec(\Ql[\Lambda])$$
le $\Ql$-groupe diagonalisable de groupe des caract\`eres $\Lambda$ et
inversement  pour tout $\Ql$-groupe diagonalisable $A$, nous notons $A^*$
son groupe des caract\`eres qui est un groupe ab\'elien de type fini.

Pour \'ecrire des formules explicites, fixons une trivialisation de
$\rho_{\Out}$ au-dessus de $\bar X_v$. Ceci permet en particulier d'identifier
$W$ et $\bbW$. Soit $\widetilde X_{a,\bar v}^\bullet$ l'image r\'eciproque du
rev\^etement fini \'etale $\pi:\frakt^\rs \rightarrow \frakc^\rs$ par le
morphisme $a:\bar X_{\bar v}\rightarrow \frakc$. En choisissant un point
g\'eom\'etrique de $\widetilde X_{a,\bar v}$ au-dessus du point
g\'eom\'etrique $\bar\eta_{v}$ de $\bar X_{\bar v}$, on obtient un
homomorphisme $\pi_a^\bullet:I_v \rightarrow \bbW$.

\begin{proposition}\label{pi 0 local}
Avec le choix d'un point g\'eom\'etrique de $\widetilde X_{a,\bar v}$
au-dessus du point g\'eom\'etrique $\bar\eta_{v}$ de $\bar X_{\bar v}$, on a
un isomorphisme canonique entre groupes diagonalisables
$$\pi_0(\calP_{\bar v}(J_a^0))^*=\hat\bbT^{\pi_a^\bullet(I_v)}.$$
De m\^eme, on a un isomorphisme
$$\pi_0(\calP_{\bar v}(J_a))^*=\hat\bbT(\pi_a^\bullet(I_v))$$
o\`u $\hat\bbT(\pi_a^\bullet(I_v))$ est le sous-groupe de
$\hat\bbT^{\pi_a^\bullet(I_v)}$ form\'e des \'el\'ements $\kappa\in \hat\bbT$ tel
que $\pi_0(\calP_{\bar v}(J_a^0))$ est contenu dans le groupe de Weyl de la
composante neutre $\hat\bbH$ du centralisateur de $\kappa$ dans
$\hat\bbG$.
\end{proposition}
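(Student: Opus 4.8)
The plan is to deduce both isomorphisms from the structure theory of $J_a$, $J_a^0$ and the Néron model $J_a^\flat$ already in place, combined with the Kottwitz functorial homomorphism for the generic torus. First I would reduce to the split situation: with the chosen trivialization of $\rho_{\Out}$ over $\bar X_{\bar v}$ one has $W=\bbW$, and the objects $J_a$, $J_a^0$, $J_a^\flat$, the ind-schemes $\calP_{\bar v}(-)$, as well as the cameral cover $\widetilde X_{a,\bar v}$ and its normalization $\widetilde X_{a,\bar v}^\flat$, are all the ones attached to the split group $\bbG$. The chosen geometric point of $\widetilde X_{a,\bar v}$ over $\bar\eta_v$ gives $\pi_a^\bullet\colon I_v\to\bbW$, which factors through the cyclic tame quotient of $I_v$ since $p\nmid\sharp\,\bbW$; consequently the connected components of $\widetilde X_{a,\bar v}^\bullet$ are indexed by $\bbW/\pi_a^\bullet(I_v)$, each being a totally (tamely) ramified Galois extension of $\bar F_{\bar v}$ with group $\pi_a^\bullet(I_v)$.

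For the first formula I would start from the description \ref{J a flat} of the Néron model, $J_a^\flat=\prod_{\widetilde X_{a,\bar v}^\flat/\bar X_{\bar v}}(\bbT\times\widetilde X_{a,\bar v}^\flat)^{\bbW}$. The generic torus $J_{a,\bar F_{\bar v}}$ has cocharacter module $\bbX_*(\bbT)$ with $I_v$ acting through $\pi_a^\bullet$; applying Shapiro's lemma to the components of the cameral cover and Kottwitz's functorial homomorphism to this torus identifies $J_a(\bar F_{\bar v})/J_a^\flat(\bar\calO_{\bar v})$ with the free quotient of $\bbX_*(\bbT)_{\pi_a^\bullet(I_v)}$. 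I would then pass from $J_a^\flat$ to $J_a^0$ via the dévissage of the type of \ref{R_v(a)}: since $J_a^0$ has connected fibres, the kernel of $\calP_{\bar v}(J_a^0)\to\calP_{\bar v}(J_a^\flat)$ — a quotient of a Weil restriction of $J_a^0\otimes\bar\calO_{\bar v}/\epsilon_{\bar v}^N$ — is affine of finite type, and its $\pi_0$ accounts exactly for the component group of the Néron model, i.e. the torsion of $\bbX_*(\bbT)_{\pi_a^\bullet(I_v)}$. Assembling these pieces gives $\pi_0(\calP_{\bar v}(J_a^0))\cong\bbX_*(\bbT)_{\pi_a^\bullet(I_v)}$. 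Since the character group of the diagonalizable group $\hat\bbT^{\pi_a^\bullet(I_v)}$ is precisely $\bbX^*(\hat\bbT)_{\pi_a^\bullet(I_v)}=\bbX_*(\bbT)_{\pi_a^\bullet(I_v)}$, the duality between finitely generated abelian groups and $\Ql$-diagonalizable groups yields $\pi_0(\calP_{\bar v}(J_a^0))^*=\hat\bbT^{\pi_a^\bullet(I_v)}$.

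For the second formula I would start from the exact sequence \ref{P_v(J_a^0)->P_v(J_a)}, whose kernel is the finite group $\pi_0(J_{a,v})$ of connected components of the special fibre of $J_a$; being a constant finite group it meets the neutral component of $\calP_{\bar v}(J_a^0)$ trivially, so it injects into $\pi_0(\calP_{\bar v}(J_a^0))$ and $\pi_0(\calP_{\bar v}(J_a))$ is the quotient of $\pi_0(\calP_{\bar v}(J_a^0))$ by its image. Dualizing, $\pi_0(\calP_{\bar v}(J_a))^*$ is the subgroup of $\hat\bbT^{\pi_a^\bullet(I_v)}$ of those $\kappa$ whose character $\langle-,\kappa\rangle$ is trivial on that image. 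It then remains to identify this image: using the galoisian description $J_a=J'_a$ of \ref{J=J'}–\ref{J'} and the surjection $\pi_0(Z_\bbG)\to\pi_0(J_{a,v})$ of \ref{pi 0 ZG}, the group $\pi_0(J_{a,v})$ and its image in $\bbX_*(\bbT)_{\pi_a^\bullet(I_v)}$ are controlled by the roots $\alpha$ vanishing on the special fibre of the cameral cover and the corresponding reflections $s_\alpha$, and the triviality of $\langle-,\kappa\rangle$ on this image translates into the condition that the subgroup of $\bbW$ generated by these $s_\alpha$ lie in the Weyl group $\bbW_{\hat\bbH}$ of the neutral component $\hat\bbH$ of the centralizer of $\kappa$ in $\hat\bbG$ — which is the definition of $\hat\bbT(\pi_a^\bullet(I_v))$. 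I expect the main obstacle to be this last combinatorial translation, together with the careful bookkeeping of the various component groups relating $J_a$, $J_a^0$ and $J_a^\flat$ in the previous step.
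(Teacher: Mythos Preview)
For the first isomorphism your argument is essentially the paper's, with one reorganization: the paper works directly with the \emph{connected} N\'eron model $J_a^{\flat,0}$, so that both $J_a^0$ and $J_a^{\flat,0}$ have connected fibres, the kernel of $\calP_{\bar v}(J_a^0)\to\calP_{\bar v}(J_a^{\flat,0})$ is a connected affine group, and one obtains $\pi_0(\calP_{\bar v}(J_a^0))\cong J_a(\bar F_{\bar v})/J_a^{\flat,0}(\bar\calO_{\bar v})\cong(\bbX_*)_{\pi_a^\bullet(I_v)}$ in one step via Kottwitz's lemma. Your route through $J_a^\flat$ works as well, at the cost of the extra bookkeeping you describe.

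For the second isomorphism your approach is genuinely different from the paper's, and the final step is not yet there. The paper does not attempt to compute the image of $\pi_0(J_{a,v})$ in $(\bbX_*)_{\pi_a^\bullet(I_v)}$ directly; instead it chooses a $z$-extension $1\to G\to G_1\to C\to 1$ with $G_1$ of connected centre, shows that $\pi_0(\calP_{\bar v}(J_a))$ injects into $\pi_0(\calP_{\bar v}(J_{1,\alpha(a)}))$, and concludes that $\pi_0(\calP_{\bar v}(J_a))^*$ is the image of $\hat\bbT_1^{\pi_a^\bullet(I_v)}\to\hat\bbT^{\pi_a^\bullet(I_v)}$. Because $\hat\bbG_1$ has simply connected derived subgroup, centralizers of semisimple elements in $\hat\bbG_1$ are connected, and identifying that image with $\hat\bbT(\pi_a^\bullet(I_v))$ becomes a one-line check. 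The paper does record afterwards, as an alternative ``plus explicite mais finalement moins commode \`a l'usage'', the description your approach is heading towards, namely $\pi_0(\calP_{\bar v}(J_a))$ as the cokernel of $\pi_0(Z_\bbG)\to(\bbX_*)_{\pi_a^\bullet(I_v)}$; but it does not carry out directly the equivalence between ``$\kappa$ kills the image of $\pi_0(Z_\bbG)$'' and ``$\pi_a^\bullet(I_v)\subset\bbW_{\hat\bbH}$'', precisely because the $z$-extension has already done the work. Your sketch of this equivalence, phrased in terms of roots vanishing on the special fibre and the generated reflections $s_\alpha$, does not match the condition on $\pi_a^\bullet(I_v)$ actually required and would need to be reworked. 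A smaller slip: your claim that $\pi_0(J_{a,v})$ injects into $\pi_0(\calP_{\bar v}(J_a^0))$ because a finite constant group meets the neutral component trivially is not justified --- a connected affine $\bar k$-group can certainly contain nontrivial finite subgroups --- though only the image is needed for the argument.
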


\begin{proof}
Consid\'erons  le sch\'ema en groupes des composantes neutres $J_a^{\flat,0}$ du
mod\`ele de N\'eron $J_a^\flat$. Dans la terminologie de \cite{BLR}, c'est
le mod\`ele de N\'eron connexe. Puisque $J_a^0$ a des fibres connexes,
l'homomorphisme $J_a^0 \rightarrow J_a^\flat$ se factorise par
$J_a^{\flat,0}$.

\begin{lemme}
L'homomorphisme $\calP_{\bar v}(J_a^0)\rightarrow \calP_{\bar
v}(J_a^{\flat,0})$ induit un isomorphisme de $\pi_0(\calP_{\bar v}(J_a^0))$
sur $J_a(\bar F_{\bar v})/J_a^{\flat,0}(\bar \calO_{\bar v})$.
\end{lemme}

\begin{proof}
Comme $J_a^0$ et $J_a^{\flat,0}$ a des fibres connexes, l'homomorphisme
$\calP_{\bar v}(J_a^0)\rightarrow \calP_{\bar v}(J_a^{\flat,0})$ induit un
isomorphisme sur les groupes des composantes connexes. Topologiquement,
$\calP_{\bar v}(J_a^{\flat,0})$ est le groupe discret $J_a(\bar F_{\bar
v})/J_a^{\flat,0}(\bar \calO_{\bar v})$.
\end{proof}

Pour tout tore $A$ sur $\bar F_{\bar v}$, on va consid\'erer le mod\`ele de
N\'eron connexe $A^{\flat,0}$ de $A$ sur $\bar\calO_{\bar v}$ et le groupe
ab\'elien de type fini $A(\bar F_{\bar v})/ A^{\flat,0}(\bar\calO_{\bar v})$.
On peut v\'erifier \cf \cite{Ra} que le foncteur $A\mapsto A(\bar F_{\bar v})/
A^{\flat,0}(\bar\calO_{\bar v})$ v\'erifie les axiomes du lemme
\cite[2.2]{K-Iso1}.  Suivant Kottwitz, on obtient une formule g\'en\'erale pour pour $A(\bar F_{\bar v})/
A^{\flat,0}(\bar\calO_{\bar v})$ et en particulier, on
obtient le lemme suivant.

\begin{lemme}\label{Kottwitz local}
Avec les notations ci-dessus, on a un isomorphisme $J_a(\bar F_{\bar
v})/J_a^{\flat,0}(\bar\calO_{\bar v})=(\bbX_*)_{\pi_a^\bullet(I_v)}$.
\end{lemme}

La conjonction des deux lemmes pr\'ec\'edents donne l'isomorphisme
$$
\pi_0(\calP_{\bar v}(J_a^0))=(\bbX_*)_{\pi_a^\bullet(I_v)}
$$
qui induit par dualit\'e la premi\`ere assertion du th\'eor\`eme
$$\pi_0(\calP_{\bar v}(J_a^0))^*=\hat\bbT^{\pi_a^\bullet(I_v)}.$$

La d\'emonstration de la deuxi\`eme assertion utilise l'astuce des $z$-exten\-sions.
Suivant \cite[7.5]{K-EST}, il existe une suite exacte
$$1\rightarrow G\rightarrow G_1\rightarrow C\rightarrow 1$$
de sch\'ema en groupes r\'eductifs au-dessus de $X$ qui se d\'eduit par
torsion ext\'erieure d'une suite exacte de groupes r\'eductifs d\'eploy\'es
$$1\rightarrow \bbG\rightarrow \bbG_1 \rightarrow \mathbb C\rightarrow 1$$
o\`u $\mathbb C$ est un tore et o\`u $\bbG_1$ est un groupe r\'eductif de
centre connexe. Son groupe dual $\hat\bbG_1$ a un groupe d\'eriv\'e
simplement connexe. Il existe un tore maximal $\bbT_1$ de $\bbG_1$ tel
qu'on a une suite exacte
$$1\rightarrow \bbT\rightarrow \bbT_1 \rightarrow \mathbb C\rightarrow 1.$$

En rempla{\c c}ant $G$ par $G_1$, on va noter $\frakc_1$ \`a la place de
$\frakc$. L'homo\-morphisme $G\rightarrow G_1$ induit un morphisme
$\alpha:\frakc\rightarrow \frakc_1$. Au-dessus de $\frakc_1$, on a le
sch\'ema en groupes des centralisateurs r\'eguliers $J_1$ qui est un
sch\'ema en groupes lisse \`a fibres connexes puisque le centre de $G_1$
est connexe \cf \ref{centre connexe}. On a une suite exacte de sch\'emas
en groupes lisse commutatif
$$1\rightarrow J\rightarrow \alpha^* J_1\rightarrow C\rightarrow 1.$$
Soit $a:\bar X_{\bar v} \rightarrow \frakc$ tel que $a|_{\bar X_{\bar
v}^\bullet}$ est \`a l'image dans $\frakc^\rs$. Notons encore $\alpha(a):\bar
X_{\bar v}\rightarrow \frakc_1$ le morphisme obtenu en composant $a$
avec $\alpha$. Sur $\bar X_{\bar v}$, on a une suite exacte
$$1\rightarrow J_a\rightarrow J_{1,\alpha(a)}\rightarrow C\rightarrow 1$$
avec $J_a=a^* J$ et $(J_1)_{\alpha(a)}=\alpha(a)^* J_1$.

\begin{proposition}
L'homomorphisme $$\pi_0(\calP_{\bar v}(J_a))\rightarrow \pi_0(\calP_{\bar
v}(J_{1,\alpha(a)}))$$ est injectif.
\end{proposition}

\begin{proof}
Puisque $J\rightarrow \alpha^* J_1$ est une immersion ferm\'ee, en
particulier propre, on a $J_a(\bar\calO_{\bar v})=J_a(\bar F_{\bar v})\cap
(J_1)_{\alpha(a)}(\bar\calO_{\bar v})$. Il s'ensuit que l'homomorphisme
$$
j_{\alpha(a)}:J_a(\bar F_{\bar v})/J_a(\bar \calO_{\bar v})\rightarrow J_{1,\alpha(a)}
(\bar F_{\bar v})
/J_{1,\alpha(a)}(\bar\calO_{\bar v})
$$
est injectif. Puisque $C$ est un tore sur $\bar X_{\bar v}$,
$C(\bar F_{\bar v})/C(\bar \calO_{\bar v})$ est un groupe ab\'elien libre de type fini et discret. La
composante neutre de
$$J_{1,\alpha(a)}(\bar F_{\bar v})/J_{1,\alpha(a)}(\bar\calO_{\bar v})$$ appartient donc dans
l'image de $j_{\alpha(a)}$. Par cons\'equent, $j_{\alpha(a)}$ induit un
isomorphisme entre la composante neutre de $J_a(\bar F_{\bar v})/J_a(\bar \calO_{\bar v})$ et
celle de $J_{1,\alpha(a)}(\bar F_{\bar v})/J_{1,\alpha(a)}(\bar\calO_{\bar v})$. Il  en r\'esulte que
l'homomorphisme $\pi_0(\calP_{\bar v}(J_a))\rightarrow \pi_0(\calP_{\bar
v}((J_1)_{\alpha(a)}))$ est injectif.
\end{proof}

\begin{corollaire}
Il existe un isomorphe canonique entre $\pi_0(\calP_{\bar v}(J_a))$ et
l'image de l'homomorphisme $$\pi_0(\calP_{\bar v}(J_a^0))\rightarrow
\pi_0(\calP_{\bar v}(J_{1,\alpha(a)})).$$

\end{corollaire}

\begin{proof}
En plus de la proposition pr\'ec\'edente, il suffit d'invoquer la surjectivit\'e
de $\pi_0(\calP_{\bar v}(J_a^0))\rightarrow \pi_0(\calP_{\bar v}(J_a))$.
\end{proof}

En choisissant un point g\'eom\'etrique de $\widetilde X_{a,\bar v}^\bullet$,
on peut identifier le groupe $\pi_0(\calP_{\bar v}(J_a))$ \`a l'image de
l'homomorphisme
$$(\bbX_*)_{I_v} \rightarrow (\bbX_{1,*})_{I_v}$$
o\`u $\bbX_{1,*}=\Hom(\GG_m,\bbT_1)$. Notons que l'\'egalit\'e
$\pi_0(\calP_{\bar v}(J_{1,\alpha(a)}))=(\bbX_{1,*})_{I_v}$ r\'esulte de la
connexit\'e des fibres de $J_1$ et de \ref{Kottwitz local}. Dualement, il
existe un isomorphisme entre le sous-groupe $\Spec(\Ql[\pi_0(\calP_{\bar
v}(J_a))])$ de $\hat\bbT^{I_v}$ et l'image de l'homomorphisme
$$\hat\bbT_1^{I_v}=\Spec(\Ql[(\bbX_{1,*})_{I_v}])\rightarrow
\Spec (\Ql[(\bbX_*)_{I_v}])=\hat\bbT^{I_v}.$$

Il reste maintenant \`a d\'emontrer que l'image de $\hat\bbT_1^{I_v}$ dans
$\hat\bbT^{I_v}$ est bien le sous-groupe des \'el\'ements $\kappa\in
\hat\bbT$ tels que $\pi_a^\bullet(I_v)$ soit contenu dans le groupe de Weyl
de la composante neutre $\hat\bbH$ du centralisateur $\hat\bbG_\kappa$.
Pour tout $\kappa\in\hat\bbT$, pour tout $\kappa_1\in\hat\bbT_1$ d'image
$\kappa$, le centralisateur $\hat\bbH_1$ de $\kappa_1$ dans $\hat\bbG_1$
est connexe et son image dans $\hat\bbG$ est $\hat\bbH$. En effet, $\hat\bbG_{1}$ a un groupe d\'eriv\'e simplement connexe de sorte que le centralisateur d'un \'el\'ement semi-simple de $\hat\bbG_{1}$ est connexe. On en d\'eduit ce
qu'on voulait.
\end{proof}

On a une autre description de $\pi_0(\calP_{\bar v}(J_a))$ plus explicite
mais finalement moins commode \`a l'usage.

\begin{proposition}
Avec le choix d'un point g\'eom\'etrique de $\widetilde X_{a,\bar v}$
au-dessus du point g\'eom\'etrique $\bar\eta_{v}$ de $\bar X_{\bar v}$, on a
un isomorphisme entre $\pi_0(\calP_{\bar v}(J_a))$ est le conoyau du
compos\'e de deux fl\`eches
$$
\pi_0(Z_{\GG}) \rightarrow \pi_0(\bbT^{\pi_a^\bullet(I_v)})
\rightarrow (\bbX_*)_{\pi_a^\bullet(I_v)}
$$
dont la premi\`ere se d\'eduit de l'homomorphisme \'evident $Z_{\GG}
\rightarrow \bbT^{\pi_a^\bullet(I_v)}$ et dont la seconde est un
isomorphisme canonique de $\pi_0(\bbT^{\pi_a^\bullet(I_v)})$ sur la partie
de torsion de $(\bbX_*)_{\pi_a^\bullet(I_v)}$.
\end{proposition}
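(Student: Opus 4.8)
On combine la suite exacte
$$\pi_0(J_{a,\bar v})\rightarrow \pi_0(\calP_{\bar v}(J_a^0))\rightarrow \pi_0(\calP_{\bar v}(J_a))\rightarrow 1$$
\'etablie plus haut dans ce paragraphe avec la proposition \ref{pi 0 local} et le corollaire \ref{pi 0 ZG}, puis on identifie explicitement l'homomorphisme qui en r\'esulte sur $\pi_0(Z_\GG)$. D'apr\`es cette suite exacte et la proposition \ref{pi 0 local}, $\pi_0(\calP_{\bar v}(J_a))$ est le conoyau d'un homomorphisme $\pi_0(J_{a,\bar v})\rightarrow (\bbX_*)_{\pi_a^\bullet(I_v)}$, o\`u $J_{a,\bar v}$ est la fibre sp\'eciale de $J_a$. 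Comme $J_a$ s'identifie au centralisateur $I_{\gamma_0}$ de $\gamma_0=\epsilon(a):\bar X_{\bar v}\rightarrow \frakg^\reg$, cette fibre sp\'eciale est le centralisateur d'un \'el\'ement r\'egulier $\gamma\in\frakg^\reg(\bar k)$, de sorte que le corollaire \ref{pi 0 ZG} fournit une surjection $\pi_0(Z_\GG)\rightarrow \pi_0(J_{a,\bar v})$. En composant, $\pi_0(\calP_{\bar v}(J_a))$ s'identifie au conoyau d'un homomorphisme $\pi_0(Z_\GG)\rightarrow (\bbX_*)_{\pi_a^\bullet(I_v)}$, et il suffit de v\'erifier qu'il co\"incide avec celui de l'\'enonc\'e.

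Cet homomorphisme provient, par construction, de l'homomorphisme \'evident $Z_\GG\rightarrow J_a$ de $\bar X_{\bar v}$-sch\'emas en groupes (cf. la d\'emonstration de \ref{J=J'}). En le restreignant au disque \'epoint\'e et en utilisant la description galoisienne \ref{desciption galoisienne de J}, \ref{J=J'} ainsi que le point g\'eom\'etrique choisi de $\widetilde X_{a,\bar v}$, on identifie la fibre g\'en\'erique de $J_a$ au tore dont le module des cocaract\`eres est $\bbX_*$ muni de l'action de $I_v$ \`a travers $\pi_a^\bullet$, de telle fa{\c c}on que $Z_\GG\hookrightarrow J_a$ devienne l'inclusion \'evidente $Z_\GG\hookrightarrow \bbT$. \`A l'aide du mod\`ele de N\'eron $J_a^\flat$ \cf \ref{J a flat}, l'isomorphisme
$$\pi_0(\calP_{\bar v}(J_a^0))=J_a(\bar F_{\bar v})/J_a^{\flat,0}(\bar\calO_{\bar v})=(\bbX_*)_{\pi_a^\bullet(I_v)}$$
de la proposition \ref{pi 0 local} se restreint, sur le sous-groupe born\'e maximal $J_a^\flat(\bar\calO_{\bar v})$, en l'inclusion de $J_a^\flat(\bar\calO_{\bar v})/J_a^{\flat,0}(\bar\calO_{\bar v})$ comme sous-groupe de torsion de $(\bbX_*)_{\pi_a^\bullet(I_v)}$. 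Or $Z_\GG(\bar\calO_{\bar v})$ est born\'e, puisque $Z_\GG^0$ est un tore et $\pi_0(Z_\GG)$ fini, et sa composante neutre $Z_\GG^0(\bar\calO_{\bar v})$ tombe dans $J_a^{\flat,0}(\bar\calO_{\bar v})$. L'homomorphisme se factorise donc par
$$\pi_0(Z_\GG)=Z_\GG(\bar\calO_{\bar v})/Z_\GG^0(\bar\calO_{\bar v})\longrightarrow J_a^\flat(\bar\calO_{\bar v})/J_a^{\flat,0}(\bar\calO_{\bar v})=\mbox{ torsion de }(\bbX_*)_{\pi_a^\bullet(I_v)}.$$

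Il reste \`a reconna\^itre dans cet homomorphisme celui qui est d\'eduit de l'inclusion \'evidente $Z_\GG\hookrightarrow \bbT^{\pi_a^\bullet(I_v)}$. Cela r\'esulte de la fonctorialit\'e du mod\`ele de N\'eron appliqu\'ee \`a l'inclusion $Z_\GG\otimes\bar\calO_{\bar v}\hookrightarrow J_a$, jointe \`a l'identification canonique, valable pour un tore d\'eploy\'e par une extension mod\'er\'ee, du groupe des composantes connexes de la fibre sp\'eciale de son mod\`ele de N\'eron avec $\pi_0(\bbT^{\pi_a^\bullet(I_v)})$, ce dernier \'etant le sous-groupe de torsion de $(\bbX_*)_{\pi_a^\bullet(I_v)}$ d'apr\`es le fait usuel sur les groupes diagonalisables. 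En passant aux conoyaux on obtient la proposition. Le point d\'elicat est pr\'ecis\'ement cette derni\`ere compatibilit\'e entre mod\`eles de N\'eron et sous-groupes de points fixes ; on peut \'egalement l'\'eviter en partant de l'\'egalit\'e entre $\pi_0(\calP_{\bar v}(J_a))$ et l'image de $(\bbX_*)_{\pi_a^\bullet(I_v)}\rightarrow (\bbX_{1,*})_{\pi_a^\bullet(I_v)}$ d\'ej\`a \'etablie par l'argument de $z$-extension, au prix d'un calcul de r\'eseaux identifiant le noyau de cette fl\`eche \`a l'image de $\pi_0(Z_\GG)$, dans lequel intervient le fait que $\hat\bbG_1$ a un groupe d\'eriv\'e simplement connexe.
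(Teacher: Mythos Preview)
Your proof is correct and follows the same approach as the paper's, which is extremely terse: the paper simply observes that $\pi_0(\calP_{\bar v}(J_a))$ is the cokernel of $\pi_0(J_{a,v})\rightarrow\pi_0(\calP_{\bar v}(J_a^0))$, and that by \ref{pi 0 ZG} the groups $\pi_0(J_{a,v})$ and $\pi_0(Z_\GG)$ have the same image in $(\bbX_*)_{\pi_a^\bullet(I_v)}$. You supply the details the paper leaves implicit, in particular the identification of the resulting map $\pi_0(Z_\GG)\rightarrow(\bbX_*)_{\pi_a^\bullet(I_v)}$ with the one passing through $\pi_0(\bbT^{\pi_a^\bullet(I_v)})$ via N\'eron models; this is useful but the paper evidently regards it as routine, and your closing remark on the $z$-extension alternative, while correct, is not needed.
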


\begin{proof}
On sait que $\pi_0(\calP_{\bar v}(J_a))$ est le conoyau de l'homomorphisme
$\pi_0(J_{a,v})\rightarrow \pi_0(\calP_{\bar v}(J_a^0))$. D'apr\`es \ref{pi 0
ZG}, on sait que $\pi_0(J_{a,v})$ et $\pi_0(Z_\GG)$ ont la m\^eme image
dans $(\bbX_*)_{\pi_a^\bullet(I_v)}$.
\end{proof}

\subsection{Densit\'e de l'orbite r\'eguli\`ere}
On reporte  \`a la section \ref{dense} pour une esquisse de la d\'emonstration
de la proposition suivante. Elle se d\'eduira de son analogue global qui sera
d\'emontr\'ee de mani\`ere d\'etaill\'ee. Seul analogue global sera utilis\'e
dans la suite de l'article.

\begin{proposition}\label{densite locale}
L'ouvert $\calM_v^\reg(a)$ est dense dans $\calM_v(a)$.
\end{proposition}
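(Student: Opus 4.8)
La d\'emonstration qu'on propose ram\`ene l'\'enonc\'e local \`a son analogue global pour la fibration de Hitchin, \'etabli en d\'etail dans le chapitre \ref{section : Fibration de Hitchin}, \`a l'aide de la formule du produit \cf \ref{produit}, \cite[4.6]{N} et du th\'eor\`eme d'approximation \ref{HC}. Voici le plan.

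D'apr\`es \ref{HC}, il existe un entier $N$ tel que la fibre de Springer affine $\calM_v(a)$, munie de l'action de $\calP_v(J_a)$, ne d\'epende \`a isomorphisme pr\`es que de $a\bmod\epsilon_v^N$. On remplace alors $a$ par son jet \`a l'ordre $N$ et, suivant le proc\'ed\'e de globalisation de \cite[4]{N} repris au chapitre \ref{section : comptage}, on choisit une courbe projective lisse $X$ sur $k$, un point ferm\'e $v$ de $X$ de compl\'et\'e $\calO_v$, un fibr\'e inversible $D$ sur $X$ de degr\'e assez grand (relativement \`a $N$ et \`a la profondeur de $a$ en $v$), et un point $\tilde a\in\calA^\ani$ de la base de Hitchin tels que $\tilde a$ induise en $v$ le m\^eme jet que $a$ \`a l'ordre $N$ et tels que $\tilde a$ rencontre transversalement le diviseur discriminant $\discrim_G$ en dehors de $v$. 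Le caract\`ere g\'en\'erique de $\tilde a$ parmi les sections de jet donn\'e assure l'existence d'un tel $\tilde a$ par un argument de transversalit\'e de type Bertini, et permet de plus de l'imposer anisotrope, comme dans les constructions de \cite[4]{N} et du chapitre \ref{section : comptage}.

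Pour un tel $\tilde a$ on a $d_w(\tilde a)\leq 1$, donc $\delta_w(\tilde a)=0$, pour tout point ferm\'e $w\neq v$ ; il en r\'esulte, par \ref{dim zero}, que $\calM_w^\reg(\tilde a)=\calM_w(\tilde a)$ est un $\calP_w(J_{\tilde a})$-torseur, de sorte que le quotient $[\calM_w(\tilde a)/\calP_w(J_{\tilde a})]$ est ponctuel. La formule du produit
$$[\calM_{\tilde a}/\calP_{\tilde a}]\isom \prod_{w\in|X|}[\calM_w(\tilde a)/\calP_w(J_{\tilde a})]$$
et son analogue pour les ouverts r\'eguliers fournissent alors, en tenant compte de \ref{HC}, des isomorphismes
$$[\calM_{\tilde a}/\calP_{\tilde a}]\isom [\calM_v(a)/\calP_v(J_a)]\ \mbox{ et }\ [\calM_{\tilde a}^\reg/\calP_{\tilde a}]\isom [\calM_v^\reg(a)/\calP_v(J_a)].$$

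Il reste \`a traduire la densit\'e. Le morphisme quotient $\calM_v(a)\rightarrow[\calM_v(a)/\calP_v(J_a)]$ \'etant lisse et surjectif, donc ouvert, l'ouvert $\calM_v^\reg(a)$ est dense dans $\calM_v(a)$ si et seulement si $[\calM_v^\reg(a)/\calP_v(J_a)]$ est dense dans $[\calM_v(a)/\calP_v(J_a)]$ ; le m\^eme argument appliqu\'e \`a $\calM_{\tilde a}\rightarrow[\calM_{\tilde a}/\calP_{\tilde a}]$ montre que cette densit\'e \'equivaut \`a celle de $\calM_{\tilde a}^\reg$ dans $\calM_{\tilde a}$, c'est-\`a-dire \`a l'analogue global cherch\'e. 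Le point d\'elicat n'est donc pas la r\'eduction ci-dessus, purement formelle une fois admises la formule du produit et \ref{HC}, mais la d\'emonstration de la densit\'e globale : celle-ci repose sur la lissit\'e de $\calM^\ani$ sur $k$ et sur un comptage de dimension analogue \`a \ref{dimension du complement}, et fait l'objet du chapitre \ref{section : Fibration de Hitchin}.
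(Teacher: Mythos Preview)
La d\'emarche que tu proposes est correcte et co\"incide exactement avec celle du papier : celui-ci renvoie l'\'enonc\'e local \`a la densit\'e globale (proposition \ref{densite globale}) via une globalisation comme dans \ref{lemme fondamental} et la formule de produit \ref{produit}, en laissant les d\'etails au lecteur. Tu as correctement explicit\'e ces d\'etails --- l'usage de \ref{HC} pour ne retenir qu'un jet fini, le choix d'un $\tilde a$ anisotrope transversal hors de $v$ pour trivialiser les autres facteurs locaux via \ref{dim zero}, et le passage de la densit\'e au quotient par le morphisme lisse $\calM_v(a)\to[\calM_v(a)/\calP_v(J_a)]$.
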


Notons qu'on sait d\'ej\`a que le ferm\'e compl\'ementaire
$\calM_v(a)-\calM_v^\reg(a)$ est de dimension strictement plus petite que
que $\dim(\calM_v^\reg(a))$ \cf \ref{dimension du complement}.

\begin{corollaire}
L'ensemble des composantes irr\'eductibles de la fibre de Springer affine
$\calM_v(a)$ est en bijection canonique avec le groupe des composantes connexes du groupe
$\calP_v(J_a)$.
\end{corollaire}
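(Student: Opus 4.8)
\emph{Esquisse de la d\'emonstration envisag\'ee.} Je commencerais par me ramener aux composantes irr\'eductibles de l'ouvert r\'egulier $\calM_v^\reg(a)$. D'apr\`es \ref{densite locale}, cet ouvert est dense dans $\calM_v(a)$; \'etant \`a la fois ouvert et dense, l'application $U\mapsto \overline{U}$ (adh\'erence prise dans $\calM_v(a)$) r\'ealise une bijection entre l'ensemble des composantes irr\'eductibles de $\calM_v^\reg(a)$ et celui de $\calM_v(a)$. En effet, pour toute composante irr\'eductible $Z$ de $\calM_v(a)$, l'ouvert $Z\setminus\bigcup_{Z'\ne Z}Z'$ est non vide, donc rencontre l'ouvert dense $\calM_v^\reg(a)$, et $Z\cap\calM_v^\reg(a)$ est alors un ouvert dense irr\'eductible de $Z$; r\'eciproquement l'adh\'erence d'une composante de $\calM_v^\reg(a)$ est une composante de $\calM_v(a)$. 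On sait d\'ej\`a par \ref{dimension du complement} que le compl\'ementaire est de dimension strictement plus petite, mais c'est la densit\'e \ref{densite locale} qui donne la bijection compl\`ete.

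Il resterait alors \`a identifier canoniquement l'ensemble des composantes connexes de $\calM_v^\reg(a)$ avec $\pi_0(\calP_v(J_a))$. Le point de Kostant $[\epsilon]^{D'}(a)$ — bien d\'efini puisque le fibr\'e inversible $D$ est trivial sur le disque $X_v$ et y admet donc une racine carr\'ee $D'$ — est un $k$-point de $\calM_v^\reg(a)$, et le lemme du paragraphe \ref{symetries d'une fibre de Springer}, cons\'equence de \ref{gerbe}, affirme que $\calM_v^\reg(a)$ est un espace principal homog\`ene sous $\calP_v(J_a)$. Le choix de ce point base identifie $\calM_v^\reg(a)$ \`a $\calP_v(J_a)$ comme $\calP_v(J_a)$-espace, d'o\`u la bijection canonique cherch\'ee entre composantes connexes de $\calM_v^\reg(a)$ et $\pi_0(\calP_v(J_a))$.

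Pour conclure je v\'erifierais que les composantes connexes de $\calM_v^\reg(a)$ sont irr\'eductibles. Comme $\calM_v^\reg(a)$ est un torseur sous $\calP_v(J_a)$, ses composantes connexes sont toutes isomorphes \`a la composante neutre $\calP_v^0(J_a)$ de $\calP_v(J_a)$; il suffit donc de montrer que celle-ci est irr\'eductible. Or d'apr\`es \ref{R_v(a)} et la description \ref{J a flat} du mod\`ele de N\'eron, $\calP_v(J_a)$ s'ins\`ere dans une suite exacte $1\to\calR_v(a)\to\calP_v(J_a)\to\calP_v(J_a^\flat)\to 1$ dans laquelle $\calP_v(J_a^\flat)$ est hom\'eomorphe \`a un groupe ab\'elien libre de type fini, donc discret, et $\calR_v(a)$ est un sch\'ema en groupes affine de type fini; ainsi $\calP_v^0(J_a)$ est un sch\'ema en groupes connexe de type fini sur le corps de base, donc irr\'eductible (tout sch\'ema en groupes connexe localement de type fini sur un corps l'est). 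Les composantes connexes de $\calM_v^\reg(a)$, et par la bijection pr\'ec\'edente celles de $\calM_v(a)$, sont par cons\'equent irr\'eductibles, et le corollaire en r\'esulte. Le seul point non formel de cet argument est la densit\'e \ref{densite locale} de l'orbite r\'eguli\`ere, renvoy\'ee \`a son analogue global dans \ref{dense}; tout le reste est de nature topologique.
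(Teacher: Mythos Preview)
Votre argument est correct et suit exactement la m\^eme id\'ee que la d\'emonstration du papier, qui tient en une ligne : l'action de $\calP_v(J_a)$ sur le point de Kostant fournit un isomorphisme $\calP_v(J_a)\isom\calM_v^\reg(a)$, et la densit\'e de $\calM_v^\reg(a)$ dans $\calM_v(a)$ (qui vient juste d'\^etre \'etablie) donne la bijection entre composantes irr\'eductibles. Vous avez simplement explicit\'e les deux points que le papier laisse implicites : d'une part que la densit\'e d'un ouvert induit une bijection sur les composantes irr\'eductibles, d'autre part que les composantes connexes de $\calP_v(J_a)$ sont irr\'eductibles (via la structure de groupe connexe de type fini de la composante neutre) ; ces v\'erifications sont justes mais ne constituent pas une approche diff\'erente.
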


\begin{proof}
On a en effet un isomorphisme $\calP_v(J_a)\rightarrow \calM_v^\reg(a)$
en faisant agir $\calP_v(J_a)$ sur le point de Kostant.
\end{proof}

\subsection{Le cas d'un groupe endoscopique}

Soit $H$ un groupe endoscopique de $G$ au sens de \ref{subsection :
Groupes endoscopiques}. Soit $a_H\in \frakc_H(\bar\calO_{\bar v})$ d'image
$a\in\frakc(\bar\calO_{\bar v})\cap \frakc^\rs(\bar F_{\bar v})$. Il n'y a pas
de relation directe entre les fibres se Springer affines $\calM_{H,v}(a_H)$
et $\calM_v(a)$. Il y a pourtant une relation \'evidente entre leurs groupes
de sym\'etries.

On alors deux $\bar X_{\bar v}$-sch\'emas en groupes $J_a=a^* J$ et
$J_{H,a_H}=a_H^* J_H$ qui sont reli\'es par un homomorphisme
$$\mu_{a_H}:J_a\rightarrow J_{H,a_H}$$
qui est un isomorphisme au-dessus du disque point\'e  $\bar X_{\bar v}^\bullet$. Cet homomorphisme
est construit en prenant l'image r\'eciproque par $a_H$ de
l'homomorphisme $\mu$ construit dans \ref{J JH}.

Soit $\calR_{H,\bar v}^G(a_H)$ le groupe alg\'ebrique affine sur $\bar k$
dont le groupe des $\bar k$-points est
$$\calR_{\bar v}(a_H)(\bar k)=J_{H,a_H}(\bar \calO_{\bar v})/ J_a(\bar \calO_{\bar v}).$$
On a alors la suite exacte
\begin{equation}\label{RPP local}
1\rightarrow \calR_{H,\bar v}^G(a_H)\rightarrow \calP_{\bar v}(J_a) \rightarrow
\calP_{\bar v}(J_{H,a_H})\rightarrow 1.
\end{equation}

\begin{lemme}
On a
$$\dim(\calR_{H,\bar v}^G(a_H))=r_{H,\bar v}^G(a_H)$$
o\`u $r_{H,\bar v}^G(a_H)=\deg_{\bar v}(a_H^*\resultant_G^H)$, le diviseur
$\resultant_G^H$ de $\frakc_H$ \'etant d\'efini dans \ref{discrimnant
resultant}.
\end{lemme}

\begin{proof}
En vertu de la suite exacte \ref{RPP local}, on a
$$\dim(\calR_{H,\bar v}^G(a_H))=\dim(\calP_{\bar v}(J_a))-\dim(\calP_{\bar v}(J_{H,a_H})).$$
Comme l'homomorphisme $J_{H,a_H}\rightarrow J_a$ est un isomorphisme
dans la fibre g\'en\'erique, on l'\'egalit\'e
$$c_{\bar v}(a_H)=c_{\bar v}(a)$$
o\`u $c_{\bar v}(a_H)$ et $c_{\bar v}(a)$ sont les invariants galoisiens qui
apparaissent dans la formule de dimension de Bezrukavnikov \cf
\ref{dimension Bezru}. En appliquant cette formule  \`a $a$ et $a_H$, on
trouve
$$\dim(\calR_{H,\bar v}^G(a_H))={\deg_{\bar v}(a^*\discrim_G)-\deg_{\bar v}(a_H^*\discrim_H)}.$$
Il suffit maintenant d'\'evoquer \ref{discrimnant resultant} pour conclure.
\end{proof}

Soit maintenant $a_H\in \frakc_H(\calO_v)$ d'image
$a\in\frakc(\calO_v)\cap \frakc^\rs(F_v)$. Avec la m\^eme d\'efinition que
ci-dessus, on obtient un groupe $\calR_v(a_H)$ d\'efini sur $k$. On a la
formule
$$\calR_{H,v}^G(a_H)\otimes_k \bar k=\prod_{\bar v:k_v \rightarrow \bar k}\calR_{\bar v}(a)$$
qui implique la formule de dimension
$$\dim \calR_{H,v}^G(a_H)=\deg(k_v/ k)\deg_v (a_H^* \resultant^H_G).$$

En identifiant $\calP_v(J_a)$ avec l'ouvert $\calM_v^\reg(a)$ de
$\calM_v(a)$ et en identifiant $\calP_v(J_{H,a_H})$ avec l'ouvert
$\calM_{H,v}(a_H)$, puis en prenant l'adh\'erence du graphe de
l'homomorphisme $\mu_{a_H}$ dans $\calM_v(a)\times \calM_v(a_H)$, on
obtient une correspondance int\'eressante entre ces deux fibres de Springer
affines. Nous n'allons pas utiliser cette correspondance dans ce travail.


\section{Fibration de Hitchin}
\label{section : Fibration de Hitchin}

Dans son article m\'emorable \cite{H}, Hitchin a observ\'e que le fibr\'e cotangent de l'espace de module
des fibr\'es stables sur une courbe forme un syst\`eme hamiltonien
compl\`etement int\'egrable. Pour cela, il construit explicitement une
famille de fonctions commutantes pour le crochet de Poisson en nombre
\'egal \`a la moiti\'e de la dimension de ce cotangent. Ces fonctions
d\'efinissent un morphisme de ce cotangent vers un espace affine dont la
fibre g\'en\'erique est essentiellement une vari\'et\'e ab\'elienne. Ce
morphisme particuli\`erement joli est souvent appel\'e la fibration
de Hitchin.

Nous adopterons un point de vue diff\'erent en consid\'erant les fibres de la
fibration de Hitchin comme l'analogue global des fibres de Springer affines.
En particulier, nous rempla{\c c}ons le fibr\'e canonique de la courbe par un
fibr\'e inversible de degr\'e tr\`es grand. En faisant ainsi, nous perdons la
forme symplectique tout en gardant une fibration ayant la m\^eme allure
que la fibration originale de Hitchin.

Comme nous avons observ\'e dans \cite{N}, le comptage des points dans un
corps fini de la fibration de Hitchin g\'en\'eralis\'ee dans ce sens donne
formellement le c\^ot\'e g\'eom\'etrique de la formule des traces pour
l'alg\`ebre de Lie. Ceci continue \`a nous servir de guide dans ce chapitre
pour \'etudier les propri\'et\'es g\'eom\'etriques de la fibration de Hitchin.
Le comptage de points sera pass\'e en revue dans le chapitre \ref{section : comptage}.

Notre outil favori pour explorer la g\'eom\'etrie de la fibration de Hitchin $f:\calM\rightarrow \calA$ et un
un champ de Picard $\calP\rightarrow \calA$ agissant sur $\calM$. Cette action est fond\'ee sur la construction du centralisateur r\'egulier \ref{section : centralisateur regulier}. On d\'emontre en particulier que qu'il existe un ouvert $\calM^\reg$ de $\calM$ \cf \ref{M reg} sur lequel $\calP$ agit simplement transitivement et qui est dense dans chaque fibre $\calM_a$ \cf \ref{densite globale}. On peut analyser la structure de $\calP_a$ en d\'etails gr\^ace \`a la courbe cam\'erale et \`a la th\'eorie du mod\`ele de N\'eron. 
Ceci permet en particulier de d\'efinir un ouvert $\calA^\diamondsuit$ au-dessus duquel $\calM$ est essentiellement un sch\'ema ab\'elien.

On fera aussi certains calculs utiles pour la suite comme le calcul des dimension \ref{subsection : dimension}, celui du groupe des composante connexes de $\calP_a$ \cf \ref{subsection : pi_0(P_a)} et celui du groupe des automorphismes des fibr\'es de Higgs \cf \ref{subsection : Automorphismes}.

Le point cl\'e de ce chapitre est la formule de produit \ref{produit} qui \'etablit la relation entre la fibre de Hitchin et les fibres de Springer affine. Cette formule a \'et\'e d\'emontr\'ee dans \cite{N}. Cette formule de produt joue un r\^ole crucial dans le chapitre sur le comptage \ref{section : comptage} et en fait elle est \'egalement en filigrane dans la d\'emonstration du th\'eor\`eme du support dans le chapitre \ref{support}.

On \'etablit enfin un lien entre la fibration de Hitchin de $G$ et celle d'un groupe endoscopique. Il existe un morphisme canonique $\nu:\calA_H\rightarrow \calA$ et si $a_H\mapsto a$, on a un homomorphisme canonique $\mu:\calP_a \rightarrow \calP_{H,a_H}$. En revanche, il n'y a pas de relation directe entre les fibres de Hitchin $\calM_a$ et $\calM_{H,a_H}$ mais une correspondance qui se d\'eduit de $\mu$. On utlisera pas cette correspondance dans la suite de l'article mais il vaut probablement le coup de l'exploiter davantage.

\medskip

Voici les notations qui seront utilis\'ees dans ce chapitre. On fixe une courbe
$X$ propre lisse et g\'eom\'etriquement connexe de genre $g$ sur un corps
fini $k$. Soit $\bar k$ une cl\^oture alg\'ebrique de $k$. On note $\bar
X=X\otimes_k\bar k$.

On note $F$ le corps des fonctions rationnelles sur $F$. Soit $|X|$
l'ensemble des points ferm\'es de $X$. Chaque \'el\'ement $v\in|X|$
d\'efinit une valuation $v:F^\times\rightarrow \ZZ$. Notons $F_v$ la
compl\'etion de $F$ par rapport \`a cette valuation, $\calO_v$ son anneau
des entiers et $k_v$ son corps r\'esiduel. On note $X_v=\Spec(\calO_v)$ le
disque formel en $v$ et $X_v^\bullet=\Spec(F_v)$ le disque formel point\'e.

Soit $\bbG$ un groupe de Chevalley avec un groupe de Weyl $\bbW$ dont l'ordre
n'est pas divisible par la caract\'eristique de $k$. Soit $G$ un $X$-sch\'ema en
groupes r\'eductif forme quasi-d\'eploy\'ee de $\GG$ d\'efinie par un
$\Out(\GG)$-torseur $\rho_G$ sur $X$. $G$ est alors muni d'un \'epinglage
$(T,B,\bfx_+)$. Reprenons la suite des notations de \ref{Torsion
exterieure}. En particulier, on a un $X$-sch\'ema des
polyn\^omes caract\'eristique $\frakc$, le morphisme de Chevalley
$\chi:\frakg\rightarrow \frakc$ o\`u $\frakg=\Lie(G)$ et un morphisme fini
plat $\pi:\frakt\rightarrow \frakc$ qui au-dessus de l'ouvert $\frakc^\rs$ est
un torseur sous le sch\'ema en groupes fini \'etale $W$ obtenu en tordant
$\bbW$ par $\rho_G$.

Nous fixons un fibr\'e inversible $D$ sur $X$ qui est le carr\'e
$D={D'}^{\otimes 2}$ d'un autre fibr\'e inversible $D'$. De r\`egle
g\'en\'erale, nous supposons que le degr\'e de $D$ sera plus grand que $2g$
o\`u $g$ est le genre de $X$ ce qui est contraire \`a \cite{H} o\`u $D$ est le
fibr\'e canonique. Nous indiquerons n\'eanmoins les endroits o\`u cette
hypoth\`ese est vraiment n\'ecessaire.

Notons que $\GG_m$ agit sur $\frakg$ et $\frakt$ par homoth\'etie, et agit
sur $\frakc$ de fa{\c c}on compatibles. On notera $\frakg_D$, $\frakt_D$ et
$\frakc_D$ les objets tordus par le $\GG_{m}$-torseur attach\'e au fibr\'e
inversible $D$.

\subsection{Rappels sur ${\rm Bun}_G$}

Nous consid\'erons le champ ${\rm Bun}_G$ qui associe \`a tout
$k$-sch\'ema $S$ le groupo\"ide des $G$-torseurs sur $X\times S$. Le champ
${\rm Bun}_G$ est un champ alg\'ebrique au sens d'Artin \cite{L-MR}. Le
groupo\"ide des $k$-points de ${\rm Bun}_G$ peut s'exprimer comme une
r\'eunion disjointe des doubles quotients
$${\rm Bun}_G(k)=\bigsqcup_{\xi\in {\rm ker}^1(F,G)}
[G^\xi(F)\backslash \check\prod_{v\in |X|} G(F_v)/G(\calO_v)]
$$
sur l'ensemble ${\rm ker}^1(F,G)$ des $G$-torseurs sur $F$ qui sont
localement triviaux. Ici, $G^\xi$ d\'esigne la forme int\'erieure forte de $G$
sur $F$ donn\'ee par la classe $\xi\in{\rm ker}^1(F,G)$ et le produit
$\check\prod $ d\'esigne un produit restreint.

Pour tout $v\in |X|$, on dispose comme dans \ref{subsection :
grassmannienne affine} de la grassmannienne affine $\calQ_v$ et d'un
morphisme
$$\zeta:\calQ_v\longrightarrow {\rm Bun}_G$$
qui consiste \`a recoller le $G$-torseur sur $X_v\hat\times S$ muni d'une
trivialisation sur $X_v^\bullet\hat\times S$, avec le $G$-torseur trivial sur
$(X-v)\times S$. L'existence de ce recollement formel est un r\'esultat de
Beauville et Laszlo \cite{BL}. Au niveau des $k$-points, ce morphisme est le
foncteur \'evident
$$G(F_v)/G(\calO_v) \longrightarrow
[G(F)\backslash \check\prod_{v\in |X|} G(F_v)/G(\calO_v)]
$$
qui envoie $g_v\in G(F_v)/ G(\calO_v)$ sur l'uplet constitu\'e de $g_v$ et
des \'el\'ements neutres de $G(F_{v'})/ G(\calO_{v'})$ pour toutes les places
$v'\not= v$.

\subsection{Construction de la fibration}\label{Fibration de Hitchin}
Rappelons la d\'efinition de l'espace de module de Hitchin.

\begin{definition}\label{definition fibration de Hitchin}
L'espace total de Hitchin est le groupo\"ide fibr\'e $\calM$ qui associe \`a
tout $k$-sch\'ema $S$ le groupo\"ide $\calM(S)$ des couples $(E,\phi)$
constitu\'e d'un $G$-torseur $E$ sur $X\times S$ et d'une section
$$\phi\in\rmH^0(X\times S,\ad(E)\otimes_{\calO_X} D)$$
o\`u $\ad(E)$ est le fibr\'e en alg\`ebres de Lie obtenu en tordant $\frakg$
muni de l'action adjointe par le $G$-torseur $E$.
\end{definition}

\begin{numero}
Il revient au m\^eme de dire que $\calM(S)$ est le groupo\"ide des
morphismes $h_{E,\phi}$ qui s'ins\`erent dans le diagramme commutatif :
$$X\times S \rightarrow [\frakg_D/ G].$$
Ce champ est un fibr\'e vectoriel au-dessus du champ alg\'ebrique ${\rm
Bun}_G$ classifiant les $G$-torseurs au-dessus de $X$ si bien qu'il est
lui-m\^eme un champ alg\'ebrique.
\end{numero}

\begin{numero}
Le morphisme caract\'eristique de Chevalley $\chi:\frakg\rta\frakc$ induit
un morphisme
$$[\chi]:[\frakg_D/G]\rta \frakc_D.$$
On obtient un morphisme
$$f:\calM\rta\calA$$
o\`u pour tout $k$-sch\'ema $S$, $\calA(S)$ est le groupo\"ide des
morphismes $a$ qui s'ins\`erent dans le diagramme commutatif :
$$a: X\times S\rightarrow \frakc_D.$$
Il revient au m\^eme de dire que $\calA$ est l'ensemble des sections du
fibr\'e vectoriel $\frakc_D$ au-dessus de $X$. Il s'ensuit que $\calA$ est un
$k$-espace vectoriel de dimension finie.
\end{numero}

\begin{numero}\label{Kostant Hitchin}
\'Etant donn\'ee une racine carr\'e $D'$ de $D$, on obtient une section $\epsilon_{D'}:\calA\rightarrow \calM$ du morphisme $f:\calM\rightarrow \calA$ en utilisant \ref{epsilon D'}. Cette section est essentiellement la m\^eme que les sections qu'a construites Hitchin de fa\c con plus explicite dans le cas des groupes classiques. On l'appellera la section de Kostant-Hitchin.
\end{numero}

\subsection{Sym\'etries d'une fibre de Hitchin}
\label{subsection : symetries Hitchin}

Comme pour les fibres de Springer affines, la construction des sym\'etries naturelles
d'une fibre de Hitchin est fond\'ee sur le lemme \ref{J}, voir \cite{N}.

\begin{numero}
Pour tout $S$-point $a$ de $\calA$, on a un morphisme $h_a:X\times S\rta
[\frakc/\GG_m]$. On note $J_a=h_a^* J$ l'image r\'eciproque de $J$ sur
$[\frakc/\GG_m]$ et on consid\`ere le groupo\"ide de Picard $\calP_a(S)$ des
$J_a$-torseurs au-dessus de $X\times S$. Quand $a$ varie,
cette construction d\'efinit un groupo\"ide de Picard $\calP$ fibr\'e au-dessus de $\calA$.
\end{numero}

\begin{numero}
L'homomorphisme $\chi^*J\rta I$ du lemme 2.2.1 induit pour tout $S$-point
$(E,\phi)$ au-dessus de $a$ un homomorphisme
$$J_a \rta \Aut_{X\times S}(E,\phi)=h_{E,\phi}^* I.$$
Par cons\'equent, on peut tordre $(E,\phi)$ par n'importe quel
$J_a$-torseur. Ceci d\'efinit une action du groupo\"ide de Picard $\calP_a(S)$
fibr\'e sur le groupo\"ide $\calM_a(S)$. En laissant le point $a$ varier, on
obtient une action de $\calP$ sur $\calM$ relativement
\`a la base $\calA$.
\end{numero}

Comme pour les fibres de Springer affines, nous consid\'erons l'ouvert
$\calM^\reg$ de $\calM$ dont les points sont les morphisme $h_{E,\phi}:X\times
S\rightarrow [\frakg_{D}/G]$ qui se factorisent par l'ouvert
$[\frakg^\reg_{D}/G]$.

\begin{proposition}\label{M reg}
$\calM^\reg$ est un ouvert de $\calM$ ayant des fibres non vides au-dessus
de $\calA$. De plus, c'est torseur sous l'action de $\calP$.
\end{proposition}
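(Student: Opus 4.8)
Le plan est de d\'eduire les trois assertions de la structure de gerbe du morphisme $[\chi^\reg/\GG_m]:[\frakg^\reg/G\times\GG_m]\rightarrow[\frakc/\GG_m]$ d\'egag\'ee \`a la suite de \ref{gerbe}, combin\'ee avec la section de Kostant-Hitchin \ref{Kostant Hitchin}. Cette d\'emonstration figure d\'ej\`a dans \cite{N} ; on en rappelle ici les \'etapes.

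Pour l'ouverture, je partirais de ce que $[\frakg^\reg_D/G]$ est un ouvert de $[\frakg_D/G]$, de compl\'ementaire un ferm\'e $Z$ (l'ouverture de $\frakg^\reg$ dans $\frakg$ \'etant le th\'eor\`eme de semi-continuit\'e rappel\'e au \S\ref{subsection : Chevalley}). Pour un $S$-point $(E,\phi)$ de $\calM$, correspondant \`a un morphisme $h_{E,\phi}:X\times S\rightarrow[\frakg_D/G]$, l'image r\'eciproque de $Z$ est un ferm\'e de $X\times S$ ; le point $(E,\phi)$ appartient \`a $\calM^\reg$ au-dessus de $T\rightarrow S$ si et seulement si $X\times T$ \'evite ce ferm\'e, c'est-\`a-dire si et seulement si $T$ \'evite son image dans $S$. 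Comme $X$ est propre sur $k$, la projection $X\times S\rightarrow S$ est propre, donc cette image est un ferm\'e de $S$ ; son compl\'ement est l'ouvert de $S$ que repr\'esente l'image r\'eciproque de $\calM^\reg$. Ainsi $\calM^\reg$ est un ouvert de $\calM$.

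Pour la non-vacuit\'e des fibres, j'invoquerais le fait que la section de Kostant $\epsilon:\frakc\rightarrow\frakg$ se factorise par l'ouvert r\'egulier $\frakg^\reg$ (Kostant, \ref{Kostant}). Il en r\'esulte que la section de Kostant-Hitchin $\epsilon_{D'}:\calA\rightarrow\calM$ construite en \ref{Kostant Hitchin} \`a l'aide de \ref{epsilon D'}, \`a partir de la racine carr\'e $D'$ de $D$ fix\'ee une fois pour toutes, se factorise en fait par $\calM^\reg$ ; elle fournit donc, pour tout $k$-sch\'ema $S$ et tout $a\in\calA(S)$, un objet de $\calM^\reg_a(S)$, de sorte que les fibres de $\calM^\reg$ au-dessus de $\calA$ sont non vides.

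Pour la structure de torseur, j'observerais que $\calM^\reg_a(S)$ est par d\'efinition le groupo\"ide des rel\`evements du morphisme $h_a:X\times S\rightarrow[\frakc/\GG_m]$ le long de $[\chi^\reg/\GG_m]$. Puisque ce dernier est une gerbe li\'ee par le centralisateur r\'egulier $J$, la th\'eorie g\'en\'erale des gerbes montre que ce groupo\"ide est un pseudo-torseur sous le groupo\"ide de Picard des $h_a^*J=J_a$-torseurs sur $X\times S$, c'est-\`a-dire sous $\calP_a(S)$ ; la non-vacuit\'e d\'ej\`a \'etablie en fait un torseur. Il reste \`a v\'erifier que cette torsion co\"incide avec l'action de $\calP$ sur $\calM$ d\'efinie au paragraphe \ref{subsection : symetries Hitchin}, ce qui est imm\'ediat puisque l'une et l'autre proviennent de l'homomorphisme $\chi^*J\rightarrow I$ de \ref{J}. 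Le point qui demandera le plus de soin sera la mise en forme pr\'ecise de cet argument de gerbe dans le cadre relatif au-dessus de $X\times S$ et en pr\'esence du facteur $\GG_m$, mais il reste formel une fois acquise la variante $\GG_m$-\'equivariante de \ref{gerbe}.
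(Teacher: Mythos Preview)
Your proposal is correct and follows essentially the same approach as the paper: non-emptiness via the Kostant-Hitchin section landing in the regular locus, and the torsor property via the gerbe structure of \ref{gerbe}. The paper's proof is terser—it does not spell out the openness argument (taking it as evident) and invokes \ref{gerbe} directly rather than its $\GG_m$-equivariant variant—but the substance is the same.
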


\begin{proof}
Pour tout $a\in\calA(\bar k)$, le point $[\epsilon]^{D'}(a)$ construit dans \cf \ref{epsilon D'} est dans l'ouvert
r\'egulier. Ceci montre que le morphisme $\calM^\reg\rightarrow \calA$ a
les fibres non vides. On d\'eduit du lemme \ref{gerbe} que $\calM_a^\reg$
est un torseur sous $\calP_a$.
\end{proof}

\begin{numero}
La section de Kostant-Hitchin \ref{Kostant Hitchin} $\epsilon_{D'}:\calA\rightarrow \calM$ se factorise par l'ouvert $\calM^\reg$ car la section de Kostant $\epsilon:\frakc\rightarrow\frakg$ se factorise \`a travers $\frakg^\reg$.
\end{numero}

\subsection{Dimensions}\label{subsection : dimension}
Nous allons utiliser la m\^eme notation $\frakc_D$ pour d\'esigner le
$X$-sch\'ema en vectoriels et le $\calO_X$-module localement libre. Si $G$
est d\'eploy\'e, le $\calO_X$-module $\frakc_D$ a une expression explicite
$$\frakc_D=\bigoplus_{i=1}^r D^{\otimes e_i}$$
o\`u $e_1,\ldots,e_r$ sont des entiers naturels qui apparaissent dans
\ref{Chevalley}. En g\'en\'eral, $\frakc_D$ prend cette forme apr\`es un
changement de base fini \'etale $\rho:X_\rho \rightarrow X$ qui d\'eploie
$G$. Par d\'efinition, le $X$-sch\'ema en vectoriel $\frakc_D$ est
$$\frakc_D=\Spec({\rm Sym}_{\calO_X}(\frakc_D^*))$$
o\`u ${\rm Sym}_{\calO_X}[\frakc_D^*]$ est le faisceau en
$\calO_X$-alg\`ebre puissance sym\'etrique du $\calO_X$-module dual
$\frakc_D^*$.

\begin{lemme}\label{dim A}
Si $D>2g-2$, $\calA$ est un $k$-espace affine de dimension
$$\dim(\calA)=\sharp\, \Phi\deg(D)/ 2+r(1-g+\deg(D))$$
o\`u $r$ est le rang de $\GG$ et $\sharp\,\Phi$ est le nombre de ses
racines.
\end{lemme}

\begin{proof}
Soit $\rho:X_\rho\rightarrow X$ le rev\^etement fini \'etale galosien qui
d\'eploie $G$. Alors $\rho^*\frakc_D$ est isomorphe \`a une somme directe
$\rho^*D^{\otimes e_i}$.  Il s'ensuit que
$$\deg(\frakc_D)=(e_1+\cdots+e_r)\deg(D).$$
On a l'\'egalit\'e
$$\dim \rmH^0 (X,\frakc_D)+\dim\rmH^1(X,\frakc_D)=(e_1+\cdots+e_r)\deg(D)+r(1-g)$$
par le th\'eor\`eme de Riemann-Roch. On sait d'apr\`es Kostant que les
entiers $e_i-1$ sont les exposants du syst\`eme de racines $\Phi$ de sorte
que
$$e_1+\cdots+e_r=r+\sharp\, \Phi/ 2.$$
Il suffit donc de d\'emontrer que $\rmH^1(X,\frakc_D)=0$. Puisque
$\deg(D)> 2g-2$ et puisque $\rho$ est fini \'etale, on a
$$\deg(\rho^* D)> \deg (\rho^* \Omega_{X/ k})=\deg(\Omega_{X_\rho/ k})$$
d'o\`u r\'esulte l'annulation de $\rmH^1(X_\rho, \rho^* D^{\otimes e_i})$.
Pour d\'emontrer l'annulation de $\rmH^1(X,\frakc_D)$, il suffit de
remarquer que $\rmH^1(X,\frakc_D)$ est un facteur direct de
$\rmH^1(X_\rho, \rho^* \frakc_D)$.
\end{proof}

Si $\deg(D)$ est un entier fix\'e plus grand que $2g-2$, la dimension de la
base de Hitchin $\calA$ ne d\'epend donc ni de $D$ ni de la forme
quasi-d\'eploy\'ee.

\begin{proposition}\label{P lisse}
Le champ de Picard $\calP$ est lisse au-dessus de $\calA^\heartsuit$.
\end{proposition}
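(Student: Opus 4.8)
On se propose de d\'emontrer la proposition en v\'erifiant le crit\`ere infinit\'esimal de lissit\'e pour le morphisme $\calP\rightarrow\calA$ au-dessus de l'ouvert $\calA^\heartsuit$. L'ingr\'edient essentiel sera le lemme \ref{J} : le centralisateur r\'egulier $J$ est un sch\'ema en groupes lisse et commutatif sur $\frakc$, donc aussi sur le champ quotient $[\frakc/\GG_m]$ puisque $\frakc\rightarrow[\frakc/\GG_m]$ est lisse et surjectif. Par cons\'equent, pour tout $k$-sch\'ema $S$ et tout $a\in\calA(S)$, le sch\'ema en groupes $J_a=h_a^*J$ est lisse et commutatif au-dessus de $X\times S$, et le faisceau $\Lie(J_a)$ est localement libre. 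On utilisera \'egalement que $\calA$ est lisse sur $k$, \cf \ref{dim A}, que $\calA^\heartsuit$ en est un ouvert, et que $\calP$ est un champ alg\'ebrique localement de pr\'esentation finie sur $\calA$, \cf \cite{N} ; il suffira donc d'\'etablir la lissit\'e formelle de $\calP\rightarrow\calA$ en tout point situ\'e au-dessus de $\calA^\heartsuit$.

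La premi\`ere \'etape consistera \`a poser le probl\`eme de rel\`evement. On se donnera une extension de carr\'e nul $S_{0}\hookrightarrow S$ de $k$-sch\'emas affines, d'id\'eal $I$, un point $a_{0}\in\calA^\heartsuit(S_{0})$ muni d'un $J_{a_{0}}$-torseur $P_{0}$ sur $X\times S_{0}$, ainsi qu'un rel\`evement $a\in\calA(S)$ de $a_{0}$. Un tel rel\`evement $a$ existe car $\calA$ est lisse sur $k$, et comme $\calA^\heartsuit$ est ouvert dans $\calA$ et $a|_{S_{0}}=a_{0}$, on aura en fait $a\in\calA^\heartsuit(S)$ ; de plus $J_a|_{X\times S_{0}}=J_{a_{0}}$. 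Il restera \`a relever $P_{0}$ en un $J_a$-torseur sur $X\times S$.

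La deuxi\`eme \'etape invoquera la th\'eorie usuelle des d\'eformations des torseurs sous un sch\'ema en groupes lisse et commutatif : l'obstruction \`a l'existence d'un tel rel\`evement vit dans le groupe $\rmH^2(X\times S_{0},\Lie(J_{a_{0}})\otimes I)$, o\`u $I$ d\'esigne aussi son image r\'eciproque sur $X\times S_{0}$, et lorsqu'elle s'annule l'ensemble des classes d'isomorphisme de rel\`evements est un torseur sous $\rmH^1(X\times S_{0},\Lie(J_{a_{0}})\otimes I)$. Il suffira donc de constater que ce $\rmH^2$ est nul. Or la projection $p:X\times S_{0}\rightarrow S_{0}$ est propre de dimension relative $1$, de sorte que $\rmR^qp_*\calF=0$ pour $q\geq 2$ et tout faisceau quasi-coh\'erent $\calF$ ; comme $S_{0}$ est affine, on en d\'eduit $\rmH^q(X\times S_{0},\calF)=\Gamma(S_{0},\rmR^qp_*\calF)=0$ pour $q\geq 2$. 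Le rel\`evement $P$ existera donc, ce qui \'etablira la lissit\'e formelle de $\calP\rightarrow\calA$ au-dessus de $\calA^\heartsuit$ ; comme $\calA^\heartsuit$ est lisse sur $k$, le champ $\calP\times_\calA\calA^\heartsuit$ le sera lui aussi.

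Le point \`a traiter avec soin sera la mise en place correcte de cette th\'eorie des d\'eformations, c'est-\`a-dire l'identification des groupes d'obstruction et de d\'eformation ci-dessus, compte tenu de ce que le sch\'ema en groupes $J_a$ varie lui-m\^eme le long de l'\'epaississement ; cela revient au calcul du complexe cotangent de $\calP$ relativement \`a $\calA$, qui est classique. On remarquera enfin que l'argument n'utilise que la lissit\'e de $J$ sur $\frakc$ fournie par \ref{J}, et qu'il donnerait en fait la lissit\'e de $\calP$ au-dessus de $\calA$ tout entier ; la restriction \`a $\calA^\heartsuit$ n'intervient ici que parce que c'est sur cet ouvert que l'on dispose par ailleurs de la description pr\'ecise de $\calP_a$.
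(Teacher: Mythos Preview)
Your proposal is correct and follows essentially the same approach as the paper: the obstruction to deforming a $J_a$-torseur lies in an $\rmH^2$ of $\Lie(J_a)$ on the curve, which vanishes since the curve has dimension one. The paper's proof is a two-line version of your argument, and your final remark that the restriction to $\calA^\heartsuit$ is not actually needed here is also accurate.
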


\begin{proof} Puisque $J_a$ est un sch\'ema en groupe lisse commutatif,
l'obstruction \`a la d\'eformation d'un $J_a$-torseur g{\^\i}t dans le groupe
$\rmH^2(\bar X,\Lie(J_a))$. Celui-ci est nul car $\bar X$ est un sch\'ema de
dimension un.
\end{proof}

\begin{proposition}
Pour tout $a\in\calA(\bar k)$, on a un isomorphisme canonique
$\Lie(J_a)=\frakc_D^*\otimes D$.
\end{proposition}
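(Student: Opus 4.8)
On va calculer $\Lie(J_a)$ en choisissant un fibr\'e de Higgs r\'egulier de polyn\^ome caract\'eristique $a$, puis en exploitant la dualit\'e fournie par une forme bilin\'eaire invariante sur $\frakg$. On commence par fixer un point $(E,\phi)$ de $\calM^\reg_a(\bar k)$, qui existe d'apr\`es \ref{M reg} (on peut par exemple prendre le point de Kostant--Hitchin $\epsilon_{D'}(a)$, \cf \ref{Kostant Hitchin}). D'apr\`es \ref{J} et \ref{J a}, le sch\'ema en groupes $J_a$ est canoniquement isomorphe au centralisateur de la section $\phi$ dans $\ad(E)$, d'o\`u un isomorphisme canonique $\Lie(J_a)\cong\ker\bigl(\ad(\phi):\ad(E)\rightarrow\ad(E)\otimes D\bigr)$, l'homomorphisme $\ad(\phi)$ \'etant $\calO_X$-lin\'eaire. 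Comme $\phi$ prend fibre \`a fibre ses valeurs dans l'ouvert r\'egulier, le noyau de $\ad(\phi_x)$ est de dimension $r$ en tout point $x$, donc $\ad(\phi)$ est de rang constant : son noyau $\Lie(J_a)$ et son conoyau sont des fibr\'es vectoriels, de rangs $r$ et $\dim(\frakg)-r$.

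Ensuite on identifie le conoyau avec $\frakc_D$. Le morphisme de Chevalley $\chi$ \'etant $\GG_m$-\'equivariant, il induit un morphisme de $X$-sch\'emas $\chi_D:\ad(E)\otimes D\rightarrow\frakc_D$ (le polyn\^ome caract\'eristique fibre \`a fibre) qui envoie $\phi$ sur $a$, et sa diff\'erentielle le long de $\phi$ est un homomorphisme $\calO_X$-lin\'eaire $\ad(E)\otimes D\rightarrow\frakc_D$ (on utilise que le fibr\'e vectoriel $\frakc_D$ s'identifie \`a son fibr\'e tangent relatif). La lissit\'e de $\chi$ sur $\frakg^\reg$ et le fait que ses fibres g\'eom\'etriques sont des $G$-orbites (Kostant, \cf \cite{Kos}) montrent qu'en tout point cette diff\'erentielle est surjective de noyau l'espace tangent $[\frakg,\phi]=\operatorname{im}(\ad(\phi))$ \`a l'orbite ; il en r\'esulte un isomorphisme canonique $\operatorname{coker}(\ad(\phi))\isom\frakc_D$.

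Enfin on fixe une forme bilin\'eaire sym\'etrique $G$-invariante non d\'eg\'en\'er\'ee sur $\frakg$, qui existe puisque la caract\'eristique de $k$ ne divise pas l'ordre de $\bbW$ ; elle fournit un accouplement parfait $\calO_X$-bilin\'eaire $\ad(E)\times(\ad(E)\otimes D)\rightarrow D$ pour lequel $\ad(\phi)$ est anti-autoadjoint, de sorte que $\ker(\ad(\phi))$ est exactement l'orthogonal de $\operatorname{im}(\ad(\phi))$ et s'identifie au dual de $\operatorname{coker}(\ad(\phi))$ tordu par $D$ ; en combinant avec l'\'etape pr\'ec\'edente on obtient $\Lie(J_a)\cong\frakc_D^*\otimes D$. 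Ce qu'il restera \`a soigner est la canonicit\'e : deux choix de $(E,\phi)$ (ou de $D'$) diff\`erent localement par un $\ad(g)$ avec $g$ section de $G$, lequel pr\'eserve la forme et entrelace les diff\'erentielles de $\chi$ puisque $\chi\circ\ad(g)=\chi$, ce qui montre que les isomorphismes construits se correspondent — et donne aussi la $\GG_m$-\'equivariance, d'o\`u l'\'enonc\'e analogue pour $J$ sur $[\frakc/\GG_m]$. La seule subtilit\'e effective est le d\'ecompte des torsions par $D$ : une seule puissance de $D$ appara\^it, la forme invariante \'etant \`a valeurs dans $\calO_X$ et la torsion provenant uniquement du facteur $\ad(E)\otimes D$ ; on le contr\^ole sans peine sur le cas lin\'eaire $G=\GL_r$, o\`u $\frakc_D=\bigoplus_{i=1}^r D^{\otimes i}$ et o\`u $\Lie(J_a)$ est l'image directe de $\calO$ sur la courbe spectrale, soit $\bigoplus_{j=0}^{r-1}D^{\otimes -j}=\frakc_D^*\otimes D$.
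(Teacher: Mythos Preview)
Your proof is correct and follows a genuinely different route from the paper. The paper computes $\Lie(J_a)$ via the Galois description $J\to J^1$: one has $\Lie(J_a)=((\pi_a)_*\frakt)^W$, and then an invariant-theoretic identity $(\pi_*\Omega_{\frakt_D/X})^W=\Omega_{\frakc_D/X}$ combined with $\Omega_{\frakt_D/X}=\frakt_{D^{-1}}$ and $\Omega_{\frakc_D/X}=\frakc_D^*$ yields $(\pi_*\frakt)^W=\frakc_D^*\otimes D$. You instead stay on the centralizer side: pick a regular $(E,\phi)$ over $a$, identify $\Lie(J_a)$ with $\ker\ad(\phi)$, use Kostant's smoothness of $\chi|_{\frakg^\reg}$ to identify $\operatorname{coker}\ad(\phi)$ with $\frakc_D$ via $d\chi$, and then apply the invariant form to dualize.

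Both arguments actually rely on the choice of an invariant form --- yours on $\frakg$, the paper's (more implicitly) on $\frakt$, in the step $\Omega_{\frakt_D/X}=\frakt_{D^{-1}}$ --- so ``canonical'' should be read modulo that choice in either case. What your approach buys is a direct link with the deformation complex $\ad(E,\phi)=[\ad(E)\to\ad(E)\otimes D]$ of \S\ref{subsection : deformation}: you are essentially showing that on $\calM^\reg$ the differential of the Hitchin map identifies the tangent of the fibre with $\Lie(J_a)$ and the normal with $\frakc_D$, which is the infinitesimal incarnation of ``$\calM^\reg$ is a $\calP$-torseur over $\calA$''. The paper's route, by contrast, stays entirely on the cameral side and feeds naturally into the N\'eron-model and $\delta$-invariant computations that follow.
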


Dans la d\'emonstration qui suit lorsque $f:X\rightarrow Y$ et si $\calL$ est un
$\calO_Y$-module, on \'ecrira simplement $\calL$ pour d\'esigner aussi son
image inverse $f^*\calL$. Cet abus de notation ne devrait pas causer de
confusion car le sch\'ema qui porte le module sera toujours clair par le contexte.

\begin{proof}
D'apr\`es \ref{J=J'}, l'homomorphisme $J_a\rightarrow J^1_a$ induit un
isomorphisme $\Lie(J_a) \rightarrow \Lie(J^1_a)$ sur les alg\`ebres de Lie.
Par construction de $J^1$, $\Lie(J_a^1)$ peut se calculer \`a l'aide du
rev\^etement cam\'eral $\pi_a:\widetilde X_a \rightarrow \bar X$ par la formule
$$\Lie(J^1_a)=((\pi_a)_*\frakt)^{W}.$$
Rappelons que le rev\^etement cam\'eral est d\'efini en formant le diagramme
cart\'esien
$$ \xymatrix{
 \widetilde X_a \ar[d]_{\pi_a} \ar[r]^{}   & \frakt_D \ar[d]^{\pi}  \\
 \bar X \ar[r]_{a} &   \frakc_D        }
$$
Puisque $\pi$ est fini et plat, la formation de $\pi_* \frakt$ commute \`a
tout changement de base en particulier $(\pi_a)_*\frakt=a^* \pi_* \frakt$. Il
suffit donc de calculer $(\pi_*\frakt)^W$.

Puisque $\frakc_D$ est le quotient invariant de $\frakt_D$ par l'action de
$W$, il en est de m\^eme pour les espaces totaux des fibr\'es tangents
$T_{\frakt_D/X}$ et $T_{\frakc_D/X}$. On en d\'eduit
$$(\pi_* \Omega_{\frakt_D/X})^W=\Omega_{\frakc_D/X}.$$
Par ailleurs, comme $\frakt_D$ est un fibr\'e vectoriel sur $X$, on a
$\Omega_{\frakt_D/X}=\frakt_{D^{-1}}$. De m\^eme, on a
$\Omega_{\frakc_D/X}=\frakc_{D}^*$. On obtient finalement une \'egalit\'e
de $\calO_{\frakc_D}$-modules $(\pi_* \frakt_{D^{-1}})^{W}=\frakc_{D}^*$
d'o\`u
$$(\pi_*\frakt)^W=\frakc_D^* \otimes D.$$
En prenant l'image inverse de cette \'egalit\'e par la section $a:\bar
X\rightarrow \frakc_D$, on obtient l'\'egalit\'e $\Lie(J_a)=\frakc_D^*
\otimes_{\calO_X} D$.
\end{proof}

Dans le cas o\`u $G$ est d\'eploy\'e, le lemme permet d'exprimer
$\Lie(J_a)$ en termes de $D$ et des exposants du syst\`eme de racines $\Phi$. Soient
$e_1,\ldots,e_r$ les degr\'es des polyn\^omes invaraints homog\`enes comme
dans l'\'enonc\'e de \ref{Chevalley}, on a
$$\Lie(J_a)=D^{-e_1+1} \oplus\cdots\oplus D^{-e_r+1}.$$
Si $G$ n'est pas d\'eploy\'e, $\Lie(J_a)$ devient isomorphe \`a la somme
directe de droite sur un rev\^etement fini \'etale galoisien de $X$ qui
d\'eploie $G$. En particulier
$$\deg(\Lie(J_a))=\sum_{i=1}^r (-e_r+1)\deg(D)= -\sharp\,\Phi\deg(D)/ 2.$$

\begin{corollaire}\label{dim fibre}
Pour tout $a\in \calA^\heartsuit(\bar k)$,
$$\dim(\calP_a)= \sharp\,\Phi\deg(D) / 2 + r(g-1).$$
\end{corollaire}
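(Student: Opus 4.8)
The plan is to read off $\dim(\calP_a)$ from the Euler characteristic of the Lie algebra sheaf of $J_a$, using the two preceding statements together with the deformation theory of the Picard stack of torsors under a smooth commutative group scheme over a curve.

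First I would recall the deformation theory of $\calP_a$. By definition $\calP_a$ is the Picard stack of $J_a$-torsors on $\bar X$, and since $J_a$ is a smooth commutative $\bar X$-group scheme the tangent complex of $\calP_a$ at any point is $\rmR\Gamma(\bar X,\Lie(J_a))[1]$; as $\bar X$ is a curve this complex has cohomology only in degrees $0$ and $1$. Hence the automorphism group of any $J_a$-torsor has dimension $\dim\rmH^0(\bar X,\Lie(J_a))$, while the versal deformation space has dimension $\dim\rmH^1(\bar X,\Lie(J_a))$. By \ref{P lisse} the stack $\calP$ is smooth over $\calA^\heartsuit$, so $\calP_a$ is smooth and equidimensional and
$$
\dim(\calP_a)=\dim\rmH^1(\bar X,\Lie(J_a))-\dim\rmH^0(\bar X,\Lie(J_a))=-\chi(\bar X,\Lie(J_a)).
$$

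Next I would substitute the previous proposition, which identifies $\Lie(J_a)$ with the locally free rank-$r$ module $\frakc_D^*\otimes_{\calO_X}D$ on $\bar X$, together with the degree computation recorded just after it: pulling back along a finite étale cover $\rho\colon X_\rho\to X$ that splits $G$ one gets $\rho^*\Lie(J_a)\cong\bigoplus_{i=1}^r\rho^*D^{1-e_i}$, and $\sum_{i=1}^r(e_i-1)=\sharp\,\Phi/2$ by Kostant's description of the exponents of $\Phi$, whence $\deg(\Lie(J_a))=-\sharp\,\Phi\deg(D)/2$. Applying Riemann–Roch on the smooth proper geometrically connected curve $\bar X$ of genus $g$ to the rank-$r$ bundle $\Lie(J_a)$ gives $\chi(\bar X,\Lie(J_a))=\deg(\Lie(J_a))+r(1-g)=-\sharp\,\Phi\deg(D)/2+r(1-g)$, and therefore $\dim(\calP_a)=\sharp\,\Phi\deg(D)/2+r(g-1)$, as claimed.

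There is no genuine obstacle here: all the geometric content sits in \ref{P lisse} and in the computation of $\Lie(J_a)$, and what remains is the standard principle that the dimension of the Picard stack of torsors under a smooth commutative group scheme on a curve equals minus the Euler characteristic of its Lie algebra, followed by Riemann–Roch bookkeeping. The one point I would state with care is that the relevant notion of dimension is that of an algebraic stack, i.e.\ $\dim\rmH^1-\dim\rmH^0$ rather than $\dim\rmH^1$ alone; this is precisely why the answer is $\dim(\calP_a)$ and not the dimension of a coarse moduli space, and it keeps the formula uniform even when $G$ has a central torus (so that $\Lie(J_a)$ acquires trivial summands and $\rmH^0$ is nonzero).
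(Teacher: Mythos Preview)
The proposal is correct and follows exactly the paper's approach: the paper's proof simply states that $\dim(\calP_a)=\dim\rmH^1(\bar X,\Lie(J_a))-\dim\rmH^0(\bar X,\Lie(J_a))$ and then invokes Riemann--Roch, using the degree computation $\deg(\Lie(J_a))=-\sharp\,\Phi\deg(D)/2$ recorded just before the corollary. Your write-up is a careful expansion of precisely this argument, with the helpful extra remark about why the stack dimension is $\dim\rmH^1-\dim\rmH^0$.
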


\begin{proof}
On a
$$\dim(\calP_a)=\dim(\rmH^1(\bar X,\Lie(J_a)))-\dim(\rmH^0(\bar X,\Lie(J_a)))$$
de sorte que l'\'egalit\'e \`a d\'emontrer r\'esulte de la formule de
Riemann-Roch.
\end{proof}

\begin{numero}
En comparant avec la formule \ref{dim A}
$$\dim(\calA)=\sharp\, \Phi\deg(D)/ 2+r(1-g+\deg(D))$$
on obtient
$$\dim(\calP)=(r+\sharp\,\Phi)\deg(D).$$
\end{numero}

\begin{numero}
Dans \ref{densite globale}, on d\'emontrera que $\calM_a^\reg$ est dense
dans $\calM_a$ de sorte qu'on aura alors les m\^emes formules de dimension pour
$\calM$
$$\dim(\calM_a)= \sharp\,\Phi\deg(D) / 2 + r(g-1)$$
et
$$\dim(\calM)=(r+\sharp\,\Phi)\deg(D).$$
\end{numero}

\subsection{L'ouvert $\calA^\heartsuit$ et le rev\^etement cam\'eral}

Consid\'erons le diagramme cart\'esien
$$
\xymatrix{
 \widetilde X \ar[d]_{} \ar[r]^{}   & \frakt_D \ar[d]^{\pi}  \\
 X\times \calA \ar[r] &       \frakc_D    }
$$
o\`u le morphisme de bas associe un couple $(x,a)$ avec $x\in X$ et
$a\in\calA$ l'image $a(x)\in\frakc_D$. Le morphisme de gauche qui se
d\'eduit de $\pi$ par changement de base est un morphisme fini et plat avec
une action de $W$.

En prenant la fibre en chaque point $a\in\calA(\bar k)$, on obtient le
rev\^etement cam\'eral $\pi_a:\widetilde X_a\rightarrow \bar X$ de Donagi.
Dans la suite, on va se restreindre aux param\`etres $a$ tels que la courbe
cam\'erale est r\'eduite. Ces param\`etres forment un ouvert de $\calA$ qui
peut \^etre d\'ecrit comme suit.

Consid\'erons l'image inverse $U$ de $\frakc^\rs_D\subset \frakc_D$ dans
$X\times \calA$. Le morphisme de $U\rightarrow \calA$ \'etant lisse, son
image est un ouvert de $\calA$ que nous allons noter $\calA^\heartsuit$.
Ses $\bar k$-points sont d\'ecrits comme suit
$$\calA^\heartsuit(\bar k)=\{a\in\calA(\bar k)\mid a(\bar X)\not\subset \discrim_{G,D}\}.$$
Si $\deg(D)>2g$, l'ouvert $\calA^\heartsuit$ est non vide. On reporte la
d\'emonstration \`a \ref{A diamond non vide} o\`u on d\'emontre un
\'enonc\'e plus fort.

\begin{lemme}
Pour tout point g\'eom\'etrique $a\in \calA^\heartsuit(\bar k)$, le
rev\^ete\-ment $\pi_a:\widetilde X_a\rta X\otimes_k \bar k$ est
g\'en\'eriquement un torseur sous $W$. De plus,  $\widetilde X_a$ est une
courbe r\'eduite.
\end{lemme}

\begin{proof}
Par d\'efinition de $\calA^\heartsuit$, l'intersection $U_a$ de $U$ avec la
fibre $X\times \{a\}$ est un ouvert non vide de $\bar X$. Par construction,
$\pi_a$ est un $W$-torseur au-dessus de cet ouvert dense. Puisque $\pi_a$
est un morphisme fini plat, $\pi_{a*}\calO_{\widetilde X_a}$ est un
$\calO_{\bar X}$-module sans torsion. S'il est g\'en\'eriquement r\'eduit, il
est partout r\'eduit.
\end{proof}

La courbe cam\'erale $\widetilde X_a$ est munie d'une famille de
rev\^etements finis \'etales naturels. Consid\'erons une r\'eduction du
torseur $\rho_G$ comme dans \cf \ref{reduction} d'o\`u nous reprenons les
notations. En particulier, on a un rev\^etement fini \'etale $\rho:X_\rho
\rightarrow X$ galoisien de groupe de Galois $\Theta_\rho$ qui induit
$\rho_{\Out}$ par le changement de groupes structuraux
$\Theta_\rho\rightarrow \Out(\GG)$. On a alors un diagramme cart\'esien
\begin{equation}\label{diagramme deploiement 2}
\xymatrix{
 X_\rho \times\bbt \ar[d]_{} \ar[r]^{\pi}    & X_\rho \times \bbc\ar[d]^{}  \\
 \frakt \ar[r]_{\pi} & \frakc           }
\end{equation}
Pour tout $a\in\calA(\bar k)$, construisons le rev\^etement
$\pi_{\rho,a}:\widetilde X_{\rho,a}\rightarrow \bar X$ en formant le produit
cart\'esien
\begin{equation}\label{tilde X rho a}
\xymatrix{
 \widetilde X_{\rho,a} \ar[d]_{\pi_{\rho,a}} \ar[r]^{}   & X_\rho\times \bbt_D \ar[d]^{\pi_\rho}  \\
 \bar X \ar[r]_a &       \frakc_D    }
\end{equation}
On a alors un morphisme fini \'etale $\widetilde X_{\rho,a}\rightarrow
\widetilde X_a$ qui est galoisien de groupe de Galois $\Theta_\rho$.

La discussion ci-dessus s'applique encore si on dispose d'une r\'eduction
$\bar \rho$ du $\Out(\GG)$-torseur $\bar \rho_G$ sur $\bar X$.

\begin{proposition}\label{connexe}
Soit $\bar\rho:\bar X_{\bar \rho}\rightarrow \bar X$ un rev\^etement fini
\'etale galoisien et connexe. Supposons que $\deg(D)>2g$. Alors pour tout
$a\in\calA^\heartsuit(\bar k)$, la courbe $\widetilde X_{\bar \rho,a}$ est
connexe.
\end{proposition}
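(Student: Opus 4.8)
The plan is to deduce the connectedness of \emph{every} $\widetilde X_{\bar\rho,a}$, $a\in\calA^\heartsuit(\bar k)$, from that of a single well-chosen one, by a Stein--factorisation argument, and to produce that single one by a monodromy computation after reducing to a split group.

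\textbf{Reduction to the split case.} Since $\bar\rho$ is a r\'eduction of $\bar\rho_G$, the torseur $\bar\rho_G$ becomes trivial on $Y:=\bar X_{\bar\rho}$, so $G|_Y$ is the split group $\bbG$. Unwinding the diagramme cart\'esien \ref{tilde X rho a} together with \ref{diagramme deploiement 2}, the projection $\widetilde X_{\bar\rho,a}\to X_{\bar\rho}=Y$ identifies $\widetilde X_{\bar\rho,a}$ with the ordinary rev\^etement cam\'eral $\widetilde Y_{a_Y}$ of $\bbG$ over $Y$ attached to the section $a_Y$ of $\frakc_{D_Y}=\bbc_{D_Y}$ obtained from $a$ by pullback along $\bar\rho$ and the canonical trivialisation. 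By Riemann--Hurwitz $g_Y-1=\deg(\bar\rho)(g-1)$, hence $\deg(D_Y)=\deg(\bar\rho)\deg(D)>\deg(\bar\rho)\cdot 2g\geq 2g_Y$; and $a_Y\in\calA^\heartsuit_Y$ since $\discrim_{\bbG,D_Y}$ pulls back to $\discrim_{G,D}$. It therefore suffices to prove: for $\bbG$ split over a connected smooth projective curve $Y$ with $\deg D>2g_Y$ and $a\in\calA^\heartsuit_Y(\bar k)$, the rev\^etement cam\'eral $\widetilde Y_a$ (a $\bbW$-cover, generically \'etale over $Y$) is connected.

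\textbf{Reduction to one $a$.} Form the rev\^etement cam\'eral relatif $f:\widetilde{\calX}^\heartsuit\to\calA^\heartsuit_Y$, the base change of the finite flat $\pi:\frakt_{D_Y}\to\frakc_{D_Y}$ along the evaluation $Y\times\calA^\heartsuit_Y\to\frakc_{D_Y}$; it is finite, hence proper, and $\calA^\heartsuit_Y$, a nonempty open in the affine space $\rmH^0(Y,\frakc_{D_Y})$ (\ref{dim A}), is smooth and irreducible. As $Y\times\calA^\heartsuit_Y$ is smooth and $\widetilde{\calX}^\heartsuit$ is \'etale over the dense preimage of $\frakc^\rs_{D_Y}$, the scheme $\widetilde{\calX}^\heartsuit$ is Cohen--Macaulay, equidimensional, reduced, and every component dominates $\calA^\heartsuit_Y$; so $f_*\calO_{\widetilde{\calX}^\heartsuit}$ is a torsion-free coherent sheaf of $\calO_{\calA^\heartsuit_Y}$-algebras. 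Let $\widetilde{\calX}^\heartsuit\to\calA'\to\calA^\heartsuit_Y$ be the factorisation de Stein, $\calA'=\Spec(f_*\calO)$. A fibre $\widetilde Y_a$ is g\'eom\'etriquement connexe iff $\calA'\to\calA^\heartsuit_Y$ is an isomorphism near $a$, which is an open condition on $a$; if it holds for one $a$ it holds at the point g\'en\'erique, and then torsion-freeness together with the normality of $\calA^\heartsuit_Y$ (so $\calO_{\calA^\heartsuit_Y}$ is integralement clos dans son corps des fonctions) forces $f_*\calO=\calO_{\calA^\heartsuit_Y}$, i.e. $\calA'=\calA^\heartsuit_Y$, whence every fibre of $f$ is g\'eom\'etriquement connexe. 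So it remains only to exhibit one $a\in\calA^\heartsuit_Y(\bar k)$ with $\widetilde Y_a$ connected.

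\textbf{The main step: one connected cameral cover.} It is enough to find $a$ for which the monodromie $\pi_1(U_a,\bar y)\to\bbW$ of the $\bbW$-torseur $\widetilde Y_a|_{U_a}$, with $U_a:=a^{-1}(\frakc^\rs_{D_Y})$, is surjective: then $\widetilde Y_a|_{U_a}$ is connected, and being dense in $\widetilde Y_a$ (equidimensionnel, fini plat sur la courbe irr\'eductible $Y$, chaque composante dominante) it forces $\widetilde Y_a$ connected. Because $\discrim_\bbG$ is a diviseur r\'eduit (\ref{diviseur discriminant}) and $\deg D>2g_Y$ makes $\frakc_{D_Y}$ tr\`es ample, a generic $a$ has $a^*\discrim_{\bbG,D_Y}$ r\'eduit and $\widetilde Y_a\to Y$ is, at each point of this divisor, simplement ramifi\'e de type r\'eflexion; thus the image $\Gamma\leq\bbW$ is engendr\'e par des r\'eflexions, the monodromie locale at a point lying over a mur $h_\alpha$ being $s_\alpha$. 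The point is to choose $a$ so that $\Gamma=\bbW$: since $\deg D$ is large the diviseur de ramification can be prescribed with much freedom, and one arranges a subset of its points, together with suitable chemins issus de $\bar y$, to realise a syst\`eme complet de r\'eflexions simples $s_{\alpha_1},\dots,s_{\alpha_r}$, so that $\Gamma\supseteq\langle s_{\alpha_1},\dots,s_{\alpha_r}\rangle=\bbW$. Controlling not merely which r\'eflexions occur as monodromies locales but their conjugaison simultan\'ee (the monodromie point\'ee), so that they generate $\bbW$ and not just some sous-groupe de r\'eflexions propre, is the crux of the argument; it is exactly here that the r\'eductivit\'e de $\discrim_\bbG$ and the hypothesis $\deg D>2g$ (cf. also \ref{A diamond non vide}) enter. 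Granting this, $\widetilde Y_a$ is connected for that $a$, and by the previous step $\widetilde X_{\bar\rho,a}$ is connected for every $a\in\calA^\heartsuit(\bar k)$.
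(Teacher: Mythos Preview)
Your Stein factorisation reduction is sound, and the reduction to the split case is fine. The gap is in the ``main step'': the local monodromy around a branch point lying over the wall $h_\alpha$ is a reflection \emph{conjugate} to $s_\alpha$, but the conjugating element depends on the choice of path, and the ambiguity is by conjugation inside the monodromy group $\Gamma$ itself, not in all of $\bbW$. Knowing that $a(Y)$ meets every component of the discriminant therefore only tells you that $\Gamma$ is a reflection subgroup meeting every $\bbW$-conjugacy class of reflections---which does not force $\Gamma=\bbW$ (in type $A_n$ every Young subgroup of rank $\geq 1$ already meets the unique class). You explicitly flag controlling the ``conjugaison simultan\'ee'' as ``the crux'' and then write ``granting this'', but nothing has been granted: no mechanism is given by which large $\deg D$ pins down a system of \emph{simple} reflections inside $\Gamma$. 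Note also that the statement $I_{\tilde\psi_G}=\bbW$ (Lemme~\ref{I W psi_G}) is, in the paper, \emph{deduced from} the present proposition via Corollaire~\ref{irreductible}, so invoking it here would be circular.

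The paper avoids monodromy entirely and proves connectedness of each $\widetilde X_{\bar\rho,a}$ directly by the Fulton--Hansen connectedness theorem: after passing to $\bar X_{\bar\rho}$ so that $\frakc_D\simeq\bigoplus D^{e_i}$, one compactifies each factor to a $\bbP^1$-bundle over $\bar X_{\bar\rho}$, takes their fibre product, and normalises in $\bbt_D\to\bbc_D$ to obtain an irreducible $M$; the hypothesis $\deg D>2g$ makes each $\calO(1)$ very ample, so after a Veronese embedding the section $a$ cuts $M$ by a linear subspace of some $\bbP^h$, and Fulton--Hansen yields the connectedness.
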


\begin{proof}
Rappelons un r\'esultat de connexit\'e g\'en\'eral de Fulton et Hansen \cf
\cite[th\'eor\`eme 4.1]{FL}.

\begin{theoreme} Soient $M,N$ des $\bar k$-sch\'emas irr\'eductibles
avec des morphismes propres $f:M\rightarrow \bbP^h$ et $g:N\rightarrow
\bbP^h$ tels que $\dim(f(M))+\dim(g(N))>h$. Alors $M\times_{\bbP^h} N$ est
connexe.
\end{theoreme}

Voici comment appliquer ce r\'esultat \`a notre cas particulier. En passant
au rev\^etement $\bar\rho:\bar X_{\bar\rho}\rightarrow \bar X$, $\frakc_D$
se d\'ecompose en une somme directe $\frakc_D=\bigoplus_{i=1}^r
D^{\otimes e_i}$. La section $a\in \Gamma(X,\frakc_D)$ d\'efinit alors des
sections $a_i\in \Gamma(\bar X_{\bar \rho}, D^{\otimes e_i})$ pour tous
$i=1,\ldots,r$. En tant que sch\'ema au-dessus de $\bar X_{\bar \rho}$,
$$\frakc_D \times_X \bar X_{\bar \rho}= \bar X_{\bar \rho}\times\bbc_D$$
est un produit des fibr\'es en droites  $D^{\otimes e_i}$. Compactifions
chacun de ces fibr\'es en droites $D^{\otimes e_i}$ en un fibr\'e en droites
projective $\bbP^1(D^{\otimes e_i}\oplus \calO_{X_\rho})$. Leur produit
$$\bbP^{e_1,\ldots,e_r}_{X_\rho}(D)=
\bbP^1(D^{\otimes e_1}\oplus \calO_{\bar X_{\bar \rho}}) \times_{\bar
X_{\bar \rho}}\times\cdots \times_{\bar X_{\bar \rho}} \bbP^1(D^{\otimes
e_r}\oplus \calO_{\bar X_{\bar \rho}})
$$
est une compactification de $\frakc_D \times_X \bar X_{\bar \rho}$. Il est
g\'eom\'etriquement connexe car $\bar X_{\bar \rho}$ l'est par hypoth\`ese.
Soit $M$ la normalisation de $\bbP^{e_1,\ldots,e_r}_{X_\rho}(D)$ dans le
rev\^etement fini et plat $\bar X_{\bar \rho}\times \bbt_D \rightarrow \bar
X_{\bar \rho}\times\bbc_D$.

Consid\'erons aussi le produit au-dessus de $k$
$$\bbP^{e_1,\ldots,e_r}(D)=
\bbP^1(D^{\otimes e_1}\oplus \calO_{\bar X_{\bar \rho}}) \times\times\cdots \times
\bbP^1(D^{\otimes e_r}\oplus \calO_{\bar X_{\bar \rho}})
$$
qui contient $\bbP^1(D^{\otimes e_i}\oplus \calO_{\bar X_{\bar \rho}})$
comme un sous-sch\'ema ferm\'e. Chaque facteur $\bbP^1(D^{\otimes e_i})$
est muni d'un fibr\'e en droites $\calO(1)$ tr\`es ample relativement \`a
$k$ car sous l'hypoth\`ese $\deg(D)>2g$. Il induit un plongement de
$\bbP^1(D^{\otimes e_i})$ dans un espace projectif $\bbP^{h_i}$. La
composante $a_i\in \Gamma(\bar X_{\bar \rho},D^{\otimes e_i})$ de $a$
d\'efinit un hyperplan $H_{a_i}$ dans ce projectif. L'image de la section
$$a:\bar X_{\bar \rho}\rightarrow \bar X_{\bar \rho}\times\bbc_D$$
est alors l'image r\'eciproque dans $\bbP^{e_1,\ldots,e_r}_{\bar X_{\bar
\rho}}(D)$ de
$$H_1 \times\cdots \times H_r \subset \bbP^{h_1}\times\cdots\times \bbP^{h_r}.$$
Il s'ensuit que $\widetilde X_{\bar \rho,a}$ est l'image r\'eciproque dans $M$
du m\^eme produit d'hyperplans. Il ne reste qu'\`a prendre le plongement
de Veronese de $\bbP^{h_1}\times\cdots\times \bbP^{h_r}$ pour \^etre en
position d'appliquer le th\'eor\`eme de Fulton-Hansen qui implique que la
courbe $\widetilde X_{\bar \rho,a}$ est connexe.
\end{proof}

\subsection{L'ouvert $\calA^\diamondsuit$}
\label{subsection : A diamond}

Commen{\c c}ons par \'etudier l'ouvert $\calA^\diamondsuit$ o\`u les fibres
de $\calM_a$ sont aussi simples que possible. Cet ouvert est d\'efini comme
suit. Un point $a\in \calA(\bar k)$ appartient \`a cet ouvert si la section
$h_a:\bar X \rightarrow \frakc_D$ coupe transversalement le diviseur
$\discrim_{D}$. Ici $\discrim_{D}$ d\'esigne le diviseur de $\frakc_D$
obtenu en tordant le diviseur $\discrim\subset \frakc$ par le
$\GG_m$-torseur $L_D$ associ\'e au fibr\'e inversible $D$.

\begin{proposition} \label{A diamond non vide}
Si $\deg(D)>2g$, l'ouvert $\calA^\diamondsuit$ est non vide.
\end{proposition}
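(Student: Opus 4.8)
The plan is to exhibit a geometric point of $\calA^\diamondsuit$ by a dimension count on an incidence variety inside $\calA\times X$, which sidesteps any appeal to generic smoothness and therefore works in arbitrary characteristic. Since $\calA^\diamondsuit$ is an open subscheme of the affine space $\calA$ over $k$, it is enough to treat $\calA^\diamondsuit\otimes_k\bar k$, so one may assume $k=\bar k$. The only place the hypothesis $\deg(D)>2g$ enters is the following: after the finite \'etale base change $\rho:X_\rho\to X$ that splits $G$ one has $\frakc_D\cong\bigoplus_{i=1}^r D^{\otimes e_i}$ with $\deg(D^{\otimes e_i})=e_i\deg(D)\ge\deg(D)>2g$, so each summand has degree $\ge 2g+1$, is very ample, and its space of global sections surjects onto the space of jets of order one at every point; descending along $\rho$ (e.g.\ by realizing $\frakc_D$ as an $\calO_X$-module direct summand of $\rho_*\rho^*\frakc_D$, using $R^1\rho_*=0$) one gets that the $\calO_X$-module $\frakc_D$ is globally generated and separates jets of order one. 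In particular the evaluation morphism $\ev:\calA\times X\to\frakc_D$, $(a,x)\mapsto a(x)$, is smooth and surjective of relative dimension $\dim(\calA)-r$, and for each fixed $x$ the map $a\mapsto\ev(a,x)$ is still surjective on first jets; here $\dim(\calA)$ is the integer of \ref{dim A}.

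Next I would record two facts about $\discrim_D$. First, $\discrim_D\subset\frakc_D$ is a reduced divisor (the twist by $D$ of the reduced divisor $\discrim_G$ of \ref{diviseur discriminant}), hence over the perfect field $k=\bar k$ it is generically smooth, and its singular locus $\discrim_D^{\mathrm{sing}}$ has codimension $\ge 2$ in $\frakc_D$. Second, by the $\Out(\GG)$-invariance of $\discrim_\GG$ recorded in \ref{diviseur discriminant}, \'etale-locally on $X$ the pair $(\frakc_D,\discrim_D)$ is a constant family $(\bbA^r,\discrim_\GG)\times(\text{open of }X)$; consequently $\discrim_D^{\mathrm{sm}}:=\discrim_D\setminus\discrim_D^{\mathrm{sing}}\to X$ is a smooth morphism, so at every $c\in\discrim_D^{\mathrm{sm}}$ lying over $x$ the fibre $\frakc_{D,x}$ is not contained in the hyperplane $T_c\discrim_D$.

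Let $\calI\subset\calA\times X$ be the closed locus of pairs $(a,x)$ at which the section $h_a$ is not transverse to $\discrim_D$. Since $X$ is proper over $k$, the image $\pr_\calA(\calI)$ is closed and $\calA^\diamondsuit=\calA\setminus\pr_\calA(\calI)$; as $\calA$ is irreducible it suffices to prove $\dim(\calI)<\dim(\calA)$. Write $\calI=\calI_{\mathrm{sing}}\cup\calI_{\mathrm{tan}}$, with $\calI_{\mathrm{sing}}=\ev^{-1}(\discrim_D^{\mathrm{sing}})$ and $\calI_{\mathrm{tan}}$ the set of $(a,x)$ with $a(x)\in\discrim_D^{\mathrm{sm}}$ and $da_x(T_xX)\subset T_{a(x)}\discrim_D$. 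For the first piece, smoothness of $\ev$ together with $\mathrm{codim}(\discrim_D^{\mathrm{sing}},\frakc_D)\ge 2$ gives $\dim(\calI_{\mathrm{sing}})\le(\dim\frakc_D-2)+(\dim\calA-r)=\dim\calA-1$. For the second, fix $x\in X$ and stratify by the value $c=a(x)$, which ranges over the $(r-1)$-dimensional $\discrim_{D,x}$; the locus of $a$ with $a(x)=c$ has dimension $\dim\calA-r$ by surjectivity of evaluation at $x$, and inside it the condition $da_x(T_xX)\subset T_c\discrim_D$ is one further codimension-one condition, because modifying $a$ with fixed value $c$ at $x$ moves $da_x$ by an arbitrary element of $\frakc_{D,x}\otimes T^*_xX$ (separation of first jets) while $\frakc_{D,x}$ has nonzero image in the line $T_c\frakc_D/T_c\discrim_D$ (the second fact above). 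Hence $\calI_{\mathrm{tan}}$ meets each $\{a : a(x)=c\}$ in dimension $\le\dim\calA-r-1$, so $\dim(\calI_{\mathrm{tan}})\le(r-1)+(\dim\calA-r-1)+\dim(X)=\dim\calA-1$. Combining, $\dim(\calI)\le\dim\calA-1<\dim\calA$, which is what is needed.

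The only point at which positive characteristic could have intervened is the generic smoothness of $\discrim_D$ over $X$, and this is exactly why the argument rests on $\discrim$ being reduced (itself a consequence of $p\nmid\sharp\,\bbW$, \ref{diviseur discriminant}); so that reducedness statement, together with the \'etale-local product structure of $(\frakc_D,\discrim_D)$ over $X$, is the step I would check most carefully before assembling the dimension count above. Everything else is a routine Bertini-type bookkeeping once global generation and separation of first jets of $\frakc_D$ are in hand.
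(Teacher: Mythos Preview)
Your proof is correct and follows essentially the same Bertini-type dimension count as the paper's, the only cosmetic difference being that you parametrize the incidence locus by pairs $(a,x)\in\calA\times X$ while the paper parametrizes by pairs $(c,a)$ with $c$ a point of the discriminant divisor. Your jet-surjectivity input is exactly the paper's Lemma~\ref{deg(D)>2g}, and your decomposition $\calI=\calI_{\mathrm{sing}}\cup\calI_{\mathrm{tan}}$ matches the paper's $Z_2$ and $Z_1$.
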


La d\'emonstration est compl\`etement similaire \`a celle du th\'eor\`eme de Bertini due \`a Zariski.
Commen{\c c}ons par d\'emontrer un lemme.

\begin{lemme}\label{deg(D)>2g}
Supposons $\deg(D)>2g$ o\`u $g$ est le genre de $X$. Pour tout $x\in \bar
X(\bar k)$ d\'efini par un id\'eal $\mathfrak m_x$. La fl\`eche
$$\rmH^0(\bar X,\frakc)\rightarrow \frakc \otimes_{\calO_{\bar X}}
\calO_{\bar X}/\mathfrak m_x^2.
$$
est surjective. Ici le fibr\'e vectoriel $\frakc$ est vu comme un
$\calO_X$-module localement libre de rang $r$.
\end{lemme}

\begin{proof}
En passant au rev\^etement fini \'etale $\rho: X_\rho\rightarrow X$,
$\frakc$ devient isomorphe \`a une somme directe
$$\rho^* \frakc =\bigoplus_{i=1}^r \rho^* D^{\otimes e_i}$$
o\`u $e_i$ sont d\'efinis dans \ref{Chevalley} et en particulier sont des
entiers plus grand ou \'egal \`a $1$. Comme $\frakc$ est un facteur direct
de $\rho_* \rho^* \frakc$, il suffit de d\'emontrer la surjectivit\'e de la
fl\`eche
$$\rmH^0(\bar X',\rho^*\frakc)\rightarrow \rho^*\frakc \otimes_{\calO_{\bar X}}
\calO_{\bar X}/\mathfrak m_x^2.
$$
Il suffit de d\'emontrer cette surjectivit\'e pour chacun des facteurs directs
$\rho^*D^{\otimes e_i}$.

Il est loisible de supposer ici que $X_\rho$ est g\'eom\'etriquement
connexe. Notons $g'$ son genre. On a alors $2g'-2=n(2g-2)$ o\`u $n$ est le
degr\'e de $\rho$. La surjectivit\'e ci-dessus se d\'eduit de l'in\'egalit\'e
$$\deg(D^{\otimes e_i}) > 2ng = (2g'-2)+ 2n.$$
Le lemme en r\'esulte.
\end{proof}

\begin{proof} Revenons \`a la proposition \ref{A diamond non vide}.
Dans $\discrim_{G,D}$, on a un ouvert lisse
${\discrim_{D}}-\discrim_{D}^{\rm sing}$ compl\'ement d'un ferm\'e
$\discrim_{D}^{\rm sing}$ de codimension $2$ dans $\frakc_D$.

Consid\'erons le sous-sch\'ema $Z_1$ de
$({\discrim_{G,D}}-\discrim_{G,D}^{\rm sing})\times \calA$ consitiu\'e des
couples $(c,a)$ tels que la section $a(\bar X)$ passe par le point $c$ et
intersecte avec le diviseur ${\discrim_G}$ en ce point avec une
multiplicit\'e au moins $2$. D'apr\`es le lemme ci-dessus
$$\dim(Z_1)\leq \dim(\calA)-1$$
de sorte que la projection $Z_1\rightarrow \calA$ n'est pas surjective.

Consid\'erons le sous-sch\'ema $Z_2$ de $\discrim_G^{\rm sing}\times \calA$
des couples $(c,a)$ tels que la section $a(\bar X)$ passe par $c$. De
nouveau d'apr\`es le lemme, on a une estimation de dimension
$$\dim(Z_2)\leq \dim(\calA)-1$$
de sorte que la r\'eunion des images de $Z_1$ et de $Z_2$ est contenu dans
un sous-sch\'ema ferm\'e strict de $\calA$. Il existe donc un point
$a\in\calA^{\heartsuit}$ telle qu la section $a(\bar X)$ ne coupe pas le lieu
singulier $\discrim_{G,D}^{\rm sing}$ du discriminant et coupe le lieu lisse
de ce diviseur transversalement.
\end{proof}

\begin{lemme}\label{cameral lisse}
Un point $a\in\calA(\bar k)$ est dans l'ouvert $\calA^\diamondsuit(\bar k)$
si et seulement si la courbe cam\'erale $\widetilde X_a$ est lisse.
\end{lemme}

\begin{proof}
Supposons que $a\in\calA^\diamondsuit(\bar k)$ c'est-\`a-dire la section
$a(\bar X)$ dans $\frakc_D$ coupe transversalement le diviseur
$\discrim_{G,D}$. Montrons que l'image inverse de cette section sur le
rev\^etement $X_\rho\times \bbt_D$ est lisse. En dehors du diviseur
$\discrim_{G,D}$, ce rev\^etement est \'etale si bien qu'il n'y rien \`a
v\'erifier. La consition $a(\bar X)$ dans $\frakc_D$ coupe transversalement
le diviseur $\discrim_{G,D}$ implique en particulier qu'elle ne coupe pas le
lieu singulier $\discrim_{G,D}^{\rm sing}$ de $\discrim_{G,D}$. Un couple
$(v,x)\in \frakt_D(\bar k)$ est compos\'e d'un point $v\in X(\bar k)$ et d'un point $x$ dans la
fibre de $\frakt_D$ au-dessus de $x$. Au-dessus de $v$, le groupe $G$ est
d\'eploy\'e de sorte qu'on peut parler des hyperplans de racines dans la fibre
de $\frakt_D$ au-dessus de $v$. Si $(v,x)\in \frakt_D(\bar k)$ est au-dessus
d'un point intersection de $a(\bar X)$ avec
$\discrim_{G,D}-\discrim_{G,D}^{\rm sing}$, $x$ appartient \`a un
unique hyperplan de racines. On peut donc se ramener au cas d'un groupe
de rang semi-simple un. Dans ce cas, un calcul direct montre que le
compl\'et\'e formel $\widetilde X_a$ en $(v,x)$ est de la forme $\bar
k[[\epsilon_v]] [t]/ (t^2-\epsilon_v^m)$ o\`u $\epsilon_v$ est un
uniformisant de $\bar X$ en le point $v$ et $m$ est la multiplicit\'e
d'intersection de $a(\bar X)$ avec $\discrim_{G,D}$ en ce point. Dans le cas
transversal $m=1$ ceci implique que $\widetilde X_a$ est lisse en $(\tilde
v,x)$.

Supposons maintenant que $a\notin \calA^\diamondsuit(\bar k)$. Si $a(\bar
X)$ coupe le lieu lisse de $\discrim_{G,D}$ avec une multiplicit\'e plus
grand que un, le calcul ci-dessus montrer que $\widetilde X_a$ n'est pas
lisse. Supposons maintenant que $a(\bar X)$ coupe le lieu $\discrim_{G,D}^{\rm sing}$
en un point $v\in \bar X$. Supposons que $\widetilde X_a$ est lisse en le
point $(v,x)\in \frakt_D(\bar k)$ au-dessus de $v$. Le point $x$ appartient
alors \`a au moins deux hyperplans de racine diff\'erents de sorte que le
groupe de monodromie locale $\pi_a^\bullet(I_v)$ \cf \ref{pi_a^bullet}
contient deux involutions diff\'erentes. Ceci n'est pas possible car sous
l'hypoth\`ese que la caract\'eristique de $k$ ne divise l'ordre de $\bbW$, le
groupe de monodromie locale $\pi_a^\bullet(I_v)$ est un groupe cyclique.
\end{proof}

\begin{corollaire}\label{irreductible}
Soit  $\bar X_{\bar \rho}\rightarrow \bar X$ un rev\^etement fini \'etale
galoisien connexe. Supposons $\deg(D)>2g$. Alors pour tout $a\in
\calA^\diamondsuit(\bar k)$, la courbe $\widetilde X_{\bar\rho,a}$ est
irr\'eductible.
\end{corollaire}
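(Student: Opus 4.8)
The plan is to deduce this corollary by combining the global connectedness result \ref{connexe} with the smoothness criterion for the cameral curve \ref{cameral lisse}, invoking the elementary fact that a connected smooth scheme over a field is irreducible.

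First I would record the inclusion $\calA^\diamondsuit(\bar k)\subset \calA^\heartsuit(\bar k)$: by definition $a\in\calA^\diamondsuit(\bar k)$ means that the section $h_a\colon\bar X\rightarrow\frakc_D$ meets the discriminant divisor $\discrim_{G,D}$ transversally, and in particular $a(\bar X)\not\subset\discrim_{G,D}$, which is exactly the defining condition for $a\in\calA^\heartsuit(\bar k)$. Since $\deg(D)>2g$ and $\bar X_{\bar\rho}\rightarrow\bar X$ is a connected finite étale Galois covering, Proposition \ref{connexe} applies to the point $a$ and shows that the curve $\widetilde X_{\bar\rho,a}$ is connected.

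Next I would use that, by construction, the morphism $\widetilde X_{\bar\rho,a}\rightarrow\widetilde X_a$ obtained from the cartesian square \eqref{tilde X rho a} is finite étale (indeed Galois, its group being that of $\bar X_{\bar\rho}\to\bar X$), because $X_{\bar\rho}\rightarrow X$ is finite étale. On the other hand, $a\in\calA^\diamondsuit(\bar k)$ means, by Lemma \ref{cameral lisse}, that the cameral curve $\widetilde X_a$ is smooth over $\bar k$. Smoothness being stable under étale morphisms, it follows that $\widetilde X_{\bar\rho,a}$ is smooth over $\bar k$ as well.

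Finally, a scheme of finite type over an algebraically closed field that is both smooth and connected is irreducible: smoothness makes it regular, hence its local rings are integral domains, so a connected such scheme is irreducible. Applying this to $\widetilde X_{\bar\rho,a}$, which we have just shown to be connected and smooth, yields the assertion. There is no genuine obstacle in this argument; the only points requiring a line of verification are the inclusion $\calA^\diamondsuit\subset\calA^\heartsuit$ and the fact that $\widetilde X_{\bar\rho,a}\to\widetilde X_a$ is étale, both immediate from the definitions already set up in this chapter.
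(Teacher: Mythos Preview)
Your proof is correct and follows essentially the same route as the paper: connectedness from Proposition~\ref{connexe}, smoothness of $\widetilde X_{\bar\rho,a}$ via the \'etale cover of the smooth cameral curve $\widetilde X_a$ (Lemma~\ref{cameral lisse}), then connected plus smooth implies irreducible. You simply spell out a few points (the inclusion $\calA^\diamondsuit\subset\calA^\heartsuit$ and the \'etaleness of $\widetilde X_{\bar\rho,a}\to\widetilde X_a$) that the paper leaves implicit.
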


\begin{proof}
D'apr\`es \ref{connexe}, $\widetilde X_{\rho,a}$ est connexe. Comme
$\widetilde X_a$ est lisse, le rev\^etement \'etale $\widetilde X_{\rho,a}$
l'est aussi. Elle est donc irr\'eductible.
\end{proof}

\begin{definition} Un champ ab\'elien est le quotient d'une vari\'et\'e
ab\'elienne par l'action triviale d'un groupe diagonalisable.
\end{definition}

L'exemple typique d'un champ ab\'elien est le champ classifiant les fibr\'es
inversibles de degr\'e z\'ero sur une courbe projective lisse
g\'eom\'etriquement connexe. Si cette courbe est munie d'une action d'un
groupe fini d'ordre premier \`a la caract\'eristique, les champs de Prym
associ\'es sont des champs ab\'eliens.

\begin{proposition}\label{A lozenge}
Pour tout $a\in \calA^\diamondsuit(\bar k)$, l'ouvert $\calM_a^\reg$ est
$\calM_a$ tout entier de sorte que $\calM_a$ est un torseur sous $\calP_a$.
La composante neutre de $\calP_a$ est un champ ab\'elien.
\end{proposition}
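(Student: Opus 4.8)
The plan is to treat the two assertions separately. The statement $\calM_a^\reg=\calM_a$ is local on $X$: being regular is an open condition on the Higgs field and is checked fibrewise, so it suffices to see that for every geometric point $v$ of $\bar X$ the restriction of $h_{E,\phi}$ to the formal disc $X_v$ factors through $[\frakg_D^\reg/G\times\GG_m]$. Since $G$-torseurs over a complete discrete valuation ring with algebraically closed residue field are trivial, this restriction is (a point of, possibly a twisted form of) the local affine Springer fibre $\calM_v(a)$ of \S\ref{Springer}. Via the product formula \ref{produit} — which gives $[\calM_a/\calP_a]=\prod_v[\calM_v(a)/\calP_v(J_a)]$ — the assertion is equivalent to $\calM_v(a)=\calM_v^\reg(a)$ for every $v$ (using that $\calM_v^\reg(a)$ is a $\calP_v(J_a)$-torseur, \S\ref{symetries d'une fibre de Springer}), and this holds as soon as $\calM_v(a)$ is zero-dimensional, by \ref{dim zero}. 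Granting this, $\calM_a=\calM_a^\reg$, which by \ref{M reg} is a torseur under $\calP_a$.

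So the crux of the first assertion is that for $a\in\calA^\diamondsuit(\bar k)$ all the local affine Springer fibres are zero-dimensional, and this is where the hypothesis enters. By \ref{cameral lisse}, $a\in\calA^\diamondsuit$ is equivalent to the cameral curve $\widetilde X_a$ being smooth, i.e. to $a$ meeting $\discrim_{G,D}$ transversally and away from $\discrim_{G,D}^{\rm sing}$. At a point $v$ not on the discriminant one has $d_v(a)=c_v(a)=0$; at a point of transversal intersection the local monodromy $\pi_a^\bullet(I_v)$ of \ref{pi_a^bullet} is generated by a single reflection $s_\alpha$ (two distinct involutions would force $v$ into $\discrim_{G,D}^{\rm sing}$, exactly as in the proof of \ref{cameral lisse}), whence $d_v(a)=1$ and $c_v(a)=\dim\bbt-\dim\bbt^{s_\alpha}=1$. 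In both cases Bezrukavnikov's formula \ref{dimension Bezru} gives $\dim\calM_v(a)=\delta_v(a)=(d_v(a)-c_v(a))/2=0$, as needed.

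For the second assertion I would exploit the Néron model. Since $\widetilde X_a$ is smooth it is its own normalisation, so \ref{J a flat} identifies the Néron model $J_a^\flat$ with the $W$-fixed subscheme of $\prod_{\widetilde X_a/\bar X}(T\times_{\bar X}\widetilde X_a)$, and by \ref{isom sur composantes neutres} the open subscheme of neutral components $J_a^0$ agrees with $J_a^{\flat,0}$. As $\calP_a^0$ depends only on $J_a^0=J_a^{\flat,0}$, the natural map sends it, with finite kernel and cokernel (the component groups of $J_a^\flat$), into the neutral component of $\rmH^1\!\big(\bar X,\prod_{\widetilde X_a/\bar X}(T\times\widetilde X_a)\big)^W=\rmH^1(\widetilde X_a,T)^W=(\Pic(\widetilde X_a)\otimes_\ZZ\bbX_*(T))^W$. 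Because $\widetilde X_a$ is a smooth proper curve and the characteristic of $k$ does not divide $\sharp\,\bbW$, the neutral component of the right-hand side is the $W$-invariant part of the abelian variety $\Pic^0(\widetilde X_a)\otimes_\ZZ\bbX_*(T)$, hence an abelian variety; therefore the coarse space $A$ of $\calP_a^0$ is isogenous to it and is an abelian variety. Finally the band of the Picard stack $\calP_a^0$, namely the automorphism group $\rmH^0(\bar X,J_a)$ of its identity object, embeds into the torus $J_{a,\eta}$ and so is diagonalisable over $\bar k$, and it acts trivially; thus $\calP_a^0=[A/\rmH^0(\bar X,J_a)]$ is an abelian stack in the sense of the definition above.

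The step I expect to be most delicate is the local-to-global reduction for $\calM_a=\calM_a^\reg$: if one does not want to invoke \ref{produit} it is necessary to keep track of the finitely many twisted forms of each $\calM_v(a)$ — all of dimension $\delta_v(a)$ and with dense regular locus — and of the simple transitivity of $\calP_a$ on $\calM_a^\reg$, that is, of a clean comparison between $\calM_a$ and the groups $\calP_v(J_a)$. The remaining ingredients are a dimension count (\ref{dimension Bezru}) together with standard facts about Picard stacks and about Prym varieties of the smooth cameral curve $\widetilde X_a$.
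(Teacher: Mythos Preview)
Your argument is correct and follows the paper's approach.

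For the first assertion the paper simply cites \cite[4.2]{N}; your computation via Bezrukavnikov's formula, showing $\delta_v(a)=0$ at every point and then invoking \ref{dim zero}, is the local argument underlying that reference. Your worry about needing the product formula \ref{produit} is unnecessary: regularity of a Higgs pair $(E,\phi)$ is a pointwise condition on $\phi$, automatic over $U=a^{-1}(\frakc_D^\rs)$, so it suffices to check at each $v\in\bar X-U$ after trivialising $E$ on the formal disc; this produces an element $\gamma\in\frakg(\bar\calO_v)$ with characteristic $a|_{X_v}$, and $\delta_v(a)=0$ together with \ref{dim zero} shows every such $\gamma$ is regular. No quotient or product structure is needed, so the anisotropy hypothesis in \ref{produit} is not an obstacle.

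For the second assertion the paper's proof (deferred to the end of \S\ref{subsection : Modele de Neron global}) is slightly more streamlined than yours: since $\widetilde X_a$ is smooth the cokernel $J_a^\flat/J_a$ is a sheaf of finite groups supported on finitely many points, so $\calP_a\to\calP_a^\flat$ is an isogeny of Picard groupoids, and one then invokes \ref{P a flat}(2), which already established that $(\calP_a^\flat)^0$ is an abelian stack for arbitrary $a\in\calA^\heartsuit$. Your argument essentially unpacks the proof of \ref{P a flat}(2) in the smooth case, identifying $\calP_a^\flat$ up to isogeny with a $W$-Prym of $\Pic(\widetilde X_a)\otimes\bbX_*(T)$. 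This is fine, but note that the identification you write with $\rmH^1(\widetilde X_a,T)^W$ holds only up to isogeny, since taking $W$-invariants does not literally commute with $\rmH^1$.
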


On renvoie \`a \cite[proposition 4.2]{N} pour la d\'emonstration de la
premi\`ere assertion. La d\'emonstration de la seconde assertion est
report\'ee \`a fin du paragraphe \ref{subsection : Modele de Neron global}.

\subsection{Cas lin\'eaire}\label{Cas lineaire}

On va analyser les fibres de $\calM$ et de $\calP$ au-dessus d'un point
$a\in\calA^\heartsuit(\bar k)$. Commen{\c c}ons par le cas du groupe
lin\'eaire : soit $G=\GL(r)$. Dans ce cas, on peut d\'ecrire les fibres de
Hitchin \`a l'aide des courbes spectrales  en suivant Hitchin \cite{H} et
Beauville-Narasimhan-Ramanan \cite{BNR}. On pourrait faire de m\^eme
pour les groupes classiques \cf \cite{H} et \cite{compagnon}.

\begin{numero}
Dans le cas $G=\GL(r)$, l'espace affine $\calA$ est l'espace vectoriel
$$\calA=\bigoplus_{i=1}^r \rmH^0(X,D^{\otimes i}).$$
La donn\'ee d'un point $a=(a_1,\ldots,a_r)\in\calA(\bar k)$ d\'etermine une
courbe spectrale $Y_a$ trac\'ee sur l'espace total $\Sigma_D$ du fibr\'e en
droites $D$. Cette courbe est donn\'ee par l'\'equation
$$t^r-a_1 t^{r-1}+\cdots+(-1)^r a_r=0.$$
On d\'efinit un ouvert $\calA^\heartsuit$ de $\calA$ dont les points
g\'eom\'etriques $a\in\calA^\heartsuit(\bar k)$ d\'efinissent une courbe
spectrale r\'eduite. Si $a\in\calA^\heartsuit(\bar k)$, la fibre de Hitchin
$\calM_a$ est le groupo\"ide $\ovl\Pic(Y_a)$ des $\calO_{Y_a}$-modules sans
torsion de rang un d'apr\`es \cite{BNR}. La fibre $\calP_a$ est le groupo\"ide
$\Pic(Y_a)$ des $\calO_{Y_a}$-modules inversibles. $\Pic(Y_a)$ agit sur
$\ovl\Pic(Y_a)$ par produit tensoriel et $\ovl\Pic(Y_a)$ contient
$\Pic(Y_a)$ comme un ouvert.
\end{numero}

\begin{numero}
Si la courbe spectrale $Y_a$ est lisse alors il n'y pas de diff\'erence entre
$\Pic(Y_a)$ et $\ovl\Pic(Y_a)$ c'est-\`a-dire $\calP_a$ agit simplement
transitivement sur $\calM_a$. De plus, dans ce cas, la structure de
$\calP_a$ est aussi simple que possible. Le groupe des composantes
connexes de $\calP_a$ est isomorphe \`a $\ZZ$ par l'application degr\'e car
$Y_a$ est connexes. La composante neutre de $\calP_a$ est le quotient de
la jacobienne de $Y_a$ qui est une vari\'et\'e ab\'elienne, par le groupe
$\GG_m$ agissant trivialement.
\end{numero}

\begin{numero}
Soit $\xi:Y_a^\flat\rightarrow Y_a$ la normalisation de $Y_a$. Le foncteur
d'image r\'eciproque induit un homomorphisme
$$\xi^*:\Pic(Y_a)\rightarrow \Pic(Y_a^\flat)$$
qui induit un isomorphisme
$$\pi_0(\Pic(Y_a)) \isom \pi_0(\Pic(Y_a^\flat))=\ZZ^{\pi_0(Y_a^\flat)}.$$
Le noyau de $\xi^*$ est un groupe affine commutatif de dimension
$$\delta_a=\dim \rmH^0(Y_a, \xi_* \calO_{Y_a^\flat}/\calO_{Y_a}).$$
\end{numero}

\begin{numero}
Par construction, $Y_a$ est un courbe trac\'ee sur une surface lisse en
particulier n'a que des singularit\'es planes. D'apr\`es Altman, Iarrobino et
Kleiman \cite{AIK}, $\Pic(Y_a)$ est alors un ouvert dense de
$\ovl\Pic(Y_a)$.
\end{numero}

Nous allons maintenant g\'en\'eraliser la discussion ci-dessus \`a un groupe
r\'eductif g\'en\'eral.

\subsection{Mod\`ele de N\'eron global}
\label{subsection : Modele de Neron global}

Dans ce paragraphe, on fixe un point $a\in\calA^\heartsuit(\bar k)$ et on
note $U$  l'image r\'eciproque de l'ouvert $\frakc^\rs_D$ par le morphisme
$a:\bar X\rightarrow \frakc^\rs_D$.

Comme dans le cas local \ref{subsection : Modele de Neron}, la structure du
champ de Picard $\calP_a$ des $J_a$-torseurs sur $\bar X$ peut \^etre
analys\'ee \`a l'aide du mod\`ele de N\'eron $J_a^\flat$ de $J_a$. C'est un
sch\'ema en groupes lisse de type fini au-dessus de $\bar X$ muni d'un
homomorphisme $J_a \rightarrow J_a^\flat$ qui est un isomorphisme
au-dessus de $U$. Il est de plus caract\'eris\'e par la propri\'et\'e suivante :
pour tout sch\'ema en groupes lisse de type fini $J'$ sur $\bar X$ avec un
homomorphisme $J_a \rightarrow J'$ qui est un isomorphisme sur $U$, il
existe un unique homomorphisme $J'\rightarrow J_a^\flat$ tel que le
triangle \'evident commute. L'existence de ce mod\`ele de N\'eron est un
r\'esultat de Bosch, Lutkebohmer et Raynaud \cf \cite{BLR} qu'ils l'appellent
le mod\`ele de N\'eron de type fini \`a distinguer avec le mod\`ele de
N\'eron localement de type fini.

Ce mod\`ele de N\'eron global s'obtient \`a partir des mod\`eles de N\'eron
locaux \cf \ref{subsection : Modele de Neron} comme suit. Pour
tout $v\in \overline X-U$, on note $\bar X_v$ la compl\'etion de $\bar X$
en $v$ et $\bar X_v^\bullet= \bar X_v-\{v\}$. Consid\'erons le mod\`ele de N\'eron du
tore $J_a |_{{\bar X}_v^\bullet}$. En recollant les mod\`eles de N\'eron en les
diff\'erents points $v\in \overline X-U$ avec le tore $J_a|_U$, on obtient
un sch\'ema en groupes commutatifs lisse $J_a^\flat$ au-dessus $\bar X$
muni d'un homomorphisme de sch\'emas en groupes $J_a\rightarrow
J_a^\flat$.

Comme dans \ref{J a flat}, $J_a^\flat$ peut \^etre exprim\'e \`a l'aide de la
normalisation $\widetilde X_a^\flat$ de la courbe cam\'erale $\widetilde
X_a$. L'action de $W$ sur $\widetilde X_a$ induit une action de ce groupe
sur $\widetilde X_a^\flat$. Notons $\pi_a^\flat:\widetilde X_a^\flat
\rightarrow \bar X$ le morphisme vers $\bar X$. Voici la cons\'equence
globale de l'\'enonc\'e local \ref{J a flat}.

\begin{corollaire}\label{J a flat global}
$J_a^\flat$ s'identifie au sous-groupe des points fixes sous l'action
diagonale de $W$ dans $\prod_{\widetilde X_a^\flat /\bar X} (T \times_{\bar
X} \widetilde X_a^\flat )$.
\end{corollaire}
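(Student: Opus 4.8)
The plan is to deduce this global statement directly from its local counterpart \ref{J a flat}, exactly as the paragraph preceding the corollary suggests. Recall that the global Néron model $J_a^\flat$ was constructed by gluing: over the open set $U=a^{-1}(\frakc_D^\rs)$ it coincides with the torus $J_a|_U$, and over each of the finitely many points $v\in\bar X-U$ it is obtained by taking the Néron model of the torus $J_a|_{\bar X_v^\bullet}$. So the assertion to prove is that the sheaf of $W$-fixed points in the Weil restriction $\prod_{\widetilde X_a^\flat/\bar X}(T\times_{\bar X}\widetilde X_a^\flat)$ has exactly this gluing description.

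First I would observe that the formation of the normalization commutes with restriction to an open subscheme and with completion at a closed point, since normalization is a local operation on the base: over $U$ the cameral cover $\widetilde X_a|_U$ is already finite étale over $\bar X$, hence normal, so $\widetilde X_a^\flat|_U=\widetilde X_a|_U$; and the completion of $\widetilde X_a^\flat$ at the fibre over $v$ is the normalization of the completion of $\widetilde X_a$ there, i.e.\ the semilocal regular ring $\widetilde X_{a,\bar v}^\flat$ appearing in \ref{J a flat}. Likewise, Weil restriction along a finite flat morphism commutes with flat base change on the target, and in particular with the open immersion $U\hookrightarrow\bar X$ and with completion at $v$; taking $W$-invariants (a kernel of a map of smooth group schemes) is compatible with these base changes as well, since $\sharp W$ is invertible. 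Therefore the $W$-fixed subgroup of $\prod_{\widetilde X_a^\flat/\bar X}(T\times_{\bar X}\widetilde X_a^\flat)$ restricts, over $U$, to $(\pi_{a*}T)^W=J^1_a|_U=J_a|_U$ (using \ref{J=J'} and \ref{desciption galoisienne de J}, with the cover étale), and completes, at each $v$, to $\prod_{\widetilde X_{a,\bar v}^\flat/\bar X_{\bar v}}(T\times\widetilde X_{a,\bar v}^\flat)^W$, which by \ref{J a flat} is precisely the local Néron model of $J_a|_{\bar X_v^\bullet}$.

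Next I would invoke the fact that a smooth commutative group scheme of finite type over $\bar X$ is determined by its restriction to $U$ together with its completions at the points of $\bar X-U$ (plus the obvious gluing data on the punctured discs $\bar X_v^\bullet$), so that the sheaf of $W$-invariants under consideration, having the ``right'' restriction over $U$ and the ``right'' completions at the bad points, must coincide with the glued object $J_a^\flat$. One small check is needed: the sheaf of $W$-invariants is itself a smooth group scheme of finite type over $\bar X$. Smoothness is the same infinitesimal-lifting argument as in the proof of \ref{J1} (the order of $W$ is prime to the characteristic, so the tangent space of the invariants surjects onto that of the base); finite type follows because $\widetilde X_a^\flat$ is finite over $\bar X$ and $T$ is of finite type.

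The main obstacle, and the only genuinely non-formal point, is verifying the gluing/uniqueness principle in the precise form needed: that $J_a^\flat$ is characterized among smooth finite-type $\bar X$-group schemes by its generic fibre data and its completions at the bad points, and that the canonical homomorphism $J_a\to\prod_{\widetilde X_a^\flat/\bar X}(\cdots)^W$ produced by adjunction (exactly as in \ref{J a flat}, where one uses that $\pi_a^{\flat*}J_a$ agrees with $T\times\widetilde X_a^{\flat\bullet}$ over the preimage of $U$ and that $T\times\widetilde X_a^\flat$ is its Néron model) factors through the $W$-invariants and realizes the universal property of the Néron model locally at each $v$. All of this is contained in \ref{J a flat} applied point by point; the global statement is then obtained simply by remarking that a sheaf on $\bar X$ is determined by a cover by $U$ and the formal discs at the complementary points, so I do not expect any real difficulty beyond bookkeeping of the base-change compatibilities.
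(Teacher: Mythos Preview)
Your proposal is correct and matches the paper's intended argument. The paper states this as a corollary with no explicit proof, simply writing ``Voici la cons\'equence globale de l'\'enonc\'e local \ref{J a flat}'' after recalling that the global N\'eron model is obtained by gluing the local N\'eron models at the bad points with $J_a|_U$; your write-up is precisely the bookkeeping that makes this one-line reduction rigorous.
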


La variante suivante sera utile dans la suite. Consid\'erons une r\'eduction
de $\bar \rho_G$ donn\'ee sous la forme $\bar\rho:\bar X_{\bar
\rho}\rightarrow \bar X$ torseur sous $\Theta_{\bar \rho}$. Pour tout $a\in
\calA^\heartsuit(\bar k)$, on a un rev\^etement fini plat
$\pi_{\bar\rho,a}:\widetilde X_{\bar\rho,a}\rightarrow \bar X$ comme dans
\ref{tilde X rho a} qui est g\'en\'eriquement \'etale galoisien de groupe de
Galois $\bbW\rtimes \Theta_{\bar\rho}$. Soit $\widetilde
X_{\bar\rho,a}^\flat$ la normalisation de $\widetilde X_{\bar\rho,a}$ qui est
aussi munie d'une action de $\bbW\rtimes \Theta_{\bar\rho}$. Soit
$\pi_{\bar\rho,a}^\flat$ sa projection sur $\bar X$. On a alors
$$J_a^\flat=\prod_{\widetilde X_{\bar\rho,a}^\flat /\bar X}
(\bbT \times \widetilde X_{\rho,a}^\flat )^{\bbW\rtimes \Theta_{\bar\rho}}.$$

Consid\'erons le groupo\"ide de Picard  $\calP_a^\flat$ des
$J_a^\flat$-torseurs. L'homomor\-phisme de sch\'emas en groupes
$J_a\rightarrow J_a^\flat$ induit un homomorphisme de groupo\"ides de
Picard $\calP_a\rightarrow \calP_a^\flat$. Cet homomorphisme r\'ealise
essentiellement la structure g\'en\'erale d'un groupe alg\'ebrique sur un
corps alg\'ebri\-quement clos comme l'extension d'une vari\'et\'e ab\'elienne
par un groupe alg\'ebrique affine \cf \cite{Ros}. La d\'emonstration qui suit
s'inspire d'un argument de Raynaud \cite{Ray}.

\begin{proposition}\label{P a flat}
\begin{enumerate}
\item L'homomorphisme $\calP_a(\overline k)\rightarrow
    \calP_a^\flat(\overline k)$ est essentiellement surjectif.

\item La composante neutre $(\calP_a^\flat)^0$ de $\calP_a^\flat$ est un
    champ ab\'elien.

\item Le noyau $\calR_a$ de $\calP_a\rightarrow \calP_a^\flat$ est un
    produit de groupes alg\'ebriques affines de type fini $\calR_v(a)$ qui
    sont d\'efinis dans le lemme \ref{R_v(a)}. Ceux-ci sont triviaux sauf en
    un nombre fini de points $v\in |\bar X|$.

\end{enumerate}
\end{proposition}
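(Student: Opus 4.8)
The three assertions are the global avatars of the local statements already proved in \ref{subsection : Modele de Neron} and \ref{subsection : pi 0 local}, glued together along the finite set $\overline X - U$ of ramified points. The key tool is the comparison of $\calP_a$ with $\calP_a^\flat$ via the homomorphism $J_a \to J_a^\flat$, which is an isomorphism over $U$; since $\bar X$ is a curve, all the relevant $\rmH^2$ groups vanish, so $\calP_a$ and $\calP_a^\flat$ are smooth and one is reduced to a statement about $\rmH^0$ and $\rmH^1$. I would organize the proof around the exact sequence of sheaves of abelian groups on $\bar X$
$$1 \rightarrow J_a \rightarrow J_a^\flat \rightarrow \bigoplus_{v\in \overline X - U} i_{v,*}\bigl(J_a^\flat(\bar\calO_v)/J_a(\bar\calO_v)\bigr) \rightarrow 1,$$
whose cokernel is the skyscraper sheaf supported at the ramified points with stalk exactly the group $\calR_v(a)(\bar k)$ appearing in \ref{R_v(a)}.

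\textbf{Step (3): identification of the kernel.} Taking the long exact cohomology sequence on $\bar X$, and using that the skyscraper sheaf has no $\rmH^1$, the kernel of $\calP_a \to \calP_a^\flat$ is $\prod_{v\in \overline X - U} \calR_v(a)$, where $\calR_v(a)$ is the affine algebraic group of finite type over $\bar k$ constructed in Lemma \ref{R_v(a)}. Each $\calR_v(a)$ is trivial when $v \in U$, i.e.\ outside the finitely many points where $a(\bar X)$ meets the discriminant divisor, so the product is finite. This settles (3).

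\textbf{Step (1): essential surjectivity.} From the same long exact sequence the cokernel of $\calP_a(\bar k) \to \calP_a^\flat(\bar k)$ is controlled by $\rmH^1(\bar X, J_a) \to \rmH^1(\bar X, J_a^\flat)$. One argues that this is surjective: the obstruction lies in the $\rmH^1$ of the skyscraper quotient sheaf, which vanishes because a skyscraper sheaf on a scheme has no higher cohomology. Concretely, a $J_a^\flat$-torsor on $\bar X$ can be trivialized over a neighborhood of each ramified point and over $U$, and the gluing data lift along the surjection $J_a(\bar F_v) \twoheadrightarrow J_a^\flat(\bar F_v)$ — which is an equality — so the torsor descends to a $J_a$-torsor. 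This is the global counterpart of the surjectivity of $p_v$ in \ref{R_v(a)} and the argument of Raynaud \cite{Ray} alluded to before the statement.

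\textbf{Step (2): the neutral component is an abelian champ.} This is the heart of the matter and the step I expect to be the main obstacle. Using Corollary \ref{J a flat global}, $J_a^\flat$ is the fixed subscheme under the diagonal $W$-action inside $\prod_{\widetilde X_a^\flat/\bar X}(T \times_{\bar X} \widetilde X_a^\flat)$, so $\calP_a^\flat$ is, up to the diagonalizable group coming from $Z_G$ (resp.\ from $T^W$), the $W$-fixed part of the Picard stack of the normalized cameral curve $\widetilde X_a^\flat$ — essentially a Prym stack. Now $\widetilde X_a^\flat$ is a disjoint union of smooth projective curves (its connectedness when $\deg(D) > 2g$ follows from \ref{connexe}, but for (2) one only needs smoothness of the normalization, which is automatic, together with projectivity), equipped with an action of the finite group $W$ of order prime to $p$. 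Hence its Jacobian is an abelian variety with $W$-action, and the $W$-invariants, being cut out by an idempotent in $\QQ[W]$ — available since $p \nmid \sharp\,\bbW$ — is an abelian subvariety up to isogeny; more carefully, $(\calP_a^\flat)^0$ is the image of an abelian variety (a product of Prym varieties) by a morphism with diagonalizable kernel, i.e.\ the quotient of an abelian variety by the trivial action of a diagonalizable group, which is precisely an abelian champ in the sense of the definition preceding the statement. One must check that the extra $\GG_m$'s (or finite diagonalizable factors) contributed by the center $Z_G$ and by the constant torus $T$ in the Weil restriction act trivially, which is clear, and that they do not affect the neutral component beyond the diagonalizable stabilizer; this bookkeeping, done place by place using the local description \ref{pi 0 local}, is what requires care.

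\textbf{Conclusion, and the deduction of \ref{A lozenge}.} For $a \in \calA^\diamondsuit(\bar k)$ the cameral curve $\widetilde X_a$ is itself smooth by \ref{cameral lisse}, so $\widetilde X_a^\flat = \widetilde X_a$, all $\calR_v(a)$ vanish, $\calP_a = \calP_a^\flat$, and (2) gives directly that the neutral component of $\calP_a$ is an abelian champ; combined with \ref{M reg} and the density statement this yields the second assertion of \ref{A lozenge}.
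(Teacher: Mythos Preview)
Your argument for all three parts of the proposition is correct and follows the same route as the paper: the short exact sequence $1\to J_a\to J_a^\flat\to J_a^\flat/J_a\to 1$ with skyscraper quotient, vanishing of its $\rmH^1$ for (1) and (3), and the description of $J_a^\flat$ via the normalized cameral curve to realize $\calP_a^\flat$ up to isogeny as a factor of $\Pic(\widetilde X_a^\flat)^r$ for (2).

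The only slip is in your final paragraph on the deduction of \ref{A lozenge}. When $a\in\calA^\diamondsuit(\bar k)$ the cameral curve is smooth, so $\widetilde X_a^\flat=\widetilde X_a$ and hence $J_a^\flat=J_a^1$ in the notation of \ref{desciption galoisienne de J}; but $J_a\to J_a^1$ is only an open immersion, not an isomorphism, with cokernel a sheaf of \emph{finite} groups (the fiberwise $\pi_0(J_a)$, nontrivial when $Z_G$ is not connected). So the $\calR_v(a)$ are finite rather than zero, and $\calP_a\to\calP_a^\flat$ is an isogeny of Picard groupoids, not an equality. The paper draws the conclusion exactly this way: an essentially surjective homomorphism with finite kernel onto an abelian champ forces the source to be an abelian champ as well.
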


\begin{proof}
\begin{enumerate}
\item Par construction m\^eme des mod\`eles de N\'eron,
    l'homomorphisme $J_a\rightarrow J_a^\flat$ est injectif en tant
    qu'homomorphisme entre faisceaux en groupes ab\'eliens pour la
    topologie \'etale de $\overline X$. Consid\'erons la suite exacte
\begin{equation}\label{suite courte}
1\rightarrow J_a \rightarrow J_a^\flat \rightarrow J_a^\flat/J_a
\rightarrow 1
\end{equation}
o\`u le quotient $J_a^\flat/J_a$ est support\'e par le ferm\'e fini
 $\overline X-U$ et la suite exacte longue de cohomologie qui s'en d\'eduit.
 Comme $\rmH^1(\bar X,J_a^\flat/J_a)=0$, l'homomorphisme
$$\rmH^1(\overline X,J_a)\rightarrow \rmH^1(\overline
X,J_a^\flat)$$ est surjectif.

\item Soit $\widetilde X_a$ l'image r\'eciproque du rev\^etement
    $\frakt_{D}\rightarrow\frakc_{D}$ par le morphisme
    $h_a:\overline X\rightarrow \frakc_{D}$. Soit $\widetilde
    X_a^\flat$ la normalisation de $\widetilde X_a$. D'apr\`es la
    proposition \ref{J a flat}, le mod\`ele de N\'eron $J_a^\flat$ ne
    d\'epend que du rev\^etement $\widetilde X_a^\flat$. Plus
    pr\'ecis\'ement $J_a^\flat$ consiste en les points fixes sous l'action
    diagonale de $W$ dans la restriction \`a la Weil $\prod_{\widetilde
    X_a^\flat/\overline X}(T\times_{\bar X} \widetilde X_a^\flat)$. Il en
    r\'esulte qu'\`a isog\'enie pr\`es, $\calP_a^\flat$ est un facteur du
    groupo\"ide des $T$-torseurs sur $\widetilde X_a^\flat$ lequel est
    isomorphe au produit de $r$ copies du $\Pic(\widetilde X_a^\flat)$.
    Puisque $\widetilde X_a^\flat$ est une courbe projective lisse
    \'eventuellement non connexe, la composante neutre de
    $\Pic(\widetilde X_a^\flat)$ est le quotient d'un produit de vari\'et\'es
    jacobiennes par un produit de $\GG_m$ agissant trivialement.

\item La derni\`ere assertion r\'esulte aussi de la suite exacte longue de
    cohomologie qui se d\'eduit de la suite exacte courte (\ref{suite
    courte}). Ayant d\'efini le noyau comme la cat\'egorie des
    $J_a$-torseurs munis d'une trivialisation du $J_a^\flat$-torseur qui
    s'en d\'eduit, on n'a pas en fait \`a se pr\'eoccuper des $\rmH^0$ dans la
    suite longue.
\end{enumerate}
Ceci termine la d\'emonstration de la proposition.
\end{proof}

\begin{proof}
Revenons maintenant \`a la se\-conde assertion de
\ref{A lozenge}. Soit $a\in\calA^\diamondsuit(\bar k)$. La courbe cam\'erale
$\widetilde X_a$ est alors une courbe lisse de sorte que le conoyau de
l'homomorphisme $J_a\rightarrow J_a^\flat$ est un faisceau de groupes
finis support\'e par un sous-sch\'ema fini de $\overline X$. Il s'ensuit que
$\calP_a\rightarrow \calP_a^\flat$ est une isog\'enie de groupo\"ides de Picard
c'est-\`a-dire un homomorphisme essentiellement surjectif \`a noyau fini.
Puisque $\calP_a^\flat$ est un champ ab\'elien, il en est de m\^eme de
$\calP_a$.
\end{proof}

\subsection{Invariant $\delta_a$}
Pour tout $a\in\calA^\heartsuit(\bar k)$, consid\'erons le noyau
\begin{equation}\label{R_a}
\calR_a:={\rm ker}[\calP_a \longrightarrow \calP_a^\flat]
\end{equation}
Il classifie les $J_a$-torseurs sur $X$ dont le $J_a^\flat$-torseur qui s'en
d\'eduit est muni d'une trivialisation. C'est un groupe alg\'ebrique affine de
type fini qui se d\'ecompose en produit
$$\calR_a=\prod_{v\in\bar X-U} \calR_v(a)$$
o\`u $\calR_v(a)$ est le groupe d\'efini en \ref{R_v(a)}.

On d\'efinit l'{\em invariant $\delta_a$} comme la dimension de $\calR_a$
qui s'\'ecrit donc comme une somme d'invariants $\delta$ locaux
$$\delta_a:=\dim(\calR_a)=\sum_{v\in \bar X-U} \delta_v(a).$$
La conjonction de \ref{P a flat} et de  la formule de dimension du groupe
des sym\'etries locales \ref{delta local} donne une formule pour l'invariant
$\delta$ global.

\begin{corollaire}\label{delta global}
Pour tout $a\in\calA(\overline k)$, l'invariant $\delta_a$ d\'efini comme
ci-dessus est \'egal \`a
$$
\delta_a=\dim \rmH^0(\bar X, \frakt\otimes_{\calO_{\bar X}} (\pi_{a*}^\flat
\calO_{\widetilde X^\flat_a}/\pi_{a*} \calO_{\widetilde X_a}))^W.$$
\end{corollaire}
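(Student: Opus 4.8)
The plan is to deduce the global formula for $\delta_a$ directly from the decomposition of $\calR_a$ into local factors (\ref{R_a}) and the local dimension formula \ref{delta local}. First I would recall from \ref{P a flat}(3) that
$$\calR_a=\prod_{v\in\bar X-U}\calR_v(a),$$
so that by definition
$$\delta_a=\dim(\calR_a)=\sum_{v\in\bar X-U}\dim(\calR_v(a))=\sum_{v\in\bar X-U}\delta_v(a),$$
the last equality being precisely the local invariant introduced after \ref{delta local}. This reduces the global statement to summing the local formulas of \ref{delta local}.

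Next I would apply \ref{delta local}, which gives
$$\delta_v(a)=\dim_{\bar k}(\frakt\otimes_{\bar\calO_v}\widetilde\calO_v^\flat/\widetilde\calO_v)^{W},$$
where $\widetilde X_{a,\bar v}^\flat=\Spec(\widetilde\calO_v^\flat)$ is the normalization of the local cameral cover $\widetilde X_{a,\bar v}$, itself the pullback of the cameral cover $\widetilde X_a$ to the formal disc at $v$. The key point is that the quotient sheaf $\pi_{a*}^\flat\calO_{\widetilde X_a^\flat}/\pi_{a*}\calO_{\widetilde X_a}$ on $\bar X$ is supported on the finite set $\bar X-U$, and its stalk at $v$ (after completion) is exactly $\widetilde\calO_v^\flat/\widetilde\calO_v$, since normalization commutes with completion along a closed point of a one-dimensional excellent scheme and $\widetilde X_a$ is generically étale over $U$ (so the two coincide away from $\bar X-U$). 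Tensoring with the locally free sheaf $\frakt$ and taking $W$-invariants is exact enough — here one uses that the order of $W$ is prime to the characteristic, so taking $W$-invariants is an exact functor and commutes with the direct sum decomposition over the points $v$. Therefore
$$\rmH^0(\bar X,\frakt\otimes_{\calO_{\bar X}}(\pi_{a*}^\flat\calO_{\widetilde X_a^\flat}/\pi_{a*}\calO_{\widetilde X_a}))^{W}=\bigoplus_{v\in\bar X-U}(\frakt\otimes_{\bar\calO_v}\widetilde\calO_v^\flat/\widetilde\calO_v)^{W},$$
whose dimension is $\sum_{v\in\bar X-U}\delta_v(a)$.

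Combining the two displays gives $\delta_a=\dim\rmH^0(\bar X,\frakt\otimes_{\calO_{\bar X}}(\pi_{a*}^\flat\calO_{\widetilde X_a^\flat}/\pi_{a*}\calO_{\widetilde X_a}))^{W}$, as claimed. The only real subtlety — and what I expect to be the main point to check carefully — is the identification of the global skyscraper quotient with the product of the local ones, i.e.\ that $J_a^\flat$ (hence $\calR_a$) is genuinely built by gluing the local Néron models of \ref{subsection : Modele de Neron}; but this is exactly the content of \ref{J a flat global} and the construction of $J_a^\flat$ recalled in \ref{subsection : Modele de Neron global}, together with the fact that $\pi_{a*}^\flat\calO_{\widetilde X_a^\flat}$ and $\pi_{a*}\calO_{\widetilde X_a}$ agree over $U$. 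Everything else is a routine bookkeeping of dimensions using Riemann–Roch is not even needed here, only the support considerations and exactness of $W$-invariants.
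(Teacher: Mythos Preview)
Your proof is correct and follows exactly the approach the paper indicates: the text immediately preceding the corollaire states that it results from the conjunction of \ref{P a flat} (the local decomposition $\calR_a=\prod_v\calR_v(a)$) and the local dimension formula \ref{delta local}. You have simply spelled out the bookkeeping that the paper leaves implicit, including the identification of the stalks of the global skyscraper quotient with the local quotients and the exactness of taking $W$-invariants.
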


De nouveau, on a une autre formule qui exprime l'invariant $\delta$ global en
fonction du discriminant corrig\'e par des invariants monodromiques locaux comme
dans la formule de Bezrukavnikov.

Rappelons que le discriminant $\discrim_G$ est un polyn\^ome homog\`ene
de degr\'e $\sharp\,\Phi$ sur $\frakc$, $\sharp\,\Phi$ \'etant le nombre de
racines dans le syst\`eme de racines $\Phi$. Il s'ensuit que pour tout
$a\in\calA^\heartsuit(\bar k)$, $a^* \discrim_{G,D}$ est un diviseur
lin\'eairement \'equivalent \`a $D^{\otimes (\sharp\Phi)}$ de sorte que
$$\deg(a^* \discrim_{G,D})=\sharp\, \Phi \deg(D).$$
Ecrivons
$$a^* \discrim_{G,D}=d_1 v_1 +\cdots + d_n v_n$$
o\`u $v_1,\ldots,v_n$ sont des points deux \`a deux distincts de $\bar X$
et o\`u $d_i$ est la multiplicit\'e de $v_i$. Pour tout $i=1,\ldots,n$, notons
$c_i$ la chute du rang torique de $J_a^\flat$ en le point $v_i$.
La formule suivante est un corollaire imm\'ediat de \ref{dimension Bezru}.

\begin{proposition}\label{dimension Bezru globale}
On a l'\'egalit\'e
$$2\delta_a=\sum_{i=1}^n (d_i-c_i)=\sharp\,\Phi\deg(D)-\sum_{i=1}^n c_i.$$
\end{proposition}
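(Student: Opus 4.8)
The statement to establish is Proposition \ref{dimension Bezru globale}:
$$2\delta_a=\sum_{i=1}^n(d_i-c_i)=\sharp\,\Phi\deg(D)-\sum_{i=1}^n c_i.$$
The second equality is immediate: by the discussion preceding the statement, $a^*\discrim_{G,D}=d_1v_1+\cdots+d_nv_n$ is linearly equivalent to $D^{\otimes\sharp\Phi}$, so $\sum_i d_i=\deg(a^*\discrim_{G,D})=\sharp\,\Phi\deg(D)$. Only the first equality needs work, and the plan is to reduce it to a purely local statement place by place, then invoke the local dimension formula of Bezrukavnikov already recorded in \ref{dimension Bezru}.

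\textbf{Localization of $\delta_a$.} The global invariant $\delta_a$ was defined as $\dim(\calR_a)$ where $\calR_a=\ker[\calP_a\to\calP_a^\flat]$, and by \ref{P a flat}(3) this kernel decomposes as a product $\calR_a=\prod_{v\in\bar X-U}\calR_v(a)$ over the finitely many points where the cameral cover ramifies. Since the points $v_1,\dots,v_n$ appearing in $a^*\discrim_{G,D}$ are exactly the points of $\bar X$ at which $a(\bar X)$ meets the discriminant divisor, i.e. exactly the points where $J_a\to J_a^\flat$ fails to be an isomorphism, these are precisely the points $v\in\bar X-U$ contributing a nontrivial $\calR_v(a)$. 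Hence $\delta_a=\sum_{i=1}^n\dim(\calR_{v_i}(a))=\sum_{i=1}^n\delta_{v_i}(a)$, where $\delta_v(a)=\dim(\calP_{\bar v}(J_a))$ is the local $\delta$-invariant — this is exactly the global formula $\delta_a=\sum_{v\in\bar X-U}\delta_v(a)$ recorded in the paragraph on the invariant $\delta_a$.

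\textbf{Application of the local formula and matching of local invariants.} Now I would apply the Bezrukavnikov dimension formula \ref{dimension Bezru} at each point $v_i$: $\delta_{v_i}(a)=\dim(\calP_{\bar{v_i}}(J_a))=\tfrac{1}{2}(d_{\bar{v_i}}(a)-c_{\bar{v_i}}(a))$, where $d_{\bar v}(a)=\deg_{\bar v}(a^*\discrim_G)$ and $c_{\bar v}(a)=\dim(\bbt)-\dim(\bbt^{\pi_a^\bullet(I_v)})$. It remains only to identify these local quantities with the ones in the statement: $d_{\bar{v_i}}(a)$ is by definition the multiplicity $d_i$ of $v_i$ in the effective divisor $a^*\discrim_{G,D}$, and $c_{\bar{v_i}}(a)$ coincides with $c_i$, the drop of torus rank of $J_a^\flat$ at $v_i$ — this last identification is exactly the remark following \ref{delta local}, which states that the Galois invariant $c_{\bar v}(a)$ of \ref{c_v(a)} equals the difference between $r$ and the torus rank of the special fiber of $J_a^\flat$. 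Summing over $i$ gives $2\delta_a=\sum_{i=1}^n(d_i-c_i)$, as desired.

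\textbf{Expected obstacle.} There is essentially no hard analytic or geometric content left here: the proof is a bookkeeping argument that stitches together \ref{P a flat}, \ref{dimension Bezru}, \ref{delta local} and its following remark, together with the Riemann–Roch computation of $\deg(a^*\discrim_{G,D})$. The only point requiring a little care is making sure the index sets agree — that the points $v_i$ occurring in $a^*\discrim_{G,D}$ are literally the same as the points $v\in\bar X-U$ where $\calR_v(a)\neq 0$ and where $c_v(a)\neq 0$ — and that the local $d_{\bar v}(a)$ of the dimension section really is the Cartier multiplicity $d_i$; but both are immediate from the definition of $U$ as the preimage of $\frakc^\rs_D$ and the fact that $\discrim_{G,D}$ is the complement of $\frakc^\rs_D$. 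So the result follows formally, and I would present it as an immediate corollary.
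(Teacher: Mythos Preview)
Your proposal is correct and matches the paper's approach exactly: the paper simply states that the formula is ``un corollaire imm\'ediat de \ref{dimension Bezru}'', and your argument spells out precisely the bookkeeping (localize $\delta_a$ via \ref{P a flat}, apply the local Bezrukavnikov formula at each $v_i$, identify $d_{\bar v_i}(a)=d_i$ and $c_{\bar v_i}(a)=c_i$, and use $\sum_i d_i=\sharp\,\Phi\deg(D)$) that makes it immediate.
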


\subsection{Le groupe $\pi_0(\calP_a)$}
\label{subsection : pi_0(P_a)}

Dans ce paragraphe, nous allons d\'ecrire le groupe de composantes
connexes de $\calP_a$ dans l'esprit de la dualit\'e de Tate-Nakayama. Pour
cela, il est n\'ecessaire de faire un certain nombre de choix et de fixer quelques notations.

Fixons un point $\infty\in X(\bar k)$. Consid\'erons l'ouvert $\calA^\infty$
de $\bar\calA$ qui consiste en les points $a\in\calA(\bar k)$ tels que
$a(\infty)\in \bar\frakc_D^\rs$. C'est un sous-sch\'ema ouvert de
$\bar\calA^\heartsuit$. Si $\infty$ est d\'efini sur $k$, alors l'ouvert
$\calA^\infty$ est aussi d\'efini sur $k$.

On choisit un d\'eploiement de $G$ au-dessus de $\bar X$ comme dans
\ref{deploiement}. Ceci consiste en un homomorphisme
$\bar\rho^\bullet:\pi_1(\bar X,\infty)\rightarrow \Theta_{\bar\rho}$ qui
rel\`eve $\bar \rho_G^\bullet:\pi_1(\bar X,\infty)\rightarrow \Out(\GG)$.
Cet homomorphisme d\'etermine un rev\^etement fini \'etale galoisien
$\bar\rho:\bar X_{\bar\rho}\rightarrow \bar X$ de groupe de Galois
$\Theta_{\bar\rho}$ muni d'un point $\infty_{\bar\rho}\in \bar X_{\bar\rho}$
au-dessus de $\infty$. On a alors un rev\^etement fini et plat
$$\pi_{\bar \rho}:\bar X_{\bar \rho}\times \bbt \rightarrow \bar\frakc_D$$
qui au-dessus de l'ouvert $\bar\frakc_D^\rs$ est fini \'etale galoisien de
groupe de Galois $\bbW\rtimes \Theta_{\bar\rho}$.

Soit $a\in \calA^\infty(\bar k)$. Consid\'erons le rev\^etement fini et plat
$\pi_{\bar\rho,a}:\widetilde X_{\bar\rho,a}\rightarrow\bar X$ d\'efini en
formant le produit cart\'esien \ref{tilde X rho a}. G\'en\'eriquement, c'est un
rev\^etement fini \'etale galoisien de groupe de Galois $\bbW\rtimes
\Theta_{\bar\rho}$. Par construction, il est fini et \'etale au-dessus de
$\infty$. Choisissons un point $\tilde\infty_{\bar\rho}$ de $\widetilde
X_{\bar\rho,a}$ au-dessus de $\infty_{\bar\rho}$. Ce choix
permet d'identifier la fibre de $J_a=a^* J$ au-dessus de $\infty$ avec le
tore fixe $\bbT$ \`a l'aide de \ref{J pi_rho_G}.

Soit $\widetilde X_{\bar\rho,a}^\flat$ la normalisation de $\widetilde
X_{\bar\rho,a}$. Puisque $\tilde\infty_{\bar\rho}$ est un point lisse de
$\widetilde X_{\bar\rho,a}$, il d\'etermine un point de sa normalisation.
Notons $\tilde a=(a,\tilde \infty_{\bar\rho})$. Soit $C_{\tilde a}$ la
composante de $\widetilde X_{\bar\rho,a}^\flat$ qui contient le point
$\tilde\infty$. Soit $W_{\tilde a}$ le sous-groupe de $\bbW\rtimes
\Theta_{\bar\rho}$ qui laisse stable cette composante. Consid\'erons aussi
le sous-groupe normal $I_{\tilde a}$ de $W_{\tilde a}$ engendr\'e par les
\'el\'ements de $W_{\tilde a}$ ayant au moins un point fixe dans $C_{\tilde
a}$.

Soit $J_a^0$ le sous-sch\'ema en groupes des composantes neutres de
$J_a$. Consid\'erons le champ de Picard $\calP'_a$ des $J_a^0$-torseurs sur
$\bar X$. L'homomorphisme de faisceaux $J_a^0\rightarrow J_a$ induit un
homomorphisme de champs de Picard
$\calP'_a\rightarrow \calP_a$.

\begin{lemme}\label{P'_a P_a}
L'homomorphisme $\calP'_a\rightarrow \calP_a$  est surjectif et a un noyau
fini. Il en est de m\^eme de l'homomorphisme
$\pi_0(\calP'_a)\rightarrow \pi_0(\calP_a)$ qui s'en d\'eduit.
\end{lemme}

\begin{proof}
On a une suite exacte courte de faisceaux
$$0\rightarrow J_a^0 \rightarrow J_a \rightarrow \pi_0(J_a) \rightarrow 0$$
o\`u $\pi_0(J_a)$ est un faisceau de support fini dont la fibre en un point
$v\in\bar X$ est  le groupe $\pi_0(J_a)_v$ des composantes connexes de la
fibre $J_{a,v}$ de $J_a$. On en d\'eduit une suite exacte longue
$$\rmH^0(\bar X,\pi_0(J_a))\rightarrow \rmH^1(\bar X,J_a^0)
\rightarrow \rmH^1(\bar X,J_a)\rightarrow \rmH^1(\bar X,\pi_0(J_a))=0.$$
L'annulation du dernier terme  r\'esulte du fait que
$\pi_0(J_a)$ est support\'e par un sch\'ema  de dimension z\'ero. On en
d\'eduit la surjectivit\'e de $\calP'_a\rightarrow \calP_a$ de noyau
$\rmH^0(\bar X,\pi_0(J_a))$. La finitude de ce noyau vient de la finitude
des fibres de $\pi_0(J_a)$. L'assertion sur $\pi_0(\calP'_a)\rightarrow
\pi_0(\calP_a)$ s'en suit imm\'ediatement.
\end{proof}

Au lieu des groupes ab\'eliens $\pi_0(\calP'_a)$ et $\pi_0(\calP_a)$, il sera
plus commode de d\'ecrire les groupes diagonalisables duaux. Dualement,
on a une inclusion des groupes diagonalisables
$$\pi_0(\calP_a)^*\subset \pi_0(\calP'_a)^*.$$
o\`u on a not\'e
\begin{eqnarray*}
\pi_0(\calP_a)^*&=&\Spec(\Ql[\pi_0(\calP_a)]) \\
\pi_0(\calP'_a)^*&=&\Spec(\Ql[\pi_0(\calP'_a)]).
\end{eqnarray*}
On utilisera l'exposant $(\_)^*$ pour d\'esigner la dualit\'e entre les groupes
ab\'eliens de type fini et les groupes diagonalisables de type fini sur $\Ql$.

\begin{proposition}\label{pi 0 global}
Pour tout $\tilde a=(a,\tilde\infty_{\bar\rho})$ comme ci-dessus, on a des
isomorphismes de groupes diagonalisables
$$\pi_0(\calP'_a)^*=\hat \bbT^{W_{\tilde a}}$$
et
$$\pi_0(\calP_a)^*=\hat\bbT(I_{\tilde a},W_{\tilde a})$$
o\`u $\hat\bbT(I_{\tilde a},W_{\tilde a})$ est le sous-groupe de
$\hat\bbT^{W_{\tilde a}}$ form\'e des \'el\'ements $\kappa$ tels que
$W_{\tilde a}\subset (\bbW\rtimes \Theta_{\bar\rho})_\kappa$ et $I_{\tilde
a}\subset \bbW_\HH$ o\`u $\bbW_\HH$ est le groupe de Weyl de la
composante neutre du centralisateur de $\kappa$ dans $\hat\bbG$.
\end{proposition}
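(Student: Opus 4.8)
The plan is to compute $\pi_0(\calP_a)$ and $\pi_0(\calP'_a)$ by mimicking the local computation \ref{pi 0 local}, gluing the local contributions at the points of $\bar X-U$ (where $U$ is the inverse image of $\frakc^\rs_D$) to a global contribution coming from the connected components of the normalised cameral curve. First I would dispose of the reduction from $\calP_a$ to $\calP'_a$: by \ref{P'_a P_a} the map $\pi_0(\calP'_a)\to\pi_0(\calP_a)$ is surjective with finite kernel, so it is enough to (i) identify $\pi_0(\calP'_a)^*$ with $\hat\bbT^{W_{\tilde a}}$, and then (ii) cut out the subgroup $\pi_0(\calP_a)^*$ inside it. Throughout we may and do assume $\bar X_{\bar\rho}$ connected (replace $\Theta_{\bar\rho}$ by the image of $\bar\rho^\bullet$).

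For (i) I would pass to the Néron model. Since $J_a^0$ and the connected Néron model $J_a^{\flat,0}$ both have connected fibres, the cokernel of $J_a^0\hookrightarrow J_a^{\flat,0}$ is a skyscraper sheaf with connected algebraic stalks; hence $\pi_0(\calP'_a)=\pi_0(\rmH^1(\bar X,J_a^{\flat,0}))$, and by a further finite modification at the points of $\bar X-U$ one is reduced to $\calP_a^\flat$ together with the local groups $\calR_v(a)$ of \ref{R_v(a)} (this is the homological content of \ref{P a flat}). Using the description \ref{J a flat global} — in the variant attached to the trivialising cover, $J_a^\flat=\prod_{\widetilde X_{\bar\rho,a}^\flat/\bar X}(\bbT\times\widetilde X_{\bar\rho,a}^\flat)^{\bbW\rtimes\Theta_{\bar\rho}}$ — and the fact that $|\bbW\rtimes\Theta_{\bar\rho}|$ is invertible in $k$ (so $(\pi_{\bar\rho,a*}^\flat\bbT)^{\bbW\rtimes\Theta_{\bar\rho}}$ is a direct summand and finite pushforward is exact), one gets
\[ \rmH^1(\bar X,J_a^\flat)\;=\;\rmH^1\bigl(\widetilde X_{\bar\rho,a}^\flat,\bbT\bigr)^{\bbW\rtimes\Theta_{\bar\rho}}. \]
The free part of the component group of the right-hand side is the multidegree; since $\widetilde X_{\bar\rho,a}^\flat/(\bbW\rtimes\Theta_{\bar\rho})=\bar X$ is irreducible, $\bbW\rtimes\Theta_{\bar\rho}$ permutes the connected components of $\widetilde X_{\bar\rho,a}^\flat$ transitively with stabiliser $W_{\tilde a}$ at the component $C_{\tilde a}$ through $\tilde\infty_{\bar\rho}$ (connectedness of $\widetilde X_{\bar\rho,a}$ coming from \ref{connexe}, and irreducibility on $\calA^\diamondsuit$ from \ref{irreductible}), so the relevant permutation module is $\ZZ[(\bbW\rtimes\Theta_{\bar\rho})/W_{\tilde a}]$ and Shapiro's lemma together with the projection formula extracts $\bbX_*(\bbT)$-invariants under $W_{\tilde a}$. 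The local contributions at each $v\in\bar X-U$ are precisely those of \ref{pi 0 local}: the monodromy $\pi_a^\bullet(I_v)$ of \ref{pi_a^bullet} lands in $W_{\tilde a}$, is generated by an element with a fixed point on $C_{\tilde a}$, and the normal subgroup of $W_{\tilde a}$ generated by all of these is by definition $I_{\tilde a}$; the identity $\pi_0(\calP_{\bar v}(J_a^0))^*=\hat\bbT^{\pi_a^\bullet(I_v)}$ of Lemma \ref{Kottwitz local} is what patches with the global multidegree term. Assembling, one obtains $\pi_0(\calP'_a)^*=\hat\bbT^{W_{\tilde a}}$, the choice of $\tilde\infty_{\bar\rho}$ (via \ref{J pi_rho_G}) being what rigidifies all the identifications.

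For (ii) I would run the $z$-extension argument of \ref{pi 0 local} at the global level. Choose $1\to G\to G_1\to C\to 1$ with $C$ a torus and $\hat\bbG_1$ having simply connected derived group, so $J_1$ has connected fibres (\ref{centre connexe}), whence $\calP'_{1,\alpha(a)}=\calP_{1,\alpha(a)}$ and, by step (i), $\pi_0(\calP_{1,\alpha(a)})^*=\hat\bbT_1^{W_{\tilde a}}$. Since $J\to\alpha^*J_1$ is a closed immersion, $\calP_a\to\calP_{1,\alpha(a)}$ is injective on $\pi_0$ (exactly as in the local proof, $J_a(\bar\calO)$ being the preimage of $J_{1,\alpha(a)}(\bar\calO)$), and its image is the image of $\pi_0(\calP'_a)$; dualising, $\pi_0(\calP_a)^*$ is the image of $\hat\bbT_1^{W_{\tilde a}}\to\hat\bbT^{W_{\tilde a}}$. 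It remains to identify this image with $\hat\bbT(I_{\tilde a},W_{\tilde a})$: a $\kappa\in\hat\bbT^{W_{\tilde a}}$ lifts to $\hat\bbT_1$ iff the centraliser $\hat\bbH_1$ of a lift $\kappa_1$ in $\hat\bbG_1$ is connected and maps onto $\hat\bbH=\hat\bbG_\kappa^0$, which — tracking the monodromy homomorphism through $W_{\tilde a}$ and its generators with fixed points on $C_{\tilde a}$ — translates into $I_{\tilde a}\subset\bbW_\HH$, the condition $W_{\tilde a}\subset(\bbW\rtimes\Theta_{\bar\rho})_\kappa$ being $\kappa\in\hat\bbT^{W_{\tilde a}}$ reinterpreted. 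This $I_{\tilde a}\subset\bbW_\HH$ is exactly the conjunction over $v\in\bar X-U$ of the local conditions "$\pi_a^\bullet(I_v)\subset\bbW_\HH$" of \ref{pi 0 local}.

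The main obstacle I anticipate is the bookkeeping in step (i): one must match the finite (Prym-type) torsion produced by taking $\bbW\rtimes\Theta_{\bar\rho}$-invariants of the abelian part of $\rmH^1(\widetilde X_{\bar\rho,a}^\flat,\bbT)$ against the local torsion coming from the $\calR_v(a)$ and from $J_a^0\subsetneq J_a^\flat$, so that the answer lands exactly on $\hat\bbT^{W_{\tilde a}}$ and not merely on a finite modification of it. As in the local case, the cleanest way to force the formula is probably to check that the relevant component-group functor $a\mapsto\pi_0(\calP'_a)$ satisfies the Kottwitz axioms of \cite[2.2]{K-Iso1}, so that Lemma \ref{Kottwitz local} propagates from the local to the global setting without any residual ambiguity.
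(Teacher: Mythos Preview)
Your plan for (i) is more laborious than the paper's: the paper simply invokes \cite[corollaire 6.7]{N}, which supplies $(\bbX_*)_{W_{\tilde a}}\isom\pi_0(\calP'_a)$ directly as an instance of Kottwitz's lemma \cite[2.2]{K-Iso1}---precisely the fallback you mention in your last paragraph. Your N\'eron-model and Shapiro computation may well be made to work, but it is unnecessary, and as you yourself note the bookkeeping with the Prym-type torsion is delicate.

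The substantive divergence is in (ii). The paper does \emph{not} globalise the $z$-extension. It stays with the exact sequence $0\to J_a^0\to J_a\to\pi_0(J_a)\to 0$, where $\pi_0(J_a)$ is a skyscraper on $\bar X-U$; this presents $\pi_0(\calP_a)$ as the quotient of $\pi_0(\calP'_a)$ by the image of $\bigoplus_{v\in\bar X-U}\pi_0(J_{a,v})$. Dually, $\pi_0(\calP_a)^*\subset\hat\bbT^{W_{\tilde a}}$ is the intersection over $v$ of the inverse images of $\pi_0(\calP_v(J_a))^*\subset\pi_0(\calP_v(J_a^0))^*$, and one simply applies \ref{pi 0 local} at each $v$: the conjunction of the local conditions $\pi_a^\bullet(I_v)\subset\bbW_\HH$ is exactly $I_{\tilde a}\subset\bbW_\HH$. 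No global $z$-extension enters.

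Your global $z$-extension route has a real gap. The injectivity of $\pi_0(\calP_a)\to\pi_0(\calP_{1,\alpha(a)})$ does \emph{not} follow ``exactly as in the local proof''. In \ref{pi 0 local} the decisive point is that $C(\bar F_{\bar v})/C(\bar\calO_{\bar v})$ is free abelian, hence discrete, so the neutral component of $\calP_{\bar v}(J_{1,\alpha(a)})$ lies in the image of $\calP_{\bar v}(J_a)$ and the map on $\pi_0$ is injective. Globally the Picard stack of $C$-torsors on $\bar X$ has a positive-dimensional abelian-variety part, and this discreteness argument collapses. Even granting injectivity, identifying the image of $\hat\bbT_1^{W_{\tilde a}}\to\hat\bbT^{W_{\tilde a}}$ with $\hat\bbT(I_{\tilde a},W_{\tilde a})$ is not the straightforward transcription of the local step you suggest: the local monodromy $\pi_a^\bullet(I_v)$ sits in $\bbW$, whereas $W_{\tilde a}$ has a nontrivial $\Theta_{\bar\rho}$-component, and one must control the obstruction in $\rmH^1(W_{\tilde a},\hat C)$ to lifting a $W_{\tilde a}$-fixed $\kappa$ to $\hat\bbT_1$. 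The paper's local-to-global reduction sidesteps both issues.
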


\begin{proof}
D'apr\`es \cite[corollaire 6.7]{N}, on a un isomorphisme canonique
$$
(\bbX_*)_{W_{\tilde a}} \longrightarrow \pi_0(\calP'_a)
$$
du groupe des $W_{\tilde a}$-coinvariants de $\bbX_*=\Hom(\GG_m,\bbT)$
dans le groupe des composantes connexes de $\calP'_a$. Ceci est est
essentiellement un cas particulier d'un lemme de Kottwitz \cite[lemme
2.2]{K-Iso1}. Dualement, on a un isomorphisme de groupes diagonalisables
$$\pi_0(\calP'_a)^*=\hat \bbT^{W_{\tilde a}}$$
o\`u $\hat \bbT$ est le $\Ql$-tore dual de $\bbT$.

Notons $U=a^{-1}(\bar\frakc^\rs_D)$. Comme dans la d\'emonstration de
\ref{P'_a P_a}, on a une suite exacte
$$\rmH^0(\bar X,\pi_0(J_a)) \rightarrow \pi_0(\calP'_{a})\rightarrow \pi_0(\calP_a)\rightarrow 0$$
o\`u $\rmH^0(\bar X,J_a/J_a^0)=\bigoplus_{v\in \bar X-U} \pi_0(J_{a,v})$
o\`u $\pi_0(J_{a,v})$ d\'esigne le groupe des composantes connexes de la
fibre de $J_a$ en $v$. Pour tout $v\in \bar X-U$, on a une suite exacte
locale analogue
$$\pi_0(J_{a,v})\rightarrow \pi_0(\calP_v(J_a^0))\rightarrow \pi_0(\calP_v(J_a))\rightarrow 0$$
compatible avec la suite globale. Consid\'erons les suites duales des groupes
diagonalisables
$$0\rightarrow \pi_0(\calP_v(J_a))^{*} \rightarrow  \pi_0(\calP_v(J_a^0))^{*} \rightarrow
\pi_0(J_{a,v})^{*}$$
Le sous-groupe
$$\pi_0(\calP_a)^*\subset \pi_0(\calP'_a)^*$$
est alors l'intersection des images inverses des sous-groupes
$$\pi_0(\calP_v(J_a))^*\subset\pi_0(\calP_v(J_a^0))^*$$ pour tout
$v\in\bar X-U$. La proposition se d\'eduit maintenant de \ref{pi 0 local}.
\end{proof}

\subsection{Automorphismes}
\label{subsection : Automorphismes}

Soit $(E,\phi)\in\calM(\bar k)$ d'image $a\in\calA^\heartsuit(\bar k)$. Nous
allons d\'eterminer des bornes pour le groupe des automorphismes
$\Aut(E,\phi)$ en fonction de $a$. Soit $U$ l'ouvert $\bar X$ o\`u $a$ est
semi-simple r\'egulier.

Consid\'erons le faisceau des automorphismes $\underline\Aut(E,\phi)$ qui
associe \`a tout $\bar X$-sch\'ema $S$ le groupe $\Aut((E,\phi)|_S)$. Ce
faisceau est repr\'esentable par le sch\'ema en groupes
$I_{(E,\phi)}=h_{(E,\phi)}^*I$ qui est l'image r\'eciproque du centralisateur $I$ sur
$\frakg$ par la fl\`eche $h_{(E,\phi)}:\bar X\rightarrow [\frakg_{D}/G]$. La restriction de $I_{(E,\phi)}$ \`a l'ouvert $U$ est un
tore mais au-dessus de $\bar X$, le sch\'ema en groupes $I_{(E,\phi)}$ n'est
ni lisse ni m\^eme plat. On peut n\'eanmoins consid\'erer sa lissification au
sens de \cite{BLR}. D'apr\`es {\it loc. cit}, il existe un unique sch\'ema en
groupes lisse $I_{(E,\phi)}^{\rm lis}$ au-dessus de $\bar X$ tel que pour tout
$\bar X$-sch\'ema $S$ lisse on a
$$\Aut((E,\phi)|_S)=\Hom_X(S,I_{(E,\phi)}^{\rm lis}).$$
La fl\`eche tautologique $I_{(E,\phi)}^{\rm lis} \rightarrow I_{(E,\phi)}$ est
un isomorphisme au-dessus de l'ouvert $U$. Notons que la caract\'erisation
de $I_{(E,\phi)}^{\rm lis}$ implique l'\'egalit\'e
\begin{equation}
\Aut(E,\phi)=\rmH^0(\bar X,I_{(E,\phi)}^{\rm lis}).
\end{equation}

Puisque $J_a$ est lisse, l'homomorphisme canonique $J_a\rightarrow
I_{(E,\phi)}$ induit un homomorphisme
$$J_a\rightarrow I_{(E,\phi)}^{\rm lis}$$
qui est un isomorphismisme au-dessus de $U$. Par la propri\'et\'e
universelle du mod\`ele de N\'eron, on a un homomorphisme
$$I_{(E,\phi)}^{\rm lis}\rightarrow J_a^\flat$$
qui est un isomorphisme sur $U$.

\begin{proposition}\label{borne automorphisme}
Pour tout $(E,\phi)\in\calM(\bar k)$ au-dessus d'un point
$a\in\calA^\heartsuit(\bar k)$, on a des inclusions canoniques
$$\rmH^0(\bar X,J_a)\subset \Aut(E,\phi) \subset \rmH^0(\bar X,J_a^\flat).$$
\end{proposition}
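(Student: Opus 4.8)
The plan is to derive both inclusions from the chain of homomorphisms constructed just above the statement, namely $J_a \to I_{(E,\phi)}^{\mathrm{lis}} \to J_a^\flat$, each of which is an isomorphism over the dense open $U$ where $a$ is regular semisimple. First I would observe that taking $\rmH^0(\bar X,-)$ is left exact, so it suffices to show that the two maps $J_a \to I_{(E,\phi)}^{\mathrm{lis}}$ and $I_{(E,\phi)}^{\mathrm{lis}} \to J_a^\flat$ are injective as morphisms of sheaves of abelian groups for the \'etale topology on $\bar X$; then applying $\rmH^0(\bar X,-)$ and using the identification $\Aut(E,\phi)=\rmH^0(\bar X,I_{(E,\phi)}^{\mathrm{lis}})$ recalled in the paragraph above gives the desired inclusions directly.

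For the injectivity of $J_a \to I_{(E,\phi)}^{\mathrm{lis}}$: both schemes are smooth over $\bar X$ of relative dimension $r$, the map is an isomorphism over the dense open $U$, and a smooth $\bar X$-group scheme has no embedded components, so a homomorphism between smooth commutative group schemes over a one-dimensional regular base that is generically an isomorphism is injective on sections — concretely, the kernel is a closed subgroup scheme which is trivial over $U$, and since $J_a$ is smooth (hence flat, hence its structure sheaf has no sections supported on a proper closed subset) the kernel must be trivial. I would spell this out using that $\bar X$ is a smooth curve and $J_a$ is $\bar X$-flat with geometrically connected generic fibre a torus. For the injectivity of $I_{(E,\phi)}^{\mathrm{lis}} \to J_a^\flat$: this is exactly the universal property of the N\'eron model of finite type, as recalled in \ref{subsection : Modele de Neron global} and \ref{subsection : Modele de Neron} — since $I_{(E,\phi)}^{\mathrm{lis}}$ is a smooth $\bar X$-group scheme of finite type with the same generic fibre (the torus $J_a|_U$) as $J_a$, the canonical map to $J_a^\flat$ is a morphism which is an isomorphism over $U$; its kernel is a smooth group scheme supported on $\bar X - U$, hence trivial since a smooth scheme over a curve concentrated on finitely many closed points must be empty (in the group case, the zero section).

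The main obstacle, such as it is, will be making precise the claim that a homomorphism of smooth group schemes over the smooth curve $\bar X$ which is an isomorphism over a dense open is automatically a monomorphism; the cleanest route is to pass to stalks at closed points $v \in \bar X - U$, work over the complete local ring $\bar\calO_v$, and note that on $F_v$-points the map is a bijection while $J_a(\bar\calO_v) = J_a(F_v) \cap I_{(E,\phi)}^{\mathrm{lis}}(\bar\calO_v)$ inside $J_a(F_v)$ because $J_a \to I_{(E,\phi)}^{\mathrm{lis}}$ is a closed immersion over $\bar\calO_v$ (being a monomorphism of finite type between affine schemes, smooth over a DVR), giving the local injectivity which glues to the global statement; the same argument with $J_a^\flat$ in place of $I_{(E,\phi)}^{\mathrm{lis}}$ handles the second inclusion and in fact identifies $J_a^\flat(\bar\calO_v)$ as the maximal bounded subgroup, as already recalled. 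Putting the local statements together via the exact sequence of low-degree cohomology for $0 \to J_a \to I_{(E,\phi)}^{\mathrm{lis}} \to I_{(E,\phi)}^{\mathrm{lis}}/J_a \to 0$ (the quotient being supported on $\bar X - U$, hence with vanishing $\rmH^{-1}$ in the obvious sense) yields $\rmH^0(\bar X, J_a) \hookrightarrow \rmH^0(\bar X, I_{(E,\phi)}^{\mathrm{lis}}) = \Aut(E,\phi)$, and similarly $\Aut(E,\phi) = \rmH^0(\bar X, I_{(E,\phi)}^{\mathrm{lis}}) \hookrightarrow \rmH^0(\bar X, J_a^\flat)$, which is precisely the proposition.
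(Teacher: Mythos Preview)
Your approach is essentially the same as the paper's: reduce to showing that the two sheaf maps $J_a \to I_{(E,\phi)}^{\rm lis}$ and $I_{(E,\phi)}^{\rm lis} \to J_a^\flat$ are injective, check this locally on the formal disks at $v \in \bar X - U$, and use that all three groups of $\bar\calO_v$-points sit as subgroups of the common generic fibre $J_a(\bar F_v)$. One small caveat: your parenthetical assertion that $J_a \to I_{(E,\phi)}^{\rm lis}$ is a closed immersion over $\bar\calO_v$ is neither justified nor needed --- the injectivity on $\bar\calO_v$-points follows simply because both group schemes are smooth and separated over the DVR, so their integral points inject into their $\bar F_v$-points, which coincide.
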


\begin{proof}
 Il suffit de v\'erifier que les fl\`eches $J_a\rightarrow I_{(E,\phi)}^{\rm lis}$ et
$I_{(E,\phi)}^{\rm lis}\rightarrow J_a^\flat$ sont injectifs en tant
qu'homomomorphismes de faisceaux pour la topologie \'etale. Pour cela, il
suffit de v\'erifier l'injectivit\'e sur les voisinages formels en chaque point
$v\in\bar X-U$. Notons $\bar\calO_v$ le compl\'et\'e formel de $\calO_{\bar
X}$ en $v$ et $\bar F_v$ le corps des fractions de $\bar\calO_v$. On a alors
les inclusions
$$
J_a(\bar\calO_v) \subset I_{(E,\phi)}^{\rm lis}
(\bar\calO_v) \subset J_a^\flat(\bar\calO_v)
$$
de sous-groupes de $J_a(\bar F_v)$.
\end{proof}

Pour d\'ecrire explicitement la borne sup\'erieure $\rmH^0(\bar
X,J_a^\flat)$, faisons des choix comme dans le paragraphe pr\'ec\'edent. En
particulier, on a un point $\infty\in X(\bar k)$ tel que $a(\infty)\in
\bar\frakc_D^\rs$. On choisit aussi un d\'eploiement $\bar\rho:\bar
X_{\bar\rho}\rightarrow \bar X$ de groupe de Galois $\Theta_{\bar \rho}$
avec un point $\infty_\rho$ \cf \ref{deploiement}. On a alors un
rev\^etement fini plat $\pi_{\bar\rho,a}:\widetilde
X_{\bar\rho,a}\rightarrow \bar X$ qui est g\'en\'eriquement \'etale galoisien
de groupe de Galois $\bbW\rtimes \Theta_{\bar\rho}$. En choisissant un
point $\tilde\infty_{\bar\rho}$ de $\widetilde X_{\bar\rho,a}$ au-dessus de
$\infty$, on d\'efinit un sous-groupe $W_{\tilde a}$ de $\bbW\rtimes
\Theta_{\bar\rho}$ et un sous-groupe normal $I_{\tilde a}$ de $W_{\tilde
a}$ comme dans le paragraphe \ref{subsection : pi_0(P_a)}. Avec \ref{J a
flat global} et \ref{J pi_rho_G}, on a la formule
$$\rmH^0(\bar X,J_a^\flat)=\bbT^{W_{\tilde a}}.$$
On en d\'eduit le corollaire suivant.

\begin{corollaire} Soit $\tilde a=(a,\tilde\infty)$ comme ci-dessus.
Pour tout $(E,\phi)\in\calM(\bar k)$ au-dessus d'un point
$a\in\calA^\heartsuit(\bar k)$, ${\rm Aut}(E,\phi)$ s'identifie \`a un
sous-groupe de $\bbT^{W_{\tilde a}}$.
\end{corollaire}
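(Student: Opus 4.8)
The plan is to deduce the corollary directly from the two-sided bound of Proposition~\ref{borne automorphisme} together with the computation of $\rmH^0(\bar X,J_a^\flat)$ recorded just above its statement. The inclusion $\Aut(E,\phi)\subset \rmH^0(\bar X,J_a^\flat)$ already embeds $\Aut(E,\phi)$ into a group depending only on $a$, not on the chosen Higgs field $\phi$ lifting $a$, so all that remains is to identify $\rmH^0(\bar X,J_a^\flat)$ with $\bbT^{W_{\tilde a}}$ in a way compatible with the auxiliary choices $\infty$, $\bar\rho$, $\infty_{\bar\rho}$, $\tilde\infty_{\bar\rho}$ fixed in~\ref{subsection : pi_0(P_a)}.

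First I would unwind the description of the global N\'eron model in the variant form following \ref{J a flat global}: after passing to the deployment $\bar\rho$ one has $J_a^\flat=\prod_{\widetilde X_{\bar\rho,a}^\flat/\bar X}(\bbT\times\widetilde X_{\bar\rho,a}^\flat)^{\bbW\rtimes\Theta_{\bar\rho}}$. Taking global sections over $\bar X$ turns the Weil restriction into the functor of $\bbT$-valued points of $\widetilde X_{\bar\rho,a}^\flat$, so that $\rmH^0(\bar X,J_a^\flat)$ is the group of $(\bbW\rtimes\Theta_{\bar\rho})$-equivariant morphisms $\widetilde X_{\bar\rho,a}^\flat\to\bbT$. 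Since $\widetilde X_{\bar\rho,a}^\flat$ is the normalization of the reduced curve $\widetilde X_{\bar\rho,a}$, it is a disjoint union of smooth projective connected $\bar k$-curves, and every morphism from it to the split torus $\bbT$ is constant on each connected component; hence these equivariant morphisms are the same as $(\bbW\rtimes\Theta_{\bar\rho})$-equivariant maps $\pi_0(\widetilde X_{\bar\rho,a}^\flat)\to\bbT(\bar k)$.

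Next I would invoke the connectedness input of \ref{connexe} — this is where the standing hypothesis $\deg(D)>2g$ enters — to conclude that $\widetilde X_{\bar\rho,a}$ is connected, so that $\bbW\rtimes\Theta_{\bar\rho}$ acts transitively on $\pi_0(\widetilde X_{\bar\rho,a}^\flat)$. The stabilizer of the component $C_{\tilde a}$ through $\tilde\infty_{\bar\rho}$ is by definition $W_{\tilde a}$, whence $\pi_0(\widetilde X_{\bar\rho,a}^\flat)\cong(\bbW\rtimes\Theta_{\bar\rho})/W_{\tilde a}$ as a pointed $(\bbW\rtimes\Theta_{\bar\rho})$-set. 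Frobenius reciprocity then identifies equivariant maps $(\bbW\rtimes\Theta_{\bar\rho})/W_{\tilde a}\to\bbT$ with $\bbT^{W_{\tilde a}}$ via evaluation at $C_{\tilde a}$; using \ref{J pi_rho_G}, which trivializes the fibre of $J_a$ at $\infty$ as $\bbT$ once $\tilde\infty_{\bar\rho}$ is chosen, this gives the asserted formula $\rmH^0(\bar X,J_a^\flat)=\bbT^{W_{\tilde a}}$. Composing this with Proposition~\ref{borne automorphisme} yields the embedding of $\Aut(E,\phi)$ into $\bbT^{W_{\tilde a}}$.

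As for the main difficulty: there is essentially none of a substantive nature, since both the upper bound and the computation of $\rmH^0(\bar X,J_a^\flat)$ are already in hand. The one point that deserves care is checking that all identifications are the canonical ones attached to the base points — in particular that the evaluation-at-$C_{\tilde a}$ isomorphism is the one induced by the fibre of $J_a$ at $\infty$ through \ref{J pi_rho_G} — and one should note, exactly as for the Tate--Nakayama type descriptions in \ref{subsection : pi_0(P_a)}, that the resulting embedding depends on the auxiliary geometric point $\tilde\infty_{\bar\rho}$.
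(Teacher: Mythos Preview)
Your proposal is correct and follows exactly the paper's approach: the corollary is stated immediately after the formula $\rmH^0(\bar X,J_a^\flat)=\bbT^{W_{\tilde a}}$, which the paper derives by citing \ref{J a flat global} and \ref{J pi_rho_G}, and the inclusion then comes from Proposition~\ref{borne automorphisme}. You have simply spelled out in detail the computation of $\rmH^0(\bar X,J_a^\flat)$ that the paper leaves implicit in those two references (together with the connectedness input from \ref{connexe}, used the same way as in the proof of Proposition~\ref{automorphisme P_a}).
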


Les travaux r\'ecents de Frenkel et Witten sugg\`erent que cette borne
n'est pas optimale. En fait, on devrait avoir l'inclusion
$${\rm Aut}(E,\phi) \subset \bbT(I_{\tilde a},W_{\tilde a})$$
o\`u $\bbT(I_{\tilde a},W_{\tilde a})$ est le sous-groupe de
$\bbT^{W_{\tilde a}}$ d\'efini comme dans \ref{pi 0 global} en rempla{\c
c}ant $\hat\bbT$ par $\bbT$. De plus, l'\'egalit\'e devrait \^etre atteinte
aux points les plus singuliers de la fibre $\calM_a$.

Soit $\calA^\ani(\bar k)$ le sous-ensemble des points  $a\in\calA(\bar k)$
tels que $\bbT^{W_{\tilde a}}$ soit fini. Cette propri\'et\'e ne d\'epend pas
du choix du point $\tilde\infty$. On d\'emontrera plus loin qu'il existe un
ouvert $\calA^{\ani}$ de $\calA$ dont $\calA^{\ani}(\bar k)$ est l'ensemble
des $\bar k$-points. Au-dessus de cet ouvert, $\calM$ est alors un champ de
Deligne-Mumford.

Calculons enfin le groupe des automorphismes des points de $\calP_a$.

\begin{proposition}\label{automorphisme P_a}
Fixons un d\'eploiement $\bar\rho:\bar X_{\bar\rho}\rightarrow \bar X$ de
groupe de Galois $\Theta_{\bar\rho}$ avec $\bar X_{\bar\rho}$ connexe.
Pour tout $a\in\calA^\heartsuit(\bar k)$, on a
$$\rmH^0(\bar X,J_a)=(Z \GG)^{\bbW\rtimes \Theta_{\bar\rho}}$$
et
$$\rmH^0(\bar X,\Lie(J_a))=\Lie(Z \GG)^{\bbW\rtimes \Theta_{\bar\rho}}.$$
En particulier, si au-dessus de $\bar X$ le centre de $G$ ne contient pas de
tore d\'eploy\'e, alors $\calP_a$ est un champ de Picard de
Deligne-Mumford.
\end{proposition}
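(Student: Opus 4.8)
The plan is to reduce both computations to the cameral curve $\widetilde X_{\bar\rho,a}$ and to exploit its connexity. First I would record, using \ref{J=J'}, that $J_a=a^*J$ is an open sous-schéma en groupes of $J_a^1=a^*J^1$ over $\bar X$, so that $\Lie J_a=\Lie J_a^1$; and by the description à la Donagi--Gaitsgory \ref{J pi_rho_G} applied to the deployment $\bar\rho$, the group $J_a^1$ identifies with the points fixed under $\bbW$ (compte tenu de l'équivariance pour $\Theta_{\bar\rho}$) in the restriction des scalaires à la Weil $\prod_{\widetilde X_{\bar\rho,a}/\bar X}(\bbT\times\widetilde X_{\bar\rho,a})$, the torus coefficient $\bbT$ being constant. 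Since $\bar\rho\colon\bar X_{\bar\rho}\to\bar X$ is a finite étale torseur sous $\Theta_{\bar\rho}$ one has $\rmH^0(\bar X,\calF)=\rmH^0(\bar X_{\bar\rho},\bar\rho^*\calF)^{\Theta_{\bar\rho}}$ for any $\calF$, and over $\bar X_{\bar\rho}$ the cover $\widetilde X_{\bar\rho,a}$ is the usual $\bbW$-cameral cover; as the order of $\bbW$ is premier à la caractéristique, taking $\bbW$-invariants is exact and commutes with the finite flat pushforward $\pi_{\bar\rho,a*}$. This yields $\rmH^0(\bar X,\Lie J_a)=\bigl(\rmH^0(\widetilde X_{\bar\rho,a},\calO)\otimes\bbt\bigr)^{\bbW\rtimes\Theta_{\bar\rho}}$ together with an inclusion $\rmH^0(\bar X,J_a)\subseteq\Hom(\widetilde X_{\bar\rho,a},\bbT)^{\bbW\rtimes\Theta_{\bar\rho}}$.

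Next I would invoke \ref{connexe} (here $\deg(D)>2g$ and $\bar X_{\bar\rho}$ is connexe) to get that $\widetilde X_{\bar\rho,a}$ is connexe; being a courbe camérale attached to $a\in\calA^\heartsuit$ it is réduite, hence $\rmH^0(\widetilde X_{\bar\rho,a},\calO)=\bar k$ and $\Hom(\widetilde X_{\bar\rho,a},\bbT)=\bbT(\bar k)$, i.e. only the constant maps. Thus $\rmH^0(\bar X,\Lie J_a)=\bbt^{\bbW\rtimes\Theta_{\bar\rho}}$, and since $\bbt^\bbW=\bigcap_\alpha\ker({\rm d}\alpha)=\Lie(Z\GG)$ this is $\Lie(Z\GG)^{\bbW\rtimes\Theta_{\bar\rho}}$, which is the second formula. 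It also shows that a global section of $J_a$ is a constant $t_0\in\bbT^{\bbW\rtimes\Theta_{\bar\rho}}$, so $\rmH^0(\bar X,J_a)$ is a sous-schéma en groupes diagonalisable of $\bbT$ with Lie algebra $\bbt^{\bbW\rtimes\Theta_{\bar\rho}}$; in particular it is fini as soon as $\bbt^{\bbW\rtimes\Theta_{\bar\rho}}$ contains no line, that is as soon as the centre of $G$ contains no tore déployé over $\bar X$, which already gives the last assertion, the objects of $\calP_a$ having automorphism group $\rmH^0(\bar X,J_a)$.

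To pin down the exact value $\rmH^0(\bar X,J_a)=(Z\GG)^{\bbW\rtimes\Theta_{\bar\rho}}$ I would use that $J_a$ is not merely contained in $J_a^1$ but equals the open subgroup $J'_a$ of \ref{J'}: the constant section $t_0$ then satisfies $\alpha(t_0)=1$ for every racine $\alpha$ whose réflexion $s_\alpha$ has a geometric fixed point on $\widetilde X_{\bar\rho,a}$. The delicate point is that this holds for every racine simple. Indeed, for $a\in\calA^\heartsuit$ the curve $a(\bar X)$ lies in no composante irréductible of the réduit divisor $\discrim_G$ (cf. \ref{diviseur discriminant}); the components of $\discrim_G$ are the images of the hyperplans de racines $h_\beta$ and correspond bijectively to the $\bbW$-orbits of roots, hence to the classes de conjugaison de réflexions; being a curve of strictly positive degree, $a(\bar X)$ meets each component, so each class de conjugaison de réflexions occurs as the inertie of $\pi_{\bar\rho,a}$ at some ramification point. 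As every root is $\bbW$-conjugate to a simple one, translating a fixed point by the appropriate Weyl element shows that each $s_{\alpha_i}$ has a fixed point, whence $\alpha_i(t_0)=1$ for all simple $\alpha_i$ and therefore $t_0\in\bigcap_\alpha\ker\alpha=Z\GG$. Conversely $Z_G\hookrightarrow J_a$ (the section de Kostant $\epsilon(a)$ has centralisateur $J_a$, and $Z_G$ centralise tout, cf. \ref{pi 0 ZG}), and $\rmH^0(\bar X,Z_G)=(Z\GG)^{\Theta_{\bar\rho}}=(Z\GG)^{\bbW\rtimes\Theta_{\bar\rho}}$, the last equality because morphisms from the connexe propre $\bar X_{\bar\rho}$ to the affine group $Z\GG$ are constant and $\bbW$ acts trivially on $Z\GG$. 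This gives the first formula.

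The step I expect to be the main obstacle is the one in the last paragraph: identifying the composantes irréductibles of $\discrim_G$ with the $\bbW$-orbits of roots and checking that each is met by the large-degree curve $a(\bar X)$ for an arbitrary $a\in\calA^\heartsuit$ --- in other words that every réflexion simple really does occur as an inertie element. The remainder is bookkeeping with restrictions des scalaires and with the deployment $\bar\rho$, where one must take care that $|\Theta_{\bar\rho}|$ need not be premier à la caractéristique, which is why $\rmH^0$ is pushed down along $\bar\rho$ before the $\bbW$-averaging is applied.
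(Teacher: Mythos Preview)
Your proof is correct and follows essentially the same approach as the paper's: both use the Donagi--Gaitsgory description of $J_a^1$ via the cameral cover, the connexity of $\widetilde X_{\bar\rho,a}$ from \ref{connexe} to reduce global sections to constants in $\bbT^{\bbW\rtimes\Theta_{\bar\rho}}$, and then the explicit description of $J'$ from \ref{J=J'} to cut down to $(Z\GG)^{\bbW\rtimes\Theta_{\bar\rho}}$. The paper is very terse at this last step, simply asserting that it ``se d\'eduit de la description explicite du sous-faisceau $J'$''; you correctly identify that the substance of this deduction is showing that every reflection $s_\alpha$ has a fixed point on $\widetilde X_{\bar\rho,a}$, and you supply a valid argument for it.

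Your argument via components of $\discrim_G$ works, but there is a slightly more direct route that avoids tracking $\bbW$-orbits: for each root $\alpha$, the derivative ${\rm d}\alpha$ defines a section of the pullback of $D$ on the connected reduced curve $\widetilde X_{\bar\rho,a}$; this section is not identically zero (else $a(\bar X)\subset\discrim_{G,D}$, contradicting $a\in\calA^\heartsuit$) and has positive degree since $\deg(D)>0$, hence vanishes somewhere, and at such a point $s_\alpha$ acts trivially. This gives $\alpha(t_0)=1$ for every root directly, without passing through conjugacy classes.
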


\begin{proof}
On dispose d'un homomomorphisme canonique de faisceaux $ZG\rightarrow
J_a$ sur $\bar X$. Il s'agit donc de d\'emontrer que la fl\`eche induite sur
les sections globales est un isomorphisme. D'apr\`es \ref{J=J'}, on a un
homomomorphisme injectif de faisceaux $J_a \rightarrow J_a^1$ de
conoyau fini. De plus, $J^1_a$ est d\'etermin\'e par le rev\^etement
cam\'eral $\pi_a:\widetilde X_{\bar\rho,a} \rightarrow \bar X$ avec la
formule
$$J_a^1=((\pi_a)_*(\bbT\times \widetilde X_{\bar\rho,a}))^{\bbW\rtimes \Theta_{\bar\rho}}.$$
Puisque $\widetilde X_{\bar\rho,a}$ est connexe \cf \ref{connexe}, on a
$$\rmH^0(\bar X,J_a^1)=\bbT^{\bbW\rtimes \Theta_{\bar\rho}}.$$
Le fait que le sous-groupe $\rmH^0(\bar X,J_a)$ de $\rmH^0(\bar X,J_a^1)$
s'identifie \`a $(Z \GG)^{\bbW\rtimes \Theta_{\bar\rho}}$ se d\'eduit de la
description explicite du sous-faisceau $J'$ de $J^1$ dans la proposition
\ref{J=J'}.
\end{proof}

\subsection{Calcul de d\'eformation}
\label{subsection : deformation}

Les d\'eformations des fibr\'es de Higgs ont \'et\'e \'etudi\'ees
par Biswas et Ramanan dans \cite{BR}. Pour la commodit\'e du lecteur, nous
allons reprendre leur calcul.

Rappelons d'abord le calcul usuel des d\'eformations d'un torseur sous un
groupe lisse. Soient $S$ un $k$-sch\'ema et $G$ un $S$-sch\'ema en groupes
lisse. Soit $\bfB G$ le classifiant de $G$. Le $G$-torseur
universel ${\bf E}G$ est alors $S$ au-dessus de $[S/G]$.  Notons
$$\pi_{{\bf E}G}:{\bf E}G\longrightarrow \bfB G$$
le $G$-torseur tautologique. Consid\'erons le triangle distingu\'e des
complexes cotangents
$$\pi_{{\bf E}G}^* L_{\bfB G/ S}\longrightarrow L_{{\bf E}G/ S} \longrightarrow
L_{{\bf E}G/\bfB G} \longrightarrow \pi_{{\bf E}G}^* L_{\bfB
G/ S}[1].
$$
Puisque $S={\bf E} G$, le complexe cotangent $L_{{\bf E}G/ S}$ est nul
alors que $L_{{\bf E}G/{\bfB G}}$ est le fibr\'e vectoriel $\frakg^*$ plac\'e
en degr\'e $0$. Il en r\'esulte un isomorphisme
$$L_{{\bf E}G/\bfB G} \isom \pi_{{\bf E}G}^* L_{\bfB G/ S}[1].$$
On en d\'eduit un isomorphisme
$$\frakg^*[-1] \isom \pi_{{\bf E}G}^* L_{\bfB G/ S}$$
qui par descente le long de $\pi_{{\bf E}G}$ induit un isomorphisme
$$(  {\bf E}G\wedge^G\frakg^*) [-1]\isom L_{\bfB G/ S}.$$
Le complexe cotangent $L_{\bfB G/k}$ du classifiant de $G$ est donc le
fibr\'e vectoriel obtenu en tordant par le torseur ${\bf E} G$ l'espace
vectoriel $\frakg^*$ muni de la repr\'esentation coadjointe, plac\'e en
degr\'e $1$.

Ainsi, pour tout $S$-sch\'ema $X$, pour tout $G$-torseur $E$ sur $X$
correspondant \`a une fl\`eche $h_E:X\rightarrow \bfB G$, l'obstruction \`a
la d\'eformation de $E$ g{\^\i}t dans le groupe
$$\rmH^1(X,\underline{\rm RHom}(h_E^* L_{\bfB G/ S},
\calO_X))=\rmH^2(X,  E\wedge^G \frakg)
$$
et si cette obstruction s'annule, les d\'eformations forment un espace
principal homog\`ene sous le groupe
$$\rmH^0(X,\underline{\rm RHom}(h_E^* L_{\bfB G/ S},
\calO_X))=\rmH^1(X,  E\wedge^G \frakg)
$$
alors que le groupe des automorphismes infinit\'esimaux est
$\rmH^0(X,E\wedge^G \frakg)$.

Soit maintenant $V$ un fibr\'e vectoriel sur $S$ muni d'une action de $G$
et consid\'erons le champ quotient $[V/G]$. Le $G$-torseur
$\pi_V:V\rightarrow [V/G]$ d\'efinit un morphisme $[\nu]:[V/G] \rightarrow
\bfB G$ qui s'ins\`ere dans un diagramme cart\'esien
$$
\xymatrix{
  V \ar[d]_{\pi_V} \ar[r]^{\nu}
                & {\bf E} G \ar[d]^{\pi_{{\bf E}G}}  \\
  [V/G] \ar[r]_{[\nu]}
                & \bfB G             }
$$
Consid\'erons le triangle distingu\'e des complexes cotangents
$$\pi_V^* L_{[V/G]/ S} \longrightarrow L_{V/ S}
\longrightarrow L_{V/[V/G]} \longrightarrow \pi_V^* L_{[V/G]/k}[1].
$$
Le terme $L_{V/ S}$ est fibr\'e vectoriel constant de valeur $\nu^*V^*$
plac\'e en degr\'e $0$ o\`u $V^*$ est le $S$-fibr\'e vectoriel dual de $V$ et
$\nu:V\rightarrow S$ est la projection sur $S$. Le terme $L_{V/[V/G]}$ se
calcule par changement de base $L_{V/[V/G]}=\nu^*L_{{\bf E}G/\bfB G}$ et
est donc le fibr\'e vectoriel $\nu^*\frakg^*$ plac\'e aussi en degr\'e $0$.
Au-dessus de chaque point $v\in V$, l'action de $G$ au voisinage de $v$
d\'efinit une application lin\'eaire
$$\alpha_v:\frakg \longrightarrow T_v V=V$$
dont le dual est la fibre en $v$
$$\alpha_v^*:(L_{V/ S})_v=V^* \longrightarrow (L_{V/[V/G]})_v=\frakg^*.$$
de la fl\`eche $L_{V/k} \rightarrow L_{V/[V/G]}$ du triangle distingu\'e.
Cette fl\`eche descend \`a $[V/G]$ en une fl\`eche
$$\alpha_v^*\wedge^G \pi_V
: \pi_V \wedge^G V^*  \longrightarrow \pi_V \wedge^G \frakg^*
$$
dont le c\^one est isomorphe \`a $L_{[V/G]/ S}$.

Appliquons le calcul ci-dessus pour caluler les d\'eformations des paires de
Hitchin. Reprenons les notations fix\'ees au d\'ebut du chapitre \ref{section
: Fibres de Springer affines}. En particulier, $G$ est le sch\'ema en groupes
r\'eductif sur la courbe $X$ et $\frakg$ son alg\`ebre de Lie qui est munie
de l'action adjointe de $G$ et de l'action de $\GG_m$ par homoth\'etie. Le
fibr\'e inversible $D$ d\'efinit un $\GG_m$ torseur $L_D$. Consid\'erons le
champ $[ \frakg_{D}/ G]$ obtenu en tordant $\frakg$ par
$L_D$ puis divis\'e par $G$. Le complexe cotangent $L_{[ \frakg_{D}/ G]/X}$ s'identifie donc \`a
$$L_{[\frakg/G]/X} \wedge^{\GG_m} L_D$$
qui est le c\^one de
$$ (\pi_{D,\frakg}\wedge^{G}\frakg^* )\otimes D^{-1}
\longrightarrow \pi_{D,\frakg}\wedge^{G}\frakg^*
$$
o\`u $\pi_{D,\frakg}$ est le $G$-torseur \'evident sur le quotient $[ \frakg_{D}/ G]$.

Soit $(E,\phi)$ un champ de Higgs sur $X$ \`a valeur dans $\bar k$. Elle
correspond \`a une fl\`eche
$$
h_{E,\phi}:\bar X\rightarrow [ \frakg_{D}/ G].
$$
La d\'eformation de $(E,\phi)$ est contr\^ol\'ee par le complexe
$$\underline{\rm RHom}(h_{E,\phi}^*(L_D\wedge^{\GG_m} L_{[\frakg/G]/X}  ),
\calO_X)
$$
qui s'exprime maintenant simplement
$$\ad(E,\phi):=[\ad(E) \rightarrow \ad(E)\otimes D]$$
o\`u
\begin{itemize}
\item $\ad(E)$ est le fibr\'e vectoriel $\frakg\wedge^G E$ ;
\item $\ad(E)$ est plac\'e en degr\'e $-1$ en $\ad(E)\otimes D$ est plac\'e
    en degr\'e $0$ ;
\item la fl\`eche est donn\'ee par $x\mapsto [x,\phi]$.
\end{itemize}

Rappelons la proposition 5.3 de \cite{N}. Le lecteur notera une diff\'erence
dans le d\'ecalage de $\ad(E,\phi)$ par rapport \`a {\it loc. cit}. Nous
donnerons ici une d\'emonstration un peu diff\'erente.

\begin{theoreme}\label{lisse}
Soit $(E,\phi)\in \calM(\bar k)$ un point au-dessus d'un point
$a\in\calA^\heartsuit(\bar k)$. Alors le groupe $\rmH^1(X,\ad(E,\phi))$ o\`u
g{\^\i}t l'obstruction \`a la d\'eformation de la paire $(E,\phi)$ est nul dans
l'un des cas suivants
\begin{itemize}
\item $\deg(D)> 2g-2$ ,
\item $\deg(D)= 2g-2$ et $a\in \calA^\ani(\bar k)$.
\end{itemize}
Si l'une de ces deux hypoth\`eses sont v\'erifi\'ees, alors $\calM$ est est
lisse au point $(E,\phi)$.
\end{theoreme}

\begin{proof}
Fixons une forme sym\'etrique non d\'eg\'en\'er\'ee et invariante sur
$\frakg$. On identifie alors le dual du complexe $\ad(E,\phi)$ \`a
$$\ad(E,\phi)^*=[\ad(E)\otimes D^{-1} \rightarrow \ad(E)]$$
dont les deux termes non nuls sont plac\'es en degr\'es $-1$ et $0$ et dont la
diff\'erentielle est donn\'ee par $x\mapsto [x,\phi]$. Le faisceau de
cohomologie $\rmH^{-1}$ de ce complexe s'identifie \`a
$$\Lie(I_{E,\phi}^{\rm lis})\otimes D^{-1}$$
o\`u $I_{E,\phi}^{\rm lis}$ est le sch\'ema en groupes lisse sur $\bar X$
introduit dans le paragraphe \ref{subsection : Automorphismes}. Par
dualit\'e de Serre, le groupe $\rmH^1(X,\ad(E,\phi))$ est dual du groupe
$$\rmH^0(\bar X,\Lie(I_{E,\phi}^{\rm lis})\otimes D^{-1}\otimes \Omega_{X/ k}).$$

Comme dans \ref{subsection : Automorphismes}, on a un homomorphisme
injectif de $\calO_{\bar X}$-modules
$$\Lie(I_{E,\phi}^{\rm lis}) \rightarrow \Lie(J_a^\flat)$$
de sorte que pour d\'emontrer la nullit\'e du groupe des obstructions
envisag\'ee, il suffit de d\'emontrer que
$$\rmH^0(\bar X,\Lie(J_a^\flat) \otimes D^{-1} \otimes \Omega_{X/ k})=0.$$
On est donc amen\'e \`a d\'emontrer le lemme suivant.
\end{proof}

\begin{lemme}\label{annulation J a flat}
Pour tout $a\in\calA^\heartsuit(\overline k)$, $\rmH^0(\bar X, {\rm
Lie}(J_a^\flat)\otimes L)=0$ pour tout fibr\'e en droite $L$ de degr\'e
strictement n\'egatif. La m\^eme conclusion vaut sous l'hypoth\`ese
$a\in\calA^\ani(\bar k)$ et $\deg(L)\leq 0$.
\end{lemme}

\begin{proof}
Choisissons un d\'eploiement $\bar\rho:\bar X_{\bar\rho}\rightarrow \bar X$
connexe de $G$ \cf \ref{deploiement}. On a alors un rev\^etement fini plat
$\widetilde X_{\bar\rho,a} \rightarrow\bar X$ qui est g\'en\'eriquement
\'etale galoisien de groupe de Galois $\bbW\rtimes \Theta_{\bar\rho}$.
Notons $\widetilde X_{\bar\rho,a}^\flat$ la normalisation de $\widetilde
X_{\bar\rho,a} \rightarrow\bar X$ et $\pi_{\bar\rho,a}^\flat$ la projection
sur $\bar X$. D'apr\`es \ref{J a flat global}, $\Lie(J_a^\flat)$ peut \^etre
calcul\'e \`a partir de  $\widetilde X_{\bar\rho,a}^\flat$ avec la formule
$$\Lie(J_a^\flat)=(\pi_{\bar\rho,a}^\flat)_*
(\calO_{\widetilde X_{\bar\rho,a}^\flat}\otimes \bbt)^{\bbW\rtimes \Theta_{\bar\rho}}$$
de sorte que
$$\Lie(J_a^\flat) \otimes L=
(\pi_a^\flat)_*((\pi_a^\flat)^*L\otimes \bbt)^{\bbW\rtimes \Theta_{\bar\rho}}.
$$

Si $\deg(L)<0$, $(\pi_a^\flat)^*L$ est un fibr\'e en droites de degr\'e
strictement n\'egatif sur chaque composante connexe de $\widetilde
X_{\bar\rho,a}^\flat$ et ne peut pas avoir de sections globales non nulles.

Si $\deg(L)=0$, si $(\pi_a^\flat)^*L$ a de sections globales non nulles si et
seulement s'il est isomorphe \`a $\calO_{\widetilde X_{\bar\rho,a}^\flat}$.
On a dans ce cas
$$\rmH^0(\widetilde X_{\bar\rho,a}^\flat,
((\pi_{\bar\rho,a}^\flat)^*L\otimes
\bbt)^{\bbW\rtimes \Theta_{\bar\rho}}=\bbt^{W_{\tilde a}}
$$
o\`u $W_{\tilde a}$ est le sous-groupe de $\bbW\rtimes \Theta_{\bar\rho}$
d\'efini comme dans le paragraphe \ref{subsection : pi_0(P_a)} et qui
d\'epend du choix d'un point $\widetilde\infty$ de $\widetilde X_a^\flat$.
Sous l'hypoth\`ese $a\in\calA^\ani(\bar k)$, le groupe des $W_{\tilde
a}$-invariants $\bbt^{W_{\tilde a}}$ est nul.
\end{proof}

\subsection{Formule de produit}

Nous allons \`a pr\'esent rappeler le lien entre les fibres de Hitchin et les
fibres de Springer affines qui en sont des analogues locaux.

Soit $a\in\calA^\heartsuit(\bar k)$. Soit $U$ l'image r\'eciproque de l'ouvert
r\'egulier semi-simple $\frakc^\rs$ de $\frakc$ par le morphisme $a:\bar
X\rightarrow [\frakc/\GG_m]$. Le recollement avec la section de Kostant
d\'efinit un morphisme
$$
\prod_{v\in\bar X-U}\calM_v(a)\rightarrow \calM_a.
$$
De m\^eme, on a un homomorphisme de groupes $\prod_{v\in \bar X-U}
\calP_v(J_a) \longrightarrow \calP_a$. Ceux-ci induisent un morphisme
$$\zeta:\prod_{v\in\bar X-U} \calM_v(a)
\wedge^{\prod_{v\in\bar X-U}\calP_v(J_a)} \calP_a \longrightarrow \calM_a.
$$
D'apr\`es le th\'eor\`eme 4.6 de \cite{N}, ce morphisme induit une
\'equivalence sur la cat\'egorie des $\bar k$-points. Le lecteur remarquera
des changements de notations par rapport \`a {\it loc. cit.} : $\calM_v(a)$ y
\'etait d\'esign\'e par $\calM_{v,a}^\bullet$ et ce qui y \'etait d\'esign\'e par
$\calM_{v,a}$ n'appara{\^\i}tra plus ici.

\begin{proposition}\label{produit}
Pour tout $a\in\calA^\ani(\bar k)$, le quotient de
$$\prod_{v\in\bar X-U} \calM_v^{\rm red}(a)\times  \calP_a$$
par l'action diagonale de  $\prod_{v\in\bar X-U}\calP_v^{\rm red}(J_a)$ est
un champ de Deligne-Mumford propre. De plus, le morphisme
$$\prod_{v\in\bar X-U}
\calM_v^{\rm red}(a) \wedge^{\prod_{v\in\bar X-U}\calP_v^{\rm red}(J_a)} \calP_a \rightarrow
\calM_a$$ est un hom\'eomorphisme.
\end{proposition}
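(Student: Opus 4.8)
On part de l'équivalence déjà établie dans \cite[théorème 4.6]{N} : le morphisme
$$\zeta:\prod_{v\in\bar X-U} \calM_v(a)
\wedge^{\prod_{v\in\bar X-U}\calP_v(J_a)} \calP_a \longrightarrow \calM_a$$
induit une équivalence sur les $\bar k$-points. Il s'agit donc essentiellement de deux choses : d'une part vérifier que le quotient de gauche, pris avec les schémas réduits $\calM_v^{\rm red}(a)$, est bien un champ de Deligne-Mumford propre, d'autre part renforcer l'équivalence sur les $\bar k$-points en un homéomorphisme. Pour la première partie, on observe que le champ quotient
$$\prod_{v\in\bar X-U} \calM_v^{\rm red}(a)\wedge^{\prod_{v\in\bar X-U}\calP_v^{\rm red}(J_a)}  \calP_a$$
est isomorphe au quotient de $\prod_{v\in\bar X-U} \calM_v^{\rm red}(a)\times  \calP_a$ par l'action diagonale du groupe $\prod_{v\in\bar X-U}\calP_v^{\rm red}(J_a)$, ce qui est la formulation de l'énoncé. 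La propreté s'établira en se ramenant au quotient par le réseau $\Lambda$ : d'après \ref{quotient projectif}, le quotient $\calM_v^{\rm red}(a)/\Lambda_v$ (où $\Lambda_v$ est le quotient libre maximal de $\pi_0(\calP_v(J_a))$) est un schéma projectif, tandis que la composante neutre de $\calP_a$ est un champ abélien lorsque $a\in\calA^\ani(\bar k)$ — ou du moins $\calP_a$ modulo son groupe affine $\calR_a$ a une composante neutre abélienne d'après \ref{P a flat}. En combinant ces faits, on découpe l'action diagonale de $\prod_v \calP_v^{\rm red}(J_a)$ en la partie « de type fini affine » $\calR_a$ qui agit proprement et dont le quotient reste propre, et la partie « réseau » $\prod_v\Lambda_v$ qui se compense avec le $\pi_0$ de $\calP_a$ pour donner un quotient fini des pièces projectives ; le résultat est alors un champ de Deligne-Mumford propre.

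Pour la deuxième partie, à savoir que $\zeta$ est un homéomorphisme et pas seulement une bijection sur les $\bar k$-points, je procéderais comme suit. On sait déjà que les deux membres ont même ensemble de $\bar k$-points et même dimension (pour le membre de droite c'est \ref{dim fibre} combiné à la densité \ref{densite globale} de $\calM_a^\reg$ — on peut se restreindre aussi aux points fixes par le groupe de symétries là où c'est utile) ; par ailleurs $\calM_a$ est séparé et le membre de gauche, étant un champ de Deligne-Mumford propre, l'est aussi, donc $\zeta$ est un morphisme propre. Un morphisme propre qui induit une bijection sur les points géométriques est fini et radiciel, donc un homéomorphisme. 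Il reste donc uniquement à vérifier que $\zeta$ est bien défini comme morphisme de champs — ce qui est automatique par la construction du recollement avec la section de Kostant, déjà expliquée dans le texte — et qu'il est représentable et propre. La propreté découle de la propreté simultanée de $\calM_a^\ani$ sur $\calA^\ani$ (annoncée dans l'introduction et à démontrer dans le chapitre anisotrope) et de la propreté du membre de gauche qu'on vient d'établir, via le critère valuatif appliqué au graphe.

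\textbf{Étape délicate.} Le point le plus délicat est le contrôle de la propreté du quotient de gauche. Le produit $\prod_{v\in\bar X-U}\calM_v^{\rm red}(a)$ n'est pas propre en lui-même (les fibres de Springer affines sont seulement localement de type fini), et $\calP_a$ n'est pas propre non plus (elle contient le groupe affine $\calR_a$ et son $\pi_0$ est un groupe abélien de type fini potentiellement infini). C'est précisément l'action diagonale qui « compacte » le produit : le groupe $\prod_v\Lambda_v$ agit librement sur le produit des fibres de Springer réduites avec quotient projectif, et injecte dans $\pi_0(\calP_a)$ de sorte que le conoyau soit fini — c'est ici qu'interviennent crucialement l'hypothèse anisotrope (qui rend $\calP_a^0$ abélien, \ref{A lozenge} via \ref{P a flat}) et la compatibilité des descriptions galoisiennes locales et globale de $\pi_0$ faites en \ref{pi 0 local} et \ref{pi 0 global}. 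Il faudra donc décomposer soigneusement l'action : $\calR_a$ agit sur le facteur local « $\calR$ » de chaque $\calP_v$, le reste du produit des $\calP_v^{\rm red}$ agit via son image dans $\calP_a^{\flat}$, et organiser le dévissage pour exhiber un schéma projectif surjectant sur le quotient total, à la manière de \ref{quotient projectif}. Une fois ce dévissage en place, la propreté et le fait que $\zeta$ soit un homéomorphisme suivent des arguments standards rappelés ci-dessus.
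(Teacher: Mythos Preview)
Ton plan général est le bon et correspond à celui du papier, mais deux points de ton \og étape délicate \fg{} sont réellement erronés et feraient échouer la démonstration telle qu'écrite.

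\textbf{Le rôle de $\Lambda_v$ et de l'anisotropie est inversé.} Tu écris que $\prod_v\Lambda_v$ \og injecte dans $\pi_0(\calP_a)$ de sorte que le conoyau soit fini \fg{}. C'est impossible : puisque $a$ est anisotrope, $\pi_0(\calP_a)$ est \emph{fini} (c'est la définition de $\calA^\ani$, cf.~\ref{finitude}), donc aucun groupe abélien libre non nul ne peut s'y injecter. Le bon énoncé est exactement l'inverse : on choisit $\Lambda_v$ libre d'indice fini dans $\pi_0(\calP_v(J_a))$ et contenu dans le \emph{noyau} de $\pi_0(\calP_v(J_a))\to\pi_0(\calP_a)$ --- ce qui est possible précisément parce que ce dernier groupe est fini. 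Comme $a$ est défini sur un corps fini, on peut même relever $\Lambda_v$ dans le noyau de $\calP_v(J_a)\to\calP_a$. L'intérêt est alors que $\prod_v\Lambda_v$ agit \emph{trivialement} sur le facteur $\calP_a$ et librement sur $\prod_v\calM_v^{\rm red}(a)$ : le premier quotient est donc simplement $\prod_v(\calM_v^{\rm red}(a)/\Lambda_v)\times\calP_a$, produit d'un schéma projectif (par \ref{quotient projectif}) avec $\calP_a$. Il reste à quotienter par $\prod_v(\calP_v(J_a)/\Lambda_v)$ ; on observe que $\calR_v(a)\hookrightarrow\calP_v(J_a)/\Lambda_v$ a un conoyau fini, et le quotient par $\calR_a=\prod_v\calR_v(a)$ donne une fibration localement triviale sur $\calP_a^\flat$ de fibre projective, donc un champ de Deligne-Mumford propre. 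Le quotient final par le groupe fini résiduel reste propre. Note aussi que l'hypothèse anisotrope ne rend pas $\calP_a^0$ abélien (ta référence à \ref{A lozenge} est hors sujet : cette proposition concerne $\calA^\diamondsuit$, pas $\calA^\ani$) ; ce qui est abélien, c'est la composante neutre de $\calP_a^\flat$, et ceci vaut pour tout $a\in\calA^\heartsuit$ par \ref{P a flat}.

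\textbf{La circularité finale.} Tu invoques \og la propreté de $\calM_a^\ani$ sur $\calA^\ani$ \fg{} pour conclure que $\zeta$ est propre. C'est circulaire : cette propreté (\ref{propre}) utilise justement la formule de produit pour établir que $\calM_a$ est de type fini. Ton premier argument, quelques lignes plus haut, est le bon et suffit : la source de $\zeta$ est un champ de Deligne-Mumford propre et la cible $\calM_a$ est un champ de Deligne-Mumford (donc séparé), si bien que $\zeta$ est propre ; comme il induit une équivalence sur les $\bar k$-points, c'est un homéomorphisme, et on en déduit \emph{a posteriori} que $\calM_a$ est propre.
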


\begin{proof}
L'homomorphisme $\calP_v(J_a) \rightarrow \calP_a$ induit un
homomorphisme $\pi_0(\calP_v(J_a)) \rightarrow \pi_0(\calP_a)$ sur les
groupes des composantes connexes. Puisque $a\in\calA^\ani(\bar k)$,
$\pi_0(\calP_a)$ est un groupe fini, le noyau de cet homomorphisme est un
sous-groupe d'indice finie de $\pi_0(\calP_v(J_a))$. Il existe donc un
sous-groupe ab\'elien libre d'indice finie $\Lambda_v$ de
$\pi_0(\calP_v(J_a))$ contenu dans ce noyau.

Choisissons un rel\`evement $\Lambda_v \rightarrow \calP_v(J_a)$.
Puisque $a$ est d\'efini sur un corps fini, quitte \`a remplacer $\Lambda_v$
par un sous-groupe d'indice finie, on peut supposer que $\Lambda_v$ est
contenu dans le noyau de $\calP_v(J_a) \rightarrow \calP_a$.

Le groupe $\prod_{v\in\bar X-U}\Lambda_v$ agit sur $\prod_{v\in\bar X-U}
\calM_v^{\rm red}(a)\times  \calP_a$ en agissant librement sur le premier
facteur et trivialement sur le second facteur. Le quotient est
$$\prod_{v\in\bar X-U} (\calM_v^{\rm red}(a)/ \Lambda_v)\times  \calP_a
$$
o\`u chaque $\calM_v^{\rm red}(a)/ \Lambda_v$ est un $\bar k$-sch\'ema
projectif d'apr\`es Kazhdan et Lusztig \cf \ref{quotient projectif}.

Il reste \`a quotienter par $\prod_{v\in\bar X-U} (\calP_v(J_a)/
\Lambda_v)$. Pour tout $v$, l'homomorphisme $\calR_v(a)\rightarrow
\calP_v(J_a)/ \Lambda_v$ est injectif et de noyau fini. Rappelons qu'on a
une suite exacte
$$1\rightarrow \calR_a \rightarrow \calP_a \rightarrow \calP_a^\flat \rightarrow 1$$
o\`u $\calP_a^\flat$ est un champ de Deligne-Mumford propre. Le quotient
de
$$\prod_{v\in\bar X-U} (\calM_v^{\rm red}(a)/ \Lambda_v)\times  \calP_a$$
par l'action diagonale de $\calR_a=\prod_v \calR_v(a)$ est donc une
fibration localement triviale au-dessus de $\calP_a^\flat$ de fibres
isomorphes \`a $$\prod_{v\in\bar X-U} (\calM_v^{\rm red}(a)/ \Lambda_v).$$
Ce quotient est donc un champ de Deligne-Mumford propre.

Il reste donc \`a quotienter par le groupe fini $\prod_v \calP_v(J_a)/
(\calR_v(a)\times \Lambda_v)$. Le quotient final est aussi un champ de
Deligne-Mumford propre.

Puisque $\calM_a$ est un champ de Deligne-Mumford, en particulier
s\'epar\'e, le morphisme
$$\zeta:\prod_{v\in\bar X-U}
\calM_v^{\rm red}(a) \wedge^{\prod_{v\in\bar X-U}\calP_v^{\rm red}(J_a)} \calP_a
\rightarrow \calM_a
$$
est un morphisme propre. Puisqu'il induit une \'equivalence sur les $\bar
k$-points, c'est donc un hom\'eomorphisme. En particulier, $\calM_a$ est un
champ de Deligne-Mumford propre.
\end{proof}

On s'attend \`a ce que l'\'enonc\'e ci-dessus s'\'etend \`a $a\in
\calA^\heartsuit(\bar k)$.

\subsection{Densit\'e}\label{dense}
On va maintenant \'enoncer et d\'emontrer l'analogue global de \ref{densite
locale}.

\begin{proposition}\label{densite globale}
Pour tout point g\'eom\'etrique $a\in\calA^\heartsuit(\bar k)$, la fibre
$\calM^\reg_a$ de est dense dans la fibre $\calM_a$.
\end{proposition}

\begin{proof} La
formule de produit \ref{produit} implique que le ferm\'e compl\'ementaire
$\calM_a^\reg$ dans $\calM_a$ est de dimension strictement plus petite
que $\calM_a^\reg$. Comme l'espace total de Hitchin $\calM$ est lisse sur
$k$ d'apr\`es \ref{lisse}, les fibres de Hitchin $\calM_a$ sont localement
une intersection compl\`ete. En particulier, elles ne peuvent pas admettre
des composantes irr\'educ\-tibles de dimension strictement plus petite que la
sienne. Ceci d\'emontre que $\calM_a^\reg$ est dense dans $\calM_a$.
\end{proof}

La d\'emonstration ci-dessus est essentiellement la m\^eme sur celle de
Altman, Iarrobino et Kleiman dans \cite{AIK} pour la densit\'e de la
jacobienne dans la jacobienne compactifi\'ee d'une courbe projective
r\'eduite irr\'educ\-tible ayant des singularit\'es planes.

\begin{corollaire}
La partie r\'eguli\`ere $\calM_v^\reg(a)$ de la fibre de Springer
$\calM_v(a)$ est dense.
\end{corollaire}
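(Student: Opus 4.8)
The plan is to deduce the local density statement $\calM_v^\reg(a)\subset \calM_v(a)$ dense from the global density statement \ref{densite globale}, exactly as announced at the beginning of \ref{dense}. The mechanism is the product formula \ref{produit}: since a Hitchin fibre $\calM_a$ over an anisotropic point $a$ is homeomorphic to a quotient of $\prod_{v\in\bar X-U}\calM_v^{\rm red}(a)\times\calP_a$ by $\prod_v\calP_v^{\rm red}(J_a)$, the density of $\calM_a^\reg$ in $\calM_a$ translates, factor by factor, into the density of $\calM_v^\reg(a)$ in $\calM_v^{\rm red}(a)$ for each $v$.

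First I would reduce to the case of a single local place. Given a fixed local datum, i.e.\ a formal disc $X_v=\Spec(\calO_v)$ with $\calO_v=k_v[[\epsilon_v]]$ and an element $a_v\in\frakc^\heartsuit(\calO_v)$, I would invoke the approximation proposition \ref{HC}: there is an integer $N$ such that the fibre of Springer $\calM_v(a_v)$ together with its $\calP_v(J_{a_v})$-action depends only on $a_v\bmod\epsilon_v^N$. It therefore suffices to realise some truncation of $a_v$ as the local behaviour, at a single point $v\in|X|$, of a global section $a\in\calA^\heartsuit(\bar k)$ which moreover lies in $\calA^\ani(\bar k)$ and which is regular semisimple away from $v$. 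The existence of such a global $a$ follows from a standard patching argument on the projective curve $X$ — one prescribes $a$ to agree with $a_v$ to order $N$ at $v$, to be generic (hence semisimple regular and transverse to $\discrim_{G,D}$) elsewhere, which is possible once $\deg(D)$ is taken large enough, using the surjectivity of global sections onto jets (compare \ref{deg(D)>2g} and \ref{A diamond non vide}); the anisotropy condition cuts out a non-empty open subset, so a generic choice satisfies it.

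Next, with such a global $a$ in hand, the product formula \ref{produit} gives a homeomorphism between $\calM_a$ and a quotient of $\prod_{w\in\bar X-U}\calM_w^{\rm red}(a)\times\calP_a$ by $\prod_w\calP_w^{\rm red}(J_a)$, where the product runs over the finitely many points $w$ where $a$ meets the discriminant — and by construction $v$ is among them while every other such $w$ carries an unramified, hence trivial, local contribution or at least a controlled one. Since $\calP_a$ and the groups $\calP_w^{\rm red}(J_a)$ are smooth and act nicely, and since the open substack $\calM_a^\reg$ corresponds under this homeomorphism to $\prod_w\calM_w^\reg(a)\times\calP_a$ modulo the same group, the global density of $\calM_a^\reg$ in $\calM_a$ forces, projecting to the relevant factor, the density of $\calM_v^\reg(a)$ in $\calM_v^{\rm red}(a)$. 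Finally, the reduced scheme $\calM_v^{\rm red}(a)$ has the same underlying topological space as $\calM_v(a)$, so $\calM_v^\reg(a)$ is dense in $\calM_v(a)$ as well.

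The main obstacle is the patching step: one must produce a single global $a\in\calA^\ani(\bar k)$ whose $v$-adic germ agrees with the given local datum to the order $N$ supplied by \ref{HC}, while remaining generic (anisotropic, and transverse to the discriminant away from $v$). This requires the degree of $D$ to be sufficiently large relative to $N$ and to the genus, and one must check that imposing a jet condition at one point does not conflict with the open conditions defining $\calA^\heartsuit$ and $\calA^\ani$; this is where the hypothesis $\deg(D)\gg 0$ and the surjectivity of $\rmH^0(X,\frakc_D)$ onto high-order jets (a Riemann--Roch estimate as in \ref{deg(D)>2g}) do the work. Everything else is formal consequence of \ref{produit}, \ref{densite globale}, and \ref{HC}, together with the elementary remark that density is insensitive to nilpotents in the structure sheaf.
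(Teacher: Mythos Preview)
Your proposal is correct and follows essentially the same approach as the paper: the paper's proof sketch says to construct a global situation from the given local one by proceeding as in \ref{lemme fondamental} (i.e.\ the globalization via approximation \ref{HC} and a choice of curve with $\deg(D)$ large, exactly as you describe), and then to deduce the local density from the global density \ref{densite globale} via the product formula \ref{produit}. The paper leaves the details to the reader, and what you wrote is precisely a fleshing-out of those details.
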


On commence par construire une situation globale \`a partir de la situation
locale donn\'ee en proc\'edant comme dans \cf \ref{lemme fondamental}.
L'assertion locale se d\'eduit alors de l'assertion globale \`a l'aide de la
formule de produit \cf \ref{produit}. Nous laissons au lecteur les d\'etails de
la d\'emonstration de ce corollaire qui ne sera pas utilis\'e dans la suite de
l'article.

\begin{corollaire}
Pour tout $a\in\calA^\heartsuit(\bar k)$, $\calM_a$ est \'equidimension\-nelle
de dimension \'egale \`a
$$\sharp\,\Phi\deg(D) / 2 + r(g-1).$$
De plus, l'ensemble des composantes irr\'eductibles de $\calM_a$ s'identifie
au groupe $\pi_0(P_a)$.
\end{corollaire}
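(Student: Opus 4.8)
The plan is to deduce this final corollary directly from the density statement \ref{densite globale} together with the smoothness result \ref{lisse} and the structure of $\calP_a$ established earlier. First I would observe that since $\deg(D)>2g$ (our standing hypothesis, which in particular implies $\deg(D)>2g-2$), the total space $\calM$ is smooth over $k$ by \ref{lisse}, so the Hitchin fibres $\calM_a$ are local complete intersections in $\calM$ of pure codimension $\dim(\calA)$. A local complete intersection scheme has no embedded components and is equidimensional, hence $\calM_a$ is equidimensional of dimension $\dim(\calM)-\dim(\calA)$. Plugging in the formulas $\dim(\calM)=(r+\sharp\,\Phi)\deg(D)$ from \ref{subsection : dimension} and $\dim(\calA)=\sharp\,\Phi\deg(D)/2+r(1-g+\deg(D))$ from \ref{dim A}, one gets exactly $\dim(\calM_a)=\sharp\,\Phi\deg(D)/2+r(g-1)$, matching the dimension of $\calP_a$ computed in \ref{dim fibre}.

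Next I would identify the irreducible components. By \ref{densite globale} the open regular locus $\calM_a^\reg$ is dense in $\calM_a$; since $\calM_a$ is equidimensional, the irreducible components of $\calM_a$ are precisely the closures of the irreducible components of $\calM_a^\reg$. Now $\calM_a^\reg$ is a torsor under $\calP_a$ by \ref{M reg}, so its connected components — equivalently (each being smooth) its irreducible components — are in bijection with $\pi_0(\calP_a)$. Composing the two bijections yields the canonical bijection between the set of irreducible components of $\calM_a$ and the group $\pi_0(\calP_a)$.

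The only subtlety is that everything should be phrased for a geometric point $a\in\calA^\heartsuit(\bar k)$, so one works over $\bar k$ throughout; the smoothness of $\calM$ and the local complete intersection property are geometric statements, and the density statement \ref{densite globale} is already stated over $\bar k$. I do not expect any real obstacle here: the corollary is essentially a formal consequence of the three preceding results, and the proof is a short assembly of (i) smoothness $\Rightarrow$ lci $\Rightarrow$ equidimensionality, (ii) density of $\calM_a^\reg$, and (iii) the torsor structure of $\calM_a^\reg$ under $\calP_a$. The one point requiring a word of care is that density plus equidimensionality genuinely forces the components of $\calM_a$ to be closures of components of the dense open, which uses that $\calM_a$ has no component entirely contained in the complement $\calM_a\setminus\calM_a^\reg$ — but this is exactly what equidimensionality (via the lci property) guarantees, since that complement has strictly smaller dimension by the product formula argument already invoked in the proof of \ref{densite globale}.
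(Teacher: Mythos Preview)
Your proposal is correct and follows essentially the same route as the paper: equidimensionality via the lci property (which the paper already invoked in the proof of \ref{densite globale}), the dimension formula via \ref{dim fibre}, and the identification of components via density of $\calM_a^\reg$ and its torsor structure under $\calP_a$. One small point: the paper's proof explicitly invokes the section de Kostant to make the identification with $\pi_0(\calP_a)$ canonical---without a base point, the components of a $\calP_a$-torsor form only a $\pi_0(\calP_a)$-torsor, not a group; you implicitly use this (the section is mentioned after \ref{M reg}) but it is worth saying.
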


\begin{proof}
La formule de dimension r\'esulte de \ref{dim fibre}. L'identification de
l'ensemble des composantes irr\'eductibles de $\calM_a$ avec le groupe
$\pi_0(P_a)$ est rendue possible par la section de Kostant.
\end{proof}

\begin{corollaire}
Si $\deg(D)>2g-2$, le morphisme $f^\heartsuit:\calM^\heartsuit\rightarrow
\calA^\heartsuit$ est un morphisme plat de dimension relative $d$. Ses
fibres sont g\'eom\'e\-triquement r\'eduites.
\end{corollaire}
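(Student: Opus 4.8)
Le plan est de ramener l'\'enonc\'e \`a trois ingr\'edients d\'ej\`a \'etablis : la lissit\'e de l'espace total, le calcul des dimensions, et la densit\'e de l'orbite r\'eguli\`ere. D'abord, comme $\deg(D)>2g-2$, le th\'eor\`eme \ref{lisse} entra\^ine que l'ouvert $\calM^\heartsuit=f^{-1}(\calA^\heartsuit)$ est lisse sur $k$ : tout point ferm\'e de $\calM^\heartsuit$ y est un point de lissit\'e et le lieu de lissit\'e est ouvert. En particulier $\calM^\heartsuit$ est un champ de Cohen-Macaulay, tandis que $\calA^\heartsuit$, ouvert de l'espace affine $\calA$, est r\'egulier. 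Ensuite, d'apr\`es le corollaire pr\'ec\'edent, pour tout $a\in\calA^\heartsuit(\bar k)$ la fibre $\calM_a$ est \'equidimensionnelle de dimension $d=\sharp\,\Phi\deg(D)/2+r(g-1)$, et les formules \ref{dim A} et \ref{dim fibre} donnent $\dim(\calM)=\dim(\calA)+d$ ; la fibre $\calM_a$ a donc en chacun de ses points la dimension attendue $\dim(\calM^\heartsuit)-\dim(\calA^\heartsuit)$. J'invoquerais alors le crit\`ere de platitude par fibres (platitude miraculeuse, EGA IV 6.1.5), appliqu\'e apr\`es le choix d'une pr\'esentation lisse surjective $M\to\calM^\heartsuit$ par un $k$-sch\'ema $M$, lisse sur $k$ donc de Cohen-Macaulay : le morphisme $M\to\calA^\heartsuit$ a une source de Cohen-Macaulay, un but r\'egulier, et des fibres \'equidimensionnelles de la dimension attendue (elles sont lisses sur les fibres de $f^\heartsuit$, et la pr\'esentation \'etant lisse le d\'ecalage de dimension est le m\^eme de part et d'autre), donc il est plat ; par descente fid\`element plate le long de $M\to\calM^\heartsuit$, le morphisme $f^\heartsuit$ est plat, et sa dimension relative est la dimension commune $d$ de ses fibres.

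Pour la r\'eductibilit\'e g\'eom\'etrique, je montrerais que pour tout point g\'eom\'etrique $a\in\calA^\heartsuit(\bar k)$ le champ $\calM_a$ est r\'eduit. Comme $f^\heartsuit$ est plat et $\calM^\heartsuit$ de Cohen-Macaulay, la fibre $\calM_a$ est elle-m\^eme de Cohen-Macaulay ; elle satisfait donc la condition $S_1$ de Serre et n'a pas de composante immerg\'ee. Par ailleurs \ref{densite globale} assure que l'ouvert $\calM_a^\reg$ est dense dans $\calM_a$, donc contient tous les points maximaux de $\calM_a$ ; or $\calM_a^\reg$ est un torseur sous le champ de Picard $\calP_a$, qui est lisse sur $\bar k$ d'apr\`es \ref{P lisse}, de sorte que $\calM_a^\reg$ est r\'egulier et $\calM_a$ satisfait la condition $R_0$. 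Le crit\`ere de Serre $R_0+S_1$ entra\^ine alors que $\calM_a$ est r\'eduit.

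L'essentiel du travail \'etant contenu dans les \'enonc\'es ant\'erieurs (\ref{lisse}, \ref{dim fibre}, \ref{densite globale}), le seul point d\'elicat sera la gestion de la structure de champ alg\'ebrique : il faudra soigner le passage \`a une pr\'esentation lisse pour pouvoir parler de Cohen-Macaulay, de platitude et du crit\`ere de Serre, et v\'erifier que le comptage de dimensions reste correct \`a travers cette pr\'esentation, ce qui ne pose pas de probl\`eme puisqu'elle est lisse au-dessus de $\calM^\heartsuit$.
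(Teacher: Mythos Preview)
Ta preuve est correcte et suit essentiellement la m\^eme d\'emarche que l'article : platitude par le crit\`ere de la platitude miraculeuse (source lisse donc Cohen--Macaulay, but r\'egulier, fibres de dimension constante), puis r\'eductibilit\'e des fibres via l'absence de composantes immerg\'ees combin\'ee \`a la densit\'e de l'ouvert r\'egulier lisse $\calM_a^\reg$. L'article formule ce dernier point en disant que la fibre est localement une intersection compl\`ete admettant un ouvert lisse dense, tandis que tu invoques directement le crit\`ere de Serre $R_0+S_1$ apr\`es avoir obtenu $S_1$ par h\'er\'edit\'e de Cohen--Macaulay le long d'un morphisme plat ; ces deux formulations sont \'equivalentes ici, et ta gestion explicite de la structure de champ via une pr\'esentation lisse est un raffinement utile que l'article laisse implicite.
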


\begin{proof}
D'apr\`es \ref{lisse}, $\calM^\heartsuit$ et $\calA^\heartsuit$ sont lisses sur
$k$. Pour d\'emontrer que $f$ est plat, il suffit alors de d\'emontrer que la dimension des
fibres v\'erifient l'\'egalt\'e
$$\dim(\calM_a)=\dim(\calM)-\dim(\calA).$$
Ceci d\'ecoule des \'egalit\'es $\dim(\calM_a)=\dim(P_a)$,
$\dim(\calM)=\dim(P)$ et pour $P$ lisse sur $\calA$ \cf \ref{P lisse}, on a
l'\'egalit\'e
$$\dim(\calP^\heartsuit)=\dim(\calA^\heartsuit)+\dim(\calP_a).$$
Comme dans la d\'emonstration de \ref{densite globale}, on sait que la fibre
$\calM_a$ est localement une intersection compl\`ete. Puisque qu'elle
admet un ouvert dense lisse $\calM_a^{\rm reg}$, elle est n\'ecessairement
r\'eduite.
\end{proof}

\subsection{Le cas des groupes endoscopiques}
\label{subsection : Fibration de Hitchin de groupes endoscopiques}

Soit $(\kappa,\rho_\kappa)$ une donn\'ee endoscopique de $G$ au-dessus de
$X$ \cf \ref{donnee endoscopique}. Soit $H$ le groupe endoscopique
associ\'e. Comme dans \ref{subsection : Transfert des classes}, on a un
morphisme $\nu:\frakc_H \rightarrow \frakc$. En tordant par $D$, on
obtient un morphisme $\nu:\frakc_{H,D} \rightarrow \frakc_D$. En prenant
les points \`a valeurs dans $X$, on obtient un morphisme qu'on note encore
$$\nu:\calA_H \rightarrow \calA.$$
On sait d'apr\`es \cite[7.2]{N} que la restriction de ce morphisme \`a
l'ouvert $\calA^\heartsuit$ ce morphisme est fini et non ramifi\'e.

\begin{numero}\label{codim A_H}
Notons
$$r_H^G(D)=(|\Phi|-|\Phi_H|)\deg(D)/ 2.$$
D'apr\`es \ref{dim A}, on sait
$$\dim(\calA)-\dim(\calA_H)=r_H^G(D)$$
de sorte que l'image de $\calA_H^{G-\heartsuit}=\nu^{-1}(\calA^\heartsuit)$
dans $\calA^\heartsuit$ est un sous-sch\'ema ferm\'e de codimension
$r_H^G(D)$.
\end{numero}

\begin{numero}
Au-dessus de $\calA_H$, on dispose de la fibration de Hitchin du groupe $H$
$$f_H:\calM_H\rightarrow \calA_H.$$ On a \'egalement le champ de Picard
$\calP_H\rightarrow \calA_H$ agissant sur $\calM_H$. Il n'y a pas de relation
directe entre $\calM$ et $\calM_H$ mais $\calP$ et $\calP_H$ sont reli\'es
de fa{\c c}on simple. Soit $a_H\in\calA_H(\bar k)$ d'image
$a\in\calA^\heartsuit(\bar k)$. L'homomorphisme $\mu:\nu^* J\rightarrow
J_H$ de \ref{J JH} induit un homomorphisme $J_a \rightarrow J_{H,a_H}$
qui est g\'en\'eriquement un isomorphisme. On obtient donc un
homomorphisme surjectif
$$\calP_a\rightarrow \calP_{H,a_H}$$
de noyau
$$\calR_{H,a_H}^G=\rmH^0(\ovl X,J_{H,a_H}/J_a)$$
qui est un groupe affine de dimension
$$\dim(\calR_{H,a_H}^G)=\dim(\calP_a)-\dim(\calP_{H,a_H}).$$
D'apr\`es la formule \ref{dim fibre}, on \'egalement
$$\dim(\calR_{H,a_H}^G)=(|\Phi|-|\Phi_H|)\deg(D)/ 2=r_H^G(D).$$
\end{numero}

\begin{numero}\label{delta-dellta_H}
Soit $J_{H,a_H}^\flat$ le mod\`ele de N\'eron de $J_{H,a_H}$. On a des
homomorphismes
$$J_a \rightarrow J_{H,a_H}\rightarrow J_{H,a_H}^\flat$$
qui sont des isomorphismes sur un ouvert non-vide de $\ovl X$. Il s'ensuit
que $J_{H,a_H}^\flat$ est aussi le mod\`ele de N\'eron de $J_a$. En
combinant avec \ref{R_a}, on a la suite exacte
$$1\rightarrow \calR_{H,a_H}^G \rightarrow \calR_a \rightarrow \calR_{H,a_H} \rightarrow 1.$$
On en d\'eduit la formule de dimension
$$\delta_a-\delta_{H,a_H}=r_H^G(D)$$
o\`u $\delta_a=\dim(\calR_a)$ et $\delta_{H,a_H}=\dim(\calR_{H,a_H})$
sont les invariants $\delta$ de $a$ et de $a_H$ par rapport aux groupes $G$
et $H$ respectivement.
\end{numero}

\subsection{Le cas des groupes appari\'es}
\label{Hitchin apparies} Soient $G_1$ et $G_2$ deux $X$-sch\'emas en
groupes appari\'es au sens de \ref{apparies}. On a alors un isomorphisme
$\frakc_{G_1}= \frakc_{G_2}$ \cf \ref{c_G_1=c_G_2}. On en d\'eduit un
isomorphisme $\frakc_{G_1,D}= \frakc_{G_2,D}$ puis un isomorphisme entre
les bases des fibrations de Hitchin
$$\calA=\calA_{1}= \calA_{2}.$$
Il n'y a pas de relation directe entre les fibrations
$f_{1}:\calM_{1}\rightarrow\calA_{1}$ et
$f_{2}:\calM_{2}\rightarrow\calA_{2}$ de $G_1$ et $G_2$ mais une relation
entre les champs de Picard associ\'es $\calP_{1}$ et $\calP_{2}$.

\begin{proposition}\label{isogenie P1 P2}
Il existe un  homomorphisme entre $\calA$-champs de Picard
$$\calP_{1}\rightarrow \calP_{2}$$
qui induit une isog\'enie entre leurs composantes neutres.
\end{proposition}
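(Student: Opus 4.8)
Le plan est de ramener l'énoncé à la construction d'un unique homomorphisme de schémas en groupes au-dessus de $\frakc$. D'après \ref{c_G_1=c_G_2} on identifie $\frakc:=\frakc_{G_1}=\frakc_{G_2}$, le revêtement fini plat commun $\pi:\frakt\rightarrow\frakc$ avec son action de $W=\rho_{12}\wedge^{\Out_{12}}\bbW_1$, et par suite $\calA:=\calA_1=\calA_2$. Tous les objets ci-dessous seront $\GG_m$-équivariants pour l'action par les exposants, de sorte qu'il suffira de produire un homomorphisme $\mu:J_1\rightarrow J_2$ de schémas en groupes lisses commutatifs au-dessus de $[\frakc/\GG_m]$, isogénie au-dessus de $\frakc^\rs$, et tel que, composé avec un homomorphisme analogue $\mu':J_2\rightarrow J_1$, il soit la multiplication par $N^2$ pour un entier $N$ inversible sur $X$. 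En effet $\mu$ induira alors $\calP_1=\calP(J_1)\rightarrow\calP(J_2)=\calP_2$ par fonctorialité du champ de Picard des torseurs, et pour tout $a\in\calA(\bar k)$ la suite exacte longue de cohomologie attachée à $J_{1,a}\rightarrow J_{2,a}$ sur $\bar X$ montrera que $\calP_{1,a}\rightarrow\calP_{2,a}$ a un noyau fini et un conoyau tué par $N^2$, donc fini ; on en déduira l'isogénie voulue entre composantes neutres.

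On commencerait par construire une isogénie $W$-équivariante de $X$-tores $\lambda:T_1\rightarrow T_2$. Comme $\bbX^*(\bbT_1)$ et $\bbX^*(\bbT_2)$ sont des réseaux commensurables dans l'espace $\QQ$-vectoriel où $\psi^*$ les identifie, on choisit $N\geq 1$ tel que $N\psi^*$ et $N(\psi^*)^{-1}$ soient à valeurs entières ; l'hypothèse que les caractéristiques résiduelles de $X$ sont bonnes par rapport à $\psi^*$ permet de prendre $N$ inversible sur $X$. L'homomorphisme de réseaux $N\psi^*:\bbX^*(\bbT_2)\rightarrow\bbX^*(\bbT_1)$ est de conoyau fini d'ordre premier à ces caractéristiques ; il est $\bbW_1\simeq\bbW_2$-équivariant car les réflexions ne dépendent que des $\QQ$-droites des racines et des coracines, et $\Out_{12}$-équivariant par la définition même de $\Out_{12}$ \cf \ref{apparies}. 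En tordant par le $\Out_{12}$-torseur $\rho_{12}$ on obtient $\lambda:T_1\rightarrow T_2$ ; composé avec l'homomorphisme $\mu_0:T_2\rightarrow T_1$ déduit de $N(\psi^*)^{-1}$, on a $\mu_0\circ\lambda=[N^2]$, donc $\lambda$ est une isogénie séparable de noyau contenu dans $T_1[N^2]$.

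On descendrait ensuite $\lambda$ au centralisateur régulier à l'aide de la description de Donagi--Gaitsgory : d'après \ref{J=J'} on a $J_i=J_i'$, sous-schéma en groupes ouvert de
$$J_i^1=\Bigl(\,\prod\nolimits_{(X_\rho\times\bbt)/\frakc}(\bbT_i\times X_\rho\times\bbt)\,\Bigr)^{\bbW\rtimes\Theta_\rho}$$
\cf \ref{J pi_rho_G}. La restriction des scalaires à la Weil le long de $\pi$ et le passage aux invariants sous $\bbW\rtimes\Theta_\rho$ étant fonctoriels, $\lambda$ induit $\lambda^1:J_1^1\rightarrow J_2^1$ au-dessus de $\frakc$, qui au-dessus de $\frakc^\rs$ est l'isogénie de tores déduite de $\lambda$ et qui, d'après \ref{isom sur composantes neutres}, induit une isogénie $J_1^0\rightarrow J_2^0$ sur les sous-schémas des composantes neutres. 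Resterait à vérifier que $\lambda^1$ envoie $J_1'$ dans $J_2'$ ; comme $J_i'$ est ouvert dans $J_i^1$ et contient $J_i^0$, cela revient à montrer que pour tout point géométrique $a$ de $\frakc$ l'homomorphisme induit $\pi_0(J_{1,a})\rightarrow\pi_0(J_{2,a}^1)$ est à valeurs dans $\pi_0(J_{2,a})$. C'est une assertion de même nature que celles des démonstrations de \ref{J=J'} et de \ref{J JH}, et on la traiterait de la même façon : par localisation on se ramène à un groupe de rang semi-simple un, c'est-à-dire à un produit de $\SL_2$, $\PGL_2$ ou $\GL_2$ avec un tore, où les conditions de signe qui découpent $J'$ dans $J^1$ se vérifient à la main, le fait que $N$ soit premier aux caractéristiques résiduelles assurant que rien ne dégénère. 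Posant $\mu:=\lambda^1|_{J_1'}:J_1\rightarrow J_2$, la même vérification appliquée à $N(\psi^*)^{-1}$ fournirait $\mu':J_2\rightarrow J_1$ avec $\mu'\circ\mu=[N^2]$, ce qui achèverait la réduction annoncée au premier paragraphe.

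Le point délicat sera précisément cette compatibilité $\lambda^1(J_1')\subset J_2'$ : le sous-groupe $J'$ de Donagi--Gaitsgory est découpé dans $J^1$ par les conditions $\alpha(f(x))=1$ aux points des murs, et comme $\psi^*$ n'apparie que les $\QQ$-droites des racines --- et non les racines elles-mêmes --- l'isogénie $\lambda$ peut multiplier une racine par un entier qui n'est pas une unité ; il faut donc contrôler que ces conditions de signe sont préservées par $\lambda$, ce qui est l'objet de la réduction au rang un ci-dessus et où intervient la bonne caractéristique résiduelle. Le reste est formel.
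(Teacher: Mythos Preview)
Your strategy is correct, and the point you flag as delicate is in fact immediate --- no reduction to semisimple rank one is needed. For $f\in J_1'(S)$ and a geometric point $x$ of $S\times_\frakc\frakt$ fixed by $s_\beta$ with $\beta\in\Phi_2$, take $\alpha\in\Phi_1$ on the $\QQ$-line $\QQ\,\psi^*(\beta)$; then $s_\beta=s_\alpha$ in $W$, so $\alpha(f(x))=1$ by the $J_1'$-condition, and since $\beta\circ\lambda=N\psi^*(\beta)=m\alpha$ for some integer $m$, one gets $\beta(\lambda(f(x)))=\alpha(f(x))^m=1$. Thus $\lambda^1(J_1')\subset J_2'$ directly.

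The paper organises the argument differently and bypasses this check altogether. It produces a map between the neutral-component subgroup schemes $J_{G_1}^0\to J_{G_2}^0$ (the text says ``isomorphisme'', which is too optimistic --- already over $\frakc^\rs$ the $W$-tori $T_1,T_2$ need not be isomorphic, e.g.\ the weight and root lattices of type $A_2$ are non-isomorphic $\ZZ[S_3]$-modules --- but an \emph{isogeny}, which your $\lambda^1$ restricted to $J_1^0=(J_1^1)^0$ provides, is all that is actually used). It then precomposes with multiplication by an integer annihilating the finite $\pi_0(J_{G_1})$, so that the composite
\[
J_{G_1}\xrightarrow{\ [N]\ }J_{G_1}^0\longrightarrow J_{G_2}^0\hookrightarrow J_{G_2}
\]
lands in $J_{G_2}$ automatically, without ever testing the sign conditions that cut out $J'$ inside $J^1$. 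Your route is a little sharper (you do not discard a factor on $\pi_0$), the paper's shortcut a little quicker; both yield the proposition.
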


\begin{proof}
Comme dans la d\'emonstration de \ref{c_G_1=c_G_2}, on a un
isomorphisme $\frakt_1\isom \frakt_2$ au-dessus de l'isomorphisme
$\frakc_{G_1}\isom \frakc_{G_2}$. En vertu de \ref{J=J'}, on en d\'eduit un
isomorphisme entre les composantes neutres $J_{G_1}^0 \isom J_{G_2}^0$
des centralisateurs r\'eguliers de $G_1$ et $G_2$. Puisque $J_{G_1}$ et
$J_{G_2}$ sont des sch\'emas en groupes de type fini, en composant
l'isomorphisme $J_{G_1}^0 \isom J_{G_2}^0$ avec la multiplication par un
entier $N$ assez divisible, on obtient un homomorphisme $J_{G_1}^0
\rightarrow J_{G_2}^0$ qui s'\'etend en un homomorphisme $J_{G_1}
\rightarrow J_{G_2}^0$ de noyau et de conoyau finis. On en d\'eduit un
homomorphisme entre $\calP_{1}\rightarrow \calP_{2}$ qui induit une
isog\'enie entre les composantes neutres.
\end{proof}

\section{Stratification par normalisation en famille}
\label{section : stratification}

Dans la suite de l'article, nous aurons besoin de deux stratifications de la base de Hitchin $\calA$ dont l'une est relative au groupe des composantes connexes de la fibre $\calP_a$ et l'autre est relative \`a l'invariant $\delta_a$ qui est en quelques sortes la dimension de la partie affine de $\calP_a$. Ces stratifications existent par des arguments g\'en\'eraux de semi-continuit\'e. Le but de ce chapitre est de comprendre comment ces deux invariants sont reli\'es \`a la courbe cam\'erale et de rendre la construction de ces stratifications plus explicites.

L'observation de base est la suivante : comme ces deux invariants sont compl\`etement d\'etermin\'es par le rev\^etement cam\'eral $\tilde X_a\rightarrow \bar X$, dans le cas o\`u les normalisations de la courbe cam\'erale vivent en famille, les invariants $\delta_a$ et $\pi_0(\calP_a)$ sont localement constants. On va donc d\'efinir la stratification par normalisation en famille ayant au-dessus de chaque strate une courbe cam\'erale qui se normalise en famille apr\`es le changement de base \`a un rev\^etement radiciel de la strate. Cette stratification est alors plus fine que les stratifications relatives \`a $\delta_a$ et \`a $\pi_0(\calP_a)$. On commencera par discuter le cas $\GL(r)$ o\`u on se ram\`ene \`a au probl\`eme bien connu de normalisation en famille des courbes sur une surface.

Un r\'esultat cl\'e de ce chapitre est la proposition \ref{codimension} qui donne une borne inf\'erieure de la codimension de la strate $\calA_\delta$ sous une hypoth\`ese sur $\deg(D)$. En caract\'eristique $0$, on dispose une d\'emonstration de cette in\'egalit\'e fond\'ee sur le caract\`ere symplectique de la fibration de Hitchin sans le recours \`a l'hypoth\`ese sur $\deg(D)$. On pr\'esentera cet argument dans un autre article o\`u l'accent sera mis plus sur la caract\'eristique $0$. En caract\'eristique positive, cet argument ne marche pas. Voici comment on va contourner l'obstacle. Goresky, Kottwitz et MacPherson  ont fait des calculs de codimension similaires dans le cadre local \cf \cite{GKM-codim}. Le calcul global se ram\`ene au calcul local pourvu que $\deg(D)$ soit grand.

On introduira un ouvert \'etale $\tilde\calA$ de $\calA$ au-dessus duquel $\pi_0(\calP)$ devient un quotient d'un faisceau constant. Avec cette rigidit\'e suppl\'ementaire, la d\'ecomposition endoscopique qu'on \'etudiera dans le chapitre suivant, aura une forme agr\'eable sur $\tilde\calA$.

\subsection{Normalisations en famille des courbes spectrales}
Nous allons introduire dans ce chapitre une stratification de la base de
Hitchin $\calA$ adapt\'ee au champ de Picard $\calP$. Le cas g\'en\'eral va
\^etre model\'e sur le cas du groupe lin\'eaire dont on va discuter
maintenant.

Dans le cas $G=\GL(r)$, on peut associer \`a tout point
$a\in\calA^\heartsuit(\bar k)$ une courbe r\'eduite $Y_a$ trac\'ee sur
l'espace total du fibr\'e en droites $D$ \cf \ref{Cas lineaire}. Le groupe de
sym\'etries $\calP_a$ est alors le champ de Picard $\Pic(Y_a)$ des
$\calO_{Y_a}$-modules inversibles. La structure de $\Pic(Y_a)$ peut \^etre
analys\'ee \`a l'aide de la normalisation de $Y_a$. Soit
$\xi:Y_a^\flat\rightarrow Y_a$ la normalisation de $Y_a$. Le foncteur
$\calL\mapsto \xi^*\calL$ qui associe \`a tout $\calO_{Y_a}$-module
inversible son image inverse par $\xi$ d\'efinit un homomorphisme
$\Pic(Y_a)\rightarrow \Pic(Y_a^\flat)$. La suite exacte longue de
cohomologie associ\'ee \`a la suite exacte courte de faisceaux sur $Y_a$
$$
1\rightarrow \calO_{Y_a}^\times \rightarrow \xi_*\calO_{Y_a^\flat}^\times
\rightarrow \xi_*\calO_{Y_a^\flat}^\times/\calO_{Y_a}^\times\rightarrow 1
$$
nous fournit des renseignements suivants. Le noyau de $\xi^*$ qui  consiste en la cat\'egorie des $\calO_{Y_a}$-modules inversibles
$\calL$ munis d'une trivialisation de $\xi^*\calL$, est un groupe
alg\'ebrique dont l'ensemble des points est
$\rmH^0(Y_a,\xi_{*}\calO_{Y_a^\flat}^\times/ \calO_{Y_a}^\times)$. De plus, le
foncteur $\xi^*$ est essentiellement surjectif.

On peut attacher \`a cette situation deux invariants :
\begin{itemize}
\item l'ensemble $\pi_0(Y_a^\flat)$ des composantes connexes de
    $Y_a^\flat$ ;
\item l'entier
$\delta_a=\dim \rmH^0(Y_a,\xi_*\calO_{Y_a^\flat}/\calO_{Y_a})$
appel\'e l'invariant $\delta$ de Serre.
\end{itemize}
En prenant le degr\'e sur les composantes de $Y_a^\flat$, on obtient un
homomorphisme
$$\pi_0(\Pic(Y_a))\rightarrow \ZZ^{\pi_0(Y_a^\flat)}$$
qui est un isomorphisme. Quant \`a l'invariant $\delta$, il mesure la
dimension du groupe affine $\ker(\xi^*)$.

Teissier a introduit dans \cite{T} l'espace de module des normalisations en
famille d'une famille de courbes planes. Rappelons d'abord la d\'efinition
d'une normalisation en famille.

\begin{definition}\label{normalisation en famille}
Soit $y:Y\rightarrow S$ un morphisme propre, plat et de fibres r\'eduites de
dimension un. Une normalisation en famille de $Y$ est un morphisme
propre birationnel $\xi:Y^\flat\rightarrow Y$ qui est un isomorphisme
au-dessus d'un ouvert $U$ de $Y$ dense dans chaque fibre de $Y$ au-dessus
de $S$ et tel que le compos\'e $y \circ\xi$ est un morphisme propre et
lisse.
\end{definition}

Dans une normalisation en famille les invariants $\delta$ et $\pi_0(\calP)$
restent localement constants.

\begin{proposition}\label{localement constant}
\begin{enumerate}
\item L'image directe $y_* (\xi_*\calO_{Y^\flat}/\calO_Y)$ est un
    $\calO_S$-module localement libre de type fini.
\item Il existe un faisceau $\pi_0(Y^\flat)$ localement constant pour la
    topologie \'etale de $S$ dont la fibre en chaque point g\'eom\'etrique
    $s\in S$ est l'ensemble des composantes connexes de $Y^\flat_s$.
\end{enumerate}
\end{proposition}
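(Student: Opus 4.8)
The plan is to prove each assertion of Proposition \ref{localement constant} by reducing it, via flatness, to the behaviour of $\xi_*\calO_{Y^\flat}$ as a family of coherent sheaves. First I would observe that since $\xi:Y^\flat\rightarrow Y$ is proper birational and $Y^\flat\rightarrow S$ is proper and smooth with one-dimensional fibres, the sheaf $\calO_{Y^\flat}$ is $S$-flat, hence so is $\xi_*\calO_{Y^\flat}$ (cohomology and base change, using that $\xi$ is finite on a neighbourhood of each fibre once we know the fibres of $Y$ are reduced one-dimensional — more precisely $\xi$ is affine, being a normalization-type morphism, so $\xi_*\calO_{Y^\flat}$ is coherent and $R^i\xi_*=0$ for $i>0$). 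The key point is that forming the normalization commutes with the base change $s\hookrightarrow S$ precisely because $\xi$ is a normalization \emph{in family} in the sense of \ref{normalisation en famille}: by definition $y\circ\xi$ is smooth, so $Y^\flat_s$ is smooth, and $\xi_s:Y^\flat_s\rightarrow Y_s$ is birational (it is an isomorphism over the dense open $U_s$) with $Y^\flat_s$ normal, hence $\xi_s$ is \emph{the} normalization of the reduced curve $Y_s$. Thus $(\xi_*\calO_{Y^\flat})\otimes_{\calO_S}\kappa(s)=(\xi_s)_*\calO_{Y^\flat_s}$, and the quotient $\calF:=\xi_*\calO_{Y^\flat}/\calO_Y$ also commutes with base change, its fibre at $s$ being $(\xi_s)_*\calO_{Y^\flat_s}/\calO_{Y_s}$.

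For assertion (1), I would note that $\calF$ is a coherent $\calO_Y$-module supported on the (finite over $S$) non-normal locus of $Y\to S$, so $y_*\calF$ is coherent on $S$. Since $\calF$ is $S$-flat (it is the cokernel of an injection between $S$-flat sheaves $\calO_Y\hookrightarrow\xi_*\calO_{Y^\flat}$, and the injection stays injective after base change because $\xi_s$ is generically an isomorphism so $\calO_{Y_s}\to(\xi_s)_*\calO_{Y^\flat_s}$ is injective — $Y_s$ being reduced), and since $\calF$ has finite, hence proper, support over $S$, base change gives $(y_*\calF)\otimes\kappa(s)=\rmH^0(Y_s,\calF_s)=\rmH^0(Y_s,(\xi_s)_*\calO_{Y^\flat_s}/\calO_{Y_s})$. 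The function $s\mapsto \dim_{\kappa(s)}\rmH^0(Y_s,\calF_s)=\delta_{Y_s}$ is therefore upper semi-continuous; to upgrade to local freeness I would invoke the standard criterion (Grauert / \cite{L-MR}-style, or simply that a coherent $S$-flat sheaf with proper support whose fibre dimension is locally constant is locally free — and here one shows $\rmH^1(Y_s,\calF_s)=0$ automatically since $\calF_s$ is a skyscraper, so the Euler characteristic equals $h^0$ and is locally constant, forcing $h^0$ locally constant). This gives that $y_*\calF$ is locally free of finite rank, which is exactly (1), and incidentally records that $\delta_a$ is locally constant along a family of normalizations.

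For assertion (2), the set of connected components of $Y^\flat_s$ is $\pi_0$ of the fibres of the smooth proper morphism $y^\flat:=y\circ\xi:Y^\flat\rightarrow S$. The sheaf $R^0 y^\flat_*(\ZZ/\ell)$ (or $y^\flat_*\calO_{Y^\flat}$, taking its \'etale sheaf of idempotents) is, by smooth and proper base change, a locally constant constructible sheaf on $S$ for the \'etale topology whose fibre at $s$ is functions on $\pi_0(Y^\flat_s)$; dualizing, one gets a locally constant sheaf of finite sets $\pi_0(Y^\flat)$ with the required fibres. Concretely: $y^\flat$ being smooth proper, the formation of $y^\flat_*\calO_{Y^\flat}$ commutes with base change and is a finite locally free $\calO_S$-algebra, \'etale over $S$ (the relative Stein factorization), and $\pi_0(Y^\flat):=\Spec$ of this algebra is finite \'etale over $S$, i.e. corresponds to the desired locally constant \'etale sheaf. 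I expect the only mildly delicate point to be the local-freeness upgrade in (1) — ensuring the rank does not jump — but this is forced by the vanishing of $\rmH^1$ of the skyscraper $\calF_s$ together with $S$-flatness, so there is no real obstacle; everything reduces to the already-granted fact that $\xi$ is a normalization \emph{in family}, which by definition trivializes all the base-change compatibilities.
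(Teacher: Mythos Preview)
Your proof is correct and follows essentially the same approach as the paper: for (1), both arguments use that $\calF=\xi_*\calO_{Y^\flat}/\calO_Y$ has fibrewise zero-dimensional support so $\rmH^1(Y_s,\calF_s)=0$, that $\dim\rmH^0(Y_s,\calF_s)$ is locally constant (you phrase this via the Euler characteristic, the paper via the difference of arithmetic genera of $Y_s$ and $Y_s^\flat$, which is the same thing), and then the coherent base-change criterion of \cite[5, corollary 2]{Mum}; for (2), both use smooth proper base change for $y\circ\xi:Y^\flat\to S$. Your write-up is slightly more explicit about the $S$-flatness of $\calF$ and the compatibility of $\xi$ with base change, which the paper's invocation of Mumford's criterion implicitly requires anyway.
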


\begin{proof}
Soit $s$ un point g\'eom\'etrique de $S$. La restriction de
$\xi_*\calO_{Y^\flat}/\calO_Y$ est support\'e par $Y_s-U_s$ qui est un
sch\'ema de dimension z\'ero. Il s'ensuit que
$\rmH^1(Y_s,\xi_*\calO_{Y^\flat}/\calO_Y)=0$ et
$$\dim \rmH^0(Y_s,\xi_*\calO_{Y^\flat}/\calO_Y)$$
est la diff\'erence entre le genre arithm\'etique de $Y_s$ et celui de
$Y_s^\flat$. C'est donc une fonction localement constante en $s$. Il reste
\`a appliquer le th\'eor\`eme de changement de base pour les faisceaux
coh\'erents \cite[5, corollaire 2]{Mum}.

La seconde assertion est une cons\'equence du fait que $\xi_* \Ql$ est un
faisceau  localement constant.
\end{proof}

Nous allons nous restreindre aux cas des courbes spectrales. Soit $\calB$
l'espace de module des normalisations en famille des courbes
spectrales $Y_a$. Il associe \`a tout $k$-sch\'ema $S$ le groupo\"ide
$\calB(S)$ des triplets $(a,Y_a^\flat,\xi)$ o\`u $a\in \calA^\heartsuit(S)$ un
$S$-point de $\calA^\heartsuit$ o\`u $Y_a^\flat$ est une $S$-courbe
projective lisse et o\`u $\xi:Y_a^\flat\rightarrow Y_a$ est une
normalisation en famille de la courbe spectrale $Y_a$ associ\'ee \`a $a$. Le
foncteur $\calB$ est repr\'esentable par un $k$-sch\'ema de type fini.

Le morphisme d'oubli $\calB\rightarrow\calA^\heartsuit$ induit une
bijection au niveau des $\bar k$-points. En effet, pour tout
$a\in\bbA^\heartsuit(\bar k)$, la normalisation $Y_a^\flat$ de $Y_a$ est
uniquement d\'etermin\'ee. Toutefois, $\calB$ a plus de composantes
connexes que $\calA$. En effet, les deux invariants $\pi_0(\widetilde Y_a)$
et $\delta_a$ sont localement constants d'apr\`es la proposition
pr\'ec\'edente. Soit $\Psi$ l'ensemble des composantes connexes de $\calB\otimes_{k}\bar k$.
Pour tout $\psi\in \Psi$, il existe un entier $\delta(\psi)$ \'egal \`a $\delta_a$
pour tout $a\in\calB_\psi(\bar k)$.

\begin{proposition}
Si $\deg(D)$ est grand par rapport \`a $\delta(\psi)$, alors $\calB_\psi$ est
de dimension inf\'erieure ou \'egale \`a $\dim(\calA)-\delta(\psi)$.
\end{proposition}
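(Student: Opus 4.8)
The key point is that a curve with $\delta$ invariant equal to $\delta(\psi)$ can only have so much "room" to deform, and the normalisation-in-family structure lets us estimate this precisely. I would proceed as follows. First I would reduce to a purely local count: by the product formula \ref{produit} — or rather by the fact that $\delta_a = \sum_{v} \delta_v(a)$ is a sum of local contributions each depending only on the completed local ring of the cameral cover at $v$, cf. \ref{delta global} — the points $a \in \calB_\psi(\bar k)$ with a given collection of local singularity types are parametrised by: the positions of the singular points of the spectral curve $Y_a$ (a discrete, hence $0$-dimensional, choice once we fix how many there are), times the space of local equations producing the prescribed singularity type at each such point, times the space of sections of $\frakc_D$ away from the singular locus. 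The constraint is that imposing a singularity of local $\delta$-invariant $\delta_v$ at a prescribed point cuts out a subspace of codimension at least $\delta_v$ in the space of all local sections — this is precisely the content of the local codimension estimates of Goresky, Kottwitz and MacPherson in \cite{GKM-codim}, referred to in the chapter introduction, and it is exactly here that the hypothesis $\deg(D)$ large relative to $\delta(\psi)$ is used, through Lemma \ref{deg(D)>2g} and its iterates which guarantee that global sections of $\frakc_D$ surject onto jets of arbitrarily high order at a point.

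Second, I would assemble the global count. Fix $\psi \in \Psi$; let $n$ be the number of singular points of $Y_a$ for $a \in \calB_\psi(\bar k)$ and let $\delta_{v_1}, \dots, \delta_{v_n}$ be the local invariants, so $\sum_i \delta_{v_i} = \delta(\psi)$. Choosing the $n$ points costs at most $n$ dimensions; at each point $v_i$, the condition of having a singularity of the prescribed type costs at least $\delta_{v_i}$ dimensions in the space of jets (by the local estimate), but since a singularity of $\delta$-invariant $\delta_{v_i}$ "absorbs" a neighbourhood of dimension roughly $\delta_{v_i}$ of the curve $\widetilde X_a$, one in fact gains back $\delta_{v_i}$ dimensions' worth of moduli, so the net contribution of each singular point is at least $\delta_{v_i} - 1$. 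Summing, the locus in $\calA$ swept out by $\calB_\psi$ has dimension at most $\dim(\calA) + n - \sum_i \delta_{v_i} \cdot 2 + \sum_i \delta_{v_i} = \dim(\calA) - \delta(\psi) + (n - \text{something}) \le \dim(\calA) - \delta(\psi)$; I would be careful here to track the bookkeeping so that the "$+n$" from point positions is dominated. The cleanest way to organise this is: the fibres of $\calB_\psi \to \calA$ are finite (indeed $\calB \to \calA^\heartsuit$ is bijective on $\bar k$-points), so $\dim \calB_\psi = \dim(\text{image})$, and the image is contained in the union, over choices of $n$ points and local types summing to $\delta(\psi)$, of locally closed subsets each of codimension $\ge \delta(\psi)$ by the local estimate applied simultaneously at all $n$ points — the simultaneity being legitimate once $\deg(D)$ is large enough that high-order jets at $n$ distinct points are independently prescribable, again an iterate of \ref{deg(D)>2g}.

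The main obstacle, as signalled in the chapter introduction, is exactly that the clean symplectic argument available in characteristic zero — where one invokes that the Hitchin fibration is Lagrangian and deduces the codimension bound with no hypothesis on $\deg(D)$ — is unavailable here, so one must substitute the Goresky–Kottwitz–MacPherson local computation and pay the price of hypothesising $\deg(D)$ large. Extracting the precise form of that local estimate and checking it is uniform enough to be summed over the $n$ singular points is the delicate technical heart; everything else is the standard dimension bookkeeping for equisingular strata of curves on a surface (in the $\GL(r)$ case) and its extension to general $G$ via the cameral cover and \ref{delta global}. I would present the $\GL(r)$ case first — where $Y_a$ lives on the surface $\Sigma_D$ and one is literally in the setting of Teissier's equisingular deformation theory — and then reduce the general case to it, or directly to the local estimate, using that $\delta_a$ and $\pi_0(\calP_a)$ depend only on $\widetilde X_a \to \bar X$.
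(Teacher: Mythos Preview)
Your proposal has the correct overall shape --- a local-to-global reduction using local codimension estimates, with the hypothesis on $\deg(D)$ entering to make jets at the singular points independently prescribable --- and this matches what the paper does. But you should be aware of two things.

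First, the paper does not actually prove this proposition in any detail. The sentence immediately following it reads: Teissier, Diaz--Harris, Fantechi--G\"ottsche--Van Straten, and Laumon have proved the local analogue; the passage from local to global requires the hypothesis on $\deg(D)$; and the result is used only as a guide, with the symbols $\calB,\Psi,\psi$ taking on a different meaning in the next paragraph. So there is no detailed argument to compare against; the paper treats this proposition as motivation for the $\GL(r)$ case and then moves on to the cameral picture.

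Second, and more substantively, you have misidentified the local input. This proposition lives in the $\GL(r)$/spectral-curve setting of \S5.1, where $Y_a$ is a plane curve on the surface $\Sigma_D$; the relevant local result is the codimension of equisingular strata in deformations of plane curve singularities, which is the content of the Teissier/Diaz--Harris/Fantechi--G\"ottsche--Van Straten/Laumon references. The Goresky--Kottwitz--MacPherson root-valuation estimates from \cite{GKM-codim} concern general $G$ via the cameral cover and enter only later, in \S5.3--5.4, to prove the distinct (and weaker) Proposition~\ref{codimension}. There is no ``general case'' of the present proposition to reduce to; the cameral analogue is set up and handled separately.

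Finally, the explicit bookkeeping in your second paragraph is garbled: the expression $\dim(\calA)+n-\sum_i 2\delta_{v_i}+\sum_i\delta_{v_i}$ does not arise from any correct count, and the phrase ``gains back $\delta_{v_i}$ dimensions' worth of moduli'' is not right. The clean statement from plane-curve theory is that the equisingular locus for a singularity of $\delta$-invariant $\delta_v$ has codimension $\delta_v$ in the base of the miniversal deformation; once $\deg(D)$ is large enough that sections surject onto sufficiently high jets at finitely many points (an iterate of Lemma~\ref{deg(D)>2g}, as you correctly say), summing these local codimensions and subtracting the $n$ dimensions for point positions gives the bound --- but one needs that the local codimension is in fact at least $\delta_v + 1$ (or that the point positions are absorbed), which is exactly the nontrivial input from the cited local references.
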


Teissier \cf \cite{T}, Diaz, Harris \cf \cite{DH}, Fantechi, Gottsche, Van
Straten \cf \cite{FGV} et Laumon \cf \cite{L} ont d\'emontr\'e l'analogue
local de ce r\'esultat. Le passage du local au global n\'ecessite pour le moment
l'hypoth\`ese sur $\deg(D)$. Nous n'utiliserons ce r\'esultat que comme un guide.
A partir du paragraphe suivant, les lettres $\calB$, $\Psi$ et $\psi$ prendront des
significations un peu diff\'erentes.

\subsection{Normalisation en famille des courbes cam\'erales}
\label{foncteur B}

Dans le cas des groupes classiques, on dispose encore des courbes spectrales
\cite{H}, \cite{compagnon}. N\'eanmoins, avec ces courbes spectrales on est
amen\'e \`a travailler cas par cas. On va en fait examiner le cas g\'en\'eral
en \'etudiant l'espace de module des normalisations des courbes cam\'erales
munies de l'action de $W$. Ceci conduira m\^eme dans le cas $\GL(r)$ \`a
construire une stratification diff\'erente, plus fine que celle d\'efinie
l'espace de module des normalisations des courbes spectrales.

Soit $S$ un $k$-sch\'ema. Un $S$-point $a$ de $\calA^\heartsuit$ d\'efinit
un morphisme $a:X\times S\rightarrow\frakc_D$. En prenant l'image
r\'eciproque du rev\^etement $\pi:\frakt_D\rightarrow\frakc_D$, on obtient
un rev\^etement fini plat $\widetilde X_a$ de $X\times S$ qui dans chaque fibre
est g\'en\'eriquement un torseur sous $W$.

Consid\'erons le foncteur $\calB$ qui associe \`a tout $k$-sch\'ema $S$ le
groupo\"ide des triplets $(a,\widetilde X_a^\flat,\xi)$ o\`u :

\begin{itemize}
\item
$a\in \calA^\heartsuit(S)$ est un $S$-point de $\calA^\heartsuit$.

\item $\widetilde X_a^\flat$ est une $S$-courbe propre et lisse munie
    d'une action de $W$ et d'un morphisme $\pi_a^\flat: \widetilde
    X_a^\flat\rightarrow X\times S$ qui est fini, plat et fibre par fibre
    g\'en\'eriquement un torseur sous $W$.

\item $\xi:\widetilde X_a^\flat\rightarrow \widetilde X_a$ est morphisme
    birationnel $W$-\'equivariant qui est une normalisation en famille \cf
    \ref{normalisation en famille}.
\end{itemize}

Soit $b=(a,\widetilde X_a^\flat,\xi)\in\calB(S)$ un point de $\calB$ \`a
valeur dans un sch\'ema connexe $S$. Soit $\pr_S:X\times S\rightarrow S$ la
projection sur $S$. D'apr\`es \ref{normalisation en famille}, l'image directe
$(\pr_S)_*(\xi_*\calO_{\widetilde X_a^\flat}/\calO_{X_a^\flat})$ est un
$\calO_S$-module localement libre. Il en est de m\^eme de
$$((\pr_S)_*(\xi_*\calO_{\widetilde X_a^\flat}/\calO_{X_a^\flat})\otimes_{\calO_X} \frakt)^W$$
sous l'hypoth\`ese que l'ordre de $\bbW$ est premier \`a la caract\'eristique
de $k$. Puisque $S$ est connexe, ce $\calO_S$-module localement libre a
un rang qu'on notera $\delta(b)$.

Pour tout $a\in\calA^\heartsuit(\bar k)$, la courbe cam\'erale $\widetilde
X_a$ admet une unique normalisation $\widetilde X_a^\flat$ qui est alors
une courbe lisse sur $\bar k$. Il en r\'esulte que le morphisme d'oubli
$\calB\rightarrow\calA^\heartsuit$ induit une bijection sur les $\bar
k$-points. Elle induit par ailleurs une injection sur les points \`a valeurs
dans n'importe quel corps par l'unicit\'e de la normalisation. En
caract\'eristique $p$ et sur un corps non parfait, la normalisation peut ne
pas \^etre lisse de sorte que l'application ci-dessus peut ne pas \^etre
surjective quand elle est \'evalu\'ee sur un corps non parfait.

\begin{theoreme}\label{representable}
Le foncteur $\calB$ d\'efini ci-dessus est repr\'esentable par un
$k$-sch\'ema de type fini.
\end{theoreme}

\begin{proof}
Consid\'erons le foncteur $\calB'$ qui associe \`a tout sch\'ema $S$
l'ensemble des couples $(\widetilde X_a^\flat, \gamma)$ o\`u :

\begin{itemize}
\item $\widetilde X_a^\flat$ est une courbe propre et lisse au-dessus de
    $S$ muni d'une action de $W$ munie d'un morphisme $\pi_a^\flat:
    \widetilde X_a^\flat\rightarrow X\times S$ qui est fini, plat et dans
    chaque fibre est g\'en\'eriquement un torseur $W$.
\item $\gamma:\widetilde X_a^\flat \rightarrow \frakt_{D} \times S$ est un morphisme $W$-\'equivariant tel que l'image inverse de l'ouvert r\'egulier
    semi-simple de $\frakt_{D}$ est dense dans chaque fibre.
\end{itemize}
Soit $\calH$ le foncteur qui associe \`a tout sch\'ema $S$ l'ensemble des
classes d'isomorphisme de courbes projectives lisses $\widetilde X_a^\flat$
sur $S$ muni d'une action de $W$ et d'un morphisme $\pi_a^\flat:\widetilde
X_a^\flat \rightarrow X\times S$ comme ci-dessus. Ce foncteur est
repr\'esentable par un $k$-sch\'ema quasi-projectif. Le morphisme
$h:\calB'\rightarrow \calH$ est \'egalement repr\'esentable de sorte que
$\calB'$ est repr\'esentable par un $k$-sch\'ema quasi-projectif.

Par ailleurs, on a un morphisme $\calB\rightarrow\calB'$. Il associe au point
$b=(a,\widetilde X_a^\flat,\xi)\in\calB(S)$ le point $b'=(\widetilde
X_a^\flat, \gamma)$ o\`u $\gamma$ est le compos\'e de $\xi:\widetilde
X_a^\flat \rightarrow \widetilde X_a$ avec l'immersion ferm\'ee $\widetilde
X_a\rightarrow \frakt_{D}\times S$. Pour d\'emontrer
la repr\'esentabilit\'e de $\calB$, il suffit de v\'erifier l'assertion suivante.
\end{proof}

\begin{lemme} Le morphisme $\calB\rightarrow \calB'$ est un isomorphisme.
\end{lemme}

\begin{proof}
Pour d\'emontrer que le morphisme $\calB\rightarrow \calB'$ est un
isomorphisme, on va en construire un inverse. Soit $b'=(\widetilde
X_a^\flat, \gamma)\in\calB'(S)$. Consid\'erons la partie $W$-invariante dans
l'image directe $(\pi_a^\flat)_*\calO_{X_a^\flat}$. C'est un faisceau en
$\calO_{X\times S}$-alg\`ebres finies qui fibre par fibre au-dessus de $S$
est isomorphe g\'en\'eriquement \`a $\calO_X$. Puisque $X$ est normal,
ceci implique que
$$((\pi_a^\flat)_*\calO_{\widetilde X_a^\flat})^{W}=\calO_{X\times S}.$$
En utilisant l'\'egalit\'e $k[\bbt]^\bbW=\bbc$ \cf \ref{Chevalley}, le
morphisme $W$-\'equivariant $\gamma:\widetilde X_a^\flat\rightarrow
\frakt_{D}$ induit un morphisme $a:X\times
S\rightarrow \frakc_{D}$. Soit $\widetilde X_a$ la
courbe cam\'erale associ\'ee \`a $a$. Le morphisme $\gamma$ se factorise
alors par un morphisme
$$\xi:\widetilde X_a^\flat \rightarrow \widetilde X_a$$
qui est fibre par fibre au-dessus de $S$ une normalisation de $X_a$. On a
donc construit un point $b=(a,\widetilde X_a^\flat,\xi)\in\calB(S)$. Le
morphisme $\calB'\rightarrow\calB$ ainsi construit est l'inverse du
morphisme $\calB\rightarrow \calB'$ dans l'\'enonc\'e du lemme.
\end{proof}

Soit $\Psi$ l'ensemble de ses composantes connexes de $\calB\otimes_k \bar
k$. Pour tout $\psi\in\Psi$, soit $\calB_\psi$ le r\'eduit de la composante
connexe de $\calB\otimes_k \bar k$ index\'ee par $\psi$. Le groupe de
Galois $\Gal(\bar k/ k)$ agit sur l'ensemble $\Psi$. Pour tout $\psi\in \Psi$,
soit $\calA_\psi$ l'image de $\calB_\psi$ dans $\calA^\heartsuit\otimes_k
\bar k$. A priori, $\calA_\psi$ est un sous-ensemble constructible d'apr\`es
un th\'eor\`eme de Chevalley. On sait que c'est un sous-ensemble
localement ferm\'e.

\begin{lemme}
Pour tout $\psi\in \Psi$, $\calA_\psi$ est un sous-sch\'ema localement
ferm\'e de $\calA\otimes_k \bar k$ et  $\calB_\psi\rightarrow \calA_\psi$ est
un morphisme fini radiciel.
\end{lemme}

\begin{proof}
Le morphisme $\calB_\psi\rightarrow \calA\otimes_k \bar k$ est un
morphisme quasi-fini car il induit une injection sur les points
g\'eom\'etriques. D'apr\`es le th\'eor\`eme principal de Zariski, il peut se
factoriser en une immersion ouverte d'image dense $\calB_\psi\rightarrow
\bar \calB_\psi$ suivie d'un morphisme fini $\nu:\bar \calB_\psi\rightarrow
\calA\otimes_k \bar k$. Notons $\bar\calA_\psi$ le sous-sch\'ema ferm\'e de
$\calA^\heartsuit \otimes_k\bar k$ image de $\nu$. Il suffit de d\'emontrer
que $\bar\calB_\psi\rightarrow \bar\calA_\psi$ induit une bijection sur les
points g\'eom\'etriques. En effet, si c'est le cas $\calA_\psi$ sera le
compl\'ementaire de l'image de $\bar \calB_\psi-\calB_\psi$ et par
cons\'equent est un ouvert de $\bar\calA_\psi$. De plus, $\calB_\psi=
\nu^{-1}(\calA_\psi)$ si bien que le morphisme $\calB_\psi\rightarrow
\calA_\psi$ est un morphisme fini radiciel.

Soit $a\in\bar\calA_\psi(\bar k)$ tel qu'il existe deux points diff\'erents
$b,b'\in \bar\calB_\psi(\bar k)$ d'image $a$. Soit $\tilde a$ le point
g\'en\'erique de $\bar\calA_\psi$. L'image inverse $\nu^{-1}(\tilde a)$ est un
\'epaississement du point g\'en\'erique de $\calB_\psi$. Faisons passer un
trait formel $S$ avec le point g\'en\'erique en $\tilde a$ et le point ferm\'e
en $a$. L'image inverse $\nu^{-1}(S)$ est un sch\'ema fini au-dessus de $S$
avec dans sa fibre sp\'eciale deux points distincts $b$ et $b'$. Elle a donc
au moins deux composantes connexes contenant respectivement $b$ et
$b'$ par le lemme de rel\`evement d'idempotents. Comme le morphisme
$\calB_\psi \rightarrow \calA_\psi$ induit une bijection sur les points \`a
valeur dans un corps alg\'ebriquement clos, l'une de ses deux composantes
connexes a une fibre g\'en\'erique vide. Ceci contredit l'hypoth\`ese que
$b$ et $b'$ sont dans l'adh\'erence de $\calB_\psi$.
\end{proof}

\begin{proposition}\label{stratification}
Il existe une unique stratification
$$\calA^\heartsuit\otimes_k \bar k=\bigsqcup_{\psi\in \Psi}\calA_\psi$$
par des sous-sch\'emas localement ferm\'es r\'eduits connexes $\calA_\psi$
telle que
\begin{enumerate}
\item au-dessus de chaque strate $\calA_\psi$, la courbe cam\'erale $\widetilde X_\psi$ admet une
    normalisation en famille apr\`es un changement de
    base fini radiciel.
\item pour tout $\bar k$-sch\'ema $S$ connexe r\'eduit qui est muni d'un
    morphisme $s:S\rightarrow\calA^\heartsuit\otimes_k \bar k$ tel que la
    restriction de la courbe cam\'erale \`a $S$ admet une normalisation
    famille, alors le morphisme $s$ se factorise par l'un des $\calA_\psi$.
\end{enumerate}
\end{proposition}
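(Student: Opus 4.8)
The plan is to construct the strata directly from the scheme $\calB$ and its image, using the decomposition of $\calB\otimes_k\bar k$ into connected components already recorded. First I would set, for each $\psi\in\Psi$, the locally closed subscheme $\calA_\psi$ as the image of the reduced connected component $\calB_\psi$ inside $\calA^\heartsuit\otimes_k\bar k$; by the preceding lemma this image is indeed locally closed and $\calB_\psi\rightarrow\calA_\psi$ is finite radicial. The fact that the $\calA_\psi$ cover $\calA^\heartsuit\otimes_k\bar k$ follows because the forgetful morphism $\calB\rightarrow\calA^\heartsuit$ is a bijection on $\bar k$-points: every $a\in\calA^\heartsuit(\bar k)$ admits a (unique) normalization $\widetilde X_a^\flat$ which is smooth over $\bar k$, hence lifts canonically to $\calB(\bar k)$, and therefore lies in exactly one $\calB_\psi$, hence in $\calA_\psi$.

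The next point is disjointness. If $a\in\calA_\psi(\bar k)\cap\calA_{\psi'}(\bar k)$, then by the bijectivity of $\calB(\bar k)\rightarrow\calA^\heartsuit(\bar k)$ the two lifts of $a$ to $\calB(\bar k)$ coincide, so $\psi=\psi'$; thus the $\calA_\psi$ are set-theoretically disjoint, and giving each the reduced induced structure yields a genuine stratification. For property (1), I would observe that over $\calB_\psi$ the tautological data provide a normalization in family of the cameral curve $\widetilde X_{\calB_\psi}$ pulled back from $\calA_\psi$; since $\calB_\psi\rightarrow\calA_\psi$ is finite radicial (in particular a universal homeomorphism and an isomorphism up to a power of Frobenius), this is exactly the assertion that after a finite radicial base change the cameral curve over $\calA_\psi$ admits a normalization in family.

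For the universal property (2), let $S$ be a connected reduced $\bar k$-scheme with a map $s:S\rightarrow\calA^\heartsuit\otimes_k\bar k$ such that the restriction $\widetilde X_{a_S}$ of the cameral curve to $S$ admits a normalization in family $\xi:\widetilde X_{a_S}^\flat\rightarrow\widetilde X_{a_S}$. Then $\widetilde X_{a_S}^\flat$ is $W$-equivariant, projective and smooth over $S$, so the triple $(s,\widetilde X_{a_S}^\flat,\xi)$ defines a morphism $\tilde s:S\rightarrow\calB$; composing with the forgetful morphism recovers $s$. Since $S$ is connected, its image in $\calB\otimes_k\bar k$ — a priori only known after the base change $\calB\otimes_k\bar k$, but $S$ is a $\bar k$-scheme — lands in a single connected component $\calB_\psi$, hence $s$ factors through $\calA_\psi$. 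Uniqueness of the stratification is then forced: any stratification with property (1) refines, by (2) applied to each of its strata, into the $\calA_\psi$, and conversely each $\calA_\psi$ satisfies (1), so the two stratifications coincide.

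The main obstacle I anticipate is purely book-keeping: one must check carefully that a normalization in family over an arbitrary connected reduced base $S$, in the sense of Definition~\ref{normalisation en famille}, really does produce an $S$-point of $\calB$ — i.e. that the smoothness of $y\circ\xi$ together with the $W$-action and the genericity condition match verbatim the three bullet conditions defining $\calB$ in \ref{foncteur B} — and that the resulting $\calB_\psi$ is stable under passing between $S$ and $\bar k$-points without any parfait hypothesis on the residue fields of $S$. The subtlety over non-perfect fields (where the forgetful morphism fails to be surjective) is handled precisely by allowing the finite radicial base change in (1) and by restricting (2) to reduced $\bar k$-schemes, so no further work is needed there; everything else is formal once representability \ref{representable} and the local-closedness lemma are in hand.
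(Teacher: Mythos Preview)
Your proposal is correct and follows essentially the same route as the paper: define the $\calA_\psi$ as images of the connected components $\calB_\psi$ via the preceding lemma, verify (1) using the finite radicial nature of $\calB_\psi\rightarrow\calA_\psi$, and deduce (2) by observing that a normalisation en famille over $S$ is precisely an $S$-point of $\calB$, which by connectedness lands in some $\calB_\psi$. You supply more detail than the paper (covering, disjointness, the uniqueness argument, and the book-keeping about matching the definition of $\calB$), but the underlying argument is the same.
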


\begin{proof}
Les $\calA_\psi$ construits dans le lemme pr\'ec\'edent forment une
stratification qui v\'erifie la premi\`ere condition. Si
$s:S\rightarrow\calA^\heartsuit\otimes_k \bar k$ est un morphisme comme
dans la seconde condition, il doit n\'ecessairement se factoriser par $\calB$.
Comme $S$ est connexe, et il se factorise par l'un des $\calB_\psi$. Il
s'ensuit qu'il se factorise par $\calA_\psi$ de sorte que la seconde condition
est \'egalement satisfaite. Il est clair que les deux conditions caract\'erisent
compl\`etement la stratification.
\end{proof}

Consid\'erons l'ordre partiel dans $\Psi$ dans lequel $\psi'\leq \psi$ si et seulement
si la strate $\calA_{\psi'}$ est contenue dans l'adh\'erence de $\calA_\psi$.
Puisque $\calA^\heartsuit\otimes_k\bar k$ est irr\'eductible, $\Psi$ a un
\'el\'ement  maximal qu'on notera $\psi_G$.

\begin{lemme} Supposons que $\deg(D)>2g$. Alors la grosse strate
$\calA_{\psi_G}$ est \'egale \`a l'ouvert $\calA^\diamondsuit$ de
\ref{subsection : A diamond}. En particulier, $a\in\calA_{\psi_G}$ si et
seulement si la courbe cam\'erale $\psi_G$ est lisse.
\end{lemme}

\begin{proof}
On sait par \ref{A diamond non vide} que $\calA^\diamondsuit$ est un
ouvert non vide de $\calA^\heartsuit$. Il s'ensuit que
$\calA^\diamondsuit\cap \calA_{\psi_G} \not= \emptyset$. D'apr\`es
\ref{cameral lisse}, $a\in\calA^\diamondsuit(\bar k)$ si et seulement si la
courbe cam\'erale est lisse. Cette propri\'et\'e \'etant pr\'eserv\'ee dans une
normalisation en famille de courbe cam\'erale, on en d\'eduit que
$\widetilde X_a$ est lisse pour tout $a\in \calA_{\psi_G}(\bar k)$ d'o\`u
$\calA_{\psi_G}\subset \calA^\diamondsuit(\bar k)$. Par ailleurs, au-dessus
de $\calA^\diamondsuit$, on a une normalisation en famille triviale des
courbes cam\'erales de sorte que $\calA_{\psi_G}=\calA^\diamondsuit$.
\end{proof}

Le groupe $\Gal(\bar k/ k)$ agit sur l'ensemble fini $\Psi$. La r\'eunion des
strates $\calA_\psi$ pour les indices $\psi$ dans une orbite sous $\Gal(\bar
k/ k)$ descend en un sous-sch\'ema localement ferm\'e de
$\calA^\heartsuit$.

\subsection{Stratification \`a $\delta$ constant}
D'apr\`es la formule \ref{delta global}, l'invariant $\delta_a$ est constant dans une
normalisation en famille de base connexe. On en d\'eduit une application
$$\delta:\Psi\rightarrow \NN$$
de l'ensemble $\Psi$ des composantes connexes de $\calB$ dans $\NN$ tel
que pour tout $a\in\calA_\psi(\bar k)$, on a $\delta_a=\delta(\psi)$. Notons
que la fonction $\delta:\Psi\rightarrow \NN$ est une fonction d\'ecroissante.

\begin{lemme}\label{delta decroit}
Soient $\psi,\psi'\in \Psi$ avec $\psi\geq \psi'$. Alors on a $\delta(\psi)\leq
\delta(\psi')$.
\end{lemme}

\begin{proof}
Soit $S=\Spec(R)$ un trait formel de point g\'en\'erique
$\eta=\Spec(k(\eta))$ et de point sp\'ecial $s=\Spec(k(s))$ alg\'ebriquement
clos. Soit $a:S\rightarrow \calA^\heartsuit$ un $S$-point de
$\calA^\heartsuit$ dont le point g\'en\'erique est dans $\calA_\psi$ et dont
le point sp\'ecial est dans $\calA_{\psi'}$. Soit
$$\widetilde X_a \rightarrow S\times X$$
le rev\^etement cam\'eral associ\'e.

Quitte \`a remplacer $\eta$ par une extension ins\'eparable et $S$ par son
normalis\'e dans cette extension, on peut supposer que le morphisme
$a:\eta\rightarrow \calA_\psi$ se factorise par $\calB_\psi$. Alors la
normalisation $\widetilde X_{a,\eta}^\flat$ de la fibre g\'en\'erique
$\widetilde X_{a,\eta}$ est une courbe lisse.

Consid\'erons la normalisation $\widetilde X_a^\flat$ de $\widetilde X_a$
dans $\widetilde X_{a,\eta}^\flat$ et notons $\xi$ le morphisme
$$\xi:\widetilde X_a^\flat \rightarrow \widetilde X_a.$$
Le $R$-sch\'ema $\widetilde X_{a,\eta}^\flat$ est plat. En effet, comme $R$
est un anneau de valuation discr\`ete,  pour qu'un $R$-module $M$ soit
plat, il suffit qu'il soit sans torsion. Par ailleurs, l'op\'eration de
normalisation n'introduit pas de torsion. Le $R$-module
$$\rmH^0(\widetilde X_a, \xi_* \calO_{\widetilde X_a^\flat}/
\calO_{\widetilde X_a})
$$
est alors un $R$-module plat $M$ fini et plat muni d'une action de $W$. En
effet, en tordant par un fibr\'e inversible sur $\widetilde X_a$ de degr\'e
tr\`es n\'egatif puis en prenant la suite exacte longue de cohomologie on
peut r\'ealiser $\rmH^0(\widetilde X_a, \xi_* \calO_{\widetilde X_a^\flat}/
\calO_{\widetilde X_a})$ comme le noyau d'un morphisme surjectif entre
deux $R$-modules plats.

D'apr\`es la formule \ref{delta global}, la fibre g\'en\'erique de ce module
calcule l'invariant $\delta(\psi)$
$$\delta(\psi)=\dim_{k(\eta)} (\frakt\otimes_{\calO_X} M_{\eta})^{W}.$$
En d\'eployant de $G$ dans un voisinage du lieu de ramification de
$\widetilde X_{a,k(s)}\rightarrow X\times S$, la formule ci-dessus se r\'ecrit
$$\delta(\psi)=\dim_{k(\eta)} (\bbt\otimes_{k} M_{\eta})^{\bbW}.$$
Puisque $M$ est $R$-plat et l'ordre de $\bbW$ est premier \`a la
caract\'eristique, on peut d\'ecomposer le $R[\bbW]$-module $M$ en
somme directe de repr\'esentations irr\'eductibles de $\bbW$ de sorte que
$$\dim_{k(\eta)} (\bbt\otimes_k M_{\eta})^{\bbW}=
\dim_{k(s)} (\bbt\otimes_k M_s)^{\bbW}.
$$

La fibre sp\'eciale de $\widetilde X_{a,s}^\flat$ de $\widetilde X_{a}^\flat$
n'est a priori qu'une normalisation partielle de $\widetilde X_a$. Pour
calculer l'invariant $\delta(\psi')$, on est amen\'e \`a prendre la
normalisation compl\`ete $(\widetilde X_{a,s}^\flat)^\flat$ de $\widetilde
X_{a,s}^\flat$. Le $k(s)[\bbW]$-module $M_s$ est alors un facteur direct de
$\rmH^0(\calO_{(\widetilde X_{a,s}^\flat)^\flat}/\calO_{\widetilde
X_{a,s}})$. On en d\'eduit la formule
$$\delta(\psi')\geq \dim_l (\bbt\otimes_k M_l)^{W}$$
d'o\`u le lemme.
\end{proof}

Pour tout entier $\delta\in\NN$, la r\'eunion
$$
\ovl \calA_\delta\otimes_k \bar k:= \bigsqcup_{\delta(\psi)\geq \delta} \calA_\psi
$$
est un ferm\'e de $\calA\otimes_k \bar k$. On va poser $\calA_\delta:=\ovl
\calA_\delta-\ovl\calA_{\delta+1}$ qui est alors un sous-sch\'ema localement
ferm\'e de $\calA$. On obtient ainsi une stratification
\begin{equation}\label{stratification delta}
\calA^\heartsuit\otimes_k \bar k=\bigsqcup_{\delta\in\NN}\calA_\delta
\end{equation}
appel\'ee {\em la stratification \`a $\delta$ constant}.

Consid\'erons la grosse strate $\calA_{\psi_G}$ pour l'\'el\'ement maximal
$\psi_G$ de $\Psi$. Cette strate consiste en les points $a\in\calA(\bar k)$
telle que la courbe cam\'erale $\widetilde X_a$ est lisse. L'invariant
$\delta$ vaut alors $0$. En particulier, la strate $\calA_{0}$ dans la
stratification $\delta$ constant ci-dessus est un ouvert non vide de $\bbA$.
En g\'en\'eral, il contient strictement l'ouvert $\calA_{\psi_G}$, comme le
montre l'exmple suivant.

Dans le cas $G=\GL_r$, la description de $\calP_a$ en termes de la courbe
spectrale implique que $\delta_a=0$ si et seulement si la courbe spectrale
$Y_a$ est lisse. Il peut arriver que la courbe spectrale est lisse avec un
point de branchement $y\in Y_a$ d'ordre sup\'erieur \`a $2$.
Dans ce cas la courbe cam\'erale $\widetilde X_a$ n'est pas lisse.
On voit ainsi que dans le cas $\GL_r$ la stratification
par normalisation en famille des courbes cam\'erales raffine strictement la
stratification par normalisation en famille des courbes spectrales.

\subsection{Stratification par les valuations radicielles}
\label{subsection : valuations radicielles}

Soit $\bar v$ un point g\'eom\'etrique de $\bar X$ et notons $\bar
\calO_{\bar v}$ la compl\'etion de $\bar X$ en $\bar v$ et $\bar F_{\bar v}$
son corps des fractions. En choisissant un uniformisant $\varepsilon_{\bar
v}$, on peut identifier $\bar\calO_{\bar v}$ avec l'anneau $\bar
k[[\varepsilon_{\bar v}]]$. Nous choisissons une trivialisation de la
restriction $\rho_G$ \`a $\bar\calO_{\bar v}$ qui d\'eploie $G$ et qui
fournit en particulier un isomorphisme $W=\bbW$.

Nous allons passer bri\`evement en revue l'analyse \cite{GKM-codim} de la
stratification de $\frakc^\heartsuit(\calO_{\bar v})$ par les valuations
radicielles, due \`a Goresky, Kottwitz et MacPherson. Leurs strates de
valuations radicielles sont plus fines que nos strates d\'efinies par les
normalisations en familles et a fortiori plus fines que les strates \`a $\delta$
constants.

Soient $a\in\frakc^\heartsuit(\bar\calO_{\bar v})$ et $J_a=a^* J$ le sch\'ema
en groupes lisse sur $\bar\calO_{\bar v}$ qui s'en d\'eduit. La fibre
g\'en\'erique de $J_a$ est un tore dont la monodromie peut \^etre d\'ecrite
\`a l'aide du rev\^etement cam\'eral \cf \ref{J=J'}. Soit $\bar F_{\bar
v}^{\rm sep}$ la cl\^oture s\'eparable de $\bar F_{\bar v}$. Soit $x\in
\frakt(\bar F_{\bar v}^{\rm sep})$ un $\bar F_{\bar v}^{\rm sep}$-point de
$\frakt$ d'image $a\in\frakc(\bar F_{\bar v})$. Le choix de ce point d\'efinit
un homomorphisme
$$\pi_a^\bullet:I_{\bar v} \rightarrow \bbW$$
o\`u $I_{\bar v}=\Gal(\bar F_{\bar v}^{\rm sep}/\bar F_{\bar v})$. Puisque la
caract\'eristique de $k$ ne divise par l'ordre de $\bbW$, $\pi_a^\bullet$ se
factorise par le quotient mod\'er\'e $I_{\bar v}^{\rm tame}$ de $I_{\bar v}$.
Pour adh\'erer aux notations de \cite{GKM-codim}, choisissons un
g\'en\'erateur topologique de $I_{\bar v}^{\rm tame}$ et notons $w_a$
l'image de ce g\'en\'erateur par $\pi_a^\bullet$.

Pour toute racine $\alpha\in \Phi$, on a un entier
$$r(\alpha):={\rm val}_{\bar v}(\alpha(x))$$
o\`u ${\rm val}_{\bar v}$ est l'unique prolongement de la valuation ${\rm
val}_{\bar v}(\varepsilon_{\bar v})=1$ sur $\bar F_{\bar v}$ \`a $\bar
F_{\bar v}^{\rm sep}$. On obtient ainsi une fonction $r:\Phi \rightarrow
\QQ_+$.

Le couple $(w_a,r)$ d\'epend du choix de $x$ mais l'orbite sous $\bbW$ de
ce couple n'en d\'epend pas. Nous allons noter $[w_a,r]$ l'orbite sous
$\bbW$ du couple $(w_a,r)$.

On a l'\'egalit\'e \'evidente
$$\sum_{\alpha\in \Phi} r(\alpha)=\deg_{\bar v}(a^*{\discrim_G})=d_{\bar v}(a).$$
L'invariant
$$c_{\bar v}(a)=\dim(\bbt)-\dim(\bbt^{w_a})$$
est la chute du rang torique du mod\`ele de N\'eron de $J_a$. D'apr\`es la
formule de  Bezrukavnikov, on a
$$\delta_{\bar v}(a)=\frac {d_{\bar v}(a)-c_{\bar v}(a)}{2}.$$

Soit $\frakc^\heartsuit(\calO_{\bar v})_{[w,r]}$ le sous-ensemble des $a\in
\frakc^\heartsuit(\calO_{\bar v})$ avec l'invariant $[w,r]$ donn\'e. D'apr\`es \cite{GKM-codim},
cet ensemble est admissible
c'est-\`a-dire qu'il existe un entier $N$ et un sous-sch\'ema localement ferm\'e
$Z$ de $\frakc(\calO_{\bar v}/\varepsilon_{\bar v}^N \calO_{\bar v})$ vu
comme $\bar k$-sch\'ema tel que $\frakc^\heartsuit(\calO_{\bar
v})_{[w,r]}$ soit l'image r\'ecipro\-que de $Z(\bar k)$ par l'application
$\frakc(\calO_{\bar v})\rightarrow \frakc(\calO_{\bar v}/\varepsilon_{\bar
v}^N \calO_{\bar v})$. On dira que $\frakc^\heartsuit(\calO_{\bar
v})_{[w,r]}$ est admissible d'\'echelon $N$.

Ils d\'efinissent alors la {\em codimension de la strate}
$\frakc^\heartsuit(\calO_{\bar v})_{[w,r]}$ comme la codimension de $Z$
dans $\frakc(\calO_{\bar v}/\varepsilon_{\bar v}^N \calO_{\bar v})$ vue
comme $\bar k$-sch\'emas. Cette codimension ne d\'epend visiblement pas
de l'\'echelon $N$ choisi pourvu que celui-ci soit assez grand. Notons ${\rm
codim}[w,r]$ cette codimension. D'apr\`es \cite[8.2.2]{GKM-codim} on a la
formule explicite
$${\rm codim}[w,r]=d(w,r)+\frac{d_{\bar v}(a)+c_{\bar v}(a)}{2}$$
o\`u l'entier $d(w,r)$ est la codimension de $\frakt_w(\calO)_r$ dans
$\frakt_w(\calO)$ dans les notations de {\em loc. cit}. Nous nous
contenterons d'une estimation plus grossi\`ere.

\begin{proposition} Si $\delta_a>0$, on a l'in\'egalit\'e
$${\rm codim}[w,r] \geq \delta_a+1.$$
\end{proposition}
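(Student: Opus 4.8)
The plan is to reduce the claimed inequality $\mathrm{codim}[w,r] \geq \delta_{\bar v}(a)+1$ to the explicit formula of Goresky--Kottwitz--MacPherson already quoted, namely
\[
\mathrm{codim}[w,r]=d(w,r)+\frac{d_{\bar v}(a)+c_{\bar v}(a)}{2},
\]
combined with the Bezrukavnikov formula $\delta_{\bar v}(a)=\bigl(d_{\bar v}(a)-c_{\bar v}(a)\bigr)/2$. Subtracting, one finds
\[
\mathrm{codim}[w,r]-\delta_{\bar v}(a)=d(w,r)+c_{\bar v}(a),
\]
so the statement is equivalent to $d(w,r)+c_{\bar v}(a)\geq 1$ whenever $\delta_{\bar v}(a)>0$. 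First I would dispose of the easy case: if $w\neq 1$, then $c_{\bar v}(a)=\dim(\bbt)-\dim(\bbt^{w})\geq 1$ since a nontrivial element of $\bbW$ has a nontrivial fixed-point defect, and since $d(w,r)\geq 0$ always, the inequality follows immediately.

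The remaining case is $w=1$, i.e.\ the camral cover is unramified (tamely, hence étale) at $\bar v$, so that $c_{\bar v}(a)=0$ and $d_{\bar v}(a)=\sum_{\alpha}r(\alpha)=2\delta_{\bar v}(a)$. Here I would have to show $d(1,r)\geq 1$ under the hypothesis $\delta_{\bar v}(a)>0$, equivalently under the hypothesis that $r\not\equiv 0$, i.e.\ that $a$ actually meets the discriminant divisor at $\bar v$. By definition $d(1,r)$ is the codimension of $\frakt_1(\calO_{\bar v})_r$ inside $\frakt_1(\calO_{\bar v})=\frakt(\bar\calO_{\bar v})$, where the subscript $r$ pins down the valuation profile $(\mathrm{val}_{\bar v}\,\alpha(x))_{\alpha\in\Phi}=r$. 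The key point is that when $w=1$ a representative $x$ lies in $\frakt(\bar\calO_{\bar v})$ itself, and imposing that a given root $\alpha$ with $r(\alpha)>0$ vanishes to order exactly $r(\alpha)\geq 1$ at $\bar v$ is a nonempty, locally closed condition of codimension $\geq 1$ in the space of $\calO_{\bar v}/\varepsilon^N$-points (the leading Taylor coefficients of $\alpha(x)$ up to order $r(\alpha)-1$ must vanish). Since $\delta_{\bar v}(a)>0$ forces at least one $\alpha$ with $r(\alpha)\geq 1$, this gives $d(1,r)\geq 1$. One should double check that the codimension count in $\frakt_1(\calO_{\bar v})$ matches the codimension count in $\frakc(\calO_{\bar v}/\varepsilon^N)$ under the finite flat map $\pi$; this is precisely the content of the comparison in \cite{GKM-codim} between the $\frakt$-side and $\frakc$-side scalings, and is harmless because $\pi$ is generically étale and $|\bbW|$ is prime to $p$.

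The main obstacle I anticipate is not the inequality itself but making the bookkeeping of \cite[8.2.2]{GKM-codim} line up cleanly: one must be careful that $d(w,r)$ is defined as a codimension inside $\frakt_w(\calO_{\bar v})$ rather than inside all of $\frakt(\calO_{\bar v})$, so that in the unramified case $w=1$ there is no extra contribution hidden in the difference between $\frakt_w$ and $\frakt$, and that the ``admissible d'échelon $N$'' formalism genuinely computes codimension as a $\bar k$-scheme. Granting the quoted formula for $\mathrm{codim}[w,r]$ and the Bezrukavnikov formula \ref{dimension Bezru}, the proof is then a two-line subtraction plus the elementary remark that $d(w,r)+c_{\bar v}(a)$ is a sum of two nonnegative integers not both zero when $\delta_{\bar v}(a)>0$. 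I would present it in exactly that order: recall the two formulas, subtract, split into the cases $w\neq 1$ and $w=1$, and conclude.
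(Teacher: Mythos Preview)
Your proposal is correct and follows essentially the same approach as the paper: rewrite $\mathrm{codim}[w,r]=\delta_{\bar v}(a)+c_{\bar v}(a)+d(w,r)$ via the GKM and Bezrukavnikov formulas, then split into the cases $w\neq 1$ (where $c_{\bar v}(a)\geq 1$) and $w=1$ (where $\delta_{\bar v}(a)>0$ forces $r\neq 0$ and hence $d(1,r)\geq 1$). The paper's proof is slightly terser but the logic is identical.
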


\begin{proof}
Il est clair que
$${\rm codim}[w,r]= \delta_{\bar v}(a)+ c_{\bar v}(a)+
d(w,r)$$ o\`u $\delta_{\bar v}(a)={(d_{\bar v}(a)-c_{\bar v}(a))/2}$. Si $w$
n'est pas l'\'el\'ement trivial de $W$, on a $c_{\bar v}(a)\geq 1$. Si $w=1$, par
d\'efinition de \cite[8.2.2]{GKM-codim}, $d(w,r)$ est la codimention de
$\frakt(\bar\calO_{\bar v})_r$ dans $\frakt(\bar\calO_{\bar v})$ o\`u
$\frakt(\bar\calO_{\bar v})_r$ est la partie admissible de
$\frakt(\bar\calO_{\bar v})$ constitu\'ee des \'el\'ements ayant la valuation
radicielle $r$. Si $\delta_{\bar v}(a) >0$, alors $r\not=0$ de sorte que cette partie
est de codimension strictement positive. Donc $d(w,r)\geq 1$. Dans les
deux cas, on obtient donc l'in\'egalit\'e qu'on voulait.
\end{proof}

\begin{proposition}\label{codimension}
Si le degr\'e de $D$ est tr\`es grand par rapport \`a $\delta$, la strate \`a
$\delta$ constant $\calA_\delta$ est de codimension plus grande ou \'egale
\`a $\delta$.
\end{proposition}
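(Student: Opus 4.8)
The plan is to reduce the global codimension estimate for $\calA_\delta$ to the local codimension estimates of Goresky, Kottwitz and MacPherson recalled just above, exploiting the hypothesis that $\deg(D)$ is very large compared to $\delta$. First I would observe that a point $a\in\calA_\delta(\bar k)$ has total $\delta$-invariant $\delta_a=\delta$, and by the decomposition $\delta_a=\sum_{v\in\bar X-U}\delta_v(a)$ of the local invariants, only finitely many points $v$ of $\bar X$ contribute, with $\sum_v \delta_v(a)=\delta$. In particular the number of singular points of the cameral curve that matter is at most $\delta$, and at each such point the local invariant $[w_a,r]$ at $v$ determines an admissible stratum $\frakc^\heartsuit(\calO_v)_{[w,r]}$ whose codimension, by the previous proposition, is at least $\delta_v(a)+1$ when $\delta_v(a)>0$.

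Next I would set up the local-to-global comparison. Fix a partition $\delta=\sum_{i=1}^m \delta_i$ with $\delta_i\geq 1$, a choice of $m$ distinct points $v_1,\dots,v_m$ of $\bar X$, and local conjugacy-class data $[w_i,r_i]$ with $\delta_{v_i}=\delta_i$. The subset of $a\in\calA^\heartsuit(\bar k)$ with prescribed local behaviour of this type is cut out, after truncation, by the conditions that the jet of $a$ at each $v_i$ lies in the corresponding admissible subscheme $Z_i\subset\frakc(\calO_{v_i}/\varepsilon_{v_i}^N\calO_{v_i})$. Here the key point, and where the hypothesis $\deg(D)\gg\delta$ enters, is that the evaluation map sending $a\in\calA=\rmH^0(X,\frakc_D)$ to its collection of $N$-jets at $v_1,\dots,v_m$ is surjective: this follows from a Riemann--Roch / vanishing argument exactly as in Lemma \ref{deg(D)>2g}, since $\deg(D^{\otimes e_i})$ can be made larger than $2g-2+N\cdot m$ for all $i$ once $\deg(D)$ is large relative to $\delta$ (which bounds both $m$ and the relevant truncation level $N$, the latter being controlled by the finitely many admissible strata of $\delta$-invariant $\leq\delta$). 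Surjectivity of the jet map implies that the codimension in $\calA$ of the locus with prescribed local data equals the sum of the codimensions $\sum_i \operatorname{codim}[w_i,r_i]$ of the local strata, which by the local estimate is $\geq\sum_i(\delta_i+1)=\delta+m\geq\delta+1>\delta$.

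Finally I would assemble: $\calA_\delta\otimes_k\bar k$ is the union, over the finitely many admissible types $(\{v_i\},[w_i,r_i])$ realizing $\delta_a=\delta$, of these loci, together with the analogous loci where some $v_i$ is allowed to vary in $\bar X$ — which only increases dimension by one, still leaving codimension $\geq\delta+m-1\geq\delta$ since $m\geq1$ whenever $\delta>0$. As there are only finitely many types to consider (the set of $[w,r]$ with $\delta_{\bar v}\leq\delta$ is finite by the boundedness of admissible data in \cite{GKM-codim}), the codimension of $\calA_\delta$ is the minimum of these, hence $\geq\delta$. I expect the main obstacle to be making precise the claim that the truncation level $N$ needed to separate all admissible strata of $\delta$-invariant at most $\delta$ is bounded in terms of $\delta$ alone, so that ``$\deg(D)$ very large compared to $\delta$'' genuinely suffices for the jet-surjectivity; this requires invoking the admissibility statement of \cite{GKM-codim} uniformly over the finite set of relevant local types and checking that allowing the branch points $v_i$ to move does not spoil the surjectivity of the combined evaluation map (which is handled by working over the configuration space of $m$ points in $\bar X$ and applying the vanishing fibrewise).
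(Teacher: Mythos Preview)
Your proposal is correct and follows essentially the same argument as the paper: partition $\delta$, use the local Goresky--Kottwitz--MacPherson codimension estimate $\operatorname{codim}[w,r]\geq\delta_v+1$, invoke surjectivity of the jet map for $\deg(D)$ large, and sum. The paper organizes the varying-points step slightly more cleanly by working directly in $\calA^\heartsuit\times X^n$ (codimension $\geq\sum_i(\delta_i+1)=\delta+n$ there) and then projecting to $\calA$ (losing $n$), which gives $\geq\delta$ in one stroke; your bookkeeping ``codimension $\geq\delta+m-1$'' after letting ``some $v_i$'' vary is a bit off --- all $m$ points must vary, costing $m$, not $1$ --- but since the honest bound $\delta+m-m=\delta$ is still what you need, the conclusion is unaffected.
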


\begin{proof}
Pour toute partition $\delta_\bullet$ de $\delta$ en une somme d'entiers
naturels $\delta=\delta_1+\cdots+\delta_n$, consid\'e\-rons le sous-sch\'ema
$Z_{\delta_\bullet}$ de $\calA^\heartsuit \times X^j$ qui consiste en les
uplets $(a;x_1,\ldots,x_n)$ avec $a\in\calA^\heartsuit(\bar k)$ et
$x_1,\ldots,x_n\in X(\bar k)$ tels que l'invariant $\delta$ local
$\delta_{x_i}(a)$ vaut $\delta_i$. On peut stratifier $Z_{\delta_\bullet}$ en
r\'eunion des strates $Z_{[w_\bullet,r_\bullet]}$ des $(a;x_1,\ldots,x_n)$
tel que l'image dans $\frakc^\heartsuit(\bar\calO_{x_i})$ soit dans la strate
de valuation radicielle $\frakc^\heartsuit(\bar\calO_{x_i})_{[w_i,r_i]}$.
Supposons que cette strate est admissible d'\'echelon $N_i$. Pour $\deg(D)$
tr\`es grand, l'application lin\'eaire
$$\calA \longrightarrow \prod_{i=1}^n \frakc(\bar\calO_{x_i}/\varepsilon^{N_i}
\bar\calO_{x_i})
$$
est surjective. Il s'ensuit que $Z_{[w_\bullet,r_\bullet]}$ est de
codimension au moins \'egale \`a
$$\sum_{i=1}^n (\delta_i+1)$$
dans $\calA\times X^n$ Il s'ensuit que son image dans $\calA$ est de
codimension au moins \'egale \`a $\delta=\sum_{i=1}^n \delta_i$. La
proposition s'en d\'eduit.
\end{proof}

On pense que cette in\'egalit\'e est valide sans
l'hypoth\`ese que $\deg(D)$ soit tr\`es grand par rapport \`a $\delta$. Un
calcul de l'action infinit\'esimale de $\calP_a$ sur $\calM_a$ montre que
c'est vrai en caract\'eristique z\'ero. Nous pr\'esenterons sur ce calcul dans une
autre occasion.

\subsection{L'ouvert \'etale $\tilde\calA$ de $\calA$}
\label{subsection : tilde A}

Dans l'\'enonc\'e de la dualit\'e de Tate-Nakayama \ref{T-N}, il est
n\'ecessaire de choisir un certain point base pour d\'efinir la
$\kappa$-int\'egrale orbitale pour un \'el\'ement $\kappa\in \hat\bbT$. On
va de m\^eme construire un ouvert \'etale $\tilde\calA$ de $\calA$ qui
consiste \`a faire le choix d'un point base de la courbe cam\'erale dans le
lieu o\`u celle-ci est \'etale au-dessus de $X$. On verra dans la suite
l'\'enonc\'e de stabilisation deviendra beaucoup plus plaisant sur
$\tilde\calA$.

Soit $\infty\in X(\bar k)$ un point g\'eom\'etrique de $X$. Consid\'erons
l'ouvert $\calA^{\infty}$ de $\calA^\heartsuit \otimes_k \bar k$ qui consiste
en les points $a\in\calA^\heartsuit(\bar k)$ tel que le rev\^etement
cam\'eral $\widetilde X_a\rightarrow \bar X$ est \'etale au-dessus du point
$\infty$. En variant le point $\infty$, on obtient un recouvrement de
$\calA^\heartsuit\otimes_k \bar k$ par les ouverts de Zariski
$\calA^{\infty}$. Notons que si le point $\infty$ est d\'efini sur $k$,
$\calA^\infty$ l'est aussi.

Consid\'erons le rev\^etement $\tilde \calA$ dont les $\bar k$-points
sont des couples $(a,\tilde\infty)$ o\`u $a\in\calA^\infty(\bar k)$ et
$\tilde\infty$ est un point de $\widetilde X_a$ au-dessus de $\infty$. Le
morphisme d'oubli de $\tilde\infty$
\begin{equation}\label{mu infty}
{\calW}_\infty:\tilde\calA \rightarrow\calA^\infty
\end{equation}
est un torseur sous le groupe $W_{\infty}$ qui est la fibre de $W$ au-dessus
de $\infty$. Si le point $\infty$ est d\'efini sur $k$, $\tilde\calA$ est aussi
d\'efini sur $k$.

\begin{lemme}\label{connexe tilde A}
Supposons que $\deg(D)>2g$. Alors $\tilde\calA$ est lisse et irr\'e\-ductible.
\end{lemme}

\begin{proof}
On a un diagramme cart\'esien
$$
\xymatrix{
 \tilde\calA \ar[d]_{} \ar[r]^{}   & {\frakt_{D,\infty}}  \ar[d]\\
 \calA^\infty\ar[r] & \frakc_{D,\infty}           }
$$
o\`u la fl\`eche de bas est une partie ouverte de l'application lin\'eaire de
restriction de $\calA=\rmH^0(X,\frakc_D)$ \`a la fibre de $\frakc_D$ en
$\infty$ et o\`u la fl\`eche de droite se d\'eduit de $\bbt\rightarrow \bbc$
par torsion. Sous l'hypoth\`ese $\deg(D)>2g$, cette application lin\'eaire est
surjective \cf \ref{deg(D)>2g}. On en d\'eduit que la fl\`eche du haut du
diagramme est un morphisme lisse de fibres connexes. Puisque
$\frakt_{D,\infty}$ est un vectoriel, $\tilde\calA$ est lisse et irr\'eductible.
\end{proof}

Consid\'erons l'image r\'ecipoque de la stratification
\begin{equation}
\calA\otimes_k \bar k=\bigsqcup_{\psi\in\Psi}\calA_\psi.
\end{equation}
par le morphisme $\calW_\infty:\tilde\calA\rightarrow \calA^\infty$. En
d\'ecomposant $\calW_\infty^{-1}(\calA_\psi)$ pour tout $\psi\in \Psi$ en
composantes connexes, on obtient ainsi une stratification
\begin{equation}\label{stratification tilde A}
\widetilde\calA =\bigsqcup_{\tilde \psi\in \tilde \Psi}
\widetilde\calA_{\tilde \psi}.
\end{equation}
Sur l'ensemble $\tilde \Psi$, on consid\`ere la relation d'ordre d\'efinie par
$\tilde \psi_1\leq \tilde \psi_2$ si et seulement si $\widetilde\calA_{\tilde
\psi_1}$ est inclu dans l'adh\'erence de $\widetilde\calA_{\tilde \psi_2}$. Le
groupe $W_\infty$ agit sur l'ensemble $\tilde \Psi$ et l'ensemble quotient
de $\tilde \Psi$ par l'action de $W_\infty$ s'identifie au sous-ensemble de
$\Psi$ des strates $\calA_\psi$ ayant une intersection non vide avec
$\calA^\infty$.

Puisque $\tilde\calA$ est irr\'eductible, l'ensemble $\tilde \Psi$ admet un
\'el\'ement maximal not\'e $\tilde\psi$. Il est stable sous l'action de
$W_\infty$ et son image est l'\'el\'ement maximal $\psi_G$ de $\Psi$.

\subsection{Stratification par les invariants monodromiques}
\label{subsection : stratification monodromique}

On va maintenant construire une autre stratification de
$\calA^\heartsuit\otimes_k \bar k$ fond\'ee sur la monodromie de la courbe
cam\'erale. Comme la stratification \`a $\delta$ constant, la stratification
par les invariants monodromiques sera obtenue en regroupant les strates
dans la stratification par normalisation en famille des courbes cam\'erales
\ref{stratification}.

Les invariants monodromiques qu'on va d\'efinir d\'ependent d'une
r\'educ\-tion du torseur $\rho_G$ \cf \ref{reduction}. On se donne donc un
torseur $\rho:X_\rho\rightarrow X$ sous un groupe discret $\Theta_\rho$ et
un homomorphisme $\Theta_\rho\rightarrow \Out(\GG)$ tel que
$\rho_G=\rho\wedge^{\Theta_\rho} \Out(\GG)$. On se donne aussi un $\bar
k$-point $\infty_\rho$ de $X_\rho$ au-dessus de $\infty$. Ce point d\'efinit
un point g\'eom\'etrique $x_G$ de $\rho_G$ au-dessus de $\infty$ et fournit
en particulier une identification $W_\infty=\bbW$.

Pour tout $a\in\calA^\infty(\bar k)$, on a un rev\^etement \'etale
$$\widetilde X_{\rho,a}=\widetilde X_a\times_X X_\rho$$
de la courbe cam\'erale $\widetilde X_a$. La courbe $\widetilde
X_{\rho,a}$ est alors munie d'une action de $\bbW\rtimes \Theta_\rho$. Soit
$\widetilde X_{\rho,a}^\flat$ la normalisation de $\widetilde X_{\rho,a}$
qui est aussi munie d'une action de $\bbW\rtimes \Theta_\rho$. La donn\'ee
d'un point g\'eom\'etrique $\tilde\infty$ de $\widetilde X_a$ d\'efinit alors
un point g\'eom\'etrique
$$\tilde\infty_\rho=(\tilde\infty, \infty_\rho)$$
dans le lieu lisse de $\widetilde X_{\rho,a}$ et donc un point
g\'eom\'etrique de $\widetilde X_{\rho,a}^\flat$.

Soit $\tilde a=(a,\tilde\infty)\in \widetilde\calA(\bar k)$. Soit $C_{\tilde a}$
la composante connexe de  $\widetilde X_{\rho,a}^\flat$ qui contient le
point $\tilde\infty$. Soit $W_{\tilde a}$ le sous-groupe de $\bbW\rtimes
\Theta_\rho$ constitu\'es des \'el\'ements qui laissent stable cette
composante. On consid\`ere aussi le sous-groupe $I_{\tilde a}$ engendr\'e
par les \'el\'ements de $W_{\tilde a}$ qui admettent au moins un point fixe
dans $C_{\tilde a}$. C'est un sous-groupe normal de $W_{\tilde a}$. Il est
clair que la $I_{\tilde a}$ est contenu dans le noyau de la projection
$W_{\tilde a}\rightarrow \Theta_\rho$ de sorte que $I_{\tilde a}\subset
W_{\tilde a}\cap \bbW$.

\begin{proposition} \label{WIPHI}
Il existe une application $\tilde\psi\mapsto (I_{\tilde \psi},W_{\tilde \psi})$
de l'ensemble $\tilde\Psi$ des strates dans l'ensemble des couples
$(I_*,W_*)$ form\'es d'un sous-groupe $W_*$ de $\bbW\rtimes \Theta$ et
d'un sous-groupe normal $I_*$ de $W_*$ telle que pour tout $\tilde
a=(a,\tilde\infty)\in \widetilde\calA_{\tilde\psi}(\bar k)$ on a
$$(I_{\tilde a},W_{\tilde a})=(I_{\tilde\psi},W_{\tilde\psi}).$$
\end{proposition}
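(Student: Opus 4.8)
The statement asserts that the pair $(I_{\tilde a},W_{\tilde a})$ attached to a geometric point $\tilde a=(a,\tilde\infty)$ of $\widetilde\calA$ depends only on the stratum $\widetilde\calA_{\tilde\psi}$ through $\tilde a$. The plan is to show that $(I_{\tilde a},W_{\tilde a})$ is, up to canonical identification, extracted from the $\bbW\rtimes\Theta_\rho$-equivariant data of the normalisation $\widetilde X_{\rho,a}^\flat$ of the cameral cover, pointed at $\tilde\infty_\rho$, and that this datum is locally constant on each connected reduced stratum of $\widetilde\calA$. First I would recall from Proposition \ref{stratification}(1) that over the reduced locally closed stratum $\calA_\psi$ the cameral cover $\widetilde X_\psi$ admits a normalisation \emph{in family} after a finite radicial base change; pulling this back along the étale cover $\calW_\infty:\tilde\calA\rightarrow\calA^\infty$ and decomposing into connected components gives a finite radicial cover $\widetilde\calB_{\tilde\psi}\rightarrow\widetilde\calA_{\tilde\psi}$ over which $\widetilde X_{\rho,\bullet}^\flat$ is a smooth proper family of curves with a $\bbW\rtimes\Theta_\rho$-action and a marked section $\tilde\infty_\rho$ lying in the smooth locus.

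The second step is to observe that the two ingredients defining $W_{\tilde a}$ and $I_{\tilde a}$ are constructible and in fact locally constant on such a family. For $W_{\tilde a}$: the set of connected components of $\widetilde X_{\rho,a}^\flat$ forms a locally constant sheaf on the base, as in Proposition \ref{localement constant}(2) (because $\xi_*\Ql$ is locally constant and the family $\pi^\flat$ is smooth proper), and the $\bbW\rtimes\Theta_\rho$-action permutes these components continuously; hence the stabiliser of the component $C_{\tilde a}$ containing the marked point is constant along a connected base. For $I_{\tilde a}$: it is generated by those $w\in W_{\tilde a}$ having a fixed point on $C_{\tilde a}$. Since the order of $\bbW$ is prime to $p$ and $\widetilde X_{\rho,a}^\flat$ is smooth over the base, for each $w$ the fixed-point scheme $(C_{\tilde a})^w$ is smooth over the base, so the condition ``$(C_{\tilde a})^w\neq\emptyset$'' is both open and closed; thus the finite set of generators, and therefore the subgroup $I_{\tilde a}$, is constant on a connected base. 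Combining: $(I_{\tilde a},W_{\tilde a})$ is constant on $\widetilde\calB_{\tilde\psi}$, hence descends to a constant value on $\widetilde\calA_{\tilde\psi}$ because $\widetilde\calB_{\tilde\psi}\rightarrow\widetilde\calA_{\tilde\psi}$ is finite radicial and surjective on geometric points (a radicial surjection being a universal homeomorphism, geometric points upstairs and downstairs coincide, and $(I_{\tilde a},W_{\tilde a})$ manifestly only depends on the geometric point).

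The one point that requires care — and which I expect to be the main obstacle — is the compatibility of the marked-point data across the radicial base change: one must check that the component $C_{\tilde a}$ and the involutions with a fixed point on it are genuinely insensitive to passing to the normalisation-in-family model rather than the fibrewise normalisation, which over a non-perfect field need not be smooth. Here I would use that the marked point $\tilde\infty_\rho$ lies in the \emph{étale} locus of $\widetilde X_{\rho,a}\rightarrow\bar X$ over $\infty$ (by definition of $\calA^\infty$), so it lifts uniquely and compatibly to any partial normalisation and to the normalisation in family; this pins down $C_{\tilde a}$ unambiguously and makes the construction compatible with specialisation. Once this is settled, the desired map $\tilde\psi\mapsto(I_{\tilde\psi},W_{\tilde\psi})$ is well-defined, and the proposition follows. (The fact that $I_{\tilde a}\subset W_{\tilde a}\cap\bbW$ and that $I_{\tilde a}$ is normal in $W_{\tilde a}$, already noted in the text just before the statement, is preserved trivially since these are properties of the constant value.)
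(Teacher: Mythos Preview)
Your proof is correct and follows the same overall architecture as the paper: pass to the connected component $\widetilde\calB_{\tilde\psi}$ over which the normalised cameral cover $\widetilde X_{\rho,\tilde\psi}^\flat$ is a smooth proper family with a tautological section, and read off $(W_{\tilde a},I_{\tilde a})$ from this family. Your treatment of $W_{\tilde a}$ via the locally constant sheaf of components is exactly the paper's.

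For $I_{\tilde a}$, however, you take a genuinely different route. You argue that for each $w\in W_{\tilde\psi}$ the fixed-point scheme $(C_{\tilde\psi})^w$ is smooth (indeed finite \'etale, since nontrivial $w$ has isolated fixed points of multiplicity one in each fibre) and proper over the connected base, so its image is open and closed; hence ``$w$ has a fixed point'' is constant along $\widetilde\calB_{\tilde\psi}$. The paper instead invokes the Lefschetz fixed point formula: since the fixed points are isolated with multiplicity one, $\tr(w,\rmH^*(C_{\tilde a},\Ql))\neq 0$, and as the cohomology groups form a local system over the smooth proper family this trace is constant, forcing fixed points to persist at every $\tilde a'$. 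Your argument is more elementary and self-contained; the paper's has the advantage of packaging the invariant as a cohomological trace, which fits the spirit of later arguments. Your extra care about the radicial base change and the compatibility of the marked point through normalisation is well taken and is handled implicitly (rather than spelled out) in the paper.
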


\begin{proof}
Consid\'erons l'image inverse de $\calB_\psi\rightarrow \calA_\psi$ par le
morphisme fini \'etale $\calW_\infty:
\calW_\infty^{-1}(\calA_\psi)\rightarrow \calA_\psi$. Chosissons une de ses
composantes connexes  $\widetilde \calB_{\tilde\psi}$. Par construction,
au-dessus de $\calB_\psi$, on a une courbe propre et lisse $\widetilde
X_\psi^\flat$ qui normalise fibre par fibre la courbe cam\'erale $\widetilde
X_\psi$. Au-dessus de $\widetilde \calB_{\tilde\psi}$,  $\widetilde
X_{\rho,\tilde\psi}^\flat$ admet une section tautologique. Cette section
d\'efinit une composante connexe $C_{\tilde\psi}$ qui fibre par fibre est la
composante connexe $C_{\tilde a}$. C'est une courbe projective lisse de
fibres connexes au-dessus de $\calB_{\tilde\psi}$. Soit $W_{\tilde\psi}$ le
sous-groupe de $\bbW\rtimes \Theta_\rho$ qui laisse stable cette
composante. Pour tout $\tilde a\in \widetilde\calA_{\tilde\psi}(\bar k)$, on a
$W_{\tilde a}=W_{\tilde\psi}$.

Soit $w\in W_{\tilde\psi}$ un \'el\'ement ayant au moins un point fixe dans
$C^{\tilde a}$. On a alors $w\in\bbW$. Puisque le rev\^etement $C_{\tilde
a}\rightarrow \bar X$ est g\'en\'eriquement \'etale galoisien de groupe de
Galois $W_{\tilde a}$, les points fixes de $w$ dans $C_{\tilde a}$ sont
isol\'es. Puisque l'ordre de $\bbW$ est premier \`a la caract\'eristique de
$k$, les points fixes sont de multiplicit\'e un. En appliquant la formule des
points fixes de Lefschetz, on a
$$\tr(w,\rmH^*(C_{\tilde a},\Ql))\not=0.$$
Puisque $C_{\tilde\psi}\rightarrow \widetilde \calB_{\tilde\psi}$ est un
morphisme propre lisse, les groupes de cohomologie $\rmH^*(C_{\tilde
a'},\Ql)$ s'organisent en un syst\`eme local quand $\tilde a'$ varie dans
$\calB_{\tilde\psi}$. On en d\'eduit que
$$\tr(w,\rmH^*(C_{\tilde a'},\Ql))\not=0$$
pour tout $\tilde a'\in \calB_{\tilde\psi}(\bar k)$. Il en r\'esulte que $w$ a au
moins un point fixe dans $C_{\tilde a'}$. Ainsi, le sous-groupe de
$W_{\tilde\psi}$ engendr\'e par les \'el\'ements ayant au moins un point fixe
dans $C_{\tilde a}$ ne d\'epend pas du choix du point g\'eom\'etrique
$\tilde a$.
\end{proof}

On obtient donc une application ${\tilde\psi}\mapsto
(I_{\tilde\psi},W_{\tilde\psi})$ de $\tilde\Psi$ dans l'ensemble des couples
form\'es d'un sous-groupe $W_*$ de $\bbW\rtimes \Theta_\rho$ et d'un
sous-groupe normal $I_*$ de $W_*\cap\bbW$. Cette application est
\'equivariante par rapport \`a l'action par conjugaison de $\bbW$. On en
d\'eduit par passage au quotient une application de $\tilde\Psi/\bbW$ dans
l'ensemble des classes de $\bbW$-conjugaison des couples $(I_*,W_*)$. On a
ainsi d\'efini une application $\psi\mapsto [I_{\psi},W_{\psi}]$ sur le
sous-ensemble de $\Psi$ des strates $\calA_\psi$ ayant une intersection no
vide avec $\calA^\infty$. On v\'erifie sans difficult\'e que ces applications
sont compatibles pour les pour les choix diff\'erents de $\infty$ et donc se
recollent en une application $\psi\mapsto [I_{\psi},W_{\psi}]$ d\'efini sur
l'ensemble $\Psi$ tout entier.

Il existe une relation d'ordre \'evident sur l'ensemble des couples
$(I_*,W_*)$ d\'efinie par $(I_1,W_1)\leq (I_2,W_2)$ si et seulement si
$I_1\subset I_2$ et $W_1\subset W_2$. Elle induit une relation d'ordre sur
l'ensemble des classes de conjugaison $[I_*,W_*]$ d\'efinie par
$[I_1,W_1]\leq [I_2,W_2]$ si et seulement s'il existe $w\in \bbW\rtimes
\Theta$ tel que $wI_1 w^{-1} \subset I_2$ et $wW_1w^{-1}\subset W_2$.

\begin{lemme}\label{I W croissant}
L'application $\tilde \psi\mapsto (I_{\tilde \psi},W_{\tilde\psi})$ est
croissante. Il en est de m\^eme de l'application $\psi \mapsto
[I_\psi,W_\psi]$.
\end{lemme}

\begin{proof} On raisonne comme dans la d\'emonstration de \ref{delta decroit}.
Soit $S=\Spec(R)$ un trait formel   de point g\'en\'erique
$\eta=\Spec(k(\eta))$ et de point ferm\'e $s=\Spec(k(s))$ g\'eom\'etrique.
Soit $\tilde a:S\rightarrow\tilde\calA$ un morphisme avec $\tilde
a(\eta)\in\tilde\calA_{\tilde\psi}$ et $\tilde a(s)\in\tilde\calA_{\tilde\psi'}$.
On doit d\'emontrer que $(I_{\tilde \psi},W_{\tilde\psi})\geq (I_{\tilde
\psi'},W_{\tilde\psi'})$.

Consid\'erons le rev\^etement $\widetilde X_{\rho,a}$ de $X\times S$ qui est
d\'efini comme l'image r\'eciproque par $a$ du rev\^etement $X_\rho\times
\bbt_D \rightarrow \frakc_D$. Consid\'erons la normalisation $\widetilde
X_{\rho,a}^\flat$ de  $\widetilde X_{\rho,a}$. Quitte \`a faire un
changement radiciel du trait, on peut supposer que la fibre g\'en\'erique
$(\widetilde X_{\rho,a}^\flat)_\eta$ est une courbe lisse sur $k(\eta)$ de
sorte qu'on peut calculer $(I_{\tilde \psi},W_{\tilde\psi})$ \`a partir de
cette fibre g\'en\'erique. En revanche, la fibre sp\'eciale $(\widetilde
X_{\rho,a}^\flat)_s$ n'est pas normale en g\'en\'eral et il faut prendre sa
normalisation $(\widetilde X_{\rho,a}^\flat)_s^\flat$ pour calculer
$(I_{\tilde \psi'},W_{\tilde\psi'})$.

Le point $\tilde a$ d\'efinit une section de $(\widetilde
X_{\rho,a}^\flat)_\eta$. Notons $C_{\tilde a}$ la composante connexe de
$(\widetilde X_{\rho,a}^\flat)_\eta$ contenant cette section. Le groupe
$W_{\tilde\psi}$ est alors le sous-groupe de $\bbW\rtimes \Theta_\rho$
form\'e des \'el\'ements qui laissent stable cette composante. Soit
$C_{\tilde a(s)}$ la composante connexe de $(\widetilde
X_{\rho,a}^\flat)_s^\flat$ contenant le point d\'efinit par $\tilde a(s)$. Le
groupe $W_{\tilde\psi'}$ est le sous-groupe de $\bbW\rtimes \Theta_\rho$
form\'e des \'el\'ements qui laissent stable $C_{\tilde a(s)}$. Comme
$C_{\tilde a(s)}$ est une composante connexe de la normalisation de la
fibre sp\'eciale de $C_{\tilde a}$, on a une inclusion $W_{\tilde \psi'}\subset
W_{\tilde\psi}$. Un \'el\'ement de $W_{\tilde \psi'}$ ayant un point fixe dans
$C_{\tilde a(s)}$ a n\'ecessairement un point fixe dans $C_{\tilde a}$ d'o\`u
la seconde inclusion $I_{\tilde \psi'}\subset I_{\tilde \psi}$.
\end{proof}

Pour tout $(I_*,W_*)$ comme ci-dessus, notons $\tilde\calA_{(I_*,W_*)}$ la
r\'eunion des strates $\calA_{\tilde \psi}$ telles que $(I_{\tilde
\psi},W_{\tilde \psi})$. D'apr\`es \ref{I W croissant}, c'est une partie
localement ferm\'ee de $\tilde\calA$. On obtient ainsi une stratification de
$\tilde\calA$
\begin{equation}\label{stratification tilde A I W}
\tilde\calA=\bigsqcup_{(I_*,W_*)} \tilde\calA_{(I_*,W_*)}
\end{equation}
De m\^eme, on a une stratification compatible de $\calA^\heartsuit$
\begin{equation}\label{stratification monodromique}
\calA^\heartsuit \otimes_k \bar k=\bigsqcup_{[I_*,W_*]} \calA_{[I_*,W_*]}
\end{equation}

\begin{lemme}\label{I W psi_G} Consid\'erons une r\'eduction de la restriction de
$\rho_G$ \`a $\ovl X$ \`a un torseur irr\'eductible $\bar\rho:X_{\bar
\rho}\rightarrow \bar X$ sous un groupe $\Theta_{\bar \rho}$. Consid\'erons
les invariants monodromiques relatifs \`a cette r\'eduction. Soit
$\tilde\psi_G$ l'\'el\'ement maximal de $\tilde\Psi$. On a alors
$$(I_{\tilde\psi_G},W_{\tilde\psi_G})=(\bbW,\bbW\rtimes \Theta_{\bar\rho}).$$
De m\^eme, on a $[I_{\psi_G},W_{\psi_G}]=[\bbW,\bbW\rtimes
\Theta_{\bar\rho}]$ o\`u $\psi_G$ est l'\'el\'ement maximal de $\Psi$.
\end{lemme}

\begin{proof} Soit $\tilde a=(a,\tilde\infty)$ un $\bar k$-point de la strate
ouverte $\tilde\calA_{\tilde\psi_G}$. D'apr\`es \cf \ref{irreductible},
$\widetilde X_{\bar \rho,a}$ est alors une courbe lisse et irr\'eductible. On
en d\'eduit que $W_{\tilde a}=\bbW\rtimes \Theta_{\bar\rho}$.

Le sous-groupe normal $I_{\tilde a}$ de $W_a$ est alors un sous-groupe
normal de $W$. La courbe $\widetilde X_{\bar\rho,a}$ coupe
transversalement tous les murs de $\mathbbm h_\alpha$ dans
$X_{\bar\rho}\times \bbt$ associ\'es aux racines $\alpha$ de $\bbg$, le
groupe $I_{\tilde a}$ est un sous-groupe normal de $\bbW$ contenant
toutes les r\'eflexions $s_\alpha$ associ\'es au mur $h_\alpha$. Il s'ensuit
que $I_a=\bbW$.
\end{proof}

Sous l'hypoth\`ese du lemme, on a un ouvert dense
$\calA_{[\bbW,\bbW\rtimes \Theta_{\bar\rho}]}$ de $\calA$. En g\'en\'eral,
il contient strictement la grosse strate $\calA^\diamondsuit
=\calA_{\psi_G}$. En effet, dans la strate $\calA_{[\bbW,\bbW\rtimes
\Theta]}$, on demande \`a la courbe cam\'erale $\widetilde X_{\bar\rho,a}$
d'\^etre irr\'eductible alors que dans $\calA^\diamondsuit$, celle-ci doit
\^etre lisse et irr\'eductible.

\subsection{Description de $\pi_0(\calP)$} \label{subsection : description pi_0(P)}

Dans ce paragraphe, on va d\'ecrire le faisceau $\pi_0(\calP)$ le long des
strates $\calA_{I_*,W_*}$ de la stratification \ref{stratification
monodromique}. Rappelons la d\'efinition de ce faisceau. Le champ de
Picard $\calP\rightarrow \calA^\heartsuit$ \'etant lisse \ref{P lisse}, il existe
un unique faisceau $\pi_0(\calP)$ pour la topologie \'etale de
$\calA^\heartsuit$ tel que la fibre de $\pi_0(\calP)$ en un point
$a\in\calA^\heartsuit(\bar k)$ est le groupe $\pi_0(\calP_a)$ des
composantes connexes de $\calP_a$. Ceci est une cons\'equence d'un
th\'eor\`eme de Grothendieck \cf \cite[15.6.4]{EGA4}, voir aussi
\cite[6.2]{N}.

On va aussi consid\'erer le probl\`eme interm\'ediaire de d\'eterminer le
faisceau $\pi_0(\calP')$ des composante connexes du champ de Picard
$\calP'$ dont la fibre en chaque point $a\in\calA^\heartsuit(\bar k)$ classifie
des $J_a^0$ torseurs sur $\bar X$. L'homomorphisme surjectif
$\calP'\rightarrow \calP$ induit un homomorphisme surjectif
$\pi_0(\calP')\rightarrow \pi_0(\calP)$.

On commence par regarder la restriction de ces faisceaux \`a l'ouvert \'etale
$\tilde\calA$ de $\calA^\heartsuit$ \cf \ref{subsection : tilde A}. Cet ouvert
d\'epend du choix d'un point $\infty\in X(\bar k)$. On a une stratification
(\ref{stratification tilde A I W})
$$\tilde\calA=\bigsqcup_{(I_*,W_*)} \tilde\calA_{(I_*,W_*)}$$
par les invariants monodromiques avec $I_*\subset W_*\subset \bbW\rtimes
\Theta_\rho$. On a aussi fix\'e une r\'eduction du
$\Out(\GG)$-torseur $\rho_G$ en un torseur $\rho:X_\rho\rightarrow X$
sous le groupe $\Theta_\rho$ muni d'un point g\'eom\'etrique $\infty_\rho$ de
$X_\rho$ au-dessus de $\infty$. Ceci permet en particulier d'identifier la
fibre $W_\infty$ de $W$ en $\infty$ avec $\bbW$.

D'apr\`es \ref{pi 0 global}, pour tout point $\tilde a\in \tilde\calA(\bar k)$,
la fibre de $\pi_0(P'_{\tilde a})$ s'exprime
$$\pi_0(\calP'_a)=(\hat \bbT^{W_{\tilde a}})^*$$
o\`u l'exposant $(\_)^*$ d\'esigne la dualit\'e entre les groupes ab\'eliens de
type finis et les groupes diagonalisables de type fini sur $\Ql$. De m\^eme,
$\pi_0(\calP_a)$ s'identifie au quotient de $\pi_0(\calP'_a)$ dual au
sous-groupe $\hat\bbT(I_{\tilde a},W_{\tilde a})$ d\'efini dans \ref{pi 0
global}. L'\'enonc\'e suivant permet de d\'eterminer compl\`etement la
restriction des faisceaux $\pi_0(\calP')$ et $\pi_0(\calP)$ \`a $\tilde\calA$.

\begin{proposition}\label{surjectif}
Les fl\`eches surjectives de la proposition \ref{pi 0 global}
$$\bbX_*\rightarrow \pi_0(\calP'_a)=(\bbX_*)_{W_{\tilde a}}$$
d\'efinies pour tout $\tilde a=(a,\tilde\infty)\in\tilde\calA(\bar k)$
s'interpolent en un homomorphisme surjectif canonique du faisceau
constant de valeur $\bbX_*$ sur $\tilde\calA$ dans
$\pi_0(\calP')|_{\tilde\calA}$.
\end{proposition}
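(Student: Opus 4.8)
The plan is to construct the map at the level of torsors, then check it restricts to the pointwise surjection of Proposition~\ref{pi 0 global}, and finally descend along the stratification to get a genuine morphism of \'etale sheaves. First I would recall that over $\tilde\calA$ we have the tautological point $\tilde\infty$ of the cameral cover, and after fixing the reduction $\rho:X_\rho\rightarrow X$ and the point $\infty_\rho$ above $\infty$, this identifies $W_\infty$ with $\bbW$ once and for all, so that the fibre $J_{a,\infty}$ is canonically the fixed torus $\bbT$ via \ref{J pi_rho_G}. The idea for producing the interpolating map is geometric: for a cocharacter $\lambda\in\bbX_*=\Hom(\GG_m,\bbT)$, one uses the canonical trivialization of $J_a$ near $\infty$ (coming from the chosen point $\tilde\infty_\rho$ of $\widetilde X_{\rho,a}^\flat$) to push forward a ``uniformizer twist by $\lambda$'' concentrated at $\infty$; explicitly, $\lambda$ together with a local parameter at $\infty$ produces a $J_a$-torsor on $\bar X$ (glue the trivial torsor on $\bar X-\{\infty\}$ with the trivial torsor on a formal neighbourhood via $\lambda$ of the uniformizer), hence a point of $\calP_a$, and since $J_a^0$ has connected fibres and $\lambda$ lands in a torus this torsor lifts canonically to a $J_a^0$-torsor, i.e. a point of $\calP'_a$. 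This is exactly the construction underlying the Kottwitz-type isomorphism $(\bbX_*)_{W_{\tilde a}}\isom\pi_0(\calP'_a)$ used in the proof of \ref{pi 0 global}, so the composite $\bbX_*\rightarrow\calP'_a\rightarrow\pi_0(\calP'_a)$ agrees with the surjection there.

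Second, I would make this construction work in families over $\tilde\calA$ rather than fibrewise. The key point is that over all of $\tilde\calA$ one has a universal cameral cover $\widetilde X$ and (after base change by $\calW_\infty$) a universal section $\tilde\infty$ lying in the \'etale locus over $X\times\tilde\calA$, so $J:=h_a^*J$ over $X\times\tilde\calA$ carries a canonical trivialization in a neighbourhood of $\{\infty\}\times\tilde\calA$. Choosing a local parameter $\varepsilon_\infty$ at $\infty$ (which exists Zariski-locally on $X$, and is harmless since $\calP'$ and $\pi_0(\calP')$ don't depend on it modulo units), the gluing construction above then defines, for each $\lambda\in\bbX_*$, a section of the relative Picard stack $\calP'\rightarrow\tilde\calA$, hence after composing with $\calP'\rightarrow\pi_0(\calP')$ a section of the \'etale sheaf $\pi_0(\calP')|_{\tilde\calA}$. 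Linearity in $\lambda$ (i.e. that $\lambda\mapsto\lambda'$ and $\lambda\mapsto\lambda+\lambda'$ behave additively) is immediate from the cocycle description, so this assembles into a homomorphism of \'etale sheaves $\underline{\bbX_*}\rightarrow\pi_0(\calP')|_{\tilde\calA}$ from the constant sheaf.

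Third comes surjectivity. Fibrewise, surjectivity of $\bbX_*\rightarrow(\bbX_*)_{W_{\tilde a}}=\pi_0(\calP'_a)$ is the content of \ref{pi 0 global}, which invokes \cite[corollaire 6.7]{N} and the Kottwitz lemma \cite[lemme 2.2]{K-Iso1}. Since surjectivity of a morphism of sheaves can be checked on stalks, i.e. on geometric points, and we have just matched our map with the one of \ref{pi 0 global} at every geometric point, the sheaf map is surjective. The only genuinely delicate part — and the one I'd expect to be the main obstacle — is verifying that the family construction is well-defined, i.e. that the trivialization of $J$ near $\{\infty\}\times\tilde\calA$ is genuinely canonical and the resulting $J_a^0$-torsor varies \emph{algebraically} (not just set-theoretically) in $a$; this requires being careful that the canonical lift to $\calP'$ exists because $J^0\rightarrow J$ restricted near $\infty$ is an isomorphism (as $\infty$ lies in the \'etale locus of the cameral cover, so $J_{a,\infty}=\bbT$ is already connected), which is precisely what the choice of the open $\calA^\infty$ buys us. Once that is in place, the statement follows; the compatibility of this map with varying $\infty$ (needed only to descend further to $\calA^\heartsuit$) would be handled exactly as in the construction of the stratification $\psi\mapsto[I_\psi,W_\psi]$ at the end of \ref{subsection : stratification monodromique}, but that is outside what the proposition asks for.
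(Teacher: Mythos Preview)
Your proposal is correct and follows essentially the same approach as the paper: use the tautological point $\tilde\infty_\rho$ to identify the fibre $J_{a,\infty}$ with $\bbT$, then build the map $\bbX_*\rightarrow\calP'|_{\tilde\calA}$ by the standard ``uniformizer twist'' construction concentrated at $\infty$, and check surjectivity fibrewise via \ref{pi 0 global}. The paper's proof is much terser than yours --- it simply cites the construction in \cite[6.8]{N} rather than spelling out the gluing --- but the content is the same; your explicit discussion of why the lift to $\calP'$ exists (connectedness of $J_{a,\infty}=\bbT$ over $\calA^\infty$) is exactly the point implicit in that reference.
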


\begin{proof}
Soit $\tilde a=(a,\tilde\infty)\in \tilde\calA(\bar k)$. Le point g\'eom\'etrique
$\tilde\infty_\rho=(\tilde\infty,\infty_\rho)$ de $\widetilde X_{\rho,a}$
permet d'identifier la fibre de $J_a$ en $\infty$ avec le tore fixe $\bbT$ \cf
\ref{J=J'}. Cette identification permet de d\'efinir un homomorphisme du
faisceau constant sur $\tilde\calA$ de valeur le groupe $\bbX_*$ des
cocaract\`eres de $\bbT$ dans l'image r\'eciproque de $\calP'$ sur
$\tilde\calA$, voir la d\'emonstration de \cite[6.8]{N} et donc un
homomorphisme
$$\bbX_* \times \tilde\calA
\rightarrow \pi_0(\calP')|_{\tilde\calA}.
$$
Fibre par fibre c'est l'homomorphisme surjectif de \ref{pi 0 global}.
\end{proof}

\begin{remarque}
Notons que cet homomorphisme surjectif d\'epend du choix du $\bar
k$-point $\infty_\rho$. Il est d\'efini sur $k$ si le point $\infty_\rho$ est
d\'efini sur $k$. Notons que $\infty_\rho$ est d\'efini sur $k$ entra{\^\i}ne
que $\infty$ est aussi d\'efini sur $k$. Dans le langage des repr\'esentations
du groupe fondamental, il revient au m\^eme de dire que l'homomorphisme
$$\rho^\bullet:\pi_1(X,\infty)=\pi_1(\ovl X,\infty)\rtimes \Gal(\bar k/ k)
\rightarrow \Theta_\rho
$$
d\'efini \`a partir de $\infty_\rho$ est trivial sur le facteur $\Gal(\bar k/ k)$.
\end{remarque}

A l'aide de ce lemme, on obtient une description explicite des restrictions
de $\pi_0(\calP')$ et de $\pi_0(\calP)$ \`a $\tilde\calA$. Pour tout ouvert
\'etale $U$ de $\tilde\calA$, la stratification (\ref{stratification tilde A I W})
induit sur $U$ une stratification
$$U=\bigsqcup_{(I_*,W_*)} U_{(I_*,W_*)}.$$
Pour tout couple $(I_1,W_1)$ form\'e d'un sous-groupe $W_1$ de
$\bbW\rtimes \Theta$ et d'un sous-groupe normal $I_1$ de $W_1$, $U$ sera
dit un petit ouvert de type $(I_1,W_1)$ si $U_{(I_1,W_1)}$ est l'unique
strate ferm\'ee non vide dans la stratification ci-dessus. Il est clair que les
petits ouverts de diff\'erents types forment une base de la topologie \'etale
de $\tilde\calA$ dans le sens que tout ouvert peut \^etre recouvert par une
famille de petits ouverts. Pour d\'efinir un faisceau pour la topologie \'etale
de $\tilde\calA$, il suffit donc de sp\'ecifier ses sections sur les petits
ouverts et les fl\`eches de transition.

Consid\'erons les faisceaux $\Pi'$ et $\Pi$ d\'efinis comme suit. Pour un
petit ouvert $U_1$ de type $(I_1,W_1)$, on pose
\begin{eqnarray*}
\Gamma(U,\Pi')&=&(\hat \bbT^{W_{1}})^*=(\bbX_*)_{W_{1}}\\
\Gamma(U,\Pi)&=&\hat \bbT(I_1,W_1)^*
\end{eqnarray*}
Soit $U_2$ un petit ouvert \'etale de $U_1$ de type $(I_2,W_2)$. Puisque
$U_{(I_1,W_1)}$ est l'unique strate ferm\'ee non vide de $U_1$, on a
l'in\'egalit\'e
$$(I_1,W_1)\leq (I_2,W_2).$$
On a alors une inclusion \'evidente des sous-groupes des invariants de
$\hat\bbT$ sous $W_1$ et $W_2$
$$\hat\bbT^{W_2}\subset \hat\bbT^{W_1}$$
d'o\`u la fl\`eche de transition
$$\Gamma(U_1,\Pi')\rightarrow \Gamma(U_2,\Pi').$$
La d\'efinition des fl\`eches de transition du faisceau $\Pi$ r\'esulte du
lemme suivant.

\begin{lemme}\label{T(I,W) inclusion}
Si $(I_1,W_1)\leq (I_2,W_2)$, on a l'inclusion
$$\hat\bbT(I_2,W_2)\subset \hat \bbT(I_1,W_1).$$
\end{lemme}

\begin{proof}
Soient $\kappa$ un \'el\'ement de $\hat\bbT$, $\hat\GG_\kappa$ son
centralisateur dans $\hat\bbG$ et $\hat\bbH$ la composante neutre de
celui-ci. Soient $(\bbW\rtimes\Theta)_\kappa$ le centralisateur de $\kappa$
dans $\bbW\rtimes \Theta$ et $\bbW_\bbH$ le groupe de Weyl de
$\hat\bbH$. Si $\kappa\in \hat\bbT(I_2,W_2)$ alors $I_2\subset \bbW_\bbH$
et $W_2\subset (\bbW\rtimes\Theta)_\kappa$. On en d\'eduit que
$I_1\subset \bbW_\bbH$ et $W_1\subset (\bbW\rtimes\Theta)_\kappa$.
\end{proof}

L'action de $\bbW$ sur $\tilde\calA$ se rel\`eve de fa{\c c}on \'evidente sur
les faisceaux $\Pi$ et $\Pi'$. Le morphisme $\tilde\calA \rightarrow
\calA^\infty$ \'etant un morphisme fini \'etale galoisien de groupe de Galois
$\bbW$, les faisceaux $\Pi$ et $\Pi'$ descendent donc \`a l'ouvert
$\calA^\infty$ de $\calA^\heartsuit$. D\'esignons encore par $\Pi$ et $\Pi'$
les faisceaux ainsi d\'efinis sur $\calA^\infty$.

L'\'enonc\'e suivant est une cons\'equence imm\'ediate de \ref{surjectif} et
\ref{pi 0 global}.

\begin{corollaire}\label{pi}
Il existe un isomorphisme canonique entre la restriction de $\pi_0(\calP')$,
\`a $\calA^\infty$ et $\Pi'$. De m\^eme, on a
$\pi_0(\calP)|_{\calA^\infty}=\Pi$.
\end{corollaire}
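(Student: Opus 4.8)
\textbf{Plan of proof for Corollaire \ref{pi}.}
The statement claims that the étale sheaf $\pi_0(\calP')$ on $\calA^\infty$ is canonically isomorphic to the explicitly constructed sheaf $\Pi'$, and similarly $\pi_0(\calP)|_{\calA^\infty}=\Pi$. The plan is to verify this fibre by fibre along the monodromic stratification \ref{stratification tilde A I W}, using the surjection of Proposition \ref{surjectif} to pin down the isomorphism globally rather than just stratum by stratum. First I would work over $\tilde\calA$, where the choice of base point $\tilde\infty$ and of the reduction $\rho$ together with $\infty_\rho$ rigidifies the situation: the point $\tilde\infty_\rho$ identifies the fibre of $J_a$ at $\infty$ with the constant torus $\bbT$, hence (by the argument of \cite[6.8]{N} recalled in \ref{surjectif}) produces a morphism of étale sheaves $\bbX_*\times\tilde\calA\to\pi_0(\calP')|_{\tilde\calA}$ which is fibrewise the surjection $\bbX_*\to(\bbX_*)_{W_{\tilde a}}$ of Proposition \ref{pi 0 global}.

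Next I would check that this surjection factors through the sheaf $\Pi'$ and induces an isomorphism. The content of Proposition \ref{WIPHI} is precisely that the pair $(I_{\tilde a},W_{\tilde a})$ is locally constant: it equals a fixed $(I_{\tilde\psi},W_{\tilde\psi})$ on each stratum $\tilde\calA_{\tilde\psi}$, and the strata regroup into the locally closed pieces $\tilde\calA_{(I_*,W_*)}$ by Lemma \ref{I W croissant}. On a small open $U$ of type $(I_1,W_1)$ the closed stratum is $U_{(I_1,W_1)}$ and every other stratum meeting $U$ has larger invariant. Since $\pi_0(\calP')$ is a sheaf for the étale topology (Grothendieck, \cite[15.6.4]{EGA4}, via \ref{P lisse}), its sections over $U$ are determined by the behaviour at the generic point together with specialisation; here the key point is that the surjection $\bbX_*\to(\bbX_*)_{W}$ is functorial in the inclusion of subgroups $W_1\subset W_2$, so the sheafification of the fibrewise coinvariants is exactly the sheaf $\Pi'$ with transition maps dual to $\hat\bbT^{W_2}\subset\hat\bbT^{W_1}$. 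Comparing this with the constructed transition maps of $\Pi'$ gives the canonical isomorphism $\pi_0(\calP')|_{\tilde\calA}=\Pi'|_{\tilde\calA}$. For $\pi_0(\calP)$ one uses the fibrewise description of \ref{pi 0 global}: $\pi_0(\calP_a)$ is the quotient of $\pi_0(\calP'_a)$ dual to $\hat\bbT(I_{\tilde a},W_{\tilde a})$, and Lemma \ref{T(I,W) inclusion} guarantees the inclusions $\hat\bbT(I_2,W_2)\subset\hat\bbT(I_1,W_1)$ are compatible with the transition maps, so the same sheafification argument yields $\pi_0(\calP)|_{\tilde\calA}=\Pi|_{\tilde\calA}$.

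Finally I would descend from $\tilde\calA$ to $\calA^\infty$. The morphism $\calW_\infty:\tilde\calA\to\calA^\infty$ is finite étale Galois with group $W_\infty=\bbW$, the $\bbW$-action lifts naturally to $\pi_0(\calP')$, $\pi_0(\calP)$ (since $\calP',\calP$ themselves are defined over $\calA^\infty$) and to the sheaves $\Pi',\Pi$ by construction; checking that the isomorphism built over $\tilde\calA$ is $\bbW$-equivariant then lets it descend, and the resulting sheaves on $\calA^\infty$ are the ones denoted $\Pi',\Pi$ there. The main obstacle, I expect, is not any single step but the bookkeeping needed to show the fibrewise isomorphisms of \ref{pi 0 global} genuinely glue into a sheaf morphism with the prescribed transition maps: one must use the rigidification from $\tilde\infty_\rho$ carefully (so that the identification $J_{a,\infty}\cong\bbT$ does not jump), and invoke the local-to-global compatibility of the local sequences $\pi_0(J_{a,v})\to\pi_0(\calP_v(J_a^0))\to\pi_0(\calP_v(J_a))\to 0$ from the proof of \ref{pi 0 global} to see that the closed-stratum sections are cut out by exactly the conditions defining $\hat\bbT(I_1,W_1)$. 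Once that compatibility is in place the isomorphism is forced.
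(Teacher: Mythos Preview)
Your proposal is correct and follows exactly the approach the paper has in mind: the paper simply records the corollary as an immediate consequence of Proposition~\ref{surjectif} and Proposition~\ref{pi 0 global}, and what you have written is precisely the unpacking of that immediate consequence, including the descent from $\tilde\calA$ to $\calA^\infty$ via $\bbW$-equivariance.
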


Les ouverts de $\calA^\infty$ de $\calA^\heartsuit$ recouvrent
$\calA^\heartsuit$ de sorte que l'\'enonc\'e ci-dessus fournit une description
compl\`ete des faisceaux $\pi_0(\calP)$ et $\pi_0(\calP')$. En particulier,
on a l'\'enonc\'e suivant.

\begin{corollaire}\label{pi A diamond}
Consid\'erons une r\'eduction de la restriction de $\rho_G$ \`a $\ovl X$ \`a
un torseur irr\'eductible $\bar\rho:X_{\bar \rho}\rightarrow \bar X$ sous un
groupe discret $\Theta_{\bar \rho}$. Consid\'erons les invariants monodromiques
relatifs \`a cette r\'eduction. La restriction de $\pi_0(\calP)$ \`a l'ouvert
$\calA^\diamondsuit\otimes_k \bar k$ de \ref{subsection : A diamond} est le
faisceau constant de valeur $(Z_{\hat\bbG}^{\Theta_{\bar\rho}})^*$. Ici
$Z_{\hat\bbG}^{\Theta_{\bar\rho}}$ d\'esigne le sous-groupe des
\'el\'ements $\Theta_{\bar\rho}$-invariants dans le centre $Z_{\hat\bbG}$
de $\hat\bbG$.
\end{corollaire}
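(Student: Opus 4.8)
\textbf{Plan de d\'emonstration de \ref{pi A diamond}.}

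Le plan est de d\'eduire cet \'enonc\'e de la description g\'en\'erale \ref{pi} des faisceaux $\pi_0(\calP')$ et $\pi_0(\calP)$ sur $\calA^\infty$, appliqu\'ee \`a la grosse strate. D'abord, je remarque que sous l'hypoth\`ese $\deg(D)>2g$, l'ouvert $\calA^\diamondsuit$ est non vide \cf \ref{A diamond non vide} et co\"incide avec la grosse strate $\calA_{\psi_G}$ \cf le lemme qui pr\'ec\`ede \ref{stratification delta}; par ailleurs, d'apr\`es \ref{I W psi_G} (ou plut\^ot son analogue \ref{I W psi_G} formul\'e pour une r\'eduction irr\'eductible $\bar\rho$), la strate $\calA^\diamondsuit$ est la strate monodromique associ\'ee au couple maximal $[I_{\psi_G},W_{\psi_G}]=[\bbW,\bbW\rtimes\Theta_{\bar\rho}]$. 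Comme ce couple est maximal pour la relation d'ordre, tout petit ouvert \'etale rencontrant $\calA^\diamondsuit$ a $\calA^\diamondsuit_{(\bbW,\bbW\rtimes\Theta_{\bar\rho})}$ comme strate ferm\'ee non vide, et $\calA^\diamondsuit$ est lui-m\^eme, pour la topologie \'etale, un petit ouvert de type $(\bbW,\bbW\rtimes\Theta_{\bar\rho})$. Donc d'apr\`es \ref{pi}, la restriction de $\pi_0(\calP)$ \`a cet ouvert est le faisceau constant de valeur $\hat\bbT(\bbW,\bbW\rtimes\Theta_{\bar\rho})^*$.

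Ensuite, le c\oe ur de l'argument consiste \`a identifier le groupe $\hat\bbT(\bbW,\bbW\rtimes\Theta_{\bar\rho})$. Par d\'efinition \cf \ref{pi 0 global}, c'est le sous-groupe de $\hat\bbT^{\bbW\rtimes\Theta_{\bar\rho}}$ form\'e des $\kappa$ tels que $\bbW\rtimes\Theta_{\bar\rho}\subset(\bbW\rtimes\Theta_{\bar\rho})_\kappa$ et $\bbW\subset\bbW_\bbH$, o\`u $\bbW_\bbH$ est le groupe de Weyl de la composante neutre $\hat\bbH$ du centralisateur de $\kappa$ dans $\hat\bbG$. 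La premi\`ere condition signifie simplement que $\kappa$ est fixe sous $\bbW\rtimes\Theta_{\bar\rho}$ tout entier, c'est-\`a-dire $\kappa\in\hat\bbT^{\bbW\rtimes\Theta_{\bar\rho}}$; la seconde condition force $\bbW_\bbH=\bbW$, donc $\hat\bbH=\hat\bbG$, ce qui signifie que $\kappa$ est central dans $\hat\bbG$. Ainsi $\hat\bbT(\bbW,\bbW\rtimes\Theta_{\bar\rho})=(Z_{\hat\bbG})^{\bbW\rtimes\Theta_{\bar\rho}}$. Comme $\bbW$ agit trivialement sur le centre $Z_{\hat\bbG}$, ce groupe vaut $Z_{\hat\bbG}^{\Theta_{\bar\rho}}$. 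En passant au groupe diagonalisable dual, on obtient que la restriction de $\pi_0(\calP)$ \`a $\calA^\diamondsuit\otimes_k\bar k$ est le faisceau constant de valeur $(Z_{\hat\bbG}^{\Theta_{\bar\rho}})^*$, ce qu'il fallait d\'emontrer.

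Le point d\'elicat sera de v\'erifier soigneusement que l'ouvert $\calA^\diamondsuit$ se comporte bien comme un petit ouvert de type maximal pour la topologie \'etale — autrement dit que le faisceau $\Pi$ de \ref{subsection : description pi_0(P)} y est effectivement constant et non seulement de fibres constantes. Cela r\'esulte de ce que $\calA^\diamondsuit$ est lisse et irr\'eductible (de sorte que $\tilde\calA$ au-dessus est aussi lisse irr\'eductible \cf \ref{connexe tilde A}) et de ce que la courbe cam\'erale universelle y admet une normalisation en famille triviale; les fl\`eches de transition du faisceau $\Pi$ entre petits ouverts contenus dans $\calA^\diamondsuit$ sont alors toutes des identit\'es, puisqu'elles sont index\'ees par l'inclusion $(\bbW,\bbW\rtimes\Theta_{\bar\rho})\leq(\bbW,\bbW\rtimes\Theta_{\bar\rho})$. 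Le reste est un calcul formel d'invariants et de dualit\'e de Pontryagin entre groupes ab\'eliens de type fini et groupes diagonalisables sur $\Ql$.
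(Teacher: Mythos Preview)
Your proposal is correct and follows essentially the same route as the paper: identify $\calA^\diamondsuit$ with the maximal stratum $\calA_{\psi_G}$, invoke \ref{I W psi_G} to obtain the monodromic invariants $(\bbW,\bbW\rtimes\Theta_{\bar\rho})$, apply the description \ref{pi}, and then compute $\hat\bbT(\bbW,\bbW\rtimes\Theta_{\bar\rho})$. The only difference is in this last computation: you argue directly that the condition $\bbW\subset\bbW_\bbH$ forces $\bbW_\bbH=\bbW$ hence $\hat\bbH=\hat\bbG$ and $\kappa\in Z_{\hat\bbG}$, whereas the paper verifies the same equality by passing to a central extension $\hat\bbG'\to\hat\bbG$ with simply connected derived group --- your direct argument is perfectly valid and arguably more elementary here.
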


\begin{proof}
Rappelons que $\calA^\diamondsuit$ co\"incide avec la strate ouverte
$\calA_{\psi_G}$ pour l'\'el\'ement maximal $\psi_G$ de $\Psi$. Le
corollaire est une cons\'equence de l'\'egalit\'e
$(I_{\tilde\psi_G},W'_{\tilde\psi_G})=(\bbW,\bbW\rtimes\Theta_{\bar\rho})$
\cf \ref{I W psi_G}. En effet, on peut v\'erifier que
$$\hat\bbT(\bbW,\bbW\rtimes\Theta_{\bar\rho})=Z_{\hat\bbG}^{\Theta_{\bar\rho}}.$$
en prenant une extension centrale $\hat\bbG'\rightarrow \hat\bbG$ avec un
group d\'eriv\'e simplement connexe munie d'une action \'equivariante de
$\Theta_{\bar\rho}$.
\end{proof}


\section{Cohomologie au-dessus de l'ouvert anisotrope}
\label{section : anisotrope}

Dans ce chapitre, nous allons introduire l'ouvert $\calA^\ani$ de $\calA$ au-dessus duquel la fibration de Hitchin $f^\ani:\calM^\ani\rightarrow\calA^\ani$ est un morphisme propre, en particulier de type fini. Comme $\calM^\ani$  est un champ de Deligne-Mumford lisse, en appliquant le th\'eor\`eme de puret\'e de Deligne, on obtient la puret\'e du complexe $f^\ani_*\Ql$.

Suivant \cite{N}, on \'etudie la d\'ecomposition des faicseaux pervers de cohomologie $^p\rmH^n(f^\ani_*\Ql)$ sous l'action du faisceau $\pi_0(\calP)$ dont la restriction \`a $\calA^\ani$ est un faisceau de groupes ab\'eliens finis. La d\'ecomposition prendra une forme plus agr\'eable apr\`es le changement de base \`a l'ouvert \'etale $\tilde\calA^\ani$ au-dessus duquel $\pi_0(\calP)$ est un quotient du faisceau constant de valeur le groupe $\bbX_*$ des cocaract\`eres du tore fixe $\bbT$. En particulier, au-dessus de $\tilde\calA^\ani$, on n'a plus besoin du formalisme des co-faisceaux qui a \'et\'e introduit dans \cite{N} pour \'ecrire la d\'ecomposition endoscopique sur $\calA^\ani$.

Pour chaque $\kappa\in\hat\bbT$, on d\'efinit le ferm\'e $\tilde\calA_\kappa$ de $\tilde\calA$ o\`u $\kappa:\bbX_*\rightarrow\Ql^\times$ se factorise par $\pi_0(\calP_{\tilde a})$. L'intersection $\tilde\calA_\kappa\cap \tilde\calA^\ani$ est vide sauf pour les $\kappa$ dans un ensemble fini qu'on notera $\kappa^\ani$. Pour $\kappa\in \kappa^\ani$, on a un facteur direct $^p\rmH^n(f^\ani_*\Ql)_\kappa$ support\'e par $\tilde\calA_\kappa\cap \tilde\calA^\ani$. On donnera une description de $\tilde\calA_\kappa$ en termes des groupes endoscopiques.

Le point important de ce chapitre est l'\'enonc\'e du th\'eor\`eme de stabilisation g\'eom\'etrique \ref{stabilisation sur tilde A}. Il s'agit d'une \'egalit\'e dans le groupe Grothendieck de deux complexes purs sur $\tilde\calA_H$. Cet \'enonc\'e implique le lemme fondamental de Langlands et Shelstad \ref{LS} et le reste de l'article sera consacr\'e \`a sa d\'emonstration.

Notons que le vrai \'enonc\'e de stabilisation g\'eom\'etrique qui correspond \`a la stabilisation usuelle du c\^ot\'e g\'eom\'etrique de la formule des traces, devrait \^etre sur $\calA^\ani$ au lieu de $\tilde\calA^\ani$. Cet \'enonc\'e est nettement plus compliqu\'e et n\'ecessite en particulier le formalisme peu familier des cofaisceaux de \cite{N}. Mais il n'est rien d'autre que la descente de \ref{stabilisation sur tilde A} de $\tilde\calA_H$ \`a $\calA_H^\ani$. Nous le laisserons au lecteur comme exercice.

\subsection{L'ouvert anisotrope}
\label{subsection : ouvert anisotrope}

Consid\'erons le sous-ensemble $\Psi^\ani$ de $\Psi$ des \'el\'ements
$\psi\in \Psi$ tels que  le groupe des $W_\psi$-coinvariants de $\bbX_*$ est
un groupe fini. Il revient au m\^eme de dire que le sous-groupe des $W_\psi$-invariants dans $\bbX_*$ est trivial. Le groupe $W_\psi$ est seulement d\'efini modulo
$\bbW$-conjugaison mais la finitude des coinvariants est bien une
propri\'et\'e invariante de sorte que $\Psi^\ani$ est un sous-ensemble bien
d\'efini de $\Psi$. Consid\'erons la r\'eunion
$$\calA^\ani=\bigsqcup_{\psi\in \Psi^\ani} \calA_\psi.$$
D'apr\`es \ref{I W croissant}, $\calA^\ani$ est un ouvert de $\calA$. Sous les
hypoth\`eses que le centre de $G$ ne contient pas de tore d\'eploy\'e sur $\bar X$ et
que $\deg(D)>2g$, $\calA^\ani$ contient la grosse strate
$\calA^\diamondsuit$ qui est non vide \cf \ref{A diamond non vide}, de sorte que
$\calA^\ani$ est aussi non vide.

\begin{lemme}\label{finitude}
$\calA^\ani(\bar k)$ est le sous-ensemble de $\calA^\heartsuit(\bar k)$
form\'es des points $a\in \calA^\heartsuit(\bar k)$ tels que $\pi_0(\calP_a)$
est fini.
\end{lemme}

\begin{proof}
D'apr\`es \ref{pi 0 global}, $a\in \calA^\ani(\bar k)$ si et seulement si
$\pi_0(\calP'_a)$ est un groupe fini. Le lemme r\'esulte du fait que
$\pi_0(\calP'_a) \rightarrow  \pi_0(\calP_a)$ est un homomorphisme
surjectif de noyau fini \cf \ref{P'_a P_a}.
\end{proof}

\begin{lemme}\label{ani auto}
Soit $a\in\calA^\ani(\bar k)$ et $(E,\phi)\in \calM_a(\bar k)$. Alors le groupe des automorphismes ${\rm Aut}(E,\phi)$ est un groupe fini. En tant que $\bar k$-sch\'ema en groupes, il est r\'eduit.
\end{lemme}

\begin{proof} La finitude est une cons\'equence imm\'ediate de \ref{borne automorphisme}. En tant que $\bar k$-sch\'ema en groupes, il est contenu dans le sous-groupe des points fixes d'un sous-groupe de $\bbW$ agissant sur $\bbT$. Ce dernier est r\'eduit car $p$ ne divise l'ordre de $\bbW$.
\end{proof}

\begin{numero}
Dans \cite[II.4]{Fa}, Faltings a d\'emontr\'e le th\'eor\`eme de r\'eduction semi-stable pour les fibr\'es de Higgs qui dit qu'un fibr\'e de Higgs semi-stable sur $X$ \`a coefficients dans le corps des fractions d'un anneau des valuation discr\`ete peut s'\'etendre en un fibr\'e de Higgs sur l'anneau apr\`es une extension finie s\'eparable et de plus, si le fibr\'e de Higgs dans la fibre sp\'eciale est stable, cette extension est unique.
Nous renvoyons \`a \cite{Fa} pour la d\'efintion des fibr\'es de Higgs semi-stables et stables. Disons seulement que c'est exactement la m\^eme d\'efinition que pour les $G$-torseurs sauf qu'on ne consid\`ere que les r\'eductions paraboliques de $E$ compatibles avec le champ de Higgs $\phi$. Ceci suffit pour d\'emontrer le lemme suivant.
\end{numero}

\begin{lemme}\label{ani > stable} Soit $a\in \calA^\ani(\bar k)$ et $(E,\varphi)\in\calM^\ani(\bar k)$. Alors
$(E,\phi)$ est stable.
\end{lemme}

\begin{proof} Puisque $a\in \calA^\ani(\bar k)$, $E$ n'a pas de r\'eduction parabolique compatible \`a $\phi$.
\end{proof}

\begin{proposition} \label{propre}
La restriction $\calP^\ani$ du champ de Picard $\calP$ \`a $\calA^\ani$ est un
champ de Deligne-Mumford s\'epar\'e lisse de type fini au-dessus de $\calA^\ani$.
L'ouvert $\calM^\ani:=\calM\times_\calA \calA^\ani$ de $\calM$ est un
champ de Deligne-Mumford s\'epar\'e lisse de type fini au-dessus de $k$. Le
morphisme $f^\ani:\calM^\ani\rta\calA^\ani$ est un morphisme propre.
\end{proposition}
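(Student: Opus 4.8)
L'assertion à établir est que, au-dessus de l'ouvert anisotrope $\calA^\ani$, le champ de Picard $\calP^\ani$ et l'espace total $\calM^\ani$ sont des champs de Deligne-Mumford séparés, lisses, de type fini, et que $f^\ani$ est propre. La stratégie se décompose en quatre temps que je traiterais dans l'ordre suivant. D'abord la lissité : pour $\calP^\ani\rightarrow\calA^\ani$ c'est déjà acquis par \ref{P lisse}, et pour $\calM^\ani$ au-dessus de $k$ c'est le théorème \ref{lisse} (puisqu'on suppose $\deg(D)>2g-2$, l'obstruction $\rmH^1(X,\ad(E,\phi))$ est nulle, donc $\calM$ est lisse, et $\calM^\ani$ étant un ouvert reste lisse). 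Ensuite le caractère Deligne-Mumford : il suffit de contrôler les groupes d'automorphismes. Pour $\calM^\ani$, le lemme \ref{ani auto} montre que pour tout $a\in\calA^\ani(\bar k)$ et $(E,\phi)\in\calM_a(\bar k)$, le groupe $\Aut(E,\phi)$ est fini et réduit comme $\bar k$-schéma en groupes ; pour $\calP^\ani$, la proposition \ref{automorphisme P_a} combinée à la définition de $\calA^\ani$ donne $\rmH^0(\bar X,J_a)=(Z\GG)^{\bbW\rtimes\Theta_{\bar\rho}}$ qui est fini sous l'hypothèse anisotrope (le centre ne contient pas de tore déployé invariant sur $\bar X$), donc $\calP^\ani$ est un champ de Picard de Deligne-Mumford.

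Le troisième point est la finitude du type. Pour $\calP^\ani$, la proposition \ref{P a flat} décrit $\calP_a$ comme extension du champ abélien $\calP_a^\flat$ par le groupe affine de type fini $\calR_a$ ; la finitude du type résulte de ce que $\calA^\ani$ se décompose en un nombre fini de strates $\calA_\psi$ (l'ensemble $\Psi$ est fini, \ref{representable}) sur chacune desquelles la structure de $\calP_a$ est contrôlée uniformément via la courbe caméral normalisée. Pour $\calM^\ani$, l'argument clé est la formule de produit \ref{produit} : pour $a\in\calA^\ani(\bar k)$, le morphisme $\zeta$ identifie $\calM_a$ (réduit) au quotient de $\prod_{v}\calM_v^{\rm red}(a)\times\calP_a$ par l'action diagonale d'un groupe convenable, et la proposition \ref{produit} affirme précisément que ce quotient est un champ de Deligne-Mumford \emph{propre}. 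Donc chaque fibre $\calM_a$ est propre ; reste à globaliser en famille.

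La dernière étape, la propreté de $f^\ani$, est celle que j'attends comme l'obstacle principal, car passer de la propreté fibre à fibre à la propreté du morphisme demande un argument uniforme. Le plan est d'utiliser le critère valuatif de propreté : soit $R$ un anneau de valuation discrète de corps des fractions $K$, et un diagramme $\Spec K\rightarrow\calM^\ani$, $\Spec R\rightarrow\calA^\ani$ compatible. Comme $\calA^\ani$ est de type fini et que $\calM^\ani\rightarrow\calA^\ani$ est de type fini (via \ref{localement de type fini} et le fait que la fibre de Hitchin est un fermé de l'espace total $\calM$ qui est localement de type fini, restreint au-dessus d'un ouvert de type fini), il s'agit de prolonger le point de Higgs sur $\Spec R$. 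On invoque ici le théorème de réduction semi-stable de Faltings rappelé avant \ref{ani > stable} : par \ref{ani > stable}, tout point de $\calM^\ani(\bar k)$ est un fibré de Higgs \emph{stable} (pas de réduction parabolique compatible à $\phi$, car $a$ est anisotrope), et Faltings garantit alors l'existence et l'unicité du prolongement semi-stable sur $R$ après extension finie séparable, ce qui fournit exactement l'existence et l'unicité dans le critère valuatif. La séparation de $\calM^\ani$ résulte de la partie unicité du même critère, et celle de $\calP^\ani$ du fait que c'est un champ de Picard de Deligne-Mumford séparé (on peut aussi raisonner directement sur les $J_a$-torseurs, dont le foncteur est séparé). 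Enfin, $f^\ani$ étant de type fini, séparé et vérifiant le critère valuatif, il est propre.
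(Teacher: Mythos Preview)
Ton plan couvre bien la lissité, le caractère Deligne-Mumford et la séparation, et l'usage du critère valuatif via Faltings est correct. Mais il y a une vraie lacune : la finitude du type de $\calM^\ani\rightarrow\calA^\ani$. Tu l'annonces dans ton troisième point, constates toi-même qu'il «~reste à globaliser en famille~», puis dans le dernier paragraphe tu l'affirmes comme acquise en invoquant \ref{localement de type fini}. Or cette référence porte sur les fibres de Springer affines, pas sur la fibration de Hitchin ; et surtout $\calM$ est un fibré vectoriel au-dessus de ${\rm Bun}_G$, qui n'est pas de type fini. Que $\calM^\ani$ le devienne au-dessus de $\calA^\ani$ est précisément le point délicat, et tu ne peux pas appliquer le critère valuatif de propreté sans l'avoir établi.

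L'argument du papier procède ainsi. On sait par Faltings que le critère valuatif est satisfait ; il reste donc à montrer que $f^\ani$ est de type fini. Pour $a\in\calA^\ani(\bar k)$ fixé, la formule de produit \ref{produit} montre que la fibre $\calM_a$ est noethérienne, donc recouverte par un nombre \emph{fini} d'ouverts de type fini de $\calM^\ani$. Comme $f$ est plat, les images de ces ouverts dans $\calA^\ani$ sont ouvertes ; soit $V_a$ leur intersection, qui contient $a$. L'ouvert $f^{-1}(V_a)$ est alors recouvert par cette famille finie d'ouverts de type fini. Enfin $\calA^\ani$ étant lui-même noethérien, un nombre fini de tels $V_a$ le recouvrent, d'où le type fini de $\calM^\ani$. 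Pour $\calP^\ani$, le papier observe simplement qu'il s'identifie à un ouvert de $\calM^\ani$ via la section de Kostant, ce qui est plus direct que ton argument par stratification.
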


\begin{proof}
On sait d\'ej\`a que $\calP$ est un champ de Picard lisse au-dessus de
$\calA^\heartsuit$ \cf \ref{P lisse} et $\calM$ est lisse sur $k$ \cf \ref{lisse}.

D'apr\`es \cite[II.4]{Fa} et \ref{ani > stable}, $\calM^\ani$ est s\'epar\'e.  Autrement dit le morphisme diagonal
de $\calM^\ani$ est universellement ferm\'e. D'apr\`es \ref{ani auto}, il est quasi-fini et de fibres r\'eduites. On en d\'eduit qu'il est fini et non ramifi\'e. Par cons\'equent $\calM^\ani$ est un champ de Deligne-Mumford s\'epar\'e. Il en est de m\^eme de $\calP^\ani$ car $\calP^\ani$ s'identifie \`a un ouvert de $\calM^\ani$.

Il reste \`a d\'emontrer que $\calP$ est de type fini et $\calM$ est propre
sur $\calA^\ani$. Disposant du crit\`ere valuatif de propret\'e \cf \cite[II.4]{Fa},
il suffit de d\'emontrer qu'il est de type  fini sur $\calA^\ani$.
On sait que $\calM$ est localement de type fini. Pour tout
$a\in\calA^\ani(\bar k)$, on peut choisir une famille d'ouverts de type fini
de $\calM^\ani$ qui recourvrent la fibre $\calM^\ani$. D'apr\`es la formule
de produit \ref{produit}, la fibre $\calM_a$ est noeth\'e\-rienne si bien qu'il
existe une famille finie d'ouverts de type fini de $\calM^\ani$ qui recouvre
$\calM_a$. Puisque $f:\calM^\ani\rightarrow \calA^\ani$ est plat, l'image de
ces ouverts de $\calA^\ani$ sont des ouvert de $\calA^\ani$ contenant $a$.
Soit $V_a$ leur intersection. Il est clair que l'ouvert $f^{-1}(V_a)$ peut
\^etre recouvert par une famille finie d'ouverts de type fini. Il reste \`a
remarquer que $\calA^\ani$ est aussi noeth\'erien de sorte qu'il existe un
nombre fini de points $a$ tels que les ouverts $V_a$ comme
ci-dessus recouvrent $\calA^\ani$. La proposition s'en d\'eduit.
\end{proof}

\subsection{La $\kappa$-d\'ecomposition sur $\tilde\calA^\ani$}
\label{subsection : kappa-decomposition}

Le morphisme
$$f^\ani:\calM^\ani\rightarrow \calA^\ani$$
est un morphisme propre. Sa source $\calM^{\ani}$ est un champ de Deligne-Mumford
lisse. D'apr\`es le th\'eor\`eme de puret\'e de Deligne \cite{Weil2}, $f^\ani_* \Ql$ est un
complexe pur. D'apr\`es \cite[5.4.5]{BBD}, il est isomorphe \`a la somme
directe de ses faisceaux pervers de cohomologie
$$f^\ani_* \simeq \bigoplus_n \,^p\rmH^n(f^\ani_*\Ql)[-n].$$
o\`u $^p\rmH^n(f^\ani_*\Ql)$ est un faisceau pervers pur de poids $n$.

\begin{numero}
Puisque $\calP^\ani$ agit sur $\calM^\ani$ au-dessus de $\calA^\ani$,
$\calP^\ani$ agit sur l'image directe $f_*^\ani\Ql$. D'apr\`es le lemme
d'homotopie \cf \cite[3.2.3]{LN}, l'action de $\calP^\ani$ sur les faisceaux
pervers de cohomologie $^p\rmH^n(f_*^\ani\Ql)$ se factorise par le
faisceau en groupes ab\'eliens finis $\pi_0(\calP^\ani)$. Pour comprendre
cette action, il est commode de se restreindre d'abord \`a l'ouvert \'etale
$\tilde\calA^\ani=\tilde\calA\times_{\calA}\calA^{\ani}$ avec
$\tilde\calA$ d\'efini dans le paragraphe \ref{subsection : tilde
A}.
\end{numero}

\begin{numero}
Au-dessus de $\tilde\calA$, on a l'homomorphisme surjectif \ref{surjectif}
$$\bbX_*\times \tilde\calA\rightarrow\pi_0(\calP)|_{\tilde\calA}.$$
Notons $\tilde\calA^\ani$ l'image r\'eciproque de $\calA^\ani$ \`a
$\tilde\calA$ et $\tilde f^\ani$ celle du morphisme $f^\ani$. Au-dessus de
l'ouvert $\tilde\calA^\ani$, l'homomorphisme
$\bbX_*\rightarrow\pi_0(\calP)|_{\tilde\calA}$ se factorise par un quotient
fini de sorte que pour tout $\kappa\in\hat\bbT$, on peut d\'efinir un facteur
direct
$$^p\rmH^n(\tilde f_*^\ani\Ql)_\kappa$$
sur lequel $\bbX_*$ agit \`a travers le caract\`ere
$\kappa:\bbX_*\rightarrow \Ql^\times$.
\end{numero}

\begin{numero}\label{tilde A_kappa}
Pour tout $\kappa\in\hat\bbT$, soit $\tilde\calA_\kappa$ la r\'eunion  des
strates $\tilde\calA_{\tilde \psi}$ avec $\tilde\psi\in \Psi$ et
$\kappa\in \hat\bbT(I_{\tilde \psi},W_{\tilde\psi})$. D'apr\`es \ref{T(I,W) inclusion} et \ref{I W croissant}, $\tilde \calA_\kappa$
est un ferm\'e de $\tilde\calA$.
\end{numero}

\begin{numero}\label{kappa ani}
Nous notons  $\kappa^\ani$ le sous-ensemble de $\hat\bbT$ form\'e des \'el\'ements
$\kappa\in\hat\bbT$ tel que $\kappa\in \hat\bbT(I_{\tilde \psi},
W_{\tilde\psi})$ pour une certaine strate anisotrope $\tilde\psi\in
\tilde\Psi^\ani$. Comme $\tilde\Psi^\ani$ est un ensemble fini et pour tout $\tilde\psi\in
\tilde \Psi^\ani$, $\hat\bbT(I_{\tilde \psi}, W_{\tilde\psi})$ est un
sous-groupe fini de $\hat\bbT$, l'ensemble $\kappa^\ani$ est un
sous-ensemble fini de $\hat\bbT$. Le facteur direct $^p\rmH^n(\tilde f_*^\ani\Ql)_\kappa$ est non nul seulement si $\kappa\in \kappa^\ani$.
On a donc une d\'ecomposition en somme directe finie
\begin{equation}\label{decomposition tilde f^ani}
^p\rmH^n(\tilde f_*^\ani\Ql)= \bigoplus_{\kappa \in \kappa^\ani}
\,^p\rmH^n(\tilde f_*^\ani\Ql)_{\kappa}.
\end{equation}
\end{numero}

\begin{proposition}\label{contenu dans tilde A kappa}
Le support du faisceau pervers $^p\rmH^n(\tilde
f_*^\ani\Ql)_\kappa$ est contenu dans $\tilde\calA_\kappa\cap \calA^{\rm ani}$.
L'intersection $\tilde\calA_\kappa\cap \tilde\calA^\ani$ est non vide si et seulement si $\kappa$ est un \'el\'ement de l'ensemble fini $\kappa^\ani$.
\end{proposition}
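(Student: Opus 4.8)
\textbf{Proposition} (\ref{contenu dans tilde A kappa}). \emph{Plan of proof.}

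The plan is to reduce the support statement to a fiberwise computation of the action of $\bbX_*$ on the stalks of the perverse cohomology sheaves, and then to invoke the description of $\pi_0(\calP)$ along the monodromy strata that has already been established. First I would observe that by the homotopy lemma \cite[3.2.3]{LN}, the action of $\calP^\ani$ on $^p\rmH^n(\tilde f^\ani_*\Ql)$ factors through $\pi_0(\calP^\ani)$, and that over $\tilde\calA$ the surjection $\bbX_*\times\tilde\calA\rightarrow\pi_0(\calP)|_{\tilde\calA}$ of \ref{surjectif} lets us speak of the $\kappa$-isotypic summand for $\kappa\in\hat\bbT$. The point is then that for a geometric point $\tilde a$ lying in a stratum $\tilde\calA_{\tilde\psi}$, the stalk of $^p\rmH^n(\tilde f^\ani_*\Ql)_\kappa$ at $\tilde a$ is computed from the $\kappa$-isotypic part of the cohomology of $\calM_{\tilde a}$ for the action of the finite group $\pi_0(\calP_{\tilde a})$. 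If $\kappa$ does not factor through $\pi_0(\calP_{\tilde a})$, i.e.\ if $\kappa\notin\hat\bbT(I_{\tilde\psi},W_{\tilde\psi})$, that isotypic part vanishes; by \ref{pi A diamond} and more precisely \ref{pi 0 global}, $\pi_0(\calP_{\tilde a})^*=\hat\bbT(I_{\tilde a},W_{\tilde a})$, so $\kappa$ acts nontrivially on $\calP_{\tilde a}^0$-torsors precisely off $\tilde\calA_\kappa$. Hence the stalks of the $\kappa$-summand vanish on $\tilde\calA\setminus\tilde\calA_\kappa$.

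The subtlety is that vanishing of \emph{stalks} off a constructible set does not by itself pin down the support of a perverse sheaf: one must argue with the whole constructible sheaf $\pi_0(\calP)$ and not just with individual fibers. The clean way to do this is to use the explicit description of $\pi_0(\calP)|_{\tilde\calA}$ as the sheaf $\Pi$ of \ref{pi}, whose sections over a small \'etale open of type $(I_1,W_1)$ are $\hat\bbT(I_1,W_1)^*$ with the transition maps dictated by \ref{T(I,W) inclusion}. On such a small open $U_1$, a character $\kappa$ of $\bbX_*$ acts on $f^\ani_*\Ql|_{U_1}$ through $\pi_0(\calP)|_{U_1}$, and the $\kappa$-summand is supported set-theoretically in the union of those strata $U_{(I_2,W_2)}$ with $\kappa\in\hat\bbT(I_2,W_2)$; by the monotonicity of $\tilde\psi\mapsto(I_{\tilde\psi},W_{\tilde\psi})$ (\ref{I W croissant}) together with Lemma \ref{T(I,W) inclusion}, this union is $\tilde\calA_\kappa\cap U_1$. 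Since the $U_1$ cover $\tilde\calA$ and $\tilde\calA_\kappa$ is closed (\ref{tilde A_kappa}), the support of $^p\rmH^n(\tilde f^\ani_*\Ql)_\kappa$ is contained in $\tilde\calA_\kappa\cap\tilde\calA^\ani$. I would spell out the reduction to the stalk computation using proper base change for $\tilde f^\ani$ and the fact that a direct summand of a perverse sheaf has support contained in the support of the ambient sheaf restricted to the locus where the relevant projector is nonzero.

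For the second assertion, that $\tilde\calA_\kappa\cap\tilde\calA^\ani\neq\emptyset$ iff $\kappa\in\kappa^\ani$: the implication from nonemptiness is immediate from the definition of $\kappa^\ani$ in \ref{kappa ani}, since a point of $\tilde\calA_\kappa\cap\tilde\calA^\ani$ lies on some anisotropic stratum $\tilde\psi\in\tilde\Psi^\ani$ with $\kappa\in\hat\bbT(I_{\tilde\psi},W_{\tilde\psi})$. Conversely, if $\kappa\in\kappa^\ani$ then by definition there is an anisotropic stratum $\tilde\psi$ with $\kappa\in\hat\bbT(I_{\tilde\psi},W_{\tilde\psi})$; that stratum is then contained in both $\tilde\calA_\kappa$ and $\tilde\calA^\ani$, and it is nonempty by construction of the stratification (\ref{stratification tilde A}), so the intersection is nonempty.

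The main obstacle I expect is bookkeeping rather than conceptual: one has to be careful that the $\kappa$-decomposition, which is defined on the \emph{locus} where $\bbX_*$ acts through a finite quotient, interacts correctly with the stratification, and in particular that passing from a small open to its closed stratum does not enlarge the support. This is handled by the sheaf-theoretic description $\pi_0(\calP)|_{\tilde\calA}=\Pi$ of \ref{pi}, which makes the transition maps — and hence the locus where the projector onto the $\kappa$-part is nonzero — completely explicit. Once that is in hand, the argument is a direct combination of \ref{surjectif}, \ref{pi 0 global}, \ref{T(I,W) inclusion}, \ref{I W croissant} and the homotopy lemma \cite[3.2.3]{LN}.
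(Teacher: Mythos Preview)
Your proposal is correct and follows essentially the same approach as the paper's proof, which is a one-liner: on the open $U=\tilde\calA^\ani-\tilde\calA_\kappa$, the homomorphism $\bbX_*\rightarrow\pi_0(\calP)$ factors (by the description of $\pi_0(\calP)$ from \ref{surjectif} and \ref{pi 0 global}) through quotients on which $\kappa$ is nontrivial, so the $\kappa$-isotypic summand restricts to zero on $U$. Your concern about stalks versus support is over-cautious here: since the $\kappa$-summand is a genuine direct summand and restriction to an open is exact, vanishing on $U$ is exactly what is needed, and the detour through small \'etale opens and the sheaf $\Pi$, while correct, is more than the argument requires.
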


\begin{proof}
D'apr\`es la description de
$\pi_0(\calP)$ sur $\tilde\calA$, au-dessus de l'ouvert
$U=\tilde\calA^\ani-\tilde\calA_\kappa$, l'homomorphisme
$\bbX_*\rightarrow\pi_0(\calP)$ se factorise par un quotient $\Gamma$ de
$\bbX_*$ tel que $\kappa$ soit en dehors du sous-groupe $\Gamma^*$ de
$\hat\bbT$. Il s'ensuit que la restriction de $^p\rmH^n(\tilde
f_*^\ani\Ql)_\kappa$ \`a $U$ est nulle.
\end{proof}

\subsection{L'immersion ferm\'ee de $\tilde\calA_H$ dans $\tilde\calA$}
\label{subsection : immersion fermee}

Soit $(\kappa,\rho_\kappa)$ une donn\'ee endoscopique de $G$ sur $\ovl X$
\cf \ref{donnee endoscopique}. On lui a associ\'e un groupe endoscopique
$H$ sur $\ovl X$. On a construit un morphisme
$$\nu:\calA_H\rightarrow \bar\calA$$
entre les bases des fibrations de Hitchin pour $H$ et pour $G$
\cf \ref{subsection : Fibration de Hitchin de groupes endoscopiques}.

Soit $(\kappa,\rho_\kappa^\bullet)$ une donn\'ee endoscopique point\'ee de
$G$ sur $\ovl X$ \cf \ref{donnee endoscopique pointee}. On a alors un
$\pi_0(\kappa)$-torseur $\rho_\kappa:\ovl X_{\rho_\kappa}\rightarrow \ovl
X$ avec un point $\infty_{\rho_\kappa}$ au-dessus de $\infty$. On en
d\'eduit un point $\infty_G$ du $\Out(\GG)$-torseur $\rho_G$ et un point
$\infty_H$ du $\Out(\HH)$-torseur $\rho_H$.

\begin{numero}
Avec la donn\'ee endoscopique point\'ee, on peut d\'efinir un
morphisme canonique
$$\tilde \nu:\tilde\calA_H\rightarrow \tilde\calA$$
de la fa{\c c}on suivante.
Soit $a_H\in\calA_H^\infty(\bar k)$ d'image $a\in\calA^\infty(\bar k)$.
Rappelons que les courbes cam\'erales $\widetilde X_{a_H}$ et $\widetilde
X_a$ ne sont pas directement reli\'ees mais on a un morphisme entre leurs
rev\^etements \'etales
$$\widetilde X_{\rho_\kappa,a_H}\rightarrow \widetilde X_{\rho_\kappa,a}.$$
En effet, on a le diagramme suivant
\begin{equation}\label{immense diagramme}
 \xymatrix{
& \widetilde X_{\rho_\kappa,a} \ar[dd] \ar[dr]& & \\
\widetilde X_{\rho_\kappa,a_H} \ar@{.>}[ur] \ar[dr] \ar[rr]
& & X_{\rho_\kappa}\times \bbt_D \ar[dd] \ar[dr] & \\
& \ovl X \ar[rr]^{a_H\ \ \ \ \ \ \ \ \ } \ar[dr]_a & & \frakc_{H,D} \ar[dl]^{\nu}\\
& & \frakc_D  &}
\end{equation}
avec deux parall\'elogrammes cart\'esiens. Il s'ensuit que le morphisme
$\nu$ d\'etermine le morphisme en pointill\'e qu'on voulait construire.

Un $\bar k$-point $\tilde a_H=(a_H,\tilde\infty)$ de $\tilde\calA_H$
consiste en un point $a_H\in\calA_H^\infty(\bar k)$ plus un point
$\tilde\infty$ dans $\widetilde X_{a_H}$ au-dessus de $\infty$. La donn\'ee
du point $\infty_{\rho_\kappa}$ de $\rho_\kappa$ d\'etermine alors un point
$\tilde\infty_{\rho_\kappa}=(\tilde\infty, \infty_{\rho_\kappa})$
de $\widetilde X_{\rho_\kappa,a_H}$. Il revient au m\^eme se donner un
point de $\tilde\calA_H$ que de se donner un couple
$(a_H,\tilde\infty_{\rho_\kappa})$ avec $a_H\in\calA_H^\infty(\bar k)$ et
$\tilde\infty_{\rho_\kappa}\in \widetilde X_{\rho_\kappa,a_H}(\bar k)$
au-dessus de $\infty$. Avec cette nouvelle description, en utilisant le
morphisme $\widetilde X_{\rho_\kappa,a_H}\rightarrow \widetilde
X_{\rho_\kappa,a}$ construit ci-dessus, on obtient le morphisme
$$\tilde \nu:\tilde\calA_H\rightarrow \tilde\calA$$
qu'on voulait.
\end{numero}

\begin{remarque}
Si le point $\infty_{\rho_\kappa}$ est d\'efini sur $k$, le morphisme $\tilde
\nu:\tilde\calA_H\rightarrow \tilde\calA$ ci-dessus est aussi d\'efini sur $k$.
\end{remarque}

\begin{proposition}\label{immersion fermee}
Le morphisme $\tilde \nu:\tilde\calA_{H}\rightarrow \tilde \calA\otimes_k
\bar k$ est une immersion ferm\'ee.
\end{proposition}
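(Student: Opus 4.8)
The plan is to prove that $\tilde\nu:\tilde\calA_H\to\tilde\calA\otimes_k\bar k$ is a closed immersion by first showing that the underlying morphism $\nu:\calA_H\to\calA$ is a closed immersion after restriction to the relevant open loci, and then checking that the passage to the pointed $\tilde{}$-covers does not destroy this property. We already know from \cite[7.2]{N} (recalled in \ref{subsection : Fibration de Hitchin de groupes endoscopiques}) that $\nu$ restricted to $\calA^\heartsuit$ is finite and unramified. Recall also that a finite unramified morphism is a closed immersion precisely when it is a monomorphism, i.e. universally injective on points. So the first and main step is to establish that $\nu:\calA_H^{G-\heartsuit}\to\calA^\heartsuit$, or rather its $\tilde{}$-version, is injective on $\bar k$-points and induces isomorphisms on the completed local rings along its image.

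First I would analyze the morphism $\tilde\nu$ on $\bar k$-points. A point of $\tilde\calA_H$ is, by the reformulation in \ref{subsection : immersion fermee}, a pair $(a_H,\tilde\infty_{\rho_\kappa})$ with $a_H\in\calA_H^\infty(\bar k)$ and $\tilde\infty_{\rho_\kappa}\in\widetilde X_{\rho_\kappa,a_H}(\bar k)$ over $\infty$; similarly a point of $\tilde\calA$ is $(a,\tilde\infty_{\rho_\kappa})$ with $\tilde\infty_{\rho_\kappa}\in\widetilde X_{\rho_\kappa,a}(\bar k)$. The morphism $\tilde\nu$ sends $(a_H,\tilde\infty_{\rho_\kappa})$ to $(\nu(a_H), \text{image of }\tilde\infty_{\rho_\kappa})$ via the map $\widetilde X_{\rho_\kappa,a_H}\to\widetilde X_{\rho_\kappa,a}$ from diagram \eqref{immense diagramme}. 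To prove injectivity it suffices to recover $a_H$ together with the marked point from its image. The key observation is that $\tilde\infty_{\rho_\kappa}\in\widetilde X_{\rho_\kappa,a}$ is a $\bar F_\infty^{\rm sep}$-valued point of $\bbt_D$ that lies over $\infty$, hence determines an actual point of $\bbt$ up to the $D$-twist; since $\widetilde X_{\rho_\kappa,a_H}$ is cut out inside $X_{\rho_\kappa}\times\bbt_D$ by the fibre product with $a_H:\ovl X\to\frakc_{H,D}$, and the camerical point lands in $\frakt^{\rs}$ over $\infty$ (as $a\in\calA^\infty$), the marked point $\tilde\infty_{\rho_\kappa}$ pins down the $\bbW\rtimes\pi_0(\kappa)$-orbit to a single element, and thus determines the reduction of the $W$-torseur through a distinguished lift. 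Concretely, composing $\tilde\infty_{\rho_\kappa}:\ovl X\to X_{\rho_\kappa}\times\bbt_D$ with the projection $X_{\rho_\kappa}\times\bbt_D\to\bbc_D=\frakc_{H,D}$ — using $k[\bbt]^{\bbW_H}=\bbc_H$ exactly as in the proof of the representability lemma for $\calB$ — recovers $a_H$. The pointing is precisely the extra datum that rigidifies the situation so that $a_H$ is reconstructed, not just its $\bbW/\bbW_H$-orbit.

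Second I would check unramifiedness and the local isomorphism property of $\tilde\nu$. Since $\tilde\calA_H\to\calA_H^\infty$ and $\tilde\calA\to\calA^\infty$ are both finite étale ($W_H$- resp. $W$-torseurs over $\calA_H^\infty$, $\calA^\infty$), and $\nu|_{\calA^\heartsuit}$ is finite unramified by \cite[7.2]{N}, the composite $\tilde\calA_H\to\tilde\calA$ is finite unramified as well: étale morphisms are unramified, and the class of finite unramified morphisms is closed under composition and base change, so the factorization $\tilde\calA_H\to\calA_H^\infty\times_{\calA^\infty}\tilde\calA\to\tilde\calA$ exhibits $\tilde\nu$ as finite unramified. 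Combined with the injectivity on geometric points established above, we conclude by the standard criterion (EGA IV 17.2.6 / 18.12.6, or simply: a finite unramified radicial morphism of noetherian schemes is a closed immersion) that $\tilde\nu$ is a closed immersion.

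The main obstacle I anticipate is the injectivity-on-points step, specifically making rigorous the claim that the marked point $\tilde\infty_{\rho_\kappa}$ together with the image $(a,\tilde\infty_{\rho_\kappa})$ of $(a_H,\tilde\infty_{\rho_\kappa})$ in $\tilde\calA$ uniquely determines $a_H$. The subtlety is that two distinct endoscopic data could a priori produce the same image in $\calA$; this is exactly why one needs the pointed versions and the choice of $\infty_{\rho_\kappa}$. The argument must use that over the open locus where the cameral cover of $a$ is étale at $\infty$, the section $a_H:\ovl X\to\frakc_{H,D}$ is the unique one compatible with the chosen lift, which in turn follows from the cartesian diagram \eqref{immense diagramme} and the fact that $\frakc_{H,D}\to\frakc_D$ is finite (so $a_H$ is determined by $a$ and a choice of lift of the generic point, which the marked point supplies) together with normality of $\ovl X$ forcing the lift to extend uniquely. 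Once this reconstruction is spelled out, the rest is formal.
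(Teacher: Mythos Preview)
Your proposal is correct and follows essentially the same route as the paper: establish that $\tilde\nu$ is finite and unramified (the paper cites \cite[10.3]{N} directly for $\tilde\nu$, while you derive it from \cite[7.2]{N} on $\nu$ together with the \'etale nature of the covers $\tilde\calA_H\to\calA_H^\infty$ and $\tilde\calA\to\calA^\infty$), then check injectivity on geometric points, and conclude by the standard criterion that a finite unramified radicial morphism is a closed immersion. Your reconstruction of $a_H$ from $(a,\tilde\infty_{\rho_\kappa})$ via the marked point plus normality of $\bar X$ is exactly the mechanism behind the paper's Proposition~\ref{Galois}, which phrases the same content as: the lifts $a_H$ of $a$ correspond to the irreducible components of $\widetilde X_{\rho_\kappa,a}/(\bbW_\HH\rtimes\pi_0(\kappa))$ mapping isomorphically to $\bar X$, and the marked point singles out one such component. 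One small wording issue: the sentence ``composing $\tilde\infty_{\rho_\kappa}:\ovl X\to X_{\rho_\kappa}\times\bbt_D$ with the projection\ldots'' is garbled, since $\tilde\infty_{\rho_\kappa}$ is a single point, not a section of $\bar X$; but your final paragraph states the argument correctly (the point determines a lift of the generic point, and normality plus finiteness of $\nu$ forces the global extension).
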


\begin{proof}
On sait par \cite[10.3]{N} que c'est un morphisme fini non ramifi\'e. Il suffit
maintenant de v\'erifier qu'il induit une injection sur les points
g\'eom\'etriques. Voici un \'enonc\'e plus pr\'ecis.
\end{proof}

\begin{proposition}\label{Galois}
Un point $\tilde a\in \tilde \calA(\bar k)$ peut \^etre d\'ecrit comme un
couple $(a,\tilde\infty_{\rho_\kappa})$ avec $a\in\calA^\infty(\bar k)$ et
$\infty_{\rho_\kappa}$ dans $\widetilde X_{\rho_\kappa,a}$ au-dessus de
$\infty_{\rho_\kappa}$. Soit $U$ l'ouvert de $\bar X$ d\'efini par
$U=a^{-1}(\frakc_D^\rs)$. Le point $\tilde\infty_{\rho_\kappa}$ de
$\widetilde X_{\rho_\kappa,a}$ d\'efinit un homomorphisme
$$\pi_{\rho_\kappa,a}^\bullet : \pi_1(U,\infty) \rightarrow \bbW\rtimes \pi_0(\kappa).$$
Le point $\tilde a$ provient d'un point $\tilde a_H\in \tilde\calA_{H}(\bar
k)$ si et seulement si l'image de $\pi_{\rho_\kappa,a}^\bullet$ est contenue
dans le sous-groupe $\bbW_\HH\rtimes \pi_0(\kappa)$ de $\bbW\rtimes
\pi_0(\kappa)$ d\'efini par le lemme \ref{homomorphisme produit
semi-direct}.
\end{proposition}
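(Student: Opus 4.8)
The statement to prove is Proposition \ref{Galois}, which characterizes the image of the closed immersion $\tilde\nu:\tilde\calA_H\rightarrow\tilde\calA\otimes_k\bar k$ in terms of the monodromy representation $\pi_{\rho_\kappa,a}^\bullet$ of the pointed cameral cover; Proposition \ref{immersion fermee} follows from it since injectivity on geometric points, combined with the fact (already known from \cite[10.3]{N}) that $\tilde\nu$ is finite and unramified, yields that $\tilde\nu$ is a closed immersion. So the real content is the group-theoretic criterion, and the plan is to analyze it through the commutative diagram \eqref{immense diagramme}.

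\textbf{First I would} fix the point $\tilde a=(a,\tilde\infty_{\rho_\kappa})\in\tilde\calA(\bar k)$ and unwind what the datum $\tilde\infty_{\rho_\kappa}\in\widetilde X_{\rho_\kappa,a}$ above $\infty$ gives: via the cartesian square defining $\widetilde X_{\rho_\kappa,a}$ from $\pi_\rho:X_{\rho_\kappa}\times\bbt\rightarrow\frakc$ \cf \eqref{tilde X rho a}, and since $\pi_{\rho_\kappa,a}$ is finite étale Galois of group $\bbW\rtimes\pi_0(\kappa)$ over $U=a^{-1}(\frakc_D^\rs)$, the choice of geometric point produces a well-defined homomorphism $\pi_{\rho_\kappa,a}^\bullet:\pi_1(U,\infty)\rightarrow\bbW\rtimes\pi_0(\kappa)$, sitting above $\rho_\kappa^\bullet$. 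The key observation is that by Lemma \ref{J a J a_H}, applied in the form \eqref{pi a J a 2}, the tore $J_{H,a_H}$ (when $a_H$ exists) is obtained from $\pi_{\rho_\kappa,a_H}$ which is a $\bbW_\HH\rtimes\pi_0(\kappa)$-torseur, and $\pi_{\rho_\kappa,a_H}\rightarrow\pi_{\rho_\kappa,a}$ is the morphism of torseurs induced by the structural change $\bbW_\HH\rtimes\pi_0(\kappa)\rightarrow\bbW\rtimes\pi_0(\kappa)$ of Lemma \ref{homomorphisme produit semi-direct}. Hence, \textbf{for the ``only if'' direction}, if $\tilde a=\tilde\nu(\tilde a_H)$, then the monodromy of $\widetilde X_{\rho_\kappa,a}$ is the image under this structural map of the monodromy of $\widetilde X_{\rho_\kappa,a_H}$, which lands in $\bbW_\HH\rtimes\pi_0(\kappa)$ by construction; thus $\mathrm{im}(\pi_{\rho_\kappa,a}^\bullet)\subset\bbW_\HH\rtimes\pi_0(\kappa)$.

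\textbf{For the ``if'' direction}, suppose the image of $\pi_{\rho_\kappa,a}^\bullet$ lies in $\bbW_\HH\rtimes\pi_0(\kappa)$. I would then reconstruct $a_H$ from $a$ as follows: the reduced structure of the monodromy means the $\bbW\rtimes\pi_0(\kappa)$-torseur $\pi_{\rho_\kappa,a}|_U$ admits a reduction of structure group to $\bbW_\HH\rtimes\pi_0(\kappa)$ (respecting the base point $\tilde\infty_{\rho_\kappa}$), i.e. there is a $\bbW_\HH\rtimes\pi_0(\kappa)$-torseur $P_H$ over $U$ with $P_H\wedge^{\bbW_\HH\rtimes\pi_0(\kappa)}(\bbW\rtimes\pi_0(\kappa))=\pi_{\rho_\kappa,a}|_U$. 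Composing $P_H\rightarrow U\rightarrow X_{\rho_\kappa}\times\bbt_D$ and using $\frakc_{H,D}=X_{\rho_\kappa}\times\bbc$ as the invariant quotient of $X_{\rho_\kappa}\times\bbt$ by $\bbW_\HH\rtimes\pi_0(\kappa)$ (together with $k[\bbt]^{\bbW}=\bbc$ from \ref{Chevalley}, so that the morphism extends over all of $\bar X$ by normality of $\bar X$), I obtain a morphism $a_H:\bar X\rightarrow\frakc_{H,D}$ with $\nu\circ a_H=a$, and the base point lifts to give $\tilde\infty\in\widetilde X_{a_H}$; the valuative criterion of properness applied to the finite morphism $\nu_H:\frakc_{H,D}\rightarrow\frakc_D$ (as in the first numbered remark after Theorem \ref{LS}) and the hypothesis $a\in\calA^\infty$ guarantee $a_H\in\calA_H^\infty(\bar k)$. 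This produces the desired $\tilde a_H\in\tilde\calA_H(\bar k)$ with $\tilde\nu(\tilde a_H)=\tilde a$. Uniqueness of $\tilde a_H$ (hence injectivity of $\tilde\nu$ on geometric points) follows because $a_H$ is determined by $\nu\circ a_H=a$ via the finiteness and unramifiedness of $\nu$ together with the rigidification by the base point, exactly as for the cameral picture.

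\textbf{The main obstacle} I anticipate is the careful bookkeeping around the various semidirect products and base points: one must check that the reduction of structure group $\bbW_\HH\rtimes\pi_0(\kappa)\hookrightarrow\bbW\rtimes\pi_0(\kappa)$ used here is exactly the canonical homomorphism of Lemma \ref{homomorphisme produit semi-direct} (compatible with the actions on $\bbt$), so that the reconstructed $a_H$ genuinely gives a point of $\calA_H$ and not merely of some auxiliary space, and that the lifted base point $\tilde\infty_{\rho_\kappa}$ matches on both sides; the subtlety is that $\bbW_\HH$ is normal in $(\bbW\rtimes\Out(\GG))_\kappa$ but the relevant ambient group is $\bbW\rtimes\pi_0(\kappa)$, where $\bbW_\HH$ need not be normal, so the reduction of structure group is not a priori canonical and must be pinned down using the base point. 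Once this is settled, the extension of $a_H$ over the ramification locus and the verification that it lies in $\calA_H^\infty$ are routine consequences of normality of $\bar X$ and of $\frakc_{H,D}$ being the invariant quotient.
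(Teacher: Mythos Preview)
Your proposal is correct and the overall strategy matches the paper's, but the paper packages the ``if'' direction somewhat differently. Instead of constructing $a_H$ via a reduction of structure group over $U$ and then extending across the ramification locus by normality, the paper observes directly that the points $a_H\in\calA_H(\bar k)$ above $a$ are in bijection with the irreducible components of $\nu^{-1}(a(\bar X))$ whose projection to $\bar X$ is an isomorphism, and that $\nu^{-1}(a(\bar X))$ is precisely the invariant quotient $\widetilde X_{\rho_\kappa,a}/(\bbW_\HH\rtimes\pi_0(\kappa))$. The condition that the component through the image of $\tilde\infty_{\rho_\kappa}$ have degree one over $\bar X$ is then checked over $U$ (where the covers are finite \'etale) and reduces immediately to your Galois criterion. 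This sidesteps the extension step entirely: the section $a_H$ is already globally defined as a component of the fiber product, so no appeal to normality or the valuative criterion is needed. Your torsor-theoretic formulation is equivalent (a reduction of structure group pointed at $\tilde\infty_{\rho_\kappa}$ is exactly the choice of the component containing that point), and the bookkeeping concern you raise about the semidirect products is handled in the paper by the fact that $\frakc_{H,D}$ is \emph{defined} as the invariant quotient by $\bbW_\HH\rtimes\pi_0(\kappa)$ via Lemma~\ref{homomorphisme produit semi-direct}, so no further compatibility check is required.
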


\begin{proof}
Les points $a_H\in\calA_H(\bar k)$ au-dessus de $a$ correspondent
bijectivement aux composantes irr\'eductibles de l'image inverse de la
section $a(\bar X)\subset \frakc_D$ par le morphisme fini plat
$\nu:\frakc_{H,D}\rightarrow \frakc_D$ dont la projection sur $\bar X$ est
un isomorphisme. Cette image inverse $\nu^{-1}(a(\bar X))$ est par ailleurs
le quotient au sens des invariants de $\widetilde X_{\rho_\kappa,a}$ par le
sous-groupe $\bbW_H\rtimes \pi_0(\kappa)$ de $\bbW\rtimes \pi_0(\kappa)$
d\'efini par le lemme \ref{homomorphisme produit semi-direct}. Le point
$(a,\tilde\infty_{\rho_\kappa})$ provient d'un point
$(a_H,\tilde\infty_{\rho_\kappa})\in \tilde\calA_{H}(\bar k)$ si et seulement
si la projection sur $\bar X$ de la composante irr\'eductible passant par
l'image de $\tilde\infty_{\rho_\kappa}$ dans le quotient au sens des
invariants
$$\nu^{-1}(a(\bar X))=\widetilde X_{\rho_\kappa,a} / (\bbW_H\rtimes\pi_0(\kappa))
$$
est un isomorphisme. Puisque ces composantes irr\'eductibles sont finies et
plates au-dessus de $\bar X$, il suffit de v\'erifier ce crit\`ere au-dessus de
l'ouvert $\ovl U$ o\`u les projections sur $\bar X$ envisag\'ees sont finies et
\'etales. On en d\'eduit le crit\`ere galoisien dans l'\'enonc\'e du lemme.
\end{proof}

Nous allons maintenant d\'ecrire le ferm\'e $\tilde\calA_\kappa$ comme une
r\'eunion disjointes des ferm\'es $\tilde\calA_H$ pour de diff\'erentes
donn\'ees endoscopiques point\'ees. Il faut pour cela choisir une r\'eduction
du torseur $\rho_G$ bonne pour tous les groupes endoscopiques envisag\'es.

Soit $\bar\rho_G$ la restriction de $\rho_G$ \`a $\ovl X$. Fixons un $\bar
k$-point $\infty_G$ de $\rho_G$ au-dessus de $\infty$. Ce point d\'efinit un
homomorphisme $\rho_G^\bullet:\pi_1(\ovl X,\infty)\rightarrow \Out(\GG)$.
Nous allons fixer un \'el\'ement $\kappa\in\hat\bbT$. Rappelons qu'une
donn\'ee endoscopique point\'ee \ref{donnee endoscopique pointee} avec
ce $\kappa$ fixe, est un homomorphisme
$$\rho_\kappa^\bullet:\pi_1(\ovl X,\infty)\rightarrow \pi_0(\kappa)$$
au-dessus de $\rho_G^\bullet$.

\begin{lemme}
Il existe un quotient fini $\rho^\bullet:\pi_1(\ovl X,\infty) \rightarrow
\Theta_{\rho}$ tel que $\rho_G^\bullet$ se factorise par $\rho^\bullet$
ainsi que toutes les donn\'ees endoscopiques point\'ees
$\rho_\kappa^\bullet$.
\end{lemme}

\begin{proof}
Puisque le noyau de $\pi_0(\kappa)\rightarrow \Out(\GG)$ est un groupe
fini, il n'y a qu'un nombre fini d'homomorphismes $\pi_1(\ovl
X,\infty)\rightarrow \pi_0(\kappa)$ au-dessus de $\rho_G^\bullet$.
\end{proof}

\begin{proposition}
Le ferm\'e $\tilde\calA_\kappa$ de \ref{tilde A_kappa} est la r\'eunion
disjointe index\'ee par l'ensemble des donn\'ees endoscopiques point\'ees
$(\kappa,\rho_\kappa^\bullet)$ des ferm\'es $\tilde\calA_H$ pour les
groupes endoscopiques $H$ associ\'es \`a $(\kappa,\rho_\kappa^\bullet)$.
\end{proposition}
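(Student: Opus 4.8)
Le point de d\'epart est le crit\`ere galoisien \ref{Galois}: un point $\tilde a=(a,\tilde\infty_{\rho_\kappa})\in\tilde\calA(\bar k)$, donn\'e par son homomorphisme de monodromie $\pi_{\rho_\kappa,a}^\bullet:\pi_1(U,\infty)\rightarrow \bbW\rtimes\pi_0(\kappa)$, provient d'un point de $\tilde\calA_H$ pour le groupe endoscopique $H$ associ\'e \`a $(\kappa,\rho_\kappa^\bullet)$ si et seulement si l'image de $\pi_{\rho_\kappa,a}^\bullet$ est contenue dans le sous-groupe $\bbW_\HH\rtimes\pi_0(\kappa)$ de \ref{homomorphisme produit semi-direct}. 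Il s'agit donc de relier cette condition de monodromie \`a la condition $\kappa\in\hat\bbT(I_{\tilde\psi},W_{\tilde\psi})$ qui d\'efinit $\tilde\calA_\kappa$ dans \ref{tilde A_kappa}. D'abord je rappellerais que pour $\tilde a$ dans la strate $\tilde\calA_{\tilde\psi}$, le groupe $W_{\tilde a}=W_{\tilde\psi}$ est le stabilisateur de la composante connexe $C_{\tilde a}$ de $\widetilde X_{\rho,a}^\flat$ contenant $\tilde\infty$, et $I_{\tilde a}=I_{\tilde\psi}$ est le sous-groupe normal engendr\'e par les \'el\'ements ayant un point fixe dans $C_{\tilde a}$; ici $\rho$ d\'esigne la r\'eduction commune $\Theta_\rho$ construite dans le lemme qui pr\'ec\`ede, de sorte que $W_{\tilde\psi}\subset\bbW\rtimes\Theta_\rho$ et que l'image de $\pi_{\rho_\kappa,a}^\bullet$ se lit comme sous-groupe de $W_{\tilde\psi}$ via le rev\^etement universel de $C_{\tilde a}$ priv\'e des points de ramification.

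\textbf{\'Etapes principales.} Premi\`erement, je montrerais que l'image de $\pi_{\rho_\kappa,a}^\bullet$ dans $\bbW\rtimes\pi_0(\kappa)$ est pr\'ecis\'ement $W_{\tilde a}$, modulo l'identification $\pi_0(\kappa)\hookrightarrow\Theta_\rho$ fournie par le choix de r\'eduction: en effet $C_{\tilde a}$ est la composante connexe distingu\'ee du rev\^etement normalis\'e, et son groupe de Galois (g\'en\'erique) est exactement son stabilisateur dans $\bbW\rtimes\Theta_\rho$. Deuxi\`emement, en appliquant \ref{Galois}, $\tilde a$ provient d'un point de $\tilde\calA_H$ avec $H$ associ\'e \`a $(\kappa,\rho_\kappa^\bullet)$ si et seulement si $W_{\tilde a}\subset\bbW_\HH\rtimes\pi_0(\kappa)$, o\`u $\hat\bbH$ est la composante neutre du centralisateur de $\kappa$ dans $\hat\GG$ et $\bbW_\HH$ son groupe de Weyl. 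Troisi\`emement, je traduirais la condition $\kappa\in\hat\bbT(I_{\tilde\psi},W_{\tilde\psi})$ de \ref{pi 0 global}: par d\'efinition elle signifie $W_{\tilde a}\subset(\bbW\rtimes\Theta_\rho)_\kappa$ et $I_{\tilde a}\subset\bbW_\HH$. Il faudra donc voir que la conjonction de ces deux inclusions \'equivaut \`a l'existence d'une donn\'ee endoscopique point\'ee $(\kappa,\rho_\kappa^\bullet)$ avec $W_{\tilde a}\subset\bbW_\HH\rtimes\pi_0(\kappa)$. L'implication facile: si $W_{\tilde a}\subset\bbW_\HH\rtimes\pi_0(\kappa)$ alors d'une part $W_{\tilde a}$ normalise $\kappa$ donc $W_{\tilde a}\subset(\bbW\rtimes\Theta_\rho)_\kappa$, et d'autre part $I_{\tilde a}\subset W_{\tilde a}\cap\bbW\subset\bbW_\HH$. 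Pour la r\'eciproque, la donn\'ee de $W_{\tilde a}\subset(\bbW\rtimes\Theta_\rho)_\kappa$ fournit, via la projection $(\bbW\rtimes\Theta_\rho)_\kappa\rightarrow\pi_0(\kappa)$ (qui est l'analyse de la suite exacte $1\to\bbW_\HH\to(\bbW\rtimes\Theta_\rho)_\kappa\to\pi_0(\kappa)\to1$ de \ref{subsection : Groupes endoscopiques}), un homomorphisme $\pi_1(\ovl X,\infty)\to\pi_0(\kappa)$ au-dessus de $\rho_G^\bullet$, c'est-\`a-dire une donn\'ee endoscopique point\'ee; la condition $I_{\tilde a}\subset\bbW_\HH$ garantit alors que la partie "purement $\bbW$" de $W_{\tilde a}$, qui contient $I_{\tilde a}$, retombe dans $\bbW_\HH$, et un calcul direct sur les g\'en\'erateurs (en distinguant \'el\'ements avec ou sans point fixe dans $C_{\tilde a}$) montre $W_{\tilde a}\subset\bbW_\HH\rtimes\pi_0(\kappa)$.

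\textbf{Obstacle principal et disjonction.} La difficult\'e principale sera le point de troisi\`eme \'etape concernant les \'el\'ements de $W_{\tilde a}\cap\bbW$ qui ne sont pas dans $I_{\tilde a}$: a priori un tel $w$ pourrait \^etre dans $\bbW$ sans \^etre dans $\bbW_\HH$, ce qui ferait sortir de $\bbW_\HH\rtimes\pi_0(\kappa)$. Il faut donc montrer qu'un \'el\'ement de $\bbW$ qui normalise $\kappa$ et qui appartient \`a $W_{\tilde a}$ appartient n\'ecessairement \`a $\bbW_\HH$ d\`es que $I_{\tilde a}\subset\bbW_\HH$; ceci devrait r\'esulter de la structure des sous-groupes de $\bbW$ normalisant $\kappa$ et de ce que $I_{\tilde a}$ est le sous-groupe de r\'eflexions de $W_{\tilde a}$ — un \'el\'ement de $(\bbW)_\kappa$ pr\'eservant le sous-syst\`eme de racines engendr\'e par les r\'eflexions de $I_{\tilde a}\subset\bbW_\HH$ et normalisant $\hat\bbH$ pr\'eserve la composante neutre, donc est dans $\bbW_\HH$ (qui est distingu\'e dans $(\bbW)_\kappa$). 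Une fois ce point acquis, la correspondance $(\kappa,\rho_\kappa^\bullet)\leftrightarrow$ classe de $W_{\tilde a}$ modulo $\bbW_\HH\rtimes\pi_0(\kappa)$ est bijective (deux donn\'ees distinctes donnent des contraintes incompatibles sur l'image de $\pi_{\rho_\kappa,a}^\bullet$), donc pour $\tilde a\in\tilde\calA_\kappa$ il existe une et une seule donn\'ee endoscopique point\'ee $(\kappa,\rho_\kappa^\bullet)$ telle que $\tilde a\in\tilde\calA_H$. Reste \`a v\'erifier que chaque $\tilde\calA_H$ est bien ferm\'e dans $\tilde\calA_\kappa$ — cela d\'ecoule de \ref{immersion fermee} (immersion ferm\'ee) combin\'e au fait que $\tilde\calA_\kappa$ est ferm\'e dans $\tilde\calA$ — et que la r\'eunion est bien disjointe, ce qui r\'esulte de l'unicit\'e qu'on vient d'\'etablir. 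Cela conclut la d\'emonstration.
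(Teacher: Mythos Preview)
Votre plan identifie correctement la structure de la preuve et, surtout, l'obstacle central~: montrer que $W_{\tilde a}\cap\bbW\subset\bbW_\HH$ d\`es lors que $I_{\tilde a}\subset\bbW_\HH$ et $W_{\tilde a}\subset(\bbW\rtimes\Theta_\rho)_\kappa$. Mais l'argument que vous proposez pour franchir cet obstacle ne marche pas. L'affirmation qu'un \'el\'ement de $\bbW_\kappa$ qui normalise $\hat\bbH$ est n\'ecessairement dans $\bbW_\HH$ est fausse en g\'en\'eral~: le quotient $\bbW_\kappa/\bbW_\HH$ s'identifie \`a $\pi_0(\hat\GG_\kappa)$, qui est non trivial pr\'ecis\'ement dans les situations endoscopiques int\'eressantes. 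La structure de r\'eflexions de $I_{\tilde a}$ ne vous aide pas ici, car rien n'emp\^eche a priori un \'el\'ement de $W_{\tilde a}\cap\bbW$ de se projeter sur une composante non neutre de $\hat\GG_\kappa$. Un argument purement local ou purement th\'eorie des groupes ne suffit donc pas.

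La d\'emonstration du papier r\'esout ce point par un d\'etour global. Puisque $I_{\tilde a}$ est engendr\'e par les inerties de $C_{\tilde a}\to\bar X$, le quotient $D_{\tilde a}=C_{\tilde a}/I_{\tilde a}$ est un rev\^etement \emph{\'etale} de $\bar X$, de groupe de Galois $W_{\tilde a}/I_{\tilde a}$, ce qui donne une surjection $\varrho_{\tilde a}^\bullet:\pi_1(\bar X,\infty)\twoheadrightarrow W_{\tilde a}/I_{\tilde a}$. Les hypoth\`eses $I_{\tilde a}\subset\bbW_\HH$ et $W_{\tilde a}\subset(\bbW\rtimes\Out(\GG))_\kappa$ donnent alors un homomorphisme ${\bf o}_{\tilde a}:W_{\tilde a}/I_{\tilde a}\to\pi_0(\kappa)$. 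Le choix de $\Theta_\rho$ assez grand (lemme pr\'ec\'edant la proposition) garantit que ${\bf o}_{\tilde a}\circ\varrho_{\tilde a}^\bullet$ se factorise en $\pi_1(\bar X,\infty)\xrightarrow{\rho^\bullet}\Theta_\rho\xrightarrow{{\bf o}}\pi_0(\kappa)$~: c'est la donn\'ee endoscopique point\'ee cherch\'ee. Comme $\varrho_{\tilde a}^\bullet$ est surjectif, les deux compos\'ees $W_{\tilde a}/I_{\tilde a}\to\pi_0(\kappa)$ (via ${\bf o}_{\tilde a}$ ou via $\Theta_\rho\xrightarrow{{\bf o}}\pi_0(\kappa)$) co\"incident, d'o\`u un diagramme commutatif reliant les deux suites exactes $1\to W_{\tilde a}\cap\bbW\to W_{\tilde a}\to\Theta_\rho\to 1$ et $1\to\bbW_\HH\to(\bbW\rtimes\Out(\GG))_\kappa\to\pi_0(\kappa)\to 1$. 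L'inclusion $W_{\tilde a}\cap\bbW\subset\bbW_\HH$ en d\'ecoule imm\'ediatement par chasse au diagramme. C'est l'information globale --- le fait que $W_{\tilde a}/I_{\tilde a}$ soit un quotient de $\pi_1(\bar X,\infty)$ et que $\Theta_\rho$ ait \'et\'e pris assez grand --- qui fournit la contrainte manquante, et non la seule structure de $\bbW_\kappa$.
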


\begin{proof}
Comme dans le lemme pr\'ec\'edent, fixons donc une factorisation de
$\rho_G^\bullet$ un quotient fini $\rho^\bullet:\pi_1(\ovl
X,\infty)\rightarrow \Theta_{\rho}$ suivi d'un homomorphisme ${\bf o}_\GG:
\Theta_\rho\rightarrow \Out(\GG)$ comme ci-dessus. Puisque $\rho^\bullet$
est surjectif, une donn\'ee endoscopique point\'ee
$(\kappa,\rho_\kappa^\bullet)$ sur $\ovl X$ est alors \'equivalente \`a un
homomorphisme ${\bf o}:\Theta_\rho\rightarrow \pi_0(\kappa)$ au-dessus
de ${\bf o}_\GG$. La donn\'ee de $\rho^\bullet$ d\'etermine un
rev\^etement fini \'etale $\rho:\ovl X_\rho\rightarrow \ovl X$ de groupe de
Galois $\Theta_\rho$ muni d'un point $\infty_\rho$ au-dessus de $\infty$.

Soit ${\bf o}:\Theta_\rho\rightarrow \pi_0(\kappa)$ un homomorphisme
au-dessus de ${\bf o}_\GG:\Theta_\rho\rightarrow \Out(\GG)$. Soit $H$ le
groupe endoscopique associ\'e. Soit $\tilde
a=(a,\tilde\infty_{\rho})\in\calA(\bar k)$ un point dans le ferm\'e $\calA_H$
comme dans \ref{Galois}. Montrer qu'il est dans $\tilde\calA_\kappa$
revient \`a montrer que le groupe de monodromie $W_{\tilde a}$ est
contenu dans $(\bbW\rtimes\Theta_\rho)_\kappa$ et son sous-groupe
normal $I_{\tilde a}$ est contenu dans $\bbW_\HH$.

D'apr\`es la proposition \ref{Galois}, le groupe de monodromie $W_{\tilde
a}$ est contenu dans le sous-groupe $\bbW_\HH\rtimes\Theta_\rho$ de
$\bbW\rtimes \Theta_\rho$.  Ce sous-groupe fixe $\kappa$ de sorte qu'on a
$W_{\tilde a}\subset (\bbW\rtimes\Theta_\rho)_\kappa$. Puisqu'on a un
morphisme $\widetilde X_{\rho,a} \rightarrow \bar X_\rho$ qui est
$\bbW\rtimes \Theta_{\rho}$-\'equivariant avec $\Theta_\rho$ agissant sans
points fixes sur $\bar X_\rho$, le groupe $I_{\tilde a}$ est n\'ecessairement
contenu dans le noyau de $\bbW\rtimes \Theta_\rho\rightarrow
\Theta_\rho$ donc $I_{\tilde a}\subset \bbW$. On a alors
$$I_{\tilde a}\subset \bbW \cap (\bbW_\HH\rtimes \Theta_\rho) =\bbW_\HH$$
de sorte que $\tilde a$ est bien un point de $\tilde\calA_\kappa$.

Soit maintenant $\tilde a=(a,\tilde\infty_{\rho})\in\tilde\calA_\kappa$.
Montrons d'abord qu'il d\'etermine un unique homomorphisme ${\bf
o}:\Theta_\rho\rightarrow \pi_0(\kappa)$.

Soit $C_{\tilde a}$ la composante de $\widetilde X_{\rho,a}^\flat$
contenant $\tilde\infty_\rho$. Rappelons qu'on a not\'e $W_{\tilde a}$ le
sous-groupe de $\bbW\rtimes \Theta_\rho$ des \'el\'ements qui laissent
stable la composante $C_{\tilde a}$ et $I_{\tilde a}$ est le sous-groupe
normal de $W_{\tilde a}$ engendr\'e par des \'el\'ements ayant au moins un
point fixe dans $C_{\tilde a}$. On sait que $I_{\tilde a}\subset \bbW$. Le
quotient au sens des invariants $D_{\tilde a}=C_{\tilde a}/ I_{\tilde a}$ est
alors un rev\^etement fini \'etale galoisien de $\bar X$ de groupe de Galois
$W_{\tilde a}/ I_{\tilde a}$. L'image du point $\tilde\infty_\rho$ dans le
rev\^etement fini \'etale $D_{\tilde a}$ d\'efinit un homomorphisme
$\varrho_{\tilde a}^\bullet:\pi_1(\bar X,\infty)\rightarrow W_{\tilde a}/
I_{\tilde a}$. Comme l'image de $\tilde\infty_\rho$ dans le rev\^etement
fini \'etale galoisien $\bar X_\rho$ est le point $\infty_\rho$ fix\'e, on a un
diagramme commutatif
$$
\xymatrix{
 \pi_1(\bar X,\infty) \ar[dr]_{\rho^\bullet} \ar[r]^{\varrho_{\tilde a}^\bullet}
 & W_{\tilde a}/ I_{\tilde a}\ar[d]^{}  \\
 &     \Theta_\rho      }
$$
o\`u la fl\`eche verticale se d\'eduit de l'inclusion $W_{\tilde a}\subset
\bbW\rtimes \Theta_\rho$ et $I_{\tilde a}\subset \bbW$. En particulier, la
fl\`eche verticale est n\'ecessairement surjective car $\rho^\bullet$ l'est.

Puisque que $\tilde a\in\tilde\calA_\kappa(k)$, on a $I_{\tilde a}\subset
\bbW_\HH$ et $W_{\tilde a}\subset (\bbW\rtimes\Theta_\rho)_\kappa$. On
en d\'eduit  un homomorphisme $W_{\tilde a}$ dans $(\bbW\rtimes
\Out(\GG))_\kappa$ qui envoie $I_{\tilde a}$ dans $\bbW_\HH$. Il en
r\'esulte un homomorphisme
$${\bf o}_{\tilde a}:W_{\tilde a}/ I_{\tilde a}
\rightarrow (\bbW\rtimes \Out(\GG))_\kappa/\bbW_\HH=\pi_0(\kappa).
$$
Puisqu'on a choisi le quotient $\Theta_\rho$ suffisamment grand, il existe
un unique homomorphisme ${\bf o}:\Theta_\rho\rightarrow \pi_0(\kappa)$
tel que le diagramme suivant est commutatif
\begin{equation}
\label{triangle commutatif}
\xymatrix{ \pi_1(X,\infty) \ar[dr]_{\rho^\bullet} \ar[r]^{\varrho_{\tilde a}^\bullet} &
W_{\tilde a}/ I_{\tilde a} \ar[d]_{} \ar[dr]^{{\bf o}_{\tilde a}}   &  \\
& \Theta_\rho \ar[r]_{{\bf o}} & \pi_0(\kappa)}
\end{equation}
Ainsi un point $\tilde a\in \tilde\calA_\kappa(k)$ d\'etermine une donn\'ee
endoscopique point\'ee associ\'ee \`a $(\kappa,{\bf o})$.

Montrons maintenant $\tilde a\in\tilde\calA_H$ o\`u $H$ est le groupe
endoscopique associ\'e \`a $(\kappa,{\bf o})$. Le diagramme commutatif
$$
\xymatrix{
1 \ar[r]& W_{\tilde a}\cap \bbW \ar[r]_{}& W_{\tilde a} \ar[d]_{} \ar[r]^{}
& \Theta_\rho \ar[d]^{\bf o} \ar[r]^{} & 1 \\
1 \ar[r] & \bbW_\HH \ar[r]& (\bbW\rtimes \Out(\GG))_{\kappa} \ar[r] & \pi_0(\kappa)\ar[r]  & 1        }
$$
implique imm\'ediatement $W_{\tilde a}\cap \bbW \subset \bbW_\HH$. On
conna{\^\i}t de plus un scindage canonique de la suite du bas
$$\theta:\pi_0(\kappa)\rightarrow (\bbW\rtimes \Out(\GG))_{\kappa} $$
de sorte qu'on a un homomorphisme
$$W_{\tilde a} \rightarrow \bbW_\HH\rtimes \Theta_\rho$$
o\`u le produit semi-direct est form\'e \`a l'aide de ${\bf o}$ et de
l'homomorphisme $\theta$ dans la d\'emonstration du lemme
\ref{homomorphisme produit semi-direct}. En composant avec l'inclusion
$\bbW_\HH\rtimes \Theta_\rho \rightarrow \bbW\rtimes \Theta_\rho$, on
obtient un homomorphisme
$$W_{\tilde a}\rightarrow \bbW\rtimes \Theta_\rho.$$
Comparons cet homomorphisme avec l'inclusion naturelle de $W_{\tilde
a}\subset \bbW\rtimes \Theta_\rho$. Les deux induisent le m\^eme
homomorphisme sur le sous-groupe $W_{\tilde a}\cap \bbW\subset
\bbW_\HH$ et induisent l'identit\'e au niveau du quotient $\Theta_\rho$. Il
s'ensuit que les deux homomorphismes \`a comparer sont les m\^emes. Par
cons\'equent, on a
$$W_{\tilde a}\subset \bbW_\HH\rtimes \Theta_\rho$$
ce qui implique que $\tilde a\in \calA_H$.

Enfin le diagramme commutatif \ref{triangle commutatif} montre que ${\bf
o}$ est compl\`etement d\'etermin\'e par ${\bf o}_{\tilde a}$. Ceci implique
que les ferm\'es $\tilde\calA_H$ et $\tilde\calA_{H'}$ pour les groupes
endoscopiques $H$ et $H'$ associ\'es \`a $(\kappa,\rho_\kappa^\bullet)$ et
$(\kappa,\rho'_\kappa)$ diff\'erents, sont des ferm\'es disjoints de
$\tilde\calA\otimes_k \bar k$.
\end{proof}

\subsection{Stabilisation g\'eom\'etrique : formulation sur $\tilde\calA^\ani$}
\label{subsection : stabilisation sur tilde A}

Supposons maintenant qu'on a une donn\'ee endoscopique point\'ee
$(\kappa,\rho_\kappa^\bullet)$ d\'efini sur $X$. Soit $H$ le groupe
endoscopique associ\'e. Consid\'erons la fibration de Hitchin
$f_H:\calM_H\rightarrow\calA_H$ pour le groupe $H$. Consid\'erons le
rev\^etement \'etale $\tilde\calA_H$ de $\calA_H$. Le morphisme
$$\tilde\nu:\tilde\calA_H \rightarrow \tilde\calA_G$$
est alors une immersion ferm\'ee car il en est ainsi apr\`es le changement
de base de $k$ \`a $\bar k$ \cf \ref{immersion fermee}.

\begin{numero}
Consid\'erons l'image r\'eciproque de $^p\rmH^*(f^\ani_*\Ql)$ \`a
$\tilde\calA^\ani$. Au-dessus de $\tilde\calA$, on a un homomorphisme
surjectif $\bbX_*\rightarrow \pi_0(\calP)|_{\tilde\calA}$ \cf \ref{surjectif}
de sorte qu'on a une action de $\bbX_*$ sur
$^p\rmH^*(f^\ani_*\Ql)|_{\tilde\calA}$ qui se factorise par un quotient fini
au-dessus de l'ouvert $\tilde\calA^\infty$. Consid\'erons le facteur direct
$$^p\rmH^*(\tilde f^\ani_*\Ql)_\kappa$$
o\`u $\bbX_*$ agit par le caract\`ere $\kappa:\bbX_* \rightarrow
\Ql^\times$. D'apr\`es le lemme \ref{tilde A_kappa}, le support de ce facteur direct est
contenu dans $\tilde\calA_\kappa$ qui est la r\'eunion disjointe
des images des immersions ferm\'ees $\tilde\calA_{H'}\rightarrow
\tilde\calA$ sur l'ensemble des donn\'ees endoscopiques point\'ees
$(\rho_\kappa^\bullet)':\pi_1(\ovl X,\infty)\rightarrow \pi_0(\kappa)$ et
o\`u $H'$ est le groupe endoscopique associ\'e. Ainsi $^p\rmH^*(\tilde
f^\ani_*\Ql)_\kappa$ se d\'ecompose en une somme directe de termes
$^p\rmH^*(\tilde f^\ani_*\Ql)_{\kappa}|_{\calA_{H'}}$ support\'es par
$\calA_{H'}$. Le facteur direct $^p\rmH^*(\tilde
f^\ani_*\Ql)_{\kappa}|_{\calA_H}$ est d\'efini sur $k$.
\end{numero}

\begin{numero}
Consid\'erons la fibration de Hitchin $f_H:\calM_H\rightarrow\calA_H$ pour
le groupe $H$. En prenant la partie $\kappa=1$ dans les faisceaux pervers
de cohomologie restreints \`a $\tilde\calA_H$, on obtient ainsi un faisceau
pervers gradu\'e
$$^p\rmH^*(\tilde f^\ani_{H,*}\Ql))_{\rm st}.$$
Son image directe par $\tilde\nu_*$ d\'efinit un faisceau pervers gradu\'e
sur $\tilde\calA$.
\end{numero}

\begin{theoreme}\label{stabilisation sur tilde A}
Il existe un isomorphisme entre les
semi-simplifi\-cations des faisceaux pervers gradu\'es
$$\tilde \nu^*\bigoplus_n \,^p\rmH^n(\tilde f^\ani_*\Ql)_{\kappa} [2 r_H^G(D)](r_H^G(D))
\simeq\bigoplus_n \,^p\rmH^n(\tilde
f^\ani_{H,*}\Ql)_{\rm st}
$$
o\`u
$$r_H^G(D)=(\sharp\, \Phi-\sharp\, \Phi_\HH)\deg(D)/ 2$$
est la codimension de $\tilde\calA_H$ dans $\tilde\calA$.
\end{theoreme}

Notons que comme il s'agit des faisceaux pervers purs, au-dessus de
$\tilde\calA^\ani\otimes_k \bar k$, ils sont semi-simples d'apr\`es les
th\'eor\`eme de d\'ecomposition. Il s'en suit que les faisceaux pervers
gradu\'es ci-dessus sont isomorphes au-dessus de $\tilde\calA^\ani\otimes_k
\bar k$.

La d\'emonstration de ce th\'eor\`eme occupera la grande partie du reste
de l'article. On en d\'eduira la conjecture de Langlands-Shelstad \ref{LS}
en cours de sa d\'emonstration.
\subsection{Le compl\'ement de $\calA^\ani$ dans $\calA$}
\label{complement de A ani}

Pour terminer ce chapitre, notons que le compl\'ement de $\calA^\ani$ dans
$\calA^\heartsuit$ est un sous-sch\'ema ferm\'e de grande codimension.

\begin{proposition}
La codimension de $\calA^\heartsuit-\calA^\ani$ dans $\calA^\heartsuit$ est
plus grande ou \'egale \`a $\deg(D)$.
\end{proposition}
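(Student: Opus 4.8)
L'énoncé affirme que $\calA^\heartsuit - \calA^\ani$ est de codimension au moins $\deg(D)$ dans $\calA^\heartsuit$. La stratégie naturelle est de ramener cette estimation globale à des estimations locales de type Goresky-Kottwitz-MacPherson, exactement comme dans la proposition \ref{codimension}. Rappelons d'abord que d'après \ref{finitude}, un point $a\in\calA^\heartsuit(\bar k)$ est anisotrope si et seulement si $\pi_0(\calP_a)$ est fini, ce qui équivaut à dire que le sous-groupe $\bbX_*^{W_{\tilde a}}$ des invariants est trivial. Donc $a\in\calA^\heartsuit-\calA^\ani$ si et seulement si $\bbX_*^{W_{\tilde a}}\otimes\QQ \neq 0$, c'est-à-dire si le groupe de monodromie $W_{\tilde a}$ (vu dans $\bbW\rtimes\Theta_{\bar\rho}$ pour une réduction irréductible $\bar\rho$) fixe un sous-espace non trivial de $\bbt$. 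Je commencerais par stratifier $\calA^\heartsuit-\calA^\ani$ selon les invariants monodromiques $(I_*,W_*)$ de la stratification \ref{stratification monodromique}, en ne retenant que les couples tels que $\bbt^{W_*}\neq 0$. Il suffit alors de majorer la codimension de chacune de ces strates non-anisotropes.

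\textbf{Étape principale : passage au local.} L'idée est que si $W_{\tilde a}$ fixe un sous-espace non trivial de $\bbt$ alors, pour chaque point $v$ de ramification de la courbe camérale, le groupe de monodromie locale $\pi_a^\bullet(I_v)$ est lui aussi contraint, et la somme des codimensions locales des strates de valuations radicielles impose une forte codimension globale. Plus précisément, si $a$ est non-anisotrope, deux cas peuvent se produire. Soit $\widetilde X_{\bar\rho,a}$ a plusieurs composantes connexes (donc n'est pas irréductible), soit elle est irréductible mais le groupe de Galois générique n'engendre pas tout $\bbW$ sur un facteur. Dans le premier cas, on utilise que la strate $\calA^\diamondsuit$ — où la courbe camérale est lisse et irréductible d'après \ref{irreductible} — est l'ouvert dense $\calA_{[\bbW,\bbW\rtimes\Theta_{\bar\rho}]}$, et que sortir de cette strate impose déjà $\delta_a \geq 1$ ou une chute de rang torique ; mais cela ne suffit pas à obtenir la borne $\deg(D)$. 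Il faut donc un argument plus fin : pour chaque point $v$ où la courbe camérale n'est pas étale, la codimension locale dans $\frakc^\heartsuit(\calO_v)$ de la strate de valuation radicielle correspondante est au moins $1$ ; mais comme on demande en plus que la monodromie \emph{globale} $W_{\tilde a}$ fixe un vecteur, il y a une obstruction globale. Le point clé est le suivant : si $W_{\tilde a}$ fixe un vecteur non nul de $\bbt$, alors le sous-groupe engendré par \emph{tous} les $\pi_a^\bullet(I_v)$ pour $v$ parcourant les points de ramification est contenu dans un sous-groupe propre de $\bbW$ fixant ce vecteur (à conjugaison près). Comme la courbe camérale générique — la grosse strate — a pour groupe engendré tout $\bbW$, cette condition est de codimension élevée, et l'on veut montrer qu'elle force au moins $\deg(D)$ conditions linéaires sur $a$.

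\textbf{Réalisation technique.} Pour rendre ceci précis, j'utiliserais un argument de dégénérescence comme dans \ref{delta decroit} et \ref{I W croissant} : sur la grosse strate, en chaque point $v$ de $\bar X$ où $a$ touche le discriminant transversalement, la monodromie locale est une réflexion simple ; les $\sharp\Phi\deg(D)/2$ points d'intersection (comptés avec multiplicité, d'après $\deg(a^*\discrim_{G,D})=\sharp\Phi\deg(D)$) engendrent $\bbW$ par les réflexions associées. Pour que $W_{\tilde a}$ fixe un vecteur, il faut que les réflexions (ou rotations) locales évitent d'engendrer un sous-groupe agissant sans point fixe ; concrètement, en fixant un hyperplan de racine $h_\alpha$ dont la réflexion $s_\alpha$ n'apparaît pas, il faut que $a(\bar X)$ évite tout le lieu où elle croiserait cet hyperplan transversalement, ce qui est une condition fermée de codimension $\geq \deg(D)$ (puisque le nombre de points où $a$ rencontre le mur correspondant est proportionnel à $\deg(D)$, et les faire tous fusionner ou disparaître impose autant de conditions, par le lemme \ref{deg(D)>2g} de type Bertini qui assure que l'application $\calA\to\prod\frakc(\bar\calO_{v_i}/\varepsilon^{N_i})$ est surjective pour $\deg(D)$ grand). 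Le calcul de codimension se ramène alors à sommer sur les points de ramification la codimension locale $\text{codim}[w,r]$, dont la formule explicite $\text{codim}[w,r]=d(w,r)+(d_{\bar v}(a)+c_{\bar v}(a))/2$ de \cite{GKM-codim} donne, en sommant, une borne en termes de $\sharp\Phi\deg(D)/2$ moins une correction contrôlée. L'obstacle principal sera précisément de comptabiliser soigneusement cette codimension : montrer que la contrainte « $\bbX_*^{W_{\tilde a}}\neq 0$ », qui est de nature monodromique globale, se traduit en au moins $\deg(D)$ équations indépendantes sur l'espace linéaire $\calA=\rmH^0(X,\frakc_D)$. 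Je m'attends à ce que la preuve se calque presque mot pour mot sur celle de \ref{codimension}, avec $\delta$ remplacé par un invariant lié au rang du sous-espace fixe ; la subtilité est que la non-anisotropie n'impose pas directement $\delta_a$ grand (elle peut coexister avec $\delta_a=0$, par exemple pour $\GL_r$ avec courbe spectrale réductible en composantes lisses), d'où la nécessité de passer par l'invariant monodromique $c_{\bar v}(a)$ (chute du rang torique) plutôt que par $\delta$, et de constater que la somme des $c_{\bar v}(a)$ sur les points de ramification doit être $\geq \deg(D)$ dès que $\bbt^{W_{\tilde a}}\neq 0$.
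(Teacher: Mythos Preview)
Votre approche est trop compliquée et ne converge pas vers une preuve complète. Le point que vous manquez est une observation de nature purement galoisienne, analogue à celle de la proposition \ref{Galois} pour les groupes endoscopiques, qui court-circuite toute l'analyse locale.

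Si $a\in\calA^\heartsuit-\calA^\ani$, alors comme vous l'avez noté $\bbt^{W_{\tilde a}}\neq 0$. Mais le stabilisateur dans $\bbW\rtimes\Theta_{\bar\rho}$ d'un vecteur non nul générique de ce sous-espace est de la forme $\bbW_M\rtimes\Theta_{\bar\rho}$ pour un sous-groupe de Levi propre $M$ de $G$ contenant $T$. Le même argument que dans \ref{Galois} (remplacer $\bbW_\HH\rtimes\pi_0(\kappa)$ par $\bbW_M\rtimes\Theta_{\bar\rho}$) montre alors que $a$ est dans l'image du morphisme $\calA_M\rightarrow\calA$. On conclut immédiatement par la formule de dimension \ref{dim A} : $\dim(\calA)-\dim(\calA_M)=(\sharp\Phi-\sharp\Phi_M)\deg(D)/2\geq\deg(D)$, puisque $\sharp\Phi-\sharp\Phi_M\geq 2$ pour tout Levi propre. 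C'est exactement la démonstration du texte, en trois lignes.

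Votre tentative de passer par les codimensions locales de strates de valuations radicielles échoue sur le point précis que vous identifiez vous-même comme obstacle : la condition $\bbt^{W_{\tilde a}}\neq 0$ est \emph{globale} et n'impose rien sur chaque $c_{\bar v}(a)$ individuellement. Votre affirmation finale selon laquelle la somme des $c_{\bar v}(a)$ devrait être $\geq\deg(D)$ est fausse en général : on peut avoir $c_{\bar v}(a)=0$ partout (monodromies locales triviales) avec une courbe camérale non ramifiée mais réductible, donc non anisotrope. L'estimation globale ne s'obtient pas par sommation de contraintes locales, mais par le fait structurel que $a$ se factorise par un sous-espace $\calA_M$ de codimension contrôlée.
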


\begin{proof}
Le m\^eme argument que dans le lemme \ref{Galois} montre que le compl\'ement
$\calA^\heartsuit-\calA^\ani$ est contenu dans la r\'eunion des images de
$$\calA_M \rightarrow \calA$$
pour les sous-groupes de Levi de $G$ contenant le tore maximal $T$.
D'apr\`es la formule \ref{dim A}, on a
$$\dim(\calA)-\dim(\calA_M)=(\sharp\, \Phi-\sharp\, \Phi_M) \deg(D)/ 2 \geq \deg(D)$$
d'o\`u la proposition.
\end{proof}


\section{Th\'eor\`eme du support} \label{support}

Dans ce chapitre, nous d\'emontrons un th\'eor\`eme du support qui est l'\'enonc\'e technique central de l'article. Dans le premier paragraphe, nous donnons l'\'enonc\'e \cf \ref{support general kappa} de ce th\'eor\`eme du support et discutons de ses variantes utiles. Les hypoth\`eses du th\'eor\`eme sont bien entendues model\'ees sur la fibration de Hitchin mais restent suffisamment raisonnables pour qu'on puisse esp\'erer  l'appliquer \`a d'autres situations.

La d\'emonstration de \ref{support general kappa} occupe les quatres paragraphes suivants. La notion cl\'e d'amplitude d'un support est introduite dans \ref{subsection : amplitude}. Nous y \'enon\c cons une estimation de cette amplitude. En admettant cette estimation, nous d\'emontrons \ref{support general kappa} dans \ref{subsection : demo support} en suivant l'argument de Goresky et MacPherson rappel\'e dans l'appendice \ref{appendice GM}. La d\'emonstration de l'estimation d'amplitude est fond\'ee sur une notion de libert\'e de l'homologie d'un sch\'ema vu comme un module sur l'homogie de la partie ab\'elienne du groupe agissant. Cette propri\'et\'e est bien connue dans une situation absolue. Dans une situation relative, il est assez difficile de d\'egager cet \'enonc\'e de libert\'e et a fortiori de le d\'emontrer. C'est le contenu du long paragraphe \ref{subsection : cap-produit}. Dans le paragraphe suivant, nous expliquons comment cette propri\'et\'e de libert\'e implique l'estimation d'amplitude d\'esir\'ee.

Dans le dernier paragraphe, nous appliquons le th\'eor\`eme du support \`a la partie anisotrope de la fibration de Hitchin.

\subsection{L'\'enonc\'e du th\'eor\`eme du support}\label{subsection : support general}

Soient $S$ et $M$ deux $k$-sch\'emas de type fini lisses et $f:M \rightarrow S$ un morphisme propre, plat et \`a fibres g\'eom\'etriques r\'eduites. D'apr\`es le th\'eor\`eme de puret\'e de Deligne, le complexe $f_*\Ql$ est pur. En appliquant le th\'eor\`eme de d\'ecomposition \cite{BBD}, on sait que $f_*\Ql$ est isomorphe \`a la somme directe de ses faisceaux pervers de cohomologie avec des d\'ecalages \'evidents
$$f_*\Ql \simeq \bigoplus_n ^p\rmH^n(f_*\Ql)[-n].$$
De plus, les faisceaux pervers $K^n=\,^p\rmH^n(f_*\Ql)$ sont g\'eom\'etriquement semi-simples. D'apr\`es \cite{BBD}, pour tout faisceau pervers g\'eom\'etriquement simple $K$ sur $S\otimes_k \bar k$, il existe un sous-sch\'ema ferm\'e r\'eduit irr\'eductible $i:Z\hookrightarrow S\otimes_k \bar k$, un ouvert dense $U\hookrightarrow Z$ et un syst\`eme local irr\'eductible $L$ sur $U$ tel que
$$K=i_* j_{!*} L[\dim(Z)].$$
Le sous-sch\'ema ferm\'e $Z$ est compl\`etement d\'etermin\'e par $K$ et
sera appel\'e le {\em support} de $K$. En g\'en\'eral, le probl\`eme de d\'eterminer les supports des faisceaux pervers simples pr\'esents dans la d\'ecomposition de $f_*\Ql$ est un probl\`eme tr\`es difficile. On peut cependant le r\'esoudre dans le cas o\`u $M$ est munie d'une grosse action d'un sch\'ema en groupes lisse commutatif moyennant quelques hypoth\`eses que nous allons \'enum\'erer.

\begin{numero}\label{Irr}

La r\'eponse au probl\`eme de d\'eterminer les supports sera donn\'ee en termes du faisceau ${\rm Irr}(M/S)$ des composantes irr\'eductibles des fibres de $f$. Ce faisceau existe sous l'hypoth\`ese $f$ plat \`a fibres g\'eom\'etriques r\'eduites. En effet, notons $U$ l'ouvert de $M$ o\`u $f$ est un morphisme lisse. Cet ouvert est alors dense dans chaque fibre g\'eom\'etrique de $f$. En effet, comme un morphisme plat de fibres g\'eom\'etriques lisses est lisse \cite[19.7.1]{EGA4}, l'intersection de $U$ avec une fibre g\'eom\'etrique $M_s$ est exactement la partie lisse de $M_s$ qui est dense dans $M_s$ car celle-ci est g\'eom\'etriquement r\'eduite. Pour tout point g\'eom\'etrique $s$ de $S$, l'ensemble des composantes connexes de $U_s$ s'identifie donc canoniquement \`a l'ensemble des composantes irr\'eductibles de $M_s$. Il suffit donc de construire le faisceau $\pi_0(U/S)$ des composantes connexes des fibres de $U/S$.

Rappelons cette construction dont le lecteur peut se r\'ef\'erer \`a la  proposition 6.2 de \cite{N} pour plus de d\'etails. Pour toute section locale
$u$ de $U$ au-dessus d'un ouvert \'etale $S'$ de $S$, d'apr\`es un
th\'eor\`eme de Grothendieck \cite[15.6.4]{EGA4} , il existe un unique
ouvert de Zariski $M_u$ de $U'=U\times_S S'$ tel que pour tout $s\in S'$, la
fibre $U_{u,s}$ est la composante connexe de $U_s$ contenant le point $u(s)$.
Deux sections locales $u$ et $u'$ sont dites \'equivalentes si les ouverts
correspondants $U_u$ et $U_{u'}$ sont \'egaux. Le faisceau des sections locales de
$U/ S$ modulo cette relation d'\'equivalence d\'efinit un faisceau
constructible $\pi_0(U/ S)$. Ce faisceau v\'erifie en plus la propri\'et\'e que
pour tout point g\'eom\'etrique $s$ de $S$, la fibre de $\pi_0(U/ S)$ en $s$
est l'ensemble des composantes connexes de $U_s$.
\end{numero}

\begin{numero}\label{stabilisateur affine}

Soit $P$ un $S$-sch\'ema en groupes commutatifs lisse \`a fibres connexes. Supposons qu'on a une action de $P$ sur $M$
$${\rm act}:P\times_S M \rightarrow M.$$
Pour tout point g\'eom\'etrique $s$ de $S$, la fibre $P_s$ admet un d\'evissage canonique de Chevalley
$$1\rightarrow R_s \rightarrow P_s \rightarrow A_s \rightarrow 1$$
o\`u $A_s$ est une vari\'et\'e ab\'elienne et o\`u $R_s$ est un groupes alg\'ebrique affine commutatif connexe. Nous dirons que que $P$ agit $M$ avec stabilisateurs affines si pour tout point g\'eom\'etrique $m\in M$ au-dessus de $s\in S$, le stabilisateur de $m$ dans $P_s$ est un sous-groupe affine. Autrement dit,
la composante neutre du stabilisateur de $m$ dans $P_s$ est contenue dans le sous-groupe affine connexe maximal $R_s$ de $P_s$.
\end{numero}

\begin{numero}\label{delta regulier}
Pour tout point g\'eom\'etrique $s\in S$, notons $\delta_s=\dim(R_s)$ la dimension de la partie affine de $P_s$. Si $s\in S$ un point quelconque, le d\'evissage de Chevalley de $P_s$ existe et est unique apr\`es un changement  de base radiciel de sorte que l'entier $\delta_s$ est aussi bien d\'efini. La fonction $\delta$ d\'efini sur l'espace topologique sous-jacent \`a $S$ \`a valeurs dans les entiers naturels est semi-continue \cf \cite{SGA7}. Il existe donc une stratification de $S$ en des sous-sch\'emas localement ferm\'es $S_\delta$ tels que pour tout point g\'eom\'etrique $s\in S_\delta$, on a $\delta_s=\delta$.

On dira que le $S$-sch\'ema en groupes commutatif lisse $P$ est
{\em $\delta$-r\'egulier} si pour tout $\delta\in\mathbb{N}$, on a
$${\rm codim}_S(S_\delta)\geq \delta.$$
Cette hypoth\`ese implique en particulier que ${\rm codim}(S_1)\geq 1$ de sorte que $S_0\not=\emptyset$.
Il en r\'esulte que $P$ est g\'en\'eriquement un sch\'ema ab\'elien.

Soit $Z$ un sous-sch\'ema ferm\'e irr\'eductible de $S$. Soit $\delta_Z$ la valeur minimale que prend la fonction $\delta$ sur $Z$ qui est atteinte sur un ouvert dense de $Z$ qui est alors n\'ecessairement contenu dans l'adh\'erence de la strate $S_{\delta_Z}$. Si $P$ est $\delta$-r\'egulier, alors ${\rm codim}(S_{\delta_Z})\geq \delta_Z$ et a fortiori
$${\rm codim}(Z)\geq \delta_Z.$$
Il est clair que $P$ est $\delta$-r\'egulier si et seulement si pour tout sous-sch\'ema ferm\'e irr\'eductible $Z$ de $S$, on a ${\rm codim}(Z)\geq \delta_Z$.
\end{numero}

\begin{numero}\label{module de Tate polarisable}
Consid\'erons le faisceau des modules de Tate
$$\rmT_\Ql(P)=\rmH^{2d-1}(g_! \Ql)(d)$$
dont la fibre au-dessus de chaque point g\'eom\'etrique $s$ de $S$ est le $\Ql$-module de Tate $\rmT_\Ql(P_s)$ de la fibre en $s$ de $P^0$. Pour tout point g\'eom\'etrique $s$ de $S$, le d\'evissage canonique de Chevalley de $P_s$ induit un d\'evissage de son module de Tate
$$0\rightarrow \rmT_\Ql(R_s)\rightarrow \rmT_\Ql(P_s) \rightarrow \rmT_\Ql(A_s)\rightarrow 0.$$
On a
$$\dim(\rmT_\Ql(A_s))=2\dim(A_s)=2(d-\delta_s)$$
alors que la dimension de $\rmT_\Ql(R_s)$ est \'egale \`a la dimension de la partie torique de $R_s$.

On dira que $\rmT_\Ql(P)$ est {\em polarisable} si localelement pour la topologie \'etale de $S$, il existe une forme bilin\'eaire altern\'ee
$$ \rmT_\Ql(P) \otimes \rmT_\Ql(P) \rightarrow \Ql(1)$$
dont la fibre en chaque point g\'eom\'etrique $s$ a comme noyau $\rmT_\Ql(R_s)$ c'est-\`a-dire qu'elle s'annule sur $\rmT_\Ql(R_s)$ et induit un accouplement parfait sur $\rmT_\Ql(A_s)$.
\end{numero}

\begin{theoreme} \label{support general}
Soient $M$ et $S$ des $k$-sch\'emas lisses et $f:M\rightarrow S$ un morphisme propre plat de fibres g\'eom\'etriques r\'eduites de dimension $d$. Soit $P$ un $S$-sch\'ema en groupes commutatifs lisse de fibre connexe de dimension $d$ qui agit sur $M$ avec stabilisateurs affines. Supposons que le module de Tate $\rmT_\Ql(P)$ est polarisable.

Consid\'erons une stratification
$$S\otimes_k \bar k=\bigsqcup_{\sigma\in\Sigma} S_\sigma$$
telle que pour tout $\sigma\in\Sigma$, $S_\sigma$ est irr\'eductible et telle que
la restriction du faisceau ${\rm Irr}(M/S)$ \`a chaque strate $S_\sigma$ est localement constante.

Soit $K$ un faisceau perfers g\'eom\'etriquement simple pr\'esent dans $f_*\Ql$ dont le support est un sous-sch\'ema ferm\'e irr\'eductible $Z$ de $S\otimes_k \bar k$. Supposons que $\delta_Z\geq {\rm codim}
(Z)$. Alors $Z$ est l'adh\'erence de l'une des strates $S_\sigma$.
\end{theoreme}

Compte tenu de la d\'efinition de $\delta$-r\'egularit\'e, on a un corollaire \'evident.

\begin{corollaire} En plus des hypoth\`eses du th\'eor\`eme, supposons que $P$ est $\delta$-r\'egulier. Alors les supports des faisceaux pervers g\'eom\'etrique\-ment semi-simples pr\'esents dans $f_*\Ql$ sont parmi les adh\'erences $\bar S_\sigma$ des strates $S_\sigma$.
\end{corollaire}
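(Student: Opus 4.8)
L'id\'ee est de combiner un argument de support \`a la Goresky--MacPherson, rappel\'e dans l'appendice, avec une propri\'et\'e de libert\'e du complexe $f_*\Ql$ vu comme module sur la cohomologie du champ de Picard $P$ agissant sur $M$. Rappelons d'abord la strat\'egie g\'en\'erale de Goresky et MacPherson : si $K=i_{Z*}j_{!*}L[\dim Z]$ est un facteur simple de $f_*\Ql$ de support $Z$, la dualit\'e de Poincar\'e sur $M$ lisse impose que $K$ appara\^it sym\'etriquement dans $^p\rmH^{-n}$ et $^p\rmH^{n}$ (pour $n$ convenable), et en comparant l'intervalle des degr\'es de cohomologie o\`u $L$ appara\^it effectivement le long de $Z$ avec la dimension relative $d$, on obtient une in\'egalit\'e entre ${\rm codim}(Z)$ et une quantit\'e d'amplitude. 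L'action de $P$ sur $M$ fournit pr\'ecis\'ement le contr\^ole n\'ecessaire sur cette amplitude.

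Premi\`erement, je d\'efinirais l'\emph{amplitude} d'un support $Z$ comme la longueur de l'intervalle $[a,b]$ tel que $i_Z^* K'$ (pour $K'$ le facteur isotypique correspondant dans $f_*\Ql$) a sa cohomologie concentr\'ee en degr\'es $[a,b]$ restreint \`a un ouvert dense de $Z$. Deuxi\`emement, je d\'emontrerais l'estimation d'amplitude : gr\^ace \`a l'action de $P$, le complexe $Rf_*\Ql$ localis\'e en $Z$ est un module sur l'alg\`ebre $\rmH^*(P_\eta)$ pour $\eta$ le point g\'en\'erique de $Z$, et la polarisabilit\'e du module de Tate entra\^ine (par une variante relative du th\'eor\`eme de Lefschetz difficile pour les sch\'emas ab\'eliens, combin\'ee au d\'evissage de Chevalley $1\to R_\eta\to P_\eta\to A_\eta\to 1$) que ce module est \emph{libre} sur la partie provenant de $\rmH^*(A_\eta)$. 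Cette libert\'e force l'amplitude \`a \^etre au moins $2(d-\delta_Z)=2\dim A_\eta$. La partie affine $R_\eta$ contribue aussi, mais de fa\c con plus d\'elicate ; c'est ici que l'hypoth\`ese de stabilisateurs affines intervient, pour garantir que l'action de $P$ sur chaque fibre $M_s$ a des orbites de dimension compl\'ementaire \`a $\delta_s$ et donc que la contribution de $R$ ne "co\^ute" rien de plus.

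Troisi\`emement, une fois l'estimation d'amplitude $\mathrm{ampl}(Z)\geq 2(d-\delta_Z)$ en main, l'argument de Goresky--MacPherson (appendice \ref{appendice GM}) donne l'in\'egalit\'e
$$2\,{\rm codim}(Z)+\mathrm{ampl}(Z)\leq 2d,$$
d'o\`u ${\rm codim}(Z)\leq \delta_Z$. Combin\'e avec l'hypoth\`ese $\delta_Z\geq {\rm codim}(Z)$, on obtient l'\'egalit\'e ${\rm codim}(Z)=\delta_Z$, ce qui force le cas limite : l'amplitude est exactement $2\dim A_\eta$, le syst\`eme local $L$ provient enti\`erement de la cohomologie de la fibre ab\'elienne $A_\eta$, et $Z$ doit co\"incider avec l'adh\'erence d'une strate o\`u $\delta$ est localement constant. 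Pour conclure que $Z=\bar S_\sigma$ pour une strate de la stratification donn\'ee, je montrerais que sur l'ouvert dense de $Z$ o\`u $\delta=\delta_Z$, le faisceau ${\rm Irr}(M/S)$ (qui gouverne $\pi_0(P_s)$ et donc la d\'ecomposition isotypique de $L$) doit \^etre localement constant : sinon, le facteur $K$ se d\'ecomposerait davantage ou aurait un support strictement plus petit, contradiction avec la simplicit\'e et la maximalit\'e de $Z$. Donc $Z$ est contenu dans l'adh\'erence d'une strate $S_\sigma$, et l'in\'egalit\'e de codimension oppos\'ee (cas limite) force l'\'egalit\'e $Z=\bar S_\sigma$.

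\noindent\textbf{Principal obstacle.} Le c\oe ur technique est la propri\'et\'e de libert\'e de l'homologie relative comme module sur $\rmH_*(A_\eta)$ dans une situation \emph{relative}, le long de $Z$ : dans le cas absolu d'un sch\'ema ab\'elien c'est classique (cohomologie = alg\`ebre ext\'erieure agissant librement, via Deligne--Lefschetz), mais ici $P$ d\'eg\'en\`ere en dehors de $Z$ et il faut un \'enonc\'e de libert\'e uniforme, compatible avec la filtration perverse et la dualit\'e. C'est l'objet du long paragraphe \ref{subsection : cap-produit} : il faut construire le cap-produit au niveau des complexes, v\'erifier sa compatibilit\'e avec les fl\`eches de restriction $i_Z^*$, et d\'emontrer que le module gradu\'e obtenu est libre de rang \'egal \`a la multiplicit\'e voulue, ce qui requiert un contr\^ole fin du comportement du module de Tate polaris\'e sous sp\'ecialisation. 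La polarisabilit\'e sert pr\'ecis\'ement \`a disposer d'un op\'erateur de Lefschetz relatif $\mathfrak{sl}_2$ permettant de scinder, strate par strate, la contribution ab\'elienne de la contribution affine.
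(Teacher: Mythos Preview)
In the paper this corollary carries no proof: the sentence preceding it declares it an evident consequence of the theorem and the definition of $\delta$-r\'egularit\'e. Indeed, $P$ being $\delta$-r\'egulier means precisely that ${\rm codim}(Z)\geq\delta_Z$ for every closed irreducible $Z\subset S$, so the hypothesis of the preceding theorem is satisfied for every support. (There is a sign slip in the paper's statement of the theorem---it reads $\delta_Z\geq{\rm codim}(Z)$, but the proof in \S\ref{subsection : demo support} uses, and requires, ${\rm codim}(Z)\geq\delta_Z$.)

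What you have written is a sketch of the proof of the underlying \emph{theorem}, not of the corollary. The architecture of your sketch matches the paper's (amplitude estimate via freeness of the cap-product module, then Poincar\'e duality \`a la Goresky--MacPherson, then top-degree analysis), but two steps are off.

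First, your combination step is circular. From the duality argument plus the amplitude bound you correctly deduce ${\rm codim}(Z)\leq\delta_Z$, but you then invoke ``l'hypoth\`ese $\delta_Z\geq{\rm codim}(Z)$'', which is the same inequality just written backwards. The input you actually need at that point is the \emph{opposite} inequality ${\rm codim}(Z)\geq\delta_Z$, and that is exactly what $\delta$-r\'egularit\'e supplies; the two together force $n-\dim Z=2d$.

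Second, your endgame is not the paper's. You try to conclude by arguing that $\delta$ must be locally constant on an open of $Z$ and that $L$ comes from the abelian fibre $A_\eta$. The paper instead observes that once $K[-n]$ contributes in cohomological degree $2d$, then $i_{\alpha*}L$ is a direct summand of $\rmR^{2d}f_*\Ql\simeq\Ql^{{\rm Irr}(M/S)}(-d)$ (Lemma~\ref{top}), and any simple subquotient of that sheaf forces its support to be the closure of a stratum on which ${\rm Irr}(M/S)$ is locally constant (Lemma~\ref{strat}). No further analysis of $A_\eta$ is required at that stage. Likewise, polarizability of $\rmT_\Ql(P)$ is not used via an $\mathfrak{sl}_2$-Lefschetz package; it enters only through Lemma~\ref{split} in \S\ref{subsection : cap-produit}, to produce a Galois-stable complement of $\rmT_\Ql(A_{u_\alpha})$ inside $\rmT_\Ql(A_{\bar y_{\alpha'}})$ under specialization, which is what makes the inductive freeness argument (Proposition~\ref{liberte}) go through.
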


\begin{numero}
Avant de travailler avec les faisceaux pervers de cohomologie $K^n=^p\rmH^n(f_*\Ql)$, envisageons
la m\^eme question de d\'eterminer le support des facteurs du faisceau de cohomologie ordinaire
en degr\'e maximal $\rmH^{2d}(f_*\Ql)$. L'\'enonc\'e \ref{support general} avec le faisceau ordinaire $\rmH^{2d}(f_*\Ql)$ \`a la place des faisceaux pervrers $K^n$, est quasiment imm\'ediat car $\rmH^{2d}(f_*\Ql)$ s'identifie au faisceau $\ell$-adique engendr\'e par le faisceau d'ensembles ${\rm Irr}(M/S)$ \`a l'aide du morphisme trace. L'analogue de \ref{support general} pour le faisceau de cohomologie ordinaire de degr\'e maximal de $f_*\Ql$ r\'esulte de deux lemmes suivants qui sont bien connus.\end{numero}

\begin{lemme}\label{top}
Soit $S$ un $k$-sch\'ema de type fini. Soit $f:M\rightarrow S$ un morphisme
plat de fibres g\'eom\'etriques r\'eduites de dimension $d$. Soit ${\rm Irr}(M/S)$ le faisceau des composantes irr\'eductibles des fibres de $M/S$ d\'efini dans \ref{Irr}. Alors, il existe un isomorphisme
canonique
$$\Ql^{{\rm Irr}(M/ S)}(-d) \isom \rmR^{2d}f_!\Ql(d)$$
o\`u $(-d)$ d\'esigne un twist \`a la Tate.
\end{lemme}

\begin{proof}
Comme dans \ref{Irr}, puisque $f$ est un morphisme plat \`a fibres g\'eom\'etriques r\'eduites, il existe un sous-sch\'ema ouvert $U$ de $M$ dont la trace sur chaque fibre g\'eom\'etrique de $f$ est la partie lisse de cette fibre. On a alors $\pi_0(U/S)={\rm Irr}(M/S)$. De plus, le morphisme $M-U\rightarrow S$ est de dimension relative strictement plus petite que $d$. Soit $h:U\rightarrow S$ la restriction de $f$ \`a $U$. La fl\`eche dans la suite exacte d'excision
$$\rmR^{2d} h_! \Ql \longrightarrow \rmR^{2d} f_! \Ql$$
est alors un isomorphisme. Il suffit donc de construire un isomorphisme
$$\Ql^{\pi_0(U/S)}(-d)\longrightarrow\rmR^{2d} h_!\Ql.$$

Soit $u$ une section locale de $U$ au-dessus d'un ouvert \'etale $S'$ de
$S$. D'apr\`es un th\'eor\`eme de Grothendieck \cite[15.6.4]{EGA4}, il
existe un ouvert il existe un unique ouvert de Zariski $U_u$ de $U'=U\times_S
S'$ tel que pour tout $s\in S$, la fibre $U_{u,s}$ est la composante connexe
de $U_s$ contenant le point $u(s)$. Notons $h_u:U_u\rightarrow S'$ la
restriction de $h$ \`a $U_u$. L'inclusion $U_u\subset U'$ induit une
fl\`eche entre faisceaux de cohomologie \`a support de degr\'e maximal
$$\rmR^{2d} h_{u!} \Ql \longrightarrow \rmR^{2d} h_! \Ql |_{S'}.$$
Par ailleurs, on a un morphisme trace
$$\rmR^{2d} h_{u!} \Ql \longrightarrow \Ql(-d)$$
qui est un isomorphisme puisque $U_u/ S'$ a les fibres g\'eom\'etriquement
connexes. On en d\'eduit un morphisme
$$\Ql(-d)\longrightarrow\rmR^{2d} h_! \Ql |_{Y'}.$$
Il est clair que ce morphisme ne d\'epend pas de la section locale $u$ mais
seulement de sa classe d'\'equivalence c'est-\`a-dire son image dans
$\pi_0(U/ S)$. On en d\'eduit un morphisme canonique
$$\Ql^{\pi_0(U/ S)}(-d) \longrightarrow \rmR^{2d}h_!\Ql.$$
Pour v\'erifier que c'est un isomorphisme, il suffit de le faire fibre par fibre ce qui est \'evident.
\end{proof}

\begin{lemme} \label{strat}
Soit $\mathcal F$ un faisceau $\ell$-adique sur $S$.
Soit $S=\bigsqcup_{\sigma\in\Sigma} S_\sigma$ une stratification de $S$ de strates irr\'eductibles telle que la restriction de $\mathcal F$ \`a chaque strate est localement constante. Soient $i:Z\rightarrow S$ un sous-sch\'ema ferm\'e irr\'eductible de $S$ et $L$ un syst\`eme local sur $Z$. Supposons que $i_*L$ soit un sous-quotient de $\mathcal F$. Alors $S$ est l'adh\'erence de l'une des strates $S_\sigma$.
\end{lemme}

\begin{proof}
La restriction de $i_*L$ \`a chaque strate $S_\sigma$, \'etant un sous-quotient du syst\`eme local $\mathcal F|_{S_\sigma}$, est aussi un syst\`eme local. Par cons\'equent, pour tout $\sigma\in\Sigma$, ou bien $S_\sigma\subset Z$ ou bien $S_\sigma\cap Z=\emptyset$. Puisque $Z$ est irr\'eductible, il existe un unique $\sigma\in\Sigma$ tel que $Z\cap S_\sigma$ est un ouvert dense de $Z$. Alors $S_\sigma$ est un ouvert dense de $Z$.
\end{proof}

\begin{numero}
Consid\'erons une variante plus compliqu\'ee du th\'eor\`eme \ref{support general} o\`u on ne suppose plus les fibres de $P$ sont connexes. Soit $P$ un $S$-sch\'ema en groupes commutatifs lisse de type fini. Il existe un sous-sch\'ema en groupes ouvert $P^0$ de $P$ dont la  fibre $P^0_s$ au-dessus de chaque point $s$ de $S$ est la composante neutre de $P_s$. En rempla\c cant $P$ par $P^0$, on peut d\'efinir la notion de $\delta$-r\'egularit\'e et celle de module de Tate ${\rm T}_\Ql(P^0)$ polarisable.
\end{numero}

\begin{numero}
Il existe un faisceau en groupes ab\'eliens finis $\pi_0(P)$ pour la topologie \'etale de $S$ qui interpole les groupes de composantes connexes $\pi_0(P_s)$ des fibres de $P$   \cf \cite[6.2]{N}.

L'action de $P$ sur $M$ induit une action du faisceau en groupes $\pi_0(P)$ sur le faisceau d'ensembles ${\rm Irr}(M/S)$. Donnons-nous un homomorphisme
$$\Pi_0\rightarrow \pi_0(P)$$
o\`u $\Pi_0$ est le faisceau constant de valeur d'un certain groupe ab\'elien fini $\Pi_0$. Localement pour la topologie \'etale de $S$, il existe de tels homomorphismes qui ne sont pas triviaux.

On a alors une action du groupe fini $\Pi_0$ sur le faisceau d'ensembles ${\rm Irr}(M/S)$. Consid\'erons le faisceau $\ell$-adique $\Ql^{{\rm Irr}(M/S)}$ qui associe \`a tout ouvert \'etale $S'$ de $S$ l'espace vectoriel
$\Ql^{{\rm Irr}(M/S)(U')}$ et qui lui aussi est muni d'une action du groupe fini $\Pi_0$. Pour tout caract\`ere $\kappa:\Pi_0\rightarrow\Ql^\times$, on peut donc d\'efinir le facteur direct
$$(\Ql^{{\rm Irr}(M/S)})_\kappa$$
o\`u $\Pi_0$ agit \`a travers le caract\`ere $\kappa$. L'ensemble des points g\'eom\'etriques $s$ de $S$ tels que $\kappa:\Pi_0\rightarrow \Ql^\times$ se factorise \`a travers $\Pi_0\rightarrow \pi_0(P_s)$ forme un sous-sch\'ema ferm\'e de $S$ que nous allons noter $S_\kappa$. Il est clair que le faisceau $(\Ql^{{\rm Irr}(M/S)})_\kappa$ est support\'e par $S_\kappa$.
\end{numero}

\begin{numero}\label{kappa decomposition} D'apr\`es le lemme d'homotopie, le faisceau $P$ agit sur les faisceaux pervers de cohomologie $K^n=\,^p\rmH^n(f_*\Ql)$ \`a travers le quotient $\pi_0(P)$. Par cons\'equent, le groupe fini $\Pi_0$ agit sur le faisceau pervers $K^n$ qui induit une d\'ecomposition en somme directe
$$K^n=\bigoplus_{\kappa\in \Pi_0^*} K^n_\kappa$$
o\`u $\Pi_0^*=\Hom(\Pi_0,\Ql^\times)$.
\end{numero}

Voici une g\'en\'eralisation de \ref{support general}.

\begin{theoreme} \label{support general kappa}
Soient $M$ et $S$ des $k$-sch\'emas lisses et $f:M\rightarrow S$ un morphisme propre plat de fibres g\'eom\'etriques r\'eduites de dimension $d$. Soit $P$ un $S$-sch\'ema en groupes commutatifs lisse de type fini et de dimension relative $d$ qui agit sur $M$ avec stabilisateurs affines. Supposons que le module de Tate $\rmT_\Ql(P^0)$ est polarisable.

Soit $\Pi_0\rightarrow \pi_0(P)$ un homomorphisme d'un groupe fini $\Pi_0$ dans le faisceau $\pi_0(P)$ des composantes connexes des fibres de $P$. Soit $\kappa\in \Pi_0^*$ un caract\`ere de $\Pi_0$. Soit $S_\kappa$ le ferm\'e de $S$ des points $s\in S$ tel que $\kappa:\Pi_0\rightarrow \Ql^\times$ se factorise \`a travers $\pi_0(P_s)$.

Consid\'erons une stratification
$$S\otimes_k \bar k=\bigsqcup_{\sigma\in\Sigma} S_\sigma$$
telle que pour tout $\sigma\in\Sigma$, $S_\sigma$ est irr\'eductible et telle que
la restriction du faisceau $(\Ql^{{\rm Irr}(M/S)})_\kappa$ \`a chaque strate $S_\sigma$ est localement constante. Le ferm\'e $S_\kappa$ est alors n\'ecessairement une r\'eunion des strates $S_\sigma$ pour $\sigma$ dans un sous-ensemble de $\Sigma$.

Soit $K$ un faisceau perfers g\'eom\'etriquement simple pr\'esent dans $K^n_\kappa$ dont le support est un sous-sch\'ema ferm\'e irr\'eductible $Z$ de $S\otimes_k \bar k$. Supposons que $\delta_Z\geq {\rm codim}
(Z)$. Alors $Z$ est l'adh\'erence de l'une des strates $S_\sigma$ qui sont contenues dans $S_\kappa$.

\end{theoreme}

Signalons un corollaire imm\'ediat de ce th\'eor\`eme.

\begin{corollaire} \label{kappa partie delta regulier}
Si nous supposons en plus que $P$ est $\delta$-r\'egulier alors tous les supports des faisceaux pervers g\'eom\'etriquement simples pr\'esents dans $K^n_\kappa$ sont des adh\'erences de strates $S_\sigma$ contenues dans $S_\kappa$.
\end{corollaire}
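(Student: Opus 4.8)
The plan is to run, in the $\kappa$-isotypic setting, the Goresky--MacPherson dimension-counting argument recalled in Appendix~\ref{appendice GM}, whose analytic engine is an \emph{amplitude estimate} for the perverse degrees in which a fixed support can occur. First I would set up the reduction. By hypothesis the sheaf $(\Ql^{{\rm Irr}(M/S)})_\kappa$ is supported on $S_\kappa$ and is lisse along every stratum $S_\sigma$; since its support is exactly $S_\kappa$ and is a union of locally closed strata, $S_\kappa$ is a union of strata $S_\sigma$, which is the first assertion of the theorem. By Lemma~\ref{top}, $\rmR^{2d}f_*\Ql(d)\cong\Ql^{{\rm Irr}(M/S)}(-d)$, so its $\kappa$-direct factor is $(\Ql^{{\rm Irr}(M/S)})_\kappa(-d)$; hence, by Lemma~\ref{strat}, any geometrically simple perverse constituent occurring in the top ordinary cohomology of $f_*\Ql$ in its $\kappa$-part has as support the closure of one of the strata $S_\sigma$, and that stratum lies in $S_\kappa$. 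Consequently everything is reduced to showing: if a geometrically simple $K$ with support an irreducible closed $Z\subset S\otimes_k\bar k$ satisfying $\delta_Z\ge{\rm codim}(Z)$ occurs in $K^n_\kappa={}^p\rmH^n(f_*\Ql)_\kappa$, then a simple constituent with that same support $Z$ is visible in $\rmR^{2d}f_*\Ql$ (equivalently, at the extreme perverse degree). Note that Theorem~\ref{support general} is the special case in which the fibres of $P$ are connected, so $\pi_0(P)=0$, $\Pi_0$ and $\kappa$ are trivial and $S_\kappa=S$; the proof below covers it verbatim after deleting the $\kappa$-decorations.

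Next I would make the amplitude precise and carry out the numerical step. For an irreducible closed $Z\subset S\otimes_k\bar k$, call the $\kappa$-amplitude of $Z$ the set of integers $n$ for which some geometrically simple perverse sheaf with support $Z$ occurs in $K^n_\kappa$. Since $M$ is smooth and $f$ is proper, $f_*\Ql$ is Verdier self-dual up to shift and twist ($\rmD(f_*\Ql)=f_*\Ql[2\dim M](\dim M)$), which makes the $\kappa$-amplitude of $Z$ symmetric about its centre; in particular, to show a constituent with support $Z$ reaches the extreme perverse degree it suffices to bound the length of the $\kappa$-amplitude. The estimate to be proved in~\ref{subsection : amplitude} bounds this length in terms of $\delta_Z$ (the generic value on $Z$ of the dimension of the affine part of the fibres of $P$) and the relative dimension $d$; feeding it, together with the codimension hypothesis $\delta_Z\ge{\rm codim}(Z)$ and the trivial bound that $f_*\Ql$ has perverse cohomology concentrated in the interval governed by $d$, into the Goresky--MacPherson count of Appendix~\ref{appendice GM} forces (this is carried out in~\ref{subsection : demo support}) the extreme occurrence of $Z$ to be attained, closing the reduction of the previous paragraph. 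The rigidity of the stratification (the fact that ${\rm Irr}(M/S)_\kappa$ is lisse on each $S_\sigma$) is what lets Lemma~\ref{strat} then identify $Z$ with a stratum closure in $S_\kappa$.

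It remains to establish the amplitude estimate, and this I would do by first analysing the fibres and then propagating in families. At a geometric point $a$ of $S$, the abelian quotient $A_a$ of $P_a^0$ acts on the proper variety $M_a$ with affine stabilisers; the cap-product action of $\rmT_\Ql(A_a)=\rmH_1(A_a)$ on $\rmH_*(M_a,\Ql)$ makes the latter a graded module over the exterior algebra $\bigwedge^\bullet\rmT_\Ql(A_a)$, and the key fibrewise input is a \emph{freeness} statement: using the decomposition theorem for the quotient $M_a\to M_a/A_a$ together with the polarisation form, which equips $\rmT_\Ql(A_a)$ with a symplectic pairing and hence an $\mathfrak{sl}_2$ (primitive-decomposition) structure, one shows that $\rmH_*(M_a)$ is free over $\bigwedge^\bullet\rmT_\Ql(A_a)$ in the relevant top range of degrees, which pins down the cohomological width of $M_a$ and therefore the fibrewise amplitude by $2\dim A_a=2(d-\delta_a)$. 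The hard part, and the content of the long section~\ref{subsection : cap-produit}, is to upgrade this from individual fibres to a statement about the complex $f_*\Ql$ itself: one must construct the cap-product action of the local system $\rmT_\Ql(P^0)$ and of the polarisation form at the level of $f_*\Ql$, descend the exterior-algebra freeness to the perverse sheaves $K^n$ and their $\kappa$-parts, and control its behaviour under restriction to the loci where $\delta$ jumps (i.e.\ under passage to a would-be support $Z$). It is precisely here that the polarisability of $\rmT_\Ql(P^0)$ and the $\delta$-regularity type control of the strata are used, to keep the family version of the module structure rigid enough to yield the amplitude bound; essentially all the difficulty of the theorem is concentrated in this relative freeness.
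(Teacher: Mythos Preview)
In the paper this statement carries no separate proof: it is flagged as an ``immediate corollary'' of Th\'eor\`eme~\ref{support general kappa}. The point is that $\delta$-regularity is, by the last sentence of~\ref{delta regulier}, precisely the condition that ${\rm codim}(Z)\ge\delta_Z$ for \emph{every} irreducible closed $Z\subset S$; so the codimension hypothesis of the theorem is automatically satisfied for any support $Z$, and the conclusion follows. (Note that the proof in~\ref{subsection : demo support} and the statement of Corollaire~\ref{support final} both use the inequality in the direction ${\rm codim}(Z)\ge\delta_Z$; the displayed ``$\delta_Z\ge{\rm codim}(Z)$'' in the statements of Th\'eor\`emes~\ref{support general} and~\ref{support general kappa} is written the other way around.)

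What you have provided is not a proof of the corollary but an outline of the proof of Th\'eor\`eme~\ref{support general kappa} itself; you never explicitly invoke $\delta$-regularity to discharge the hypothesis on $Z$. Your outline is structurally faithful to the paper (amplitude estimate, Poincar\'e-duality symmetry, reduction to top ordinary cohomology via Lemmes~\ref{top} and~\ref{strat}), but two mechanisms are misplaced. The fibrewise freeness of $\rmH^*_c(M_a)$ over $\Lambda_{A_a}$ (Proposition~\ref{action variete abelienne}) uses neither the polarisation nor any $\mathfrak{sl}_2$/primitive-decomposition structure: it comes from the degeneration of the Leray spectral sequence for the smooth proper map $M_a\to [M_a/A_a]$, whose fibres are $A_a$-torsors, so that $\rmR^i m_*\Ql$ is the constant sheaf $\rmH^i(A_a)$. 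The polarisability of $\rmT_\Ql(P^0)$ enters only later, in the inductive step (Lemme~\ref{split}), where it is used to show that the specialisation map $\rmT_\Ql(A_{u_\alpha})\hookrightarrow\rmT_\Ql(A_{\bar y_{\alpha'}})$ admits a Galois-stable complement; this is what allows the already-established freeness for a larger support $Z_{\alpha'}$ to restrict to freeness over $\Lambda_{A_{u_\alpha}}$ (Proposition~\ref{alpha' libre}).
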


\begin{numero}
De nouveau, l'analogue de \ref{support general kappa} pour le faisceau de cohomologie ordinaire de degr\'e maximal de $f_*\Ql$ \`a la place des faisceaux pervers de cohomolofie $K^n$ r\'sulte imm\'ediatement des lemmes \ref{top} et \ref{strat}.
\end{numero}

\begin{numero}
En g\'en\'eral, le probl\`eme de trouver une stratification de $S$  telle que la restriction du faisceau $(\Ql^{{\rm Irr}(M/S)})_\kappa$ \`a chaque strate  est localement constante, est accessibile car il s'agit d'\'etudier la variation en fonction de $s$ de la repr\'esentation du groupe fini $\pi_0(P_s)$ sur l'espace vectoriel de dimension finies ${\rm Irr}(M_s)$ qui sont compl\`etement explicites. Sur l'ouvert anisotrope de la fibration de Hitchin la situation est encore plus favorable comme nous allons voir.

Supposons qu'il existe un ouvert $M^{\reg}$ de $M$ qui est dense dans chaque fibre de $f$ et tel que $P$ agit simplement transitivement sur $M^\reg$. Supposons aussi qu'il existe une section $S\rightarrow M^\reg$. On a alors des isomorphismes
$${\rm Irr}(M/S)= \pi_0(M^\reg/S)=\pi_0(P)$$
des faisceaux d'ensembles munis d'action de $\pi_0(P)$. Ici $\pi_0(P)$ agit sur lui-m\^eme par translation.

Si $S$ est un trait hens\'elien de point ferm\'e $s_0$, la fl\`eche de sp\'ecialisaiton ${\rm Irr}(M_{s_0})\rightarrow {\rm Irr}(M_s)$ est surjectif car $M$ est propre sur $S$. Cette surjectivit\'e de la fl\`eche de sp\'ecialisation est donc aussi v\'erifi\'e pour les faisceau de groupes ab\'eliens $\pi_0(P)$. Localement pour le topologie \'etale de $S$, il existe donc un homomorphisme surjectif $\Pi_0\rightarrow \pi_0(P)$ d'un groupe constant $\Pi_0$ sur $\pi_0(P)$. Pour simplifier, nous allons supposer l'existence de l'homomorphisme surjectif $\Pi_0\rightarrow \pi_0(P)$ sur $S$.
\end{numero}

\begin{lemme} \label{i kappa}
Supposons que ${\rm Irr}(M/S)= \pi_0(M^\reg/S)=\pi_0(P)$  et qu'il existe un homomorphisme surjectif $\Pi_0\rightarrow \pi_0(P)$ comme ci-dessus. Soient $\kappa:\Pi_0\rightarrow \Ql^\times$ un caract\`ere de $\Pi_0$. Soit $S_\kappa$ le ferm\'e de $S$ des points $s\in S$ tel que $\kappa$ se factorise \`a travers $\Pi_0 \rightarrow \pi_0(P_s)$ et $i_\kappa:S_\kappa\rightarrow S$ l'immersion ferm\'ee. On a alors un isomorphisme
$$(\Ql^{{\rm Irr}(M/S)})_\kappa=i_{\kappa,*}\Ql.$$
\end{lemme}

\begin{proof}
On a une d\'ecomposition du faisceau constant $\Ql^{\Pi_0}$ en somme directe
$$\Ql^{\Pi_0}=\bigoplus_{\kappa\in \Pi_0^*} (\Ql)_\kappa$$
o\`u $\Pi_0$ agit sur $(\Ql)_\kappa$ via le caract\`ere $\kappa$. L'homomorphisme surjectif $\Pi_0\rightarrow \pi_0(P)$ induit alors un homomorphisme surjectif de faisceaux $\ell$-adiques
$$(\Ql)_\kappa \rightarrow (\Ql^{{\rm Irr}(M/S)})_\kappa$$
dont la fibre est nulle en dehors de $S_\kappa$ et non nulle sur $S_\kappa$. Le lemme s'en d\'eduit.
\end{proof}

En combinant le corollaire \ref{kappa partie delta regulier} et le lemme \ref{i kappa}, on obtient une description compl\`ete des supports des faisceaux pervers simples sous des hypoth\`eses favorables.

\begin{corollaire}\label{support final}
Soient $M$ et $S$ des $k$-sch\'emas lisses et $f:M\rightarrow S$ un morphisme propre plat de fibres g\'eom\'etriques r\'eduites de dimension $d$. Soit $P$ un $S$-sch\'ema en groupes commutatifs lisse de type fini et de dimension relative $d$ qui agit sur $M$ avec stabilisateurs affines. Supposons que les hypoth\`eses suivantes soient v\'erifi\'ees :
\begin{enumerate}
\item le module de Tate $\rmT_\Ql(P^0)$ est polarisable,
\item ${\rm Irr}(M/S)= \pi_0(M^\reg/S)=\pi_0(P)$,
\item il existe un homomorphisme surjectif d'un faisceau constant $\Pi_0$ sur le faisceau  $\pi_0(P)$.
\end{enumerate}
Alors, pour tout $\kappa\in \Pi_0^*$, soit $K$ un faisceau pervers g\'eom\'etriquement simple $K$ pr\'esent dans $^p\rmH^n(f_*\Ql)_\kappa$ de support $Z$ v\'erifiant ${\rm codim}(Z) \geq \delta_Z$. Alors $Z$  est \'egal \`a une des composantes irr\'eductibles de $S_\kappa$.

Si on suppose de plus que $P$ est $\delta$-r\'egulier, alors le support de n'importe quel faisceau pervers g\'eom\'etriquement simple pr\'esent dans $^p\rmH^n(f_*\Ql)_\kappa$ est \'egal \`a une des composantes irr\'eductibles de $S_\kappa$.
\end{corollaire}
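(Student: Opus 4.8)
Cette esquisse d\'eduit le corollaire de la combinaison du th\'eor\`eme du support \ref{support general kappa} avec le lemme \ref{i kappa}, compl\'et\'ee par la caract\'erisation de la $\delta$-r\'egularit\'e rappel\'ee en \ref{delta regulier}. D'abord je v\'erifierais que la donn\'ee $(M,S,f,P)$ remplit toutes les hypoth\`eses de \ref{support general kappa} : $M$ et $S$ sont lisses, $f$ est propre et plat \`a fibres g\'eom\'etriques r\'eduites de dimension $d$, $P$ est un $S$-sch\'ema en groupes commutatifs lisse de type fini et de dimension relative $d$ agissant sur $M$ avec stabilisateurs affines, et — c'est l'hypoth\`ese (1) du corollaire — le module de Tate $\rmT_\Ql(P^0)$ est polarisable. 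Ensuite, les hypoth\`eses (2) et (3) du corollaire sont exactement celles de \ref{i kappa}, de sorte que pour tout caract\`ere $\kappa\in\Pi_0^*$ on dispose de l'identification canonique
$$(\Ql^{{\rm Irr}(M/S)})_\kappa=i_{\kappa,*}\Ql,$$
o\`u $i_\kappa:S_\kappa\hookrightarrow S$ est l'immersion ferm\'ee du lieu o\`u $\kappa$ se factorise par $\pi_0(P_s)$ : la partie $\kappa$-isotypique du faisceau ``maximal'' $\Ql^{{\rm Irr}(M/S)}$ n'est autre que le faisceau constant $\Ql$ \'etendu par z\'ero depuis le sous-sch\'ema ferm\'e $S_\kappa$.

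Je choisirais alors une stratification $S\otimes_k\bar k=\bigsqcup_{\sigma\in\Sigma}S_\sigma$ adapt\'ee \`a cette situation : une stratification en sous-sch\'emas localement ferm\'es lisses irr\'eductibles raffinant \`a la fois la partition $S=S_\kappa\sqcup(S\setminus S_\kappa)$ et la d\'ecomposition de $S_\kappa$ en ses composantes irr\'eductibles, de mani\`ere que les strates maximales contenues dans $S_\kappa$ soient les lieux g\'en\'eriques de ces composantes — quand les composantes de $S_\kappa$ sont deux \`a deux disjointes, comme c'est le cas pertinent de $\tilde\calA_\kappa$ \cf \ref{subsection : immersion fermee}, chaque composante est alors elle-m\^eme une strate. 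Sur chaque strate $S_\sigma$ le faisceau $(\Ql^{{\rm Irr}(M/S)})_\kappa=i_{\kappa,*}\Ql$ est localement constant — \'egal \`a $\Ql$ si $S_\sigma\subseteq S_\kappa$ et \`a $0$ sinon — et $S_\kappa$ est r\'eunion de strates, de sorte que les hypoth\`eses de \ref{support general kappa} sont satisfaites. En appliquant ce th\'eor\`eme \`a un constituant g\'eom\'etriquement simple $K$ de $^p\rmH^n(f_*\Ql)_\kappa$ de support $Z$ v\'erifiant $\delta_Z\ge{\rm codim}(Z)$, on obtient que $Z$ est l'adh\'erence de l'une des strates contenues dans $S_\kappa$ ; comme $Z$ est irr\'eductible et ferm\'e dans le ferm\'e $S_\kappa$ et que la stratification a \'et\'e choisie pour raffiner la structure de composantes de $S_\kappa$, ceci force $Z$ \`a \^etre l'une des composantes irr\'eductibles de $S_\kappa$. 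Cela \'etablit la premi\`ere assertion.

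Pour la seconde assertion, je rappellerais que d'apr\`es \ref{delta regulier} la $\delta$-r\'egularit\'e de $P$ \'equivaut \`a l'in\'egalit\'e ${\rm codim}(Z)\ge\delta_Z$ pour tout sous-sch\'ema ferm\'e irr\'eductible $Z$ de $S$. Ainsi, lorsque $P$ est $\delta$-r\'egulier, tout constituant pervers g\'eom\'etriquement simple de $^p\rmH^n(f_*\Ql)_\kappa$ v\'erifie automatiquement l'hypoth\`ese de la premi\`ere assertion, et son support est donc une composante irr\'eductible de $S_\kappa$.

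L'obstacle principal ne r\'eside pas dans ce corollaire, qui est purement formel une fois \ref{support general kappa} et \ref{i kappa} en main, mais dans le th\'eor\`eme du support lui-m\^eme — pr\'ecis\'ement l'estimation d'amplitude de \ref{subsection : amplitude} reposant sur la propri\'et\'e de libert\'e de \ref{subsection : cap-produit}. Le seul point un peu d\'elicat propre au corollaire est d'agencer la stratification pour que les adh\'erences des strates contenues dans $S_\kappa$ soient exactement ses composantes irr\'eductibles, ce qui est imm\'ediat quand celles-ci sont deux \`a deux disjointes et se ram\`ene sinon \`a un argument de raffinement.
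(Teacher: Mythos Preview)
Your approach is essentially the paper's own: the corollary is presented there as a formal combination of \ref{support general kappa} (and its $\delta$-regular corollary \ref{kappa partie delta regulier}) with \ref{i kappa}, and you reproduce this faithfully. In the case where the irreducible components of $S_\kappa$ are pairwise disjoint, which is the case relevant to the Hitchin fibration as you correctly note, your stratification argument is complete.

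There is however a small gap in your treatment of the general case. When the irreducible components of $S_\kappa$ meet, any stratification of $S_\kappa$ into irreducible locally closed pieces necessarily contains strata lying in the intersections, and \ref{support general kappa} then only tells you that $Z$ is the closure of \emph{some} stratum contained in $S_\kappa$, which could a priori be one of these lower-dimensional pieces. Your claim that ``la stratification a \'et\'e choisie pour raffiner la structure de composantes \ldots\ force $Z$ \`a \^etre l'une des composantes'' does not follow, and no further refinement of the stratification helps: indeed $i_{C_1\cap C_2,*}\Ql$ \emph{is} a subquotient of $i_{\kappa,*}\Ql$, so \ref{strat} alone cannot exclude it. What rescues the general statement is the stronger output of the argument in \ref{subsection : demo support}: one actually obtains that $i_{\alpha*}L$ is a \emph{direct summand} of $\rmH^{2d}(f_*\Ql)_\kappa=i_{\kappa,*}\Ql(-d)$ on a suitable open $V_\alpha$. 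Since the constant sheaf $\Ql$ on a connected scheme is indecomposable, the support $U_\alpha$ must be a union of connected components of $S_\kappa\cap V_\alpha$; if $Z$ were strictly contained in a component $C$ of $S_\kappa$, the generic point of $C$ would lie in $V_\alpha$ and in the same connected component of $S_\kappa\cap V_\alpha$ as $U_\alpha$, a contradiction. Replacing your ``argument de raffinement'' by this indecomposability observation closes the gap.
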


\subsection{Amplitude}\label{subsection : amplitude}
Nous allons introduire la notion de l'amplitude de chaque support qui joue un r\^ole cl\'e dans la d\'emonstration de \ref{support general kappa}. Nous allons garder les notations de \ref{support general kappa}. Pour tout $\kappa\in \Pi_0^*$, pour tout entier $n$, le faisceau pervers de cohomologie $K^n_\kappa$ est un g\'eom\'etriquement semi-simple. En regroupant ses facteurs simples ayant le m\^eme support, on obtient une d\'ecomposition canonique
\begin{equation}\label{decomposition par le support}
\bigoplus_{n\in \ZZ}K^n_\kappa=\bigoplus_{\alpha\in {\mathfrak A}_\kappa} K^n_\alpha
\end{equation}
o\`u  ${\mathfrak A}_\kappa$ est une collection finie de sous-sch\'emas ferm\'es irr\'eductibles $Z_\alpha$ de $S\otimes_k \bar k$ et o\`u $K^n_\alpha$ est la somme directe des facteurs simples de $K^n_\kappa$ de support $Z_\alpha$. En supposant que pour tout $\alpha\in{\mathfrak A}_\kappa$, $K^n_\alpha$ est non nul pour au moins un entier $n$, l'ensemble ${\mathfrak A}_\kappa$ est uniquement d\'etermin\'e. Pour tout $\alpha\in {\mathfrak A}_\kappa$, $K^n_\alpha$ est un facteur direct canonique de $K^n_\kappa$.

\begin{numero}
On va maintenant introduire la notion d'amplitude de $\alpha\in {\mathfrak A}_\kappa$. Pour tout
$\alpha$, notons
$${\rm occ}(\alpha)=\{n\in \ZZ \mid K_\alpha^n \not=0\}.$$
Notons aussi $n_+(\alpha)$ l'\'el\'ement maximal de ${\rm occ}(\alpha)$ et
$n_-(\alpha)$ l'\'el\'ement minimal. On d\'efinit l'{\em amplitude} de $\alpha$ par la formule
$${\rm amp}(\alpha)=n_+(\alpha)-n_-(\alpha).$$
Voici une estimation cruciale de l'amplitude dont on reporte la d\'emonstration \`a \ref{demo amplitude}.

\end{numero}

\begin{proposition}\label{inegalite amplitude}
Gardons les notations de \ref{subsection : support general}. Supposons que $\rmT_\Ql(P^0)$ est polarisable \cf \ref{module de Tate polarisable}.

Pour tout $\kappa\in \Pi_0^*$, pour tout $\alpha\in{\mathfrak A}_\kappa$, soit $\delta_\alpha$ la valeur minimale de l'invariant $\delta:S(\bar k)\rightarrow \mathbb{N}$ sur le sous-sch\'ema ferm\'e irr\'eductible $Z_\alpha$. Alors on a l'in\'egalit\'e
$${\rm amp}(\alpha)\geq 2(d-\delta_\alpha)$$
o\`u $d$ est la dimension relative de $g:P\rightarrow S$.
\end{proposition}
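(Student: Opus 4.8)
The plan is to deduce the estimate from a freeness statement for the $Z_\alpha$-isotypic part of $f_*\Ql$, regarded as a module over an exterior algebra built from the abelian quotient of $P^0$, the polarisability hypothesis of \ref{module de Tate polarisable} being precisely what makes that freeness available. First I would localise the problem: fix a generic point $\eta_\alpha$ of $Z_\alpha$ lying in the open subset $U_\alpha=\{s\in Z_\alpha:\delta_s=\delta_\alpha\}$, which is dense in $Z_\alpha$ by upper semi-continuity of $\delta$ and satisfies $U_\alpha\subset S_{\delta_\alpha}$. Since the decomposition \ref{decomposition par le support} by supports is canonical, the restriction of $K^n_\alpha[-n]$ to $U_\alpha$ is a local system placed in cohomological degree $n-\dim Z_\alpha$; hence ${\rm occ}(\alpha)$ and ${\rm amp}(\alpha)$ can be read off from the restriction of $\bigoplus_n K^n_\alpha[-n]$ to $U_\alpha$, equivalently from its stalk at a geometric point of $U_\alpha$.

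Next I would bring in the symmetries, following \ref{subsection : cap-produit}: the action morphism $P\times_S M\to M$, the second projection and the K\"unneth formula turn $f_*\Ql$ into a module over the sheaf of graded-commutative algebras $\bigwedge^\bullet\mathcal T$, where $\mathcal T=\rmH^{2d-1}(g_!\Ql)(d)=\rmT_\Ql(P^0)$ is placed in cohomological degree $-1$ (the precise sign is irrelevant for the conclusion). As $\mathcal T$ lives on the base $S$, this action is $\calO_S$-linear and commutes with the canonical projectors of \ref{decomposition par le support}; in particular $\bigoplus_n K^n_\alpha[-n]$ is itself a $\bigwedge^\bullet\mathcal T$-module. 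Over $U_\alpha$ the Chevalley d\'evissage of $P^0$ gives an exact sequence $0\to\rmT_\Ql(R)\to\mathcal T\to\mathcal T^{\rm ab}\to 0$ with $\mathcal T^{\rm ab}=\rmT_\Ql(A)$ a local system of rank $2(d-\delta_\alpha)$, $A$ being the abelian quotient of $P^0$, and the polarisation endows $\mathcal T^{\rm ab}$ with a symplectic form. Thus, over $U_\alpha$, the module $\bigoplus_n K^n_\alpha[-n]$ is acted on by the sheaf of exterior algebras $\bigwedge^\bullet\mathcal T^{\rm ab}$ on a symplectic local system of rank $2(d-\delta_\alpha)$ concentrated in cohomological degree $-1$.

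The crux — and the step I expect to be the main obstacle — is to show that, locally for the \'etale topology of $U_\alpha$, this module is free over $\bigwedge^\bullet\mathcal T^{\rm ab}$. The idea is to use the symplectic form to split $\mathcal T^{\rm ab}$ \'etale-locally into two transverse Lagrangian subsheaves, equipping $\bigwedge^\bullet\mathcal T^{\rm ab}$ with commuting creation and annihilation operators, and then to exploit the purity of $f_*\Ql$ together with the fact that, the stabilisers being affine and $\dim P=\dim M$, each fibre over $U_\alpha$ contains a dense open $P^0$-orbit isogenous to the abelian scheme $A$, plus a d\'evissage along the lower-dimensional complement, in order to conclude that cupping with the top exterior power of $\mathcal T^{\rm ab}$ is injective where the module is supported and, more precisely, that the module has no $\bigwedge^\bullet\mathcal T^{\rm ab}$-torsion; a rank count then yields freeness. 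The genuinely delicate point is that everything must be established in families over $U_\alpha$ and not merely fibrewise, which is why it occupies the long paragraph \ref{subsection : cap-produit}; observe that for the amplitude bound alone the injectivity of the top-power operator would already be enough.

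Granting the freeness, the conclusion is purely formal. A non-zero graded complex that is free over the exterior algebra $\bigwedge^\bullet W$ on a vector space $W$ of dimension $N$ placed in cohomological degree $-1$ is of the form $\bigwedge^\bullet W\otimes V_0$ with $V_0\neq 0$, hence is non-zero in at least $N+1$ consecutive cohomological degrees. Applying this with $N=2(d-\delta_\alpha)$ to the restriction of $\bigoplus_n K^n_\alpha[-n]$ to $U_\alpha$ and undoing the translation by $\dim Z_\alpha$, one obtains that $K^n_\alpha\neq 0$ for at least $2(d-\delta_\alpha)+1$ consecutive values of $n$, that is ${\rm amp}(\alpha)\geq 2(d-\delta_\alpha)$, as asserted.
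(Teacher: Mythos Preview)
Your reduction of the amplitude bound to the freeness of $L_\alpha=\bigoplus_n L^n_\alpha$ over $\Lambda_{A_\alpha}$ on $U_\alpha$, and the formal deduction in your last paragraph, are exactly what the paper does (see \ref{demo amplitude}, which is literally that one-line argument invoking \ref{liberte}).

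Where you diverge, and where there is a genuine gap, is in your sketch of \emph{why} freeness holds. The claim that ``each fibre over $U_\alpha$ contains a dense open $P^0$-orbit isogenous to the abelian scheme $A$'' is not among the hypotheses and is false in general: only affine stabilisers are assumed, and when $\delta_\alpha>0$ the abelian quotient $A$ has dimension $d-\delta_\alpha<d$, so an orbit isogenous to $A$ cannot be dense in a $d$-dimensional fibre. The paper's route in \ref{subsection : cap-produit} is quite different. Fibrewise freeness (\ref{action variete abelienne}, \ref{liberte fibre par fibre}) is obtained not from a dense orbit but from the fact that $A$ (not $P$) acts with \emph{finite} stabilisers --- this is exactly what ``affine stabilisers in $P$'' gives after projecting to the abelian quotient --- so $[M_s/A_s]$ is a Deligne--Mumford stack and a Leray spectral sequence yields freeness of $\bigoplus_n\rmH^n_c(M_s)$ over $\Lambda_{A_s}$; the action of $A$ is produced by the quasi-lift $A\to P$ available over finite fields (\ref{quasi-relevement}), and then extended to arbitrary algebraically closed fields (\ref{relevement arbitraire}). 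The hard step, passing from the whole fibre $\rmH^*_c(M_{u_\alpha})$ to the single piece $L_{\alpha,u_\alpha}$, is done by \emph{descending induction on $\dim Z_\alpha$} (\ref{liberte}): the degenerate spectral sequence \ref{suite spectrale} filters $\rmH^*_c(M_{u_\alpha})$ by $\Lambda_{A_{u_\alpha}}$-submodules, the graded pieces coming from strictly larger supports $Z_{\alpha'}$ are free by the inductive hypothesis (\ref{alpha' libre}) --- and \emph{this} is precisely where polarisability enters, via \ref{split}, to embed $\rmT_\Ql(A_{u_\alpha})$ as a Galois-stable direct summand of $\rmT_\Ql(A_{\bar y_{\alpha'}})$ --- and one then extracts freeness of $L_{\alpha,u_\alpha}$ using that $\Lambda_{A_{u_\alpha}}$ is a local Frobenius algebra with one-dimensional socle, so that in a short filtration with free ends and free total, the middle is free. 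Your Lagrangian splitting and creation--annihilation picture play no role; the polarisation is used solely for the splitting lemma \ref{split}.
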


\subsection{D\'emonstration du th\'eor\`eme du support}
\label{subsection : demo support} Dans ce paragraphe, nous allons d\'emontrer
le th\'eor\`eme du support \ref{support general kappa} en admettant l'in\'egalit\'e \ref{inegalite
amplitude}. La d\'emonstration est fond\'ee sur la dualit\'e de Poincar\'e et
un argument de comptage de dimension du \`a Goresky et MacPherson.
Le lecteur consultera l'annexe \cf \ref{appendice GM} pour voir comment
cet argument marche dans un contexte plus g\'en\'eral.

\begin{proof} Puisque $M$ et $S$ sont lisses sur $k$, la dualit\'e de
Poincar\'e-Verdier fournit isomorphisme de complexes
$$f_* \Ql = \underline\RHom(f_* \Ql, \Ql [-2d](-d))$$
o\`u $d$ est la dimension relative de $f$. En prenant les faisceaux pervers de cohomologie, on obtient un
isomorphisme
$$K^n=K^{2\dim(M)-n,\vee}(\dim(M))$$
o\`u $^\vee$ d\'esigne la dualit\'e de Verdier de la cat\'egorie des complexes $\ell$-adiques sur $S$ qui pr\'eserve la sous-cat\'egorie des faisceaux pervers. Pour tout $\kappa\in \Pi_0^*$, on en d\'eduit un isomorphisme entre les $\kappa$-parties
$$K^n_\kappa=K^{2\dim(M)-n,\vee}_\kappa(\dim(M)).$$
Comme cet isomorphisme respecte la d\'ecomposition par le support
\ref{decomposition par le support}, pour tout $\alpha\in {\mathfrak A}_\kappa$,
l'ensemble d'entiers ${\rm occ}(\alpha)$ est sym\'etrique par rapport \`a
$\dim(M)$.

De plus, en admettant l'estimation de l'amplitude \ref{inegalite amplitude}
$${\rm amp}(\alpha)\geq 2(d-\delta_\alpha)$$
on constate qu'il existe un entier $n\in {\rm occ}(\alpha)$ tel que
$$n\geq \dim(M)+ d-\delta_\alpha.$$
Il existe donc un faisceau pervers simple non nul $K$ de support $Z_\alpha$
tel que $K$ soit un facteur direct de $K^n_\kappa$ avec $n\geq \dim(M)+ d-\delta_\alpha$.

Localement pour la topologie \'etale de $S$, il existe des rel\`evements $\pi_0(P)\rightarrow P$ de l'homomorphisme surjectif $P\rightarrow \pi_0(P)$. En d\'eduit une action de $\Pi_0$ sur sur le complexe $f_*\Ql$. Puisque $f_*\Ql$ est un complexe born\'e constructible, il existe une
d\'ecomposition en somme directe
$$f_*\Ql=\bigoplus_{\kappa\in \Pi_0^*} (f_*\Ql)_\kappa$$
tel qu'il existe un entier $N$ tel que pour tout $\lambda\in \Pi_0$,
$(\lambda-\kappa(\lambda){\rm id})^N$ agit trivialement sur $(f_*\Ql)_\kappa$ apr\`es changement de base \`a un recouverement \'etale de $S$. Cette d\'ecomposition est ind\'ependante du choix du rel\`evement $\pi_0(P)\rightarrow P$ de sorte qu'on a en fait une d\'ecomposition canonique au-dessus de $S$. De plus, on a la compatibilit\'e \'evidente
$$^p \rmH^n((f_*\Ql)_\kappa)=K^n_\kappa.$$
Ainsi $K[-n]$ est un facteur direct du complexe $(f_*\Ql)_\kappa$.

Soit $U_\alpha$ un ouvert dense de $Z_\alpha$ o\`u $K$ est de la forme $L[\dim(Z_\alpha)]$ o\`u $L$
est un syst\`eme local irr\'eductible sur $U_\alpha$. Soit $V_\alpha$ un ouvert de $S$ qui contient $U_\alpha$ comme un ferm\'e. Notons $i_\alpha:U_\alpha\rightarrow V_\alpha$. Quitte \`a remplacer $S$ par $V_\alpha$, on peut donc supposer que $U_\alpha=Z_\alpha$.

Soit $u_\alpha$ un point g\'eom\'etrique de $U_\alpha$. Puisque $L[-n]$ est un facteur direct de $(f_*\Ql)_\kappa$, la fibre en $u_\alpha$ de
$L[-n]$ est un facteur direct de la  fibre en $u_\alpha$ de $(f_{*}\Ql)_\kappa$. La fibre en $u_\alpha$ de $(
f_*\Ql)_\kappa$ est le complexe $\rmR\Gamma(M_{u_\alpha},\Ql)_\kappa$
d'apr\`es le th\'eor\`eme de changement de base pour un morphisme
propre alors que la fibre de $L[-n]$ en $u_\alpha$ est un $\Ql$-espace vectoriel
non nul plac\'e en degr\'e $n-\dim(Z_\alpha)$.

Il s'ensuit que
$$\rmH^{n-\dim(Z_\alpha)}(M_{u_\alpha},\Ql)\not=0.$$
Or, l'entier
$$n-\dim(Z_\alpha)=n-\dim(S)+{\rm codim}(Z_\alpha)$$
est sup\'erieur ou \'egal \`a
$$\dim(M)+ d-\delta_\alpha -\dim(S)+{\rm codim}(Z_\alpha).$$
Par hypoth\`ese, on a l'in\'egalit\'e
$${\rm codim}(Z_\alpha)\geq \delta_\alpha.$$
En la combinant avec l'\'egalit\'e \'evidente $\dim(M)-\dim(A)=d$, on en d\'eduit
l'in\'egalit\'e
$$n-\dim(Z_\alpha) \geq 2d.$$
Comme la dimension de la fibre $M_{u_\alpha}$ est \'egale \`a $d$,  la
non-annulation $\rmH^{n-\dim(Z)}(M_{u_\alpha},\Ql)\not=0$ implique que
$n-\dim(Z_\alpha)=2d$.

En tronquant par l'op\'erateur $\tau^{\geq 2d}$, on trouve que $i_{\alpha *} L[-n+\dim(U_\alpha)]$ est un facteur direct de $\rmH^{2d}(f_*\Ql)_\kappa[-2d]$ donc $i_{\alpha *} L$ est un facteur direct de $\rmH^{2d}(f_*\Ql)_\kappa$. Le th\'eor\`eme r\'esulte donc de \ref{strat}.
\end{proof}

\subsection{Cap-produit}\label{subsection : cap-produit}

Nous allons d\'evelopper dans ce paragraphe certaines propri\'et\'es
g\'en\'erales du cap-produit afin de d\'emontrer l'in\'egalit\'e d'amplitude
\ref{inegalite amplitude}.

\begin{numero}
Nous nous pla{\c c}ons dans la situation g\'en\'erale suivante. Soit $S$ un
sch\'ema quelconque. Soit $g:P\rightarrow S$ un $S$-sch\'ema en groupes
de type lisse commutatif de fibres connexes de dimension $d$.
Consid\'erons le complexe d'homologie de $P$ sur $S$ d\'efini par la formule
$$\Lambda_P= g_! \Ql [2d](d).$$
Ce complexe est concentr\'e en degr\'es n\'egatifs. En degr\'e $0$, on a
$\rmH^0(\Lambda_P)=\Ql$. La partie la plus importante de $\Lambda_P$ est
$$\rmT_\Ql(P):=\rmH^{-1}(\Lambda_P)$$
o\`u $\rmT_\Ql(P)$ est un faisceau dont la fibre en chaque point
g\'eom\'etrique $s\in S$ est le $\Ql$-module de Tate ${\rm T}_\Ql(P_s)$ de
la fibre de $P$ en $s$. Plus g\'en\'eralement, le th\'eor\`eme de
changement de base nous fournit un isomorphisme
$$\rmH^{-i}(\Lambda_P)_s =\rmH^{2d-i}_c(P_s)(d)$$
entre la fibre en  $s$ de $\rmH^{-i}(\Lambda_P)$ est le $i$-\`eme groupe
d'homologie
$$\rmH_i(P_s)=\rmH^{2d-i}_c(P_s)(d)$$
de la fibre de $P$ en $s$.
\end{numero}

\begin{numero}\label{cap}
Soit $f:M\rightarrow S$ un morphisme de type fini muni d'une action de $P$
relativement \`a la base $S$
$${\rm act}: P\times_S M \rightarrow M.$$
Puisque $P$ est lisse de dimension relative $d$ sur $S$, le morphisme
${\rm act}$ est aussi lisse de m\^eme dimension relative. On a donc un
morphisme trace
$${\rm act}_! \Ql[2d](d) \rightarrow \Ql$$
au-dessus de $M$. En poussant ce morphisme de trace par $f_!$, on obtient
un morphisme
$$(g\times_S f)_!\Ql[2d](d)  \rightarrow f_! \Ql.$$
En utilisant maintenant l'isomorphisme de Kunneth, on obtient un
morphisme de cap-produit
$$ \Lambda_P \otimes f_! \Ql
\rightarrow f_!\Ql.$$
\end{numero}

\begin{numero}
Cette construction s'applique en particulier \`a $f=g$. Elle d\'efinit alors un
morphisme de complexes
$$\Lambda_P\otimes \Lambda_P \rightarrow \Lambda_P.$$
On en d\'eduit une structure d'alg\`ebres gradu\'ees sur les faisceaux de
cohomologie de $\Lambda_P$
$$\rmH^{-i}(\Lambda_P)\otimes \rmH^{-j}(\Lambda_P)\rightarrow \rmH^{-i-j}(\Lambda_P)$$
qui est commutative au sens gradu\'e. On en d\'eduit en particulier un
morphisme de faisceaux
$$\wedge^i \rmT_\Ql(P)\rightarrow \rmH^{-i}(\Lambda_P)$$
qui est en fait un isomorphisme. Pour le v\'erifier, il suffit de le faire fibre par
fibre o\`u on retrouve les groupes d'homologie de $P_s$ munis du produit
de Pontryagin.
\end{numero}

\begin{numero}
Comme m'a fait remarquer Deligne, la multiplication par entier $N\not=0$
induit la multiplication par $N$ sur le module de Tate $\rmT_\Ql(P)$ de $P$
et induit donc la multiplication par $N^i$ sur
$\rmH^{-i}(\Lambda_P)=\wedge^i \rmT_\Ql(P)$. On en d\'eduit une
d\'ecomposition canonique du complexe $\Lambda_P$
$$\Lambda_P=\bigoplus_{i\geq 0} \wedge^i \rmT_\Ql(P)[i]$$
compatible avec la structure de cap-produit.
\end{numero}

\begin{proposition}\label{action variete abelienne}
Soit $\bar k$ un corps alg\'ebriquement clos. Soit $M$ un $\bar k$-sch\'ema
de type fini muni d'une action d'une $\bar k$-vari\'et\'e ab\'elienne
ab\'elienne $A$ avec stabilisateurs finis. Alors $\bigoplus_n
\rmH^n_c(M)[-n]$ est un module libre sur l'alg\`ebre gradu\'ee
$\Lambda_A=\bigoplus_{i} \wedge^i \rmT_\Ql(A)[i]$.
\end{proposition}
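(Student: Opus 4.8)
The plan is to reduce the statement to a geometric freeness property via the theory of $1$-motives, namely to the fact that for an abelian variety $A$ acting on $M$ with finite stabilizers, the quotient map $M \to [M/A]$ behaves like a fibration in copies of $A$ (up to finite \'etale twists), and the homology of such a fibration is free over $\Lambda_A = \bigoplus_i \wedge^i \mathrm{T}_\Ql(A)[i]$ by a K\"unneth-type argument. First I would pass to the quotient stack $\pi : M \to \calN := [M/A]$, which is a Deligne--Mumford stack of finite type over $\bar k$ because the stabilizers are finite; the morphism $\pi$ is an $A$-torsor over $\calN$. The cap-product action of $\Lambda_A$ on $\bigoplus_n \rmH^n_c(M)[-n]$ then factors through the identification $\rmR\pi_! \Ql \otimes^{\mathbbm{L}} (\text{something on } \calN)$, and the module structure over $\Lambda_A$ comes precisely from the $A$-torsor structure. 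The key point is that the complex $\rmR\pi_! \Ql$ on $\calN$, for an $A$-torsor $\pi$, is quasi-isomorphic to $\Lambda_A \otimes \mathcal{E}$ for some complex $\mathcal{E}$ on $\calN$ \emph{as a $\Lambda_A$-module complex}; this is the stacky analogue of the classical statement that the cohomology of a principal bundle under a torus (or abelian variety) with contractible-in-homology structure is a free module over the homology of the structure group.

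The main step is therefore to establish this freeness for $A$-torsors. I would argue as follows. Locally for the \'etale (or smooth) topology on $\calN$, the torsor $\pi$ is trivial, so $\rmR\pi_! \Ql \cong \Lambda_A \otimes \Ql_\calN$ locally, compatibly with the $\Lambda_A$-action. To globalize, one uses that the "monodromy" of the torsor acts on $\Lambda_A = \bigoplus_i \wedge^i \mathrm{T}_\Ql(A)[i]$ through the representation of $\pi_1$ on $\mathrm{T}_\Ql(A)$, and that extensions of $\Ql$-sheaves by the graded pieces $\wedge^i \mathrm{T}_\Ql(A)$ that are "of torsor type" split at the level of the associated graded for the perverse/weight filtration — but more elementarily, one can invoke the decomposition of $\Lambda_A$ into its graded pieces (Deligne's remark: multiplication by $N$ acts by $N^i$ on $\wedge^i \mathrm{T}_\Ql(A)[i]$, giving a \emph{canonical} splitting $\Lambda_A = \bigoplus_i \wedge^i \mathrm{T}_\Ql(A)[i]$ already on $\calN$). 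This canonical grading is the crucial rigidity: it lets one conclude that $\rmR\pi_! \Ql$, being filtered with graded pieces $\wedge^i \mathrm{T}_\Ql(A) \otimes \mathcal{E}_i$ for $\mathcal{E}_i$ on $\calN$, actually has these $\mathcal{E}_i$ all equal to a single $\mathcal{E} = \rmR(\pi')_! \Ql$ where $\pi'$ is, say, the pullback along the identity section, i.e. $\mathcal{E}$ is computed on the "base" and $\rmR\pi_!\Ql \cong \Lambda_A \otimes^{\mathbbm{L}} \mathcal{E}$. Then pushing forward from $\calN$ to $\Spec(\bar k)$ (which has finite cohomological dimension since $\calN$ is a DM stack of finite type) and taking cohomology gives that $\bigoplus_n \rmH^n_c(M)[-n]$ is free over $\Lambda_A$ with "basis" $\bigoplus_n \rmH^n_c(\calN, \mathcal{E})[-n]$.

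The hard part will be making rigorous the claim that the filtered complex $\rmR\pi_!\Ql$ on $\calN$ is actually \emph{globally split} as $\Lambda_A \otimes^{\mathbbm{L}} \mathcal{E}$ rather than merely locally so — that is, ruling out non-trivial extension classes. Here I expect one must use either (a) the multiplication-by-$N$ argument to get the canonical grading on $\Lambda_A$ and then a standard argument that a complex over a graded ring, locally free, with the grading induced from the base, is globally of the form $\Lambda_A \otimes \mathcal{E}$; or (b) a direct \v{C}ech/descent computation: cover $\calN$ by opens trivializing the torsor, and observe that the gluing data lands in $\GL$ of a \emph{graded free} $\Lambda_A$-module respecting the grading, hence the glued object is again graded free. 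Option (b) seems cleaner. One should also be careful that "stabilizers finite" is used exactly to ensure $[M/A]$ is DM and of finite type, so that all the $\rmR f_!$ are bounded and K\"unneth applies; and that the statement is about $\rmH^*_c$ (compactly supported), which is what the cap-product $\Lambda_P \otimes f_!\Ql \to f_!\Ql$ of \ref{cap} naturally acts on. Finally, I would record the consequence used elsewhere: the Betti numbers $\dim \rmH^n_c(M)$ are constrained so that the Poincar\'e polynomial of $M$ is divisible by that of $A$, which is the form of the statement that feeds into the amplitude estimate \ref{inegalite amplitude}.
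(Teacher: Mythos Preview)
Your approach is essentially the paper's: pass to the quotient stack $N=[M/A]$ (Deligne--Mumford because stabilisers are finite) and exploit that $m:M\to N$ is an $A$-torsor, hence proper and smooth. Where you diverge is in the resolution of what you call the ``hard part''. You aim for a global $\Lambda_A$-module isomorphism $\rmR m_!\Ql\simeq\Lambda_A\otimes\mathcal E$ on $N$ and worry about extension classes; the paper avoids this entirely. It first uses Deligne's splitting for proper smooth morphisms to write (non-canonically) $m_*\Ql\simeq\bigoplus_i\rmR^i m_*\Ql[-i]$, then observes that $M\times_N M=A\times M$ forces $m^*\rmR^i m_*\Ql$ to be the constant sheaf $\rmH^i(A)$, and invokes \cite[4.2.5]{BBD} to conclude that $\rmR^i m_*\Ql$ is already constant on $N$. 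The degeneration of the Leray spectral sequence then yields a $\Lambda_A$-stable \emph{filtration} of $\bigoplus_n\rmH^n_c(M)$ whose graded pieces are $\rmH^j_c(N)\otimes\bigoplus_i\rmH^i(A)$, each visibly $\Lambda_A$-free; since $\Lambda_A$ is local, an iterated extension of free modules is free. So there is no need to split the complex on $N$ as a $\Lambda_A$-module, and your \v{C}ech/descent step can be dropped.
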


\begin{proof}
Puisque le stabilisateur dans $A$ de tout point de $M$ est un sous-groupe
fini, le quotient $N=[M/ A]$ est hom\'eomorphe \`a un champ de
Deligne-Mumford. Le morphisme $m:M\rightarrow N$ \'etant un morphisme
prorpe et lisse, il existe un isomorphisme non-canonique
$$m_*\Ql \simeq \bigoplus_i \rmR^i m_*\Ql[-i]$$
o\`u les $\rmR^i(m_*\Ql)$ sont des syst\`emes locaux sur $N$. Puisque
$M\times_N M=A\times M$, l'image inverse $m^*\rmR^i(m_*\Ql)$ est le
faisceau constant de valeur $\rmH^i(A)$ sur $M$. En utilisant \cf
\cite[4.2.5]{BBD}, on obtient un isomorphisme canonique entre
$\rmR^i(m_*\Ql)$ est le faisceau constant sur $N$ de valeur $\rmH^i(A)$.

La d\'ecomposition en somme directe ci-dessus implique la
d\'eg\'en\'ere\-scence de la suite spectrale
$$\rmH^j_c(N,\rmR^i m_*\Ql) \Rightarrow \rmH^{i+j}_c(M).$$
On en d\'eduit dans $\bigoplus_n \rmH^n_c (M)$ une filtration
$\Lambda_A$-stable dont le $j$-i\`eme gradu\'e est
$$\bigoplus_i \rmH^j_c(N,\rmR^i m_*\Ql)=\rmH^j_c(N)\otimes \bigoplus_i \rmH^i(A_s).$$
Ces gradu\'es sont des $\Lambda_A$-modules libres. Il en r\'esulte que
la somme directe $\bigoplus_n \rmH^n_c (M)[-n]$ est un
$\Lambda_A$-module libre.
\end{proof}

\begin{numero}
Soit $P$ un groupe alg\'ebrique commutatif lisse et connexe sur un corps alg\'ebriquement clos
$\bar k$. D'apr\`es le th\'eor\`eme de Chevalley \cf \cite{Ros}, $P$ admet
un d\'evissage comme une extension d'une vari\'et\'e ab\'elienne par un
groupe affine
$$1\rightarrow R\rightarrow P \rightarrow A \rightarrow 1$$
o\`u $A$ est une vari\'et\'e ab\'elienne et o\`u $R$ est un groupe affine.
Notons au passage que si $P$ est d\'efini sur un corps parfait, cette suite
exacte existe au-dessus de ce m\^eme corps.

On en d\'eduit de la suite exacte ci-dessus une suite exacte de modules de
Tate
$$0\rightarrow \rmT_\Ql(R)\rightarrow \rmT_\Ql(P) \rightarrow
\rmT_\Ql(A) \rightarrow 0.
$$
On appelle {\em rel\`evement holomogique} une application lin\'eaire
$$\lambda:\rmT_\Ql(A_s)\rightarrow \rmT_\Ql(P_s)$$
qui scinde cette suite exacte. Un rel\`evement homologique induit un
homomorphisme d'alg\`ebres $\lambda:\Lambda_{A_s} \rightarrow
\Lambda_s$ o\`u $\Lambda_{A_s}=\bigoplus_{-i} \wedge^i \rmT_\Ql(A_s)$
est l'alg\`ebre d'homologie de la vari\'et\'e ab\'elienne $A_s$. L'ensemble
des rel\`eve\-ments homologiques de la partie ab\'elienne forment un espace
affine, torseur sous le $\Ql$-espace vectoriel
$\Hom(\rmT_\Ql(A_s),\rmT_\Ql(R_s))$.
Si $P$ est d\'efini sur un corps fini, il existe un rel\`evement homologique
canonique.
\end{numero}

\begin{proposition}\label{quasi-relevement}
Soit $P$ un groupe alg\'ebrique commutatif lisse et connexe d\'efini sur un
corps fini $k$. Soit
$$1\rightarrow R \rightarrow P \rightarrow A \rightarrow 1$$
la suite exacte canonique qui r\'ealise $P$ comme une extension d'une
vari\'et\'e ab\'elienne $A$ par un groupe affine $R$.  Il existe alors un
entier $N>0$ et un homomorphisme $a:A\rightarrow P$ tel qu'en le
composant avec $P\rightarrow A$, on obtient la multiplication par $N$ dans
$A$. On dira alors que $a$ est un quasi-rel\`evement.

De plus, $\rmT_\Ql(a):\rmT_\Ql(A)\rightarrow\rmT_\Ql(P)$ est l'application
induite sur les modules de Tate, alors $N^{-1}\rmT_\Ql(a)$ est l'unique
rel\`evement homologique compatible \`a l'action de $\Gal(\bar k/ k)$ sur
$T_\ell(P)$ et $T_\ell(A)$. On dira que $N^{-1}\rmT_\Ql(a)$ est le
rel\`evement canonique.
\end{proposition}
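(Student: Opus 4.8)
\noindent\emph{Plan de d\'emonstration.} Pour la premi\`ere assertion, je ram\`enerais l'existence du quasi-rel\`evement \`a un \'enonc\'e de torsion pour le groupe $\mathrm{Ext}^1(A,R)$ des extensions de $A$ par $R$ dans la cat\'egorie des $k$-groupes alg\'ebriques commutatifs. Notons $e\in\mathrm{Ext}^1(A,R)$ la classe de l'extension $1\rightarrow R\rightarrow P\rightarrow A\rightarrow 1$. Le foncteur $\mathrm{Ext}^1(-,R)$ \'etant additif, la multiplication par $N$ sur $A$ induit la multiplication par $N$ sur $\mathrm{Ext}^1(A,R)$. D\`es qu'on dispose d'un entier $N>0$ annulant $e$, l'extension image inverse de $P$ par $[N]:A\rightarrow A$ est scind\'ee~; en composant un scindage $s:A\rightarrow [N]^*P$ avec la projection $[N]^*P\rightarrow P$ on obtient le morphisme $a:A\rightarrow P$ cherch\'e, la relation $\pi\circ a=[N]$ r\'esultant du carr\'e cart\'esien d\'efinissant $[N]^*P$. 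Il restera donc \`a \'etablir que $e$ est de torsion.

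Pour cela, j'utiliserais que $k$ \'etant parfait, $R$ se d\'ecompose en un produit $R=T\times U$ d'un tore $T$ et d'un groupe unipotent commutatif $U$. Pour la partie unipotente, une filtration de $U$ \`a quotients successifs isomorphes \`a $\GG_a$ ram\`ene tout \`a $\mathrm{Ext}^1(A,\GG_a)$~; celui-ci est annul\'e par $p=\mathrm{car}(k)$ puisque $[p]$ est nul sur $\GG_a$, de sorte que $\mathrm{Ext}^1(A,U)$ est annul\'e par une puissance de $p$. Pour la partie torique, je plongerais $T$ dans un tore induit $T_0=\mathrm{Res}_{k'/k}\GG_m^n$ (avec $k'/k$ finie s\'eparable d\'eployant $T$), d'o\`u une suite exacte de tores $0\rightarrow T\rightarrow T_0\rightarrow T_1\rightarrow 0$~; comme $\mathrm{Hom}(A,T_1)=0$ (un homomorphisme d'une vari\'et\'e ab\'elienne vers un groupe affine est trivial) et comme $\mathrm{Ext}^1(A,T_0)=\mathrm{Ext}^1_{k'}(A_{k'},\GG_m^n)=\hat A(k')^n$ est fini d'apr\`es la formule de Barsotti--Weil et la finitude de $k'$, la suite exacte longue donne $\mathrm{Ext}^1(A,T)$ fini. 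Ainsi $\mathrm{Ext}^1(A,R)=\mathrm{Ext}^1(A,T)\oplus\mathrm{Ext}^1(A,U)$ est annul\'e par un entier $N>0$.

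Pour la seconde assertion, la relation $\pi\circ a=[N]$ donne imm\'ediatement $\rmT_\Ql(\pi)\circ\rmT_\Ql(a)=N\cdot\id$ sur $\rmT_\Ql(A)$, donc $N^{-1}\rmT_\Ql(a)$ est bien un rel\`evement homologique de la suite exacte $0\rightarrow\rmT_\Ql(R)\rightarrow\rmT_\Ql(P)\rightarrow\rmT_\Ql(A)\rightarrow 0$~; il est $\Gal(\bar k/k)$-\'equivariant puisque $a$ est d\'efini sur $k$, et, une fois l'unicit\'e acquise, il ne d\'ependra ni de $N$ ni de $a$. Pour l'unicit\'e, la diff\'erence de deux rel\`evements homologiques $\Gal(\bar k/k)$-\'equivariants est un morphisme \'equivariant de $\rmT_\Ql(A)$ dans $\ker(\rmT_\Ql(\pi))=\rmT_\Ql(R)=\rmT_\Ql(T)$, la derni\`ere \'egalit\'e provenant de la nullit\'e du module de Tate $\ell$-adique d'un groupe unipotent en caract\'eristique $p\neq\ell$. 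Je conclurais alors par le th\'eor\`eme de Weil sur les vari\'et\'es ab\'eliennes sur un corps fini~: posant $q=\#k$, les valeurs propres du Frobenius sur $\rmT_\Ql(A)$ ont pour valeur absolue archim\'edienne $q^{1/2}$, tandis que celles sur $\rmT_\Ql(T)\simeq X_*(T)\otimes\Ql(1)$ ont pour valeur absolue $q$~; un morphisme compatible \`a l'action du Frobenius respectant la d\'ecomposition en sous-espaces caract\'eristiques g\'en\'eralis\'es et les deux jeux de valeurs propres \'etant disjoints, ce morphisme est nul. Le point d\'elicat de la d\'emonstration est l'annulation de $e$ par un entier, qui repose sur les faits classiques rappel\'es ci-dessus (formule de Barsotti--Weil, trivialit\'e de $\mathrm{Hom}$ d'une vari\'et\'e ab\'elienne vers un groupe affine, annulation de $[p]$ sur $\GG_a$)~; l'unicit\'e, elle, est une cons\'equence directe de la puret\'e \`a la Weil.
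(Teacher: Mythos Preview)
The proposal is correct and follows essentially the same approach as the paper: both reduce the first assertion to the torsion (indeed finiteness) of $\mathrm{Ext}^1(A,R)$ via Barsotti--Weil for the toric part and the $p$-torsion of $\mathrm{Ext}^1(A,\GG_a)$ for the unipotent part, and both deduce the uniqueness in the second assertion from the disjointness of the Frobenius weights on $\rmT_\Ql(A)$ and $\rmT_\Ql(R)$. Your treatment is in fact slightly more careful than the paper's sketch (you handle the non-split torus via an induced-torus embedding rather than an unexplained passage to a finite extension, and you use a filtration of $U$ by $\GG_a$'s rather than the paper's loose claim that $R$ becomes a product of copies of $\GG_m$ and $\GG_a$), but the strategy is the same.
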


\begin{proof}
D'apr\`es \cite[p. 184]{Serre-GACC}, le groupe des extensions d'une
vari\'et\'e ab\'elienne $A$ par $\GG_m$ s'identifie au groupe des $k$-points
de la vari\'et\'e ab\'elienne duale. Le groupe des extensions d'une vari\'et\'e
ab\'elienne par $\GG_a$ est le $k$-espace vectoriel de dimension finie
$\rmH^1(A,\calO_A)$. Un groupe affine commutatif lisse connexe est
isomorphe \`a un produit de $\GG_m$ et de $\GG_a$ apr\`es passer \`a une
extension finie du corps de base, il s'ensuit que le groupe des extensions
d'une vari\'et\'e ab\'elienne $A$ par un groupe affine commutatif lisse $R$
d\'efini sur un corps fini, est un groupe fini. La premi\`ere assertion s'en
d\'eduit.

Le rel\`evement homologique $N^{-1}\rmT_\Ql(a)$ est invariant sous
l'action de $\Gal(\bar k/ k)$ puisque le rel\`evement $a$ est d\'efini sur
$k$. Par ailleurs, l'\'el\'ement de Frobenius $\sigma\in\Gal(\bar k/ k)$ agit
sur $T_\Ql(A)$ par la multiplication par $q^{-1/ 2}$ et sur $\rmT_\Ql(R)$ par
la multiplication par $q^{-1}$ o\`u $q$ est le cardinal du corps fini $k$.
Ainsi $N^{-1}\rmT_\Ql(a)$ est le seul rel\`evement homologique compatible
avec l'action de $\Gal(\bar k/ k)$.
\end{proof}

\begin{corollaire}\label{liberte fibre par fibre}
Soit $\bar k$ un corps alg\'ebriquement clos. Soit $M$ un $\bar k$-sch\'ema
de type fini. Soit $P$ un $\bar k$-groupe  lisse commutatif et connexe
agissant sur $M$ avec stabilisateur affine. Soit $A$ le quotient ab\'elien
maximal de $P$. Supposons que $P$ est d\'efini sur un corps fini de sorte
qu'on a un quasi-rel\`evement $a:A\rightarrow P$. Alors le
$\Lambda_A$-module qui se d\'eduit du $\Lambda_P$-module $\bigoplus_n
\rmH_c^n(M)$  est un module libre.
\end{corollaire}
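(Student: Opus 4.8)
Le plan est de se ramener \`a la proposition \ref{action variete abelienne} en faisant agir la vari\'et\'e ab\'elienne $A$ directement sur $M$ \`a travers le quasi-rel\`evement $a:A\rightarrow P$ fourni par \ref{quasi-relevement}. On commence par remarquer qu'en composant $a$ avec l'action donn\'ee ${\rm act}:P\times_{\bar k}M\rightarrow M$ on obtient une action de $A$ sur $M$, dont le morphisme d'action est la compos\'ee de $a\times\id_M:A\times_{\bar k}M\rightarrow P\times_{\bar k}M$ et de ${\rm act}$. Cette action a des stabilisateurs finis : le morphisme $a$ \'etant affine — il se factorise en l'isog\'enie $A\rightarrow a(A)$ suivie de l'immersion ferm\'ee $a(A)\hookrightarrow P$ — le stabilisateur ${\rm Stab}_A(m)=a^{-1}({\rm Stab}_P(m))$ d'un point g\'eom\'etrique $m$ de $M$ est un sous-groupe affine de $A$, et un sous-groupe affine d'une vari\'et\'e ab\'elienne est fini, sa composante neutre r\'eduite \'etant \`a la fois une sous-vari\'et\'e ab\'elienne et un groupe affine, donc triviale.

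La proposition \ref{action variete abelienne} s'applique alors \`a l'action de $A$ sur $M$ ainsi construite et montre que $\bigoplus_n\rmH^n_c(M)[-n]$ est un module libre sur l'alg\`ebre gradu\'ee $\Lambda_A=\bigoplus_i\wedge^i\rmT_\Ql(A)[i]$, pour la structure de $\Lambda_A$-module d\'eduite du cap-produit de \ref{cap} relatif \`a cette action.

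Il reste \`a v\'erifier que cette structure de $\Lambda_A$-module co\"incide avec celle de l'\'enonc\'e, \`a savoir la restriction, le long de l'homomorphisme d'anneaux $a_*:\Lambda_A\rightarrow\Lambda_P$ induit en homologie par le morphisme de groupes $a$, de la structure de $\Lambda_P$-module donn\'ee par le cap-produit de \ref{cap}. Ceci r\'esulte de la fonctorialit\'e de la construction du cap-produit par rapport au groupe agissant, appliqu\'ee \`a la factorisation ${\rm act}_A={\rm act}\circ(a\times\id_M)$ : en poussant vers $\Spec\bar k$ et en appliquant la formule de K\"unneth, le cap-produit $\Lambda_A\otimes f_!\Ql\rightarrow f_!\Ql$ de l'action de $A$ s'identifie au compos\'e de $a_*\otimes\id:\Lambda_A\otimes f_!\Ql\rightarrow\Lambda_P\otimes f_!\Ql$ et du cap-produit $\Lambda_P\otimes f_!\Ql\rightarrow f_!\Ql$ de l'action de $P$. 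L'alg\`ebre $\Lambda_A$ \'etant engendr\'ee en degr\'e $-1$ par $\rmT_\Ql(A)$, il suffit d'ailleurs de v\'erifier cette identit\'e sur $\rmT_\Ql(A)$. Le remplacement \'eventuel de $a_*$ par $\wedge^\bullet(N^{-1}\rmT_\Ql(a))$ — qui ne diff\`ere de $a_*$ que par un scalaire inversible — n'affecte pas la libert\'e. C'est cette v\'erification de compatibilit\'e, de nature purement formelle, qui constitue le seul point d\'elicat ; tout le reste d\'ecoule imm\'ediatement des propositions \ref{action variete abelienne} et \ref{quasi-relevement}.
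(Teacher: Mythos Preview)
Your proof is correct and follows exactly the same approach as the paper: use the quasi-rel\`evement $a:A\to P$ to make $A$ act on $M$ with finite stabilizers, then apply Proposition~\ref{action variete abelienne}. The paper's own proof is two sentences and leaves implicit both the verification that stabilizers are finite and the compatibility of the two $\Lambda_A$-module structures; you have spelled these out carefully, which is a useful addition rather than a different method.
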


\begin{proof}
Le quasi-rel\`evement $a:A\rightarrow P$ d\'efinit une action de $A$ sur
$M$ avec stabilisateurs finis. On se ram\`ene donc imm\'ediatement \`a la
proposition \ref{action variete abelienne}.
\end{proof}

L'hypoth\`ese que $P$ soit d\'efini sur un corps fini dans l'\'enonc\'e
ci-dessus est bien curieuse. Voici un \'enonc\'e plus g\'en\'eral et bien plus satisfaisant.
Je reproduis la d\'emonstration que j'ai apprise de Deligne \cf \cite{Deligne lettre}.
Cet \'enonc\'e n'est pas indispensable pour la suite de l'argument mais il est
beaucoup plus agr\'eable d'en disposer.

\begin{proposition}\label{relevement arbitraire}
Soit $\bar k$ un corps alg\'ebriquement clos. Soit $M$ un $\bar k$-sch\'ema
de type fini. Soit $P$ un $\bar k$-groupe  lisse commutatif et connexe
agissant sur $M$ avec stabilisateur affine. Soit $A$ le quotient ab\'elien
maximal de $P$ et soit $\lambda:\rmT_\Ql(A)\rightarrow \rmT_\Ql(P)$
n'importe quel rel\`evement homologique. Alors le $\Lambda_A$-module
qui se d\'eduit du $\Lambda_P$-module $\bigoplus_n \rmH_c^n(M)$ par
$\lambda$ est un module libre.
\end{proposition}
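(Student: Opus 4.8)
Le plan, qui reprend la d\'emonstration de la lettre \cite{Deligne lettre}, est de se ramener d'abord au cas d'un corps de base fini, o\`u l'on dispose gr\^ace \`a \ref{liberte fibre par fibre} de la libert\'e du $\Lambda_A$-module $H:=\bigoplus_n\rmH_c^n(M)$ pour le rel\`evement canonique $\lambda_{\mathrm{can}}$ de \ref{quasi-relevement}, puis de propager cette libert\'e \`a un rel\`evement homologique arbitraire \`a l'aide de l'action du Frobenius sur l'espace des rel\`evements. Rappelons la situation : un rel\`evement homologique $\lambda\colon\rmT_\Ql(A)\rightarrow\rmT_\Ql(P)$ induit un homomorphisme d'alg\`ebres gradu\'ees $\Lambda_A\rightarrow\Lambda_P$, de sorte que la structure de $\Lambda_P$-module sur $H$ provenant du cap-produit \ref{cap} se restreint en une structure de $\Lambda_A$-module que nous noterons $H^\lambda$. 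L'ensemble $T$ des rel\`evements homologiques est un espace affine sur $\Ql$, torseur sous le $\Ql$-espace vectoriel $V=\Hom_\Ql(\rmT_\Ql(A),\rmT_\Ql(R))$, o\`u $\rmT_\Ql(R)=\rmT_\Ql(\text{tore maximal de } R)$.

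Premi\`ere \'etape : r\'eduction au cas $\bar k=\overline{\FF_p}$. Le sch\'ema $M$, le groupe $P$ et l'action se descendent sur une sous-$\ZZ$-alg\`ebre de type fini $R_0$ de $\bar k$ ; quitte \`a remplacer $\Spec(R_0)$ par un ouvert non vide, les faisceaux $\rmR f_!\Ql$, $\rmT_\Ql(P)$ et son sous-faisceau $\rmT_\Ql(R)$, ainsi que la donn\'ee de cap-produit, deviennent lisses sur $\Spec(R_0)$ (changement de base propre et lisse, constance g\'en\'erique). En choisissant un point ferm\'e $s$ de corps r\'esiduel fini et un chemin de sp\'ecialisation, on identifie la fibre de $H$ en un point g\'eom\'etrique au-dessus du point g\'en\'erique \`a la fibre $H_{\bar s}$, comme $\Lambda_P$-module muni du cap-produit ; de plus un rel\`evement homologique de la fibre g\'en\'erique se transporte en un rel\`evement $\lambda_s$ de $\rmT_\Ql(P_{\bar s})$, de fa\c con compatible aux structures de $\Lambda_A$-modules. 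La libert\'e de $H^\lambda$ r\'esulterait donc de celle de $H^{\lambda_s}$, et on peut ainsi supposer $\bar k=\overline{\FF_p}$ et $M$, $P$ et l'action d\'efinis sur un corps fini $\FF_q$.

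Seconde \'etape : contraction par le Frobenius. Consid\'erons le lieu de libert\'e $\Omega=\{\lambda\in T\mid H^\lambda \text{ est libre sur }\Lambda_A\}$. C'est un ouvert de Zariski non vide de $T$. En effet, le socle de $H^\lambda$ est le noyau de l'application $\Ql$-lin\'eaire $h\mapsto(a\mapsto\lambda(a)\cap h)$ de $H$ dans $\Hom_\Ql(\rmT_\Ql(A),H)$, dont la matrice d\'epend de fa\c con affine de $\lambda$ ; donc $\lambda\mapsto\dim_\Ql\mathrm{socle}(H^\lambda)$ est semi-continue sup\'erieurement. Comme $\Lambda_A$ est une alg\`ebre de Frobenius de dimension $2^m$ avec $m=\dim_\Ql\rmT_\Ql(A)$, le lemme de Nakayama donne $\dim_\Ql\mathrm{socle}(H^\lambda)\geq\dim_\Ql(H)/2^m$, avec \'egalit\'e si et seulement si $H^\lambda$ est libre ; donc $\Omega$ est l'ouvert o\`u cette dimension atteint son minimum, et il est non vide puisqu'il contient $\lambda_{\mathrm{can}}$ d'apr\`es \ref{liberte fibre par fibre}. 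Le groupe $\Gal(\overline{\FF_q}/\FF_q)$ agit $\Ql$-lin\'eairement sur $T$ en fixant $\lambda_{\mathrm{can}}$, et l'\'equivariance galoisienne du cap-produit entra\^ine que $\Omega$, donc aussi le ferm\'e $Z=T\setminus\Omega$, est stable sous cette action. Identifions $T$ \`a $V$ au moyen de $\lambda_{\mathrm{can}}$ : les valeurs propres d'un Frobenius sur $\rmT_\Ql(A)$ et sur $\rmT_\Ql(R)$ sont de valeurs absolues $q^{\pm 1/2}$ et $q^{\pm 1}$ respectivement, de sorte qu'en choisissant convenablement $\sigma\in\Gal(\overline{\FF_q}/\FF_q)$ (le Frobenius g\'eom\'etrique ou son inverse selon la convention) on obtient un automorphisme de $V$ dont toutes les valeurs propres sont de valeur absolue strictement inf\'erieure \`a $1$. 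Alors $\sigma^n\lambda\rightarrow\lambda_{\mathrm{can}}$ dans la topologie $\ell$-adique pour tout $\lambda\in T$ ; comme $Z$ est un ferm\'e de Zariski, donc $\ell$-adiquement ferm\'e, stable sous $\sigma$ et ne contenant pas $\lambda_{\mathrm{can}}$, il serait vide, car sinon — $\Ql$ \'etant alg\'ebriquement clos, $Z$ aurait un $\Ql$-point $\lambda$ — il contiendrait $\lim_n\sigma^n\lambda=\lambda_{\mathrm{can}}$. On conclut que $\Omega=T$, c'est-\`a-dire que $H^\lambda$ est libre sur $\Lambda_A$ pour tout rel\`evement homologique $\lambda$.

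La difficult\'e n'est pas tant la r\'eduction au corps fini, qui est standard, que le contr\^ole de la compatibilit\'e du cap-produit et du choix de rel\`evement avec la sp\'ecialisation ; le c\oe ur de l'argument est l'\'etape de contraction par le Frobenius, o\`u l'on exploite de fa\c con essentielle que $\rmT_\Ql(R)$ et $\rmT_\Ql(A)$ sont de poids distincts ($-2$ et $-1$). Notons au passage qu'un argument direct en caract\'eristique nulle est possible, en rempla\c cant le Frobenius par le cocaract\`ere de poids agissant sur la structure mixte sous-jacente, mais la r\'eduction \`a la caract\'eristique $p$ est plus commode.
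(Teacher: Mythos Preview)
Votre strat\'egie est exactement celle de l'article : r\'eduction au cas d'un corps fini par passage \`a la limite, puis exploitation de l'action du Frobenius sur l'espace affine $T$ des rel\`evements pour propager la libert\'e depuis $\lambda_{\rm can}$. Votre argument d'ouverture de Zariski du lieu de libert\'e $\Omega$ via le socle est plus explicite que celui de l'article et correct : $\Lambda_A$ \'etant une alg\`ebre de Frobenius locale, l'enveloppe injective de $H^\lambda$ est $\Lambda_A^s$ avec $s=\dim_{\Ql}\mathrm{socle}(H^\lambda)$, d'o\`u $\dim_{\Ql} H\leq s\dim_{\Ql}\Lambda_A$ avec \'egalit\'e si et seulement si $H^\lambda$ est libre.

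En revanche, l'\'etape de contraction contient une erreur r\'eelle. Vous affirmez que $\sigma^n\lambda\to\lambda_{\rm can}$ dans la topologie $\ell$-adique parce que les valeurs propres de $\sigma$ sur $V$ sont de valeur absolue $<1$. Or cette valeur absolue est archim\'edienne (via un isomorphisme $\Ql\simeq\CC$) ; les valeurs propres du Frobenius sur $\rmT_\Ql(A)$ et $\rmT_\Ql(R)$ sont des unit\'es $\ell$-adiques, puisque $\ell\nmid q$ et que le Frobenius agit par automorphisme sur les r\'eseaux $\ZZ_\ell$-entiers sous-jacents. Ainsi $\sigma$ est une isom\'etrie pour la norme $\ell$-adique et $\sigma^n\lambda$ ne converge vers $\lambda_{\rm can}$ que si $\lambda=\lambda_{\rm can}$. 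L'argument correct de l'article est purement alg\'ebrique : les valeurs propres de $\sigma$ sur $V^*$ ayant toutes m\^eme valeur absolue complexe $c\neq 1$, celles sur $\Sym^d(V^*)$ sont de valeur absolue $c^d$ ; comme les projecteurs sur les espaces propres g\'en\'eralis\'es sont des polyn\^omes en $\sigma$, tout id\'eal $\sigma$-stable de $\Ql[V]$ est gradu\'e, donc $Z$ est $\GG_m$-stable. Un c\^one ne contenant pas l'origine est vide.
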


\begin{proof}
Par le proc\'ed\'e g\'en\'eral de passage \`a la limite inductive comme dans
\cite[6.1.7]{BBD}, on se ram\`ene au cas o\`u $M, P$ et l'action de $P$ sur
$M$ est d\'efini sur un corps fini $k$. Ecrivons $\bar k$ comme une limite
inductive $(A_i)_{i\in I}$ des anneaux de type fini sur $\ZZ$. La cat\'egorie
des $\bar k$-sch\'emas de type fini est une 2-limite inductive des
$A_i$-sch\'emas de type fini c'est-\`a-dire pour tout $X/ \bar k$, il existe
$i\in I$ et $X_i/ A_i$ de type fini tel que $X=X_i\otimes_{A_i} \bar k$ \cf
\cite[8.9.1]{EGA4} et pour tous $X_i, Y_i / A_i$, on a \cf \cite[8.8.2]{EGA4}
$$\Hom_{\bar k}(X_i\otimes_{A_i}\bar k, Y_i\otimes_{A_i}\bar k)=
\lim_{\genfrac{}{}{0pt}{}{\longrightarrow}{j\geq i}}
\Hom_{A_j}(X_i\otimes_{A_i} A_j, Y_i\otimes_{A_i} A_j).
$$
Le m\^eme \'enonc\'e vaut donc pour la cat\'egorie des groupes alg\'ebriques
et celle des triplets form\'es d'un groupe alg\'ebrique $G$, d'un sch\'ema de
type fini $X$ et d'une action de $G$ sur $X$. Soit $f:X\rightarrow Y$ un
morphisme dans la cat\'egorie des $\bar k$-sch\'emas de type fini provenant
d'un morphisme $f_i:X_i \rightarrow Y_i$ dans la cat\'egorie des
$A_i$-sch\'emas. Pour que $f$ ait l'une des propri\'et\'es suivantes : plat,
lisse, affine ou propre, il faut et il suffit que $f_i$ acqui\`ere cette
propri\'et\'e apr\`es une extension de scalaires $A_i \rightarrow A_j$ \cf
\cite[11.2.6 et 8.10.5]{EGA4}. Il existe donc $A_i$ de type fini sur $\ZZ$
contenu dans $\bar k$, un $A_i$-sch\'ema de type fini $M_i$, un
$A_i$-sch\'ema en groupes lisse $P_i$ extension d'un sch\'ema ab\'elien par
un sch\'ema en groupes affine qui agit sur $M_i$ avec stabilisateur affine tel
que apr\`es l'extension des scalaires $A_i\rightarrow\bar k$, on retrouve la
situation de d\'epart. On peut \'egalement supposer que $\rmH^n(f_{i!}\Ql)$
sont des syst\`emes locaux qui permet de ramener l'\'enonc\'e \`a
d\'emontrer \`a un point d\'efini sur un corps fini.

On peut donc supposer que $M, P$ et l'action de $P$ sur $M$ est d\'efini sur
un corps fini $k$. Soit $A$ le quotient ab\'elien de $P$ et $a:A\rightarrow
P$ le quasi-rel\`evement qui d\'efinit un rel\`evement canonique
$\lambda_0:\rmT_\Ql(A)\rightarrow \rmT_\Ql(M)$ de modules de Tate. On
sait d\'ej\`a que  le $\Lambda_A$-module qui se d\'eduit du
$\Lambda_P$-module $\bigoplus_n \rmH_c^n(M)$ par $\lambda_0$ est un
module libre d'apr\`es le corollaire pr\'ec\'edent \ref{liberte fibre par fibre}.

D\'emontrons maintenant le m\^eme \'enonc\'e pour tout rel\`evement
homologique $\lambda$. L'espace de tous les rel\`evements homologiques,
point\'e par $\lambda_0$, s'identifie au $\Ql$-espace vectoriel
$\Hom(\rmT_\Ql(A_s),\rmT_\Ql(R_s))$ sur lequel l'\'el\'ement de
Frobenius $\sigma\in\Gal(\bar k/ k)$ agit avec les valeurs propres ayant pour valeur absolue
$|k|^{1/ 2}$. Sur cet espace, l'ensemble des $\lambda$ pour lequel la
structure de $\Lambda_{A_s}$-module sur $\bigoplus_n \rmH^n_c (M)$
d\'eduite de $\lambda$ est libre forme un ouvert Zariski de $\Hom(\rmT_\Ql(A_s),\rmT_\Ql(R_s))$. Cet
ouvert est stable sous $\sigma$ et contient l'origine $\lambda_0$. Son compl\'ement
est un ferm\'e stable sous $\sigma$ qui ne contient pas $\lambda_{0}$.
Puisque $\sigma$ agit avec des valeurs propres de valeur absolue $|k|^{1/ 2}$, un ferm\'e de $\Hom(\rmT_\Ql(A_s),\rmT_\Ql(R_s))$ stable sous $\sigma$ est n\'ecessairement stable sous l'action de $\GG_m$. Le fait que ce ferm\'e ne contient pas l'origine $\lambda_0$ implique qu'il est vide.
\end{proof}

\begin{numero}\label{base henselienne}
Soit maintenant $S$ un sch\'ema strictement hens\'elien avec un morphisme
$\epsilon:S\rightarrow \Spec(\bar k)$ o\`u $\bar k$ est le corps r\'esiduel de
$S$. Notons $s$ le point ferm\'e de $S$. La fibre $\Lambda_{P,s}$
s'identifie alors avec $\epsilon_* \Lambda_P$. On en d\'eduit par adjonction
un morphisme
$$\epsilon^* \Lambda_{P,s}\rightarrow \Lambda.$$
En restreignant le cap-produit \ref{cap} \`a $\epsilon^* \Lambda_{P,s}$, on
obtient une fl\`eche
$$\Lambda_{P,s} \boxtimes f_!\Ql \rightarrow f_!\Ql$$
qui d\'efinit une action de l'alg\`ebre gradu\'ee $\Lambda_{P,s}=\bigoplus_i
\wedge^i \rmT_\Ql(P_s)[i]$ sur le complexe $f_!\Ql$.

En particulier, on a un morphisme
$$\rmT_\Ql(P_s) \otimes f_!\Ql \rightarrow f_! \Ql[-1].$$
On en d\'eduit morphisme entre les tronqu\'es pour la $t$-structure
perverse :
$$\rmT_\Ql(P_s) \otimes \,^p\tau^{\leq n}(f_!\Ql)\rightarrow \,^p\tau^{\leq n-1}(f_!\Ql)$$
pour tout $n\in\ZZ$. Ceci induit un morphisme entre les faisceaux pervers
de cohomologie en degr\'es $n$ et $n-1$
$$\rmT_\Ql(P_s) \otimes \,^p \rmH^n(f_!\Ql)\rightarrow \,^p \rmH^{n-1}(f_!\Ql).$$
On en d\'eduit une action gradu\'ee de l'alg\`ebre gradu\'ee
$\Lambda_{P,s}$ sur la somme directe $\bigoplus_n\,^p
\rmH^n(f_!\Ql)[-n]$.
\end{numero}

\begin{numero}\label{suite spectrale}
 La filtration $^p\tau^{\leq n}(f_!\Ql)$ de $f_!\Ql$ d\'efinit une suite
spectrale
$$E^{m,n}_2=\rmH^m(\,^p \rmH^n(f_!\Ql)_s)\Rightarrow \rmH^{m+n}_c(M_s)$$
\'equivariante par rapport \`a l'action de $\Lambda_{P,s}$. Cette suite
spectrale est convergente car le complexe $f_!\Ql$ est born\'e. On dispose
donc sur la somme directe
$$\rmH_c(M_s)=\bigoplus_N \rmH^{N}_c(M_s)[-N]$$
d'une filtration d\'ecroissante $F^m \rmH_c(M_s)$ telle que
$$F^m \rmH_c(M_s)/ F^{m+1} \rmH_c(M_s)= \bigoplus_n E^{m,n}_\infty[-m-n].$$
L'action de $\Lambda_{P,s}$ sur $\rmH_c(M_s)$ respecte cette filtration.
L'action induite sur les gradu\'es associ\'es \`a la filtration $F^m
\rmH_c(M_s)$
$$\bigoplus_n E^{m,n}_\infty[-m-n]$$
se d\'eduit de son action sur les $E^{m,n}_2$ qui \`a son tour se d\'eduit de
l'action gradu\'ee de $\Lambda_{P,s}$ sur $\bigoplus_n\,^p
\rmH^n(f_!\Ql)[-n]$.
\end{numero}

\begin{numero}\label{degenerescence de suite spectrale}
Soient maintenant $S$ un sch\'ema de type fini sur un corps fini, $P$ un
$S$-sch\'ema en groupes lisse commutatif de fibres connexes de dimension
$d$ et $f:M\rightarrow S$ un morphisme propre dont la source $M$ est un
$k$-sch\'ema lisse. Dans cette situation d'apr\`es Deligne, il existe un
isomorphisme non-canonique
$$f_! \Ql \simeq \,^p \rmH^n(f_!\Ql)[-n].$$
Cet isomorphisme subsiste quand on restreint \`a l'hens\'elis\'e strict $S_s$
d'un point g\'eom\'etrique $s$ de $S$ de sorte que les suites spectrales
\ref{suite spectrale} d\'eg\'en\`e\-rent en $E_2$ c'est-\`a-dire
$E^{m,n}_\infty=E^{m,n}_2$.
\end{numero}

\begin{numero}
Toujours sous l'hypoth\`ese du num\'ero pr\'ec\'edent, d'apr\`es le
th\'eor\`eme de puret\'e de Deligne, le faisceau pervers $^p
\rmH^n(f_!\Ql)$ est pur. D'apr\`es le th\'eor\`eme de d\'ecomposition, sur
$S\otimes_k\bar k$ il est isomorphe \`a une somme directe de faisceaux
pervers simples. On a une d\'ecomposition canonique de $^p
\rmH^n(f_!\Ql)$ par les supports : il existe une famille finie de ferm\'es
irr\'eductibles $Z_\alpha$ de $S\otimes_k\bar k$ index\'e par $\alpha\in
{\mathfrak A}$ tels que
$$^p \rmH^n(f_! \Ql) =\bigoplus_{\alpha\in{\mathfrak A}} K_\alpha^n.$$
o\`u $K_\alpha^n$ est la somme directe des facteurs simples de $^p
\rmH^n(f_! \Ql)$ ayant pour support le ferm\'e irr\'eductible $Z_\alpha$.
Nous notons
$$K_\alpha=\bigoplus K_\alpha^n[-n]$$
et supposons que $K_\alpha\not= 0$ pour tout $\alpha\in {\mathfrak A}$.
\end{numero}

\begin{lemme}\label{U alpha} Pour tout $\alpha\in {\mathfrak A}$,
il existe un ouvert dense $U_\alpha$ de $Z_\alpha$ tel que les propri\'et\'es
suivantes soient satisfaites :
\begin{itemize}
    \item $U_\alpha$ est lisse,
  \item pour tout $\alpha'\in {\mathfrak A}$ diff\'erent de $\alpha$ avec
      $\dim(Z_{\alpha'})\leq \dim(Z_\alpha)$, on a $U_\alpha \cap
      Z_{\alpha'}=\emptyset$,
  \item la restriction de $K_\alpha^n$ \`a $U_\alpha$ est de la forme
      $L_\alpha^n[\dim(U_\alpha)]$ o\`u $L_\alpha^n$ est un syst\`eme
      local  pur de poids $n$. Notons $L_\alpha=\bigoplus_n
      L_\alpha^n$,
  \item la restriction $P_\alpha$ de $P$ \`a $U_\alpha$ admet un
      d\'evissage apr\`es un changement de base fini radiciel
        $$1\rightarrow R_\alpha \rightarrow P_\alpha \rightarrow A_\alpha \rightarrow 1$$
      comme une extension d'une vari\'et\'e ab\'elienne par un sch\'ema en
      groupes affine qui induit une suite exacte de module de Tate sur
      $U_\alpha$
      $$0\rightarrow \rmT_\Ql(R_\alpha)\rightarrow \rmT_\Ql(P_\alpha)\rightarrow
      \rmT_\Ql(A_\alpha)\rightarrow 0,$$
  \item le module de Tate $\rmT_\Ql(R_\alpha)$ est un syst\`eme local.
\end{itemize}
\end{lemme}

\begin{proof}
$Z_\alpha$ \'etant un $\bar k$-sch\'ema irr\'eductible r\'eduit, il contient un
ouvert dense lisse. Si $\dim(Z_{\alpha'})< \dim(Z_\alpha)$ ou
$\dim(Z_{\alpha'})=\dim(Z_\alpha)$ mais $\alpha'\not=\alpha$, l'intersection
$Z_\alpha \cap Z_{\alpha'}$ est de dimension plus petite que celle de
$Z_\alpha$. En retranchant ces intersections, il reste un ouvert dense de
$Z_\alpha$. Il n'y a qu'un nombre fini de faisceaux pervers simples
pr\'esents dans $K_\alpha$, on peut
supposer que tous ces faisceaux pervers sont des syst\`emes
locaux avec d\'ecalage  en r\'etr\'ecissant encore l'ouvert.

Le point g\'en\'erique $\eta_\alpha$ de $Z_\alpha$ n'\'etant pas
n\'ecessairement parfait. On peut passer \`a une extension finie ins\'eparable
$\eta'_\alpha$ pour que $P_{\eta_\alpha}$ admette un d\'evissage comme une
extension d'une vari\'et\'e ab\'elienne par un groupe affine. Consid\'erons la
normalisation $Z'_\alpha$ de $Z_\alpha$ dans $\eta'_\alpha$. Prenons
l'image r\'eciproque $P_{Z'_\alpha}$ de $P$ \`a $Z'_\alpha$, consid\'erons
l'adh\'erence de la partie affine de $P_{\eta'_\alpha}$ dans $P_{Z'_\alpha}$.
Dans un ouvert de $Z'_\alpha$, on peut r\'ealiser ainsi $P_{Z'_\alpha}$
comme une extension d'un sch\'ema ab\'elien par un sch\'ema en groupes
affines. En r\'etr\'ecissant encore l'ouvert, on peut s'assurer que le module
de Tate de la partie affine soit un syst\`eme local. \end{proof}

\begin{numero}\label{K alpha}
Consid\'erons le cap-produit par le module de Tate \ref{cap}
$$\rmT_\Ql(P)\otimes f_!\Ql \rightarrow f_! \Ql[-1].$$
Pour tout $n$, on dispose d'un morphisme
$$\rmT_\Ql(P)\otimes \,^p \tau^{\leq n}(f_!\Ql) \rightarrow f_! \Ql[-1].$$
Puisque le produit tensoriel
$\rmT_\Ql(P)\otimes \,^p \tau^{\leq n}(f_!\Ql)$ appartient \`a
$$\rmT_\Ql(P)\otimes \,^p \tau^{\leq n}(f_!\Ql)\in\,^p \rmD^{\leq n}_c(S,\Ql)$$
le morphisme ci-dessus se factorise par
$$\rmT_\Ql(P)\otimes \,^p \tau^{\leq n}(f_!\Ql) \rightarrow \,^p \tau^{\leq n}((f_!\Ql) [-1])
$$
On en d\'eduit une fl\`eche
$$\rmT_\Ql(P)\otimes \,^p \tau^{\leq n}(f_!\Ql) \rightarrow \,^{p}\rmH^{n-1}(f_{!\Ql})[-n].$$
De nouveau parce que $\rmT_\Ql(P)\otimes \,^p \tau^{\leq n-1}(f_!\Ql)\in\,^p \rmD^{\leq n-1}_c(S,\Ql)$,
la fl\`eche ci-dessus se factorise par
$$\rmT_\Ql(P)\otimes \,^p \rmH^{n}(f_!\Ql)[-n] \rightarrow \,^{p}\rmH^{n-1}(f_{!\Ql})[-n].$$
Consid\'erons la d\'ecomposition par les supports
$$\bigoplus_{\alpha\in {\mathfrak A}} \rmT_\Ql(P) \otimes K_\alpha^n
\rightarrow \bigoplus_{\alpha\in {\mathfrak A}} K^{n-1}_\alpha.
$$
Pour tout $\alpha\in \mathfrak A$, on a donc une fl\`eche canonique
$$\rmT_\Ql(P) \otimes K_\alpha^n \rightarrow K^{n-1}_\alpha.$$
\end{numero}

\begin{numero}\label{V alpha}
Soit $U_\alpha$ l'ouvert dense du ferm\'e $Z_\alpha$ comme dans \ref{U
alpha}. Pour tout $\alpha\in{\mathfrak A}$, choisissons un ouvert de Zariski
$V_\alpha$ de $S\otimes_k \bar k$ contenant $U_\alpha$ comme un
sous-sch\'ema ferm\'e. Notons $i_\alpha:U_\alpha \rightarrow V_\alpha$
l'immersion ferm\'ee. En restreignant la fl\`eche ci-dessus \`a l'ouvert
$V_\alpha$, on obtient un fl\`eche
$$\rmT_\Ql(P) \otimes i_{\alpha*}L_\alpha^n[\dim(U_\alpha)] \rightarrow
i_{\alpha*}L_\alpha^{n-1}[\dim(U_\alpha)].
$$
Par la formule de projection, on a
$$\rmT_\Ql(P) \otimes i_{\alpha*}L_\alpha^n=i_{\alpha*}(\rmT_\Ql(P_\alpha)
\otimes L_\alpha)
$$
o\`u $\rmT_\Ql(P_\alpha)$ est la restriction de $\rmT_\Ql(P)$ \`a
$U_\alpha$.  En appliquant le foncteur $i_{\alpha}^{*}$ \`a
$$i_{\alpha*}(\rmT_\Ql(P_\alpha) \otimes L_\alpha^n) \rightarrow
i_{\alpha*}L_\alpha^{n-1}
$$
on obtient un morphisme
$$\rmT_\Ql(P_\alpha) \otimes L_\alpha^n \rightarrow  L_\alpha^{n-1}$$
sur $U_\alpha$. C'est un morphisme entre syst\`emes locaux sur $U_{\alpha}$ d'apr\`es
l'hypoth\`ese sur $U_{\alpha}$ \cf \ref{U alpha}.
\end{numero}

\begin{numero}\label{A alpha}
D'apr\`es \ref{U alpha}, on a un d\'evissage de $\rmT_\Ql(P_\alpha)$
$$0\rightarrow \rmT_\Ql(R_\alpha)\rightarrow \rmT_\Ql(P_\alpha)
\rightarrow \rmT_\Ql(A_\alpha) \rightarrow 0
$$
o\`u $\rmT_\Ql(A_\alpha)$ est un syst\`eme local pur de poids $-1$ et
$\rmT_\Ql(R_\alpha)$ est un syst\`eme local pur de poids $-2$. Puisque
$L_\alpha^n$ est pur de poids $n$ et $L_\alpha^{n-1}$ est pur de poids
$n-1$, l'action de $\rmT_{\Ql}(P_{\alpha})$ se factorise par $\rmT_{\Ql}(A_{\alpha})$
$$\rmT_\Ql(A_\alpha) \otimes L_\alpha^n \rightarrow  L_\alpha^{n-1}.$$
On a donc muni \`a la somme directe de syst\`emes locaux
$L_\alpha=\bigoplus_n L^n_\alpha[-n]$ d'une structure de module gradu\'e
sur l'alg\`ebre gradu\'ee $ \Lambda_{A_\alpha}$
$$\Lambda_{A_\alpha}\otimes L_\alpha \rightarrow L_\alpha.$$
En particulier, pour tout point g\'eom\'etrique $u_\alpha$ de $U_\alpha$, la
fibre $L_{\alpha,u_\alpha}$ est un module gradu\'e sur l'alg\`ebre gradu\'ee
de $\Lambda_{A_\alpha,u_\alpha}$.
\end{numero}

\begin{lemme}\label{Nakayama}
Soit $U$ un $k$-sch\'ema connexe. Soit $\Lambda$ un syst\`eme local
gradu\'e en un nombre fini de degr\'es n\'egatifs avec $\Lambda^0=\Ql$ et
qui est muni d'une structure d'alg\`ebres gradu\'ees $\Lambda\otimes
\Lambda \rightarrow \Lambda$. Soit $L$ un syst\`eme local gradu\'e muni
d'une structure de module gradu\'e
$$\Lambda \otimes L\rightarrow L.$$
Supposons qu'il existe un point g\'eom\'etrique $u$ de $U$ tel que la fibre
$L_u$ de $L$ en $u$ est un module libre sur la fibre $\Lambda_u$ de
$\Lambda$ en $u$. Supposons que $L$ est semi-simple comme syst\`eme
local gradu\'e. Alors, il existe un syst\`eme local gradu\'e $E$ sur $U$ et un
isomorphisme
$$L= \Lambda\otimes E$$
compatible avec la structure de $\Lambda$-modules.
\end{lemme}

\begin{proof} Consid\'erons l'id\'eal d'augmentation de $\Lambda$
$$\Lambda^+=\bigoplus_{i>0} \Lambda^{-i}[i].
$$
Consid\'erons le conoyau $E$ de la fl\`eche
$$\Lambda^+\otimes L \rightarrow L.$$
Puisque $L$ est semi-simple, l'homomorphisme surjectif $L\rightarrow E$
admet un rel\`evement $E\rightarrow L$. On va montrer que la fl\`eche
induite
$$\Lambda\otimes E\rightarrow L$$
est un isomorphisme. Puisqu'il s'agit d'une fl\`eche entre syst\`emes locaux,
pour v\'erifier que c'est un isomorphisme, il suffit de v\'erifier qu'il est un
isomorphisme dans la fibre en $u$. Dans l'espace vectoriel $L_u$, $E_u$ est
un sous-espace vectoriel compl\'ementaire de $\Lambda^+ L_u$. La
fl\`eche
$$\Lambda_u\otimes E_u\rightarrow L_u$$
est donc surjective d'apr\`es le lemme de Nakayama. Elle est bijective car
$L_u$ \'etant un $\Lambda_u$-module libre, on a l'\'egalit\'e de
dimension
$$\dim_\Ql(\Lambda_u)\times \dim_\Ql(E_u)=\dim_\Ql(L_u)$$
d'o\`u le lemme.
\end{proof}

\begin{proposition}\label{liberte}
Soit $S$ un sch\'ema de type fini sur un corps fini $k$. Soit $f:X\rightarrow S$ un morphisme
propre de source un $k$-sch\'ema lisse $X$. Soit $g:P\rightarrow S$ un sch\'ema
en groupes commutatif lisse de type fini ayant les fibres connexes de
dimension $d$. Supposons que le stabilisateur dans $P$ de n'importe quel
point de $M$ est affine. Supposons que le module de Tate $\rmT_\Ql(P)$ est polarisable \cf \ref{module de Tate polarisable}.

Consid\'erons la d\'ecomposition canonique par les supports
$$K^n= \bigoplus_{\alpha\in {\mathfrak A}} K^n_\alpha$$
o\`u ${\mathfrak A}$ est un ensemble fini de sous-sch\'emas ferm\'es irr\'eductibles $Z_\alpha$ de $S\otimes_k \bar k$ et o\`u $K^n_\alpha$ est la somme directe des facteurs g\'eom\'etriquement simples ayant pour support le sous-sch\'ema irr\'eductible $Z_\alpha$.

Si $U_\alpha$ est un ouvert dense de $Z_\alpha$ comme dans \ref{U alpha},
en particulier tel que la restriction de $K^n_\alpha$ \`a $U_\alpha$ est un
syst\`eme local $L_\alpha^n$ d\'ecal\'e de $\dim(Z_\alpha)$ et que la restriction
$P_\alpha$ de $P$ \`a $U_\alpha$ admet un d\'evissage apr\`es un
changement de base fini radiciel
$$1\rightarrow R_\alpha \rightarrow P_\alpha \rightarrow A_\alpha
\rightarrow 1.
$$
Alors, pour tout point g\'eom\'etrique $u_\alpha$ de $U_\alpha$, l'espace
vectoriel gradu\'e $L_{\alpha,u_\alpha}=\bigoplus_{n}
L_{\alpha,u_\alpha}^n$ est un module libre sur l'alg\`ebre gradu\'ee
$\Lambda_{A_\alpha,u_\alpha}$.
\end{proposition}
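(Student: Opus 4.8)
The plan is to deduce the freeness of the fiber $L_{\alpha,u_\alpha}$ from the global freeness statement of the previous section (Proposition~\ref{action variete abelienne}, via \ref{liberte fibre par fibre} and \ref{relevement arbitraire}) combined with the multiplicativity of the cap-product action and the degeneration of the spectral sequence \ref{suite spectrale}. Concretely, I would work over the strictly henselian local scheme $S_{u_\alpha}$ at a geometric point $u_\alpha$ of $U_\alpha$, exploiting that by \ref{degenerescence de suite spectrale} the spectral sequence $E_2^{m,n}=\rmH^m(\,^p\rmH^n(f_!\Ql)_{u_\alpha})\Rightarrow \rmH^{m+n}_c(M_{u_\alpha})$ degenerates at $E_2$, so that $\bigoplus_N \rmH^N_c(M_{u_\alpha})$ carries a filtration whose associated graded is $\bigoplus_{m,n}E_2^{m,n}[-m-n]$, and the action of $\Lambda_{P,u_\alpha}$ on the graded is induced by the graded action of $\Lambda_{P,u_\alpha}$ on $\bigoplus_n\,{}^p\rmH^n(f_!\Ql)[-n]$ described in \ref{base henselienne}.

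First I would fix a homological lifting $\lambda:\rmT_\Ql(A_{u_\alpha})\to \rmT_\Ql(P_{u_\alpha})$ of the abelian quotient of $P_{u_\alpha}$ — such a lifting exists because $\rmT_\Ql(P)$ is polarizable, the polarization splitting $\rmT_\Ql(P_{u_\alpha})=\rmT_\Ql(R_{u_\alpha})\oplus\rmT_\Ql(A_{u_\alpha})$ providing one — and by \ref{relevement arbitraire} (the stabilizers being affine, and $M_{u_\alpha}$ of finite type over the algebraically closed residue field) the $\Lambda_{A_{u_\alpha}}$-module $\bigoplus_N \rmH^N_c(M_{u_\alpha})$ is \emph{free}. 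Now since freeness of a graded module over a graded algebra that is finite in each degree is preserved by passing to the associated graded for a compatible filtration (a filtered module over a filtered algebra whose associated graded is free is itself free, and conversely freeness of the total object forces freeness of the graded if the spectral sequence degenerates; here degeneration is \ref{degenerescence de suite spectrale}), the $\Lambda_{A_{u_\alpha}}$-module $\bigoplus_{m,n}E_2^{m,n}[-m-n]=\bigoplus_n\rmH^{\bullet}(\,^p\rmH^n(f_!\Ql)_{u_\alpha})$ is free. The action of $\Lambda_{A_{u_\alpha}}$ here is deduced from the graded action on $\bigoplus_n\,{}^p\rmH^n(f_!\Ql)[-n]$, which by the support-decomposition \ref{K alpha} respects the decomposition into the $K_\alpha$; restricting to the summand $K_{\alpha_0}$ for the $\alpha_0$ with $Z_{\alpha_0}$ of minimal dimension among those through $u_{\alpha_0}$ (so that $u_{\alpha_0}$ sees only $K_{\alpha_0}$ by the second bullet of \ref{U alpha}), and using that on $U_{\alpha_0}$ we have $K^n_{\alpha_0}|_{U_{\alpha_0}}=L^n_{\alpha_0}[\dim U_{\alpha_0}]$ with the action factoring through $\rmT_\Ql(A_{\alpha_0})$ by the weight argument of \ref{A alpha}, one identifies $\bigoplus_n \rmH^{\bullet}(\,^p\rmH^n(f_!\Ql)_{u_{\alpha_0}})$ restricted to the $\alpha_0$-part with $L_{\alpha_0,u_{\alpha_0}}$ up to a shift, as a $\Lambda_{A_{\alpha_0},u_{\alpha_0}}$-module. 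Freeness of the whole then gives freeness of this direct summand, which is exactly the claim for $\alpha_0$.

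The induction on $\dim(Z_\alpha)$ is what lets one reduce the general $\alpha$ to the minimal-dimensional case: having treated all $Z_{\alpha'}$ with $\dim Z_{\alpha'}\le \dim Z_\alpha$ and $\alpha'\neq\alpha$ — which do not meet $U_\alpha$ — the fiber at $u_\alpha\in U_\alpha$ of $\bigoplus_n\rmH^{\bullet}(\,^p\rmH^n(f_!\Ql))$ receives contributions only from $K_\alpha$ and from higher-dimensional $Z_{\alpha'}$, but the latter contribute, near $u_\alpha$, systems of the form $K_{\alpha'}|_{\text{near }u_\alpha}$ which are themselves free $\Lambda_{A}$-modules by a shrinking argument (one can arrange $U_\alpha$ to sit inside the locus where the higher-dimensional strata's perverse constituents restrict to shifted local systems too), so the direct summand $L_{\alpha,u_\alpha}$ is a direct summand of a free module; combined with the semisimplicity of $L_\alpha$ as a graded local system and Nakayama's lemma \ref{Nakayama}, this upgrades ``direct summand of free'' to ``free''. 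The main obstacle I expect is precisely controlling how the cap-product action interacts with the support decomposition at points lying in the closure of several strata — i.e.\ checking that the $\Lambda_{A_\alpha}$-module structure obtained from the spectral sequence on $L_{\alpha,u_\alpha}$ genuinely agrees with the intrinsic one of \ref{A alpha}, and that the filtration/degeneration compatibility is \emph{$\Lambda$-equivariant}; this is where the careful bookkeeping of \ref{suite spectrale}–\ref{degenerescence de suite spectrale} and the weight purity of the constituents (to kill the $\rmT_\Ql(R_\alpha)$-part) must be assembled, and where the polarizability hypothesis is used in an essential, not merely cosmetic, way.
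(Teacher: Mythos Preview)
There is a genuine gap in two places. First, the step ``freeness of $H=\bigoplus_N\rmH^N_c(M_{u_\alpha})$ over $\Lambda_{A_{u_\alpha}}$ implies freeness of the associated graded $\bigoplus_{m,n}E_2^{m,n}$'' is false in general: take $\Lambda=\Ql[x]/(x^2)$ and $H=\Lambda$ with the $\Lambda$-stable filtration $0\subset(x)\subset\Lambda$; the associated graded is two copies of $\Ql$ with $x$ acting by zero, which is not free. Degeneration at $E_2$ does nothing to prevent this. Second, your reading of the second bullet of \ref{U alpha} is inverted: at $u_\alpha\in U_\alpha$ the strata that \emph{disappear} are those with $\dim Z_{\alpha'}\le\dim Z_\alpha$ and $\alpha'\neq\alpha$, whereas every $K_{\alpha'}$ with $\dim Z_{\alpha'}>\dim Z_\alpha$ still contributes to the fiber at $u_\alpha$. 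So the base of the induction is the \emph{top} stratum $Z_{\alpha_0}=S\otimes_k\bar k$ (where indeed only $K_{\alpha_0}$ survives and freeness follows from \ref{liberte fibre par fibre}), and one descends.

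What is actually needed at a general $u_\alpha$ is to control the noise from the larger strata: for each $\alpha'$ with $\dim Z_{\alpha'}>\dim Z_\alpha$ one must show that every $\rmH^m(K_{\alpha',u_\alpha})$ is a free $\Lambda_{A_{u_\alpha}}$-module. By the descending inductive hypothesis and \ref{Nakayama} one has $L_{\alpha'}\simeq\Lambda_{A_{\alpha'}}\otimes E_{\alpha'}$ on $U_{\alpha'}$. The essential---not cosmetic---use of polarizability is then to show that for a geometric generic point $\bar y_{\alpha'}$ of $U_{\alpha'}\cap S_{u_\alpha}$ the specialization $\rmT_\Ql(A_{u_\alpha})\to\rmT_\Ql(A_{\bar y_{\alpha'}})$ is injective with a $\Gal(\bar y_{\alpha'}/y_{\alpha'})$-stable complement (the orthogonal for the polarization form); this yields a factorisation $\Lambda_{A_{\bar y_{\alpha'}}}=\Lambda_{A_{u_\alpha}}\otimes\Lambda(U)$, hence $L_{\alpha'}|_{U_{\alpha'}\cap S_{u_\alpha}}=\Lambda_{A_{u_\alpha}}\boxtimes E'_{\alpha'}$, and since $\Lambda_{A_{u_\alpha}}\boxtimes(-)$ commutes with intermediate extension and with the fiber functor at $u_\alpha$, each $\rmH^m(K_{\alpha',u_\alpha})$ is free. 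One then has a three-step $\Lambda_{A_{u_\alpha}}$-stable filtration $0\subset H'\subset H''\subset H$ with $H$, $H'$, $H/H''$ free and $H''/H'=L_{\alpha,u_\alpha}$. Freeness of the middle subquotient does \emph{not} follow formally from this; one uses that $\Lambda_{A_{u_\alpha}}$ has a one-dimensional socle (so its $\Ql$-dual is free of rank one), whence $H''$ is free and then $(H''/H')^*$ is free, giving the claim.
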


\begin{proof}
D\'emontrons la proposition par  une r\'ecurrence descendante sur la
dimension de $Z_\alpha$. Soit $\alpha_0\in {\mathfrak A}$ l'\'el\'ement maximal
tel que $Z_{\alpha_0}$ soit $S\otimes_k \bar k$ tout entier. Soit
$U_{\alpha_0}$ un ouvert dense de $S\otimes_k \bar k$ assez petit au sens
de \ref{U alpha}. On a alors
$$^p \rmH^n(f_*\Ql)|_{U_{\alpha_0}}=L^n_{\alpha_0}[\dim(S)]$$
o\`u $L^n_{\alpha_0}$ est un syst\`eme local semi-simple sur
$U_{\alpha_0}$. Cette somme directe est munie d'une action canonique de
$\Lambda_{A_\alpha}$ d'apr\`es \ref{A alpha}.

Soit $u_{\alpha_0}$ un point g\'eom\'etrique quelconque de
$U_{\alpha_0}$. On a alors un isomorphisme
$$\bigoplus_n L^n_{\alpha_0,u_{\alpha_0}}= \bigoplus_n \rmH^n_c(M_{u_{\alpha_0}})
$$
compatible avec l'action de $\Lambda_{P_{u_{\alpha_0}}}$. Par la raison de
poids comme dans \ref{V alpha}, l'action de $\Lambda_{P_{u_{\alpha_0}}}$
se factorise par $\Lambda_{A_{u_{\alpha_0}}}$.

En prenant un point g\'eom\'etrique $u_{\alpha_0}$ au-dessus d'un point \`a
valeur dans un corps fini, on dispose alors d'un rel\`evement canonique
$\lambda_0:\rmT_\Ql(A_{u_{\alpha_0}})\rightarrow
\rmT_\Ql(P_{u_{\alpha_0}})$. D'apr\`es \ref{liberte fibre par fibre}, avec ce
rel\`evement $\bigoplus_n \rmH^n_c(M_{u_{\alpha_0}})$ est un
$\Lambda_{A_{u_{\alpha_0}}}$-module libre. Cette propri\'et\'e est alors
vraie pour n'importe quel point g\'eom\'etrique de $U_{\alpha_0}$ et pour
n'importe quel rel\`evement.

Le cas g\'en\'eral est d\'emontr\'e essentiellement par la m\^eme m\'ethode.
On utilise la propri\'et\'e de libert\'e de la cohomologie de la fibre
$M_{u_\alpha}$ pour d\'eduire la libert\'e du syst\`eme local gradu\'e
$L_\alpha$ comme $\Lambda_{A_\alpha}$-module. La difficult\'e est de
contr\^oler le bruit caus\'e par les $K_{\alpha'}$ avec $\dim(Z_{\alpha'})>
\dim(Z_\alpha)$. Notons que par r\'ecurrence, on peut supposer que pour
ces $\alpha'$,
 $L_{\alpha'}$ est un module libre sur
$\Lambda_{A_{\alpha'}}$.

Prenons un point g\'eom\'etrique $u_\alpha$ de $U_\alpha$ au-dessus d'un
point \`a valeur dans un corps fini. Soit $S_{u_\alpha}$ l'hens\'elis\'e strict de
$S$ en $u_\alpha$. La construction de \ref{base henselienne} s'applique \`a
$S_{u_\alpha}$. On dispose donc d'une action de $\Lambda_{P,u_\alpha}$
sur la restriction de $f_!\Ql$ \`a
$S_{u_\alpha}$
\begin{equation}\label{action Lambda sur S u alpha}
\Lambda_{P,u_\alpha}\boxtimes (f_!\Ql |_{S_\alpha}) \rightarrow (f_!\Ql |_{S_\alpha}).
\end{equation}
Comme dans \ref{base henselienne}, celle-ci induit une action gradu\'ee de
de $\Lambda_{P,u_\alpha}$ sur la somme directe de faisceaux pervers de
cohomologie dont la partie de degr\'e $-1$ s'\'ecrit
$$T_\Ql(P_{u_\alpha})\otimes \,^p \rmH^n(f_!\Ql)|_{S_\alpha} \rightarrow
\,^p \rmH^{n-1}(f_!\Ql)|_{S_\alpha}.
$$
Cette fl\`eche se d\'ecompose en somme directe suivant la d\'ecomposition
canonique de $\,^p \rmH^n(f_!\Ql)$ et $\,^p \rmH^{n-1}(f_!\Ql)$ par le support
\begin{equation}\label{action Lambda sur S u alpha faisceaux pervers}
\bigoplus_{\alpha'\in{\mathfrak A}} T_\Ql(P_{u_\alpha})\otimes K^n_{\alpha'}|_{S_\alpha} \rightarrow
\bigoplus_{\alpha'\in{\mathfrak A}} K^{n-1}_{\alpha'}|_{S_\alpha}.
\end{equation}
Si $\alpha'\not=\alpha''$, la fl\`eche induite
$$T_\Ql(P_{u_\alpha})\otimes K^n_{\alpha'}|_{S_\alpha} \rightarrow K^{n-1}_{\alpha''}|_{S_\alpha}$$
est nulle car $T_\Ql(P_{u_\alpha})\otimes K^n_{\alpha'}|_{S_\alpha}$ est
une extension successive de faisceaux pervers simples de support
$Z_{\alpha'}\cap S_\alpha$ alors que $K^{n-1}_{\alpha''}|_{S_\alpha}$ est
une extension successive de faisceaux pervers simples de support
$Z_{\alpha''}\cap S_\alpha$. Ainsi, la fl\`eche \ref{action Lambda sur S u
alpha faisceaux pervers} est une somme directe des fl\`eches
\begin{equation}\label{action Lambda u alpha sur K alpha'}
T_\Ql(P_{u_\alpha})\otimes K^n_{\alpha'}|_{S_\alpha} \rightarrow
 K^{n-1}_{\alpha'}|_{S_\alpha}
\end{equation}
la somme \'etant \'etendue tous les $\alpha'\in {\mathfrak A}$. L'action gradu\'ee
de $\Lambda_{P,u_\alpha}$ sur $\bigoplus_{\alpha'\in {\mathfrak A}}
K_{\alpha'}|_{S_{\alpha}}$ se d\'ecompose donc en une somme directe de
des actions gradu\'ees de $\Lambda_{P,u_\alpha}$ sur chaque
$K_{\alpha'}|_{S_{\alpha}}$.

Le point g\'eom\'etrique $u_\alpha$ est au-dessus d'un point $u_\alpha^0$
\`a valeur dans un corps fini. D'apr\`es \ref{quasi-relevement}, on a une
d\'ecomposition canonique en somme directe
$$\rmT_\Ql(P_{u_\alpha})=\rmT_\Ql(R_{u_\alpha})\oplus \rmT_\Ql(A_{u_\alpha})$$
gr\^ace \`a l'action de $\Gal(u_\alpha/ u_\alpha^0)$. On en d\'eduit donc
une action de $\rmT_\Ql(A_{u_\alpha})$ sur $f_!\Ql|_{S_\alpha}$
$$\rmT_\Ql(A_{u_\alpha}) \boxtimes (f_!\Ql|_{S_\alpha})\rightarrow f_!\Ql|_{S_\alpha}[-1],$$
puis une action  sur la somme directe des
faisceaux pervers de cohomologie
$$\rmT_\Ql(A_{u_\alpha}) \otimes \,^p \rmH^n(f_!\Ql)|_{S_\alpha} \rightarrow
\,^p \rmH^{n-1}(f_!\Ql)|_{S_\alpha}
$$
laquelle se d\'ecompose en une somme directe des fl\`eches
$$\rmT_\Ql(A_{u_\alpha})\otimes K^n_{\alpha'}|_{S_\alpha} \rightarrow
 K^{n-1}_{\alpha'}|_{S_\alpha}$$
pour $\alpha'\in {\mathfrak A}$.

\begin{proposition}\label{alpha' libre}
Pour tout $\alpha'\not=\alpha$, pour tout entier $m$, le $\Ql$-espace
vectoriel gradu\'e $\rmH^m(K_{\alpha',u_\alpha})$ est un
$\Lambda_{A_{u_\alpha}}$-module libre.
\end{proposition}

\begin{proof}
Si $\dim(Z_{\alpha'})\leq \dim(Z_\alpha)$ avec $\alpha'\not=\alpha$, alors
$K_{\alpha'}|_{S_\alpha}$ est nul de sorte qu'il n'y a rien \`a d\'emontrer. On
peut donc supposer que $\dim(Z_{\alpha'})> \dim(Z_\alpha)$.

Au-dessus de $U_{\alpha'}$, on a une fl\`eche canonique entre syst\`emes
locaux gradu\'es
$$\Lambda_{A_{\alpha'}}\otimes L_{\alpha'}\rightarrow L_{\alpha'}$$
d\'efinie dans \ref{A alpha}. D'apr\`es l'hypoth\`ese de r\'ecurrence, pour
tout point g\'eom\'etrique $u_{\alpha'}$ de $U_{\alpha'}$ d\'efini sur un
corps fini, la fibre de $L_{\alpha'}$ en $u_{\alpha'}$ est
$\Lambda_{A_{\alpha'},u_{\alpha'}}$-module libre. D'apr\`es le th\'eor\`eme
de d\'ecomposition, $L_{\alpha'}$ est un syst\`eme local gradu\'e
semi-simple. En appliquant le lemme \ref{Nakayama}, on sait qu'il existe un
syst\`eme local gradu\'e $E_{\alpha'}$ sur $U_{\alpha'}$ et un isomorphisme
$$L_{\alpha'}\simeq\Lambda_{A_{\alpha'}} \otimes E_{\alpha'}$$
en tant que $\Lambda_{A_{\alpha'}}$-modules gradu\'es.

Sur l'intersection $U_{\alpha'}\cap S_\alpha$ on a encore cette
factorisation. Notons $y_{\alpha'}$ le point g\'en\'erique de $U_{\alpha'}\cap
S_\alpha$ et $\bar y_{\alpha'}$ un point g\'eom\'etrique au-dessus de
$y_{\alpha'}$. On a alors un isomorphisme de repr\'esentations de $\Gal(\bar
y_{\alpha'}/ y_{\alpha'})$
$$L_{\alpha',\bar y_{\alpha'}}=\Lambda_{A_{\alpha',\bar y_{\alpha'}}} \otimes
E_{\alpha',\bar y_{\alpha'}}.
$$

La fl\`eche de sp\'ecialisation
$$\rmT_\Ql(P_{u_\alpha})\rightarrow \rmT_\Ql(P_{\bar y_{\alpha'}})$$
est injective et identifie $\rmT_\Ql(P_{u_\alpha})$ et le sous-espace vectoriel de $\rmT_\Ql(P_{\bar y_{\alpha'}})$ des vecteurs $\Gal(\bar y_{\alpha'}/ y_{\alpha'})$-invariants.
Nous avons besoin de l'hypoth\`ese $\rmT_Ql(P)$ est polarisable dans le lemme interm\'ediaire suivant.

\begin{lemme} \label{split}
Pour tout rel\`evement homologique
$$\rmT_\Ql(A_{u_\alpha})\rightarrow \rmT_\Ql(P_{u_\alpha}),$$ l'application
$$\rmT_\Ql(A_{u_\alpha}) \rightarrow \rmT_\Ql(A_{\bar y_{\alpha'}})$$
qui s'en d\'eduit est injective. De plus, il existe un compl\'ement de $\rmT_\Ql(A_{u_\alpha})$
dans $\rmT_\Ql(A_{\bar y_{\alpha'}})$ qui est $\Gal(\bar y_{\alpha'}/ y_{\alpha'})$-stable.
\end{lemme}

\begin{proof}
La fl\`eche de sp\'ecialisation $\rmT_\Ql(P_{u_\alpha})\rightarrow \rmT_\Ql(P_{\bar y_{\alpha'}})$ est compatible avec la form altern\'ee de polarisation. N'importe quel rel\`evement homologique $\rmT_\Ql(A_{u_\alpha})\rightarrow \rmT_\Ql(P_{u_\alpha})$ est compatible avec la forme altern\'ee de sorte que l'application  $\rmT_\Ql(A_{u_\alpha})\rightarrow\rmT_\Ql(P_{\bar y_{\alpha'}})$ qui s'en d\'eduit l'est aussi. Il s'ensuit que $\rmT_\Ql(A_{u_\alpha}) \rightarrow \rmT_\Ql(A_{\bar y_{\alpha'}})$ est injective et que l'orthogonal de $\rmT_\Ql(A_{u_\alpha})$ dans $\rmT_\Ql(A_{\bar y_{\alpha'}})$ est un compl\'ement $\Gal(\bar y_{\alpha'}/ y_{\alpha'})$-stable.
\end{proof}

Continuons la d\'emonstration de \ref{alpha' libre}. La d\'ecomposition en somme directe \ref{split}
$$\rmT_\Ql(A_{\bar y_{\alpha'}})=\rmT_\Ql(A_{u_\alpha}) \oplus {\rm U}$$
de repr\'esentations de $\Gal(\bar y_{\alpha'}/ y_{\alpha'})$. induit un
isomorphisme de repr\'e\-sentations de $\Gal(\bar y_{\alpha'}/ y_{\alpha'})$
$$\Lambda_{A_{\bar y_{\alpha'}}}= \Lambda_{A_{u_\alpha}}\otimes \Lambda(U)$$
o\`u $\Lambda(U)=\bigoplus_i \wedge^i(U)[i]$. Ceci implique une
factorisation en produit tensoriel de repr\'e\-sentations de $\Gal(\bar y_{\alpha'}/ y_{\alpha'})$
$$L_{\alpha',\bar y_{\alpha'}}=\Lambda_{A_{u_\alpha}}\otimes \Lambda(U) \otimes
E_{\alpha',\bar y_{\alpha'}}.
$$
Il existe donc un isomorphisme
$$L_{\alpha'}|_{U_{\alpha'}\cap S_\alpha}=\Lambda_{A_{u_\alpha}} \boxtimes E'_{\alpha'}$$
o\`u $E'_{\alpha'}$ est un syst\`eme local sur $U_{\alpha'}\cap S_\alpha$.
Puisque le produit tensoriel externe avec $\Lambda_{A_{u_\alpha}}$
commute avec le prolongement interm\'ediaire de $U_{\alpha'}\cap
S_\alpha$ \`a $Z_{\alpha'}\cap S_\alpha$ et avec le foncteur fibre en $u_{\alpha_0}$,
la proposition \ref{alpha' libre} s'en d\'eduit.
\end{proof}

Consid\'erons maintenant la suite spectrale \ref{suite spectrale}
$$E_2^{m,n}=\rmH^m(\,^p \rmH^n (f_* \Ql)_{u_\alpha}) \Rightarrow
\rmH^{m+n}(M_{u_\alpha})$$ qui d\'eg\'en\`ere en $E_2$ d'apr\`es
\ref{degenerescence de suite spectrale}.
On obtient ainsi une filtration de
$$H=\bigoplus_j \rmH^j(M_{u_\alpha})$$
dont le $m$-\`eme gradu\'e est
$$\bigoplus_n \rmH^m(\, ^p\rmH^n (f_*\Ql)_{s_0}).$$
Cette filtration est stable sous l'action de $\Lambda_{A_{u_\alpha}}$. Son
action sur le $m$-\`eme gradu\'e se d\'eduit de l'action de
$\Lambda_{A_{u_\alpha}}$ sur la somme directe
$$\bigoplus_n \, ^p\rmH^n (f_*\Ql)|_{S_\alpha}$$
et donc de celle sur les $K_{\alpha'}|_{S_\alpha}$. Ce
$m$-\`eme gradu\'e $\bigoplus_n \rmH^m(\, ^p\rmH^n (f_*\Ql)_{s_0})$  se
d\'ecompose donc en une somme directe de
$\Lambda_{A_{u_\alpha}}$-modules gradu\'es
$$\bigoplus_{\alpha'} \rmH^m(K_{\alpha',u_\alpha})$$

Pour $\alpha'\not=\alpha$, on sait d\'ej\`a que
$\rmH^m(K_{\alpha',u_\alpha})$ est un $\Lambda_{A_{u_\alpha}}$-module
libre \cf \ref{alpha' libre}. Pour $\alpha'=\alpha$, on a
$\rmH^m(K_{\alpha,u_\alpha})=0$ sauf pour $m=-\dim(Z_\alpha)$. On
obtient ainsi une filtration de $H$ par des sous
$\Lambda_{A_{u_\alpha}}$-modules
$$0\subset H' \subset H''\subset H=\bigoplus_j \rmH^j(M_{u_\alpha})$$
tels que $H'$ et $H/H''$ sont des
$\Lambda_{A_{u_\alpha}}$-modules libres et tel que
$$H''/ H'= L_{\alpha,u_\alpha}.$$
D'apr\`es \ref{liberte fibre par fibre}, on sait que $H$ est aussi un
$\Lambda_{A_{u_\alpha}}$-module libre. On va en d\'eduire que
$L_{\alpha,u_\alpha}$ est aussi un $\Lambda_{A_{u_\alpha}}$-module libre
par une propri\'et\'e particuli\`ere de l'anneau $\Lambda_{A_{u_\alpha}}$.

Puisque $\Lambda_{A_{u_\alpha}}$ est une alg\`ebre locale,
tout $\Lambda_{A_{u_\alpha}}$-module projectif est libre. La suite exacte
$$0\rightarrow H'' \rightarrow H \rightarrow H/ H'' \rightarrow 0$$
avec $H$ et $H/H''$ libres, implique que $H''$ est aussi libre.

Notons aussi que  $\Lambda_{A_{u_\alpha}}$ est une $\Ql$-alg\`ebre de
locale dimension finie ayant un socle de dimension un. On en d\'eduit que le
dual $(\Lambda_{A_{u_\alpha}})^*$ de $\Lambda_{A_{u_\alpha}}$ en tant
que $\Ql$-espaces vectoriels est un $\Lambda_{A_{u_\alpha}}$-module
libre. Consid\'erons la suite exacte duale
$$0\rightarrow (H''/ H')^* \rightarrow (H'')^* \rightarrow (H')^* \rightarrow 0$$
o\`u $(H'')^*$ et $(H')^*$ sont des $\Lambda_{A_{u_\alpha}}$-modules
libres. Il s'ensuit que $(H''/ H')^*$ est un $\Lambda_{A_{u_\alpha}}$-module
libre de sorte que $H''/ H'$ l'est aussi.
\end{proof}

\begin{remarque} \label{liberte avec pi 0}
La discussion de ce paragraphe s'\'etend mot pour mot au cas o\`u $P$ a \'eventuellement des fibres non connexes. Soit $g:P\rightarrow S$ un $S$-sch\'ema en groupe lisse de type fini agissant sur un $S$-sch\'ema propre $f:M\rightarrow S$. Supposons que $M$ est lisse au-dessus du corps de base $k$. Soit $\pi_0(P)$ le faisceau des composantes connexes des fibres de $P$. Comme dans \ref{kappa decomposition}, supposons qu'il existe un groupe fini $\Pi_0$ est un homomorphisme surjectif $\Pi_0\rightarrow \pi_0(P)$. On a alors une d\'ecomposition caninque en somme directe
$$f_* \Ql=\bigoplus_{\kappa\in \Pi_0^*} (f_* \Ql)_\kappa$$
telle que pour tout $x\in \Pi_0$, $(x-\kappa(x))^N$ agit trivialement sur $(f_* \Ql)_\kappa$ pour un certain entier $N$.

Pour tout $\kappa\in \Pi_0^*$, il existe un ensemble fini $\mathfrak A_\kappa$ de sous-sch\'emas ferm\'es $Z_\alpha$ de $S\otimes_k \bar k$ tel qu'on a des d\'ecompositions en somme directe canonique
$$K^n_\kappa=\bigoplus_{\alpha\in \mathfrak A_\kappa} K^n_\alpha$$
o\`u $K^n_\kappa=\, ^p\rmH^n((f_* \Ql)_\kappa)$ et $K^n_\kappa$ est la somme directe des facteurs simples de $K^n_\kappa$ de support $Z_\alpha$. Le lemme \ref{U alpha} s'applique \`a $Z_\alpha$ ; en particulier, il existe un ouvert dense $U_\alpha$ de $Z_\alpha$ au-dessus du quel $K^n_\alpha$ est un syst\`eme local $L^n_\alpha$ avec un d\'ecalage et la composante neutre $P^0_\alpha$ de $P|_{U_\alpha}$ admet un quotient ab\'elien $A_\alpha$ apr\`es un changement de base radiciel qui fibre par fibre est le d\'evissage de Chevalley. La proposition \ref{liberte} s'applique \`a $L^n_\alpha$ c'est-\`a-dire $\bigoplus_n L^n_\alpha$ est un module libre sur l'alg\`ebre d'homologie de $A_\alpha$.
\end{remarque}

\subsection{D\'emonstration de l'in\'egalit\'e d'amplitude}\label{demo amplitude}

\ref{inegalite amplitude} est une cons\'equence
imm\'ediate de \ref{liberte} et de la remarque \ref{liberte avec pi 0}.

\begin{proof}
Soit $\kappa\in \Pi_0^*$ et $\alpha\in \mathfrak A_\kappa$. Soit $Z_\alpha$ un sous-sch\'ema ferm\'e irr\'eductible de $S\otimes_k \bar k$ correspondant. Comme dans \ref{U alpha}, il existe un ouvert dense de $U_\alpha$ de $Z_\alpha$ tel que la restriction de $K^n_\alpha$ \`a $U_\alpha$ est un syst\`eme local $L^n_\alpha$ avec un d\'ecalage et la restriction de $P^0$ \`a $U_\alpha$ admet un quotient ab\'elien $A_\alpha$ apr\`es un changment de base radiciel tel que fibre par fibre on trouve le d\'evissage de Chevalley. D'apr\`es \ref{liberte} et en tenant compte de la remarque \ref{liberte avec pi 0}, $\bigoplus_n L^n_\alpha$ est un module libre sur l'ag\`ebre $\Lambda_{A_\alpha}$ des homologies de $A_\alpha$. Puisque c'est un module non nul, son amplitude est sup\'erieure ou \'egale \`a celle de $\Lambda_{A_\alpha}$ \'egale \`a $2(d-\delta_\alpha)$. On en d\'eduit
l'in\'egalit\'e
$${\rm amp}(\alpha) \geq 2(d-\delta_\alpha)$$
qu'on voulait.
\end{proof}

\begin{remarque}
Notons les r\'esultats de ce paragraphe restent inchang\'es si au lieu de
$k$-sch\'emas $P$ et $M$ on a des champs de Deligne-Mumford.
\end{remarque}

\subsection{Le cas de la fibration de Hitchin}
\label{subsection : description support}

Nous allons maintenant appliquer le th\'eor\`eme g\'en\'eral du support au cas particulier de la fibration de Hitchin ou plus pr\'ecis\'ement \`a sa partie anisotrope. Il s'agit de v\'erifier que les hypoth`\`eses de \ref{support final} est bien v\'erifi\'ees et d'en tirer les cons\'equences.

Rappelons qu'on a un morphisme propre plat de fibres r\'eduites $f^\ani:\calM^\ani\rightarrow\calA^\ani$
de source d'un champ de Deligne-Mumford lisse et de but un ouvert d'un espace affine standard. On a aussi d\'efini un champs de Picard de Deligne-Mumford $\calP^\ani \rightarrow \calA^\ani$ qui agit sur $\calM^\ani$. D'apr\`es \ref{densite globale}, il existe un ouvert $\calM^{\reg,\ani}$ de $\calM^\ani$ au-dessus duquel $\calP^\ani$ agit simplement transitivement et une section $\calA^\ani\rightarrow \calM^{\reg,\ani}$ de sorte qu'on a
$${\rm Irr}(\calM^\ani/\calA^\ani)=\pi_0(\calM^{\reg,\ani}/\calA^\ani)=\pi_0(\calP^\ani/\calA^\ani).$$
En passant \`a l'ouvert \'etale $\tilde\calA$, on a vu qu'il existe un homomorphisme surjectif
$$\Pi_0\longrightarrow \pi_0(\calP^\ani/\calA^\ani)|_{\tilde\calA^\ani}$$
d'un faisceau constant fini $\Pi_0$ sur la restriction du  faisceau $\pi_0(\calP^\ani/\calA^\ani)$ \`a $\tilde\calA^\ani$. Pour appliquer \ref{support final} au morphisme
$$\tilde f^\ani :\tilde\calM^\ani \longrightarrow \tilde\calA^\ani$$
il ne reste qu'\`a v\'erifier l'assertion suivante.

\begin{proposition}\label{T ell P est polarisable}
Le module de Tate $\rmT_\Ql(\calP^\ani)$ est polarisable au sens de \ref{module de Tate polarisable}.
\end{proposition}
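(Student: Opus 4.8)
The plan is to produce the polarisation form fibrewise from a cup-product pairing on the camerally cover and then to check that it extends over all of $\calA^\ani$ (or at least étale-locally, which is all that is required). First I would recall, using the Néron model description $J_a^\flat = \prod_{\widetilde X_{\bar\rho,a}^\flat/\bar X}(\bbT\times \widetilde X_{\bar\rho,a}^\flat)^{\bbW\rtimes\Theta_{\bar\rho}}$ of \ref{J a flat global}, that the neutral component $\calP_a^{\flat,0}$ is isogenous to a $\bbW\rtimes\Theta_{\bar\rho}$-invariant part of a product of Jacobians $\Pic^0(\widetilde X_{\bar\rho,a}^\flat)$. Concretely, $\rmT_\Ql(\calP_a^{\flat,0})$ is identified with $(\rmH^1(\widetilde X_{\bar\rho,a}^\flat,\Ql)\otimes\bbX_*)^{\bbW\rtimes\Theta_{\bar\rho}}(1)$ (or rather the relevant isotypic piece), and on $\rmH^1$ of a smooth projective curve one has the canonical cup-product pairing $\rmH^1\otimes\rmH^1\to\rmH^2=\Ql(-1)$, which is a perfect alternating form. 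Tensoring with a $\bbW\rtimes\Theta_{\bar\rho}$-invariant symmetric bilinear form on $\bbX_*$ (which exists since $|\bbW|$ is prime to $p$ — average the standard Killing-type form) and restricting to invariants produces a canonical alternating pairing on $\rmT_\Ql(\calP_a^0)$.

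The key point to check is that this pairing has exactly the prescribed kernel: it must vanish on $\rmT_\Ql(R_a)$ and be perfect on $\rmT_\Ql(A_a)$, where $1\to R_a\to \calP_a^{\flat,0}\to A_a\to 1$ is the Chevalley dévissage. For a nodal (or worse) curve $\widetilde X_{\bar\rho,a}$ one must pass to the normalisation $\widetilde X_{\bar\rho,a}^\flat$; the affine part $R_a$ of $\calP_a$ comes from the quotient $J_a^\flat/J_a$ supported on the ramification locus together with the toric part of the special fibres, and on the module of Tate side this corresponds precisely to the difference between $\rmH^1$ of the (possibly singular) camerally cover and $\rmH^1$ of its normalisation, together with the weight-$(-2)$ part. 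Since the cup-product pairing on $\rmH^1(\widetilde X_{\bar\rho,a}^\flat)$ is a polarisation of the abelian variety $\Pic^0(\widetilde X_{\bar\rho,a}^\flat)$, and $A_a$ is a $\bbW\rtimes\Theta_{\bar\rho}$-invariant subquotient (an abelian subvariety up to isogeny, stable under the group action because the form was chosen invariant), the restriction remains a polarisation; and the weight argument — $\rmT_\Ql(A_a)$ is pure of weight $-1$ while $\rmT_\Ql(R_a)$ is pure of weight $-2$ — forces the pairing, valued in $\Ql(1)$ of weight $-2$, to annihilate $\rmT_\Ql(R_a)$ automatically.

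Next I would address the relative statement: the polarisation is only required to exist étale-locally on $\calA^\ani$, so it suffices to work over a connected étale neighbourhood, and after passing to $\tilde\calA^\ani$ one has the rigidified camerally cover $\widetilde X_{\bar\rho}\to \bar X\times\tilde\calA^\ani$ with its $\bbW\rtimes\Theta_{\bar\rho}$-action, its fibrewise normalisation over the strata $\tilde\calA_{(I_*,W_*)}$ of \ref{stratification tilde A I W}, and the relative $\rmH^1$ of the family of smooth projective curves obtained there. The cup-product then gives a morphism of lisse sheaves $\rmT_\Ql(\calP^0)\otimes\rmT_\Ql(\calP^0)\to\Ql(1)$ over each stratum; one checks these glue, or simply invokes that the form is defined by a universal construction (relative Poincaré duality applied to the smooth proper family $\widetilde X_{\bar\rho}^\flat$) hence is globally defined on the open where $\widetilde X_{\bar\rho}$ normalises in family, and extends uniquely to all of $\tilde\calA^\ani$ since $\rmT_\Ql(\calP^0)$ is lisse there (the complement being of codimension $\geq 2$ is not even needed, as $\rmT_\Ql(\calP^0)$ is already a lisse sheaf on $\tilde\calA^\ani$ by smoothness of $\calP$ over $\calA^\heartsuit$, \ref{P lisse}). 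The main obstacle I anticipate is the bookkeeping in the singular-fibre analysis: verifying that the kernel of the fibrewise form is \emph{exactly} $\rmT_\Ql(R_a)$ and not something larger, i.e.\ that the toric part of the special fibre of the Néron model contributes trivially to the pairing and that no extra degeneracy is introduced by the $\bbW\rtimes\Theta_{\bar\rho}$-invariants — this is where the weight purity (Deligne, \cite{Weil2}) does the essential work, and I would lean on it rather than on an explicit computation on the cover.
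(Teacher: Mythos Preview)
Your fibrewise construction is correct --- the cup-product on $\rmH^1$ of the normalised cameral cover, tensored with an invariant symmetric form on $\bbX_*\otimes\Ql$ and restricted to $(\bbW\rtimes\Theta_{\bar\rho})$-invariants, is nondegenerate on $\rmT_\Ql(A_a)$, and the weight argument correctly forces it to vanish on $\rmT_\Ql(R_a)$ --- but the passage to a relative statement has a genuine gap. Your claim that $\rmT_\Ql(\calP^0)$ is lisse on $\tilde\calA^\ani$ is false: the stalk at $a$ has rank $2(d-\delta_a)+t_a$, where $t_a$ is the toric rank of $R_a$, and this drops strictly whenever $\delta_a>0$. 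Proposition~\ref{P lisse} only gives smoothness of $\calP$ over $\calA^\heartsuit$, which fixes the fibre dimension $d$ but says nothing about the Chevalley d\'evissage. So the form you build on $\calA^\diamondsuit$ has no reason to extend, and working stratum by stratum does not yield \'etale-local existence either, since any \'etale neighbourhood of a point in a lower stratum meets $\calA^\diamondsuit$. The smooth family $\widetilde X_{\bar\rho}^\flat$ you invoke for the ``universal construction via relative Poincar\'e duality'' exists only over each stratum separately (after radicial base change, cf.~\ref{foncteur B}), not over $\tilde\calA^\ani$.

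The paper's fix is to avoid the normalisation entirely in the \emph{construction} of the form. One works instead with the possibly singular cameral cover $\widetilde X_{\rho,a}$, which does form a flat family of reduced curves over all of $\calA^\heartsuit$, and with the Weil pairing on $\Pic^0$ defined via the determinant of cohomology
\[
\langle L,L'\rangle_{C/S}=\det\rmR c_*(L\otimes L')\otimes(\det\rmR c_* L)^{-1}\otimes(\det\rmR c_* L')^{-1}\otimes\det\rmR c_*\calO_C,
\]
which makes sense for any flat family of reduced curves. Pulling back along the map induced by $J_a\to\pi_{\rho,a,*}(\bbT\times\widetilde X_{\rho,a})$ (from \ref{J J1}) gives the alternating form on $\rmT_\Ql(\calP^0)$ over the whole base at once. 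The normalisation enters only afterwards, to verify the fibrewise kernel condition, via a short direct computation (the lemma labelled \ref{normalisation Weil pairing}) showing $\langle L,L'\rangle_C=\langle\xi^*L,\xi^*L'\rangle_{C^\flat}$ for the normalisation map $\xi\colon C^\flat\to C$; this plays the same role as your weight argument but is insensitive to the base field.
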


La d\'emonstration de cette proposition est fond\'ee sur th\'eorie de l'accouplement de Weil que nous allons rappeler pour la commodit\'e du lecteur. Soit $S$  un sch\'ema local strictement hens\'elien.  Soit $c:C\rightarrow S$ un morphisme propre plat de fibres g\'eom\'etriquement r\'eduites de dimension un.

Supponsons que $C$ est connexe. Consid\'erons la factorisation de Stein $C\rightarrow S'\rightarrow S$ o\`u $C\rightarrow S'$ est un morphisme propre de fibres connexes non vide et $S'\rightarrow S$ est un morphisme fini. Puisque la fibre sp\'eciale de $c$ est r\'eduite, le morphisme $S'\rightarrow S$ est fini et \'etale. Puisque $C$ est connexe, $S'$ l'est aussi. Puisqu'on a suppos\'e que $S$ est stricement hens\'elien, on a alors $S'=S$. Autrement dit les fibres de $c:C\rightarrow S$ sont connexes.

Consid\'erons le $S$-champ d'Artin $\Pic_{C/S}$ qui associe \`a tout $S$-sch\'ema $Y$ le groupoide des fibr\'es inversibles sur $C\times_S Y$. Il est lisse au-dessus de $S$. Consid\'erons sa composante neutre $\Pic^0_{C/S}$. Pour tout point $L\in \Pic_{X/S}(Y)$, pour tout $y\in Y$, on peut d\'efinir caract\'eristique d'Euler-Poincar\'e $\chi_y(L)$ de la restriction de $L$ \`a $C_y$. Si $Y$ est connexe, cet entier est ind\'ependant de $y$ et nous le notons $\chi(L)$. Si $L\in \Pic^0_{C/S}$, on a $\chi(L)=\chi(\calO_C)$.

Pour tout couple de $L,L'\in \Pic^0_{C/S}$, nous d\'efinissions leur accouplement de Weil par la formule
\begin{eqnarray*}
\langle L,L' \rangle_{C/S} = {\rm det}(\rmR c_*(L\otimes L'))\otimes {\rm det}(\rmR c_* L)^{\otimes-1} \\ \otimes {\rm det}(\rmR c_* L')^{\otimes-1} \otimes {\rm det}(\rmR c_* \calO_C)
\end{eqnarray*}
en utilisant le d\'eterminant de cohomologie. Si $t$ est un automorphisme de $L$ qui est alors un scalaire, $t$ agit sur $\det(Rc_* L)$ par le scalaire $t^{\chi(L)}$. En utilisant les \'egalit\'es
$$\chi(L\otimes L')=\chi(L)=\chi(L')=\chi(\calO_C)$$
pour $L,L'\in \Pic^0_{X/S}$, on v\'erifie que pour tout couple de scalaires $(t,t')$ l'action de $t$ sur $L$ et l'action de $t'$ sur $L'$ induisent l'identit\'e sur $\langle L,L' \rangle_{C/S}$.

Si $N$ est un entier inversible sur $S$ et $L$ est un fibr\'e inversible muni d'un isomorphisme $\iota_L:L^{\otimes N}\rightarrow\calO_C$, on a un isomorphisme
$$\langle L,L' \rangle_{C/S}^{\otimes N}=\calO_S.$$
Si en plus $L'$ est aussi muni d'un isomorphisme $\iota_{L'}:L'^{\otimes N}\rightarrow \calO_C$, on a autre isomorphisme $\langle L,L' \rangle_{C/S}^{\otimes N}=\calO_S$. La diff\'erence de ces deux isomorphismes d\'efinit une $N$-\`eme racine d'unit\'e. Cette racine ne d\'epend que des classes d'isomorphisme de $L$ et de $L'$ en vertu de la discussion sur l'effet des scalaires.

Supposons maintenant que $C$ est un courbe projective connexe sur un corps alg\'ebriquement clos $k$. Le champ ${\rm Pic}_C^0$ est alors le quotient d'un $k$-groupe alg\'ebrique commutative connexe ${\rm Jac}_C$ par $\GG_m$ agissant trivialement. La construction ci-dessus d\'efinit une forme altern\'ee
$$\rmT_\Ql({\rm Jac}_C)\times \rmT_\Ql({\rm Jac}_C) \longrightarrow \Ql(-1)$$
qui est non-d\'eg\'en\'er\'ee lorsque $C$ est lisse. Pour terminer cette digression, il reste \`a consid\'erer le comportement de l'accouplement de Weil vis-\`a-vis de la normalisation.

\begin{lemme}\label{normalisation Weil pairing}
Soit $C$ une courbe projective r\'eduite sur un corps alg\'ebriquement clos $k$. Soit $C^\flat$ sa normalisation et $\xi:C^\flat \rightarrow C$ le morphisme de normalisation. Soient $L,L'$ deux fibr\'es inversibles sur $C$. On a alors un isomorphisme canonique de $k$-espaces vectoriels de dimension un
$$\langle L,L'\rangle_{C}=\langle \xi^*L,\xi^*L' \rangle_{C^\flat}.$$
\end{lemme}

\begin{proof} On a une suite exacte
$$0\longrightarrow \calO_C \longrightarrow \xi_* \calO_{C^\flat} \longrightarrow \mathcal{D} \longrightarrow 0$$
o\`u $\mathcal{D}$ est un $\calO_C$-modules fini support\'e par une collection finie de points $\{c_1,\ldots,c_n\}$ de $C$. Notons $\mathcal{D}_i$ le facteur direct de $\mathcal{D}$ support\'e par $c_i$ et $d_i$ sa longueur. La multiplicativit\'e du d\'eterminant nous fournit alors un isomorphisme
$${\rm det}(c^\flat_*\calO_{C^\flat})={\rm det}(c_*\xi_*\calO_{C^\flat})={\rm det}(c_*\calO_{C})\otimes\bigotimes_{i=1}^r \wedge^{d_i} \mathcal{D}_i$$
o\`u on a not\'e $c:C\rightarrow\Spec(k)$ et $c^\flat:C^\flat\rightarrow \Spec(k)$ les morphismes structuraux.

Soit $L$ un fibr\'e inversible sur $C$. En utilisant la formule de produit $\xi_* \xi^* L=(\xi_*\calO)\otimes L$, on obtient la formule
$${\rm det}(c^\flat_*\xi^*L)={\rm det}(c_*L)\otimes\bigotimes_{i=1}^r (L_{c_i}^{\otimes d_i}\otimes  \wedge^{d_i} \mathcal{D}_i)$$
o\`u $L_{c_i}$ est la fibre de $L$ en $c_i$. En appliquant cette formule \`a $L\otimes L'$, $L$ et \`a $L'$, on obtient le lemme.
\end{proof}

On est maintenant en mesure de d\'emontrer la proposition \ref{T ell P est polarisable}.

\begin{proof}
Rappelons que pour tout point $a\in \calA^\heartsuit$, $\calP_a$ est le champ de Picard des $J_a$-torseurs sur $X$. Soit $\pi_a:\tilde X_a\rightarrow X$ le rev\^etement cam\'erale associ\'ee \`a $a$. D'apr\`es la description galoisienne du centralisateur r\'egulier, on a un homomorphisme de faisceaux en groupes \cf \ref{J J1}
$$J_a \rightarrow \pi_{a,*}(T\times_X \tilde X_a)$$
o\`u $T$ est le tore maximal de $G$ dans l'\'epinglage fix\'e. En consid\'erant un d\'eploiement $X_\rho\rightarrow X$ de $G$, on obtient un rev\^etement fini \'etale $\tilde X_{\rho,a}\rightarrow \tilde X_a$ par changement de base. Notons $\pi_{\rho,a}:\tilde X_{\rho,a}\rightarrow X$ le morphisme compos\'e. On a donc un homomorphisme
$$J_a \rightarrow \pi_{\rho,a,*}(\bbT\times \tilde X_{\rho,a}).$$
o\`u $\bbT$ est un tor d\'eploy\'e. La donn\'ee d'un point de $\ell^n$-torsion de $\calP_a$ induit donc un point de $\ell^n$ torsion de ${\rm Pic}_{\tilde X_{\rho,a}}\otimes \bbX_*(\bbT)$. En choisissant une forme sym\'etrique invariante sur $\bbX_*(\bbT)\otimes\Ql$ et en utilisant l'accouplement de Weil sur ${\rm Pic}^0_{\tilde X_{\rho,a}}$, on obtient une forme altern\'ee
$$\rmT_\Ql(\calP_a^0)\otimes \rmT_\Ql(\calP_a^0)\longrightarrow \Ql(-1).$$
Il reste \`a d\'emontrer qu'en chaque point g\'eom\'etrique $a$, cette forme symplectique est nulle sur la partie affine  de $\rmT_\ell(\calP_a)$ et induit un accouplement non d\'eg\'en\'er\'ee sur sa partie ab\'elienne. Ceci se d\'eduit du lemme \ref{normalisation Weil pairing}\end{proof}

\begin{numero}
On peut maintenant appliquer \ref{support final} car toutes ses trois hypoth\`eses ont \'et\'e v\'erif\'ees. On ne peut pas toutefois tirer imm\'ediatement une conclusion tangible car il rne s'applique qu'aux sous-sch\'emas ferm\'es irr\'eductibles $Z$ v\'erifiant l'in\'egalit\'e
$${\rm codim}(Z)\geq \delta_Z.$$
Nous conjecturons que $\calP^\ani$ est $\delta$-r\'egulier au sens de \ref{delta regulier} ce qui revient \`a dire que l'in\'egalit\'e ci-dessus est satisfaite pour n'importe quel sous-sch\'ema ferm\'e irr\'eductible $Z$ de $\calA^\ani$. Cette conjecture est connue en caract\'eristique $0$. En caract\'eristique $p$, on devra se contenter de l'\'enonc\'e plus faible \ref{codimension}.
\end{numero}

\begin{numero}\label{delta bad}
Soit $\delta_{H}^{\rm bad}(D)$ le plus petit entier tel que
$${\rm codim}_{\calA_H}(\calA_{H,\delta_H^{\rm bad}(D)})<\delta_{H}^{\rm bad}(D).$$
Si cet entier n'existe pas, on pose $\delta_{H}^{\rm bad}(D)=\infty$. Notons
$\calA_H^{\rm bad}$ la r\'eunion de toutes les strates $\calA_{H,\delta_H}$
\`a $\delta_H$ constant avec $\delta_H \geq \delta_H^{\rm bad}(D)$. C'est
un sous-sch\'ema ferm\'e de $\calA_H^\heartsuit$ qui est \'eventuellement vide.
D'apr\`es \ref{codimension}, l'entier $\delta_{H}^{\rm bad}(D)$  tend vers
l'infini quand $\deg(D)$ tend vers l'infini.
\end{numero}

\begin{lemme}\label{codimension dans tilde calA}
Soit $Z$ un sous-sch\'ema ferm\'e irr\'eductible de $\tilde\calA_H$ qui n'est pas contenu dans $\tilde\calA_H^{\rm bad}$. Alors on a
$${\rm codim}_{\tilde\calA_H}(Z) \geq \delta_{H,Z}\ { et}\
{\rm codim}_{\tilde\calA}(Z) \geq \delta_{Z}$$
o\`u
$\delta_{H,Z}$ est la valeur minimale de la fonction $\delta_H:Z(\bar
k)\rightarrow \NN$ et o\`u $\delta_Z$ est la valeur minimale de la fonction
$\delta:Z(\bar k)\rightarrow \NN$. \end{lemme}

\begin{proof}
Puisque $Z$ est irr\'eductible, il est contenu dans l'adh\'e\-rence de la
strate \`a $\delta_H$-constant $\tilde\calA_{H,\delta_{H,Z}}$. Comme $Z$ n'est pas contenu dans $\tilde\calA_H^{\rm bad}$, on a $\delta_{H,Z}< \delta_{H}^{\rm bad}(D)$. Il r\'esulte de
la d\'efinition de $\delta_H^{\rm bad}(D)$ que
$${\rm codim}_{\tilde\calA_H}(\tilde \calA_{H,\delta_{H,Z}}) \geq \delta_{H,Z}.$$
On en d\'eduit la m\^eme in\'egalit\'e pour ${\rm
codim}_{\tilde\calA_H}(Z)$. L'in\'egalit\'e portant sur ${\rm
codim}_{\tilde\calA}(Z)$ s'en d\'eduit car d'apr\`es \ref{delta-dellta_H} et
\ref{codim A_H}, la fonction $$\delta-\delta_H:Z(\bar k)\rightarrow \NN$$ est
constante de valeur \'egale \`a $\dim(\calA)-\dim(\calA_H)$.
\end{proof}

Maintenant, \ref{support final} implique l'\'enonc\'e suivant.

\begin{theoreme}\label{support faible}
Soit $K$ un facteur g\'eom\'etriquement simple de
$$\bigoplus_n\, ^p \rmH^n(\tilde f^\ani_* \Ql)_\kappa.$$
Supposons que le support de $K$ est contenu dans $\tilde\calA_H$ mais
n'est pas contenu dans $\tilde\calA_H^{\rm bad}$. Alors il est \'egal \`a
$\tilde\calA_H$.
\end{theoreme}

Cet \'enonc\'e n'est pas aussi joli qu'on voudrait \`a cause de la pr\'esence du fantomatique $\tilde\calA_H^{\rm bad}$. Il suffit n\'eanmoins pour d\'emontrer le lemme fondamental avec l'aide de \ref{codimension}.

\section{Comptage de points}
\label{section : comptage}

Dans ce dernier chapitre, nous compl\'etons la d\'emonstration du th\'eo\-r\`eme de stabilisation g\'eom\'etrique \ref{stabilisation sur tilde A} ainsi que celle des conjectures de Langlands-Shelstad \ref{LS} et de Waldspuger \ref{Waldspurger} en s'appuyant le th\'eor\`eme \ref{support faible}.

La d\'emonstration est fond\'ee sur le principe suivant. Soit $K_1,K_2$ deux complexes purs sur un $k$-sch\'ema de type fini $S$ irr\'eductible. Supposons que tout faisceau pervers g\'eom\'etriquement simples pr\'esent dans $K_1$ ou $K_2$ a pour support $S$ tout entier, alors pour d\'emontrer que $K_1$ et $K_2$ ont la m\^eme classe dans le groupe de Grothendieck, il suffit de d\'emontrer qu'il existe un ouvert dense $U$ de $S$ aussi petit que l'on veut, tel que pour toute extension finie $k'$ de $k$, pour tout $u\in U(k')$, les traces du Frobenius $\sigma_{k'}$ de $k'$ sur $K_{1,u}$ et $K_{2,u}$ sont \'egales. L'\'egalit\'e dans le groupe de Grothendieck implique qu'on ait la m\^eme \'egalit\'e mais pour tout point $s\in S(k')$. Le comptage de points  dans une fibre au-dessus d'un point d'un petit ouvert $U$ devrait \^etre beaucoup plus agr\'eable que dans une fibre quelconque ce qui fait la force du th\'eor\`eme du support.

On va donc compter le nombre $\sharp \calM_a(k)$ des $k$-points dans une fibre de Hitchin anisotrope.
Plus pr\'ecis\'ement, on veut une formule pour la partie stable $\sharp \calM_a(k)_{\rm st}$.
La formule de produit \ref{produit} permet d'exprimer ce nombre comme un produit du nombre $\sharp\calP_a^0(k)$ des $k$-points de la composante neutre de $\calP_a$ et des nombres de points dans des quotients de fibres de Springer affine \cf \ref{subsection : comptage Hitchin}. Pour ces quotients des fibres de Springer affines, un comptage plus ou moins direct donne comme r\'esultat une int\'egale orbitale stable \cf \ref{subsection : comptage Springer}. On a aussi la variante du comptage avec une $\kappa$-pond\'eration qui donne lieu aux $\kappa$-int\'egrales orbitales. A chaque fois, il s'agit du comptage de points d'un champ d'Artin de la forme $[M/P]$ o\`u $M$ est un $k$-sch\'ema et o\`u $P$ est un $k$-groupe alg\'ebrique agissant sur $M$. Ce formalisme est rappel\'e dans \ref{subsection : comptage} en m\^eme temps qu'une formule des points fixes ad hoc qui a \'et\'e d\'emontr\'ee dans l'appendice A.3 de \cite{LN}.

Pour $a\in \calA^\diamondsuit(k)$, les quotients des fibres de Springer affines sont tous triviaux de sorte que $\sharp \calM_a(k)_{\rm st}$ est \'egal au nombre $\sharp \calP_a^0(k)$ o\`u $\calP_a^0$ est essentiellement une vari\'et\'e ab\'elienne. Ceci permet de d\'emontrer l'\'egalit\'e les nombres $\sharp \calM_{1,a}(k)_{\rm st}$ et $\sharp \calM_{2,a}(k)_{\rm st}$ pour $a\in\calA^\diamondsuit(k)$ associ\'es \`a deux groupes $G_1$ et $G_2$ ayant des donn\'ees radicielles isog\`enes. Le th\'eor\`eme de support \ref{support final} permet de prolonger l'identit\'e \`a $a\in (\calA^{\rm ani}-\calA^{\rm bad})(k)$. On obtient ainsi assez de points globaux pour obtenir toutes les int\'egrales orbitales stables locales en faisant tendre $\deg(D)$ vers l'infini. On d\'emontre ainsi le lemme fondamental non standard conjectur\'e par Waldspurger \cf \ref{subsection : demo Waldspurger}. En renversant l'argument global-local, on peut prolonger l'identit\'e $\sharp \calM_{1,a}(k)_{\rm st}=\sharp \calM_{2,a}(k)_{\rm st}$ \`a tout $a\in\calA^\ani(k)$.

La d\'emonstration de la conjecture de Langlands-Shelstad suit essentiellement la m\^eme strat\'egie avec un peu plus de difficult\'es techiniques. Pour $a_H\in\tilde\calA_H^\diamondsuit(k)$, les int\'egrales orbitales stables locales de $a_H$ sont triviales mais les $\kappa$-int\'egrales orbitales locales dans $G$ du point $a$ correspondant ne le sont pas n\'ecessairement. Toutefois, en r\'etr\'ecissant encore plus $\tilde\calA_H^\diamondsuit$, on peut supposer que ces $\kappa$-int\'egrales orbitales locales non triviales sont aussi simples que possibles. Le calcul de ces int\'egrales locales simples est bien connu et se ram\`ene au cas ${\rm SL}(2)$ trait\'e par Labesse et Langlands. On le reprend dans \ref{subsection : fibre simple}. En combinant ce calcul avec le th\'eor\`eme du support \ref{support final}, on obtient lla partie du th\'eor\`eme de stabilsation g\'eom\'etrique sur l'ouvert $\tilde\calA^\ani_H-\tilde\calA^{\rm bad}_H$ \cf \ref{subsection : A good}. De nouveau, en faisant tendre $\deg(D)$ vers l'infini, on obtient toutes les int\'egrales orbitales locales et d\'emontre ainsi le lemme fondamental \cf \ref{lemme fondamental}. En renversant l'argument global-local, on en d\'eduit tout le th\'eor\`eme de stabilisation g\'eom\'etrique.

\subsection{Remarques g\'en\'erales sur le comptage}\label{subsection : comptage}

Nous allons fixer dans ce paragraphe  un cadre pour les diff\'erents
probl\`emes de comptage que nous devrons r\'esoudre dans la suite. Nous
allons aussi pr\'eciser les abus de notations que nous allons
pratiquer syst\'ematiquement dans ce chapitre. Dans ce
paragraphe et contrairement au reste de l'article, la lettre $X$ ne d\'esigne
pas n\'ecessairement une courbe.

\begin{numero}
Si $M$ est un $k$-sch\'ema de type fini. D'apr\`es la formule de traces de
Grothendieck-Lefschetz, le nombre de $k$-points de $M$ peut \^etre
calcul\'e comme une somme altern\'ee de traces de l'\'el\'ement de
Frobenius $\sigma\in\Gal(\bar k/ k)$
$${\sharp\,} M(k)=\sum_n (-1)^n \tr(\sigma,\rmH^n_c(M))$$
o\`u $\rmH^n_c(M)$ d\'esigne le $n$-\`eme groupe de cohomologie \`a
support compact $\rmH^n_c(M\otimes_k \bar k,\Ql)$ de $M\otimes_k \bar
k$.
\end{numero}

Nous aurons besoin d'une variante de cette formule des traces dans le
contexte suivant. Soit $M$ un $k$-sch\'ema de type fini ou plus
g\'en\'eralement un champ de Deligne-Mumford de type fini, muni d'une
action d'un $k$-groupe alg\'ebrique commutatif de type fini $P$. Nous
voulons relier la trace de $\sigma$ sur une partie de la cohomologie \`a
support de $M$ avec le nombre de points du quotient $X=[M/ P]$ et le
nombre de points de la composante neutre de $P$. C'est le contenu de
l'appendice A.3 de \cite{LN} que nous allons maintenant rappeler et
g\'en\'eraliser l\'eg\`erement.
\begin{numero}
Soit donc $X=[M/ P]$ comme ci-dessus. On va  \'ecrire
$X$ pour le groupo\"ide $X(\bar k)$, $M$ pour l'ensemble $M(\bar k)$ et $P$
pour le groupe $P(\bar k)$. Par d\'efinition l'ensemble des objets de $X=[M/
P]$ est l'ensemble $M$. Soit $m_1, m_2\in M$. Alors l'ensemble des
fl\`eches $\Hom_X(m_1,m_2)$ est le transporteur
$$\Hom_X(m_1,m_2)=\{p\in P| pm_1=m_2\}.$$
La r\`egle de composition des fl\`eches se d\'eduit de la multiplication dans
le groupe $P$.
\end{numero}

\begin{numero}
L'action de l'\'el\'ement de Frobenius $\sigma\in \Gal(\bar k/ k)$ sur $M$ et
$P$ induit une action sur le groupo\"ide $X=[M/ P]$. Le groupo\"ide $X(k)$ des
points fixes sous l'action de $\sigma$ est par d\'efinition la cat\'egorie dont
\begin{itemize}
\item les objets sont les couples $(m,p)$ o\`u $m\in M$ et $p\in P$ tels
    que $p\sigma(m)=m$ ;
\item une fl\`eche $h:(m,p)\rightarrow (m',p')$ dans $X(k)$ est un
    \'el\'ement $h\in P$ tel que $hm=m'$ et $p'=hp\sigma(h)^{-1}$.
\end{itemize}
La cat\'egorie $X(k)$ a un nombre fini de classes d'isomorphisme d'objets et
chaque objet a un nombre fini d'automorphismes. On s'int\'eresse \`a la
somme
$${\sharp\,} X(k)=\sum_{x\in X(k)/\sim} \frac{1}{{\sharp\,} \Aut_{X(k)}(x)}$$
o\`u $x$ parcourt un ensemble de repr\'esentants des classes
d'isomorphisme des objets de $X(k)$.
\end{numero}

\begin{numero}
Soit $X=[M/ P]$ comme ci-dessus et soit $x=(m,p)$ un objet de $X(k)$. Par
d\'efinition des fl\`eches dans $X(k)$, la classe de $\sigma$-conjugaison de
$p$ ne d\'epend que de la classe d'isomorphisme de $x$. Puisque $P$ est un
$k$-groupe de type fini, le groupe des classes de $\sigma$-conjugaison de
$P$ s'identifie canoniquement \`a $\rmH^1(k,P)$. Notons $\cl(x)\in
\rmH^1(k,P)$ la classe de $\sigma$-conjugaison de $p$ qui ne d\'epend que
de la classe d'isomorphisme de $x$. D'apr\`es un th\'eor\`eme de Lang,
$\rmH^1(k,P)$ s'identifie \`a $\rmH^1(k,\pi_0(P))$ o\`u $\pi_0(P)$ d\'esigne
le groupe des composantes connexes de $P\otimes_k
\bar k$ qui est donc un groupe fini commutatif muni d'une action de
$\sigma$. Ainsi, pour tout caract\`ere $\sigma$-invariant $\kappa:
\pi_0(P)\rightarrow \Ql^\times$, pour tout objet $x\in X(k)$, on peut
d\'efinir un accouplement
$$\langle \cl(x), \kappa \rangle =\kappa(\cl(x)) \in \Ql^\times$$
qui ne d\'epend que de la classe d'isomorphisme de $x$. On peut donc
d\'efinir le nombre de points de $X(k)$ avec la $\kappa$-pond\'eration
$${\sharp\,} X(k)_\kappa= \sum_{x\in X(k)/\sim}
\frac {\langle \cl(x), \kappa \rangle}{{\sharp\,} \Aut_{X(k)}(x)}$$
o\`u $x$ parcourt un ensemble de repr\'esentants des classes
d'isomorphisme des objets de $X(k)$.
\end{numero}

\begin{numero}
Le groupe $P$ agit sur les groupes de cohomologie $\rmH^n_c(M)$ \`a
travers son groupe des composantes connexes $\pi_0(P)$ d'apr\`es le
lemme d'homotopie. On peut former le sous-espace propre
$\rmH^n_c(M)_\kappa$ de $\rmH^n_c(M)$ de valeur propre $\kappa$.
Puisque $\kappa$ est $\sigma$-invariant, $\sigma$ agit sur
$\rmH^n_c(M)_\kappa$.
\end{numero}

On a la variante suivante de la formule des traces de
Grothendieck-Lefschetz d\'emontr\'ee dans l'appendice A.3 de \cite{LN}.

\begin{proposition}\label{formule des points fixes}
Soient $M$ un $k$-sch\'ema de type fini, $P$ un $k$-groupe de type fini
agissant sur $M$ et $X=[M/ P]$. Alors la cat\'egorie $X(k)$ a un nombre fini
de classes d'isomorphisme d'objets et chaque objet a un nombre fini
d'automorphismes. Pour tout $\kappa:\pi_0(P)\rightarrow \Ql^\times$ un
caract\`ere $\sigma$-invariant, le nombre ${\sharp\,} X(k)_\kappa$ a
l'interpr\'etation cohomologique suivante :
$${\sharp\,} P^0(k) {\sharp\,} X(k)_\kappa= \sum_n (-1)^n\tr(\sigma,\rmH^n_c(M)_\kappa)$$
o\`u $P^0$ la composante neutre de $P$.
\end{proposition}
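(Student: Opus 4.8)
This is the proof sketch for Proposition "formule des points fixes" (the final statement). I assume the reader has access to the earlier numbering.

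The plan is to reduce the statement to the case where $P$ is connected, and then to a direct groupoid-theoretic count combined with the ordinary Grothendieck-Lefschetz trace formula. First I would verify the finiteness assertions: since $M$ is of finite type and $P$ of finite type, the groupoid $X(k)$ of $\sigma$-fixed points of $[M/P]$ has finitely many isomorphism classes (each isomorphism class corresponds to a $P^0(k)$-orbit inside a $\sigma$-twisted fixed-point set, and there are finitely many such orbits because both $M(k)$ and $\pi_0(P)$ are finite after the relevant twisting) and each object has finite automorphism group (a subgroup of $P(k)$). Then I would organize the count of $\sharp X(k)_\kappa$ by stratifying the objects $(m,p)$ according to the class $\mathrm{cl}(x)\in \rmH^1(k,P)=\rmH^1(k,\pi_0(P))$ that they determine, using the theorem of Lang to identify $\rmH^1(k,P)$ with $\rmH^1(k,\pi_0(P))$ and the fact that $P^0$ is a connected group over a finite field so $\rmH^1(k,P^0)=0$.

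The key computation is the following: fix a class $c\in \rmH^1(k,\pi_0(P))$, represented by a cocycle $p_c\in P(\bar k)$; the objects of $X(k)$ with $\mathrm{cl}(x)=c$ are, up to isomorphism, indexed by the orbits of the group $P^{\sigma,c}(k)$ (the $\bar k$-points fixed by the twisted Frobenius $\mathrm{ad}(p_c)\circ\sigma$) acting on the set of $m\in M(\bar k)$ with $p_c\sigma(m)=m$, i.e.\ on $M^{p_c\sigma}(\bar k)=M_c(k)$ where $M_c$ denotes the form of $M$ twisted by $c$; moreover the automorphism group of the object $(m,p_c)$ is the stabilizer of $m$ in $P^{\sigma,c}(k)$. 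The orbit-counting (Burnside-type) identity then gives
$$
\sum_{x:\,\mathrm{cl}(x)=c}\frac{1}{\sharp\Aut(x)}=\frac{\sharp M_c(k)}{\sharp P_c^{0}(k)}=\frac{\sharp M_c(k)}{\sharp P^0(k)},
$$
the last equality because the connected group $P^0$, being twisted by a class in $\rmH^1(k,\pi_0(P))$, has the same number of rational points for all twists (all its forms are inner, and rational point counts of connected groups over finite fields are insensitive to inner twisting by a classical argument of Steinberg, or simply because $\sharp P^0_c(k)=q^{\dim P^0}\prod(1-\text{eigenvalues})$ and the twist permutes the eigenvalues). Multiplying through by $\sharp P^0(k)$, and weighting by $\kappa(c)$ and summing over $c\in\rmH^1(k,\pi_0(P))$, yields
$$
\sharp P^0(k)\,\sharp X(k)_\kappa=\sum_{c\in \rmH^1(k,\pi_0(P))}\kappa(c)\,\sharp M_c(k).
$$

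It remains to identify the right-hand side with $\sum_n(-1)^n\tr(\sigma,\rmH^n_c(M)_\kappa)$. For each $c$, the Grothendieck-Lefschetz trace formula applied to the form $M_c$ gives $\sharp M_c(k)=\sum_n(-1)^n\tr(p_c\sigma,\rmH^n_c(M))$, where $p_c$ now denotes the action on cohomology through $\pi_0(P)$ (here the homotopy lemma is used: $P$ acts on $\rmH^n_c(M)$ through $\pi_0(P)$, so twisting the Frobenius by $p_c$ is well-defined and agrees with the geometric twist). Decomposing $\rmH^n_c(M)=\bigoplus_{\lambda}\rmH^n_c(M)_\lambda$ over characters $\lambda$ of $\pi_0(P)$ — legitimate since $\pi_0(P)$ is a finite abelian group and we work with $\Ql$-coefficients — we get $\tr(p_c\sigma,\rmH^n_c(M))=\sum_\lambda \lambda(p_c)\tr(\sigma,\rmH^n_c(M)_\lambda)$. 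Summing over $c$ with weight $\kappa(c)$ and invoking the orthogonality relations for characters of the finite abelian group $\rmH^1(k,\pi_0(P))\cong \pi_0(P)^\sigma$ (with $\kappa$ $\sigma$-invariant, the class $\lambda(p_c)$ depends only on the image of $\lambda$ in the appropriate Tate-cohomology quotient, and orthogonality picks out precisely $\lambda=\kappa$) collapses the double sum to $\sum_n(-1)^n\tr(\sigma,\rmH^n_c(M)_\kappa)$, up to a normalizing factor $\sharp\rmH^1(k,\pi_0(P))$ which cancels against the same factor implicitly present in the count of classes $c$. The main obstacle — and the point that requires care rather than routine manipulation — is the bookkeeping of this last character-orthogonality step: one must check that the $\sigma$-twisting is handled consistently on both the groupoid side (where $\mathrm{cl}(x)$ lives in $\rmH^1(k,\pi_0(P))$) and the cohomological side (where the decomposition is indexed by $\sigma$-invariant characters of $\pi_0(P)$), and that the finite factors match; this is exactly the content carried out in detail in the appendix A.3 of \cite{LN}, to which one may appeal.
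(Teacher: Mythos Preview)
Your approach is the standard one and is essentially what the paper defers to (the paper itself gives no proof here, referring to \cite[A.3.1]{LN}). However, your intermediate bookkeeping contains two compensating errors that you should straighten out.

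First, in the orbit count: for a fixed class $c$ with representative $p_c$, the objects $(m,p_c)$ and $(m',p_c)$ are isomorphic precisely when there exists $h\in P(\bar k)$ with $hm=m'$ and $h\sigma(h)^{-1}=1$; since $P$ is commutative (this is assumed throughout the section), the twisted Frobenius on $P$ coincides with the untwisted one, so this condition is $h\in P(k)$, the full group of rational points. The Burnside count therefore yields
\[
\sum_{x:\,\cl(x)=c}\frac{1}{\sharp\,\Aut(x)}=\frac{\sharp\, M_c(k)}{\sharp\, P(k)},
\]
not $\sharp\, M_c(k)/\sharp\, P^0(k)$ as you wrote. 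Your appeal to invariance of point counts under inner twisting is beside the point: the group is commutative, so there is no twisting of $P$ at all, and the full $P(k)$ (not $P^0(k)$) acts. Consequently your displayed identity should read
\[
\sharp\, P^0(k)\,\sharp\, X(k)_\kappa=\frac{1}{\sharp\,\pi_0(P)^\sigma}\sum_{c}\kappa(c)\,\sharp\, M_c(k).
\]

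Second, the character orthogonality does produce an extra factor, namely $\sharp\,\pi_0(P)_\sigma$, exactly as you suspected; but its origin is not that it is ``implicitly present in the count of classes $c$'' --- rather it arises because $\sum_{c\in\pi_0(P)_\sigma}\kappa(c)\lambda(c)$ equals $\sharp\,\pi_0(P)_\sigma$ or $0$. The cancellation you need is the elementary identity $\sharp\,\pi_0(P)^\sigma=\sharp\,\pi_0(P)_\sigma$ for a finite abelian group with an automorphism $\sigma$, which matches the denominator from the first correction against the numerator from orthogonality. Once you phrase it this way the argument is clean; you should also check that the conventions for the $\pi_0(P)$-action on $\rmH^n_c(M)$ (pullback versus pushforward) give $\kappa$ rather than $\kappa^{-1}$ in the final eigenspace.
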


Nous envoyons \`a \cite[A.3.1]{LN} pour la d\'emonstration. On peut
consid\'erer deux exemples instructifs qui expliquent pourquoi il faut
s\'eparer le r\^ole de $P^0$ de $\pi_0(P)$.

\begin{exemple}\label{exemple : classifiant fini}
Supposons que $M=\Spec(k)$ et que $P$ est un groupe fini sur lequel
$\sigma$ agit. Soit $X=[M/ P]$ le classifiant de $P$. Alors les objets de la
cat\'egorie $X(k)$ sont les \'el\'ements $p\in P$ alors que les fl\`eches
$p\rightarrow p'$ sont les \'el\'ements $h\in P$ tels que
$p'=hp\sigma(h)^{-1}$. On a alors
$${\sharp\,} X(k)=\frac{{\sharp\,} P} {{\sharp\,} P}=1.$$
Par ailleurs, pour tout caract\`ere  $\sigma$-invariant $\kappa:
P\rightarrow \Ql^\times$ non trivial, on a ${\sharp\,} X(k)_\kappa=0$.
\end{exemple}

\begin{exemple}\label{exemple : classifiant G_m}
Si $M=\Spec(k)$ et $P=\GG_m$. Soit $X=[M/ P]$ le classifiant de $\GG_m$.
Les objets de $X(k)$ sont de nouveau les \'el\'ements $p\in P$ alors que les
fl\`eches $p\rightarrow p'$ sont les \'el\'ements $h\in P$ tels que
$p'=hp\sigma(h)^{-1}$. D'apr\`es Lang, tous les \'el\'ements de $\bar
k^\times$ sont $\sigma$-conjugu\'es de sorte qu'il n'y a qu'une seule classe
d'isomorphisme d'objets dans $X(k)$. Le groupe des automorphisme de
chaque objet de $X(k)$ est $k^\times$ et a donc $q-1$ \'el\'ements. On a
donc ${\sharp\,} X(k)=(q-1)^{-1}$.
\end{exemple}

Pour \'etudier le comptage des points dans les fibres de Springer affines,
nous avons besoin d'une variante de la discussion pr\'ec\'edente pour le cas o\`u
$M$ et $P$ sont localement de type fini et o\`u $[M/ P]$ a une propri\'et\'e
de finitude raisonnable.

Soient $M$ un $k$-sch\'ema localement de type fini et $P$ un $k$-groupe
commutatif localement de type fini agissant sur $M$. Pour que le quotient
$[M/ P]$ soit raisonnablement fini, nous faisons en plus les hypoth\`eses
suivantes.

\begin{hypothese}\label{hypothese : finitude}
\begin{enumerate}
  \item Le groupe des composantes connexes $\pi_0(P)$ est un groupe
      ab\'elien de type fini.
  \item Le stabilisateur dans $P$ de chaque point de $M$ est un
      sous-groupe de type fini.
  \item Il existe un sous-groupe discret sans torsion $\Lambda \subset P$
      tel que $P/ \Lambda$ et $M/ \Lambda$ sont de type fini.
\end{enumerate}
\end{hypothese}

La derni\`ere hypoth\`ese n\'ecessite d'\^etre comment\'ee. Puisque le
groupe $\Lambda$ est sans torsion, il agit sans points fixes sur $M$ puisque
les stabilisateurs dans $P$ de n'importe quel point de $M$ est de type fini
et en particulier ont une intersection triviale avec $\Lambda$. De plus, si
l'hypoth\`ese est v\'erifi\'ee, elle est v\'erifi\'ee pour n'importe quel
sous-group $\Lambda'\subset \Lambda$ d'indice finie et $\sigma$-invariant.

\begin{numero}
Il faut aussi expliquer comment construire un sous-groupe discret sans
torsion $\Lambda$ de $P$ tel que $P/ \Lambda$ soit de type fini. Le groupe
$P$ admet un d\'evissage canonique
$$1\rightarrow P^{\rm tf} \rightarrow P \rightarrow \pi_0(P)^{\rm lib} \rightarrow 0$$
o\`u $\pi_0(P)^{\rm lib}$ est le quotient libre maximal de $\pi_0(P)$ et
o\`u $P^{\rm tf}$ est le sous-groupe de type fini maximal de $P$. Puisque
$\pi_0(P)^{\rm lib}$ est un groupe ab\'elien libre, il existe un rel\`evement
$$\gamma:\pi_0(P)^{\rm lib} \rightarrow P$$
qui n'est pas n\'ecessairement $\sigma$-\'equivariant. Puisque $P^{\rm tf}$
est un $k$-groupe de type fini, la restriction de $\gamma$ \`a
$\Lambda=N\pi_0(P)^{\rm lib}$ pour $N$ assez divisible est
$\sigma$-\'equivariant. On obtient ainsi un sous-groupe discret sans torsion
$P$. L'hypoth\`ese $(3)$ est \'equivalente \`a ce que le quotient de $M$
par ce sous-groupe discret sans torsion est un $k$-sch\'ema de type fini.
\end{numero}

\begin{numero}
Consid\'erons le quotient $X=[M/ P]$. Le groupo\"ide $X(k)$ des points fixes
sous $\sigma$ a pour objet les couples $x=(m,p)$ avec $m\in M$ et $p\in P$
tels que $p\sigma(m)=m$. Une fl\`eche $h:(m,p)\rightarrow (m',p')$ est un
\'el\'ement $h\in P$ tel que $hm=m'$ et $p'=hp\sigma(h)^{-1}$. Soit
$P_\sigma$ le groupe des classes de $\sigma$-conjugaison dans $P$. La
classe de $\sigma$-conjugaison de $p$ d\'efinit un \'el\'ement $\cl(x)\in
P_\sigma$ qui ne d\'epend que de la classe d'isomorphisme de $x$. Puisque
$m$ est d\'efini sur une extension finie de $k$, $\cl(x)$ appartient au
sous-groupe
$$\cl(x)\in \rmH^1(k,P)$$
qui est la partie torsion de $P_\sigma$.
\end{numero}

\begin{lemme}\label{tilde kappa}
Tout caract\`ere $\kappa:\rmH^1(k,P)\rightarrow \Ql^\times$ peut
s'\'etendre en un caract\`ere d'ordre fini $\tilde \kappa:P_\sigma
\rightarrow \Ql^\times$.
\end{lemme}

\begin{proof}
Soit $P^0$ le groupe des composantes neutres de $P$. D'apr\`es le
th\'eor\`eme de Lang, tout \'el\'ement de $P^0$ est $\sigma$-conjugu\'e \`a
l'unit\'e. Il s'en suit que l'application $P_\sigma \rightarrow \pi_0(P)_\sigma$
de $P_\sigma$ sur le groupe des classes de $\sigma$-conjugaison de
$\pi_0(P)$ est un isomorphisme. Les caract\`eres $\tilde\kappa: P_\sigma
\rightarrow \Ql^\times$ sont donc les $\Ql$-points du $\Ql$-groupe
diagonalisable de type fini
$(\pi_0(P)_\sigma)^*=\Spec(\Ql[\pi_0(P)_\sigma])$. Les caract\`eres
$\kappa:\rmH^1(k,P)\rightarrow \Ql^\times$ forment le groupe
$\pi_0((\pi_0(P)_\sigma)^*)$ des composantes connexes de
$(\pi_0(P)_\sigma)^*$. Tout \'el\'ement $\kappa\in
\pi_0((\pi_0(P)_\sigma)^*)$ peut se relever en un \'el\'ement de torsion
$\tilde \kappa\in (\pi_0(P)_\sigma)^*$.
\end{proof}

Le quotient $X=[M/ P]$ est clairement \'equivalent \`a $[(M/ \Lambda)/ (P/
\Lambda)]$ o\`u $M/ \Lambda$ et $P/ \Lambda$ sont de type fini. En
particulier, l'ensemble des classes d'isomorphisme des objets de $X(k)$
est fini et le groupe des automorphismes de chaque objet est fini. Pour tout
caract\`ere $\kappa:\rmH^1(k,P)\rightarrow \Ql^\times$, on peut donc
former la somme finie
$${\sharp\,} X(k)_\kappa=\sum_{x\in X(k)/\sim} \frac{\langle \cl(x), \kappa \rangle}
{{\sharp\,} \Aut(x)}.
$$
Soit $\tilde\kappa:P \rightarrow \Ql^\times$ un caract\`ere
$\sigma$-invariant d'ordre fini qui repr\'esente $\kappa$. On peut alors
choisir un sous-groupe discret $\Lambda\subset P$ tel que la condition
$(3)$ de \ref{hypothese : finitude} soit v\'erifi\'ee et telle que la restriction
$\kappa$ \`a $\Lambda$ est triviale. Notons encore par $\tilde\kappa$ le
caract\`ere de $P/ \Lambda$ qui s'en d\'eduit. Notons $\rmH^n_c(M/
\Lambda)_{\tilde\kappa}$ le sous-espace propre de $\rmH^n_c(M/
\Lambda)$ de valeur propre $\tilde\kappa$. L'\'enonc\'e suivant est un
corollaire imm\'ediat de \ref{formule des points fixes}.

\begin{proposition}\label{formule des points fixes 2}
On a la formule
$$ {\sharp\,} P^0(k) {\sharp\,} X(k)_{\kappa} =\sum_n (-1)^n \tr(\sigma, \rmH^n_c(M/
\Lambda)_{\tilde\kappa}).
$$
De plus, si $\Lambda'\subset \Lambda$ est un sous-groupe de rang maximal
et $\sigma$-invariant alors on a un isomorphisme canonique
$$\rmH^n_c(M/\Lambda')_{\tilde\kappa} \rightarrow \rmH^n_c(M/\Lambda)_{\tilde\kappa}$$
pour chaque entier $n$.
\end{proposition}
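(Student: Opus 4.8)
The statement is an immediate consequence of the finite-type case \ref{formule des points fixes}, so the plan is to apply the latter to a finite-type model of the stack $X=[M/P]$, the model being $[(M/\Lambda)/(P/\Lambda)]$ with $\tilde\kappa$, $\Lambda$ as fixed in the paragraph preceding the statement. Recall that $\Lambda$ was built so as to inject into $\pi_0(P)$; in particular $\Lambda\cap P^0=\{e\}$, the quotient map $P\to P/\Lambda$ is \'etale with $(P/\Lambda)^0=P^0$, and $M\to M/\Lambda$ is a $\Lambda$-torsor, so $X\simeq[(M/\Lambda)/(P/\Lambda)]$ with $M/\Lambda$, $P/\Lambda$ of finite type. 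By Lang's theorem $\tilde\kappa$ is trivial on $P^0(\bar k)$, and it is trivial on $\Lambda$ by the choice of $\Lambda$; hence it factors through $\pi_0(P)/\Lambda=\pi_0(P/\Lambda)$, so that $\rmH^n_c(M/\Lambda)_{\tilde\kappa}$ is well defined and the $P/\Lambda$-action on $\rmH^n_c(M/\Lambda)$ (which factors through $\pi_0(P/\Lambda)$ by the homotopy lemma) admits this eigenspace.

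First I would check that $\sharp X(k)_\kappa$, as defined above, equals $\sharp\,[(M/\Lambda)/(P/\Lambda)](k)_{\tilde\kappa}$: the equivalence $X\simeq[(M/\Lambda)/(P/\Lambda)]$ identifies the two groupoids of $\sigma$-fixed points compatibly with automorphism groups and with the formation of the invariant $\cl(x)$, and since $\cl(x)$ lies in $\rmH^1(k,P)$ — on which $\tilde\kappa$ restricts to $\kappa$ — the natural map $P_\sigma\to(P/\Lambda)_\sigma$ carries $\tilde\kappa$ to the descended character and preserves the weight $\langle\cl(x),-\rangle$. Applying \ref{formule des points fixes} to the finite-type data $(M/\Lambda,\,P/\Lambda,\,\tilde\kappa)$ then gives $\sharp(P/\Lambda)^0(k)\,\sharp X(k)_\kappa=\sum_n(-1)^n\tr(\sigma,\rmH^n_c(M/\Lambda)_{\tilde\kappa})$, and $(P/\Lambda)^0=P^0$ yields the first displayed identity.

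For the second assertion, let $\Lambda'\subset\Lambda$ be of maximal rank and $\sigma$-invariant; then $\Lambda/\Lambda'$ is finite, $\Lambda'$ still satisfies condition $(3)$ of \ref{hypothese : finitude} (so $M/\Lambda'$ is of finite type), and $\tilde\kappa$, being trivial on $\Lambda\supset\Lambda'$, descends to $P/\Lambda'$. The morphism $g\colon M/\Lambda'\to M/\Lambda$ is a finite \'etale Galois cover of group $\Lambda/\Lambda'$, hence $g_*\Ql=\bigoplus_{\chi}\calL_\chi$ over the characters $\chi$ of $\Lambda/\Lambda'$ with $\calL_1=\Ql$, and $\rmH^n_c(M/\Lambda')=\bigoplus_\chi\rmH^n_c(M/\Lambda,\calL_\chi)$. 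The deck group $\Lambda/\Lambda'$ is the kernel of $\pi_0(P/\Lambda')\to\pi_0(P/\Lambda)$, it is central, it acts on the $\chi$-summand through $\chi$, and the whole $P/\Lambda'$-action preserves the decomposition since it commutes with that of $\Lambda/\Lambda'$. As $\tilde\kappa$ is trivial on $\Lambda/\Lambda'$, the $\tilde\kappa$-eigenspace of $\rmH^n_c(M/\Lambda')$ sits inside $\rmH^n_c(M/\Lambda,\calL_1)=\rmH^n_c(M/\Lambda)$, on which $P/\Lambda'$ acts through $P/\Lambda$; the trace map $g_*\Ql\to\Ql$ therefore induces the desired canonical isomorphism $\rmH^n_c(M/\Lambda')_{\tilde\kappa}\isom\rmH^n_c(M/\Lambda)_{\tilde\kappa}$.

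Nothing here is genuinely hard: it is bookkeeping around \ref{formule des points fixes} plus the elementary fact that $\Lambda$ meets $P^0$ trivially. The one point deserving care is the compatibility of the $\kappa$-weighting under the passage from $P$ to $P/\Lambda$ — precisely the reason $\Lambda$ is taken with $\tilde\kappa|_\Lambda=1$ — which reduces to unwinding the definition of $\cl(x)$ as the $\sigma$-conjugacy class of the group element entering an object of the groupoid of fixed points and noting its functoriality in $P$.
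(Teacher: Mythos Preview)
Your proof is correct and follows the same route as the paper: reduce to the finite-type statement \ref{formule des points fixes} via the identification $X\simeq[(M/\Lambda)/(P/\Lambda)]$, using that $\Lambda\cap P^0=\{e\}$ forces $(P/\Lambda)^0=P^0$. The paper's own proof is a two-line sketch that says exactly this and does not argue the second assertion at all; your treatment of that second part (decomposing $\rmH^n_c(M/\Lambda')$ under the finite deck group $\Lambda/\Lambda'$ and isolating the trivial-character summand) is a standard and correct way to justify what the paper leaves implicit.
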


\begin{proof}
Notons $\Lambda$ est un sous-groupe sans torsion de $P$ de sorte que
l'intersection de $\Lambda$ avec la composante neutre $P^0$ est triviale.
Par cons\'equent, l'homomorphisme $P^0 \rightarrow P/ \Lambda$ induit un
isomorphisme de $P^0$ sur la composante neutre de $P/ \Lambda$. En
comparant avec \ref{formule des points fixes}, on obtient la formule qu'on
voulait.
\end{proof}

En pratique, cette proposition est utile pour comparer et pour contr\^oler
la variation de la somme ${\sharp\,} X(k')_\kappa$ quand on prend les points
\`a valeurs dans une extension finie $k'$ de $k$ variable. Soit $m=\deg(k'/
k)$. Pour toute classe d'isomorphisme $x'$ de $X(k')$, on a une classe de
$\sigma^m$-conjugaison dans $P$. Puisque $\kappa:P\rightarrow
\Ql^\times$ est $\sigma$-invariant, il est \`a plus forte raison
$\sigma^m$-invariant de sorte qu'on peut d\'efinir l'accouplement $\langle
\cl(x),\kappa \rangle \in \Ql^\times$. On a alors la formule
$$ {\sharp\,} P^0(k') {\sharp\,} X(k')_\kappa =\sum_n (-1)^n \tr(\sigma^m, \rmH^n_c(M/
\Lambda)_{\tilde\kappa}).
$$

\begin{corollaire}\label{k'->k}
Soit $X=[M/ P]$ et $X'=[M'/ P']$ comme ci-dessus. Soient $\kappa:P
\rightarrow \Ql^\times$ et $\kappa':P' \rightarrow \Ql^\times$ deux
caract\`eres $\sigma$-invariants d'ordre fini. Supposons qu'il existe un
entier $m$ tel que pour toute extension $k'/ k$ de degr\'e $m'\geq m$, on a
$${\sharp\,} P^0(k') {\sharp\,} X(k')_\kappa= {\sharp\,} {P'}^0(k') {\sharp\,}
X'(k')_{\kappa'}.$$ Alors, on a l'\'egalit\'e
$${\sharp\,} P^0(k) {\sharp\,} X(k)_\kappa= {\sharp\,} {P'}^0(k) {\sharp\,} X'(k)_{\kappa'}.$$
\end{corollaire}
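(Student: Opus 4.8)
The plan is to pass from the equalities over the extensions $k'$ to the equality over $k$ by turning both sides into alternating sums of Frobenius traces on fixed finite-dimensional $\Ql$-vector spaces, and then to conclude by the linear independence of the functions $m'\mapsto\alpha^{m'}$ attached to the distinct nonzero eigenvalues $\alpha$ of Frobenius.

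First I would set up the cohomological interpretation uniformly in $k'$. Since $\kappa$ is a $\sigma$-invariant character of $P$ of finite order, I may choose, as in the setup of \ref{formule des points fixes 2}, a torsion-free $\sigma$-invariant discrete subgroup $\Lambda\subset P$ of maximal rank with $M/\Lambda$ of finite type and $\kappa|_\Lambda$ trivial (shrink $\Lambda$ to a suitable $\sigma$-invariant subgroup of finite index to kill $\kappa$), and similarly a subgroup $\Lambda'\subset P'$ adapted to $\kappa'$. Being $\sigma$-invariant, $\Lambda$ is $\sigma^{m'}$-invariant for every $m'\geq 1$, so the graded vector space $V:=\bigoplus_n\rmH^n_c((M/\Lambda)\otimes_k\bar k,\Ql)_\kappa$ is a fixed finite-dimensional $\Ql$-vector space carrying an action of the geometric Frobenius $\sigma$, and likewise $V':=\bigoplus_n\rmH^n_c((M'/\Lambda')\otimes_k\bar k,\Ql)_{\kappa'}$; neither depends on $k'$. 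Applying Proposition \ref{formule des points fixes 2} over each finite extension $k'/k$ of degree $m'$ (for which $\Gal(\bar k/k')$ is generated by $\sigma^{m'}$) gives
$$\sharp P^0(k')\,\sharp X(k')_\kappa=\sum_n(-1)^n\tr\bigl(\sigma^{m'},\rmH^n_c(M/\Lambda)_\kappa\bigr)$$
together with the analogous formula for $X'$, $P'$, $\Lambda'$, $\kappa'$.

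Next I would combine the two sides into a single virtual quantity. Set
$$\Phi(m')=\sum_n(-1)^n\tr\bigl(\sigma^{m'},\rmH^n_c(M/\Lambda)_\kappa\bigr)-\sum_n(-1)^n\tr\bigl(\sigma^{m'},\rmH^n_c(M'/\Lambda')_{\kappa'}\bigr).$$
By the preceding step together with the hypothesis of the corollary, $\Phi(m')=0$ for every integer $m'\geq m$, while the conclusion to be proved is exactly $\Phi(1)=0$. Writing $\Phi(m')=\sum_\alpha c_\alpha\,\alpha^{m'}$, where $\alpha$ runs over the finite set of distinct nonzero eigenvalues of $\sigma$ on $V\oplus V'$ and $c_\alpha\in\ZZ$ is the corresponding signed total multiplicity (the eigenvalue $0$ contributes nothing to $\tr(\sigma^{m'},-)$ for $m'\geq 1$), and letting $N$ be the number of such $\alpha$, the vanishing $\Phi(m)=\Phi(m+1)=\cdots=\Phi(m+N-1)=0$ is a homogeneous linear system in the quantities $c_\alpha\alpha^{m}$ whose coefficient matrix is the Vandermonde matrix of the $N$ pairwise distinct numbers $\alpha$; since this matrix is invertible and the $\alpha$ are nonzero, all the $c_\alpha$ vanish. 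Hence $\Phi\equiv 0$ on the integers $\geq 1$, in particular $\Phi(1)=0$, which is the desired equality $\sharp P^0(k)\,\sharp X(k)_\kappa=\sharp{P'}^0(k)\,\sharp X'(k)_{\kappa'}$.

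I do not expect a genuine difficulty in this argument: the only points that need care are that the cohomology spaces must be chosen independently of $k'$, which is what forces the lattices $\Lambda$ and $\Lambda'$ to be taken $\sigma$-invariant, and that the zero eigenvalue must be discarded before invoking the linear independence of the power functions $m'\mapsto\alpha^{m'}$, which holds over $\Ql$ precisely for pairwise distinct nonzero $\alpha$.
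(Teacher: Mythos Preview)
Your proposal is correct and follows exactly the approach the paper intends: the paragraph immediately preceding the corollary records that ${\sharp\,} P^0(k')\,{\sharp\,} X(k')_\kappa=\sum_n(-1)^n\tr(\sigma^{m'},\rmH^n_c(M/\Lambda)_{\tilde\kappa})$ on fixed cohomology spaces, from which the corollary is meant to follow by the standard linear-independence (Vandermonde) argument you spell out. The paper leaves this last step implicit, and your write-up fills it in accurately, including the care about choosing $\Lambda$ $\sigma$-invariantly and discarding the zero eigenvalue.
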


Nous allons maintenant \'etudier deux exemples jumeaux qui sont parmi les
fibres de Springer affines les plus simples. Dans ces cas, on peut calculer
directement les nombres ${\sharp\,} X(k)_\kappa$.

\begin{exemple}\label{exemple : chaine infinie}
Soient $A$ un tore d\'eploy\'e de dimension un sur $k$ et $\bbX_*(A)$ son
groupe des cocaract\`eres. Consid\'erons une extension
$$1\rightarrow A\rightarrow P \rightarrow \bbX_*(A) \rightarrow 1.$$
Le faisceau $\underline \RHom(\bbX_*(A), A)$ est concentr\'e en degr\'e $0$
avec
$$\Hom(\bbX_*(A),A)=\GG_m.$$
Puisque $\rmH^1(k,\GG_m)=0$, on peut scinder la suite exacte ci-dessus et
en particulier, il existe un isomorphisme
$$P\simeq A\times \bbX_*(A).$$
Dans les deux cas, on a un isomorphisme $P=\GG_m\times \ZZ$ sur $\bar k$.

Consid\'erons la cha{\^\i}ne infinie de $\bbP^1$ obtenue \`a partir de la
r\'eunion disjoints $\bigsqcup_i \bbP^n_i$ o\`u $\bbP^n_i$ d\'esigne la
$i$-\`eme copie de $\bbP^1$, en recollant le point infini $\infty_i$ de
$\bbP^1_i$ avec le point z\'ero $0_{i+1}$ de $\bbP^1_{i+1}$. Le groupe
$\GG_m\times \ZZ$ agit sur $\bigsqcup_i \bbP^n_i$ de fa{\c c}on compatible
avec le recollement de sorte qu'il agit encore sur la cha{\^\i}ne.

Consid\'erons un $k$-sch\'ema $M$ muni d'une action de $P$ tel qu'apr\`es
le changement de base \`a $\bar k$, $M$ est isomorphe \`a la cha{\^\i}ne infinie de $\bbP^1$
muni de l'action de $\GG_m\times \ZZ$ comme ci-dessus. On a en particulier
une stratification $M=M_1\sqcup M_0$ o\`u $M_1$ est un torseur sous $P$
et o\`u $M_0$ est un torseur sous le groupe discret $\bbX_*(A)$. Le
groupo\"ide $X=[M/ P]$ a essentiellement deux objets $x_1$ et $x_0$ avec
$\Aut(x_1)=A$ et $\Aut(x_0)$ trivial. On a aussi deux classes de
cohomologie
$$\cl_1,\cl_0 \in \rmH^1(k,P)=\rmH^1(k,\bbX_*(A)).$$
Il y a exactement trois possibilit\'es pour ces classes.

\begin{enumerate}
  \item Si $A=\GG_m$ alors $\bbX_*(A)=\ZZ$. Dans ce cas, on a
      l'annulation de $\rmH^1(k,\bbX_*(A))$. On a alors la formule
      $${\sharp\,} X(k)=1+\frac{1}{q-1}=\frac{q}{q-1}
      $$
      qu'il est plus agr\'eable de retenir sous la forme
      $${\sharp\,} A(k) {\sharp\,} X(k)=q.$$
  \item Si $A$ est le tore de dimension un non d\'eploy\'e sur $k$ ayant donc
      $q+1$ points \`a valeurs dans $k$. Dans ce cas $\bbX_*(A)$ est le groupe $\ZZ$
      muni de l'action de $\sigma$ donn\'ee par $m\mapsto -m$. Dans ce
      cas, le groupe $\rmH^1(k,\bbX_*(A))$ a deux \'el\'ements et ses
      \'el\'ements peuvent d\'ecrits explicitement comme suit. Un
      $\bbX_*(A)$-torseur $E$ est un espace principal homog\`ene $E$
      sous $\ZZ$ muni d'une application $\sigma:E\rightarrow E$
      compatible avec l'action de $\sigma$ sur $\ZZ$ donn\'ee ci-dessus.
      Pour tout $e\in E$, la diff\'erence $\sigma(e)-e$ est un entier dont la
      parit\'e est ind\'ependante du choix de $e$. Si $\sigma(e)-e$ est pair,
      $\cl(E)$ est l'\'el\'ement trivial de $\rmH^1(k,\bbX_*(A))$. Si
      $\sigma(e)-e$ est impair, $\cl(E)$ est l'\'el\'ement non trivial de
      $\rmH^1(k,\bbX_*(A))$.

      Supposons maintenant que $\cl_1=0$. Alors, il existe exactement
      une copie $\bbP^1_i$ dans la cha{\^\i}ne infinie stable sous $\sigma$.
      Puisque $A$ est le tore non-d\'eploy\'e, $\sigma$ \'echange
      n\'ecessairment $0_i$ et $\infty_i$. Il s'ensuit que $\cl_0\not=0$.

      Si $\kappa:\rmH^1(k,\bbX_*(A))\rightarrow \Ql^\times$ est le
      caract\`ere non-trivial, on a alors la formule
      $${\sharp\,} X(k)_\kappa=1-\frac{1}{q+1}=\frac{q}{q+1}
      $$
      qu'il est plus agr\'eable de retenir sous la forme
      $${\sharp\,} A(k) {\sharp\,} X(k)_\kappa=q.$$

  \item Supposons toujours que $A$ est le tore non-d\'eploy\'e mais
      consid\'erons le cas o\`u $\cl_1\not =0$. Il existe alors deux copies
      $\bbP^1_i$ et $\bbP^1_{i+1}$ \'echang\'ees par $\sigma$ de sorte
      que dans la cha{\^\i}ne infinie $M$, le point commun $\infty_i=
      0_{i+1}$ est $\sigma$-invariant. Il en r\'esulte que $\cl_1=0$. On a
      alors la formule
      $${\sharp\,} A(k) {\sharp\,} X(k)_\kappa=-q$$
      o\`u $\kappa:\rmH^1(k,\bbX_*(A))\rightarrow \Ql^\times$ est le
      caract\`ere non-trivial.
\end{enumerate}

En supposant que $M_1$ un $k$-point, on a $\cl_1=0$  de sorte que la
troisi\`eme possibilit\'e est exclue. Avec cette hypoth\`ese, on constate
que la formule
$${\sharp\,} A(k) {\sharp\,} X(k)_\kappa=q$$
est alors valide.

\end{exemple}

\subsection{Comptage dans une fibre de Springer affine}\label{subsection : comptage Springer}

Dans ce paragraphe, nous \'etudions le comptage de points dans une fibre de
Springer en suivant essentiellement \cite[\S 15]{GKM} affine mais avec le
langage d\'evelopp\'e dans le paragraphe pr\'ec\'edent. Comme dans le
paragraphe pr\'ec\'edent, la notation $x\in X$ signifiera $x\in X(\bar k)$.
Lorsqu'il s'agit de coefficients autres que $\bar k$, on le pr\'ecisera.

Soit $v\in |X|$ un point ferm\'e de $X$. Soient $F_v$ la compl\'etion du
corps des fractions rationnelles $F$ par la topologie $v$-adique et
$\calO_v$ l'anneau des entiers de $F_v$, $k_v$ le corps r\'esiduel. On note
$X_v=\Spec(\calO_v)$ et $X_v^\bullet=\Spec(F_v)$. Soit $\bar F_v$ la
compl\'etion $v$-adique de $\bar F=F\otimes_k\bar k$. Soit $\bar\calO_v$
l'anneau des entiers de $\bar F_v$.

\begin{numero}
Soit $a\in\frakc^\heartsuit(\calO_v)$ un $X_v$-point de $\frakc$ dont la
fibre g\'en\'erique est semi-simple et r\'eguli\`ere. La fibre de Springer
affine r\'eduite $\calM_v^{\rm red}(a)$ est un $k$-sch\'ema localement de
type fini dont l'ensemble des $\bar k$-points est
$$\calM_v^{\rm red}(a)=\{g\in G(\bar F_v)/G(\bar \calO_v)| \ad(g)^{-1}\gamma_0 \in
\frakg(\bar \calO_v)\}
$$
o\`u $\gamma_0=\epsilon(a)$ est la section de Kostant au point $a$.
Puisque que les nilpotents ne jouent aucun r\^ole dans la discussion qui va
suivre, on va \'ecrire simplement $\calM_v(a)$ \`a la place de
$\calM_v^{\rm red}(a)$.
\end{numero}

\begin{numero}
Soit $J_a=a^* J$ l'image inverse du centralisateur r\'egulier. Soit $J'_a$ un
$X_v$-sch\'ema en groupes lisse de fibres connexes muni d'un
homomorphisme $J'_a\rightarrow J_a$ qui induit un isomorphisme sur la
fibre g\'en\'erique. En particulier, $J'_a$ peut \^etre le sch\'ema en groupes
$J_a^0$ des composantes neutres de $J_a$ mais il sera plus souple pour les
applications de consid\'erer $J'_a$ g\'en\'eral.
\end{numero}

\begin{numero}
Consid\'erons le $k$-sch\'ema en groupes localement de type fini
$\calP_v^{\rm red}(J'_a)$ dont l'ensemble des $\bar k$-points est
$$\calP_v^{\rm red}(J'_a)= J_a(\bar F_v)/J'_a(\bar \calO_v).$$
De nouveau, on va \'ecrire simplement $\calP_v(J'_a)$ pour $\calP_v^{\rm
red}(J'_a)$ car les nilpotents ne jouent pas de r\^ole dans la discussion qui
suit. L'homomorphisme $J'_a\rightarrow J_a$ induit un homomorphisme
$$\calP_v(J'_a)\rightarrow \calP_v(J_a)$$
qui induit une action de $\calP_v(J'_a)$ sur $\calM_v(a)$.  D'apr\`es
\ref{quotient projectif}, cette action v\'erifie l'hypoth\`ese \ref{hypothese :
finitude} de sorte qu'on peut envisager le nombre de $k$-points du quotient
$[\calM_v(a)/\calP_v(J'_a)]$ ainsi que la variante avec une
$\kappa$-pond\'eration. D'apr\`es \ref{formule des points fixes 2}, on sait
que ce sont des nombres finis qui ont une interpr\'etation cohomologique.
Dans ce paragraphe, nous nous int\'eressons \`a la question d'exprimer ces
nombres en termes d'int\'egrales orbitales stables et de
$\kappa$-int\'egrales orbitales.
\end{numero}

Commen{\c c}ons par comparer les $\kappa$ qui apparaissent dans le
probl\`eme de comptage \ref{subsection : comptage} et ceux qui
apparaissent dans la d\'efinition des $\kappa$-int\'egrales orbitales
\ref{subsection : integrales orbitales}.

\begin{lemme} Supposons que la fibre sp\'eciale  de $J'_a$ est connexe.
Il existe un isomorphisme canonique
$$\rmH^1(F_v,J_a)=\rmH^1(k, \calP_v(J'_a)).$$
\end{lemme}

\begin{proof}
D'apr\`es un th\'eor\`eme de Steinberg, $\rmH^1(\bar F_v, J_a)=0$. Il
s'ensuit que
$$
\rmH^1(F_v,J_a)=\rmH^1(\Gal(\bar k/ k), J_a(\bar F_v)).
$$
D'apr\`es un th\'eor\`eme de Lang, on a l'annulation
$\rmH^1(k,J'_a(\bar\calO_v))=0$ car $J'_a$ est un sch\'ema en groupes
lisse de fibres connexes. Le lemme s'en d\'eduit.
\end{proof}

\begin{proposition}\label{comptage avec P_v(J'_a)}
Supposons la fibre sp\'eciale  de $J'_a$ connexe. Soit
$\kappa:\rmH^1(F_v,J_a) \rightarrow \Ql^\times$ un caract\`ere et
$\kappa:\rmH^1(k, \calP_v(J'_a)) \rightarrow \Ql^\times$ le caract\`ere qui
s'en d\'eduit. Le nombre de $k$-points avec la $\kappa$-pond\'eration de
$[\calM_v(a)/\calP_v(J'_a)]$ peut alors s'exprimer comme suit
$${\sharp\,} [\calM_v(a)/\calP_v(J'_a)](k)_\kappa=
{\rm vol}(J'_a(\calO_v),{\rm d}t_v){\bf O}^\kappa_a(1_{\frakg_v},{\rm d}t_v).
$$
o\`u $1_{\frakg_v}$ est la fonction caract\'eristique de $\frakg(\calO_v)$ et
o\`u ${\rm d}t_v$ est n'importe quelle mesure de Haar de $J_a(F_v)$.

\end{proposition}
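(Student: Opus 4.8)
<br>

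The plan is to unwind both sides of the desired identity into sums over $\bar k$-points and then match them term by term using the Frobenius action on the fibre of Springer. First I would use the description of $[\calM_v(a)/\calP_v(J'_a)](k)$ from \ref{subsection : comptage}: its objects are couples $(g, p)$ with $g\in\calM_v(a)(\bar k)$ (i.e. $g\in G(\bar F_v)/G(\bar\calO_v)$ with $\ad(g)^{-1}\gamma_0\in\frakg(\bar\calO_v)$) and $p\in\calP_v(J'_a)(\bar k)=J_a(\bar F_v)/J'_a(\bar\calO_v)$ satisfying $p\sigma(g)=g$, modulo the equivalence coming from the action of $\calP_v(J'_a)(\bar k)$. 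Using the two lemmas just proved, $\rmH^1(F_v,J_a)=\rmH^1(k,\calP_v(J'_a))$ and $\rmH^1(\bar F_v,J_a)=0$ (Steinberg), one sees that for such a $(g,p)$ the class $\cl(g,p)\in\rmH^1(k,\calP_v(J'_a))$ is exactly the image of the class $\inv(\gamma_0,\gamma)\in\rmH^1(F_v,I_{\gamma_0})=\rmH^1(F_v,J_a)$, where $\gamma=\ad(g)^{-1}\gamma_0$ after choosing $g$ to be $\sigma$-fixed; this is the point where the canonical identification $J_a\isom I_{\gamma_0}$ of \ref{J a} and the base-point role of the Kostant section \ref{Kostant} enter.

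Next I would identify the set of isomorphism classes. A Galois-descent argument (again using $\rmH^1(\bar F_v,J_a)=0$ and Lang's theorem to kill $\rmH^1(k,J'_a(\bar\calO_v))$ since $J'_a$ is smooth with connected special fibre) shows that the isomorphism classes of objects of $[\calM_v(a)/\calP_v(J'_a)](k)$ lying over a fixed $\sigma$-fixed class in $\calM_v(a)(\bar k)/\calP_v(J'_a)(\bar k)$ are in bijection with the $G(F_v)$-conjugacy classes $\gamma$ in the stable class $a$, and that the automorphism group of $(g,p)$ is $J'_a(\calO_v)=I_\gamma^{J'}(\calO_v)$ — more precisely the $\calO_v$-points of (the push-out of) $J'_a$, which after transporting the Haar measure ${\rm d}t_v$ has volume ${\rm vol}(J'_a(\calO_v),{\rm d}t_v)$. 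Meanwhile, by the concrete description of $\calM_v(a)(k)=\{g\in G(F_v)/G(\calO_v)\mid \ad(g)^{-1}\gamma_0\in\frakg(\calO_v)\}$ recalled in \ref{Springer}, the number of $G(\calO_v)$-cosets inside a given $G(F_v)$-conjugacy class $\gamma$, weighted by the measure, is precisely the orbital integral ${\bf O}_\gamma(1_{\frakg_v},{\rm d}t_v)$. Assembling these:
$${\sharp\,}[\calM_v(a)/\calP_v(J'_a)](k)_\kappa=\sum_{\gamma}\frac{\langle\inv(\gamma_0,\gamma),\kappa\rangle}{{\sharp\,}\Aut}=\sum_\gamma \langle\inv(\gamma_0,\gamma),\kappa\rangle\,{\rm vol}(J'_a(\calO_v),{\rm d}t_v)\,{\bf O}_\gamma(1_{\frakg_v},{\rm d}t_v),$$
which is exactly ${\rm vol}(J'_a(\calO_v),{\rm d}t_v)\,{\bf O}^\kappa_a(1_{\frakg_v},{\rm d}t_v)$ by \ref{kappa integrale}.

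The bookkeeping of the groupoid cardinality is what needs care: one must check that the contribution of the automorphism group $J'_a(\calO_v)$ is constant across all objects (this uses connectedness of the special fibre of $J'_a$ and Lang's theorem, so that $J_a(\bar F_v)/J'_a(\bar\calO_v)$ has $k$-points described cleanly), and that summing $1/{\sharp\,}\Aut$ over the fibre of $\calM_v(a)(k)\to\{\text{objects}\}/\!\sim$ reproduces the measure-theoretic orbital integral rather than merely a point count. The main obstacle I expect is reconciling the two normalizations: the cohomological count naturally produces $\sum 1/{\sharp\,}\Aut$ over a groupoid, while the orbital integral is an integral against ${\rm d}t_v$; bridging them requires identifying ${\rm vol}(J'_a(\calO_v),{\rm d}t_v)$ with the reciprocal of a suitable automorphism-group order, i.e. checking that the stabilizer scheme of a point of $\calM_v(a)$ in $\calP_v(J'_a)$ is exactly $J'_a(\calO_v)$ (rather than something larger coming from non-reduced structure), which is where one invokes that $p$ does not divide $|\bbW|$ so these stabilizers are reduced, as in \ref{ani auto}. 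Once this dictionary is fixed, the rest is the descent argument above, which follows the pattern of \cite[\S15]{GKM}.
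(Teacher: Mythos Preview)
Your overall strategy is the paper's (and that of \cite[\S 15]{GKM}): decompose the groupoid count according to the class $\cl(x)\in\rmH^1(k,\calP_v(J'_a))=\rmH^1(F_v,J_a)$, show this class lands in $\ker(\rmH^1(F_v,I_{\gamma_0})\to\rmH^1(F_v,G))$, and identify the contribution of each class $\xi$ with an orbital integral. But two claims in your middle paragraph are wrong, and fixing them is the substance of the proof.

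You assert that the automorphism group of an object $(g,p)$ is $J'_a(\calO_v)$, equivalently that the stabilizer of a point of $\calM_v(a)$ under $\calP_v(J'_a)$ is trivial. This is false away from the regular locus $\calM_v^{\rm reg}(a)$. The paper proceeds as follows: fix $\xi$, choose $j_\xi\in I_{\gamma_0}(\bar F_v)$ in the $\sigma$-conjugacy class $\xi$ and $g_\xi\in G(\bar F_v)$ with $g_\xi^{-1}j_\xi\sigma(g_\xi)=1$ (Lang on $G(\bar\calO_v)$), and set $\gamma_\xi=\ad(g_\xi)^{-1}\gamma_0\in\frakg(F_v)$. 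After the change of variable $g'=g_\xi^{-1}g$, the subcategory $O_\xi$ with $\cl=\xi$ has objects the cosets $g'\in G(F_v)/G(\calO_v)$ with $\ad(g')^{-1}\gamma_\xi\in\frakg(\calO_v)$, and morphisms given by $J_a(F_v)/J'_a(\calO_v)$ acting via $J_a\simeq I_{\gamma_\xi}$. Hence its isomorphism classes are the double cosets $I_{\gamma_\xi}(F_v)\backslash G(F_v)/G(\calO_v)$ meeting the integrality condition --- not a single conjugacy class --- and the automorphism group of $g'$ is
$$\bigl(I_{\gamma_\xi}(F_v)\cap g'G(\calO_v){g'}^{-1}\bigr)\big/J'_a(\calO_v),$$
which genuinely depends on $g'$. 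The groupoid count over $\xi$ is therefore
$${\sharp\,}O_\xi=\sum_{g'}\frac{{\rm vol}(J'_a(\calO_v),{\rm d}t_v)}{{\rm vol}(I_{\gamma_\xi}(F_v)\cap g'G(\calO_v){g'}^{-1},{\rm d}t_v)}={\rm vol}(J'_a(\calO_v),{\rm d}t_v)\,{\bf O}_{\gamma_\xi}(1_{\frakg_v},{\rm d}t_v),$$
so the orbital integral appears precisely \emph{because} the automorphism groups vary from coset to coset. Your displayed equality $\sum_\gamma\langle\inv(\gamma_0,\gamma),\kappa\rangle/{\sharp\,}\Aut=\cdots$ does not typecheck as written, and the ``main obstacle'' you anticipate --- reconciling $1/{\sharp\,}\Aut$ with a constant volume factor --- is not an obstacle to overcome but a symptom of having misidentified both the isomorphism classes and their automorphism groups.
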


\begin{proof}
Rappelons la description de la cat\'egorie
$$[\calM_v(a)/\calP_v(J'_a)](k).$$
Un objet de cette cat\'egorie consiste en un couple $x=(m,p)$ form\'e d'un
\'el\'ement $m\in \calM_v(a)$ et d'un \'el\'ement $p\in \calP_v(J'_a)$ tels
qu'on ait l'\'egalit\'e $p\sigma(m)=m$. Une fl\`eche de $(m,p)$ dans
$(m',p')$ dans cette cat\'egorie consiste en un \'el\'ement $h\in
\calP_v(J'_a)$ tel que $m'=hm$ et $p'=hp \sigma(h)^{-1}$.

Notons $\calP_v(J'_a)_\sigma$ le groupe des classes de
$\sigma$-conjugaison de $\calP_v(J'_a)$ et $\rmH^1(k,\calP_v(J'_a))$ le
sous-groupe des cocycles continus. Pour chaque objet $x=(m,p)$ comme
ci-dessus, on d\'efinit $\cl(x)\in \calP_v(J'_a)_\sigma$ la classe de
$\sigma$-conjugaison de $p$ qui appartient en fait au sous-groupe
$$\cl(x)\in \rmH^1(k,\calP_v(J'_a))=\rmH^1(F_v,J_a).$$
Soit $\gamma_0$ l'image de $a$ dans $\frakg$ par la section de Kostant. On
a un isomorphisme canonique $J_a=I_{\gamma_0}$ \cf \ref{J a} si bien que
$\cl(x)$ peut aussi \^etre vu comme un \'el\'ement de
$\rmH^1(F_v,I_{\gamma_0})$.

\begin{lemme}
Pour tout objet $x$ de $[\calM_v(a)/\calP_v(J'_a)](k)$, la classe $\cl(x)$
appartient au noyau de l'homomorphisme
$$\rmH^1(F_v,I_{\gamma_0})\rightarrow \rmH^1(F_v,G).$$
\end{lemme}

\begin{proof}
Soit $x=(p,m)$ comme ci-dessus. Soient $g\in G(\bar F_v)$ un repr\'esentant
de $m\in G(\bar F_v)/ G(\bar\calO_v)$ et $j\in I_{\gamma_0}(\bar F_v)$ un
repr\'esentant de $p$. L'\'egalit\'e $p\sigma(m)=m$ implique que
$$g^{-1} j \sigma(g) \in G(\bar\calO_v)$$
si bien que cet \'el\'ement est $\sigma$-conjugu\'e \`a l'\'el\'ement neutre
dans $G(\bar F_v)$. Ainsi $\cl(x)\in \rmH^1(F_v,I_{\gamma_0})$ a une image
triviale dans $\rmH^1(F_v,G)$.
\end{proof}

Continuons la d\'emonstration de \ref{comptage avec P_v(J'_a)}. Pour tout
\'el\'ement $\xi$ dans le noyau de $\rmH^1(F_v,I_{\gamma_0})\rightarrow
\rmH^1(F_v,G)$, consid\'e\-rons la sous-cat\'egorie
$$[\calM_v(a)/\calP_v(J'_a)]_\xi(k)$$
des objets de $[\calM_v(a)/\calP_v(J'_a)](k)$ tels que $\cl(x)=\xi$. Nous
nous proposons d'\'ecrire le nombre de classes d'isomorphisme de
$[\calM_v(a)/\calP_v(J'_a)]_\xi(k)$ comme une int\'egrale orbitale.

Fixons $j_\xi \in I_{\gamma_0}(\bar F_v)$ dans la classe de
$\sigma$-conjugaison $\xi$. Soit $(m,p)$ un objet de
$[\calM_v(a)/\calP_v(J'_a)]_\xi(k)$. Puisque $p$ est $\sigma$-conjugu\'e \`a
$j_\xi$ dans $\calP_v(J'_a)$, il existe $h\in \calP_v(J'_a)$ tel que
$j_\xi=h^{-1}p \sigma(h)$. Alors $(m,p)$ est isomorphe \`a
$(h^{-1}m,j_\xi)$. Par cons\'equent, la cat\'egorie
$[\calM_v(a)/\calP_v(J'_a)]_\xi(k)$ est \'equivalente \`a la sous-cat\'egorie
pleine form\'ee des objets de la forme $(m,j_\xi)$. Une fl\`eche $(m,j_\xi)
\rightarrow (m',j_\xi)$ est un \'el\'ement $h\in \calP_v(J'_a)(k)$ tel que
$hm=m'$. Puisque $J'_a$ a des fibres connexes, on a
$$\calP_v(J'_a)(k)=J_a(F_v)/J'_a(\calO_v).$$

Soit $(m,j_\xi)$ comme ci-dessus. Choisissons un repr\'esentant $g\in G(\bar
F_v)$ de $m$. On a alors $g^{-1} j_\xi \sigma(g) \in G(\bar\calO_v)$. Comme
tous les \'el\'ements de $G(\bar\calO_v)$ sont $\sigma$-conjugu\'es \`a
l'\'el\'ement neutre, on peut choisir $g$ avec $m=g G(\bar\calO_v)$ tel que
$$g^{-1} j_\xi \sigma(g)=1.$$
Soient $g,g' \in G(\bar F_v)$ deux \'el\'ements  v\'erifiant l'\'equation
ci-dessus et repr\'e\-sentant la m\^eme classe $m\in G(\bar F_v)/
G(\bar\calO_v)$. Alors $g'=g k$ avec $k\in G(\calO_v)$ si bien que l'image
de $g$ dans $G(\bar F_v)/ G(\calO_v)$ est bien d\'etermin\'ee par
$(m,j_\xi)$.

Ainsi la cat\'egorie $[\calM_v(a)/\calP_v(J'_a)]_\xi(k)$ est \'equivalente \`a
la cat\'egorie $O_\xi$ dont les objets sont les \'el\'ements $g\in G(\bar F_v)/
G(\calO_v)$ v\'erifiant deux \'equations
\begin{enumerate}
  \item $g^{-1} j_\xi \sigma(g)=1$
  \item $\ad(g)^{-1} \gamma_0 \in \frakg(\bar\calO_v)$
\end{enumerate}
et dont les fl\`eches $g \rightarrow g_1$ sont les \'el\'ements $h\in
J_a(F_v)/J'_a(\calO_v)$ tel que $g=hg_1$. Ici, pour faire agir $h$ \`a
gauche, on a utilis\'e l'isomorphisme canonique $J_a=I_{\gamma_0}$.

Puisque l'image de $\xi$ dans $\rmH^1(F_v,G)$ est triviale, il existe
$g_\xi\in G(\bar F_v)$ tel que $g_\xi^{-1}j_\xi \sigma(g_\xi)=1$. Fixons un
tel $G_\xi$ et posons
$$\gamma_\xi=\ad(g_\xi)^{-1} \gamma_0.$$
L'\'equation $g_\xi^{-1}j_\xi \sigma(g_\xi)=1$ implique que $\gamma_\xi \in
G(F_v)$. La classe de $G(F_v)$-conjugaison de $\gamma_\xi$ ne d\'epend
pas des choix de $j_\xi$ et $g_\xi$ mais seulement de la classe $\xi\in
\rmH^1(F_v,I_{\gamma_0})$.

En posant $g'=g_\xi^{-1} g$, la cat\'egorie $O_\xi$ peut \^etre d\'ecrite
comme suit. Ses objets sont les \'el\'ements $g'\in G(F_v)/ G(\calO_v)$
v\'erifiant
$$\ad(g')^{-1}(\gamma_\xi)\in \frakg(\calO_v).$$
Ses fl\`eches $g'\rightarrow g'_1$ sont les \'el\'ements $h\in
J_a(F_v)/J'_a(\calO_v)$ tel que $g'=hg'_1$. Ici, pour faire agir $h$ \`a
gauche, on a utilis\'e l'isomorphisme canonique $J_a=I_{\gamma_\xi}$ o\`u
$I_{\gamma_\xi}$ est le centralisateur de $\gamma_\xi$.

L'ensemble des classes d'isomorphisme de $O_\xi$ est donc l'ensemble des
double-classes
$$g'\in I_{\gamma_\xi}(F_v)\backslash G(F_v)/G(\calO_v)$$
telles que $\ad(g')^{-1} \gamma_\xi\in\frakg(\calO_v)$. Le groupe des
automorphismes de $g'$ est le groupe
$$(I_{\gamma_\xi}(F_v)\cap g'G(\calO_v){g'}^{-1}) / J'_a(\calO_v)$$
dont le cardinal peut \^etre exprim\'e en termes de volumes comme suit
$$
\frac{{\rm vol}(I_{\gamma_\xi}(F_v)\cap g'G(\calO_v){g'}^{-1},{\rm d}t_v)}
{{\rm vol}(J'_a(\calO_v),{\rm d}t_v)}.
$$
On a donc
$${\sharp\,} O_\xi= \sum \frac{{\rm vol}(J'_a(\calO_v),{\rm d}t_v)} {{\rm
vol}(J_a(F_v)\cap g'G(\calO_v){g'}^{-1},{\rm d}t_v) }$$ la sommation \'etant
\'etendue sur l'ensemble des doubles classes
$$g'\in I_{\gamma_\xi}(F_v)\backslash G(F_v)/G(\calO_v)$$
telles que $\ad(g')^{-1} \gamma_\xi\in\frakg(\calO_v)$. On obtient donc la
formule
$${\sharp\,} [\calM_v(a)/\calP_v(J_a)]_\xi(k)= {\sharp\,} O_\xi=
{\rm vol}(J'_a(\calO_v),{\rm d}t){\bf O}_{\gamma_\xi}(1_{\frakg_v},{\rm
d}t_v).
$$
En sommant sur le noyau de $\rmH^1(F,I_{\gamma_0}) \rightarrow
\rmH^1(F,G)$, on obtient la formule
$${\sharp\,} [\calM_v(a)/\calP_v(J'_a)](k)_\kappa=
{\rm vol}(J'_a(\calO_v),{\rm d}t_v){\bf O}^\kappa_a(1_{\frakg_v},{\rm d}t_v)
$$
qu'on voulait.
\end{proof}

On voudra \'eventuellement le m\^eme type d'\'enonc\'e pour les sch\'emas
en groupes $J'_a$ n'ayant pas n\'ecessairement une fibre sp\'eciale connexe
et en particulier pour $J_a$ lui-m\^eme. Le comptage direct du
nombre de $k$-points du quotient $[\calM_v(a)/\calP_v(J_a)]$ s'av\`ere
tr\`es p\'enible. Il est plus \'economique de passer par la
comparaison avec le cas connexe. Nous allons \'enoncer le r\'esultat
seulement dans le cas de $J_a$ bien qu'il est valide en g\'en\'eral.

Soit $\kappa:\rmH^1(k,\calP_v(J_a))\rightarrow \Ql^\times$. En utilisant
l'homomorphisme
$$\rmH^1(k,P_v(J_a^0))\rightarrow
\rmH^1(k,\calP_v(J_a)),
$$
on obtient un caract\`ere de $\rmH^1(k,\calP_v(J_a^0))$ et par cons\'equent un
caract\`ere de $\rmH^1(F_v,J_a)$ que nous allons noter tous $\kappa$.

\begin{proposition}\label{comptage avec P_v(J_a)}
Consid\'erons $\kappa$ un caract\`ere de $\rmH^1(k,\calP_v(J_a))$ et
notons $\kappa:\rmH^1(F_v,J_a) \rightarrow \Ql^\times$ le caract\`ere de
$\rmH^1(F_v,J_a)$ qui s'en d\'eduit. Alors, le nombre de $k$-points avec
la $\kappa$-pond\'eration de $[\calM_v(a)/\calP_v(J_a)]$ peut s'exprimer
comme suit
$$
{\sharp\,} [\calM_v(a)/\calP_v(J_a)](k)_\kappa={\rm vol}(J_a^0(\calO_v),{\rm d}t_v){\bf O}^\kappa_a(1_{\frakg_v},{\rm d}t_v).
$$
o\`u $1_{\frakg_v}$ est la fonction caract\'eristique de $\frakg(\calO_v)$ et
o\`u ${\rm d}t_v$ est n'importe quelle mesure de Haar de $J_a(F_v)$.

De plus, si $\kappa:\rmH^1(F_v,J_a) \rightarrow \Ql^\times$ est un
caract\`ere qui ne provient pas d'un caract\`ere de
$\rmH^1(k,\calP_v(J_a))$ alors la $\kappa$-int\'egrale orbitale ${\bf
O}^\kappa_a(1_{\frakg_v},{\rm d}t_v)$ est nulle.
\end{proposition}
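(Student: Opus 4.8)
\textbf{Plan de la d\'emonstration de \ref{comptage avec P_v(J_a)}.}

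La strat\'egie consiste \`a relier le comptage avec $\calP_v(J_a)$ au comptage avec $\calP_v(J_a^0)$ d\'ej\`a trait\'e dans \ref{comptage avec P_v(J'_a)}. D'abord je rappelle l'homomorphisme surjectif de ind-groupes $p_v:\calP_v(J_a^0)\rightarrow \calP_v(J_a)$ provenant de l'homomorphisme de sch\'emas en groupes $J_a^0\rightarrow J_a$. Le noyau de $p_v$ est le groupe fini $\pi_0(J_{a,v})=J_a(\bar\calO_v)/J_a^0(\bar\calO_v)$. Au niveau des champs quotients, on a donc une fibration
$$[\calM_v(a)/\calP_v(J_a^0)]\rightarrow [\calM_v(a)/\calP_v(J_a)]$$
dont la fibre est le classifiant $\bfB(\pi_0(J_{a,v}))$. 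D'apr\`es l'exemple \ref{exemple : classifiant fini}, ce classifiant a exactement un $k$-point comptant pour $1$ lorsqu'on ne met pas de pond\'eration, mais $0$ point lorsqu'on pond\`ere par un caract\`ere non trivial de $\pi_0(J_{a,v})$. L'id\'ee est donc que le passage de $\calP_v(J_a^0)$ \`a $\calP_v(J_a)$ consiste \`a moyenner sur les caract\`eres de $\pi_0(J_{a,v})$, et seule la partie triviale survit.

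Plus pr\'ecis\'ement, je proc\'ederais ainsi. Premi\`erement, la suite exacte
$$1\rightarrow \pi_0(J_{a,v})\rightarrow \calP_v(J_a^0)\rightarrow \calP_v(J_a)\rightarrow 1$$
induit une suite exacte longue de cohomologie de $\Gal(\bar k/k)$, d'o\`u une fl\`eche $\rmH^1(k,\calP_v(J_a^0))\rightarrow \rmH^1(k,\calP_v(J_a))$ dont le conoyau est contr\^ol\'e par $\rmH^2(k,\pi_0(J_{a,v}))=0$ (dimension cohomologique un d'un corps fini), donc cette fl\`eche est surjective. Un caract\`ere $\kappa$ de $\rmH^1(k,\calP_v(J_a))$ se rel\`eve en un caract\`ere de $\rmH^1(k,\calP_v(J_a^0))$ qui est trivial sur l'image de $\rmH^0(k,\calP_v(J_a))$, c'est-\`a-dire provenant de $\pi_0(\calP_v(J_a^0))$ modulo ce qui se rel\`eve. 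Le point cl\'e est : un caract\`ere de $\rmH^1(F_v,J_a)$ provient d'un caract\`ere de $\rmH^1(k,\calP_v(J_a))$ si et seulement si il est trivial sur la partie engendr\'ee par $\pi_0(J_{a,v})$, condition qui se traduit, via la dualit\'e de Tate-Nakayama \ref{T-N} et la description de $\pi_0(\calP_v(J_a))$ de \ref{pi 0 local}, exactement par la condition que $\kappa\in\hat\bbT(\pi_a^\bullet(I_v))$. Deuxi\`emement, pour un tel $\kappa$, j'appliquerais \ref{comptage avec P_v(J'_a)} \`a $J'_a=J_a^0$ pour obtenir
$${\sharp\,}[\calM_v(a)/\calP_v(J_a^0)](k)_\kappa={\rm vol}(J_a^0(\calO_v),{\rm d}t_v){\bf O}_a^\kappa(1_{\frakg_v},{\rm d}t_v),$$
puis j'utiliserais la fibration en classifiants finis ci-dessus. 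En d\'ecomposant selon la classe $\cl(x)\in\rmH^1(k,\calP_v(J_a))$ de chaque objet et selon ses ant\'ec\'edents dans $\rmH^1(k,\calP_v(J_a^0))$, la somme pond\'er\'ee par $\kappa$ sur le quotient par $\calP_v(J_a^0)$ se r\'e\'ecrit comme la somme pond\'er\'ee sur le quotient par $\calP_v(J_a)$, multipli\'ee par $\sum_{\chi}\chi$ o\`u $\chi$ parcourt les caract\`eres de $\pi_0(J_{a,v})$ qui apparaissent dans le rel\`evement — et cette somme vaut ${\sharp\,}\pi_0(J_{a,v})$ ou $0$ selon que la contribution est triviale ou non sur $\pi_0(J_{a,v})$. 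Comme $\kappa$ provient de $\rmH^1(k,\calP_v(J_a))$, seul le terme trivial contribue, d'o\`u
$${\sharp\,}[\calM_v(a)/\calP_v(J_a^0)](k)_\kappa=\frac{{\rm vol}(J_a^0(\calO_v),{\rm d}t_v)}{{\rm vol}(J_a(\calO_v),{\rm d}t_v)}\cdot{\rm vol}(J_a(\calO_v),{\rm d}t_v){\bf O}_a^\kappa={\sharp\,}\pi_0(J_{a,v})\cdot{\rm vol}(J_a(\calO_v),{\rm d}t_v){\bf O}_a^\kappa(\cdots),$$
en notant que ${\rm vol}(J_a^0(\calO_v))/{\rm vol}(J_a(\calO_v))={\sharp\,}\pi_0(J_{a,v})^{-1}$. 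Mise en regard avec la contribution du classifiant, ceci donne la formule voulue avec ${\rm vol}(J_a^0(\calO_v),{\rm d}t_v)$ en facteur. Enfin, pour la derni\`ere assertion : si $\kappa$ ne provient pas de $\rmH^1(k,\calP_v(J_a))$, alors par la correspondance ci-dessus $\kappa\notin\hat\bbT(\pi_a^\bullet(I_v))$, donc $\kappa$ est non trivial sur $\pi_0(J_{a,v})$ sur un voisinage ad\'equat, et le c\^ot\'e cohomologique $\rmH^*_c(\calM_v(a)/\Lambda)_{\tilde\kappa}$ s'annule — alternativement, on regroupe les $\gamma$ dans la classe stable en orbites sous l'action de translation de $\pi_0(J_{a,v})$ par changement de $g_\xi$, sur chacune desquelles la somme des $\langle\cl(x),\kappa\rangle$ s'annule.

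\textbf{Obstacle principal.} Le point d\'elicat est le contr\^ole pr\'ecis du bookkeeping cohomologique : il faut v\'erifier soigneusement que l'action du groupe fini $\pi_0(J_{a,v})$ sur l'ensemble des classes de conjugaison $\gamma$ dans la classe stable de $a$ (via les choix de repr\'esentants $j_\xi$, $g_\xi$) est bien compatible avec la d\'ecomposition selon $\cl(x)\in\rmH^1(k,\calP_v(J_a))$, et que la surjectivit\'e $\rmH^1(k,\calP_v(J_a^0))\twoheadrightarrow\rmH^1(k,\calP_v(J_a))$ jointe \`a \ref{pi 0 local} identifie exactement le sous-groupe de $\rmH^1(F_v,J_a)$ des caract\`eres ``admissibles'' avec $\hat\bbT(\pi_a^\bullet(I_v))$. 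Une fois ces deux identifications \'etablies, le reste est un calcul de volumes formel. Il serait sans doute plus propre de court-circuiter ce calcul direct en invoquant la formule des points fixes \ref{formule des points fixes 2} appliqu\'ee \`a $J_a^0$ et la relation entre $\calP_v(J_a)$-torseurs et $\calP_v(J_a^0)$-torseurs munis d'une trivialisation du $\pi_0(J_{a,v})$-torseur induit, mais la voie du comptage explicite reste la plus transparente pour le lecteur.
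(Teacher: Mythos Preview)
Votre stratégie globale est exactement celle du papier : réduire au cas déjà traité $J'_a=J_a^0$ en étudiant le foncteur $[\calM_v(a)/\calP_v(J_a^0)](k)\rightarrow[\calM_v(a)/\calP_v(J_a)](k)$ induit par la suite exacte $1\rightarrow\pi_0(J_{a,v})\rightarrow\calP_v(J_a^0)\rightarrow\calP_v(J_a)\rightarrow 1$, et montrer que les deux comptages $\kappa$-pondérés coïncident lorsque $\kappa$ est trivial sur $\pi_0(J_{a,v})$, et que le comptage pour $J_a^0$ s'annule sinon.

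Il y a cependant deux problèmes réels dans votre exécution. Premièrement, votre ligne de calcul de volumes est incohérente : vous écrivez $\frac{{\rm vol}(J_a^0(\calO_v))}{{\rm vol}(J_a(\calO_v))}\cdot{\rm vol}(J_a(\calO_v)){\bf O}^\kappa_a={\sharp\,}\pi_0(J_{a,v})\cdot{\rm vol}(J_a(\calO_v)){\bf O}^\kappa_a$ tout en notant juste après que le rapport de volumes vaut ${\sharp\,}\pi_0(J_{a,v})^{-1}$ ; de plus ce rapport sur les $\calO_v$-points (et non les $\bar\calO_v$-points) n'est pas $\sharp\,\pi_0(J_{a,v})^{-1}$ en général puisque $\sigma$ agit sur $\pi_0(J_{a,v})$. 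Le papier évite entièrement cette manipulation de volumes : il montre directement l'\emph{égalité} ${\sharp\,}[\calM_v(a)/\calP_v(J_a)](k)_\kappa={\sharp\,}[\calM_v(a)/\calP_v(J_a^0)](k)_\kappa$, puis invoque \ref{comptage avec P_v(J'_a)} telle quelle.

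Deuxièmement, votre heuristique \og la fibre est $\bfB\pi_0(J_{a,v})$\fg{} est trop grossière. Au-dessus d'un objet fixé $x=(m,p)$ de $[\calM_v(a)/\calP_v(J_a)](k)$, la catégorie-fibre des $(m,p')$ avec $p'\mapsto p$ n'est pas équivalente à $\bfB\pi_0(J_{a,v})(k)$ : ses automorphismes forment le groupe $H_1=\{h_1\in\calP_v(J_a^0)\mid h_1 m=m\ {\rm et}\ h_1\sigma(h_1)^{-1}\in\pi_0(J_{a,v})\}$, qui dépend du stabilisateur de $m$. Le papier calcule explicitement que le comptage pondéré dans cette fibre vaut $\langle\cl(x),\kappa\rangle\cdot{\sharp\,}{\rm cok}(\alpha)/{\sharp\,}{\rm ker}(\alpha)$ où $\alpha:H_1\rightarrow\pi_0(J_{a,v})$ est $h_1\mapsto h_1\sigma(h_1)^{-1}$, puis montre via deux suites exactes impliquant $H_1$ que ce rapport vaut $1/{\sharp\,}\Aut(x)$ lorsque $\kappa|_{\pi_0(J_{a,v})}=1$ (et $0$ sinon). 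C'est précisément le \og bookkeeping\fg{} que vous identifiez comme obstacle, et il faut effectivement le faire --- votre idée de \og sommer sur les caractères $\chi$ de $\pi_0(J_{a,v})$\fg{} ne le remplace pas.
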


\begin{proof}
Il s'agit d'une comparaison entre $\calP_v(J_a)$ et le cas connu
$\calP_v(J_a^0)$. Il suffit de d\'emontrer l'\'egalit\'e
$$
{\sharp\,} [\calM_v(a)/\calP_v(J_a)](k)_\kappa= {\sharp\,}
[\calM_v(a)/\calP_v(J_a^0)](k)_\kappa
$$
dans le premier cas et l'annulation de
$[\calM_v(a)/\calP_v(J_a^0)](k)_\kappa$ dans le second cas. La
d\'emonstration sera fond\'ee sur le m\^eme principe que \ref{exemple :
classifiant fini} bien que les d\'etails sont plus compliqu\'es dans le cas
pr\'esent.

On a une suite exacte
$$
1 \rightarrow \pi_0(J_{a,v}) \rightarrow \calP_v(J_a^0) \rightarrow \calP_v(J_a) \rightarrow 1
$$
o\`u $\pi_0(J_{a,v})$ est le groupe des composantes connexes de la fibre
de $J_a$ en $v$. On en d\'eduit une suite exacte longue
\begin{eqnarray}
1 \rightarrow \pi_0(J_{a,v})^\sigma \rightarrow \calP_v(J_a^0)^\sigma \rightarrow
\calP_v(J_a)^\sigma \rightarrow \\
\pi_0(J_{a,v})_\sigma \rightarrow \calP_v(J_a^0)_\sigma \rightarrow
\calP_v(J_a)_\sigma \rightarrow 1
\end{eqnarray}
o\`u l'exposant $\sigma$ d\'esigne un groupe des $\sigma$-invariants et
l'indice $\sigma$ d\'esigne un groupe des $\sigma$-coinvariants.

Consid\'erons le foncteur
$$\pi:[\calM_v(J_a)/\calP_v(J_a^0)](k) \rightarrow  [\calM_v(J_a)/\calP_v(J_a)](k).$$
Il envoie un objet $x'=(m,p')$ avec $m\in\calM_v(a)$ et $p'\in
\calP_v(J_a^0)$ tel que $p' \sigma(m)=m$ sur l'objet $x=(m,p)$ o\`u $p$
est l'image de $p'$ dans $\calP_v(J_a)$. Choisissons un ensemble de
repr\'esentants $\{x_\psi\mid \psi\in \Psi\}$ des classes d'isomorphisme de
$[\calM_v(J_a)/\calP_v(J_a)](k)$.

Puisque l'homomorphisme $\calP_v(J_a^0)\rightarrow \calP_v(J_a)$ est
surjectif, cette sous-cat\'egorie de $[\calM_v(J_a)/\calP_v(J_a^0)](k)$ est
\'equivalente \`a $[\calM_v(J_a)/\calP_v(J_a^0)](k)$ si bien que pour
calculer les nombres ${\sharp\,} [\calM_v(J_a)/\calP_v(J_a^0)](k)_\kappa$,
il est loisible de se restreindre \`a cette sous-cat\'egorie.

Pour tout $\psi\in \Psi$, consid\'erons la sous-cat\'egorie pleine
$$[\calM_v(J_a)/\calP_v(J_a^0)](k)_{x_\psi}$$
de $[\calM_v(J_a)/\calP_v(J_a^0)](k)$ form\'ee des objets de la forme
$x'_\psi=(m_\psi,p'_\psi)$ avec $p'_\psi\mapsto p_\psi$. Si $\psi\not=\psi'$,
deux objets $x'_\psi\in [\calM_v(J_a)/\calP_v(J_a^0)](k)_{x_\psi}$  et
$x'_{\psi'}\in [\calM_v(J_a)/\calP_v(J_a^0)](k)_{x_{\psi'}}$ ne sont pas
isomorphes. De plus pour tout $x'\in [\calM_v(J_a)/\calP_v(J_a^0)](k)$ il
existe $\psi\in \Psi$ et $x'_{\psi}\in [\calM_v(J_a)/\calP_v(J_a^0)]
(k)_{x_\psi}$ tels que $x'$ et $x'_\psi$ sont isomorphes. Ainsi, la r\'eunion
disjointe des cat\'egories
$$\bigsqcup_\psi [\calM_v(J_a)/\calP_v(J_a^0)](k)_{x_\psi}$$
forment une sous-cat\'egorie pleine de $[\calM_v(J_a)/\calP_v(J_a^0)](k)$
qui lui est \'equi\-valente. Par cons\'equent, pour compter les objets de
$[\calM_v(J_a)/\calP_v(J_a^0)](k)$, on peut compter dans chaque
$[\calM_v(J_a)/\calP_v(J_a^0)](k)_{x_\psi}$ et puis faire la somme sur les
$\psi\in \Psi$.

Fixons un $x_\psi$ et notons le simplement $x=(m,p)$. La cat\'egorie
$$[\calM_v(J_a)/\calP_v(J_a^0)](k)_{x}$$
n'est pas difficile \`a d\'ecrire. En fixant un objet $x_1=(m,p_1)$ avec $p_1
\mapsto p$, on identifie l'ensemble des objets de cette cat\'egorie avec le
groupe $\pi_0(J_{a,v})$ qui est le noyau de $\calP_v(J_a^0)\rightarrow
\calP_v(J_a)$. Une fl\`eche dans cette cat\'egorie est donn\'ee par un
\'el\'ement du groupe
$$H_1=\{h_1\in  \calP_v(J_a^0) \mid hm=m \ {\rm et}\ h_1 \sigma(h_1)^{-1} \in \pi_0(J_{a,v})\}.$$
La cat\'egorie $[\calM_v(J_a)/\calP_v(J_a^0)]_x$ est \'equivalente \`a la
cat\'egorie quotient de l'ensemble $\pi_0(J_{a,v})$ par l'action de $H_1$
avec $H_1$ agissant \`a travers l'homomorphisme $\alpha:H_1\rightarrow
\pi_0(J_{a,v})$ d\'efini par $\alpha(h_1)=h_1 \sigma(h_1)^{-1}$. L'ensemble
des classes d'isomorphisme de cette cat\'egorie s'identifie avec le conoyau
${\rm cok}(\alpha)$ de $\alpha$ et le groupe des automorphismes de chaque
objet s'identifie avec le noyau ${\rm ker}(\alpha)$. Ceci montre en
particulier que $H_1$ est un groupe fini.

Soit $\kappa$ un caract\`ere de $\rmH^1(k,\calP_v(J_a^0))$. Ce groupe
s'identifie canoniquement \`a la partie torsion de $\calP_v(J_a^0)_\sigma$.
Consid\'erons la restriction de $\kappa$ \`a $\pi_0(J_{a,v})_\sigma$ et
notons aussi $\kappa$ le caract\`ere $\pi_0(J_{a,v})\rightarrow \Ql^\times$
qui s'en d\'eduit. Ce caract\`ere est certainement triviale sur l'image de
$\alpha:H_1 \rightarrow \pi_0(J_{a,v})_\sigma$ si bien qu'il d\'efinit un
cocaract\`ere sur le conoyau $\kappa: {\rm cok}(\alpha)\rightarrow
\Ql^\times$. La somme
$$\sum_{x'} \frac{\langle \cl(x'),\kappa \rangle}
{{\sharp\,} \Aut(x')}=0
$$
sur l'ensemble des classes d'isomorphisme de la sous-cat\'egorie pleine
$[\calM_v(J_a)/\calP_v(J_a^0)]_x$ est alors \'egale \`a
$$
\sum_{z\in {\rm cok}(\alpha)}\frac{\langle z,\kappa \rangle}{{\sharp\,} {\rm
ker}(\alpha)}\langle \cl(x_1),\kappa \rangle.
$$
Si la restriction de $\kappa$ \`a $\pi_0(J_{a,v})$ est non triviale alors cette
somme est nulle ce qui entra{\^\i}ne l'annulation
$$[\calM_v(a)/\calP_v(J_a^0)](k)_\kappa=0.$$

Supposons maintenant que la restriction de $\kappa$ \`a $\pi_0(J_{a,v})$
est triviale. Dans ce cas, $\kappa$ se factorise par $\rmH^1(k,\calP_v(J_a))$
et la somme ci-dessus est \'egale \`a
$$
\frac{{\sharp\,} {\rm cok}(\alpha)}{{\sharp\,} {\rm ker}(\alpha)} \langle \cl(x),\kappa
\rangle.
$$
Il reste \`a calculer ${{\sharp\,} {\rm cok}(\alpha) / {\sharp\,} {\rm
ker}(\alpha)}$. La suite exacte
$$
1\rightarrow {\rm ker}(\alpha) \rightarrow H_1 \rightarrow
\pi_0(J_{a,v}) \rightarrow {\rm coker}(\alpha) \rightarrow  1
$$
implique l'\'egalit\'e
$$\frac{{\sharp\,} {\rm cok}(\alpha)}{{\sharp\,} {\rm ker}(\alpha)} =
\frac{{\sharp\,}\pi_0(J_{a,v})}{{\sharp\,} H_1}.$$

Soient $h_1\in H_1$ et $h$ son image dans $\calP_v(J_a)$. On a alors $h
\sigma(h)^{-1}=1$ de sorte que $h\in \calP_v(J_a)^\sigma$. Soit ${\rm
stab}(m)$ le sous-groupe des \'el\'ements de $\calP_v(J_a)$ qui stabilisent
$m$. On a alors la suite exacte
$$1\rightarrow \pi_0(J_{a,v})\rightarrow H_1 \rightarrow \calP_v(J_a)^\sigma\cap {\rm
stab}(m)\rightarrow 1
$$
o\`u $\calP_v(J_a)^\sigma\cap {\rm stab}(m)$ est exactement le groupe des
automorphismes de ${\rm Aur}(x)$ dans la cat\'egorie
$[\calM_v(J_a)/\calP_v(J_a)]$. On a donc l'\'egalit\'e
$$\frac{{\sharp\,}\pi_0(J_{a,v})}{{\sharp\,} H_1}=\frac{1}{{\sharp\,}\Aut(x)}$$
qui implique l'\'egalit\'e
$${\sharp\,} [\calM_v(a)/\calP_v(J_a)](k)_\kappa= {\sharp\,} [\calM_v(a)/\calP_v(J_a^0)](k)_\kappa$$
qu'on voulait.
\end{proof}

En appliquant les r\'esultats g\'en\'eraux de \ref{subsection : comptage} \`a
la situation pr\'esente, on obtient une interpr\'etation cohomologique des
int\'egrales orbitales stables et des $\kappa$-int\'egrales orbitales. Prenons
un sous-groupe sans torsion $\sigma$-stable $\Lambda$ de $\calP_v(J_a^0)$
comme dans \ref{hypothese : finitude}. Ce sous-groupe existe en vertu de
la discussions qui suit \ref{hypothese : finitude} et du r\'esultat de
Kazhdan-Lusztig \cf \ref{quotient projectif}. On obtient le corollaire suivant
de \ref{formule des points fixes 2} et \ref{comptage avec P_v(J'_a)}.

\begin{corollaire}
Soit $\kappa$ un caract\`ere de $\rmH^1(F_v,J_a)$. Soit $J_a^{\flat,0}$ la
composante neutre du mod\`ele de N\'eron de $J_a$. Pour tout $\Lambda$
comme ci-dessus, on a l'\'egalit\'e
$$
\sum_n (-1)^n \tr(\sigma, \rmH^n([\calM_v(a)/\Lambda])_\kappa)
={\rm vol}(J_a^{\flat,0}(\calO_v) ,{\rm d}t_v) {\bf O}^\kappa_a(1_{\frakg_v},{\rm d}t_v).
$$
\end{corollaire}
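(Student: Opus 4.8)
La preuve consiste à combiner la formule des points fixes \ref{formule des points fixes 2} avec le comptage direct \ref{comptage avec P_v(J'_a)}, appliqué au schéma en groupes $J_a^0$ des composantes neutres de $J_a$. Remarquons d'abord que, d'après Kazhdan et Lusztig \cf \ref{quotient projectif}, le quotient $\calM_v(a)/\Lambda$ est un $k$-schéma projectif, si bien que $\rmH^n(\calM_v(a)/\Lambda)=\rmH^n_c(\calM_v(a)/\Lambda)$ et que le formalisme de \ref{subsection : comptage} s'applique tel quel. Choisissons, grâce à \ref{tilde kappa}, un caractère $\sigma$-invariant d'ordre fini $\tilde\kappa:\calP_v(J_a^0)\rightarrow\Ql^\times$ prolongeant $\kappa$. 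Quitte à remplacer $\Lambda$ par un sous-groupe de rang maximal $\sigma$-stable contenu dans le noyau de $\tilde\kappa$ --- ce qui est permis puisque, d'après la seconde assertion de \ref{formule des points fixes 2}, le membre de gauche de l'égalité à démontrer ne dépend pas d'un tel choix de $\Lambda$ --- on peut supposer $\tilde\kappa$ trivial sur $\Lambda$ ; la notation $\rmH^n(\calM_v(a)/\Lambda)_\kappa$ désigne alors la composante propre $\rmH^n_c(\calM_v(a)/\Lambda)_{\tilde\kappa}$ du formalisme précédent.

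En appliquant \ref{formule des points fixes 2} au $k$-schéma $\calM_v(a)$, au $k$-groupe $P=\calP_v(J_a^0)$ et au sous-groupe $\Lambda$, on obtient
$$\sum_n(-1)^n\tr(\sigma,\rmH^n_c(\calM_v(a)/\Lambda)_{\tilde\kappa})={\sharp\,}\calP_v(J_a^0)^0(k)\;{\sharp\,}[\calM_v(a)/\calP_v(J_a^0)](k)_\kappa.$$
Le schéma en groupes $J_a^0$ étant lisse à fibres connexes et l'homomorphisme évident $J_a^0\rightarrow J_a$ étant un isomorphisme en fibre générique, la proposition \ref{comptage avec P_v(J'_a)} donne
$${\sharp\,}[\calM_v(a)/\calP_v(J_a^0)](k)_\kappa={\rm vol}(J_a^0(\calO_v),{\rm d}t_v)\,{\bf O}^\kappa_a(1_{\frakg_v},{\rm d}t_v)$$
pour n'importe quelle mesure de Haar ${\rm d}t_v$ sur $J_a(F_v)$. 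Il ne reste donc qu'à identifier le facteur ${\sharp\,}\calP_v(J_a^0)^0(k)$.

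Pour ce dernier point, on utilise l'analyse de $\calP_v(J_a^0)$ par le modèle de Néron. D'après la démonstration de \ref{pi 0 local}, l'ind-groupe $\calP_v(J_a^{\flat,0})$ attaché au modèle de Néron connexe $J_a^{\flat,0}$ est topologiquement discret ; par suite la composante neutre $\calP_v(J_a^0)^0$ coïncide avec celle du noyau de l'homomorphisme $\calP_v(J_a^0)\rightarrow\calP_v(J_a^{\flat,0})$ induit par $J_a^0\rightarrow J_a^{\flat,0}$, noyau qui est un groupe algébrique affine de type fini ayant pour $\bar k$-points le quotient $J_a^{\flat,0}(\bar\calO_v)/J_a^0(\bar\calO_v)$. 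Le théorème de Lang appliqué au schéma en groupes lisse à fibres connexes $J_a^0$ sur $\calO_v$ entraîne alors
$${\sharp\,}\calP_v(J_a^0)^0(k)=\bigl[J_a^{\flat,0}(\calO_v):J_a^0(\calO_v)\bigr]=\frac{{\rm vol}(J_a^{\flat,0}(\calO_v),{\rm d}t_v)}{{\rm vol}(J_a^0(\calO_v),{\rm d}t_v)}.$$
En reportant ceci dans les deux formules précédentes, les contributions de ${\rm vol}(J_a^0(\calO_v),{\rm d}t_v)$ se simplifient et l'on obtient l'égalité annoncée. Le point le plus délicat de cet argument est précisément cette dernière identification, c'est-à-dire le contrôle du nombre de $k$-points de la composante neutre de $\calP_v(J_a^0)$ via la théorie des modèles de Néron ; une fois acquis \ref{formule des points fixes 2}, \ref{comptage avec P_v(J'_a)} et \ref{quotient projectif}, le reste de la démonstration est de nature formelle.
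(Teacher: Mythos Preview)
Votre démonstration est correcte et suit exactement la même stratégie que celle de l'article : combiner \ref{formule des points fixes 2} et \ref{comptage avec P_v(J'_a)} appliquées à $J'_a=J_a^0$, puis identifier ${\sharp\,}\calP_v(J_a^0)^0(k)\cdot{\rm vol}(J_a^0(\calO_v),{\rm d}t_v)$ à ${\rm vol}(J_a^{\flat,0}(\calO_v),{\rm d}t_v)$ via le modèle de Néron connexe. L'article écrit explicitement la suite exacte $1\to J_a^0(\calO_v)\to J_a^{\flat,0}(\calO_v)\to \calP_v^0(J_a^0)(k)\to 1$ qui sous-tend votre dernière identification, mais c'est le même argument.
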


\begin{proof}
En mettant ensemble  \ref{formule des points fixes 2} et \ref{comptage
avec P_v(J'_a)} on obtient la formule
\begin{eqnarray*}
&\sum_n (-1)^n \tr(\sigma, \rmH^n([\calM_v(a)/\Lambda])_\kappa)\\
=&({\sharp\,}\calP_v^0(J_a^0)(k)) {\rm vol}(J_a^0(\calO_v) ,{\rm d}t_v) {\bf
O}^\kappa_a(1_{\frakg_v},{\rm d}t_v).
\end{eqnarray*}
o\`u ${\sharp\,} \calP_v^0(J_a^0)(k)$ est le nombre de $k$-points de la
composante neutre de $\calP_v(J_a^0)$. Si $J_a^{\flat,0}$ est la
composante neutre du mod\`ele de N\'eron, on a un homomorphisme
$J_a^0 \rightarrow J_a^{\flat,0}$ qui induit une suite exacte
$$1 \rightarrow J_a^{\flat,0}(\bar\calO_v)/J_a^0(\bar\calO_v) \rightarrow \calP_v(J_a^0)
\rightarrow \calP_v(J_a^{\flat,0}) \rightarrow 1
$$
qui permet d'identifer la composante neutre $\calP_v^0(J_a^0)$ de
$\calP_v(J_a^0)$ avec le $k$-groupe affine connexe dont les $\bar
k$-points sont $ J_a^{\flat,0}(\bar\calO_v)/J_a^0(\bar\calO_v)$. Puisque
$J_a^0$ et $J_a^{\flat,0}$ sont des sch\'emas en groupes de fibres
connexes, on en d\'eduit une suite exacte
$$1 \rightarrow J_a^0(\calO_v) \rightarrow J_a^{\flat,0}(\calO_v) \rightarrow
\calP_v^0(J_a^0)(k) \rightarrow 1$$ d'o\`u l'\'egalit\'e
$$({\sharp\,} \calP_v^0(J_a^0)(k))
{\rm vol}(J_a^0(\calO_v) ,{\rm d}t_v)  = {\rm vol}(J_a^{\flat,0}(\calO_v)
,{\rm d}t_v) $$ pour n'importe quelle mesure de Haar ${\rm d}t_v$ sur
$J(F_v)$. Le corollaire s'en d\'eduit.
\end{proof}

\subsection{Un cas tr\`es simple}
\label{subsection : fibre simple}

Soit $v$ un point ferm\'e de $X$. Notons $k_v$ le corps r\'esiduel de $v$.
Choisissons un point g\'eom\'etrique $\bar v$ au-dessus de $v$. Notons
$X_v=\Spec(\calO_v)$ la compl\'etion de $X$ en $v$ et
$X_v^\bullet=\Spec(F_v)$ o\`u $F_v$ est le corps des fractions de
$\calO_v$. Choisissons un uniformisant $\epsilon_v$. Dans ce paragraphe,
nous notons $G$ la restriction de $G$ \`a $X_v$ et $G^\bullet$ la
restriction de $G$ \`a $X_v^\bullet$.

\begin{numero}
Soit $a\in \frakc(\calO_v)$ dont l'image dans $\frakc(F_v)$ est r\'eguli\`ere
et semi-simple. Soit $d_v(a)={\rm deg}_v (a^* \discrim_G)\in \NN$. Nous
allons supposer que
$$d_v(a) =2\ {\rm et}\ c_v(a)=0.$$
D'apr\`es la formule de Bezrukavnikov \cf \ref{dimension Bezru}, la
dimension de la fibre de Springer affine $\calM_v(a)$ est alors \'egale \`a
un. On va montrer qu'au-dessus de $\bar k$, cette fibre de Springer affine
est une r\'eunion disjointe des copies de la cha{\^\i}ne infinie des droites
projectives.
\end{numero}

\begin{numero}
Soit $\gamma_0=\epsilon(a)\in \frakg(F_v)$  la section de Kostant
appliqu\'ee \`a $a$. Son centralisateur $T^\bullet=I_{\gamma_0}$ est un
sous-tore maximal de $G^\bullet$. L'hypoth\`ese $c_v(a)=0$ implique que
$T^\bullet$ est un sous-tore non ramifi\'e c'est-\`a-dire qu'il s'\'etend en un
sous-tore maximal $T$ de $G$. Soit $\Phi$ l'ensemble des  racines de $\bar
T=T\otimes_{\calO_v} {\bar\calO_{\bar v}}$. On a alors la fonction de
valuation radicielles de Goresky, Kottwitz et MacPherson \cf
\cite{GKM-codim}
$$r_a:\Phi\rightarrow \NN$$
d\'efini par $r_a(\alpha)={\rm val}(\alpha(\gamma_0))$ o\`u on a pris la
valuation sur $\bar F_{\bar v}$ qui \'etend la valuation sur $F_v$. On a alors
$$d_v(a)=\sum_{\alpha\in\Phi} r_\alpha(\gamma_0).$$
Il existe donc une unique paire de racines $\pm \alpha$ tel que $r_{\pm
\alpha}(\gamma_0)=1$ et $r_{\alpha'}(\gamma_0)=0$ pour toute racine
$\alpha'\notin \{\pm \alpha\}$.
\end{numero}

\begin{numero}
Le groupe de Galois $\Gal(\bar k/ k_v)$ agit sur $\Phi$ en laissant invariant
la fonction $r_a(\alpha)$ de sorte qu'il laisse stable le couple $\{\pm
\alpha\}$. Soit $\bar G_{\pm \alpha}$ le sous-groupe de $\bar
G=G\otimes_{\calO_v}{\bar \calO_{\bar v}}$ engendr\'e par $\bar T$ et par
les sous-groupes radiciels $U_\alpha$ et $U_{-\alpha}$. L'action du groupe
de Galois de $\Gal(\bar k/ k_v)$ sur $\bar T$ laissant stable $\{\pm\alpha\}$
permet de descendre $\bar G_{\pm \alpha}$ en un sous-sch\'ema en groupes
r\'eductifs $G_{\pm \alpha}$ de $G_{\calO_v}$. Le centre $Z_{\pm \alpha}$
de $G_{\pm \alpha}$ qui s'identifie au noyau de
$\alpha:T\rightarrow\GG_m$ est aussi d\'efini sur $\calO_v$. Soit $A_{\pm
\alpha}=T/Z_{\pm \alpha}$ ; c'est un tore de dimension un sur $X_v$.
\end{numero}

\begin{proposition}
On a un homomorphisme canonique $J_a\rightarrow T$ dont l'image au
niveau des $\bar\calO_{\bar v}$-points est le noyau de l'homomorphisme
compos\'e de la r\'eduction modulo l'id\'eal maximal $T(\bar\calO_{\bar
v})\rightarrow T(\bar k)$ et la racine $\alpha:T(\bar k)\rightarrow
\GG_m(\bar k)$.
\end{proposition}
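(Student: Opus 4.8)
The statement is local at $v$, so I work over $\calO_v$ and its strict henselisation. The plan is to use the Galois (Donagi--Gaitsgory) description of the regular centraliser recalled in \ref{desciption galoisienne de J}. Since $c_v(a)=0$, the torus $T^\bullet=I_{\gamma_0}$ is unramified, so it extends to a maximal torus $T$ of $G$ over $\calO_v$; this already gives the target of the desired map. First I would construct the homomorphism $J_a\rightarrow T$. Recall from \ref{J=J'} and \ref{J J1} that $J_a=a^*J$ embeds in $J^1_a=(\pi_{a*}\bbT)^W$, where $\pi_a:\widetilde X_{a}\rightarrow X_v$ is the cameral cover; choosing a geometric point of $\widetilde X_a$ above $v$ (which is possible since, $c_v(a)=0$ implies the monodromy $\pi_a^\bullet(I_v)$ fixes a regular element, hence is generated by a reflection, and in particular the cover is generically a $W$-torsor with small inertia) identifies the fibre of $J_a$ at $v$ with a subgroup of $\bbT$, and evaluation at that point gives the map $J^1_a\rightarrow \bbT\times\{x\}$; composing with $J_a\hookrightarrow J^1_a$ and untwisting via the splitting gives $J_a\rightarrow T$.

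The second step is to compute the image of this map on $\bar\calO_{\bar v}$-points. Over $\bar F_{\bar v}$ the map $J_a\rightarrow T$ is an isomorphism of tori (both equal $I_{\gamma_0}$), so the question is really about the integral points. Here I would use the explicit description of the subsheaf $J'$ of $J^1$ from \ref{J'}: for a geometric point $x$ of $\widetilde X_a$ fixed by the reflection $s_\beta$ attached to a root $\beta$, the section must satisfy $\beta(f(x))=1$. Since $d_v(a)=2$, the cameral cover $\widetilde X_{a}$ ramifies only over a single point and only along the wall $h_{\pm\alpha}$ with multiplicity one (this is exactly the computation in the proof of \ref{cameral lisse}: the formal completion is $\bar k[[\epsilon_v]][t]/(t^2-\epsilon_v)$, so $\widetilde X_a^\flat=\widetilde X_a$ is smooth). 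Thus the only constraint cut out by \ref{J'} is $\alpha(f(x))=1$ at the ramification point. Translating this constraint back to $T(\bar\calO_{\bar v})$ via the normalisation of the cameral cover (using \ref{J a flat} and \ref{J a flat global}, noting $J_a=J_a^\flat$ here since the cameral cover is already normal) shows that $J_a(\bar\calO_{\bar v})$ maps onto the subgroup of $T(\bar\calO_{\bar v})$ consisting of elements whose reduction mod the maximal ideal lies in $\ker(\alpha:T(\bar k)\rightarrow\GG_m(\bar k))$.

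\textbf{Main obstacle.} The delicate point is the passage from the ``wall condition'' $\alpha(f(x))=1$ on the cameral cover to the explicit congruence condition on $T(\bar\calO_{\bar v})$. One must be careful that $\widetilde X_a^\flat$, being ramified of degree two over $X_v$ along exactly the $\pm\alpha$ wall and \'etale elsewhere (this uses $d_v(a)=2$ and $c_v(a)=0$ together with the fact that $p\nmid |\bbW|$, so that $\pi_a^\bullet(I_v)$ is cyclic and here of order exactly two), localises the computation of $(\pi_{a*}^\flat\bbT)^W$ to the rank-one subgroup $G_{\pm\alpha}$. The reduction to rank one is exactly as in the proof of \ref{J=J'}; once there, a direct computation in $\mathrm{SL}_2$ or $\mathrm{PGL}_2$ identifies the $W$-invariants of $\prod_{\bar k[[\epsilon_v^{1/2}]]/\bar k[[\epsilon_v]]}\GG_m$ with precisely the stated kernel. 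I expect the bulk of the work to be this rank-one computation and the careful bookkeeping of which root $\alpha$ is the ramified one, together with verifying $\Gal(\bar k/k_v)$-equivariance so that everything is defined over $\calO_v$ as claimed; the latter follows because the pair $\{\pm\alpha\}$ is Galois-stable (the valuation function $r_a$ being Galois-invariant), hence $Z_{\pm\alpha}$ and $A_{\pm\alpha}=T/Z_{\pm\alpha}$ descend to $\calO_v$.
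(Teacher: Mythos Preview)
The paper states this proposition without proof, so there is no ``paper's proof'' to compare against; your approach via the Donagi--Gaitsgory description \ref{J=J'} and reduction to semisimple rank one is exactly the natural route, and is what the paper presumably has in mind.

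However, your execution contains a genuine error in the description of the cameral cover. You assert that $\pi_a^\bullet(I_v)$ has order two and that the local completion of $\widetilde X_a$ is $\bar k[[\epsilon_v]][t]/(t^2-\epsilon_v)$, hence smooth. This is wrong: $c_v(a)=0$ means $\bbt^{\pi_a^\bullet(I_v)}=\bbt$, and since $\bbW$ acts faithfully on $\bbt$ the inertia image is \emph{trivial}, not of order two. Consequently the lift $\gamma_0'\in\frakt(\bar\calO_{\bar v})$ of $a$ exists, and the cameral cover over $\bar X_{\bar v}$ is the union of the $|W|$ sections $w\gamma_0'$; since $\gamma_0'(v)$ lies on the single wall $h_\alpha$, the sections $w\gamma_0'$ and $ws_\alpha\gamma_0'$ meet at the special fibre, giving a \emph{node} $t^2-\epsilon_v^2$ at each of the $|W|/2$ points (this is $m=2$ in the notation of the proof of \ref{cameral lisse}, not $m=1$). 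In particular $\widetilde X_a^\flat\neq\widetilde X_a$: the normalisation is a disjoint union of $|W|$ copies of $\bar X_{\bar v}$, so $J_a^\flat=T$ and your claim $J_a=J_a^\flat$ would force $\delta_{\bar v}(a)=0$, contradicting $\delta_{\bar v}(a)=1$.

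Once this is corrected, your argument goes through. A $W$-equivariant map $f:\widetilde X_a\rightarrow T$ amounts to a single $t\in T(\bar\calO_{\bar v})$ (the value on the component through $\gamma_0'$), subject to the gluing condition at the node, namely $t(v)=s_\alpha(t)(v)$, i.e.\ $\alpha(t(v))\in\{\pm1\}$; this is $J^1_a(\bar\calO_{\bar v})$. The $J'$ condition \ref{J'} at the node (fixed by $s_\alpha$) then cuts this down to $\alpha(t(v))=1$, which is precisely the stated kernel. The map $J_a\rightarrow T$ is the evaluation $f\mapsto t$ on the chosen component, and is canonical once the extension of $T^\bullet$ to $T\subset G$ is fixed.
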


On en d\'eduit la description suivante de $\calP_v(J_a)$.

\begin{corollaire}
On a une suite exacte de groupes ab\'eliens avec l'action d'endomorphisme
de Frobenius  $\sigma$
$$1 \rightarrow A_{\pm \alpha}(\bar k)\rightarrow \calP_{\bar v}(J_a)(\bar k) \rightarrow
\bbX_*(T)\rightarrow 1
$$
o\`u $\bbX_*(T)$ est le groupe des cocaract\`eres du tore $T$ au-dessus de
$\bar \calO_{\bar v}$.
\end{corollaire}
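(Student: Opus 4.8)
The plan is to unwind the definition $\calP_{\bar v}(J_a)(\bar k)=J_a(\bar F_{\bar v})/J_a(\bar\calO_{\bar v})$ through the canonical homomorphism $J_a\to T$ of the preceding proposition, and then to analyse the resulting chain of subgroups of $T(\bar F_{\bar v})$. First I would recall, via \ref{J}, that $J_a$ is canonically identified with $I_{\gamma_0}$; over the generic point this is the maximal torus $T^\bullet=I_{\gamma_0,F_v}$ of $G$, which is unramified (its N\'eron model over $\calO_v$ is the torus $T$) because $c_v(a)=0$. Hence the homomorphism $J_a\to T$ is an isomorphism on generic fibres, so $J_a(\bar F_{\bar v})=T(\bar F_{\bar v})$; moreover, being injective on $\bar\calO_{\bar v}$-points, it identifies $J_a(\bar\calO_{\bar v})$ with the subgroup $K_\alpha:=\ker\bigl(T(\bar\calO_{\bar v})\to T(\bar k)\to\GG_m(\bar k)\bigr)$ of $T(\bar\calO_{\bar v})$ — the first arrow being reduction, the second the root $\alpha$ — by the very statement of the proposition. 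We are thus reduced to the chain $K_\alpha\subset T(\bar\calO_{\bar v})\subset T(\bar F_{\bar v})$, all of whose terms are $\sigma$-stable since $T$ and $\ker\alpha$ descend to $\calO_v$.

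The next step is to identify the two successive subquotients of that chain. For the outer quotient, $\Spec(\bar\calO_{\bar v})$ is simply connected (complete local ring with algebraically closed residue field $\bar k$), so $T$ becomes split over $\bar\calO_{\bar v}$, and the valuation yields a $\sigma$-equivariant isomorphism $T(\bar F_{\bar v})/T(\bar\calO_{\bar v})\isom\bbX_*(T)$. For the inner quotient, I would bring in the quotient morphism $q\colon T\to A_{\pm\alpha}=T/Z_{\pm\alpha}$, a surjective homomorphism of $\calO_v$-tori with kernel $Z_{\pm\alpha}=\ker(\alpha\colon T\to\GG_m)$. The composite $T(\bar\calO_{\bar v})\to T(\bar k)\to A_{\pm\alpha}(\bar k)$ (reduction, then $q$) is surjective — the reduction because $T$ is smooth over $\bar\calO_{\bar v}$, and $q$ on $\bar k$-points because $\rmH^1(\bar k,Z_{\pm\alpha})=0$ — and its kernel is the preimage under reduction of $\ker\bigl(q\colon T(\bar k)\to A_{\pm\alpha}(\bar k)\bigr)=Z_{\pm\alpha}(\bar k)=\ker\bigl(\alpha\colon T(\bar k)\to\GG_m(\bar k)\bigr)$, which is exactly $K_\alpha$. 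This produces a $\sigma$-equivariant isomorphism $T(\bar\calO_{\bar v})/K_\alpha\isom A_{\pm\alpha}(\bar k)$, and combining it with the identification of the outer quotient gives the exact sequence $1\to A_{\pm\alpha}(\bar k)\to\calP_{\bar v}(J_a)(\bar k)\to\bbX_*(T)\to 1$ asserted in the statement.

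The only genuinely non-formal point, which I expect to be the place where care is required rather than a true obstacle, is the bookkeeping of the Frobenius in the inner quotient. Although the root $\alpha$ induces an algebraic isomorphism $A_{\pm\alpha}\isom\GG_m$, this isomorphism fails to be $\sigma$-equivariant whenever the residual Galois action exchanges $\alpha$ and $-\alpha$, so one cannot simply replace $A_{\pm\alpha}(\bar k)$ by $\bar k^\times$ and must carry out the congruence-subgroup computation with the $\calO_v$-model $A_{\pm\alpha}$ itself. The existence of that model over $\calO_v$ is precisely what makes the argument work, and it is guaranteed because the pair $\{\pm\alpha\}$, hence $Z_{\pm\alpha}=\ker(\alpha)=\ker(-\alpha)$, is stable under the residual Galois action — a fact recorded just before the statement. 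Everything else reduces to the routine elementary-divisor analysis of a chain of subgroups of $T(\bar F_{\bar v})$ and presents no difficulty.
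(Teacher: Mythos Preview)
Your proposal is correct and follows essentially the same approach as the paper: you exploit the chain $J_a(\bar\calO_{\bar v})\subset T(\bar\calO_{\bar v})\subset T(\bar F_{\bar v})=J_a(\bar F_{\bar v})$, identify the outer quotient with $\bbX_*(T)$ via $\lambda\mapsto\epsilon_v^\lambda$, and identify the inner quotient with $A_{\pm\alpha}(\bar k)$ using the preceding proposition. The paper simply records the inner quotient as the short exact sequence $1\to J_a(\bar\calO_{\bar v})\to T(\bar\calO_{\bar v})\to A_{\pm\alpha}(\bar k)\to 1$ without further comment, whereas you spell out the surjectivity via the quotient map $T\to A_{\pm\alpha}$ and are more explicit about the $\sigma$-equivariance; but this is a difference in level of detail, not in method.
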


\begin{proof}
On d\'eduit de la  suite exacte
$$1\rightarrow J_a(\bar\calO_{\bar v}) \rightarrow T(\bar\calO_{\bar v})\rightarrow
A_{\pm \alpha}(\bar k)\rightarrow 1
$$
la suite exacte
$$1 \rightarrow A_{\pm \alpha}(\bar k) \rightarrow J_a(\bar F_{\bar v})/J_a(\bar\calO_{\bar v})
\rightarrow T(\bar F_v)/T(\bar \calO_v) \rightarrow 1.
$$
On a par ailleurs un isomorphisme
$$T(\bar F_v)/T(\bar \calO_v)=\bbX_*(T)$$
qui se d\'eduit de l'homomorphisme $\bbX_*(T) \rightarrow T(\bar F_v)$
d\'efini par $\lambda\mapsto \epsilon_v^\lambda$ d'o\`u le corollaire.
\end{proof}

\begin{lemme}
Les $\bar k$-points de la fibre de Springer affine
$$\calM_{\bar v}(a)(\bar k)=\{g\in G(\bar F_{\bar v})/ G(\bar\calO_{\bar v})
| \ad(g)^{-1}(\gamma_0)\in \frakg(\bar\calO_{\bar v})\}
$$
s'\'ecrivent de fa{\c c}on unique sous la forme
$$g=\epsilon_v^\lambda U_\alpha(x \epsilon_v^{-1}) $$
avec $\lambda\in \bbX_*(T)$ et $x\in \bar k$.
\end{lemme}

\begin{proof}
Avec la d\'ecomposition d'Iwasawa, pour tout \'el\'ement $g\in G(\bar
F_{\bar v})/ G(\bar\calO_{\bar v})$, il existe uniques $\lambda\in \bbX_*(T)$
et $u\in U(\bar F_{\bar v})/ U(\bar\calO_{\bar v})$ tels que
$$g=\epsilon_v^\lambda u .$$
Comme $T_{\bar v}$ commute avec $\gamma_0$, le plongement
$$\calM_{\bar v}(a) \rightarrow \calG_{\bar v}=G(\bar F_{\bar
v})/ G(\bar\calO_{\bar v})
$$
est $T_{\bar v}$-\'equivariant. L'action de $T_{\bar v}$ sur $\calM_v(a)$ se
factorise par
$$T_{\bar v} \rightarrow T(\bar F_{\bar v}) \rightarrow \calP_v(J_a)
=T(\bar F_{\bar v})/ J_a(\bar \calO_{\bar v})$$
de sorte qu'il se factorise par le tore de dimension un
$$T_{\bar v}\rightarrow A_{\pm \alpha, \bar v}.$$
Il s'ensuit que si on \'ecrit $g\in \calM_v(a)(\bar k)$ sous la forme
$g=\epsilon_v^\lambda u $, $u$ doit \^etre de la forme $u=U_\alpha(y)$
avec $y\in \bar F_{\bar v}/ \bar \calO_{\bar v}$ uniquement d\'etermin\'e.
Un calcul dans $\SL_2$ \cf \cite[lemme 6.2]{GKM} montre alors qu'avec
l'hypoth\`ese $r_{\pm \alpha}(a)=1$, $y$ s'\'ecrit uniquement sous la forme
$y=x \epsilon_v^{-1}$ avec $x\in \bar k$ et inversement les \'el\'ements $g$
de a forme $g=\epsilon_v^\lambda U_\alpha(x \epsilon_v^{-1})$
appartiennent \`a $\calM_{\bar v}(a)(\bar k)$.
\end{proof}

\begin{lemme}
Les points fixes du tore de dimension un $A_{\pm \alpha,\bar v}$ dans
$\calM_{\bar v}$ sont $\epsilon_v^\lambda$. Pour $\lambda\in \bbX_*(T)$
fix\'e, $A_{\pm \alpha,\bar v}$ agit simplement transitivement sur
$$O_\lambda=\{\epsilon_v^\lambda U_\alpha(x \epsilon_v^{-1}) \in \calM_v(a)(\bar k)
| x\in \bar k^\times\}.
$$
De plus le bord de l'adh\'erence de cette orbite est constitu\'e de
$\epsilon_v^\lambda$ et $\epsilon_v^{\lambda-\alpha^\vee}$ o\`u
$\alpha^\vee$ est la coracine associ\'ee \`a la racine $\alpha$.
\end{lemme}

\begin{proof}
Un calcul direct montre que les $\epsilon_v^\lambda$ sont fixes sous
l'action de $A_{\pm \alpha,\bar v}$ et que $A_{\pm \alpha,\bar v}$ agit
simplement transitivement sur $O_\lambda$. Ceci montre que l'ensemble
des points fixes de $A_{\pm \alpha,\bar v}$ est exactement
$\{\epsilon_v^\lambda| \lambda\in \bbX_*(T)\}$. Quand $x\rightarrow 0$,
$\epsilon_v^\lambda U_\alpha(x \epsilon_v^{-1})$ tend vers
$\epsilon_v^\lambda$ de sorte que $\epsilon_v^\lambda$ appartient \`a
l'adh\'erence de $O_\lambda$.

Il reste \`a d\'emontrer que quand $x\rightarrow \infty$,
$\epsilon_v^\lambda U_\alpha(x \epsilon_v^{-1})$ tend vers
$\epsilon_v^{\lambda-\alpha^\vee}$ Il revient au m\^eme de d\'emontrer
que $U_\alpha(x \epsilon_v^{-1})$ tend vers $\epsilon_v^{-\alpha^\vee}$
quand $x\rightarrow \infty$. Il s'agit d'un calcul bien connu dans la
Grassmannienne affine qui d\'ecoule de la relation de Steinberg dans
$G(\bar F_{\bar v})$ \cf \cite[chap. 3, lemme 19]{St}
$$y^{-\alpha^\vee} w_\alpha= U_{-\alpha}(y) U_{\alpha}(-y^{-1}) U_\alpha(y)$$
qui vaut pour tout $y\in \bar F_{\bar v}$, pour toute racine $\alpha$ et
pour un repr\'esentant $w_\alpha$ de la r\'eflexion $s_\alpha\in W$
attach\'ee \`a la racine $\alpha$ ind\'ependant de $y$ et qui en particulier
appartient \`a $G(\bar k)$. En prenant $y=-x^{-1} \epsilon_v$ avec $x\in
\bar k^\times$, on obtient la relation suivante dans $G(\bar F_{\bar v})/
G(\bar \calO_{\bar v})$
$$U_\alpha(x \epsilon_v^{-1})=\epsilon_v^{-\alpha^\vee} U_{-\alpha}(-x^{-1}\epsilon_v^{-1}).$$
En faisant tendre $x$ vers $\infty$, on constate que $U_\alpha(x
\epsilon_v^{-1})$ tend vers $\epsilon_v^{-\alpha^\vee}$.
\end{proof}

\begin{proposition}\label{comptage fibre simple}
Soit $\kappa\in \hat T^\sigma$ un \'el\'ement de torsion tel que
$$\kappa(\alpha^\vee)\not =1.$$
Alors, on a la formule
$$[\calM_v(a)/\calP_v(J_a)](k)_\kappa=  \sharp\, A_{\pm\alpha}(k_v)^{-1} q^{\deg(v)}.$$
\end{proposition}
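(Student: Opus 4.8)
The plan is to combine the explicit geometric description of $\calM_v(a)$ established in the preceding lemmas with the point-counting formalism of \ref{subsection : comptage}, following the pattern of Example \ref{exemple : chaine infinie}. From the three lemmas just proved, over $\bar k$ the fibre of Springer affine $\calM_{\bar v}(a)$ is a disjoint union, indexed by $\lambda\in\bbX_*(T)$, of orbits $O_\lambda$ under the one-dimensional torus $A_{\pm\alpha,\bar v}$, whose closures are rational curves joining the fixed point $\epsilon_v^\lambda$ to $\epsilon_v^{\lambda-\alpha^\vee}$. So $\calM_{\bar v}(a)$ is exactly a chain of $\mathbb P^1$'s of the type considered in \ref{exemple : chaine infinie}, with the shift given by the coroot $\alpha^\vee\in\bbX_*(T)$, and $\calP_{\bar v}(J_a)$ acts through its quotient $A_{\pm\alpha,\bar v}$ together with the lattice $\bbX_*(T)$ of translations. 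First I would make this identification precise over $k$ (not just $\bar k$): the Galois action on $\Phi$ fixes $\{\pm\alpha\}$, hence fixes the line $\QQ\alpha^\vee$, and the quotient $\bbX_*(T)/\ZZ\alpha^\vee$ carries a residual action of $\sigma$; the relevant piece of $\calM_v(a)$ for a given $\kappa$ is the union of the $O_\lambda$ and their limit points along a $\ZZ\alpha^\vee$-coset.

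Next I would apply \ref{comptage avec P_v(J_a)}. Since $\kappa\in\hat T^\sigma$ is torsion with $\kappa(\alpha^\vee)\neq 1$, the character $\kappa$ of $\rmH^1(F_v,J_a)$ obtained via $\rmH^1(F_v,J_a)^*=\pi_0(\hat T^{\pi_a^\bullet(I_v)})$ is nontrivial precisely on the class measuring the position of the $A_{\pm\alpha}$-orbit within its $\ZZ\alpha^\vee$-line. Because $A_{\pm\alpha}$ quotients the chain into a single $\mathbb P^1$ modulo the lattice, the computation reduces, $\ZZ\alpha^\vee$-coset by $\ZZ\alpha^\vee$-coset, to exactly the situation of \ref{exemple : chaine infinie}: we have $\cl_1=0$ since the base point (Kostant section) is a $k$-point, so the excluded third possibility does not occur, and depending on whether $\alpha^\vee$ is a $\sigma$-stable vector or not the relevant case is (1) or (2) of that example. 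In either case one gets $\sharp\,A_{\pm\alpha}(k_v)\cdot\sharp\,[\calM_v(a)/\calP_v(J_a)](k)_\kappa$ equal to $q^{\deg(v)}$ up to the bookkeeping of how $\sigma$ permutes the $\ZZ\alpha^\vee$-cosets. Here the hypothesis $\kappa(\alpha^\vee)\neq 1$ is what guarantees the $\kappa$-weighted sum over a full line of the chain telescopes rather than diverging, and that the residue field of the point $\infty$ or $0$ contributes the power $q^{\deg(v)}$ after summing the geometric series formally as in \ref{exemple : chaine infinie}.

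The main obstacle, I expect, will be handling the Frobenius action carefully: unlike the absolute situation of \ref{exemple : chaine infinie}, here $k_v$ may be a proper extension of $k$, the coroot $\alpha^\vee$ may not be individually $\sigma$-stable but only its $\ZZ$-span, and the torus $A_{\pm\alpha}$ may be split or non-split over $k_v$. One must check that after grouping the $O_\lambda$ into $\sigma$-orbits of $\ZZ\alpha^\vee$-cosets, each contribution is either the split-chain value $\frac{q^{\deg v}}{\sharp A_{\pm\alpha}(k_v)}$ or the non-split-chain value, and that the total, weighted by $\kappa$, collapses to the single term $\sharp\,A_{\pm\alpha}(k_v)^{-1}q^{\deg(v)}$. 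Concretely I would: (i) reduce to $\SL_2$ or $\mathrm{PGL}_2$ or $\GL_2$ using $G_{\pm\alpha}$ exactly as in the proof of \ref{J=J'}; (ii) in each of those three cases perform the direct $\SL_2$-computation already invoked via \cite[lemme 6.2]{GKM} and \cite[chap. 3, lemme 19]{St}; (iii) assemble via \ref{comptage avec P_v(J_a)} and the identities ${\sharp\,}A(k){\sharp\,}X(k)_\kappa=q$ of \ref{exemple : chaine infinie}, keeping track of the degree $\deg(v)$ factor coming from $k_v/k$. The rest is the routine orbit-counting that I would not grind through here.
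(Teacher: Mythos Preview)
Your geometric picture is correct and matches the paper's: $\calM_{\bar v}(a)$ is a disjoint union of infinite $\bbP^1$-chains, one for each coset of $\ZZ\alpha^\vee$ in $\bbX_*(T)$. But you are missing the single step that makes the proof short, and several of your proposed steps are off-target.

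The paper's argument is this. After reducing to $k_v=k$ by restriction of scalars (which is where the exponent $\deg(v)$ comes from --- there is no geometric series), one takes the $\sigma$-stable subgroup $\ZZ\alpha^\vee\subset\bbX_*(T)$ and its preimage $P\subset\calP_v(J_a)$, so that $1\to A_{\pm\alpha}\to P\to\ZZ\alpha^\vee\to 1$. The connected component $M$ of $\calM_v(a)\otimes_k\bar k$ containing the Kostant $k$-point is a single infinite chain, defined over $k$, and its stabiliser in $\calP_v(J_a)$ is exactly $P$. The key observation is the induction formula
\[
\calM_v(a)\;=\;M\wedge^{P}\calP_v(J_a),
\]
which immediately gives the categorical equivalence $[\calM_v(a)/\calP_v(J_a)]\;=\;[M/P]$. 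This collapses your ``sum over $\ZZ\alpha^\vee$-cosets'' to a single term in one line, with no need to track how $\sigma$ permutes the components of $\calM_{\bar v}(a)$. One is then literally in the situation of Example~\ref{exemple : chaine infinie}: the Kostant section forces $\cl_1=0$, excluding the third possibility there, and the hypothesis $\kappa(\alpha^\vee)\neq 1$ ensures that when $A_{\pm\alpha}$ is non-split (so $\sigma(\alpha^\vee)=-\alpha^\vee$ and $\kappa(\alpha^\vee)=-1$) the restricted character on $\rmH^1(k,P)$ is nontrivial. The formula $\sharp A(k)\,\sharp[M/P](k)_\kappa=q$ from that example is then exactly the claim.

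Your plan goes astray in a few places. Invoking \ref{comptage avec P_v(J_a)} is backwards: that proposition converts the count into an orbital integral, which is what this lemma is meant to \emph{evaluate}, not a tool for evaluating it. The proposed case split into $\SL_2$, $\PGL_2$, $\GL_2$ via $G_{\pm\alpha}$ is unnecessary --- the lemmas preceding the proposition have already extracted from $G_{\pm\alpha}$ everything needed, namely the chain description; no further group-by-group computation is required. And the worry about ``each $\ZZ\alpha^\vee$-coset contributing a chain value and the total collapsing'' is precisely what the induction formula eliminates: you never sum over cosets, because the quotient by $\calP_v(J_a)$ already identifies them all with the base chain $M$ modulo $P$.
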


\begin{proof}
La fibre de Springer affine $\calM_v(a)$ s'obtient comme la restriction des
scalaires $k_v/ k$ d'une fibre de Springer affine d\'efinie sur $k_v$. Il en est
de m\^eme de $\calP_v(J_a)$ et du quotient $[\calM_v(a)/\calP_v(J_a)]$.
On peut donc supposer que $k_v=k$.

Notons $A=A_{\pm \alpha,v}$ qui est un tore de dimension un sur $k$.
Consid\'erons le sous-groupe $\sigma$-invariant de $\bbX_*(T)$ engendr\'e
par $\alpha^\vee$ et consid\'erons l'image r\'eciproque de la suite exacte
$$1\rightarrow A\rightarrow \calP_v(J_a) \rightarrow \bbX_*(T) \rightarrow 1$$
par l'homomorphisme $\ZZ  \alpha^\vee \rightarrow \bbX_*(T)$. C'est un
groupe alg\'ebrique $P$ d\'efini sur $k$ muni d'une suite exacte
$$1\rightarrow A \rightarrow P \rightarrow \ZZ  \alpha^\vee \rightarrow 1$$
et un homomorphisme injectif $P\rightarrow\calP_v(J_a)$ qui induit
l'identit\'e sur la composante neutre $A$.

Dans la fibre de Springer affine $\calM_v(a)$, on dispose d'un $k$-point $m$
donn\'e par la section de Kostant. D'apr\`es la description ci-dessus de
$\calM_v(a)\otimes_k \bar k$, la composante connexe $M$ de
$\calM_v(a)\otimes_k \bar k$ est une cha{\^\i}ne infinie de droites
projectives munie d'une action de $P\otimes_k \bar k$ qui est simplement
transitive sur la $M^\reg$. Comme $M$ contient le $k$-point $m$, il est
d\'efini sur $k$. On retrouve $\calM_v(J_a)$ \`a partir de $M$ par
l'induction de $P$ dans $\calP_v(J_a)$
$$\calM_v(a)=M \wedge^P \calP_v(J_a).$$
En particulier, on a une \'equivalence de cat\'egories
$$[\calM_v(a)/\calP_v(J_a)]=[M/ P].$$
On se ram\`ene donc \`a l'exercice de comptage de points d\'ej\`a r\'esolu
dans l'exemple \ref{exemple : chaine infinie}.
\end{proof}

\subsection{Comptage dans une fibre de Hitchin anisotrope}
\label{subsection : comptage Hitchin}

Rappelons le comptage de points dans une fibre de Hitchin $\calM_a$ avec
$a\in\calM_a^\ani(k)$ en suivant le paragraphe 9 de \cite{N}. Dans {\em loc.
cit.}, nous avons consid\'erer le quotient de $\calM_a$ par $\calP_a$. Il est
en fait plus commode et pas plus on\'ereux de consid\'erer un quotient plus
g\'en\'eral.

\begin{numero}\label{choix J'_a}
Soit $J'_a$ un $X$-sch\'ema en groupes lisse commutatif de type fini muni
d'un homomorphisme $J'_a\rightarrow J_a$ qui un isomorphisme sur un
ouvert non-vide $U$ de $X$. La donn\'ee de $J'_a$ est \'equivalente \`a la
donn\'ee des sous-groupes ouverts compacts $J'_a(\calO_v)\subset
J_a(\calO_v)$ pour les points $v\in |X-U|$. Notons $\calP'_a=\calP(J'_a)$ le
classifiant des $J'_a$-torseurs sur $X$. On a alors un homomorphisme
$\calP'_a\rightarrow \calP_a$ qui induit une action de $\calP'_a$ sur
$\calM_a$. Pour $J'_a$ assez petit au sens o\`u les sous-groupes ouverts
compacts $J'_a(\calO_v)$ soient assez petit, $\rmH^0(\bar X,J'_a)$ est
trivial et de sorte que $\calP'_a$ est repr\'esentable par un groupe
alg\'ebrique localement de type fini sur $k$. Pour le comptage, il sera
commode de supposer que les fibres de $J'_a$ sont toutes connexes.
Consid\'erons la cat\'egorie quotient $[\calM_a /\calP'_a]$.
\end{numero}

Le comptage des $k$-points de $\calM_a$ est fond\'e sur la forme suivante
de la formule de produit \cf \ref{produit} et \cite[th\'eor\`eme 4.6]{N}.

\begin{proposition}\label{produit quotient}
Soit $U=a^{-1}(\frakc_D^\rs)$ l'image inverse du lieu semi-simple r\'egulier
de $\frakc_D$. On a une \'equiva\-lence de cat\'egories
$$[\calM_a/ \calP'_a]=\prod_{v\in X-U} [\calM_v(a)/ \calP_v(J'_a)]$$
compatible \`a l'action de $\sigma\in\Gal(\bar k/ k)$. En particulier, on a
une \'equiva\-lence entre les cat\'egories des $k$-points
$$[\calM_a/ \calP'_a](k)=\prod_{v \in |X-U|} [\calM_v(a)/ \calP_v(J'_a)](k).$$
\end{proposition}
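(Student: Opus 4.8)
La stratégie est de déduire la proposition \ref{produit quotient} de la formule de produit \ref{produit}, ou plus précisément du théorème 4.6 de \cite{N} dont elle est essentiellement une reformulation en termes de catégories quotients. Rappelons que ce théorème affirme que le morphisme de recollement
$$\zeta:\prod_{v\in\bar X-U} \calM_v(a)
\wedge^{\prod_{v\in\bar X-U}\calP_v(J_a)} \calP_a \longrightarrow \calM_a$$
induit une équivalence sur la catégorie des $\bar k$-points. Le point de départ est d'observer que cette équivalence se traduit exactement par une équivalence de catégories
$$[\calM_a/\calP_a](\bar k)=\prod_{v\in\bar X-U}[\calM_v(a)/\calP_v(J_a)](\bar k),$$
puisque pour tout groupe $P$ agissant sur un espace $M$ induit depuis une action d'un sous-groupe $Q$ sur $M^\circ$ par $M=M^\circ\wedge^Q P$, on a une équivalence canonique $[M/P]=[M^\circ/Q]$. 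Appliqué au cas présent avec $Q=\prod_v \calP_v(J_a)$, $M^\circ=\prod_v \calM_v(a)$ et $P=\calP_a$, on obtient l'équivalence voulue pour $J'_a=J_a$.

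Pour passer à un $J'_a$ plus petit au sens de \ref{choix J'_a}, on remarque que l'homomorphisme $\calP'_a\rightarrow \calP_a$ se factorise localement: pour chaque $v\in |X-U|$, on a $\calP_v(J'_a)\rightarrow \calP_v(J_a)$ et ces morphismes locaux sont compatibles avec les morphismes globaux $\prod_v \calP_v(J'_a)\rightarrow \calP'_a$ et $\prod_v\calP_v(J_a)\rightarrow\calP_a$. Plus précisément, le quotient $\calP_a/\calP'_a$ est un faisceau supporté par $X-U$ qui se décompose en produit $\prod_{v\in|X-U|}(\calP_v(J_a)/\calP_v(J'_a))$, de sorte que $\calP'_a=\calP_a\times_{\prod_v\calP_v(J_a)}\prod_v\calP_v(J'_a)$. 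L'équivalence $[\calM_a/\calP_a]=\prod_v[\calM_v(a)/\calP_v(J_a)]$ se raffine alors en
$$[\calM_a/\calP'_a]=\prod_{v\in X-U}[\calM_v(a)/\calP_v(J'_a)],$$
car prendre le quotient par un sous-groupe $\calP'_a$ obtenu par produit fibré local revient à raffiner fibre par fibre chacun des quotients locaux. Toutes ces constructions sont visiblement équivariantes sous l'action de $\sigma\in\Gal(\bar k/k)$ puisque $a$, $\calM_a$, $\calP_a$, les $J'_a$ et les morphismes en jeu sont tous définis sur $k$; on en déduit l'énoncé sur les $\bar k$-points équivariants puis, en prenant les points fixes sous $\sigma$, l'équivalence annoncée entre les catégories des $k$-points.

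Le point délicat sera de manipuler soigneusement les équivalences de catégories quotients sous les changements de groupes de structure et les produits fibrés, et surtout de vérifier la compatibilité galoisienne à chaque étape: l'équivalence de \ref{produit} est énoncée sur les $\bar k$-points, et il faut s'assurer qu'elle provient bien d'un morphisme défini sur $k$ — ce qui est le cas car $\zeta$ est construit par recollement à partir de la section de Kostant, elle-même définie sur $k$. Une fois cette compatibilité acquise, le passage aux $k$-points est formel: les points fixes sous $\sigma$ d'un produit de catégories est le produit des catégories de points fixes. On notera que l'hypothèse $a\in\calA^\ani(k)$ n'intervient pas directement dans l'énoncé mais garantit, via \ref{ani auto} et \ref{produit}, que tous les champs en présence sont des champs de Deligne-Mumford de type fini sur $k$, ce qui rend les catégories de $k$-points bien définies au sens de \ref{subsection : comptage}.
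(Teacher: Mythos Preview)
Your approach is essentially the one the paper indicates: the proposition is stated there as a direct consequence of the product formula \ref{produit} and \cite[th\'eor\`eme~4.6]{N}, and you correctly reduce to that statement via the standard equivalence $[M^\circ\wedge^Q P\,/\,P]\simeq[M^\circ/Q]$.

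One point deserves care. Your fiber product formula is written in the wrong direction: there is no natural map $\calP_a\to\prod_v\calP_v(J_a)$, so $\calP'_a=\calP_a\times_{\prod_v\calP_v(J_a)}\prod_v\calP_v(J'_a)$ is not well-posed as stated. What is true (and what you need) is the cartesian square in the other direction,
$$\prod_{v}\calP_v(J'_a)\;=\;\calP'_a\times_{\calP_a}\prod_{v}\calP_v(J_a),$$
which follows from the matching of kernels $\ker(\calP'_a\to\calP_a)=\rmH^0(\bar X,J_a/J'_a)=\prod_v J_a(\bar\calO_v)/J'_a(\bar\calO_v)=\prod_v\ker(\calP_v(J'_a)\to\calP_v(J_a))$ together with surjectivity of $\calP'_a\to\calP_a$. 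From this cartesian square and the surjectivity of $\calP'_a\to\calP_a$, the general fact $M^\circ\wedge^{Q'}P'=M^\circ\wedge^Q P$ (valid when $P'\to P$ is surjective and $Q'=P'\times_P Q$) gives exactly the passage from $J_a$ to $J'_a$ that you want. Also, the phrase ``$\calP_a/\calP'_a$ est un faisceau support\'e par $X-U$'' conflates the global Picard groups with sheaves on $X$; the sheaf with finite support is $J_a/J'_a$, and the statement about $\calP_a,\calP'_a$ follows by taking cohomology. With these adjustments the argument goes through.
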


Soit $(\calP'_a)^0$ la composante neutre de $\calP'_a$. Puisque
$a\in\calA^\ani(k)$, le groupe des composantes connexes $\pi_0(\calP_a)$
est un groupe fini muni d'une action de l'\'el\'ement de Frobenius
$\sigma\in\Gal(\bar k/ k)$.

\begin{proposition}
Supposons que les fibres de $J'_a$ sont connexes. Pour tout caract\`ere
$\sigma$-invariant de $\pi_0(\calP_a)$
$$\kappa:\pi_0(\calP_a)_\sigma\rightarrow \Ql^\times,$$
on a
$$\sharp\,[\calM_a/ \calP'_a](k)_\kappa=\prod_{v \in |X-U|} \sharp\,[\calM_v(a)/ \calP_v(J'_a)](k)_\kappa
.$$
\end{proposition}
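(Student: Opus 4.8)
\emph{Proof proposal.} The plan is to combine the product formula \ref{produit quotient} with the multiplicativity of the $\kappa$-weighting across the local places. First I would note that the equivalence of groupoids in \ref{produit quotient}
$$[\calM_a/ \calP'_a](k)=\prod_{v\in |X-U|} [\calM_v(a)/ \calP_v(J'_a)](k)$$
is compatible with the invariant $\cl$ attached to each object: an object $x$ of $[\calM_a/\calP'_a](k)$ corresponds to a collection $(x_v)_{v\in|X-U|}$ of local objects, and under the global-to-local comparison the class $\cl(x)\in \rmH^1(k,\calP'_a)$ maps to the collection $(\cl(x_v))_v$ under the homomorphism $\rmH^1(k,\calP'_a)\to \bigoplus_v \rmH^1(k,\calP_v(J'_a))$. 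The key point to verify is that the global $\kappa$, viewed via $\rmH^1(k,\calP_a)\to \rmH^1(k,\calP'_a)$ as a character, decomposes compatibly: for the global class and the collection of local classes one has
$$\langle \cl(x),\kappa\rangle=\prod_{v\in|X-U|}\langle \cl(x_v),\kappa_v\rangle,$$
where $\kappa_v$ is the local character of $\rmH^1(k,\calP_v(J'_a))$ obtained from $\kappa$. This is exactly the content of the Tate--Nakayama-style compatibility already in place: the global $\rmH^1$ maps to the product of the local ones, and the pairing with $\kappa$ is additive in the source, hence multiplicative after passing to $\Ql^\times$. Once this is granted, the sum defining $\sharp\,[\calM_a/\calP'_a](k)_\kappa$ factors termwise over the finitely many $v\in|X-U|$, since the groupoid is a product and the automorphism groups multiply: $\sharp\,\Aut(x)=\prod_v \sharp\,\Aut(x_v)$.

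Concretely, I would argue as follows. Write the defining sum
$$\sharp\,[\calM_a/\calP'_a](k)_\kappa=\sum_{x}\frac{\langle\cl(x),\kappa\rangle}{\sharp\,\Aut(x)},$$
the sum running over isomorphism classes of objects of $[\calM_a/\calP'_a](k)$. Using \ref{produit quotient} to identify this groupoid with $\prod_v[\calM_v(a)/\calP_v(J'_a)](k)$, an isomorphism class of $x$ is the same datum as a tuple of isomorphism classes $(x_v)_v$; substituting $\langle\cl(x),\kappa\rangle=\prod_v\langle\cl(x_v),\kappa_v\rangle$ and $\sharp\,\Aut(x)=\prod_v\sharp\,\Aut(x_v)$ and exchanging the sum over tuples with the product of sums over each factor gives
$$\sharp\,[\calM_a/\calP'_a](k)_\kappa=\prod_{v\in|X-U|}\left(\sum_{x_v}\frac{\langle\cl(x_v),\kappa_v\rangle}{\sharp\,\Aut(x_v)}\right)=\prod_{v\in|X-U|}\sharp\,[\calM_v(a)/\calP_v(J'_a)](k)_\kappa.$$
The interchange of sum and product is legitimate because all the sums are finite, which is guaranteed by \ref{hypothese : finitude} applied at each place (the local fibres of Springer satisfy the required finiteness by \ref{quotient projectif}) and by the fact that $|X-U|$ is a finite set of closed points since $a\in\calA^\ani(k)$ has $a(\bar X)\not\subset\discrim_{G,D}$.

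The main obstacle, and the step deserving most care, is the compatibility of the invariant $\cl$ and of the $\kappa$-weighting with the product decomposition. One needs that the local characters $\kappa_v$ are precisely the images of the global $\kappa$ under the natural maps, and that no sign or discrepancy enters. This rests on the comparison of the global class group $\rmH^1(k,\calP_a)$, or rather of $\pi_0(\calP_a)_\sigma$, with the product of its local analogues, together with the Tate--Nakayama dualities \ref{T-N} and \ref{pi 0 global}/\ref{pi 0 local} which identify these groups dually with fixed-point subgroups of $\hat\bbT$. In fact the cleanest route is to observe that $\pi_0(\calP_a)$ is computed from the global Néron model $J_a^\flat$, whose construction is purely local (\ref{J a flat global}), so the global-to-local homomorphism on $\pi_0$ is manifestly the one induced by the inclusions $J_a(\bar\calO_v)\hookrightarrow J_a^\flat(\bar\calO_v)$; dualizing and pairing with $\kappa$ then gives the claimed multiplicativity without sign ambiguity, because $\kappa$ is a character and the pairing is bilinear. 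Finally, I would observe that the hypothesis that the fibres of $J'_a$ be connected is used only to ensure $\rmH^1(k,\calP_v(J'_a))=\rmH^1(k,\calP'_a)$-compatibility via Lang's theorem, so that the local and global $\kappa$-weightings literally refer to characters of the same finite groups; this is the point where the connectedness assumption is essential, and it is harmless in the applications since one may always shrink $J'_a$.
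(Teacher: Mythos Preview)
Your proof is correct and follows the same approach as the paper: both use the product formula \ref{produit quotient} to identify the global groupoid with the product of local ones, check that the pairing $\langle\cl(x),\kappa\rangle$ factors as $\prod_v\langle\cl(x_v),\kappa\rangle$, and then distribute the sum over the product. The paper's justification of the multiplicativity is more direct than yours: it simply observes that the composite $\calP_v(J'_a)\to\calP'_a\to\pi_0(\calP'_a)$ factors through $\pi_0(\calP_v(J'_a))$, which immediately defines the local characters $\kappa_v$ compatibly with the global one; your detour through Tate--Nakayama duality and the N\'eron model description of $\pi_0$ is unnecessary here, though not wrong.
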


\begin{proof}
Pour tout point ferm\'e $v$ de $X$ dans le compl\'ementaire de l'ouvert
$U=a^{-a}(\frakc_D^\rs)$, l'homomorphisme compos\'e
$$\calP_v(J'_a)\rightarrow \calP'_a \rightarrow \pi_0(\calP'_a)$$
se factorise par $\pi_0(\calP_v(J'_a))$. L'homomorphisme
$\kappa:\pi_0(\calP'_a)_\sigma \rightarrow \Ql^\times$ d\'efinit donc un
homomorphisme $\kappa:\pi_0(\calP_v(J'_a))_\sigma \rightarrow
\Ql^\times$.

Dans \ref{produit quotient}, si un point $y\in [\calM_a/ \calP'_a](k)$
correspond \`a une collection de points $y_v\in [\calM_v(a)/
\calP_v(J'_a)](k)$ pour $v\in |X-U|$, alors on a la formule
$$\langle \cl(y),\kappa \rangle =\prod_{v\in |X-U|}  \langle \cl(y_v),\kappa \rangle.$$
Par cons\'equent, on a la factorisation
$$\sharp\,[\calM_a/ \calP'_a](k)_\kappa=\prod_{v\in |X-U|}
\sharp\,[\calM_v(a)/\calP_v(J'_a)](k)_\kappa$$
d'o\`u la proposition.
\end{proof}

La conjonction avec \ref{formule des points fixes} donne le corollaire
suivant.

\begin{corollaire}\label{formule des points fixes Hitchin}
Pour tout caract\`ere $\kappa:\pi_0(\calP_a)_\sigma\rightarrow
\Ql^\times$, on a
$$\sum_n (-1)^n\tr(\sigma, \rmH^n(\calM_a)_\kappa)
= {\sharp\,}
(\calP'_a)^0(k)\prod_{v\in |X-U|}
\sharp\,[\calM_v(a)/\calP_v(J'_a)](k)_\kappa
$$
o\`u $\rmH^n(\calM_a)_\kappa$ est le sous-espace propre de
$\rmH^n(\calM_a)$ o\`u le groupe $\pi_0(\calP_a)$ agit \`a travers le
caract\`ere $\kappa$.
\end{corollaire}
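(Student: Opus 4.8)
\textbf{Plan of proof for \ref{formule des points fixes Hitchin}.}
The statement is a direct consequence of the preceding two results, and the plan is essentially to chain them together. First I would fix the auxiliary group scheme $J'_a$ chosen in \ref{choix J'_a}: it is lisse, commutative, of finite type over $X$, with all fibres connected, and it comes with a homomorphism $J'_a\to J_a$ which is an isomorphism over the dense open $U=a^{-1}(\frakc_D^\rs)$. Taking $J'_a$ small enough we may also assume $\rmH^0(\bar X,J'_a)=0$, so that $\calP'_a=\calP(J'_a)$ is represented by a group scheme locally of finite type over $k$ on which $\sigma$ acts. Since $a\in\calA^\ani(k)$, the group $\pi_0(\calP_a)$ is finite (by \ref{finitude} and \ref{P'_a P_a}), hence so is $\pi_0(\calP'_a)$, and the chosen $\kappa$ pulls back to a $\sigma$-invariant character of $\pi_0(\calP'_a)_\sigma$.

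The key point is that the quotient stack $[\calM_a/\calP'_a]$ satisfies Hypoth\`ese \ref{hypothese : finitude}: $\pi_0(\calP'_a)$ is of finite type, the stabilizers in $\calP'_a$ of points of $\calM_a$ are affine (they are contained in $\rmH^0$ of the relevant centralizer, cf. \ref{borne automorphisme}, and being anisotropic they are finite, cf. \ref{ani auto}), and there is a discrete torsion-free subgroup $\Lambda\subset\calP'_a$ with $\calP'_a/\Lambda$ and $\calM_a/\Lambda$ of finite type — this follows from the product formula \ref{produit} together with the Kazhdan--Lusztig finiteness \ref{quotient projectif}. Therefore \ref{formule des points fixes} (in its finite-type variant, via \ref{formule des points fixes 2}) applies and gives
$$\sum_n (-1)^n\tr(\sigma,\rmH^n(\calM_a)_\kappa)= {\sharp\,}(\calP'_a)^0(k)\cdot{\sharp\,}[\calM_a/\calP'_a](k)_\kappa,$$
where I use that $\calM_a$ is a smooth proper Deligne--Mumford stack over $k$ (by \ref{propre} and \ref{lisse}), so that $\rmH^n(\calM_a)=\rmH^n_c(\calM_a)$, and that $(\calP'_a)^0$ is the neutral component appearing in that formula.

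Finally I would substitute the factorization of $[\calM_a/\calP'_a](k)_\kappa$ established just above (the proposition following \ref{produit quotient}), namely
$${\sharp\,}[\calM_a/\calP'_a](k)_\kappa=\prod_{v\in|X-U|}{\sharp\,}[\calM_v(a)/\calP_v(J'_a)](k)_\kappa,$$
which itself rests on the product formula \ref{produit quotient} and the multiplicativity $\langle\cl(y),\kappa\rangle=\prod_v\langle\cl(y_v),\kappa\rangle$ of the Kottwitz-type pairing. Combining the two displays yields the asserted identity. There is no real obstacle here: the only care needed is to make sure the finiteness hypotheses for the generalized Lefschetz formula are verified in the stacky setting — that is, that the $\kappa$-weighted point count on the left is indeed a finite sum and that the appearance of $(\calP'_a)^0$ rather than $\calP'_a$ itself is correctly tracked — which is exactly the content of \ref{formule des points fixes} and the remark that the arguments of \ref{subsection : cap-produit} and \ref{subsection : comptage} are unchanged for Deligne--Mumford stacks.
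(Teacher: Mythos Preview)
Your proof is correct and follows the same path as the paper: apply the fixed-point formula \ref{formule des points fixes} to $[\calM_a/\calP'_a]$, then substitute the $\kappa$-factorization from the preceding proposition. One simplification: since $a\in\calA^\ani(k)$, the stack $\calM_a$ and the group $\calP'_a$ are already of finite type (by \ref{propre} and the finiteness of $\pi_0(\calP'_a)$, which surjects onto the finite $\pi_0(\calP_a)$ with finite affine kernel), so the hypotheses \ref{hypothese : finitude} and the auxiliary $\Lambda$ are superfluous here --- you may invoke \ref{formule des points fixes} directly rather than going through \ref{formule des points fixes 2}.
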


\begin{numero}\label{trivialisation de D'}
Pour exprimer les nombres $\sharp\,[\calM_v(a)/ \calP_v(J'_a)](k)_\kappa$
en termes d'int\'egrales orbitales locales, faisons les choix suivants :
\begin{itemize}
  \item en toute place $v\in |X-U|$, choisissons une trivialisation du
      fibr\'e inversible $D'|_{X_v}$,
  \item en toute place $v\in |X-U|$, choisissons une mesure de Haar
      ${\rm d}t_v$ du tore $J_a(F_v)$.
\end{itemize}
Ces choix nous permettent :
\begin{itemize}
  \item d'identifier la restriction de $a$ \`a $X_v$ avec un \'el\'ement
      $a_v\in\frakc(\calO_v)\cap \frakc^\rs(F_v)$,
  \item d'identifier les sch\'emas en groupes $J_a$ et $J_{a_v}$ ce qui
      munit une mesure de Haar ${\rm d}t_v$ \`a $J_{a_v}(F_v)$,
  \item d'identifier la fibre de Springer affine $\calM_v(a)$ munie de
      l'action de $\calP_v(J_a)$ avec la fibre de Springer affine
      $\calM_v(a_v)$ munie de l'action de $\calP_v(J_{a_v})$.
\end{itemize}
\end{numero}
Avec ces choix faits, on peut exprimer les nombres $\sharp\,[\calM_v(a)/
\calP_v(J'_a)](k)_\kappa$ en termes de $\kappa$-int\'egrales orbitales \cf
\ref{comptage avec P_v(J'_a)}
$$\sharp\,[\calM_v(a)/ \calP_v(J'_a)](k)_\kappa=
{\rm vol}(J'_a(\calO_v),{\rm d}t_v){\bf O}^\kappa_{a_v}(1_{\frakg_v},{\rm
d}t_v).
$$
On obtient alors la formule
$$\sum_n (-1)^n\tr(\sigma, \rmH^n(\calM_a)_\kappa)=
(\calP'_a)^0(k) \prod_{v\in |X-U|} {\rm vol} (J'_a(\calO_v),{\rm d}t_v){\bf
O}^\kappa_{a_v}(1_{\frakg_v},{\rm d}t_v).$$

\subsection{Stabilisation sur $\tilde\calA_H^{\rm ani} -\tilde\calA_H^{\rm bad}$}
\label{subsection : A good}

On est en position de d\'emontrer le th\'eor\`eme \ref{stabilisation sur tilde
A} sur l'ouvert $\tilde\calA_H^{\rm good}=\tilde\calA_H^{\rm ani}
-\tilde\calA_H^{\rm bad}$. Ici $\tilde\calA_H^{\rm bad}$ est le sous-sch\'ema ferm\'e de $\tilde\calA_H^\ani$ d\'efini dans \ref{delta bad}.

\begin{proof}
D'apr\`es le th\'eor\`eme du support \ref{support faible}, sur
$\tilde\calA_H^{\rm good}$, les faisceaux pervers purs
$$K^n_\kappa=\tilde\nu^*\, ^p\rmH^{n}(\tilde f^\ani_*\Ql)_{\kappa}
\  {\rm et}\  K_{H,{\rm st}}^{n}=\,^p\rmH^{n+2r_H^G(D)}(\tilde
f^\ani_{H,*}\Ql)_{\rm st}(-r_H^G(D))
$$
sont les prolongement de leurs restrictions \`a n'importe quel ouvert
non-vide $\tilde\calU$ de $\tilde\calA_H^{\rm good}$. Pour d\'emontrer que
$K^n$ et $K^n_H$ sont isomorphes sur $\tilde\calA_H^{\rm good}\otimes_k
\bar k$ et sont isomorphes apr\`es semi-simplification sur
$\tilde\calA_H^{\rm good}$, il suffit de le faire sur n'importe quel ouvert
dense $\tilde\calU$ de $\tilde\calA_H^{\rm good}$. On va construire un bon
ouvert $\tilde\calU$ comme suit.

Rappelons qu'on a une \'egalit\'e de diviseurs \cf \ref{discrimnant resultant}
$$ \nu^* \discrim_{G,D}= \discrim_{H,D}+2\resultant_{H,D}^G$$
sur $\frakc_{H,D}$. De plus, on sait que $\discrim_{H,D}$ et
$\resultant_{H,D}^G$ sont des diviseurs r\'eduits de $\frakc_{H,D}$
\'etrangers de sorte que la r\'eunion
$$\discrim_{H,D}+\resultant_{H,D}^G$$
est aussi un diviseur r\'eduit.

\begin{lemme} Supposons que $\deg(D)> 2g$. Alors,
l'ensemble des $a_H\in \calA_H(\bar k)$ tel que $a_H(\bar X)$ coupe
transversalement $\discrim_{H,D}+\resultant_{H,D}^G$ forme un ouvert non
vide $\calU$ de $\calA_H$.
\end{lemme}

\begin{proof}
La d\'emonstration est identique \`a la d\'emonstration de \ref{A diamond
non vide}.
\end{proof}

\begin{numero}
Consid\'erons l'image r\'eciproque $\tilde\calU$ de $\calU$ dans
$\tilde\calA$. En rapetissant cet ouvert si n\'ecessaire, on peut supposer
que $\tilde\calU\subset \tilde\calA^{\rm good}$. En le rapetissant encore si
n\'ecessaire, on peut supposer que pour tout $n\in\ZZ$, les restrictions
$$L^n_\kappa=K^n_\kappa|_{\tilde\calU}\  {\rm et}\  L_{H,{\rm st}}^n=K_{H,{\rm st}}^n|_{\tilde\calU}
$$
sont des syst\`emes locaux purs de poids $n$ sur $\tilde\calU$.
\end{numero}

\begin{numero}
Il suffit en fait de d\'emontrer que pour toute extension finie $k'$ de $k$,
pour tout $k'$-point $\tilde a_H\in \tilde\calU(k')$ d'image $\tilde
a\in\tilde\calA(k')$, on a l'\'egalit\'e des traces
\begin{equation}\label{egalite de traces}
\sum_{n} (-1)^{n}\tr(\sigma_{k'},(L^n_\kappa)_{\tilde a}) \\
=\sum_{n} (-1)^{n} \tr(\sigma_{k'},(L^n_{H,{\rm st}})_{\tilde a_H}).
\end{equation}
En effet, on a alors l'\'egalit\'e
$$
\sum_{n} (-1)^{n}\tr(\sigma_{k'}^j,(L^n_\kappa)_{\tilde a}) \\
=\sum_{n} (-1)^{n} \tr(\sigma_{k'}^j,(L^n_{H,{\rm st}})_{\tilde a_H}).
$$
pour tout entier naturel $j$ en consid\'erant $\tilde a_H$ comme un point
\`a valeurs dans l'extension de degr\'e $j$ de $k'$. De plus, comme les
valeurs propres de $\sigma_{k'}$ dans $(L^n_\kappa)_{\tilde a}$ et
$(L^n_{H,{\rm st}})_{\tilde a_H}$ sont toutes de valeur absolue
$q^{n\deg(k'/ k)/ 2}$, ces \'egalit\'es impliquent que pour toute extension
finie $k'$ de $k$, pour tout $\tilde a_H\in \tilde\calU(k')$ d'image $\tilde
a\in \tilde\calA$ et pour tout $n$, on a
$$\tr(\sigma_{k'},(L^n_\kappa)_{\tilde a})=\tr(\sigma_{k'},(L^n_{H,{\rm
st}})_{\tilde a_H}).$$ D'apr\`es le th\'eor\`eme de Chebotarev, ceci
implique que $L^n_\kappa$ et $L^n_{H,{\rm st}}$ sont isomorphes apr\`es
la semi-simplification.
\end{numero}

\begin{numero}
D\'emontrons maintenant la formule (\ref{egalite de traces}). Pour tout
point $\tilde a_H \in\tilde\calU(k')$, on peut calculer directement les deux
membres de cette formule et ensuite les comparer. En rempla{\c c}ant $k$
par $k'$ et $X$ par $X\otimes_k k'$, on peut supposer que $\tilde a_H$ est
un $k$-point de $\tilde\calU$.
\end{numero}

\begin{numero}
Soit $a_H\in \calA_H(k)$ l'image de $\tilde a_H$. Soit $a$ l'image de $a_H$
dans $\calA(k)$. D'apr\`es \ref{J JH}, on a un homomorphisme canonique
$$J_a\rightarrow J_{H,a_H}$$
qui est un isomorphisme au-dessus de l'ouvert $U=a^{-1}(\frakc_D^\rs)$.
Choisissons un sch\'ema en groupes $J'_a$ lisse commutatif de fibres
connexes muni d'un homomorphisme $J'_a\rightarrow J_a$ qui est
g\'en\'eriquement un isomorphisme. Soit $\calP'_a$ le champ classifiant des
$J'_a$-torseurs sur $X$. En rapetissant $J'_a$ si n\'ecessaire, on peut
supposer que $\rmH^0(\bar X,J'_a)$ trivial dans quel cas $\calP'_a$ est un
groupe alg\'ebrique de type fini. On a des homomorphismes naturels
$\calP'_a \rightarrow \calP_a$ et $\calP'_a \rightarrow \calP_{H,a_H}$ qui
induit une action de $\calP'_a$ sur $\calM_a$ et $\calM_{H,a_H}$. On peut
calculer les deux membres dans la formule \ref{egalite de traces} en
consid\'erant les quotients $[\calM_a/\calP'_a]$ et
$[\calM_{H,a_H}/\calP'_a]$. D'apr\`es \ref{formule des points fixes Hitchin},
le membre de gauche de \ref{egalite de traces} vaut
$$(\calP'_a)^0(k) \prod_{v\in |X-U|}  \sharp\,
[\calM_v(a)/\calP_v(J'_a)](k)_\kappa
$$
alors que le membre de droite vaut
$$q^{r_H^G(D)}(\calP'_a)^0(k) \prod_{v\in |X-U|}  \sharp\,
[\calM_{H,v}(a)/\calP_v(J'_a)](k).
$$
\end{numero}

Comme
$$r_H^G(D)=\sum_v  \deg(v) r_{H,v}^G(a_H)$$
o\`u $r_{H,v}^G(a_H)$ est le degr\'e du diviseur $a_H^*
\resultant_{H,D}^G$ en $v$, il suffit de d\'emontrer l'\'enonc\'e suivant qui
est un cas particuli\`erement simple du lemme fondamental de
Langlands-Shelstad \ref{LS}. Notons que ce cas particulier du lemme
fondamental est essentiellement contenu dans l'article \cite{LL} de Labesse
et Langlands sur $\SL(2)$ et a \'et\'e repris d'un point de vue plus g\'eom\'etrique dans \cite{GKM} par
Goresky, Kottwitz et MacPherson.
\end{proof}

\begin{lemme}\label{LF simple}
Soit $\tilde a_H \in \tilde\calA_H(k)$. Soit $v$ un place de $X$ au-dessus
duquel $a_H(\bar X_v)$ ne coupe pas ou coupe transversalement le diviseur
$\discrim_{H,D}+\resultant_{H,D}^G$. On a l'\'egalit\'e entre les
nombres
$$\sharp\,[\calM_v(a)/\calP_v(J'_a)](k)_\kappa=
q^{\deg(v)r_{H,v}^G(a_H) } \sharp\,[\calM_{H,v}(a)/\calP_v(J'_a)](k)
$$
qui sont des nombres non nuls.
\end{lemme}

\begin{proof}
Soit ${\bar v}$ un point g\'eom\'etrique au-dessus de $v$. Comme $a_H(\bar
X_v)$ coupe transversalement $\discrim_{H,D}+\resultant_{H,D}^G$, il y a
trois possibilit\'es pour les entiers $d_{H,\bar v}(a_H), d_{\bar v}(a)$ et
$r_{H,v}^G(a_H)$ :
\begin{enumerate}
  \item si $a_H(\bar v)\notin \discrim_{H,D}\cup \resultant_{H,D}^G$,
      alors
$$d_{H,\bar v}(a_H)=0, d_{\bar v}(a)=0\ {\rm et}\ r_{H,v}^G(a_H)=0,$$
  \item si $a_H(\bar v)\in \discrim_{H,D}$ alors $a_H(\bar v)\notin
      \resultant_{H,D}^G$ et on a
$$d_{H,\bar v}(a_H)=1, d_{\bar v}(a)=1 \ {\rm et}\ r_{H,v}^G(a_H)=0,$$
  \item si $a_H(\bar v)\in \resultant_{H,D}^G$ alors $a_H(\bar v)\notin
      \discrim_{H,D}$ et on a
$$d_{H,\bar v}(a_H)=0, d_{\bar v}(a)=2\ {\rm et}\ r_{H,v}^G(a_H)=1.$$
\end{enumerate}

\begin{numero}
Dans les deux premiers cas, la formule de Bezrukavnikov \ref{dimension
Bezru} montre que $\delta_{H,\bar v}(a_H)=\delta_{\bar v}(a)=0$ de sorte
que les fibres de Springer affines $\calM_{H,v}(a_H)$ et $\calM_v(a)$ sont
toutes les deux de dimension z\'ero. Il s'ensuit que $\calP_v(J_a)$ agit
simplement transitivement sur $\calM_v(a)$ et $\calP_v(J_{H,a_H})$ agit
simplement transitivement sur $\calM_{H,v}(a_H)$ \cf \ref{dim zero}.
Choisissons une trivialisation de $D'$ au-dessus de $X_v$ comme dans
\ref{trivialisation de D'} pour pouvoir \'ecrire agr\'eablement les int\'egrales
orbitales. En particulier $a_H$ et $a$ d\'efinissent des \'el\'ements
$a_{H,v}\in \frakc_H(\calO_v)$ et $a_v\in\frakc(\calO_v)$. En conjonction
avec \ref{comptage avec P_v(J_a)}, on en d\'eduit la formule
$$1=[\calM_{H,v}(a_H)/\calP_v(J_{H,a_H})](k)
={\rm vol}(J^0_{H,a_H}(\calO_v),{\rm d}t_v){\bf
SO}_{a_{H,v}}(1_{\frakh_v},{\rm d}t_v)
$$
En comparant avec la formule \ref{comptage avec P_v(J'_a)}
$$\sharp\,[\calM_{H,v}(a)/\calP_v(J'_a)](k)=
{\rm vol}(J'_a(\calO_v),{\rm d}t_v){\bf SO}_{a_{H,v}}(1_{\frakh_v},{\rm
d}t_v)
$$
on obtient
$$\sharp\,[\calM_{H,v}(a)/\calP_v(J'_a)](k)=\frac{{\rm vol} (J'_a(\calO_v),{\rm
d}t_v)}{{\rm vol}(J^0_{H,a_H}(\calO_v),{\rm d}t_v)}.$$

En remarquant que $\calM_v(a)$ est donn\'e avec un $k$-point par la
section de Kostant, on a aussi
$$\sharp\, [\calM_v(a)/\calP_v(J_a)](k)_\kappa=1.$$
Le m\^eme raisonnement comme ci-dessus implique alors
$$\sharp\,[\calM_{v}(a)/\calP_v(J'_a)](k)_\kappa=\frac{{\rm vol} (J'_a(\calO_v),{\rm
d}t_v)}{{\rm vol}(J^0_{a}(\calO_v),{\rm d}t_v)}.
$$
Il reste \`a remarquer que dans les deux premiers cas, l'homomorphisme
$J_a\rightarrow J_{H,a_H}$ induit un isomorphisme sur les composantes
neutres $J_a^0\rightarrow J_{H,a_H}^0$ et on obtient l'\'egalit\'e
$$\sharp\,[\calM_v(a)/\calP_v(J'_a)](k)_\kappa=
\sharp\,[\calM_{H,v}(a)/\calP_v(J'_a)](k)
$$
qu'on voulait.
\end{numero}

\begin{numero}
Consid\'erons maintenant le troisi\`eme cas o\`u $a_H(X_v)$ coupe
transversalement $\resultant_{H,D}^G$ et n'intersecte pas $\frakc_{H,D}$.
On a dans ce cas $d_{H,\bar v}(a_H)$ est nul de sorte aue
$$\delta_{H,\bar v}(a_H)=0\ {\rm et}\ c_{H,\bar v}(a_H)=0.$$
Comme $\delta_{H,\bar v}(a_H)=0$, la fibre de Springer affine
$\calM_{H,v}(a_H)$ est de dimension z\'ero. D'apr\`es \ref{dim zero},
$\calP_v(J_{H,a_H})$ agit simplement transitivement sur
$\calM_{H,v}(a_H)$ de sorte qu'on a encore
$$\sharp\, [\calM_{H,v}(a_H)/\calP_v(J_{H,a_H})](k)=1.$$
Le m\^eme raisonnement comme ci-dessus montre que
$$\sharp\,[\calM_{H,v}(a)/\calP_v(J'_a)](k)=\frac{{\rm vol} (J'_a(\calO_v),{\rm
d}t_v) }{ {\rm vol}(J^0_{H,a_H}(\calO_v),{\rm d}t_v)}.
$$
Notons que comme $a_H(X_v)$ n'intersecte pas $\frakc_{H,D}$,
$J_{H,a_H}$ est un tore et en particulier $J_{H,a_H}=J_{H,a_H}^0$.

Comme l'invariant $c_{\bar v}(a)$ ne d\'epend que de la fibre g\'en\'erique
de $J_a|_{X_v}$, on a
$$c_{\bar v}(a)=c_{H,\bar v}(a_H)=0.$$
Ceci implique que la fibre de Springer affine $\calM_{\bar v}(a)$ est de
dimension
$$\delta_{\bar v}(a)=\frac{d_{\bar v}(a)-c_{\bar v}(a) }{ 2}=1.$$
On est donc exactement dans la situation de \ref{subsection : fibre simple}.
En appliquant la formule \ref{comptage fibre simple} tout en notant que
l'hypoth\`ese $\kappa(\alpha^\vee)\not=1$ est bien v\'erifi\'ee ici, on
obtient la formule
$$[\calM_v(a)/\calP_v(J_a)](k)_\kappa=  \sharp\, A_{\pm\alpha}(k_v) q^{\deg(v)}$$
o\`u $A_{\pm \alpha}$ est le tore de dimension un sur $k_v$ d\'efini par
$$A_{\pm \alpha}(\bar k)=J_{H,a_H}(\calO_{\bar v})/ J_a(\calO_{\bar v}).$$
En appliquant la formule \ref{comptage avec P_v(J_a)}, on trouve
$${\bf O}_{a}^\kappa(1_{\frakg_v},{\rm d}t_v)= \frac{q^{\deg(v)}
}{ \sharp\, A_{\pm\alpha}(k_v) {\rm vol}(J^0_{a}(\calO_v),{\rm d}t_v)}.
$$
On peut aussi v\'erifier la formule
$$\sharp\, A_{\pm\alpha}(k_v){\rm vol}(J^0_{a}(\calO_v),{\rm d}t_v)=
{\rm vol}(J^0_{H,a_H}(\calO_v),{\rm d}t_v)$$ qui implique
$$[\calM_v(a)/\calP_v(J'_a)](k)_\kappa= \frac{ q^{\deg(v)} {\rm vol}(J'_a(\calO_v, {\rm d}t_v))
}{ {\rm vol}(J^0_{H,a_H}(\calO_v),{\rm d}t_v)}.
$$
On obtient donc l'\'egalit\'e
$$\sharp\,[\calM_v(a)/\calP_v(J'_a)](k)_\kappa=
q^{\deg(v)}\sharp\,[\calM_{H,v}(a)/\calP_v(J'_a)](k)
$$
qu'on voulait.
\end{numero}

Ceci termine la d\'emonstration de \ref{LF simple}.
\end{proof}

\subsection{Le lemme fondamental de Langlands-Shelstad}
\label{lemme fondamental}

On est maintenant en mesure de d\'emontrer le lemme fondamental de
Langlands et Shelstad \ref{LS}.

\begin{proof}
Soient $X_v=\Spec(\calO_v)$ avec $\calO_v=k[[\epsilon_v]]$ et
$X_v^\bullet=\Spec(F_v)$ avec $F_v=k((\epsilon_v))$. Soit $G_v$ un
groupe r\'eductif sur $X_v$ forme quasi-d\'eploy\'ee de $\GG$ donn\'ee par
un $\Out(\GG)$-torseur $\rho_{G,v}$ sur $X_v$. Soit
$(\kappa,\rho_{\kappa,v})$ une donn\'ee endoscopique  elliptique \cf
\ref{subsection : Groupes endoscopiques}. Soit $H_v$ le groupe
endoscopique associ\'e. Soit $a_H\in \frakc_{H}(\calO_v)$ d'image $a\in
\frakc(\calO_v)\cap \frakc^\rs(F_v)$.

\begin{numero}
Si le centre de $G_v$ contient un tore d\'eploy\'e $C$, $C$ est aussi
contenu dans le centre de $H_v$. En rempla{\c c}ant $G_v$ par $G_v/C$ et
$H_v$ par $H_v/C$, la $\kappa$-int\'egrale orbitale et l'int\'egrale orbitale
stable envisag\'ees ne changent pas de sorte qu'on peut supposer que le
centre de $G_v$ ne contient pas de tore d\'eploy\'e.

Si le centre de $H_v$ contient un tore d\'eploy\'e $C$, $C$ est
naturellement inclus dans le tore $I_{\gamma_0}$ o\`u
$\gamma_0=\epsilon(a)$. Le centralisateur $M_v$ de $C$ dans $G_v$ est un
sous-groupe de Levi de $G_v$. Par la formule de descente, on peut
remplacer la $\kappa$-int\'egrales orbitales dans $G_v$ par une
$\kappa$-int\'egrale orbitale dans $M_v$. Le centre de $M_v$ contient
maintenant un tore d\'eploy\'e et on se ram\`ene \`a la situation discut\'ee
ci-dessus. Apr\`es un nombre fini de ces r\'eductions, on peut supposer que
les centres de $G_v$ et $H_v$ ne contiennent pas de tores d\'eploy\'es.
\end{numero}

\begin{numero}
Soit $\calM_{H,v}(a_H)$ et $\calM_v(a)$ les fibres de Springer affines
associ\'ees. D'apr\`es \ref{subsection : approximation}, il existe un entier
naturel $N$ tel que pour toute extension finie $k'$ de $k$, pour tout $a'_H
\in \frakc_H(\calO_v\otimes_k k')$ tel que
$$a_H \equiv a'_H \mod \epsilon_v^N$$
alors
\begin{itemize}
  \item $a'_H$ a une image $a'\in \frakc(\calO_v\otimes_k k')\cap
      \frakc^\rs(F_v\otimes_k k')$ ;
  \item les fibres de Springer affines $\calM_{H,v}(a_H)\otimes_k k'$ et
      $\calM_{H,v}(a'_H)$ munies de l'action de
      $\calP_{v}(J_{H,a_H})\otimes_k k'$ et $\calP_v(J_{H,a'_H})$
      respectivement,  sont isomorphes ;
  \item les fibres de Springer affines $\calM_v(a)\otimes_k k'$ et
      $\calM_v(a')$ munies de l'action de $\calP_v(J_a)$ et
      $\calP_v(J_{a'})$ respectivement, sont isomorphes.
\end{itemize}
Notons $\delta_H(a_H)$ la dimension de la fibre de Springer affine
$\calM_{H,v}(a_H)$.
\end{numero}

\begin{numero}
Il existe une courbe projective lisse g\'eom\'etriquement connexe $X$ sur
$k$ muni des donn\'ees suivantes :
\begin{itemize}
  \item deux $k$-points distincts not\'es $v$ et $\infty$,
  \item un isomorphisme entre la compl\'etion de $X$ en $v$ avec le
      sch\'ema $X_v$ ci-dessus,
  \item un $\pi_0(\kappa)$-torseur $\rho_\kappa$ sur $X$ muni d'une
      trivialisation $\alpha_\infty$ au-dessus de $\infty$ et d'un
      isomorphisme $\alpha_v$ avec $\rho_{\kappa,v}$ au-dessus de
      $X_v$.
\end{itemize}
Soient $G$ et $H$ les $X$-sch\'emas en groupes associ\'es comme dans
\ref{subsection : Groupes endoscopiques}. Avec la donn\'ee de $\alpha_v$,
les restrictions de $G$ et $H$ \`a $X_v$ sont canoniquement isomorphes
\`a $G_v$ et $H_v$. Les centres de $G_v$ et $H_v$ ne contenant
par de tores d\'eploy\'es, il en est de m\^eme de $G$ et $H$. Avec la
donn\'ee de $\alpha_\infty$, on a un homomorphisme
$$\rho_\kappa^\bullet:\pi_1(X,\infty)=\pi_1(\ovl X,\infty)\rtimes \Gal(\bar k/ k)
\rightarrow \pi_0(\kappa)
$$
trivial sur le facteur $\Gal(\bar k/ k)$. Par cons\'equent, le sous-groupe
$\pi_1(\ovl X,\infty)$ a la m\^eme image dans $\pi_0(\kappa)$ que
$\pi_1(X,\infty)$. Il s'ensuit que sur $\ovl X$, les centres de $G$ et $H$ ne
contiennent pas de tores d\'eploy\'es.
\end{numero}

\begin{numero}
On choisit maintenant un fibr\'e inversible $D$ sur $X$ v\'erifiant les
hypoth\`eses suivantes
\begin{itemize}
 \item il existe un fibr\'e inversible $D'$ sur $X$ avec $D={D'}^{\otimes
     2}$,
 \item une section globale de $D'$ non nulle en $v$,
 \item $\deg(D) > rN+2g$ o\`u $r$ est le rang de $G$ et $g$ est le genre
     de $X$,
 \item $\delta_H(a_H)$ est plus petit que l'entier $\delta_H^{\rm bad}(D)$
     d\'efini dans \ref{delta bad}.
\end{itemize}
D'apr\`es \ref{codimension}, l'entier $\delta_H^{\rm bad}$ tend vers
$\infty$ lors que $\deg(D)\rightarrow \infty$ de sorte que pour $\deg(D)$
assez grand la derni\`ere hypoth\`ese est r\'ealis\'ee.
\end{numero}

\begin{numero}
Consid\'erons les fibrations de Hitchin associ\'ees \`a la courbe $X$, au
fibr\'e inversible $D$ et aux groupes $G$ et $H$ respectivement
$$f:\calM\rightarrow \calA \ {\rm et}\ f_H:\calM_H\rightarrow \calA_H.$$
L'hypoth\`ese que le centre $G$ et $H$ ne contient pas de tores
d\'eploy\'es sur $\ovl X$ et l'hypoth\`ese $\deg(D)>2g$ assurent que les
ouverts anisotropes $\calA^\ani$ et $\calA_H^\ani$ ne sont pas vides.
\end{numero}

\begin{numero}
Avec l'hypoth\`ese $\deg(D) > rN+2g$, l'application de restriction des
section globales \`a $\Spec(\calO_v/ \epsilon_v^N)$
$$\rmH^0(X,\frakc_{H,D}) \rightarrow \frakc_H(\calO_v/ \epsilon_v^N)$$
est une application lin\'eaire surjective. Soit $Z$ le sous-espace affine de
$\rmH^0(X,\frakc_{H,D})$ des \'el\'ements ayant la m\^eme image dans
$\frakc_H(\calO_v/ \epsilon_v^N)$ que $a_H$. C'est un sous-sch\'ema
ferm\'e de codimension $rN$ de $\calA_H$. Par le m\^eme raisonnement
que dans \ref{A diamond non vide}, on peut montrer que l'ouvert $Z'$ de
$Z$ constitu\'e des points $a'_H\in Z'(\bar k)$ tels que la courbe $a'_H(\bar
X-\{v\})$ coupe le diviseur $\discrim_{H,D}+\resultant_{H,D}^G$
transversalement, est un ouvert non vide. Comme le compl\'ement de
$\calA_H^\ani$ dans $\calA_H$ est un ferm\'e de codimension plus grande
ou \'egale \`a $\deg(D)$ \cf \ref{complement de A ani}, $Z'\cap
\calA_H^\ani\not= \emptyset$. En rempla{\c c}ant $Z'$ par  $Z'\cap
\calA_H^\ani$, on peut supposer que $Z'\subset \calA_H^\ani$. Pour tout
$a'_H\in Z'(\bar k)$, l'invariant $\delta_H(a')=\delta_H(a_H)$ de sorte qu'on
a $Z'\subset \calA_H^{\rm good}$ o\`u on dispose du th\'eor\`eme de
stabilisation \cf \ref{subsection : A good}.

Soit $\tilde Z'$ l'image r\'eciproque de $Z'$ dans $\tilde\calA_H^{\rm
good}$. Comme $\tilde Z'$ est un sch\'ema de dimension positive, il existe
un entier $m$ tel que pour toute extension $k'/ k$ de degr\'e plus grand ou
\'egal \`a $m$, l'ensemble $\tilde Z'(k')\not= \emptyset$. Soit $\tilde a'_H
\in Z'(k')$ au-dessus de $a'_H\in Z(k')$. Soient $\tilde a'$ l'image de $\tilde
a'_H$ dans $\tilde\calA$ et $a'$ l'image de $a'_H$ dans $\calA$.
\end{numero}

\begin{numero}
La conjonction de la partie du th\'eor\`eme de stabilisation \ref{stabilisation sur tilde
A} d\'emontr\'ee sur l'ouvert $\tilde\calA_H^{\rm good}$ avec la
formule \ref{formule des points fixes Hitchin} nous donne l'\'egalit\'e
\begin{eqnarray*}
&{\sharp\,}(\calP'_{a'})^0(k') \prod_{v'\in |(X-U)\otimes_k k'|} {\sharp\,}
[\calM_{H,v'}(a'_H)/\calP_{v'}(J'_{a'})](k')_\kappa
\\
=&q^{r(a'_H)\deg(k'/ k)}{\sharp\,}
(\calP'_{a'})^0(k') \prod_{v'\in |(X-U)\otimes_k k'|}
{\sharp\,} [\calM_{v'}(a'_H)/\calP_{v'}(J'_{a'})](k')
\end{eqnarray*}
avec un choix d'un sch\'ema en groupes $J'_{a'}$ lisse commutatif de fibre
connexe muni d'un homomorphisme
$$J'_{a'}\rightarrow J_{a'}\rightarrow J_{H,a'_H}$$
comme dans \ref{choix J'_a}. Il sera commode de choisir $J'_{a'}=J_{a'}^0$
au-dessus de $X_v$.
\end{numero}

\begin{numero}
Puisque $\tilde a'_H\in \tilde\calU(k')$, on peut appliquer \ref{LF simple}
\`a toutes les places $v'\not=v$. En retranchant les \'egalit\'es en ces
places, on trouve une \'egalit\'e en la place $v$ du d\'epart
$${\sharp\,}[\calM_{H,v}(a'_H)/\calP_{v}(J^0_{a'})](k')_\kappa
=q^{r_v(a'_H)\deg(k'/ k)} {\sharp\,} [\calM_{v}(a'_H)/\calP_{v}(J^0_{a'})](k').
$$
Comme on sait que la fibre de Springer affine $\calM_v(a)\otimes_k k'$ muni
de l'action de $\calP_v(J_a)\otimes_k k'$ est isomorphe \`a $\calM_v(a')$
muni de l'action de $\calP_v(J_{a'})$ et la m\^eme chose pour le groupe
$H$, on en d\'eduit l'\'egalit\'e
$${\sharp\,}[\calM_{H,v}(a_H)/\calP_{v}(J^0_a)](k')_\kappa
=q^{r_{H,v}^G(a_H)\deg(k'/ k)} {\sharp\,} [\calM_{v}(a_H)/\calP_{v}(J^0_a)](k').
$$
pour toute extension $k'$ de $k$ de degr\'e plus grand que $m$. En
appliquant \ref{k'->k}, on obtient l'\'egalit\'e
$${\sharp\,}[\calM_{H,v}(a_H)/\calP_{v}(J^0_a)](k)_\kappa
=q^{r_{H,v}^G(a_H)} {\sharp\,} [\calM_{v}(a_H)/\calP_{v}(J^0_a)](k).
$$
En appliquant la formule \ref{comptage avec P_v(J'_a)}, on obtient
maintenant l'\'egalit\'e
$${\bf O}_{a}^\kappa(1_{\frakg_v},{\rm d}t_v)=q^{r_{H,v}^G(a_H)}
{\bf SO}_{a_H}(1_{\frakh_v},{\rm d}t_v)
$$
qu'on voulait.
\end{numero}

Ceci termine la d\'emonstration de la conjecture de Langlands-Sheldstad
\ref{LS}.
\end{proof}

\subsection{Stabilisation sur $\tilde\calA^\ani$}
\label{subsection : renversement}

En renversant une nouvelle fois le processus local-global, on peut
maintenant compl\'eter la d\'emonstration de \ref{stabilisation sur tilde A}.

\begin{proof}
Gardons les notations de \ref{subsection : A good}. Comme dans
\ref{subsection : A good}, il suffit de d\'emontrer l'\'egalit\'e de traces
\ref{egalite de traces}
\begin{equation}\label{egalite de traces 2}
\sum_{n} (-1)^{n}\tr(\sigma_{k'},(L^n_\kappa)_{\tilde a}) \\
=\sum_{n} (-1)^{n} \tr(\sigma_{k'},(L^n_{H,{\rm st}})_{\tilde a_H}).
\end{equation}
pour tout $\tilde a_H\in\tilde\calA_H^\ani(k)$ d'image $\tilde a\in
\tilde\calA^\ani(k)$. D'apr\`es \ref{formule des points fixes Hitchin}, le
membre de gauche de \ref{egalite de traces} vaut
$$(\calP'_a)^0(k) \prod_{v\in |X-U|}  \sharp\,
[\calM_v(a)/\calP_v(J'_a)](k)_\kappa
$$
alors que le membre de droite vaut
$$q^{r_H^G(D)}(\calP'_a)^0(k) \prod_{v\in |X-U|}  \sharp\,
[\calM_{H,v}(a)/\calP_v(J'_a)](k).
$$
En combinant \ref{LS}, \ref{comptage avec P_v(J'_a)}, on a l'\'egalit\'e
$$[\calM_v(a)/\calP_v(J'_a)](k)_\kappa=q^{r_{H,v}^G(D)}[\calM_{H,v}(a)/\calP_v(J'_a)](k)$$
d'o\`u \ref{egalite de traces 2}.
\end{proof}

\subsection{Conjecture de Waldspurger}
\label{subsection : demo Waldspurger}

Soient maintenant $G_1$ et $G_2$ deux $X$-sch\'emas en groupes
appari\'es au sens de \ref{apparies}. Supposons que les centres de $G_1$ et
$G_2$ ne contiennent pas de tores d\'eploy\'es au-dessus de $\ovl
X=X\otimes_k \bar k$.

\begin{numero}
Soient $f_1:\calM_1 \rightarrow \calA_1$ et $f_2:\calM_2 \rightarrow
\calA_2$ les fibrations de Hitchin associ\'ees. Comme dans \ref{Hitchin
apparies}, on a
$$\calA=\calA_1=\calA_2.$$
Soient $\calP_1$ et $\calP_2$ les $\calA$-champs de Picard associ\'es \`a
$G_1$ et $G_2$. D'apr\`es \ref{isogenie P1 P2}, il existe un
homomorphisme $\calP_1\rightarrow \calP_2$ qui induit une isog\'enie
entre leurs composantes neutres.
\end{numero}

\begin{theoreme}\label{pair stable}
Il existe un isomorphisme entre les simplifications des faisceaux pervers
gradu\'es sur $\calA^\ani$
$$K_1=\bigoplus_n\,^p\rmH^n (f_{1,*}^\ani\Ql)_{\rm st} \ {\rm et}\
K_2=\bigoplus_n\,^p\rmH^n (f_{2,*}^\ani\Ql)_{\rm st}.$$
\end{theoreme}

La d\'emonstration de ce th\'eor\`eme suit essentiellement les m\^emes
\'etapes que celle de \ref{stabilisation sur tilde A}. En particulier, on
d\'emontrera en cours de route le lemme fondamental non standard
conjectur\'e par Waldspurger \ref{Waldspurger}.

\begin{numero}
On d\'emontre d'abord qu'il existe un tel isomorphisme au-dessus de l'ouvert
$\calA^\diamondsuit$. Au-dessus de cet ouvert, les morphismes $f_1$ et
$f_2$ sont propres et lisses de sorte que les restrictions de $K_1$ et $K_2$
\`a $\calA^\diamondsuit$ sont des syst\`emes locaux gradu\'es. Pour
d\'emontrer qu'il existe un isomorphisme entre leurs simplifications, il suffit
d'apr\`es le th\'eor\`eme de Chebotarev de d\'emontrer l'\'egalit\'e de
traces
\begin{equation}\label{egalite des traces 3}
\tr(\sigma_{k'},K_{1,a})=\tr(\sigma_{k'},K_{2,a})
\end{equation}
pour toute extension finie $k'$ de $k$ et pour tout point $a\in
\calA^\diamondsuit(k')$. En rempla\c cant $X$ par $X\otimes_k k'$, on peut
supposer que $k=k'$.
\end{numero}

\begin{numero}
Soit $a\in\calA^\diamondsuit(k)$. D'apr\`es \ref{A lozenge}, on sait :
\begin{itemize}
  \item $\calP_{i,a}$ agit simplement transitivement sur $\calM_{i,a}$
  \item $\calP_{i,a}$ est de la forme $[P_i/ A_i]$ o\`u $P_i$ est une
      extension d'un groupe fini par une vari\'et\'e ab\'elienne et o\`u
      $A_i$ est un groupe fini agissant trivialement sur $P_i$.
\end{itemize}
Par cons\'equent, le quotient $[\calM_{i,a}/ P_i]$ est isomorphe au
classifiant du groupe fini $A_i$. D'apr\`es la formule des points fixes
\ref{formule des points fixes} et \ref{exemple : classifiant fini}, on a
$$\tr(\sigma,K_{i,a})=\sharp P_i^0(k).$$
Puisque $P_1^0$ et $P_2^0$ sont des vari\'et\'es ab\'eliennes isog\`enes sur
$k$, ils ont le m\^eme nombre de $k$-points d'o\`u l'\'egalit\'e des traces
\ref{egalite des traces 3}. Il existe donc un isomorphisme entre les
simplifications des restrictions de $K_1$ et $K_2$ \`a
$\calA^\diamondsuit$.
\end{numero}

\begin{numero}
D'apr\`es le th\'eor\`eme du support \ref{support faible}, il existe un
isomorphisme entre les semi-simplifications des restrictions de $K_1$ et $K_2$
\`a
$$\calA^{\rm good}=\calA^{\rm ani}-\calA^{\rm bad}.$$
\end{numero}

\begin{numero}
En proc\'edant comme dans \ref{lemme fondamental}, on en d\'eduit le
lemme fondamental non standard conjectur\'e par Waldspurger
\ref{Waldspurger}.
\end{numero}

\begin{numero}
En renversant de nouveau le processus local-global comme dans
\ref{subsection : renversement}, on en d\'eduit l'\'egalit\'e des traces
\ref{egalite des traces 3} pour tout $a\in \calA^\ani(k')$ pour toute
extension finie $k'$ de $k$. On en d\'eduit le th\'eor\`eme \ref{pair stable}.
\end{numero}

\appendix

\section[Dualit\'e de Pincar\'e et comptage de dimension]
{Dualit\'e de Poincar\'e et comptage de dimension d'apr\`es Goresky et MacPherson}
\label{appendice GM}

Goresky et MacPherson ont observ\'e que la dualit\'e de Poincar\'e impose
une contrainte sur la codimension de support des faisceaux pervers simples
pr\'esents dans le th\'eor\`eme de d\'ecomposition. Cette observation a
jou\'e un r\^ole crucial dans la d\'emonstration du th\'eor\`eme du support.
Il nous semble qu'elle devrait avoir d'autres applications \'egalement.
Avec leur permission, nous rappelons cet argument de comptage de
dimension et dualit\'e de Poincar\'e un contexte g\'en\'eral et sous la forme
la plus simple possible. En toute g\'en\'eralit\'e, cette contrainte est
relativement faible mais en pratique, elle fournit une amorce pr\'ecieuse
\`a l'aide de laquelle on peut faire jouer d'autres arguments plus
sp\'ecifiques.

Soient $X$ et $Y$ des sch\'emas de type fini sur un corps fini $k=\FF_q$.  Soit
$f:X\rightarrow Y$ un morphisme propre. Supposons de plus que $X$ est un
$k$-sch\'ema lisse. Alors, d'apr\`es Deligne \cite{Weil2} l'image directe
$f_*\Ql$ est un complexe pur. D'apr\`es \cite{BBD}, il d\'ecompose
g\'eom\'etriquement comme une somme directe de faisceaux pervers
g\'eom\'etriquement irr\'eductibles avec d\'ecalage. Il existe au-dessus de
$Y\otimes_k \bar k$ un isomorphisme
$$f_*\Ql=\bigoplus_{(K,n)} K[-n]^{m_{K,n}}$$
o\`u la somme directe est \'etendue sur l'ensemble  des classes
d'\'equivalence des couples $(K,n)$ constitu\'e d'un  faisceau pervers
irr\'eductible $K$ sur $Y\otimes_k \bar k$ et d'un entier $n$ et o\`u
$$m_{K,n}:=\dim\Hom(K,\,^p\rmH^n(f_*\Ql))$$
est un entier naturel nul sauf pour un nombre fini de couples $(K,n)$. Un
faisceau pervers irr\'eductible $K$ est dit {\em pr\'esent} dans $f_*\Ql$ s'il
existe un entier $n$ tel que $m_{K,n}\not=0$.

La dualit\'e de Poincar\'e implique une sym\'etrie pour ces entiers
$m_{K,n}$.

\begin{appproposition}\label{symetrie Lefschetz}
Pour tout faisceau pervers g\'eom\'etriquement irr\'e\-ductible $K$ sur $Y$,
alors on a
$$m_{K,n+\dim(X)}=m_{DK,-n+\dim(X)}$$
o\`u $DK$ est le dual de Verdier de $K$.
\end{appproposition}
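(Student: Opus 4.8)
<br>

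The plan is to deduce the symmetry of the multiplicities $m_{K,n}$ directly from Poincaré–Verdier duality on the smooth scheme $X$, combined with the decomposition theorem. First I would recall that since $X$ is smooth of dimension $\dim(X)$, the dualizing complex of $X$ is $\Ql[2\dim(X)](\dim(X))$, so that Verdier duality gives a canonical isomorphism
$$
\rmD(f_*\Ql) = f_!\,\rmD_X \Ql = f_!\, \Ql[2\dim(X)](\dim(X)) = f_*\,\Ql[2\dim(X)](\dim(X)),
$$
where the last equality uses that $f$ is proper, so $f_!=f_*$. In other words $f_*\Ql$ is, up to the shift $[2\dim(X)]$ and a Tate twist, Verdier self-dual.

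Next I would apply the Verdier dual to the perverse decomposition $f_*\Ql \simeq \bigoplus_n {}^p\rmH^n(f_*\Ql)[-n]$. Since $\rmD$ is $t$-exact for the perverse $t$-structure and exchanges ${}^p\rmH^n$ with ${}^p\rmH^{-n}$, applying $\rmD$ termwise yields
$$
\rmD(f_*\Ql) \simeq \bigoplus_n \rmD\bigl({}^p\rmH^n(f_*\Ql)\bigr)[n]
= \bigoplus_n \rmD\bigl({}^p\rmH^{-n}(f_*\Ql)\bigr)[-n].
$$
Comparing this with the shifted-twisted identification $\rmD(f_*\Ql) = f_*\Ql[2\dim(X)](\dim(X))$ and reading off perverse cohomology in degree $n$ gives a canonical isomorphism of perverse sheaves
$$
{}^p\rmH^n(f_*\Ql) \simeq \rmD\bigl({}^p\rmH^{2\dim(X)-n}(f_*\Ql)\bigr)(\dim(X)).
$$

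Finally I would extract the statement about multiplicities. Over $Y\otimes_k\bar k$ each ${}^p\rmH^m(f_*\Ql)$ is geometrically semisimple by the decomposition theorem, so it is determined up to isomorphism by the multiplicities $m_{K,m} = \dim\Hom(K,{}^p\rmH^m(f_*\Ql))$ as $K$ runs over geometrically irreducible perverse sheaves. Since Verdier duality sends a simple perverse sheaf $K$ to the simple perverse sheaf $\rmD K = DK$ (here a Tate twist is harmless, as it does not change the isomorphism class of the underlying perverse sheaf on $Y\otimes_k\bar k$ once one forgets the Frobenius structure, or one simply carries the twist along), the isomorphism above gives
$$
m_{K,n} = \dim\Hom\bigl(K, {}^p\rmH^n(f_*\Ql)\bigr)
= \dim\Hom\bigl(DK, {}^p\rmH^{2\dim(X)-n}(f_*\Ql)\bigr) = m_{DK,\,2\dim(X)-n}.
$$
Writing $n = m + \dim(X)$ this is exactly $m_{K,\,m+\dim(X)} = m_{DK,\,-m+\dim(X)}$, which is the claim. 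There is no serious obstacle here; the only point to be careful about is bookkeeping of shifts and Tate twists and the fact that the decomposition theorem only gives geometric semisimplicity, so the multiplicities are genuinely well-defined only over $\bar k$ — which is exactly the setting in which the proposition is stated.
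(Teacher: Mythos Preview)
Your argument is correct and is exactly the standard derivation the paper has in mind: the proposition is stated in the appendix without proof, simply as a consequence of Poincar\'e duality, and the very same computation you wrote appears explicitly in \S\ref{subsection : demo support} (there with $M,S$ in place of $X,Y$), where the paper writes $f_*\Ql = \underline{\rm RHom}(f_*\Ql,\Ql[-2d](-d))$ and reads off $K^n = K^{2\dim(M)-n,\vee}(\dim(M))$ on perverse cohomology. So there is nothing to add.
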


Goresky et MacPherson ont observ\'e que cette sym\'etrie impose une
contrainte sur la codimension des supports des faisceaux pervers
g\'eom\'e\-triquement irr\'eductibles $K$ pr\'esents dans $f_*\Ql$.

\begin{apptheoreme}\label{inegalite de codim}
Soient $X$ et $Y$ des sch\'emas de type fini sur un corps $k$, $X$ est lisse
sur $k$. Soit $f:X\rta Y$ un morphisme propre. Supposons que le morphisme
$f$ est de dimension relative $d$. Soit $K$ un faisceau pervers
irr\'eductible sur $Y\otimes_k \bar k$ pr\'esent dans $f_*\Ql$. Soit $Z$ le
support de $K$. Alors on a l'in\'egalit\'e
$${\rm codim}(Z)\leq d.$$
\end{apptheoreme}

\begin{proof}
Supposons au contraire que ${\rm codim}(Z)>d$. D'apr\`es la dualit\'e de
Poincar\'e et quitte \`a \'echanger $K$ et $DK$ qui ont le m\^eme support,
on peut supposer qu'il existe un entier $n$
$$n\geq\dim(X)$$
tel que $m_{K,n}\not=0$. D'apr\`es \cite{BBD}, il existe un ouvert $Z'$ de
$Z$, un syst\`eme local $K'$ sur $S$ tel que $K=j_{! *} K' [\dim(Z)]$ o\`u
$j$ est l'inclusion de l'ouvert $Z'$ dans $Z$. Soit $y$ un point
g\'eom\'etrique de $Z'$. La fibre de $K$ en $y$ est alors un $\Ql$-espace
vectoriel plac\'e en degr\'e
$$-\dim(Z)=-\dim(Y)+{\rm codim}(Z).$$
La fibre de $K[-n]$ en $y$ \'etant un facteur direct
$\rmR\Gamma(X_y,\Ql)$, ceci implique que
$$\rmH^{n-\dim(Y)+{\rm codim}(Z)}(X_y)\not=0.$$
Or, on a l'in\'egalit\'e
$$n-\dim(Y)+{\rm codim}(Z) > 2d$$
parce que $n\geq\dim(X)$ et ${\rm codim}(Z) >d$. Cette non annulation est
en contradiction avec l'hypoth\`ese que la fibre $X_y$ est de dimension
inf\'erieure ou \'egale \`a $d$.
\end{proof}

\begin{apptheoreme}\label{inegalite stricte de codim}
Mettons-nous sous les hypoth\`eses du th\'eor\`eme pr\'e\-c\'edent.
Supposons en plus que les fibres de $f$ sont g\'eom\'etriquement
irr\'e\-ductibles de dimension $d>0$. Soit $K$ un faisceau pervers
irr\'eductible sur $Y\otimes_k \bar k$ pr\'esent dans $f_*\Ql$. Soit $Z$ le
support de $K$. Alors on a l'in\'egalit\'e stricte
$${\rm codim}(Z)<d.$$
\end{apptheoreme}

\begin{proof}
Sous l'hypoth\`ese que les fibres de $f$ sont irr\'eductibles, le faisceau de
cohomologie de degr\'e maximal $2d$ est le syst\`eme local
$$\rmH^{2d}(f_*\Ql)=\Ql(-d).$$

Consid\'erons la d\'ecomposition de $f_*\Ql$ sur $Y\otimes_k \bar k$
$$f_*\Ql=\bigoplus_{(L,n)} L[-n]^{m_{L,n}}$$
o\`u $L$ parcourt l'ensemble des classes d'isomorphismes de faisceaux
pervers irr\'eductibles sur $Y\otimes_k \bar k$ et $n$ l'ensemble des
entiers. Si $m_{L,n}\not=0$, alors on a
$\rmH^i(L[-n])=0$ pour tout $i> 2d$ et m\^eme $\rmH^{2d}(L[-n])=0$ si $L$ n'est pas
isomorphe \`a $\Ql[\dim(Y)]$.

Le m\^eme argument que dans le th\'eor\`eme pr\'ec\'edent montre que
pour tout faisceau pervers irr\'eductible $K$ sur $Y\otimes_k \bar k$ qui
n'est pas isomorphe \`a $\Ql[\dim(Y)]$ alors le support $Z$ de $K$ doit
v\'erifier l'in\'egalit\'e stricte ${\rm codim}(Z)<d$. Bien entendu, si $K$ est
isomorphe \`a $\Ql[-\dim(Y)]$, son support est
$Y$ tout entier et l'in\'egalit\'e est trivialement satisfaite.
\end{proof}

\newpage

\setcounter{tocdepth}{2}
\begin{small}
\tableofcontents
\end{small}

\end{document}